\def\hB{\hspace*{\fill}$\qed$}
\title{Additive $\bm{C^{*}}$-categories and $\bm{K}$-theory}
\author{
Ulrich Bunke\thanks{Fakult{\"a}t f{\"u}r Mathematik,
Universit{\"a}t Regensburg,
93040 Regensburg,
GERMANY\newline
\href{mailto:ulrich.bunke@mathematik.uni-regensburg.de}{ulrich.bunke@mathematik.uni-regensburg.de}}  
\and
Alexander Engel\thanks{Universit\"at Greifswald,
Walther-Rathenau-Strasse 47,
17489 Greifswald,
GERMANY\newline
\href{mailto:alexander.engel@uni-greifswald.de}{alexander.engel@uni-greifswald.de}}
}
\numberwithin{equation}{section}
\newtheorem{theorem}{Theorem}[section] 
\newtheorem{prop}[theorem]{Proposition}
\newtheorem{lem}[theorem]{Lemma}
\newtheorem{ddd}[theorem]{Definition}
\newtheorem{kor}[theorem]{Corollary}
\newtheorem{construction}[theorem]{Construction}
\theoremstyle{remark}
\theoremstyle{definition}
\newtheorem{ex}[theorem]{Example}
\newtheorem{rem}[theorem]{Remark}
\newcommand{\la}{\mathrm{la}}
\newcommand{\disc}{\mathrm{disc}}
\newcommand{\std}{\mathrm{std}}
\newcommand{\LM}{\mathrm{LM}}
\newcommand{\RM}{\mathrm{RM}}
\newcommand{\Lzwei}{\mathbf{L}^{2}}
 \newcommand{\Ban}{\mathbf{Ban}}
\newcommand{\Homol}{\mathrm{Hg}}
\newcommand{\Ho}{\mathbf{Ho}}
\newcommand{\bX}{\mathbf{X}}
\newcommand{\Res}{\mathrm{Res}}
\newcommand{\Rep}{\mathbf{Rep}}
\newcommand{\Orb}{\mathbf{Orb}}
\newcommand{\Hilb}{\mathbf{Hilb}}
\newcommand{\cR}{\mathcal{R}}
\newcommand{\bQ}{\mathbf{Q}}
\newcommand{\Fin}{\mathbf{Fin}}
\newcommand{\Am}{\mathbf{Am}}
\newcommand{\KAm}{\mathbf{K\textbf{-}Am}}
\newcommand{\Ob}{\mathrm{Ob}}
\newcommand{\bB}{{\mathbf{B}}}
\newcommand{\incl}{\mathrm{incl}}
\newcommand{\Emb}{\mathrm{Emb}}
\newcommand{\DL}{\mathrm{DL}}
\newcommand{\Groupoids}{\mathbf{Groupoids}}
\newcommand{\bL}{{\mathbf{L}}}
\newcommand{\bH}{{\mathbf{H}}}
\newcommand{\bM}{\mathbf{M}}
\newcommand{\bF}{{\mathbf{F}}}
\newcommand{\cL}{{\mathcal{L}}}
\newcommand{\PSh}{{\mathbf{PSh}}}
\newcommand{\bA}{{\mathbf{A}}}
\newcommand{\bK}{{\mathbf{K}}}
\newcommand{\cD}{{\mathcal{D}}}
 \newcommand{\Cat}{{\mathbf{Cat}}}
\DeclareMathOperator{\proj}{proj}
\newcommand{\IK}{\mathbb{K}}
\newcommand{\K}{\mathrm{K}}
\newcommand{\aMHom}{\mathtt{M}^{\mathrm{alg}}\mathtt{Hom}}
\newcommand{\MHom}{\mathtt{MHom}}
\newcommand{\Idem}{\mathrm{Idem}}
\newcommand{\Spc}{\mathbf{Spc}}
\newcommand{\IR}{\mathbb{R}}
\newcommand{\IC}{\mathbb{C}}
\newcommand{\Ccat}{{\mathbf{C}^{\ast}\mathbf{Cat}}}
\newcommand{\Calg}{{\mathbf{C}^{\ast}\mathbf{Alg}}}
\newcommand{\bd}{\mathrm{bd}}
\newcommand{\op}{\mathrm{op}}
\newcommand{\add}{\mathrm{add}}
\newcommand{\Wcat}{W^{*}\mathbf{Cat}}
\newcommand{\nCcat}{C^{*}\mathbf{Cat}^{\mathrm{nu}}}
\renewcommand{\Ccat}{C^{*}\mathbf{Cat}}
\newcommand{\alg}{\mathrm{alg}}
\newcommand{\nClincat}{\mathbf{{}^{*}\Cat^{\mathrm{nu}}_{\C}}}
\newcommand{\npClincat}{\mathbf{{}_{\mathrm{pre}}^{*}\Cat^{\mathrm{nu}}_{\C}}}
\newcommand{\pClincat}{\mathbf{{}_{\mathrm{pre}}^{*}\Cat_{\C}}}
\newcommand{\Clincat}{\mathbf{{}^{*}\Cat_{\C}}}
\renewcommand{\Calg}{C^{*}\mathbf{Alg}}
\newcommand{\nCalg}{C^{*}\mathbf{Alg}^{\mathbf{nu}}}
\renewcommand{\nCalg}{C^{*}\mathbf{Alg}^{\mathrm{nu}}}
\newcommand{\npAlgc}{{}_{\mathrm{pre}}^{*}\mathbf{Alg}^{\mathrm{nu}}_{\mathbb{C}}}
\newcommand{\Compl}{\mathrm{Compl}}
 \newcommand{\Kcat}{\mathrm{K}^{\mathrm{C^{*}Cat}}}
  \newcommand{\Kast}{\mathrm{K}^{C^{*}}}
\newcommand{\Morita}{\mathrm{Morita}}
\begin{document}  

\maketitle

\begin{abstract}
We introduce and study the notion of an orthogonal sum of a (possibly infinite) family of objects in a $C^{*}$-category.  Further, we construct   reduced crossed products  of $C^{*}$-categories with groups.   We axiomatize the basic properties of the $K$-theory for $C^{*}$-categories
in the notion of a homological functor. We then study various rigidity properties of homological functors in general, and special additional  features   of the $K$-theory of $C^{*}$-categories.
As an application  we construct and study interesting functors on the orbit category of a group from $C^{*}$-categorical data.
%
%
%
\end{abstract}

\tableofcontents
\setcounter{tocdepth}{5}


 \newcommand{\sep}{\mathrm{sep}}
 \newcommand{\free}{\mathrm{free}}
  \newcommand{\fg}{\mathrm{fg}}
\newcommand{\fin}{\mathrm{fin}}

 \section{Introduction}

 The goal of this paper is to provide a reference for foundational results on $C^{*}$-categories and their topological $K$-theory. The three main themes
 are  orthogonal sums of (infinite) families of objects in a $C^{*}$-category, reduced crossed products of $C^{*}$-categories with groups, and rigidity properties 
 of the $K$-theory of $C^{*}$-categories and more general homological functors.      
  The results of the present paper will be used in the subsequent papers  \cite{coarsek}, \cite{KKG} and \cite{bel-paschke}.
 

The notion of a $C^{*}$-category was introduced in \cite{ghr}; see also the further references \cite{DellAmbrogio:2010aa}, \cite{davis_lueck}, \cite{joachimcat}, \cite{mitchc}, \cite{Mitchener:aa}, \cite{startcats}. The category $\Ccat$ of $C^{*}$-categories has an interesting homotopy theory  based on the notion of unitary equivalence
which is studied in   \cite{DellAmbrogio:2010aa} and \cite{startcats}. 

 The main topic of  \cite{crosscat} are the categorical properties of the category $\nCcat$ of possibly non-unital $C^{*}$-categories.   In particular  it was shown that this category 
 is  complete and cocomplete.  Furthermore, for  $C^{*}$-categories with $G$-action the  maximal crossed product  was  introduced  and recognized as a homotopy colimit.
 
The main goal  of    \cite{coarsek}  is to  construct  equivariant coarse homology theories  in the sense of \cite{buen}, \cite{equicoarse} associated to a coefficient $C^{*}$-category. Thereby we   follow  the  recipe  of  \cite{buen}, \cite{equicoarse} and \cite{unik}.  The paper  \cite{coarsek}  concentrates 
on the construction of $C^{*}$-categories of controlled objects and the verification of their homological properties.  The present paper provides all the necessary background concerning  orthogonal sums, reduced crossed products and homological functors.  
%
%
%
%

In  \cite{KKG} we construct a stable $\infty$-category $\mathrm{KK}^{G}$ modelling equivariant Kasparov $K\!K$-theory. In \cite{bel-paschke} we then derive an equivariant version of Paschke duality which is in turn used to compare
the analytic and homotopy theoretic versions of  the Baum--Connes assembly map.  Both papers use orthogonal sums, crossed products and various properties of $K$-theory shown in the present paper.


In the remainder of this introduction we describe the content of the present paper in greater detail.

Section \ref{qrioghqoirfewffeqwfqef} serves as a reminder of basic notions from the theory of $C^{*}$- and $W^{*}$-categories. The  introduction of the $W^{*}$-envelope of a $C^{*}$-category   in  Theorem \ref{rijogegergwerg9}
seems to fill a gap in the literature. In Section \ref{weigjogfgsgdfgsfdg} we present a detailed discussion of the concept of a multiplier $C^{*}$-category
and its relation with the $W^{*}$-envelope. We use multiplier categories in order to extend the notion of a unitary equivalence between $C^{*}$-categories to the non-unital case. In Section \ref{efigosgsfgsfdgsdfg}
we use the two-categorical structure of the category of $C^{*}$-categories in order to   introduce 
  the notion of a weakly  equivariant functor  in Definition \ref{rgjeqrgoieqrgrgqewgfq}.

The first main topic of the present paper are orthogonal sums of families of objects in a $C^{*}$-category which will be defined in Section \ref{qroifjqerofwefewfefwqfe}.
We have various reasons for considering such sums:
\begin{enumerate}
\item  Let $X$ be a set. The main feature of the definition of an $X$-controlled object $C$    in a $C^{*}$-category \cite{coarsek}  is a presentation of $C$ as an orthogonal  sum of a family of objects $(C_{x})_{x\in X}$ indexed by the set $X$.  
\item In Definition \ref{ewtiojgwergerggwggr} the reduced crossed product of a $C^{*}$-category $  \bC$ with $G$-action will be constructed by completing  the algebraic crossed product (see \cite[Def.\ 5.1]{crosscat}) with respect to a norm obtained from a representation  on a $C^{*}$ category derived from $\bC$ which we will denote suggestively by $\bL^{2}(G,  \bC)$. In particular,
the morphism spaces  of the latter are given, using  orthogonal sums of families of objects indexed by $G$,  in terms of the morphism spaces of $\bC$  by $\Hom_{\bW^{\mathrm{nu}}\bC}(\bigoplus_{g\in G} gC,\bigoplus_{g\in G} gC')$.
\item Frequently the fact that a $C^{*}$-category $\bC$ has trivial $K$-theory is 
deduced from an Eilenberg swindle. This will  be encoded in the notion of flasqueness of $\bC$,  see the Definition~\ref{bjiobdfbfsferq}. The usual verification
of flasqueness  of $\bC$ consists in showing that for every object $C$  the infinite sum $\bigoplus_{\nat}C$ of countably many copies of $C$ exists in $\bC$.
\end{enumerate}


 If $A$ is a  $C^{*}$-algebra, then the category $\Hilb(A)$ of Hilbert $A$-modules is an example of a $C^{*}$-category.  Given a family $(M_{i})_{i\in I}$ of objects  in $\Hilb(A)$ we can construct the classical orthogonal sum $\bigoplus_{i\in I}M_{i}$ in    $  \Hilb(A)$ as a completion of the algebraic direct sum with respect to the norm induced by  an  explicitly given $A$-valued scalar product.
 One can then  characterize the sum $\bigoplus_{i\in I}M_{i}$ by describing     the spaces of bounded adjointable operators $B(\bigoplus_{i\in I}M_{i},M)$ or $B(M,\bigoplus_{i\in I}M_{i})$  for any object $M$ in $\Hilb(A)$ in terms of the spaces $B(M_{i},M)$ and $B(M,M_{i})$ for all $i$.
For general $C^{*}$-categories we will use a similar idea.
Our final definition of an orthogonal sum of a family of objects in a unital $C^{*}$-category is  Definition \ref{erogwfsfdvbsbfdbsdfbsfdbv}.
In Theorem  \ref{ewrgowergwerfgrfwerf} we show that  in the case of $\Hilb(A)$  {the} classical  definition of an orthogonal sum coincides with our notion of an orthogonal  sum interpreted in the $W^{*}$-envelope $\bW\Hilb(\bC)$.
Section \ref{sec_morphisms_sums} provides additional material which is helpful when working with sums.
In Remark \ref{euwifhqiufqwfewffqewffqfef} we show that our notion of an orthogonal sum is equivalent to the notion previously 
 introduced in \cite{fritz}.
 
The notion of an orthogonal sum introduced in Definition  \ref{erogwfsfdvbsbfdbsdfbsfdbv} is not adjusted to multiplier categories. 
In this respect the notion of orthogonal sums  (in the present paper we call them  AV-sums) due to Antoun and Voigt  \cite{antoun_voigt}\footnote{This preprint appeared while we were finishing a first version of the present paper.}
and  described  in Definition \ref{peofjopwebgwregwgreg} is better behaved. It will be discussed in detail in 
  Section~\ref{sec_comparison_other_definition}.  
In   Theorem  \ref{ewrgowergwerfgrfwerf} we also show\footnote{This  fact was   stated in \cite{antoun_voigt}.} that  classical sums of
Hilbert $A$-modules correspond to AV-sums interpreted in the ideal $\Hilb_{c}(A)$ of compact operators in $\Hilb(A)$.

In Section \ref{wtogwepgfereggwrferf} we describe a Yoneda type embedding of any $C^{*}$-category into a certain $C^{*}$-category of Hilbert modules. In  Theorem \ref{mainyondea}  we state its compatibility with various notions of orthogonal sums. The   Yoneda type embedding will be used subsequently in order to  find for every $C^{*}$-category an embedding into some $C^{*}$-category admitting all small sums.

Given a family of functors with target in a $C^{*}$-category
we can form the orthogonal sum of these functors objectwise provided
the target category admits the corresponding sums.  This and related material is discussed in Section \ref{qergoijeroigjeroigjqer90gu9384tuergqegerggw}.
In particular
we use this sum of functors in order to introduce   the notion of flasqueness   in Definition  \ref{bjiobdfbfsferq}.
In the equivariant case,
since sums are only unique up to unique unitary isomorphism,  an orthogonal   sum of equivariant functors is in general not equivariant anymore, but by Proposition \ref{rgoihegiqwefqfqwfqwefqwefq} it extends to a  weakly equivariant functor.

%
%
%
%


Given a   $C^{*}$-category $  \bC$ with an action of a group $G$ in \cite{crosscat}  we  introduced  the maximal crossed product  $\bC\rtimes G$ as the completion of an algebraic crossed product with respect to the maximal norm.  Equivalently, in the unital case,  it can be understood as the $C^{*}$-category of homotopy $G$-orbits in $\bC$.   As in the case of $C^{*}$-algebras besides the maximal  one there are  other   choices for the completion of the algebraic crossed product. In general these choices are less functorial but analytically more interesting. One natural choice is the  reduced crossed product ${\bC}\rtimes_{r}G$. The main result of that section is Theorem \ref{ejgwoierferfewrferfwe}  which 
asserts that the reduced crossed product functor  exists and  states its basic properties.  As explained above, 
the construction of the reduced crossed product   heavily relies on our notion of infinite orthogonal sums in $C^*$-categories. Our reason for considering the reduced crossed product is twofold. First of all it appears naturally in the calculation of the values on discrete bornological coarse spaces of the coarse homology theories constructed in \cite{coarsek}. On the other hand, the functors on the orbit category which  provide the topological side of the Baum--Connes assembly map (see Definition \ref{iuhquifhiwefqewfqwefqwefef}) involve the reduced crossed product in their construction. As for a $C^{*}$-algebra,  also for a $C^{*}$-category $\bC$  with an action by an amenable group $G$ the canonical functor
$\bC\rtimes G\to \bC\rtimes_{r} G$ from the maximal to the  reduced crossed product is an   isomorphism.


In Definition \ref{oihgjeorgwergergergwerg} of a homological functor we 
axiomatize some of the properties of the $K$-theory functor $\Kcat$ for $C^{*}$-categories. The construction of coarse homology theories in \cite{coarsek}
only relies on these axioms. In Section \ref{ewgiowegergregregergrwrf} we further derive some immediate consequences of the axioms like additivity or annihilation of flasques.  

In Section \ref{ergiojerfrfqefqwefqwef} we verify that  the $K$-theory functor  for $C^{*}$-categories    $\Kcat$  introduced by \cite{joachimcat}  is  indeed an example of a homological functor.

In Theorem \ref{ojgweorergwerfewrfwerfw} we show that the $K$-theory functor for $C^{*}$-categories  $\Kcat$ preserves arbitrary  products of additive $C^{*}$-category.  This a special property of $K$-theory  which we do not expect for arbitrary homological functors.  It is similar in spirit  with the results shown in \cite{MR1351941}, \cite{Kasprowski:2017aa}, \cite{kaswin}. The fact that 
 $\Kcat$ preserves products is one of the main inputs for the  proof of  Theorem \ref{qrfoiqfewewfqfewfeqf}  provided in  \cite{coarsek}.

In Section \ref{erogijogergergwgerg9} we consider the algebraic notion of Morita equivalences between $C^{*}$-categories introduced in  \cite{MR3123758} and homological functors preserving them. In Theorem \ref{wtoijgwergergrewfwergrg} we show that   the $K$-theory functor for $C^{*}$-categories $\Kcat$ preserves Morita equivalences. Furthermore, in  Proposition \ref{werijguhwerigvwergwer9} we show that  the reduced crossed product functor preserves Morita equivalences.   

So far Morita equivalences and  idempotent completions  
were considered for unital $C^{*}$-categories. In  Definitions  \ref{sitogdghsgfgsfg} and \ref{qrihweiofdwefffqfwefqwef}  we generalize these notions  to the  relative situation of an ideal in a unital $C^{*}$-category
and show  in Propositions \ref{qrgojqeorgqfewfeqwfqewf} and \ref{qrgojqeorgqfewfeqwfqewf1} that Morita invariant functors send relative Moria equivalences and relative idempotent completions to equivalences.
In Definition \ref{wkjrthowrtgewgwergw} we furthermore introduce the notion of a Murray-von Neumann (MvN) equivalence between morphisms between $C^{*}$-categories and verify in Proposition \ref{wkthgwfgwegwesdf}  that homological functors send MvN-equivalent morphisms to equivalent morphisms.

 It is well-known that   the left upper corner embedding of a unital $C^{*}$-algebra into the compact operators on a 
free Hilbert $C^{*}$-module induces an equivalence in the $K$-theory of $C^{*}$-algebras.  
In Section  \ref{eoigwjeoigregewrgwergwerg} we generalize 
this situation by introducing in Definition \ref{ergiowergerfwrfwfref} the notion of a weak Morita equivalence between $C^{*}$-categories.  As in the case of $C^{*}$-algebras it is a condition about the  approximability of morphisms in the bigger category by conjugates of morphisms in the smaller.  In particular, the notion of a weak Morita equivalence  belongs to the functional analytic corner of the field and has no counterpart in algebra. Our main result is Theorem \ref{eowigjwoigwregrgwegrwreg} saying that the K-theory of $C^{*}$-categories $\Kcat$ sends weak Morita equivalences to equivalences. This result will be used in \cite{bel-paschke}.

The final Section \ref{qeroigjeoigergergewgwegewrgwerg} is devoted to the construction of equivariant homology theories   from the data of a unital  $C^{*}$-category $\bC$ with a strict $G$-action on the one hand,  and some auxiliary functor $\Homol\colon \nCcat \to \bS$ (e.g.\ $\Kcat\colon \nCcat\to \Sp$) on the other.  Here in view of Elmendorf's theorem equivariant homology theories are by definition   functors $G\Orb\to \bS$ from the orbit category $G\Orb$ of $G$ to a cocomplete stable $\infty$-category $\bS$.
 
 Using homotopy theoretic methods, following \cite{startcats} we construct a functor 
$$\Homol^{G}_{\bC,\max}\colon G\Orb\to \bS$$ whose values on orbits $G/H$ are given by  $\Homol( \bC\rtimes H)$  and involve  the  maximal crossed product. 

Using the theory of orthogonal sums and reduced crossed products of $C^{*}$-categories with groups   we furthermore provide an explicit construction  of a functor 
$$\Homol^{G}_{\bC,r}\colon G\Orb\to \bS$$ together with a comparison map $\Homol^{G}_{\bC ,\max}\to \Homol^{G}_{  \bC,r}$
which on orbits $G/H$ reduces to the canonical morphism $\Homol(\bC\rtimes H)\to \Homol(\bC\rtimes_{r}H)$ from the maximal to the reduced crossed product. 

If $A$ is a unital algebra and $\bC:=\Hilb(A)^{\mathrm{fg,proj}}$ is the full sub-category of $\Hilb(A)$ of finitely generated projective Hilbert $A$-modules, then, as shown in Proposition \ref{prop_compare_DL} our functor is equivalent to the  functor constructed by Davis--L\"uck in  \cite{davis_lueck}.
While the homotopy theoretic approach provides insights in the formal properties of $\Homol^{G}_{ \bC,\max}$, the functor $\Homol^{G}_{\bC,r}$ is relevant for the Baum--Connes assembly map as discussed in \cite{bel-paschke} and is the subject of one of the main results of \cite{coarsek} reproduced here as Theorem \ref{qrfoiqfewewfqfewfeqf}.
We   refer to Proposition  \ref{eroighqeirgoregwergwergewrgewrgwe} for an interesting application of the comparison map.

{\em Acknowledgement: U.B.\ was supported by the SFB 1085 (Higher Invariants) funded by the Deutsche Forschungsgemeinschaft (DFG).}

\section{\texorpdfstring{$\bm{\C}$}{C}-linear \texorpdfstring{$*$}{star}-categories and \texorpdfstring{$\bm{C^{*}}$}{Cstar}- and  \texorpdfstring{$\bm{W^{*}}$}{Wstar}-categories}\label{qrioghqoirfewffeqwfqef}
 
 In this section we recall  the definitions of   $\C$-linear $*$-categories,    $C^{*}$-categories and $W^{*}$-categories.  These  concepts  were originally introduced   in \cite{ghr}. 
In  Theorem \ref{rijogegergwerg9} we show that  the inclusion of $W^{*}$-categories and normal functors into unital $C^{*}$-categories and unital functors has a left-adjoint which sends a    $C^{*}$-category to its $W^{*}$-envelope. This statement seems to fill a gap in the literature.

   
 In order to fix set-theoretic size issues we consider a sequence of three Grothendieck universes whose elements are called  very small, small and large sets, respectively.

 A   possibly non-unital small category $\bC$ consists of a  small set of objects $\Ob(\bC)$, for every two objects $C,C'$ a small set of
 morphisms $\Hom_{\bC}(C,C')$, and an associative law of composition.  A functor $\phi\colon \bC\to \bD$ between two possibly non-unital categories is given by a map  between the sets of objects $ \Ob(\bC)\to \Ob(\bD)$, and for every two objects $C,C'$ in $\bC$ a map of morphism sets
 $\Hom_{\bC}(C,C')\to \Hom_{\bD}(\phi(C),\phi(C'))$ which respects the laws of compositions.  
 The possibly non-unital small categories and  functors  form the large category of possibly non-unital small categories. 
 
  A small category is a possibly non-unital small category which admits units for all its objects. A unital functor between categories is
  a functor which preserves units. We get the large category of small categories and unital functors. It is a subcategory of the large category of possibly non-unital small categories.  The inclusion  is neither full nor wide.

A  possibly non-unital small  $\C$-linear category is a   possibly non-unital small category which is enriched in $\C$-vector spaces.  Thus its morphism sets have the additional structure of $\C$-vector spaces, and the composition laws are required to be bi-linear. Functors between   possibly non-unital small $\C$-linear categories are required to respect the enrichment in $\C$-vector spaces.

A  possibly non-unital small $\C$-linear $*$-category is a     possibly non-unital small $\C$-linear category equipped with an involution $*$ (a contravariant endofunctor of the underlying  possibly non-unital  category) fixing objects, reversing the direction of morphisms,  and acting complex anti-linearly on the morphism spaces. 

\begin{rem}
In comparison with the notion of a complex $*$-category as defined in \cite[Def.~1.1]{ghr} we dropped the third axiom A3 requiring positivity of morphisms of the form $f^{*}f$. \hB
 \end{rem}

A functor between 
   possibly non-unital small $\C$-linear $*$-categories is a functor between  possibly non-unital small $\C$-linear categories which in addition preserves the involutions. 
 
\begin{ddd}
 We let $\nClincat$ denote the large category  of    possibly non-unital small $\C$-linear $*$-categories, and we let
 $\Clincat$ denote the subcategory of     unital  small  $\C$-linear $*$-categories and unital functors.
\end{ddd}

 \begin{ex}
 A non-unital $*$-algebra over $\C$ can be considered as an object of $\nClincat$ which has a single object.  An example is the $*$-algebra of  finite-rank operators on an $\infty$-dimensional Hilbert space.
 
The unital $*$-algebras $\C$ and  $\Mat(2,\C)$ are   objects of $\Clincat$. The upper left corner inclusion $\C\to \Mat(2,\C)$ is a morphism in $\nClincat$, but not in $\Clincat$.
 
The category   of very small Hilbert spaces and finite-rank linear operators $\Hilb_{\text{fin-rk}}(\C)$ is an object of $\nClincat$. The $*$-operation sends an operator to its adjoint. 
Its full subcategory $\Hilb^{\fg}(\C) $  of finite-dimensional Hilbert spaces is an object of $\Clincat$.
\hB
\end{ex}

 If $H$ is a Hilbert space, then by $B(H)$ we denote the $C^{*}$-algebra of bounded operators.  It has a norm $\|-\|_{B(H)}$. If $H$ is small, 
then  we will consider $B(H)$ as an object of $\nClincat$.  
If $\bC$ is in $\nClincat$ and $f$ is a morphism in $\bC$, then we define its  maximal norm   by 
\begin{equation}\label{ervwervwe}
\|f\|_{\max} \coloneqq {\sup}_{\rho} \|\rho(f)\|_{B(H)}\, ,
\end{equation}
where $\rho$ runs over all functors $\rho \colon \bC\to B(H)$ for all    small complex Hilbert spaces $H$. 
Since there is at least the zero functor we know that  $\|f\|_{\max}$   takes values in $[0,\infty]$.

\begin{rem}
This definition is equivalent to the definition (used e.g.\ in \cite{startcats},   \cite{crosscat}) where  the maximal norm is defined  as a supremum over all representations into  small $C^{*}$-algebras since every small $C^{*}$-algebra admits an isometric embedding into $B(H)$ for some small complex Hilbert space.
  \hB
\end{rem}

In general, the maximal norm can take the value $\infty$. In order to talk about completeness or to construct completions with respect to the maximal norm we need its  finiteness. We therefore
  introduce  the notion of  a pre-$C^{*}$-category. Let $\bC$ be in $\nClincat$.
\begin{ddd}\label{defn_pre_Cstar_cat}
 $\bC$ is called a   pre-$C^{*}$-category if $\|f\|_{\max}<\infty$ for all morphisms $f$ in~$\bC$.
  We denote by $\npClincat$  and $\pClincat$ the full subcategories of $\nClincat$ and $\Clincat$ of pre-$C^{*}$-categories, respectively.
\end{ddd}

If  $\bC$ is in $\npClincat$, then $\|-\|_{\max}$ induces semi-norms on the morphism spaces of $\bC$.
 A semi-normed complex vector space is said to be complete if the semi-norm is a norm and if in addition the vector space is complete with respect to the metric induced by the norm. In the following, completeness always refers to  $\|-\|_{\max}$.
 
Let $\bC$ be in $\npClincat$.
 \begin{ddd}\label{defn_Cstar_cat}
 $\bC$ is called a $C^{*}$-category if the morphism spaces of $\bC$ are complete.
 We denote by $\nCcat$ and $\Ccat$ the full  subcategories of $\npClincat$  and $\pClincat$ of $C^{*}$-categories, respectively.
 \end{ddd}

The advantage of this definition compared to the classical definitions (see Remark \ref{sfbfdvdfvfdswr} below)  is that being a $C^{*}$-category is just a property of a $\C$-linear $*$-category. It does not require any additional data like norms on the morphism spaces.   

\begin{rem}\label{sfbfdvdfvfdswr} 
Classically the notions of a pre-$C^{*}$-algebra and a pre-$C^{*}$-category have a different meaning. A pre-$C^{*}$-algebra in the classical sense is a sub-multiplicatively  normed  *-algebra  $A$ such that the $C^{*}$-identity
$\|a^{*}a\|=\|a\|^{2}$ holds for all elements $a$ of $A$. Then $A$ is a $C^{*}$-algebra if it is in addition complete. Any pre-$C^{*}$-algebra $A$  in the classical sense can be completed to a $C^{*}$-algebra $\bar A$. A selfadjoint element $a$ in a pre-$C^{*}$-algebra is called positive if its image in $\bar A$ is positive, i.e., has a spectrum contained in $[0,\infty)$.

Similarly, 
a sub-multiplicatively normed *-category $\bC$  is a pre-$C^*$-category in the classical sense \cite[Defn.\ 2.4]{mitchc}  if the following conditions hold:
\begin{enumerate}
\item \label{qerihugiqefqwefd} The $C^*$-identity $\|x\|^2 = \|x^* x\|$ is satisfied for all morphisms $x$ in $\Hom_\bC(A,B)$ and for all objects $A,B$ of $\bC$.
\item  \label{qerihugiqefqwefd1} For every morphism $x$ in $\Hom_\bC(A,B)$ the morphism $x^* x$ is a positive element of the pre-$C^*$-algebra $\Hom_\bC(A,A)$.
\end{enumerate}
A $C^{*}$-category in the classical sense  \cite[Def. 1.1]{ghr} is then a pre-$C^{*}$-category in the classical sense whose morphism spaces are complete.

Alternatively to \ref{qerihugiqefqwefd} and  \ref{qerihugiqefqwefd1}, by \cite[Thm.\ 2.7 \& Defn.\ 2.9]{mitchc}   one can require the  Condition~\ref{qerihugiqefqwefd1} together with:
\begin{enumerate}[resume]
\item\label{efijoibsderewrfwefbsfdbsdfb} The $C^*$-inequality $\|x\|^2 \le \|x^* x + y^* y\|$ is satisfied for all morphisms $x,y$ in $\Hom_\bC(A,B)$ and for all objects $A,B$ of $\bC$.
\end{enumerate}
In \cite[Ex.\ 2.10]{mitchc} Mitchener provides an example of a sub-multiplicatively normed ${}^*$-category which satisfies both the $C^*$-identity and $C^*$-inequality, but not the positivity condition in Point~\ref{qerihugiqefqwefd1}.

\begin{ddd} A normed *-category $\bC$  satisfies  the {strong $C^*$-inequality} if for all objects $A,B,C$ of $\bC$ and all morphisms $x$ in $\Hom_\bC(A,B)$ and $y$ in $\Hom_\bC(A,C)$  we have 
\begin{equation}
\label{eq_strong_Cstar_inequality}
\|x\|^2 \le \|x^* x + y^* y\|\,.
\end{equation}
\end{ddd} 
Note that the difference to  the $C^{*}$-inequality in Point~\ref{efijoibsderewrfwefbsfdbsdfb} above is that $x$ and $y$ may have different targets.

The strong $C^*$-inequality implies both the $C^*$-inequality in Point~\ref{efijoibsderewrfwefbsfdbsdfb}  and the $C^*$-identity in Point~\ref{qerihugiqefqwefd}, and it implies the positivity  condition  in Point~\ref{qerihugiqefqwefd1} by exploiting the following property of $C^*$-algebras: A self-adjoint element $b$ in a $C^*$-algebra $A$ is positive if and only if for all positive elements $a$ in $A$ we have $\|a\| \le \|a + b\|$. 
On the other hand, this property of $C^*$-algebras also implies that the strong $C^*$-inequality is true for the maximal norm on a pre-$C^*$-category in the sense of Definition \ref{defn_pre_Cstar_cat}.

Since the norm on a $C^{*}$-category in the classical sense is equal to the maximal norm we see 
that the definitions of a $C^{*}$-category in the sense of Definition \ref{defn_Cstar_cat} and in the classical sense are equivalent.\footnote{This was already  stated in \cite[Rem.\ 2.15]{startcats}, but one must delete the word ``parallel'' in the statement of the $C^{*}$-inequality in order to turn it into the strong $C^{*}$-inequality.}
\hB
\end{rem}

 \begin{ex}\label{ex_hilbert_modules}
 A $C^{*}$-algebra is a $C^{*}$-category with a single object. It could be unital or non-unital.  
If,  according to the classical definition, a $C^{*}$-algebra is considered as a closed $*$-subalgebra $A$ of $B(H)$ for some Hilbert space $H$, then the maximal norm on $A$ coincides 
 with the restriction of the usual operator norm  from $B(H)$ to $A$.

 If $A$ is a very small $C^{*}$-algebra, then the category of  very small   Hilbert $A$-modules $\Hilb(A)$ and bounded adjointable operators is an object of $\Ccat$.  It contains the wide subcategory $\Hilb_{c}(A)$ whose morphisms are compact operators (in the sense of Hilbert $A$-modules). For $C,D$ in $\Hilb_{c}(A)$ the space of morphisms $\Hom_{\Hilb_{c}(A)}(C,D)$ is the closure  in $\Hom_{\Hilb(A)}(C,D)$ of the linear subspace generated by the operators
 $\theta_{d,c}$ for all $c$ in $C$ and  $d$ in $D$ which are given by \begin{equation}\label{ssrgfdsgsreg}c'\mapsto \theta_{d,c}(c'):=d\langle c,c'\rangle_{C} \end{equation}  for all $c'$ in $C$.
  The inclusion $\Hilb_{c} (A)\to \Hilb (A)$ is a functor in $\nCcat$.
 
  We can consider $ A$ as an object  of $\Hilb(  A)$ with the scalar product $\langle a,a'\rangle_{A}:=a^{*}a'$. The left multiplication of $A$ on itself  identifies 
  $A$ with $\End_{\Hilb_{c}(A)}(A)$.
\hB
\end{ex}

Let $G$ be a {very small} group, and let $BG$ be the category with a single object $*_{BG}$ and the  monoid  of endomorphisms $\End_{BG}(*_{BG}) \coloneqq G$. If $\cC$ is any category, then $\Fun(BG,\cC)$ is the category of $G$-objects and equivariant morphisms in $\cC$. We have a forgetful functor $\Fun(BG,\cC)\to \cC$ which forgets the $G$-action.
{
If $C$ is in $ \Fun(BG,\cC)$ we will often use the symbol $C$ also for the underlying object in $\cC$ obtained by forgetting the $G$-action. But some times, in order to avoid confusion, 
  we will  use the longer notation $\Res^{G}(C)$.} 
\begin{ex}\label{qregiuheqrigegwegergwegergw}
In this example we construct for every  $A$  in $\Fun(BG,\nCalg)$   a large $C^{*}$-category $ \Hilb( A)$  with strict $G$-action.  If $A$ is very small, then  we will require that the objects of $\Hilb(A)$ are very small as well so that $ \Hilb( A)$  belongs to  $\Fun(BG,\Ccat)$.

The underlying $C^{*}$-category  of $\Hilb(A)$ 
is   the $C^{*}$-category  of small (respectively very small) Hilbert $A$-modules from  Example \ref{ex_hilbert_modules}.  It remains to describe the strict action of $G$.   In the following we describe how $h$ in $G$ acts as an endomorphism  of $\Hilb(A)$. In the  formulas below
 the action   of a group element  $g$  on $A$ will   be  written as $a\mapsto  {}^{g}a$.

%
%
%
%
%
%
%
\begin{enumerate}
\item objects:
 The morphism $h$ sends  a Hilbert $A$-module
 $M$ with structures  $(\cdot, \langle-,- \rangle_{M})$  (the right-$A$-module structure and the  $A$-valued scalar product),   to  the Hilbert $A$-module
$hM$ with structures $(\cdot_{h},  \langle-,- \rangle_{hM}) $, 
where  $hM$ is the $\C$-vector space $M$ with the right multiplication by $A$ given by $$ {m\cdot_{h}a}:=   m\cdot ({}^{h^{-1}} a)$$ and the $A$-valued scalar product $$ \langle m,m' \rangle_{hM} \coloneqq {}^{h}\langle m,m'\rangle_{M}\ .$$ 
  \item morphisms:
If $f\colon M \to M' $ is a morphism in  $\Hilb(A)$, then its image under $h$   is the same linear map now considered as a morphism  $hf \colon hM \to hM' $.
\end{enumerate} 
One easily checks that the describes a strict $G$-action on $\Hilb(A)$. 
This action preserves the ideal $\Hilb_{c}(A)$ of compact operators so that in the case of a very small $A$
we get an object $\Hilb_{c}(A)$ in $\Fun(BG,\nCcat)$.

  
  If $G$ acts trivially on $A$, then it also acts trivially on $\Hilb(A)$.
   \hB 
\end{ex}

\begin{ex}
The functors from  $\nClincat$, $\Clincat$, $\nCcat$ and $\Ccat$ to small sets which take the sets of objects,
have right-adjoints, see \cite[Lem.~2.4 and~3.8]{crosscat}. In all cases the right-adjoint $0[-]$ sends a set $X$ to the category $0[X]$ with the set of objects $X$, and  whose morphism vector spaces are all trivial. The value of the counit of the adjunction at $\bC$  is a functor 
\begin{equation}\label{sgbsfdvfsvfvdfvsv}
\bC\to 0[\Ob(\bC)] .
\end{equation} 
\hB
\end{ex}

Let $\bC$ be in $\Ccat$ and $f$ be a morphism in $\bC$.  It is an immediate consequence of the definition of the maximal norm that
\begin{equation}\label{eq_functor_contractive}
\|\sigma(f)\|_{\max}\le \|f\|_{\max}
\end{equation}
for every morphism $\sigma \colon \bC\to  \bC'$ in $\nCcat$.

In  the following    we show that the maximal norm of a morphism in a unital $C^{*}$-category can be generated by unital representations in the object $\Hilb(\IC)$ of $\Ccat$ of very small Hilbert spaces and bounded linear operators. 
We use the notation $\|-\|$ in order to denote the operator norm of bounded operators between Hilbert spaces. Of course it coincides with the maximal norm on $\Hilb(\IC)$ considered as a pre-$C^{*}$-category.
\begin{lem}\label{ewrgoipwergwregrwefw}
We have $\|f\|_{\max}=\sup_{\sigma\in \Hom_{\Ccat}(\bC,\Hilb(\IC))} \|\sigma(f)\|$.
\end{lem}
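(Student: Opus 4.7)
The plan is to prove the two inequalities separately.

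For $\sup_{\sigma}\|\sigma(f)\|\le \|f\|_{\max}$: I would turn a unital functor $\sigma\colon \bC \to \Hilb(\IC)$ into a (not necessarily unital) functor $\rho_\sigma\colon \bC \to B(H)$ with the same norm on $f$. The natural target is the small Hilbert space $H := \bigoplus_{C \in \Ob(\bC)} \sigma(C)$; writing $\iota_C\colon \sigma(C) \hookrightarrow H$ for the canonical inclusion and $p_C\colon H \twoheadrightarrow \sigma(C)$ for its adjoint projection, I would set $\rho_\sigma(f) := \iota_{C'}\, \sigma(f)\, p_C$ for $f \in \Hom_\bC(C,C')$. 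The orthogonality relations $p_{C'}\iota_{C'} = \mathrm{id}_{\sigma(C')}$ and $p_D\iota_{C'} = 0$ for $D \neq C'$ make $\rho_\sigma$ a $*$-functor in $\nClincat$, and by construction $\|\rho_\sigma(f)\|_{B(H)} = \|\sigma(f)\|$.

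For the reverse inequality $\|f\|_{\max}\le \sup_{\sigma}\|\sigma(f)\|$: this is where unitality of $\bC$ enters. Given any functor $\rho\colon \bC \to B(H)$ in $\nClincat$, the relations $\mathrm{id}_C^2 = \mathrm{id}_C = \mathrm{id}_C^*$ show that $p_C := \rho(\mathrm{id}_C)$ is an orthogonal projection in $B(H)$, and $\rho(f) = p_{C'}\rho(f)p_C$ for every $f \in \Hom_\bC(C,C')$. I would define $\sigma_\rho\colon \bC \to \Hilb(\IC)$ by $\sigma_\rho(C) := p_C H$ with inherited inner product and $\sigma_\rho(f) := \rho(f)|_{p_C H}\colon p_C H \to p_{C'} H$. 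Then $\sigma_\rho(\mathrm{id}_C) = \mathrm{id}_{p_C H}$ establishes unitality, $*$-preservation is inherited from $\rho$, and because $\rho(f)$ annihilates $(p_C H)^\perp$ we get $\|\sigma_\rho(f)\| = \|\rho(f)\|_{B(H)}$.

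The conceptually key step is the unitality-to-projection reduction in the second direction, which is really the only place unitality of $\bC$ is needed; the rest is formal. The main technical annoyance is set-theoretic: the subspaces $p_C H$ inherit the cardinality of $H$, which is only guaranteed to be small, whereas $\Hilb(\IC)$ is set up with very small objects. I would handle this by fixing $f \in \Hom_\bC(C,C')$ together with a sequence $\xi_n \in p_C H$ of unit vectors with $\|\rho(f)\xi_n\| \to \|\rho(f)\|$, and applying the compression construction not to $H$ itself but to the sub-$*$-representation on the closed spans $K_D := \overline{\mathrm{span}}\{\rho(g)\xi_n : g \in \Hom_\bC(C,D),\ n \in \mathbb{N}\} \subseteq p_D H$, which are small enough to produce an honest morphism in $\Hom_{\Ccat}(\bC, \Hilb(\IC))$ realizing $\|\rho(f)\|$ up to arbitrarily small error.
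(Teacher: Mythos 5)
Your proof is correct and is essentially the paper's argument: the easy inequality follows because a unital functor to $\Hilb(\IC)$ is contractive for the maximal norms, and the reverse inequality is obtained by compressing an arbitrary $\rho\colon\bC\to B(H)$ to the ranges of the projections $\rho(\id_{C})$ — your $\sigma_{\rho}$ is literally the paper's $\hat\rho$, with the same verification that $\|\hat\rho(f)\|=\|\rho(f)\|$. The only difference is cosmetic: the paper does not engage with the very-small/small universe bookkeeping you raise at the end and simply declares $\hat\rho(C)=\rho(\id_{C})(H_{\rho})$ to be an object of $\Hilb(\IC)$.
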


\begin{proof}
 {From \eqref{eq_functor_contractive} we get} that $ \sup_{\sigma} \|\sigma(f)\|\le \|f\|_{\max}$,
 where $\sigma$ runs over the set of unital representations  $\ \Hom_{\Ccat}(\bC,\Hilb(\IC))$.
It therefore suffices to show the reverse inequality.  Let $\rho\colon \bC\to B(H_{\rho})$ be a functor in $\nCcat$.  Since $\bC$ is unital, applying the left-adjoint of  the adjunction $$\widehat{(-)}:\nCcat\leftrightarrows \Ccat:\incl$$ from 
\cite[(3.10)]{crosscat} and using that $\bC$ is already unital we get a unital functor
$\hat \rho\colon \bC\to \Hilb(\IC)$. It sends an object $C$ in $\bC$ to the image $\hat \rho(C):= \rho(\id_{C})({H_\rho})$ of the projection $  \rho(\id_{C})$,
and a morphism $f\colon C\to C'$ to the morphism $\rho(\id_{C'}) f_{|\hat{\rho}(C)} \colon \hat{\rho}(C) \to \hat{\rho}(C')$. Note that in contrast to $\rho$ the functor $\hat \rho$ is not constant on objects anymore.
By an inspection we see that $\|\rho(f)\|=\|\hat \rho(f)\|$. Consequently,
\[\|f\|_{\max} = {{\sup}_{\rho}\|\rho(f)\|} = {\sup}_{\rho}\|\hat{\rho}(f)\| \le  {\sup}_{\sigma} \|\sigma(f)\|\,.\qedhere\]
\end{proof}

\begin{ddd}
A morphism $\bC\to \bD$  in $\nClincat$  is called faithful if it induces injective maps of morphism spaces. \end{ddd} Note that a faithful morphism between $C^{*}$-categories is automatically  isometric.

  
 
We end this introduction to $\C$-linear $*$-categories and $C^{*}$-categories be recalling some elements of their internal language. Let $\bC$ in $\nClincat$ and let $C$ be an object of $\bC$.
\begin{ddd}\label{ebgiojoigrevsfdvsdfvsfdvsdfv}
The object $C$ is called unital if there exists an identity morphism in $\End_{\bC}(C)$.
By $\bC^{u}$
%
we denote the full subcategory of  unital objects in $\bC$.\end{ddd}
 Note that $\bC^{u}$ is an object of  $\Clincat$. 
  If $\bC$ is in $\nCcat$, then $\bC^{u}$ is in $\Ccat$.

\begin{rem} Unital objects are preserved by automorphisms. Therefore, if 
 $G$ is a group and  $  \bC$ is in $\Fun(BG,\nClincat)$ or in $\Fun(BG,\nCcat)$, then we naturally get an object $  \bC^{u}$  in $\Fun(BG,\Clincat)$ or $\Fun(BG,\Ccat)$, respectively. \hB
\end{rem}

Let $\bC$ in $\nClincat$.
\begin{ddd}\label{r9erqgrgqrg0}
\mbox{}
\begin{enumerate}
\item  \label{ergiegregwergergw9}
A  projection   is an  endomorphism $p$    such that $p^{*}=p$ and $p^{2}=p$. 
\item A partial isometry is a morphism $u $  such that $uu^{*}$ and $u^{*}u$ are projections.
\item
An   isometry  is a partial  isometry $u\colon C\to C'$ such that $u^{*}u=\id_{C}$.
\item A unitary  is an  isometry $u\colon C\to C'$ such that  $uu^{*}=\id_{C'}$.
 \end{enumerate}
\end{ddd}
 
 \begin{rem}
 Note that the condition $p^{*}=p$  in Definition~\ref{r9erqgrgqrg0}(\ref{ergiegregwergergw9}) describes orthogonal projections.
 In the present paper we will only consider orthogonal projections and therefore drop the  adjective ``orthogonal''.
 
 If $u \colon C\to C^{\prime}$ is an isometry, then implicitly the object $C$  is unital.
 Similarly, unitaries can only exist between unital objects.   \hB
 \end{rem}

Let $\bC$ in $\nClincat$ and $C$ be an object of $\bC$.
Let $p$ be a projection on $C$.
\begin{ddd}\label{gewergregwegegwerg}
An image of $p$ is a pair $(D,u)$ of an object $D$ in $\bC$ and an  isometry $u\colon D\to C$ such that $p=uu^{*}$.
\end{ddd}

The  image of a projection  is uniquely determined up to  unique unitary isomorphism. In fact, 
 let $(D,u)$ and $(D',u')$ be both images of $p$.
Then $v \coloneqq u^{\prime,*}u \colon D\to D'$ is the unique  unitary such that $u'v=u$. 

\begin{ddd}\label{regiuhjigwergwgrewgrg}\mbox{}
\begin{enumerate}
\item A projection is called effective if it admits an image.
\item $\bC$ is called idempotent complete if  every projection in $\bC$  is effective.
\end{enumerate}
\end{ddd}

\begin{ex}
If $A$ is a very small $C^{*}$-algebra, then $\Hilb(A)$   is  idempotent complete.  
The full subcategory $\Hilb^{\dim= \infty}(\C)$ of $\infty$-dimensional Hilbert spaces in $\Hilb(\C)$ is an example which is not idempotent complete.
\hB
\end{ex}

Let $\bC$ be in $\nCcat$ and $C$ be an object of $\bC$. Then $\End_{\bC}(C)$ is a $C^{*}$-algebra. Recall that a net $(h_{i})_{i}$ in $ \End_{\bC}(C)$ is an approximate unit if for every element $f$ in $  \End_{\bC}(C)$ we have
$\lim_{i}h_{i}f=f=\lim_{i}fh_{i}$ in norm. This has the following generalization.
 \begin{lem}\label{multmult}  We have $\lim_{i} h_{i}l=l$ 
 for every morphism $l$ in $\bC$ with target $C$ and $\lim_{i} k h_{i}=k$
 for  every morphism $k$ with domain $C$. 
\end{lem}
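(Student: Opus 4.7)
The plan is to reduce both statements to the defining property of the approximate unit in the $C^{*}$-algebra $\End_{\bC}(C)$. The key idea is that the $C^{*}$-identity lets us square a norm such as $\|h_{i}l-l\|_{\max}$ and thereby arrive at an expression that lives entirely inside $\End_{\bC}(C)$, at which point the approximate unit property takes over.

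For the first assertion, I would fix a morphism $l\colon D\to C$ and set $f \coloneqq l l^{*} \in \End_{\bC}(C)$. The $C^{*}$-identity for the maximal norm (valid in any object of $\nCcat$) gives
\[
\|h_{i} l - l\|_{\max}^{2} = \|(h_{i} l - l)(h_{i} l - l)^{*}\|_{\max} = \|h_{i} f h_{i}^{*} - h_{i} f - f h_{i}^{*} + f\|_{\max}\, .
\]
The right-hand expression lies in $\End_{\bC}(C)$ and can be rewritten as $h_{i}(f h_{i}^{*} - f) + (f - f h_{i}^{*})$, so it is bounded in norm by $(\|h_{i}\| + 1)\,\|f h_{i}^{*} - f\|$. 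Next I would observe that $(h_{i}^{*})$ is again an approximate unit: indeed, taking adjoints of the defining relations $h_{i} g \to g$ and $g h_{i} \to g$ (applied to $g^{*}$) yields $f h_{i}^{*} \to f$ and $h_{i}^{*} f \to f$ for every $f$ in $\End_{\bC}(C)$. Since an approximate unit in a $C^{*}$-algebra is uniformly norm-bounded, the right-hand side tends to zero, and we conclude $\|h_{i} l - l\|_{\max}\to 0$.

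The second assertion then follows by passing to adjoints. Given a morphism $k$ with domain $C$, I would apply the first assertion to the morphism $k^{*}$, which has target $C$, and to the approximate unit $(h_{i}^{*})$ in place of $(h_{i})$; this yields $h_{i}^{*} k^{*} \to k^{*}$, and taking adjoints gives $k h_{i} \to k$. The only substantive step is the squaring via the $C^{*}$-identity: without it one is left with a norm estimate that mixes $\Hom_{\bC}(D,C)$ and $\End_{\bC}(C)$, whereas after squaring the estimate is purely algebraic inside the $C^{*}$-algebra $\End_{\bC}(C)$ and the approximate unit hypothesis applies directly.
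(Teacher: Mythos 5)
Your overall strategy coincides with the paper's: square $\|h_i l - l\|$ via the $C^{*}$-identity so that everything lands in the $C^{*}$-algebra $\End_{\bC}(C)$, then invoke the approximate-unit property; and your reduction of the second assertion to the first by passing to adjoints, together with the observation that $(h_i^{*})_i$ is again an approximate unit, is correct (and in fact tidier than the paper's displayed computation, which tacitly writes $h_i$ where $h_i^{*}$ belongs).

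There is, however, one genuine gap: the step ``an approximate unit in a $C^{*}$-algebra is uniformly norm-bounded.'' With the definition in force here — a net $(h_i)_i$ such that $h_i f \to f$ and $f h_i \to f$ in norm for every $f$ in $\End_{\bC}(C)$, with no boundedness or positivity built in — this is false. Banach–Steinhaus gives boundedness only for sequences of pointwise-convergent operators, not for nets; one can manufacture an unbounded approximate unit in this weak sense (index by pairs $(n,F)$ with $F$ a finite subset of the algebra, and add to a genuine approximate unit a bump of norm $n$ located where all elements of $F$ are small). The paper is careful on exactly this point elsewhere: when boundedness is needed it says ``selfadjoint \emph{bounded} approximate unit'' explicitly, so the present lemma must hold for possibly unbounded nets. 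The paper circumvents the issue by factoring the quadratic term as $h_i\, ll^{*}\, h_i = \bigl(h_i\sqrt{ll^{*}}\bigr)\bigl(\sqrt{ll^{*}}\,h_i\bigr)$: each factor converges in norm (hence is eventually bounded), so the product converges to $ll^{*}$ by joint continuity of multiplication on norm-convergent nets, and all three terms in the expansion tend to $0$ with no a priori bound on $\|h_i\|$. Your estimate is repaired the same way: write $h_i(f h_i^{*}-f) = \bigl(h_i\sqrt{f}\bigr)\bigl(\sqrt{f}\,h_i^{*}-\sqrt{f}\bigr)$ with $f = ll^{*}\ge 0$, whose norm tends to zero because the first factor converges (to $\sqrt{f}$) and the second tends to $0$.
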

\begin{proof}
We give the argument for the first case. Note that 
$$\|h_{i}l-l\|^{2}=\|(h_{i}l-l)(h_{i}l- l)^{*}\|=\|( ll^{*} - {h_{i}} ll^{*}     ) +(ll^{*} - ll^{*}h_{i}  ) - (ll^{*} -h_{i}ll^{*}h_{i})\|\, .
$$ 
We can rewrite the last term in the form  $ h_{i}ll^{*}h_{i}= h_{i}\sqrt{ll^{*}}   \sqrt{ll^{*}}h_{i} $.
Since
$\lim_{i}h_{i}ll^{*}=ll^{*} =\lim_{i} ll^{*}h_{i}$ and
$\lim_{i} h_{i}\sqrt{ll^{*}}=  \sqrt{ll^{*}}=\lim_{i}   \sqrt{ll^{*}} h_{i}$ we conclude that  $\lim_{i}\|h_{i}l-l\|^{2}=0$.
\end{proof}

From now one,   we will usually call functors between $\C$-linear $*$-categories or $C^{*}$-categories morphisms as they are morphisms in categories $\nCcat$, $\nClincat$, etc.  We 
  use the word functor on the next categorical level, e.g.\  for functors with domain or target $\nCcat$, $\nClincat$, etc.

 In the remainder of the present section we   recall the definition   of  a $W^{*}$-category, the category $\Wcat$ of $W^{*}$-categories, and the construction of the $W^{*}$-envelope of a $C^{*}$-category.
 
  We say that  a Banach space $E$ admits a predual if there exists a Banach space $E_{*}$ (a pre-dual) such that  $E$ is the dual Banach space of $E_{*}$, i.e., the Banach space of bounded  linear functionals on $E_{*}$.
  Given a pre-dual  $E_{*}$ the $\sigma$-weak topology on $E$ is the topology of point-wise convergence on $E_{*}$.

Let $\bC$ be a unital $C^{*}$-category. 
\begin{ddd}[{\cite[Def. 2.1]{ghr}}]
$\bC$ is a $W^{*}$-category if for every two objects $C,C'$ of $\bC$ the Banach space $\Hom_{\bC}(C,C')$ admits a pre-dual.
\end{ddd}
It is known that the preduals of the morphism spaces of a $W^{*}$-category are
 unique (as subspaces of the duals of the morphism spaces). In particular, the $\sigma$-weak topology is well-defined.

\begin{ex}  A $W^{*}$-algebra (i.e., a von Neumann algebra) is a
 $W^{*}$-category with a single object. If $\bC$ is a $W^{*}$-category, then
 for every $C$ in $\bC$ the $C^{*}$-algebra $\End_{\bC}(C)$ is a $W^{*}$-algebra.    \hB \end{ex}
 
  \begin{ex}\label{ex_HilbC_Wstar}
  The $C^{*}$-category $\Hilb(\C) $ is a $W^{*}$-category, see \cite[Ex.~2.2]{ghr}. 
 As a  predual of $\Hom_{\Hilb (\C)}(C,C')$ one can take  the space $L^{1}(C',C)$ of trace class operators from $C'$ to $C$. Thereby 
 $A$   in $\Hom_{\Hilb(\C) }(C,C')$ is viewed as  the bounded linear functional
 $T\mapsto \tr (AT)$ on $L^{1}(C',C)$.   \hB\end{ex}
  
 \begin{ex}
 If $\bC$ is in $\nCcat$,  then we can form the $C^{*}$-category $\Rep(\bC)$ in $\Ccat$  of representations of $\bC$  on Hilbert spaces as follows. 
 \begin{enumerate}
 \item objects: The objects of $\Rep(\bC)$ are the morphisms  $\bC\to \Hilb(\C)$ in $\nCcat$. 
 \item morphisms: The morphisms of $\Rep(\bC)$ are uniformly  bounded natural transformations between representations. 
 Note that a natural transformation $v \colon \sigma\to \rho$ between representations is given by a family $(v_{C})_{C\in \Ob(\bC)}$ of bounded operators $v_{C}\colon \sigma(C)\to \rho(C)$ between Hilbert spaces such that $v_{C'}\sigma(f)=\rho(f)v_{C}$ for all morphisms $f\colon C\to C'$ in $\bC$. The natural transformation $v$ is called uniformly bounded if
 $\|v\| \coloneqq \sup_{C\in \Ob(\bC)} \|v_{C}\|$ is finite. 
 \item composition: The composition in $\Rep(\bC)$ is the composition of natural transformations. 
 \item $\C$-enrichment and involution: These structures are induced from the morphism spaces of $\Hilb(\bC)$.
 \end{enumerate}
The norm $v\mapsto \|v\|$ exhibits $\Rep(\bC)$ as a $C^{*}$-category.
 By \cite[Ex.~2.5]{ghr} the $C^{*}$-category $\Rep(\bC)$ is actually a $W^{*}$-category.
 \hB
   \end{ex}

 We let $\Hilb(\C)^{\la}$ denote the large $C^{*}$-category of possibly  small Hilbert spaces and bounded operators. By replacing $\Hilb(\C)$ with $\Hilb(\C)^{\la}$ we can also consider the $W^{*}$-category $\Rep^{\la}(\bC)$ of representations of $\bC$ on possibly small  Hilbert spaces.
Given a small family $(\sigma_{i})_{i\in I}$ in $\Rep^{\la}(\bC)$ we can form $\bigoplus_{i\in I}\sigma_{i}$ in $\Rep(\bC)^{\la}$  in the straightforward manner. For example we could take  the orthogonal sum of all objects of $\Rep(\bC)$.

\begin{ex}
Let $\bC$ be in $\nCcat$. 
  Then    $\Rep(\bC)^{\la}$ contains a faithful representation. For example we could 
  take $\hat \sigma:=\bigoplus_{\sigma\in \Rep(\bC)} \sigma$.
 The universal representation constructed in the proof of \cite[Prop. 1.14]{ghr} gives another faithful representation. \hB
\end{ex}
\begin{ddd}\label{ewkogwtrgwegwergferf}
We call $\sigma$ in $\Rep(\bC)^{\la}$ non-degenerate if
for every object $C$ in $\bC$ the set $\sigma(\End_{\bC}(C))\sigma(C)$ generates a  dense subspace of the Hilbert space $\sigma(C)$. 
\end{ddd}

If $\sigma$ in $\Rep(\bC)^{\la}$ is any representation, then we can find a non-degenerate sub-representation $\tilde \sigma$ of $\sigma$  by setting  \begin{equation}\label{dsfgdgsgfdsgdgdgrqe}
\tilde \sigma(C)=\overline{\sigma(\End_{\bC}(C))\sigma(C)}
\end{equation}  for every object $C$ in $\bC$.  If $\sigma$ was faithful, then so is $\tilde \sigma$.

Let $\bC$ be in $\nCcat$   and $\sigma$ be  an  object of  $\Rep(\bC)^{\la}$.

\begin{ddd}\label{wrkbhorbgfdbdfgbdfgb} We  define   the bicommutant $C^{*}$-category $\bC_{\sigma}''$ together with a morphism $w:\bC\to \bC_{\sigma}''$ as follows:
\begin{enumerate}
\item objects: The objects of $\bC_{\sigma}''$ are the objects of $\bC$ and $w:\bC\to \bC_{\sigma}''$ is the identity on objects.
\item morphisms:
  For objects $C,C'$ in $\bC$ we let $\Hom_{\bC_{\sigma}^{''}}(C,C')$ be the set of all $A$ in $\Hom_{\Hilb(\C)^{\la}}( \tilde \sigma(C), \tilde \sigma(C'))$ (see \eqref{dsfgdgsgfdsgdgdgrqe} for $\tilde \sigma$) such that $Av_{C}=v_{C'}
A$ for all $v$ in $\End_{\Rep(\bC)}(\sigma)$. On morphisms $w$ is given by $\sigma$.
\item composition and involution: The composition and the involution  of $\bC_{\sigma}'' $ are inherited from $\Hilb(\bC)^{\la}$.
\end{enumerate}
\end{ddd}
The norm induced from $\Hilb(\C)^{\la}$ exhibits $\bC_{\sigma}''$ as a $C^{*}$-category. By the bicommutant theorem   \cite[Thm. 4.2]{ghr} it is actually a $W^{*}$-category and $w:\bC\to \bC''_{\sigma}$ has a $\sigma$-weakly dense range.  The last observation implies that $  \bC''_{\sigma}$ is small.
If $\sigma$ is faithful, then  the morphism $w:\bC\to \bC_{\sigma}''$ is also faithul. Finally, if $\sigma$ is faithful and  $\bC$ is a $W^{*}$-category, then $w:\bC\to \bC_{\sigma}''$
 is an isomorphism.

If $A$ and $B$ are $W^{*}$-algebras and $\phi:A\to B$ is a morphism in $\Calg$, then $\phi$ is called normal if for every  increasing  bounded net $(a_{\nu})_{\nu}$ of positive elements in $A$ we have
$\sup_{\nu} \phi(a_{\nu})=\phi(\sup_{\nu} a_{\nu})$.
Note that a morphism between $W^{*}$-algebras is normal if and only if it is $\sigma$-weakly continuous. The notion of a normal morphism between $W^{*}$-algebras  extends to $W^{*}$-categories   in the obvious way.
Let $\bC,\bD$ be $W^{*}$-categories
and $\phi:\bC\to \bD$ a morphism in $\Ccat$.
Note that the endomorphism algebras of all objects of $\bC$ and $\bD$ are  $W^{*}$-algebras.

\begin{ddd}[{\cite[Def. 2.11]{ghr}}]
We say that $\phi$ is normal if $\phi:\End_{\bC}(C)\to \End_{\bD}(\phi(C))$ is normal for every object $C$ of $\bC$.
\end{ddd}
Again, $\phi$ is normal if and only if $\phi$ is $\sigma$-weakly continuous on morphism spaces  \cite[Prop. 2.12]{ghr}.

Let $\bC$ be a $W^{*}$-category. 
\begin{ddd} \label{weiojgowegerfwrf}The weak operator topology on the morphism spaces $\Hom_{\bC}(C,C')$  of $\bC$
is generated by the functionals
$\langle x',\sigma(-)x\rangle$ for all normal representations $\sigma:\bC\to \Hilb(\C)^{\la}$, $x$ in $\sigma(C)$ and $x'$ in $\sigma(C')$.
\end{ddd}

Since these functionals are $\sigma$-weakly continuous it is clear that
the weak operator topology is  smaller than the $\sigma$-weak topology.

\begin{ddd}
We let $\Wcat$ denote the sub-category  of $\Ccat$ of $W^{*}$-categories and normal morphisms.
\end{ddd}

The analog of the following theorem for algebras is well-known \cite{gui}.
A proof can be found  e.g. in  \cite{lurie-n8}.

\begin{theorem}\label{rijogegergwerg9}
We have an adjunction
$$\bW:\Ccat\leftrightarrows \Wcat:\incl\ .$$
\end{theorem}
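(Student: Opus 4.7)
The plan is to construct the left adjoint $\bW$ by forming the bicommutant of $\bC$ in a sufficiently universal representation, and then to verify the adjunction via a representation-extension result analogous to Sakai's theorem in the $W^{*}$-algebra case.

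For $\bC$ in $\Ccat$, I would choose a faithful non-degenerate representation $\sigma_{\bC}\colon \bC\to \Hilb(\C)^{\la}$ on possibly large Hilbert spaces that is universal in the sense that every non-degenerate representation of $\bC$ appears, up to unitary equivalence, as an orthogonal summand of $\sigma_{\bC}$; concretely, one may take a direct sum over a generating set of unitary equivalence classes of cyclic non-degenerate representations. Then I define $\bW\bC \coloneqq \bC''_{\sigma_{\bC}}$ as in Definition~\ref{wrkbhorbgfdbdfgbdfgb}. The bicommutant theorem cited there yields immediately that $\bW\bC$ is a $W^{*}$-category and that the canonical morphism $\eta_{\bC}\colon \bC\to \bW\bC$ in $\Ccat$ is faithful, unital, and has $\sigma$-weakly dense image in every morphism space.

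The crucial intermediate step is the following extension result: every non-degenerate representation $\rho\colon \bC\to \Hilb(\C)^{\la}$ admits a unique normal $\bar{\rho}\colon \bW\bC\to \Hilb(\C)^{\la}$ with $\bar{\rho}\circ \eta_{\bC}=\rho$. Using the universality of $\sigma_{\bC}$, I would identify $\rho$ with the compression of $\sigma_{\bC}$ by a projection $p$ lying in the commutant $\End_{\Rep(\bC)^{\la}}(\sigma_{\bC})$. The defining commutation condition in Definition~\ref{wrkbhorbgfdbdfgbdfgb} then guarantees that every morphism $f$ of $\bW\bC$ commutes with $p$, so its restriction to the range of $p$ is a well-defined operator which we take as $\bar{\rho}(f)$. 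Normality follows because this restriction is manifestly $\sigma$-weakly continuous, and uniqueness follows from the $\sigma$-weak density of $\eta_{\bC}(\bC)$ together with \cite[Prop.~2.12]{ghr}.

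Granting the extension step, the universal property becomes formal: for $\phi\colon \bC\to \bD$ in $\Ccat$ with $\bD$ in $\Wcat$, choose a faithful non-degenerate normal representation $\tau\colon \bD\to \Hilb(\C)^{\la}$, which exists because $\bD$ is a $W^{*}$-category and can be realized as the bicommutant in its own universal normal representation. Apply the extension step to $\tau\circ \phi$ to obtain a normal $\overline{\tau\phi}\colon \bW\bC\to \Hilb(\C)^{\la}$. Since normal morphisms are $\sigma$-weakly continuous and the image $\tau(\bD)$ is $\sigma$-weakly closed in $\Hilb(\C)^{\la}$, the image of $\overline{\tau\phi}$ lies in $\tau(\bD)$; faithfulness of $\tau$ then yields a unique normal $\tilde\phi\colon \bW\bC\to \bD$ with $\tilde\phi\circ \eta_{\bC}=\phi$. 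Naturality in $\bC$ follows from uniqueness, assembling everything into the desired adjunction.

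The main obstacle is the Sakai-type extension step. The analogous statement for $W^{*}$-algebras is classical, but transferring it to the categorical setting requires either (a) reducing to the algebra case by applying Sakai's theorem to the $2\times 2$-matrix linking algebra $\End_{\bC}(C\oplus C')$ and then descending to off-diagonal morphism spaces, or (b) a direct argument from the explicit bicommutant description, which demands some care with set-theoretic size because $\sigma_{\bC}$ is permitted to act on large Hilbert spaces.
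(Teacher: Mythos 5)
Your proposal follows essentially the same route as the paper: define $\bW\bC$ as the bicommutant $\bC''_{\sigma_{u}}$ of the universal representation built from GNS/cyclic representations, prove that every representation of $\bC$ extends to a normal morphism on $\bW\bC$, and deduce the adjunction formally from $\sigma$-weak density of the image of $\bC$ together with the $\sigma$-weak closedness of faithful normal images of $W^{*}$-categories. The one small caveat is that an arbitrary representation need not literally be an orthogonal summand of $\sigma_{u}$ (so it is not cut out by a single projection in the commutant); it is only an orthogonal sum of GNS summands, and the extension must therefore be assembled summandwise, which is exactly what the paper's Zorn's-lemma decomposition into GNS representations accomplishes.
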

\begin{proof}
The argument is a straightforward generalization of the argument given in  \cite{lurie-n8}. 
We will construct for every $\bC$ in $\Ccat$ a   morphism
$i_{\bC}:\bC\to \bW\bC$ in $\Ccat$ such that $\bW\bC$ belongs to 
$\Wcat$ and the following universal property is satisfied:
\begin{enumerate}
\item\label{iiuaoidfafasdfasdfasdf} The image of $\bC$ is $\sigma$-weakly dense in $\bW\bC$.
\item \label{wemiotgwegergerfw} Every representation $\sigma:\bC\to \Hilb(\C)^{\la}$ in $\Rep(\bC)^{\la}$ extends to a normal morphism $\hat \sigma:\bW\bC\to  \Hilb(\C)^{\la}$.
\end{enumerate}
We claim that this implies the theorem. 
First of all we must extend the construction to a functor $\bW:\Ccat\to \Wcat$
by defining $\bW$ on morphisms.
Let $\phi:\bC\to \bD$ be a morphism.
By \cite[Prop. 2.13]{ghr} there exists a normal faithful functor $\rho: \bW\bD\to \Hilb(\C)^{\la}$ {whose image is $\sigma$-weakly closed}. Then $\sigma:=\rho\circ \circ i_{\bD}\circ \phi:\bC\to \Hilb(\C)^{\la}$ is in $ \Rep(\bC)^{\la}$. We let $\hat \sigma:\bW\bC\to \Hilb(\C)^{\la}$ be its normal extension whose existence is ensured by \ref{wemiotgwegergerfw}. Since $\sigma(\bC)\subseteq 
\rho(i_{\bD}(\bD))\subseteq \rho(\bW\bD)$, $\hat \sigma$ is   $\sigma$-weakly continuous, and $\rho(\bW\bD)$ is $\sigma$-weakly closed,
we conclude  using \ref{iiuaoidfafasdfasdfasdf} that $\hat \sigma(\bW\bC)\subseteq \rho(\bW\bD)$.
We therefore get a $\sigma$-weakly continuous  and hence normal morphism
$\bW\phi:\bW\bC\to \bW\bD$. Again using \ref{iiuaoidfafasdfasdfasdf} we see that this extension is actually unique. 
Using this uniqueness we further conclude that $\bW$ is  functor, i.e., compatible with the  compositions in $\Ccat$ and $\Wcat$. 

We now assume that $\bD$ is in $\Wcat$. Then we can assume  by
\cite[Prop. 2.13]{ghr} that $\bD$ itself is a $\sigma$-weakly closed subcategory of $\Hilb(\C)^{\la}$.  A similar  argument as above shows that 
 the restriction map along $\bC\to \bW\bC$ induces a surjection
$$\Hom_{\Wcat}(\bW\bC,\bD)\to \Hom_{\Ccat}(\bC,\bD)\ .$$
Using again that morphisms in $ \Wcat$ are $\sigma$-weakly continuous
and \ref{iiuaoidfafasdfasdfasdf} we see that this restriction map also injective. 
 This finishes the proof of the claim.
 
We now show the existence of the morphisms $i_{\bC}:\bC\to \bW\bC$ with the required universal property.  The construction of the GNS representation of a $C^{*}$-algebra from a positive state
generalizes to $C^{*}$-categories \cite[Prop. 1.9]{ghr}.
For every positive linear functional  $\nu$ on $\End_{\bC}(C)$ for some object $C$ of $\bC$ we have the non-degenerate GNS representation $\sigma_{\nu}:\bC\to \Hilb(\C)^{\la}$ constructed as follows. For an object $C'$ of $\bC$ the Hilbert space $\sigma_{\nu}(C')$ is the closure
of $\Hom_{\bC}(C,C')$ with respect to  the scalar product $\langle f,g\rangle:=\nu(f\circ g)$.
For $h:C'\to C''$ the operator $\sigma_{\nu}(h):\sigma_{\nu}(C')\to \sigma_{\nu}(C'')$ is given by the left composition with $h$.

We define the universal representation  $\sigma_{u}:\bC\to \Hilb(\C)^{\la}$  of $\bC$ as the orthogonal sum
of all $\sigma_{\nu}$ for all objects $C$ of $\bC$ and positive  linear functionals  $\nu$ on $\End_{\bC}(C)$. The representation $\sigma_{u}$  is non-degenerate by construction and faithful by \cite[Prop. 1.14]{ghr}.  We now define
$\bW\bC:=\bC''_{\sigma_{u}}$, see Definition \ref{wrkbhorbgfdbdfgbdfgb}. Then the canonical morphism  $i_{\bC}:\bC\to \bW\bC$ is faithful and has a $\sigma$-weakly dense range as required by \ref{iiuaoidfafasdfasdfasdf}. 

Assume that $\sigma:\bC\to \Hilb(\C)^{\la}$ is any non-zero representation.
Then we can find an object of $\bC$ such that $\sigma(C)\not=0$ and  $x$ in  $\sigma(C)$ such that $\nu:A\mapsto \langle \sigma(A)x,x\rangle$ is a non-zero positive functional  on $\End_{\bC}(C)$.  We claim that there exists a summand of $\sigma$ which is isomorphic to $\sigma_{\nu}$.  To this end we consider  for every object $C'$ of $\bC$  the subspace
$\bar \sigma(C')$ of $\sigma(C)$ generated by $\sigma(\Hom_{\bC}(C,C'))(x)$. These subspaces form a subrepresentation of $\sigma$ which has the cyclic vector $x$. By \cite[Prop. 1.9]{ghr} it is unitarily isomorphic to $\sigma_{\nu}$.

We now verify Condition \ref{wemiotgwegergerfw}.
Let $\sigma:\bC\to \Hilb(\C)^{\la}$ be a representation.
First assume that $\sigma$ is isomorphic to a GNS-representation $\sigma_{\nu}$.  
Then $\sigma$ is a direct summand of  the universal representation $\sigma_{u}$ and the  projection
$p$ onto this summand    induces a homomorphism
$\hat \sigma_{\nu}:p\cdots p:\bW\bC\to \Hilb(\C)^{\la}$ which extends $\sigma_{\nu}$.

If $(\sigma_{i})_{i\in I}$ is a family of representations such that
$\sigma_{i} $ admits an extension $\hat \sigma_{i}$, 
 then
$\oplus_{i\in I} \sigma_{i}$ admits the extension $\widehat{\oplus_{i\in I} \sigma_{i}}=\oplus_{i\in I}  \hat \sigma_{i} $.

It thus  remains to show that every representation $\sigma :\bC\to \Hilb(\C)^{\la}$ decomposes as an orthogonal sum of GNS-representations.
To this end
we consider the poset (by inclusion) $\cD$  of subrepresentations of $\sigma$ which decompose as an orthogonal sum of GNS-representations.
We must show that $\sigma\in \cD$.

Every increasing chain in $\cD$ has an upper bound given by the representation generated by the members  of the chain.
By Zorn's lemma there exists a maximal element $\sigma' $ in $\cD$.
If $\sigma'\not=\sigma$, then  in  its  orthogonal complement we can find again a summand which is a GNS-representation. 
But this contradicts the maximality of $\sigma$. 
Hence $\sigma$ itself belongs to $\cD$. 
%
%
%
%
%
%
%
%
%
\end{proof}

%
%

Let $\bC$ be in $\Ccat$.
\begin{ddd}\label{wijtgoewgewgefw}
We call $\bW\bC$ the $W^{*}$-envelope of $\bC$.
\end{ddd}

Let $\phi:\bC\to \bD$ be a morphism in $\Ccat$.
\begin{prop}\label{rjigowergwregregwfer}
If $\phi$  is fully faithful, then so is $\bW\phi$.
\end{prop}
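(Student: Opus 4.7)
My plan is to prove fully faithfulness of $\bW\phi$ by realising it inside the explicit bicommutant model of the $W^{*}$-envelope given in the proof of Theorem~\ref{rijogegergwerg9}, and exploiting a direct-summand decomposition of $\sigma_{u}^{\bD}\circ\phi$ forced by the fullness hypothesis on $\phi$. Let $\sigma_{u}^{\bC}$ and $\sigma_{u}^{\bD}$ be the universal representations of $\bC$ and $\bD$, so that $\bW\bC = \bC_{\sigma_{u}^{\bC}}''$ and $\bW\bD = \bD_{\sigma_{u}^{\bD}}''$. The first step is to verify that $\sigma_{u}^{\bD}\circ \phi$ admits $\sigma_{u}^{\bC}$ as an orthogonal direct summand. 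Fullness of $\phi$ yields $\End_{\bD}(\phi(C')) = \phi(\End_{\bC}(C'))$ for every $C'\in\Ob(\bC)$, so every positive functional $\nu$ on $\End_{\bC}(C')$ corresponds to a unique positive functional $\mu$ on $\End_{\bD}(\phi(C'))$ with $\nu = \mu\circ\phi$; and fullness on arbitrary morphism spaces identifies the GNS Hilbert spaces $\sigma_{\nu}^{\bC}(C)$ and $\sigma_{\mu}^{\bD}(\phi(C))$ isometrically for every object $C$ of $\bC$. Summing over such pairs $(C',\nu)$ gives a canonical identification $\sigma_{u}^{\bD}\circ\phi \cong \sigma_{u}^{\bC}\oplus\beta$ for some residual representation $\beta$ of $\bC$.

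For faithfulness, let $\rho\colon \bW\bD\to \Hilb(\C)^{\la}$ be the canonical faithful normal extension of $\sigma_{u}^{\bD}$ provided in the proof of Theorem~\ref{rijogegergwerg9}. By uniqueness of normal extensions from the $\sigma$-weakly dense subcategory $\bC\subseteq\bW\bC$, the composite $\rho\circ\bW\phi$ equals the normal extension $\widehat{\sigma_{u}^{\bD}\circ\phi}$ of $\sigma_{u}^{\bD}\circ\phi$ to $\bW\bC$. The family of projections $p = (p(C))_{C\in\Ob(\bC)}$ onto the $\sigma_{u}^{\bC}$-summand is a natural transformation that commutes with $(\sigma_{u}^{\bD}\circ\phi)(f)$ for every morphism $f$ of $\bC$ and hence lies in the commutant; since commutants are $\sigma$-weakly closed and $\bC$ is $\sigma$-weakly dense in $\bW\bC$, $p$ also commutes with $\widehat{\sigma_{u}^{\bD}\circ\phi}(\bW\bC)$. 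The compression $p\,\widehat{\sigma_{u}^{\bD}\circ\phi}(-)\,p$ is therefore a normal representation of $\bW\bC$ extending $\sigma_{u}^{\bC}$, so by uniqueness it equals the tautological faithful representation $\bW\bC\to \Hilb(\C)^{\la}$ arising from the identification $\bW\bC = \bC_{\sigma_{u}^{\bC}}''$. Consequently $\widehat{\sigma_{u}^{\bD}\circ\phi}$, and therefore $\bW\phi$, is faithful.

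For fullness, fix objects $C,C'$ of $\bC$ and let $g\in \Hom_{\bW\bD}(\phi(C),\phi(C'))$. Using the $W^{*}$-categorical Kaplansky density theorem together with the equality $\Hom_{\bD}(\phi(C),\phi(C')) = \phi(\Hom_{\bC}(C,C'))$ supplied by fullness of $\phi$, one finds a net $(f_\alpha)$ in $\Hom_{\bC}(C,C')$ with $\|f_\alpha\|\le \|g\|$ such that $\phi(f_\alpha)\to g$ $\sigma$-weakly in $\bW\bD$. Since the unit ball of $\Hom_{\bW\bC}(C,C')$ is $\sigma$-weakly compact by Banach--Alaoglu, a subnet converges $\sigma$-weakly to some $f\in \Hom_{\bW\bC}(C,C')$, and the $\sigma$-weak continuity of $\bW\phi$ forces $\bW\phi(f) = \lim_{\alpha}\phi(f_\alpha) = g$, which establishes surjectivity. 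The principal technical obstacle is the decomposition step: identifying $\sigma_{u}^{\bC}$ as a genuine orthogonal direct summand (and not merely a subrepresentation) of $\sigma_{u}^{\bD}\circ\phi$ requires fullness of $\phi$ at both levels---on endomorphism algebras to match positive functionals, and on arbitrary morphism spaces to match the GNS Hilbert spaces on the nose---because only then does the projection $p$ commute with the $\bC$-action, which is what drives the compression argument yielding faithfulness.
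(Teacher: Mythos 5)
Your proof is correct and rests on the same central mechanism as the paper's: full faithfulness matches each GNS datum $(C',\nu)$ of $\bC$ with a GNS datum of $\bD$ so that $\sigma_u^{\bC}$ sits inside $\phi^*\sigma_u^{\bD}$ as an orthogonal direct summand, and everything is then extracted by compressing with the corresponding projections $p_C$. The two halves are executed differently, though. For injectivity the paper computes norms explicitly, showing $\|f\|=\sup_\nu\|\sigma_\nu(f)\|\le\|\bW\phi(f)\|\le\|f\|$, whereas you identify the compression $p\,\widehat{\sigma_u^{\bD}\circ\phi}(-)\,p$ with the tautological faithful representation of $\bW\bC=\bC''_{\sigma_u^{\bC}}$ and deduce faithfulness (isometry then being automatic for faithful morphisms of $C^*$-categories); both are fine, and yours is arguably tidier. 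For surjectivity the routes genuinely diverge: the paper shows via the compression formula $\sigma_u^{\bC}(f)=p_{C'}\sigma_u^{\bD}(\phi(f))p_C$ that $\bW\phi$ \emph{detects} $\sigma$-weak convergence of arbitrary nets, and then only needs the unbounded approximation coming from $\sigma$-weak density of $\bD$ in $\bW\bD$ plus fullness of $\phi$; you instead pass through a bounded approximating net (Kaplansky density) so that Banach--Alaoglu supplies a $\sigma$-weak cluster point in $\Hom_{\bW\bC}(C,C')$. The one ingredient you use that is neither proved nor cited in the paper is the Kaplansky density theorem for off-diagonal morphism spaces of a $W^*$-category; it is true (e.g.\ by running the usual argument in the linking algebra acting on $\sigma(\phi(C))\oplus\sigma(\phi(C'))$ for a faithful normal $\sigma$), but it should be justified or referenced, since the whole point of the paper's ``detects $\sigma$-weak convergence'' step is precisely to avoid needing bounded approximants.
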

\begin{proof}
We first show that $\bW\phi$ is isometric.
If $C$ is an object of $\bC$ and $\nu$ 
is a weight on 
$\End_{\bC}(C)$,
then using the isomorphism of $C^{*}$-algebras
$\phi: \End_{\bC}(C)\stackrel{\cong}{\to}
\End_{\bD}(\phi(C))$ we get a weight $\phi_{*}\nu$ on $ 
\End_{\bD}(\phi(C))$.
 For   every object $C'$  in $\bC$ we have an isometry   of  GNS spaces
$\sigma_{\nu}(C')\cong \sigma_{\phi_{*}\nu}(\phi(C'))$ induced by
the  isomorphism  $\phi:\Hom_{\bC}(C,C')\to \Hom_{\bD}(\phi(C),\phi(C'))$.
Using the construction of the $W^{*}$-envelope
 $\bW\bC$ given in the proof of Theorem \ref{rijogegergwerg9}  
 we see that the norm of  $f:C'\to C''$ in $\bW\bC$   is given by
  $\|f\|=\sup_{\nu} \|\sigma_{\nu}(f)\|$, where $\nu$ runs over all GNS representations of $\bC$.   Here we implicitly extended $\sigma_{\nu}$ to the $W^{*}$-envelope.  Similarly, $\|\bW\phi(f)\|=\sup_{\nu'} \|\sigma_{\nu'}(\bW\phi(f))\|$, where $\nu'$ runs over all GNS-representations of $\bD$.  But then
  $$\|f\|\ge \|\bW\phi(f)\|={\sup}_{\nu'}\|\sigma_{\nu'}(\bW\phi(f))\|\ge 
  {\sup}_{\nu}\|\sigma_{\phi_{*}\nu}(\bW\phi(f))\|=
  {\sup}_{\nu}\|\sigma_{\nu}(f)\|=\|f\|$$
  which implies the equality $\|f\|=\|\bW\phi(f)\|$.
  
  We next show that $\bW\phi$  creates the $\sigma$-weak topology on $\bW\bC$.  Note that the universal representations are defined
  as orthogonal sums over all GNS-representations. We thus  have an isometric embedding of representations $\sigma^{\bC}_{u}\to \phi^{*}\sigma_{u}^\bD$ whose complement is generated by GNS-representations of $\bD$  for   weights on endomorphism algebras of objects  which do not belong to the image of $\phi$.  For an object $C$ in $\bC$ we let 
  $p_{C}:\sigma^{\bD}_{u}(\phi(C))\to \sigma_{u}^{\bC}(C)$ denote the orthogonal projection. Then for $ f$ in $\Hom_{\bW\bC}(C,C')$  
  we have   $\sigma^{\bC}_{u}(f)=p_{C'}\sigma^{\bD}_{u}(\phi(f)) p_{C}$.
Now the $\sigma$-weak topologies on $\bW\bC$ and $\bW\bD$ are
induced via $\sigma^{\bC}_{u}$ and $\sigma_{u}^{\bD}$ from
$\Hilb(\C)^{\la}$, respectively. If $(f_{i})_{i}$ is a net in $\Hom_{\bW\bC}(C,C')$
such that the  $\sigma$-weak limit  of  $ ( \phi(f_{i}))_{i}$ exists in $\Hom_{\bW\bD}(\phi(C),\phi(C'))$, then  the $\sigma$-weak limits  of 
$(\sigma^{\bD}_{u}(\phi(f_{i})))_{i}$    and  therefore of 
$(\sigma^{\bC}_{u}(f_{i}))_{i}$  exist   in $ 
\Hilb(\C)^{\la}$. Hence the $\sigma$-weak limit   $(f_{i})_{i}$ exists in $\Hom_{\bW\bC}(C,C')$.

 Since $\bD$ is $\sigma$-weakly dense in $\bW\bD$,  $\phi$ is full, 
  and  $\bW\phi$ is $\sigma$-weakly continuous and  detects $\sigma$-weak convergence,  we can conclude that 
  $\bW\phi$ is surjective.    \end{proof}

Finally we define the $W^{*}$-envelope    of a possibly non-unital $C^{*}$-category.  
For $\bC$ in $\nCcat$ we let $\bC^{+}$ denote its unitalization. We have a faithful morphism $\bC\to \bC^{+}\to \bW\bC^{+}$.
 
\begin{ddd}\label{jgiewrgojwerfewrfeff}
We define   $\bW^{\mathrm{nu}}\bC$  in $\Wcat$ as the $\sigma$-weak closure of
$\bC$ in $\bW\bC^{+}$.
\end{ddd}

The induced  morphism  $\bC\to \bW^{\mathrm{nu}}\bC$ is faithful.
Its universal property will be explained in Proposition \ref{jigogewrgeferfref} below after  recalling of the concept of the multiplier category of $\bC$.

If $\phi:\bC\to \bD$ is a morphism in $\nCcat$, then the morphism $\bW(\phi^{+}):\bW\bC^{+}\to \bW\bD^{+}$ restricts to a $\sigma$-weakly continuous morphism $\bW^{\mathrm{nu}}(\phi):\bW^{\mathrm{nu}}\bC\to \bW^{\mathrm{nu}}\bD$ such that $$\xymatrix{\bC\ar[r]^{\phi}\ar[d]&\bD\ar[d]\\\bW^{\mathrm{nu}}\bC\ar[r]^{\bW^{\mathrm{nu}}(\phi)}&\bW^{\mathrm{nu}}\bD}$$
 commutes. In particular we obtain a functor $\bW^{\mathrm{nu}}:\nCcat\to \Wcat$. We will see in Corollary \ref{wropr} below that in analogy to Proposition \ref{rjigowergwregregwfer} the functor 
 $ \bW^{\mathrm{nu}}$ preserves fully faithfulness.

 The following will be used later. Let $\bC$ be in $\nCcat$.
 \begin{lem}\label{wtrogkprgerwgwegw}
 For any unital representation $\sigma:\bW^{\mathrm{nu}}\bC\to \Hilb(\C)^{\la}$
 the induced representation $\bC\to \bW^{\mathrm{nu}}\bC\to \Hilb(\C)^{\la}$ is
 non-degenerate.
 \end{lem}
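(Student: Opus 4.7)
The strategy is to exploit a positive contractive approximate unit $(h_{i})_{i\in I}$ in the $C^{*}$-algebra $\End_{\bC}(C)$ together with normality of $\sigma$, which is natural in the $W^{*}$-category framework (where a unital representation $\bW^{\mathrm{nu}}\bC\to \Hilb(\C)^{\la}$ is meant as a morphism in $\Wcat$). By construction $\bW^{\mathrm{nu}}\bC$ is the $\sigma$-weakly closed $*$-subcategory of $\bW\bC^{+}$ generated by $\bC$, and as an object of $\Wcat$ it carries its own identities. My first task is to identify the identity $\id_{C}\in \End_{\bW^{\mathrm{nu}}\bC}(C)$: using Lemma \ref{multmult} applied in the unitization $\bC^{+}$, together with a faithful normal representation of $\bW\bC^{+}$, the bounded increasing net $(h_{i})$ has a $\sigma$-weak limit which acts as an identity for the $W^{*}$-subalgebra $\End_{\bW^{\mathrm{nu}}\bC}(C)$ and therefore coincides with $\id_{C}$ as computed in $\bW^{\mathrm{nu}}\bC$. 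In other words, $h_{i}\to \id_{C}$ in the $\sigma$-weak topology of $\bW^{\mathrm{nu}}\bC$.

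Next, since $\sigma$ is normal, it is $\sigma$-weakly continuous and preserves suprema of bounded increasing nets of positive elements. Therefore $\sup_{i}\sigma(h_{i})=\sigma(\id_{C})=\id_{\sigma(C)}$ in $B(\sigma(C))$, and the standard monotone convergence statement for bounded nets of positive operators on a Hilbert space gives that $(\sigma(h_{i}))$ converges to $\id_{\sigma(C)}$ in the strong operator topology. Consequently, for every $x\in \sigma(C)$ we have $\sigma(h_{i})x\to x$ in norm, and since each $\sigma(h_{i})x$ lies in $\sigma(\End_{\bC}(C))\sigma(C)$, the closed linear span of this set equals $\sigma(C)$. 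This is precisely the non-degeneracy condition of Definition \ref{ewkogwtrgwegwergferf}.

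The main obstacle I expect is the first step: identifying the $\sigma$-weak limit of $(h_{i})$, taken inside the ambient $W^{*}$-category $\bW\bC^{+}$, with $\id_{C}\in \bW^{\mathrm{nu}}\bC$. The subtlety is that the inclusion $\bW^{\mathrm{nu}}\bC\hookrightarrow \bW\bC^{+}$ need not be unital on objects: the identity of the $W^{*}$-algebra $\End_{\bW^{\mathrm{nu}}\bC}(C)$ is rather the support projection of the $C^{*}$-algebra $\End_{\bC}(C)$ inside $\End_{\bW\bC^{+}}(C)$, and one must check that this is indeed the supremum of any approximate unit for $\End_{\bC}(C)$. Once this is verified, the remaining steps are routine consequences of normality and the spectral theorem for bounded monotone nets.
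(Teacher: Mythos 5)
Your proof is correct, and it rests on the same pivot as the paper's: $\End_{\bC}(C)$ is $\sigma$-weakly dense in $\End_{\bW^{\mathrm{nu}}\bC}(C)$, so $1_{C}$ is a $\sigma$-weak limit of morphisms of $\bC$, and unitality of $\sigma$ converts this into non-degeneracy. The packaging differs, though. The paper argues by contradiction: if some nonzero $x$ in $\sigma(C)$ satisfied $\langle x,\sigma(f)x\rangle=0$ for all $f$ in $\End_{\bC}(C)$, then taking \emph{any} net $(f_{i})_{i}$ in $\End_{\bC}(C)$ that converges $\sigma$-weakly to $1_{C}$ (such a net exists by density alone) yields $0\neq \|x\|^{2}=\langle x,\sigma(1_{C})x\rangle=\lim_{i}\langle x,\sigma(f_{i})x\rangle=0$. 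This sidesteps what you call your main obstacle entirely: no specific net has to be identified with $\id_{C}$. Your direct route pays for that identification (increasing approximate unit $\to$ support projection $=1_{C}$, which is indeed the standard fact and goes through as you sketch it), but it buys a stronger conclusion, namely strong convergence $\sigma(h_{i})\to \id_{\sigma(C)}$ — essentially the form of non-degeneracy that is exploited later in Lemmas \ref{qeirogqergqfqefqewfq} and \ref{ewtiojgowtrgwergwergf}. One remark on hypotheses: the interchange $\lim_{i}\langle x,\sigma(f_{i})x\rangle=\langle x,\sigma(1_{C})x\rangle$ in the paper's proof requires the functionals $\langle x,\sigma(-)x\rangle$ to be $\sigma$-weakly continuous on $\End_{\bW^{\mathrm{nu}}\bC}(C)$, i.e., it silently uses the same normality of $\sigma$ that you invoke explicitly; so your reading of ``unital representation'' as a morphism of $W^{*}$-categories is precisely the hypothesis under which the paper's own argument is valid as well.
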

\begin{proof}
Assume the contrary. Then there exists an object $C$ of $\bC$ and a non-zero vector $x$ in $\sigma(C)$ such that $\langle x,\sigma(f)x\rangle=0$ for every $f$ in $\End_{\bC}(C)$.
Note that $\langle x,\sigma(-)x\rangle$ is a continuous functional on $\End_{\bC}(C)$.  Since $ \bW^{\mathrm{nu}}\bC$ is the $\sigma$-weak closure of $\bC$ in $ \bW\bC^{+}$ there exists  
 a net $(f_{i})_{i\in I}$ in $ \End_{\bC}(C)$ which $\sigma$-weakly converges to
 $1_{C}$ in $\End_{ \bW^{\mathrm{nu}}\bC}(C)$. Since $\sigma$ is unital
 we have
  $$0\not=\langle x, x\rangle=\langle x, \sigma(1_{C})x\rangle
  ={\lim}_{i}\langle x, \sigma(f_{i})x\rangle=0\ ,$$
  a contradiction.
\end{proof}

 
\section{Multiplier categories}\label{weigjogfgsgdfgsfdg}

In this section we  discuss the  concept of a multiplier category of a $C^{*}$-category. 
It is an immediate generalization of the notion of a multiplier algebra of a $C^{*}$-algebra.
Since this concept is crucial for the present paper and the subsequent work
\cite{coarsek}, \cite{bel-paschke} we provide a detailed account. The main result is Theorem \ref{mmain} describing an explicit model for the multiplier category of a $C^{*}$-category  that is  characterized in Definition \ref{weighwoegerwferfwef}   by a universal property.
In Theorem \ref{jigogewrgeferfref} we furthermore describe the relation between the multiplier category and the  $W^{*}$-envelope introduced in  Definition \ref{jgiewrgojwerfewrfeff}.
 %
%
%
%


The concept of   the multiplier category $\bM\bC$ of a $C^{*}$-category $\bC$ was  introduced in \cite{kandelaki} or \cite[Sec.~2]{antoun_voigt}.
Most of constructions and statements  concerning multiplier categories   together with their proofs are direct generalizations of  constructions and statements in \cite{busby_double_centralizers} from $C^{*}$-algebras to $C^{*}$-categories.

  
Let $\bC$ be in $\nCcat$.
\begin{ddd}\label{weighwoegerwferfwef}
A multiplier category of $\bC$ is a unital $C^{*}$-category $\bM\bC$ with an ideal inclusion   $\bC\to \bM\bC$ such that for any other ideal inclusion $\bC\to\bD$  with $\bD$ a unital $C^{*}$-category   there is a unique unital morphism $\bD\to \bM\bC$ such that
$$\xymatrix{&\bC\ar[dr]\ar[dl]&\\\bD\ar[rr]&&\bM\bC}$$
commutes.
\end{ddd}

It is clear that if a multiplier category  $\bM\bC$  of $\bC$ exists, then it is determined uniquely up to unique isomorphism by the universal property.
  In the following we will  show the existence of a  multiplier category
  by providing an explicit model which we will also denote by $\bM\bC$.

 Let $\Ban$ denote the category of Banach spaces and continuous linear maps and consider  
  $\bC$  in $\nCcat$. An object   $C$  of $\bC$ represents  the
 $\Ban$-valued functors $\Hom_{\bC}(-,C) \colon \bC^{\op}\to \Ban$ and
 $\Hom_{\bC}(C,-) \colon \bC\to \Ban$.
 If $v$ is a natural transformation between $\Ban$-valued functors on $\bC$ given by a family
 $(v_{C})_{C\in \Ob(\bC)}$ of morphisms in $\Ban$, then we say that $v$ is uniformly bounded if 
 $\|v\| \coloneqq \sup_{C\in \Ob(\bC)} \|v_{C}\|<\infty$. The space of  uniformly bounded natural transformations between two $\Ban$-valued functors is again a Banach space with respect to this norm.

 Let $\bC$ be in $\nCcat$, and let $C,D$ be objects of $\bC$.  
\begin{ddd}\label{ewirjgoethwfegergewrg}\mbox{}
\begin{enumerate}
\item The Banach space of left multiplier morphisms from $C$ to $D$  is the Banach space of uniformly bounded natural transformations of $\Ban$-valued functors
$\Hom_{\bC}(-,C)\to \Hom_{\bC}(-,D)$ on $\bC^{\op}$. 
\item {The Banach space of right multiplier morphisms from $C$ to $D$  is the Banach space of  uniformly bounded natural transformations of $\Ban$-valued functors
$\Hom_{\bC}(D,-)\to \Hom_{\bC}(C,-)$ on $\bC$.}
\item \label{qeroigjqoergerqf} A multiplier morphism from $C$ to $D$ is a pair $(L,R)$ of a left and a right multiplier morphism from $C$ to $D$ such that for every $f$ in $\Hom_{\bC}(F,C)$ and every $g$ in $\Hom_{\bC}(D,E)$ we have
\begin{equation}\label{3r2fpokpwerverfw}
g L_F(f) = R_E(g) f\, .
\end{equation} 

We write $\MHom_{\bC}(C,D)$ for the $\C$-vector space of multiplier morphisms from $C$ to~$D$.
\end{enumerate}
\end{ddd}
In the following we spell out this definition in detail and explain the notation appearing in \eqref{3r2fpokpwerverfw}.
A left multiplier morphism $L\colon C \to D$ is a uniformly bounded family $(L_E)_{E \in \Ob(\bC)}$ of $\IC$-linear maps $L_E\colon \Hom_{\bC}(E,C) \to \Hom_{\bC}(E,D)$ such that for every $h$ in $\Hom_{\bC}(E,C)$ 
  and every  $g$ in $\Hom_{\bC}(F,E)$  we have $L_F(hg) = L_E(h)g$.

Similarly,  a right multiplier morphism $R\colon C \to D$ is given by a uniformly bounded family $(R_E)_{E \in \Ob(\bC)}$ of $\IC$-linear maps $R_E\colon \Hom_{\bC}(D,E) \to \Hom_{\bC}(C,E)$ such that for every $h$ in $\Hom_{\bC}(D,E)$ and 
 every  $g$ in $\Hom_{\bC}(E,F)$  we have $R_F(gh) = gR_E(h)$.

 {Below, in order to simplify the notation,  we will omit the subscripts and write $L(h)$ instead of $L_{E}(h)$ or $R(g)$ instead of $R_{E}(g)$.}

Let $C,D,E$ be objects of $\bC$,  let 
$(L,R)$ be in $\MHom_{\bC}(C,D) $, and  $(L',R')$ be in $ \MHom_{\bC}(D,E)$. Then the pair of compositions 
$(L'L,RR')$ belongs to $\MHom_{\bC}(C,E)$. In this way we get a $\C$-bilinear  and associative law  of composition of multiplier morphisms
\begin{equation}\label{eq_composition_mult_morph}
\MHom_{\bC}(C,D) \times \MHom_{\bC}(D,E) \to \MHom_{\bC}(C,E)\, .
\end{equation}
 {For every object $C$ in $\bC$} we have an identity multiplier morphism $\id_{C}$ in $\MHom_{\bC}(C,C )$.
 Finally, the involution of
 $\bC$ induces an anti-linear  involution
 $$(-)^{*} \colon \MHom_{\bC}(C,D)\to  \MHom_{\bC}(D,C)\, , \quad (L,R)^{*} \coloneqq (R^{*},L^{*})\, .$$
 In detail,  if $L=(L_{E})_{E\in \Ob(\bC)}$ and $R=(R_{E})_{E\in \Ob(\bC)}$, then 
  $R^* =(R_E^*)_{E \in \Ob(\bC)}$ with 
  $R_E^*(f) \coloneqq R_E(f^*)^*$ for every $f$ in $\Hom_{\bC}(E,C)$, 
  and analogously  $L^*=(L^{*}_{E})_{E\in \Ob(\bC)}$ with $L^{*}_{E}(f):=L_{E}(f^{*})^{*} $  for every $f$ in $\Hom_{\bC}(C,E)$.

The multiplier category $\bM\bC$ of $\bC$ is the object of $\Clincat$ defined as follows.
\begin{ddd}\label{wtiogjoweggfrewgre9}\mbox{}
\begin{enumerate}
\item The objects of $\bM\bC$ are the objects of $\bC$.
\item The $\C$-vector space of morphisms in $\bM\bC$ from $C$ to $D$ is the space of  multiplier morphisms $\MHom_{\bC}(C,D)$.
\item The composition and the involution are defined as described above.
\end{enumerate}
\end{ddd}
The name and notation for  $\bM\bC$ will be justified in  Theorem \ref{mmain} below.

Every morphism $f$ in $\Hom_{\bC}(C,D)$ naturally defines a  multiplier morphism 
$M_{f} \coloneqq (L_f,R_f)$ in $\MHom_{\bC}(C,D)$, where
$L_{f}=f\circ -$ and $R_{f}=-\circ f$.  
We thus have a  morphism $\bC \to \bM\bC$ in $\nClincat$ which is the identity on the objects and given by    $f \mapsto (L_f,R_f)$ on morphisms.

Let $\bC$ be in $\nCcat$.
\begin{theorem}\label{mmain}\mbox{}
\begin{enumerate}
\item\label{wjithgowgegfergegw} $\bM\bC$ is a unital $C^{*}$-category. 
\item \label{qeroigjoqergfqewfqwefqewf}  The morphism
$\bC \to \bM\bC$ is the inclusion of an   ideal.
\item \label{rieogfergegwferf}
 The inclusion $\bC\to \bM\bC$ presents $\bM\bC$ as the multiplier category of $\bC$.\end{enumerate}
 \end{theorem}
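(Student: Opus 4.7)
I first equip each $\MHom_\bC(C,D)$ with the norm $\|(L,R)\| := \|L\|$ and show it equals $\|R\|$: combining the compatibility \eqref{3r2fpokpwerverfw} with the $C^*$-identity in $\bC$ gives
$$\|L(f)\|^2 = \|L(f)^*L(f)\| = \|R(L(f)^*)f\| \le \|R\|\,\|L(f)\|\,\|f\|,$$
so $\|L\| \le \|R\|$, and symmetrically the reverse. Completeness is then routine: Cauchy sequences of multipliers converge componentwise, with naturality and \eqref{3r2fpokpwerverfw} passing to the limit. Submultiplicativity of the composition \eqref{eq_composition_mult_morph} is immediate from $\|L'L\| \le \|L'\|\|L\|$.

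To establish the $C^*$-structure in part (1), I fix a faithful non-degenerate representation $\pi \colon \bC \to \Hilb(\C)^{\la}$ (e.g.\ the non-degenerate part of the universal representation built in the proof of Theorem~\ref{rijogegergwerg9}) and construct a faithful unital $*$-functor $\bar\pi \colon \bM\bC \to \Hilb(\C)^{\la}$ extending $\pi$ by $\bar\pi(L,R)\bigl(\pi(f)y\bigr) := \pi(L(f))y$ on the dense subspace of $\pi(C)$ spanned by such vectors. Well-definedness and the norm bound $\|\bar\pi(L,R)(v)\| \le \|R\|\|v\|$ both flow from the identity $\pi(h)^*\bar\pi(L,R)(v) = \pi(R(h^*))v$ for $h \in \Hom_\bC(Y,D)$ (a direct consequence of \eqref{3r2fpokpwerverfw} under $\pi$): non-degeneracy of $\pi$ at $D$ gives well-definedness, and letting $h$ range over an approximate unit of $\End_\bC(D)$ (strongly convergent to $\id_{\pi(D)}$ by Lemma~\ref{multmult}) gives the bound. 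The reverse inequality $\|\bar\pi(L,R)\| \ge \|R\|$ follows from the relation $\pi(f)\bar\pi(L,R) = \pi(R(f))$, itself a direct consequence of $M_f \circ (L,R) = M_{R(f)}$ computed in the next paragraph. Thus $\bar\pi$ is an isometric $*$-functor, exhibiting $\bM\bC$ as a sub-$*$-category of $\Hilb(\C)^{\la}$ from which the $C^*$-structure is inherited; unitality is manifest since each object carries the identity multiplier $(L,R)$ with $L(f)=f$ and $R(g)=g$.

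For part (2), $f \mapsto M_f$ is a faithful $\C$-linear $*$-functor $\bC \to \bM\bC$. The ideal property reduces to two direct computations with \eqref{eq_composition_mult_morph}: for $(L,R) \in \MHom_\bC(C,D)$ and $f \in \Hom_\bC(E,C)$, naturality of $L$ gives $(L,R) \circ M_f = M_{L(f)}$; symmetrically, for $f' \in \Hom_\bC(D,E')$, applying \eqref{3r2fpokpwerverfw} with $g = f'$ gives $M_{f'} \circ (L,R) = M_{R(f')}$. Both composites lie in the image of $\bC$. For part (3), given any ideal inclusion $\iota \colon \bC \to \bD$ with $\bD$ unital, I define $\phi \colon \bD \to \bM\bC$ by $\phi(T) := (L_T,R_T)$, where $L_T(f) \in \Hom_\bC(E,C')$ is the unique preimage of $T\iota(f)$ under $\iota$ (existing by the ideal hypothesis), and analogously $R_T(g)$; compatibility \eqref{3r2fpokpwerverfw} for $(L_T,R_T)$ reduces to associativity in $\bD$, and $\phi$ is a unital extension of the inclusion $\bC \to \bM\bC$. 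Uniqueness: any unital extension $\phi'$ must satisfy $\phi'(T) \circ M_f = M_{L_T(f)}$ by the composition formula above, which pins down the left component of $\phi'(T)$ via its action on every $f \in \Hom_\bC(-,C)$; the symmetric argument forces $R_{\phi'(T)} = R_T$.

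The technical heart is the construction of $\bar\pi$ in part (1): well-definedness, the sharp norm bound, and compatibility with the involution all rest on the careful interplay between \eqref{3r2fpokpwerverfw}, non-degeneracy of $\pi$, and the approximate-unit arguments supplied by Lemma~\ref{multmult}. Once this isometric embedding is in hand, parts (2) and (3) reduce to bookkeeping with naturality and the multiplier relation.
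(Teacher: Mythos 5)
Your proposal is correct, but in part (1) it takes a genuinely different route from the paper. The paper stays entirely algebraic: it proves $\|L\|=\|R\|$ via the sup-characterization \eqref{qwepfojpofqefqwefqewf} of the norm, and then verifies the strong $C^*$-inequality \eqref{eq_strong_Cstar_inequality} for the multiplier norm by a direct computation with \eqref{3r2fpokpwerverfw}, invoking Remark~\ref{sfbfdvdfvfdswr} to conclude that $\bM\bC$ is a $C^*$-category. You instead build a faithful isometric representation $\bar\pi\colon\bM\bC\to\Hilb(\C)^{\la}$ extending a faithful non-degenerate representation of $\bC$ and inherit the $C^*$-structure from the operator norm; this is essentially the content of the paper's Lemma~\ref{qeirogqergqfqefqewfq}, which the paper proves only \emph{after} the theorem, so you are front-loading that lemma into the proof. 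Both work (there is no circularity: the existence of a faithful non-degenerate representation of a non-unital $\bC$ needs only the GNS construction, not the theorem itself), and your well-definedness and norm arguments for $\bar\pi$ — the identity $\pi(h)^*\bar\pi(L,R)(v)=\pi(R(h^*))v$ together with non-degeneracy at $D$ and the approximate-unit limit from Lemma~\ref{multmult} — match what the paper does in Lemma~\ref{qeirogqergqfqefqewfq}. The trade-off: the paper's computation is self-contained and makes the strong $C^*$-inequality explicit, while your embedding gets the $C^*$-identity, positivity and the extension of representations all in one stroke. Your alternative derivation of $\|L\|\le\|R\|$ from the $C^*$-identity applied to $L(f)^*L(f)$ is also valid and slightly slicker than the paper's. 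Parts (2) and (3) agree with the paper in substance; your uniqueness argument in (3), pinning down $L_{\phi'(T)}$ via $\phi'(T)\circ M_f=M_{L_T(f)}$, is more explicit than the paper's "one further checks". Two small points you should make explicit: that the families $(L_{T,E})_E$ in part (3) are uniformly bounded (immediate from $\|L_T(f)\|=\|T\iota(f)\|\le\|T\|\,\|f\|$ since the ideal inclusion is isometric — do not appeal to Proposition~\ref{wrtohijworthrtwregergwerg}, which comes later), and that the image of $\bC$ in $\bM\bC$ is closed (isometry of $f\mapsto M_f$ plus completeness of $\bC$), which Definition~\ref{weiogjwergferfwegwgregwregwgre} requires for an ideal inclusion.
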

\begin{proof}
 \ref{wjithgowgegfergegw}:
We will show that the norm of the Banach space of multiplier morphisms  exhibits $\bM\bC$ as a $C^{*}$-category. To this end we define the norm of
  a multiplier morphism $M = (L,R)$ by
\begin{equation}\label{eq_norm_multiplier}
\|M\| \coloneqq \max\{\|L\|,\|R\|\}\,.
\end{equation}
The involution $*$  on $\bM\bC$ is then isometric.

Next we show that actually   $\|L\| = \|R\|$. The argument is similar to 
the argument   for   double centralizers of $C^*$-algebras; cf.~\cite[Lem.~2.1.4]{murphy}. 
First of all note that for a morphism $f \colon C\to D$ in a $C^{*}$-category we have \begin{equation}\label{qwepfojpofqefqwefqewf}
\|f\|=\sup_{g }\|fg\|=\sup_{h}\|hf\|\,,
\end{equation} 
where $g$ runs over all morphisms with  target $C$ and $\|g\|\le 1$, and $h$ runs over all morphisms  
with domain $D$ and $\|h\|\le 1$. In fact, assume that $f\not=0$. Then we have  
$$ \|f\|\ge  \sup_{g }\|fg\|\ge \Big\| f\frac{f^{*}}{\|f\|} \Big \|=\|f^{*}f\|\|f\|^{-1}=\|f\|^{2}\|f\|^{-1}=\|f\|\, ,$$
 {where the} first inequality follows from the sub-multiplicativity of the norm, while
 the second inequality follows from specializing at $g=f^{*} /\|f\|$.

Assume that $M=(L,R)$ is a multiplier morphism from $C$ to $D$.
Then we have
 \begin{eqnarray}
\|L\|=\sup_{f}\|L(f)\|&\stackrel{\eqref{qwepfojpofqefqwefqewf}}{=}&
\sup_{h}\sup_{f}\|hL(f)\| \nonumber\\
&\stackrel{\eqref{3r2fpokpwerverfw}}{=}&\sup_{h}\sup_{f}\|R(h)f\|
\stackrel{\eqref{qwepfojpofqefqwefqewf}}{=}\sup_{h}\|R(h)\|
=\|R\|\, ,\label{tokjpherthertgetrgetrg}
\end{eqnarray}
 {where} $f$ runs over all morphisms with target $C$ and $\|f\|\le 1$, and
$h$ runs over all morphisms with domain $D$ and $\|h\|\le 1$.

 It is clear that $\bM\bC$ is complete since the spaces of left and right multipliers are complete. It is furthermore easy to see that the norm is sub-multiplicative for compositions.
 
 {In order to show that $\bM\bC$ is a $C^{*}$-category it remains to verify the strong $C^*$-inequality \eqref{eq_strong_Cstar_inequality}.}
We consider a multiplier morphisms $M_{0} =(L_{0},R_{0})$ from $C$ to  $E$ and a  multiplier morphism  $M_{1}=(L_{1},R_{1})$ from  $C$ to $D$.
We then have
\begin{eqnarray*}
\|M_{0}^{*}M_{0}+M_{1}^{*}M_{1}\| &=& \|R_{0}^{*}L_{0}+R_{1}^{*}L_{1}\|\\&=&
\sup_{f}
\|R_{0}^{*}(L_{0}(f))+R_{1}^{*}(L_{1}(f)) \|\\&\ge&\sup_{f} \|f^{*}R_{0}^{*}(L_{0}(f))+f^{*}R_{1}^{*}(L_{1}(f)) \|\\&\stackrel{\eqref{3r2fpokpwerverfw}}{=}&\sup_{f}\| L_{0}^{*}(f^{*})L_{0}(f)+L_{1}^{*}(f^{*}) L_{1}(f)\|\\&=&\sup_{f}\| L_{0}(f)^{*}L_{0}(f)+L_{1}(f)^{*} L_{1}(f)\|\\&\stackrel{!}{\ge}&\sup_{f}
\| L_{0}(f)^{*}L_{0}(f) \| =\sup_{f}\|L_{0}(f)\|^{2}\\&=&\|M_{0}\|^{2}=\|M_{0}^{*}M_{0}\|\, ,
\end{eqnarray*}
where the supremum runs over all $f$ with target $C$ and $\|f\|\le 1$, and at the marked inequality we used the strong $C^{*}$-inequality of $\bC$.
%
%
%
%
%
%
%
%

\ref{qeroigjoqergfqewfqwefqewf}:  
For $f$ in $\Hom_{\bC}(C,D)$ we have
$\|M_{f}\|=\|L_{f}\|\stackrel{\eqref{qwepfojpofqefqwefqewf}}{=}\|f\|$.
This implies that   $\bC \to \bM\bC$ is an isometric inclusion and therefore $\bC$ is closed in $\bM\bC$.

Consider a multiplier $M=(L,R)$ from $C$ to $D$ in $\bC$ and $h \colon D\to E$. Then using  \eqref{3r2fpokpwerverfw} we calculate that 
 $M_{h}M=M_{R(h)}$. These identities show that $\bC$ is an ideal in $\bM\bC$.
Similarly  for $l \colon E\to C$ we have 
$MM_{l}=M_{L(l)}$.

 \ref{rieogfergegwferf}:
 Let $\bD$ be any unital $C^{*}$-category containing $\bC$ as an ideal.
Then  we  have a unital morphism $\bD\to \bM\bC$ which sends a morphism in $\bD$ to the induced multiplier on $\bC$ given by the left and right compositions with the morphism. It induces the identity on $\bC$.
One further checks that there is no other unital morphism  $\bD\to \bM\bC$ inducing the identity on $\bC$.
 \end{proof}

\begin{rem}
Let $ \bD$ be in $\Ccat$ and $\bC\to \bD$ be an ideal  inclusion in $\nCcat$.
Then we can define the  orthogonal complement   $\bC^{\perp}$ of $\bC$  to be the   ideal of $\bD$ consisting of all morphisms 
  which compose to zero with all morphisms from $\bC$.
The ideal $\bC$ is called essential if $\bC^{\perp}=0$.

We now fix $\bC$ in $\nCcat$
 and consider the poset with respect to inclusion of all $\bD$ in $\Ccat$ with the same objects as $\bC$ and  containing $\bC$ as an essential ideal.
 The unitalization $\bC\to \bC^{+}$ is a minimal element of this poset.

One can characterize the multiplier category $\bM\bC$ as a maximal element of this poset. First of all it is clear that $\bC$ is an essential ideal of $\bM\bC$. Assume that  $\bC\to \bD$ is a bigger essential ideal inclusion. Then we have morphisms
$\alpha, \beta$ as in $$\xymatrix{&\bC\ar[dl]\ar[dr]&\\\bM\bC\ar@/^0.5cm/[rr]^{\alpha}&&\bD\ar@/^0.5cm/[ll]^{\beta}}$$
where $\alpha$ witnesses the condition of being bigger,  and $\beta$ arrises from the universal property of $\bM\bC$.
From the uniqueness clause of this universal property $\beta\circ \alpha=\id_{\bM\bC}$. Since $\bC\to \bD$ is essential $\beta$ is faithful and consequently also $\alpha\circ \beta=\id_{\bD}$.

The poset admits maximal elements by Zorn's Lemma. It is applicable since if  $(\bD_{i})_{i}$  is a chain in this poset, then $\bar \bD:=\colim_{i}\bD$
is an upper bound of this chain. \hB 
\end{rem}


Next we argue that the algebraic conditions on multiplier morphisms {alone} already imply the boundedness assumptions. We further introduce and study the strict topology on the multiplier category. 

Let $\bC$ be in $\nCcat$, and let $C,D$ be objects of $\bC$. 
We define an algebraic left multiplier from $C$ to $D$  as a natural transformation of $\C$-vector space valued functors
$\Hom_{\bC}(-,C)\to \Hom_{\bC}(-,D)$ on $\bC^{\op}$. Similarly, 
an algebraic right multiplier from $C$ to $D$  is a natural transformation of $\C$-vector space valued functors
$\Hom_{\bC}(D,-)\to \Hom_{\bC}(C,-)$ on $\bC$.  An algebraic multiplier morphism   is a pair
$(L,R)$ of an algebraic left multiplier morphism $L=(L_{E})_{E\in \Ob(\bC)}$, and an algebraic right multiplier morphism $R=(R_{E})_{E\in \Ob(\bC)}$ such that for every $f$ in $\Hom_{\bC}(F,C)$ and every $g$ in $\Hom_{\bC}(D,E)$ we have $
g L_F(f) = R_E(g) f
$. We let $\aMHom_{\bC}(C,D)$ denote the $\C$-vector space of algebraic multiplier morphisms.
 
 
 In order to define the strict topology on multipliers we introduce  the following collections of seminorms.
For every morphism $f$ with target $C$ 
we define the seminorm
\begin{equation}\label{eq_left_strict_topology}
l_f\colon \aMHom_{\bC}(C,D) \to \IR_{\ge 0}\,,\quad l_f((L,R)) \coloneqq \|L(f)\|\,.
\end{equation}
For every morphism $h$ with domain $D$  we define the seminorm
\begin{equation}\label{eq_right_strict_topology}
r_h\colon \aMHom_{\bC}(C,D) \to \IR_{\ge 0}\,,\quad r_h((L,R)) \coloneqq \|R(h)\|\,.
\end{equation}

\begin{ddd}\label{erguihierverqerf}
The strict topology on $\aMHom_{\bC}(C,D)$ is the locally convex topology given by the family of semi-norms $(l_f)_{f}\cup (r_h)_{h}$, where $f$ runs over morphisms with target $C$ and $h$ runs over morphisms with domain $D$. 
\end{ddd}

%
%

\begin{prop}\label{wrtohijworthrtwregergwerg}\mbox{}
\begin{enumerate} \item  \label{oiergjoiwegwrrgwr}The
 natural inclusion $\MHom_{\bC}(C,D)\to \aMHom_{\bC}(C,D)$ is an isomorphism.
 \item \label{oiergjoiwegwrrgwr1}   $\Hom_{\bC}(C,D)$ is strictly dense in $\MHom_{\bC}(C,D)$.
\item\label{oiergjoiwegwrrgwr2}   $\MHom_{\bC}(C,D)$ is complete with respect to the strict topology.
 
\end{enumerate}
\end{prop}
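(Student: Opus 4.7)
My overall approach is: in (1) upgrade individual boundedness of algebraic multipliers (obtained by the closed graph theorem) to uniform boundedness via a naturality plus functional-calculus trick; in (2) use an approximate unit of $\End_{\bC}(C)$ to produce strict approximations; in (3) combine (1) with pointwise completeness of the morphism spaces.

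For (1), the inclusion $\MHom_{\bC}(C,D)\hookrightarrow\aMHom_{\bC}(C,D)$ is tautological, so I only need to show that any algebraic multiplier $(L,R)$ is automatically uniformly bounded. First, I prove each $L_E$ is bounded by the closed graph theorem: if $f_n\to f$ in $\Hom_{\bC}(E,C)$ and $L_E(f_n)\to h$ in $\Hom_{\bC}(E,D)$, then for any $g$ with domain $D$ the algebraic multiplier identity $gL_E(f_n)=R_{E'}(g)f_n$ gives $gh=gL_E(f)$ in the limit; choosing $g=(h-L_E(f))^{*}$ and applying the $C^*$-identity forces $h=L_E(f)$. In particular $b_C\coloneqq\|L_C\|<\infty$. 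To upgrade to a uniform bound, fix $f\in\Hom_{\bC}(E,C)$ and form $p_\epsilon\coloneqq g_\epsilon(ff^{*})\in\End_{\bC}(C)$ with $g_\epsilon(t)=t/(t+\epsilon)$ via continuous functional calculus in the $C^{*}$-algebra $\End_{\bC}(C)$; since $g_\epsilon(0)=0$, $p_\epsilon$ lives in the possibly non-unital $\End_{\bC}(C)$ itself, with $\|p_\epsilon\|\le 1$. Naturality of $L$ in its first variable gives the crucial identity $L_C(p_\epsilon)\,f=L_E(p_\epsilon f)$, while the computation $p_\epsilon f-f=-\epsilon(ff^{*}+\epsilon)^{-1}f$ together with the elementary spectral estimate $\sup_{t\ge 0}t/(t+\epsilon)^{2}\le 1/(4\epsilon)$ gives $\|p_\epsilon f-f\|\le\sqrt{\epsilon}/2$. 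Letting $\epsilon\to 0$ and using continuity of $L_E$ yields $\|L_E(f)\|\le b_C\|f\|$, establishing $\sup_E\|L_E\|\le b_C$. The symmetric argument applied to the adjoint pair $(L^{*},R^{*})\in\aMHom_{\bC}(D,C)$ (which is again an algebraic multiplier by an easy direct check) then gives the analogous uniform bound for $R$.

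For (2), pick an approximate unit $(u_i)_{i\in I}$ for the $C^{*}$-algebra $\End_{\bC}(C)$. Each $L_C(u_i)\in\Hom_{\bC}(C,D)$ yields an associated multiplier $M_{L_C(u_i)}$, and I claim these converge strictly to $M=(L,R)$. For $f\in\Hom_{\bC}(E,C)$, naturality gives $L_C(u_i)f=L_E(u_i f)$, so
$$l_f\bigl(M-M_{L_C(u_i)}\bigr)=\|L_E(f-u_i f)\|\le\|L_E\|\cdot\|f-u_i f\|\longrightarrow 0$$
by Lemma \ref{multmult}. For $h\in\Hom_{\bC}(D,F)$, the multiplier relation gives $hL_C(u_i)=R_F(h)u_i$, so
$$r_h\bigl(M-M_{L_C(u_i)}\bigr)=\|R_F(h)-R_F(h)u_i\|\longrightarrow 0$$
again by Lemma \ref{multmult}. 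For (3), let $(M_i)_{i\in I}$ with $M_i=(L^{i},R^{i})$ be a strictly Cauchy net. The Cauchy condition in each seminorm $l_f$, $r_h$ makes $(L^{i}_{E}(f))_i$ and $(R^{i}_{F}(h))_i$ Cauchy in the Banach spaces $\Hom_{\bC}(E,D)$ and $\Hom_{\bC}(C,F)$, hence convergent; denote their limits by $L_E(f)$ and $R_F(h)$. Linearity, naturality, and the algebraic multiplier identity $gL(f)=R(g)f$ all survive these pointwise limits by joint continuity of composition in $\bC$. Hence $(L,R)\in\aMHom_{\bC}(C,D)$, and by (1) it in fact lies in $\MHom_{\bC}(C,D)$, with $M_i\to(L,R)$ strictly by construction.

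The principal obstacle is the uniform boundedness step in (1). The closed graph theorem only supplies separate bounds for each $E$; getting a bound independent of $E$ requires the naturality identity $L_C(p_\epsilon)f=L_E(p_\epsilon f)$ to transfer the estimate to the single object $C$ where $b_C<\infty$, together with a functional-calculus approximation $p_\epsilon f\to f$ that remains valid in the possibly non-unital $C^{*}$-algebra $\End_{\bC}(C)$ thanks to the vanishing of $g_\epsilon$ at the origin.
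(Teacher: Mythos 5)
Your proof is correct, and its overall skeleton matches the paper's: closed graph theorem for boundedness of each individual $L_E$ (your choice $g=(h-L_E(f))^{*}$ plus the $C^*$-identity is equivalent to the paper's use of $\|x\|=\sup_h\|hx\|$), an approximate unit to produce strict approximations from within $\Hom_{\bC}(C,D)$, and pointwise limits for strict completeness. The one genuinely different step is how you upgrade individual boundedness to uniform boundedness. The paper interleaves the assertions: it first proves strict density using an approximate unit $(h_i)$ of $\End_{\bC}(D)$, writes $L=\lim_i L_{R(h_i)}$ strictly, and then reads off $\|L(f)\|\le\|R_D\|\,\|f\|$ from $\|R(h_i)\|\le\|R_D\|$. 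You instead prove uniform boundedness directly and self-containedly, via the functional-calculus approximation $p_\epsilon=g_\epsilon(ff^{*})$ with $g_\epsilon(t)=t/(t+\epsilon)$, the naturality identity $L_E(p_\epsilon f)=L_C(p_\epsilon)f$, and the spectral estimate $\|p_\epsilon f-f\|\le\sqrt{\epsilon}/2$, yielding $\|L\|\le\|L_C\|$ (and $\|R\|\le\|R_D\|$ by applying the involution). Your route buys a cleaner logical separation of the three assertions at the cost of the functional-calculus trick; the paper's route is shorter once strict density is in hand but makes assertion (1) depend on assertion (2). A cosmetic further difference: you take the approximate unit in $\End_{\bC}(C)$ and approximate by $L_C(u_i)$, the paper takes it in $\End_{\bC}(D)$ and approximates by $R(h_i)$; both are handled by Lemma \ref{multmult}.
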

\begin{proof}
Let $(L,R)$ be an algebraic multiplier morphism from $C$ to $D$. We first show that
the members of  the families $L=(L_{E})_{E\in \Ob(\bC)}$ and $R=(R_{E})_{E\in \Ob(\bC)}$ are bounded.
We fix an object $E$ of $\bC$. Then $L_{E} \colon \Hom_{\bC}(E,C)\to  \Hom_{\bC}(E,D)$ is a linear map of Banach spaces. 
We show that its graph is closed and conclude that it is continuous and hence bounded. 
Let $(f_{i})_{i}$ be a net in $\Hom_{\bC}(E,C)$ such that $\lim_{i} f_{i}=f$ and assume that $\lim_{i}L(f_{i})=:g$ exists.
For every $h$ in $\Hom_{\bC}(D,F)$ we have
\begin{eqnarray*}
\|h L(f)-hg\|&\le&\|h L(f)-hL(f_{i})\|+\|hL(f_{i})-hg\|\\&=&
\|R(h) ( f-f_{i})\|+\|h(L(f_{i})-g)\|\,.
\end{eqnarray*}
Applying $\lim_{i}$ we get
$\|h(L(f)-g)\|=0$ for all $h$. By \eqref{qwepfojpofqefqwefqewf} we can conclude that
$L(f)=g$.
This is a first step towards the verification of Assertion \ref{oiergjoiwegwrrgwr}.

We show now the Assertion \ref{oiergjoiwegwrrgwr1}.  We actually show the stronger assertion hat  $\Hom_{\bC}(C,D)$ is strictly dense in $\aMHom_{\bC}(C,D)$. 
Let $M=(L,R)$ be in $\aMHom_{\bC}(C,D)$ and let $(h_{i})_{i}$ be a selfadjoint approximate unit of $\End_{\bC}(D)$.
 Then we have  $M_{h_{i}}M=M_{R(h_{i})}$. We show that
\[
\lim_{i}M_{R(h_{i})}=M
\]
in the strict topology. 
Let $f$ be in $\Hom_{\bC}(C,D)$. Then we have $L_{R(h_{i})}(f)=R(h_{i})f=h_{i}L(f)$ {and hence}
$\lim_{i}L_{R(h_{i})}(f)=\lim_{i} h_{i}L(f)=L(f)$ by Lemma \ref{multmult}. Similarly, for $g$ in $\Hom_{\bC}(D,C)$ we have 
$\lim_{i} R_{L(h_{i})}(g)=R(g)$.  {This proves Assertion \ref{oiergjoiwegwrrgwr1}.}
 
We now finish the proof of Assertion \ref{oiergjoiwegwrrgwr}. If $(L,R)$
is in  $\aMHom_{\bC}(C,D)$, then we have already seen that $L$ and $R$ are implemented by families $L=(L_{E})_{E\in \Ob(\bC)}$ and $R=(R_{E})_{E\in \Ob(\bC)}$ of bounded  maps. It remains to show that these families are uniformly bounded.
We now note that $L=\lim_{i} L_{R(h_{i})}$ in the strict topology, {where $(h_{i})_{i}$ is a selfadjoint bounded approximate unit}. 
Thus $L(f)=\lim_{i} R_{D}(h_{i})f$ for all $f$ in $\Hom_{\bC}(E,C)$. In particular, 
$\|L(f)\| \le \sup_{i} \|R_{D}\|\| h_{i}\| \|f\|\le \|R_{D}\|{\|f\|}$ since  
$\sup_{i}\| h_{i}\|\le 1$. This shows that $\|L\|\le \|R_{D}\|$. Similarly one shows that
$\|R\|\le \|L_{D}\|$.

We finally show Assertion \ref{oiergjoiwegwrrgwr2}.
The arguments are the same as for double centralizers for $C^{*}$-algebras \cite[Prop.\ 3.6]{busby_double_centralizers}.
Let $(M_{\nu})_{\nu}$ be a Cauchy net with respect to the strict topology in $\MHom_{\bC}(C,D)$.
Set $M_{\nu}=(L_{\nu},R_{\nu})$. Then $L \coloneqq \lim_{\nu} L_{\nu}$ and
$R \coloneqq \lim_{\nu} R_{\nu}$ exist pointwise and obviously define an element $M=(L,R)$ of $\aMHom_{\bC}(C,D)$.
We now use  Assertion \ref{oiergjoiwegwrrgwr} in order to conclude that 
$M$ belongs to $\MHom_{\bC}(C,D)$.
\end{proof}

\begin{rem}\label{qirojegwergwergrweg}
The proof of Assertion  \ref{oiergjoiwegwrrgwr1} shows that every multiplier in $\bM\bC$ is the strict limit of a uniformly bounded net in $\bC$. \hB
\end{rem}

One can also check that the composition \eqref{eq_composition_mult_morph} is separately strictly continuous, and jointly strictly continuous on bounded subsets.

Let $C$ be an object of $\bC$.
\begin{lem}\label{lem_unital_strict_equals_norm}
Assume that $C$ is unital and that $D$ is any object in $\bC$.
\begin{enumerate}
\item \label{eqroigqwgwefqewfqf}
We have equalities
\[
\Hom_{\bC}(C,D) = \MHom_{\bC}(C,D) \quad \text{and} \quad \Hom_{\bC}(D,C) = \MHom_{\bC}(D,C)\,.
\] 
 \item  \label{eqroigqwgwefqewfqf1}  On  $\MHom_{\bC}(C,D)$ and $\MHom_{\bC}(D,C)$ the strict and norm topologies  coincide.
 \end{enumerate}
\end{lem}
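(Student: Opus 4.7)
The plan is to leverage the presence of $\id_{C}$ to identify every multiplier morphism with an actual morphism of $\bC$, and then observe that the semi-norm $l_{\id_{C}}$ (respectively $r_{\id_{C}}$) already dominates the norm.

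For Assertion \ref{eqroigqwgwefqewfqf} I would argue as follows. Given $M=(L,R)$ in $\MHom_{\bC}(C,D)$, set $f\coloneqq L(\id_{C})$ in $\Hom_{\bC}(C,D)$. For every morphism $g\colon E\to C$ in $\bC$, naturality of $L$ yields $L(g)=L(\id_{C}\circ g)=L(\id_{C})g=fg=L_{f}(g)$, so $L=L_{f}$. Applying the compatibility relation \eqref{3r2fpokpwerverfw} with $f\rightsquigarrow\id_{C}$ and any $h\colon D\to E$ gives $R(h)=R(h)\,\id_{C}=h\,L(\id_{C})=hf=R_{f}(h)$. Hence $M=M_{f}$, showing that $\Hom_{\bC}(C,D)\to\MHom_{\bC}(C,D)$ is surjective; by Theorem \ref{mmain}\ref{qeroigjoqergfqewfqwefqewf} it is isometric, hence the identification. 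The argument for $\Hom_{\bC}(D,C)=\MHom_{\bC}(D,C)$ is symmetric: for $M=(L,R)$ set $f\coloneqq R(\id_{C})$, use naturality of $R$ to get $R=R_{f}$, and then \eqref{3r2fpokpwerverfw} to get $L=L_{f}$.

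For Assertion \ref{eqroigqwgwefqewfqf1} I would compare the two topologies in both directions. For $M=M_{f}$ with $f$ in $\Hom_{\bC}(C,D)$, the defining semi-norms satisfy $l_{g}(M_{f})=\|fg\|\le\|f\|\,\|g\|$ and $r_{h}(M_{f})=\|hf\|\le\|h\|\,\|f\|$, so every strict semi-norm is bounded by a multiple of the norm; in particular norm convergence implies strict convergence. Conversely, evaluating $l_{\id_{C}}$ on $M_{f}-M_{f'}=M_{f-f'}$ gives
\[
l_{\id_{C}}(M_{f-f'})=\|(f-f')\,\id_{C}\|=\|f-f'\|\,,
\]
so strict convergence of a net $(M_{f_{i}})_{i}$ to $M_{f}$ already forces $\|f_{i}-f\|\to 0$, i.e.\ norm convergence. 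Since by \eqref{eq_norm_multiplier} and the identification of Assertion \ref{eqroigqwgwefqewfqf} the norm on $\MHom_{\bC}(C,D)$ agrees with the norm on $\Hom_{\bC}(C,D)$, this proves coincidence of the two topologies. For $\MHom_{\bC}(D,C)$ one uses instead the semi-norm $r_{\id_{C}}$, which reproduces the norm on $\Hom_{\bC}(D,C)$ by the same computation.

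There is no real obstacle here; the only subtlety to watch is the direction of the composition in the definition of the left/right multiplier morphisms and in the pairing \eqref{3r2fpokpwerverfw}, so that the computations $L(g)=L(\id_{C}\circ g)$ and $R(h)=R(h\circ\id_{C})$ are applied correctly. Once this bookkeeping is fixed, both assertions reduce to one-line manipulations exploiting the unit $\id_{C}$.
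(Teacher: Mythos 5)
Your proof is correct and follows essentially the same route as the paper: the paper deduces Assertion \ref{eqroigqwgwefqewfqf} in one line from the ideal property of $\bC\subseteq\bM\bC$ (since $M=M\cdot M_{\id_{C}}$ lies in $\bC$), which is exactly the computation $L=L_{L(\id_{C})}$, $R=R_{L(\id_{C})}$ that you spell out, and for Assertion \ref{eqroigqwgwefqewfqf1} the paper likewise shows that the single seminorm $l_{\id_{C}}$ is equivalent to the norm. No gaps.
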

 \begin{proof}
Assertion \ref{eqroigqwgwefqewfqf} is an immediate consequence of  Assertion \ref{mmain}.\ref{qeroigjoqergfqewfqwefqewf}.  
We now show Assertion \ref{eqroigqwgwefqewfqf1}.
 It is clear that the norm on $\MHom_{\bC}(C,D)$ bounds (up to scale)  all the seminorms $l_{f}$ in \eqref{eq_left_strict_topology} and $r_{h}$ in \eqref{eq_right_strict_topology}.
  In particular, for a   multiplier morphism $M=(L,R)$  
 we have 
 $$l_{\id_{C}}(L)\le \|L\|=\|M\|\, .$$  
 On the  other hand we have $$\|M\|=\|R\|=\sup_{g}\|R(g)\|= \sup_{g}\|R(g)\id_{C}\|=\sup_{g}\|gL(\id_{C})\| \le  l_{\id_{C}}(L)\, ,$$
where the supremum runs over all morphisms $g$ with domain $D$ and $\|g\|\le 1$. 
This shows that the seminorm $  l_{\id_{C}} $ is equivalent to the norm on $\MHom_{\bC}(C,D)$.
%
\end{proof}

From now on for a multiplier morphisms $(L,R)$ from $C$ to $D$ we use the same notation $f$    as for morphisms    and write $gf$ instead of $R(g)$ and $fh$ instead of $L(h)$. 
%
%
%

In the following we discuss the relation between multiplier categories and $W^{*}$-envelopes.  We furthermore discuss the functoriality of the multiplier category.  
Let $\bC$ be in $\nCcat$ and $\sigma$ be in $\Rep(\bC)^{\la}$.
Recall the Definition \ref{ewkogwtrgwegwergferf} of non-degeneracy of a representation.
 \begin{lem}\label{qeirogqergqfqefqewfq}
If $\sigma$ is faithful and  non-degenerate, then it uniquely extends to a unital and faithful representation $\bM\sigma \colon \bM\bC\to \Hilb(\C)^{\la}$. 
\end{lem}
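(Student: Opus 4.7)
The plan is to define $\bM\sigma$ to agree with $\sigma$ on objects and, for each multiplier morphism $(L,R)\colon C\to D$, to specify the operator $\bM\sigma((L,R))\colon\sigma(C)\to\sigma(D)$ first on the dense subspace of $\sigma(C)$ spanned by vectors of the form $\sigma(f)z$ with $f\in\End_{\bC}(C)$ and $z\in\sigma(C)$ (dense by non-degeneracy) by the formula
\[
\bM\sigma((L,R))\,\sigma(f)z \;:=\; \sigma(L(f))z,
\]
and then extend by continuity once the bound $\|\bM\sigma((L,R))\|\le\|(L,R)\|$ has been established. Uniqueness of such an extension is then immediate: any unital $\bM\sigma$ extending $\sigma$ must satisfy $\bM\sigma((L,R))\,\sigma(f)z = \bM\sigma((L,R)\cdot M_f)z = \bM\sigma(M_{L(f)})z = \sigma(L(f))z$ by the identity $(L,R)\cdot M_f = M_{L(f)}$ established in the proof of Theorem \ref{mmain}(\ref{qeroigjoqergfqewfqwefqewf}), and these vectors are dense.

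The heart of the argument is well-definedness and contractivity of the above formula. For this I use that $\End_{\bM\bC}(C)$ is a unital $C^{*}$-algebra by Theorem \ref{mmain}(\ref{wjithgowgegfergegw}), so $(L,R)^{*}(L,R)\le\|(L,R)\|^{2}\id_{C}$ and we may write $\|(L,R)\|^{2}\id_{C}-(L,R)^{*}(L,R)=Q^{*}Q$ for a positive square root $Q\in\End_{\bM\bC}(C)$. Because $\bC$ is an ideal in $\bM\bC$ by Theorem \ref{mmain}(\ref{qeroigjoqergfqewfqwefqewf}), each $Qf_{i}$ lies in $\End_{\bC}(C)$, and from $L(f_{j})^{*}L(f_{i}) = f_{j}^{*}(L,R)^{*}(L,R)f_{i}$ (which lies in $\End_{\bC}(C)$) one computes
\[
\|(L,R)\|^{2}\,\sigma(f_{j}^{*}f_{i}) - \sigma(L(f_{j})^{*}L(f_{i})) \;=\; \sigma((Qf_{j})^{*}(Qf_{i})).
\]
Summing against vectors $z_{i},z_{j}$ and using $\langle\sigma(g^{*}h)z,w\rangle = \langle\sigma(h)z,\sigma(g)w\rangle$ yields
\[
\|(L,R)\|^{2}\Big\|\sum_{i}\sigma(f_{i})z_{i}\Big\|^{2} - \Big\|\sum_{i}\sigma(L(f_{i}))z_{i}\Big\|^{2} \;=\; \Big\|\sum_{i}\sigma(Qf_{i})z_{i}\Big\|^{2}\;\ge\;0,
\]
which gives simultaneously that the formula is well-defined on the dense subspace and that $\|\bM\sigma((L,R))\|\le\|(L,R)\|$.

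To verify functoriality it is convenient first to extend the formula $\bM\sigma((L,R))\,\sigma(h)z = \sigma(L(h))z$ to arbitrary $h\in\Hom_{\bC}(E,C)$ and $z\in\sigma(E)$; this follows from the naturality identity $L(u_{i}h) = L(u_{i})h$ for a selfadjoint approximate unit $(u_{i})$ of $\End_{\bC}(C)$ together with Lemma \ref{multmult} and the continuity of $\bM\sigma((L,R))$. Compositionality then holds on the dense subspace because $(L'L)(f) = L'(L(f))$, and the involution is handled by the direct calculation $\langle\bM\sigma((L,R))^{*}\sigma(h)z,\sigma(g)w\rangle = \langle z,\sigma(h^{*}L(g))w\rangle$ combined with the multiplier relation \eqref{3r2fpokpwerverfw} in the form $h^{*}L(g)=R(h^{*})g$, which identifies the right-hand side with the action of $\bM\sigma((L,R)^{*})$ via the formulas for $(R^{*},L^{*})$. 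Unitality is clear since the multiplier $\id_{C}$ fixes each $\sigma(f)z$. Faithfulness of $\bM\sigma$ is then immediate: if $\bM\sigma((L,R))=0$ then $\sigma(L(h))=0$ for every $h$, whence $L=0$ by faithfulness of $\sigma$, and $R=0$ by the $*$-structure.

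I expect the main obstacle to be the careful positivity bound in the second paragraph: the key move is to recognise $(L,R)^{*}(L,R)$ and its square root $Q$ as living in the unital $C^{*}$-algebra $\End_{\bM\bC}(C)$ while exploiting the ideal property of $\bC$ to keep $Qf_{i}$ and $L(f_{j})^{*}L(f_{i})$ inside $\bC$, so that the inequality can be detected directly by $\sigma$ without first extending it to $\bM\bC$.
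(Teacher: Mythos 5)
Your construction is the same as the paper's: you define $\bM\sigma$ on the dense subspace $\sigma(\End_{\bC}(C))\sigma(C)$ by the forced formula $\sigma(f)z\mapsto\sigma(L(f))z$, which is exactly the paper's $\sum_i\sigma(u_i)y_i\mapsto\sum_i\sigma(fu_i)y_i$. Where you diverge is in the two verifications. For well-definedness and the bound, the paper splits the work: it gets well-definedness from non-degeneracy in the form $\bigcap_{v\in\End_{\bC}(C')}\ker\sigma(v)=\{0\}$ (testing against $\sigma(vf)$ for all $v$), and then gets $\|\bM\sigma(f)\|\le\|f\|$ separately by letting $v$ run over a normalized approximate unit of $\End_{\bC}(C')$ and passing to the limit in $\|\sigma(vf)x\|\le\|f\|\,\|x\|$. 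You instead run a single Stinespring-type positivity argument, writing $\|(L,R)\|^2\id_C-(L,R)^*(L,R)=Q^*Q$ in the unital $C^*$-algebra $\End_{\bM\bC}(C)$ and using the ideal property to push $Qf_i$ and $L(f_j)^*L(f_i)$ into $\bC$; this yields well-definedness and contractivity in one stroke, at the cost of invoking functional calculus in $\End_{\bM\bC}(C)$ where the paper only needs an approximate unit. Both arguments are correct. For faithfulness, the paper proves the stronger isometry statement directly, choosing $u$ with $\|fu\|\ge(1-\epsilon)\|f\|$; you prove injectivity on morphism spaces and implicitly lean on the paper's earlier remark that faithful morphisms of $C^*$-categories are automatically isometric — which is legitimate, though when you conclude ``$R=0$ by the $*$-structure'' you should make explicit that this uses either the relation $R(g)f=gL(f)=0$ for all $f$ together with Lemma~\ref{multmult}, or the already-verified compatibility of $\bM\sigma$ with the involution. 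These are routine, and the remaining functoriality checks you sketch (naturality $L(u_ih)=L(u_i)h$ plus Lemma~\ref{multmult}, and the computation with \eqref{3r2fpokpwerverfw} for the adjoint) go through.
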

\begin{proof} Let $f:C\to C'$ be a morphism in $\bM\bC$.
For $x$ in the dense subspace generated by $ \sigma(\End_{\bC}(C))\sigma(C)$  of $\sigma(C)$ we choose finite families $(u_{i})_{i}$ in $
\End_{\bC}(C)$ and $(y_{i})_{i}$ in $\sigma(C)$ such that $x=\sum_{i}\sigma(u_{i})y_{i}$.
Then we must define $\bM\sigma(f)(x):=\sum_{i}\sigma(fu_{i})(y_{i})$. 
In order to see that this element of $\sigma(C')$ is well-defined we consider  other choices of finite families $(u'_{j})_{j}$ and $(y'_{j})_{j}$  such that $\sum_{j}\sigma(u'_{j})y_{j}'=x$.
Then for any $v$ in $\End_{\bC}(C')$ we have  \begin{equation}\label{erferffsdfssdferfwe}
 0=\sigma(vf)(\sum_{i}\sigma(u_{i})y_{i}-\sum_{j}\sigma(u_{j}')y_{j}')=\sigma(v) (  \sum_{i}\sigma(fu_{i})y_{i}-\sum_{j}\sigma(fu_{j}')y_{j}')\ . 
\end{equation}
 Since $\sigma$ is non-degenerate we have
$\bigcap_{v\in \End_{\bC}(C')}\ker(\sigma(v)) =\{0\}$. 
Since $v$ is arbitrary  we conclude from \eqref{erferffsdfssdferfwe}  that $ \sum_{i}\sigma(fu_{i})y_{i}=\sum_{j}\sigma(fu'_{j})y_{j}'$.
 
 We now show that $\bM\sigma(f)$ is bounded and hence extends continuously to an operator defined on all of $\sigma(C)$.
 To this end we let $v$ run over a  normalized approximate unit of the ideal $ \End_{\bC}(C')$ in $\End_{\bM\bC}(C')$. Then
 $$\lim_{v}   \sigma(vf)x=\lim_{v} \sum_{i}\sigma (vfu_{i})y_{i}=    \sum_{i}\sigma(fu_{i})y_{i}= \bM \sigma(f)x \ .$$ On the other hand, for every member $v$ of the normalized approximate unit we have
 $$\| \sigma(vf)x\|\le \|\sigma(vf)\|\|x\|\le \|vf\| \|x\|\le \|f\|\|x\|\ .$$
 Hence
 $\|\bM\sigma(f)x\|\le \|f\|\|x\|$ and  $\|\bM\sigma(f)\|$ is bounded by $\|f\|$. This finishes the construction of a 
  unital morphism $\bM\sigma:\bM\bC\to \Hilb(\C)^{\la}$ extending $\sigma$.

In order to see that it is faithful we note that for any $\epsilon$ in $(0,\infty)$ there exists $u$ in $\End_{\bC}(C)$ with $\|u\|\le 1$ such that $\|fu\|\ge (1-\epsilon)\|f\|$.
But since $\sigma$ is faithful we then have the  last inequality in
$$\|\bM\sigma(f)\| \ge \|\bM\sigma(f)\|\|  \sigma(u)\|\ge \|\sigma(fu)\|\ge  (1-\epsilon)\|f\|\ .$$  
  Since $\epsilon$ is arbitrary and clearly $\|\bM\sigma(f)\|\le \|f\|$ we have $  \|\bM\sigma(f)=\|f\|$.\end{proof}
Let $\phi:\bC\to \bD$ be a morphism in $\nCcat$.
\begin{ddd}\label{fkjbofgbwtgdgbdfgb}
The morphism $\phi$  is called non-degenerate if for every two objects $C,C'$ of $\bC$
the sets $\phi(\End_{\bC}(C'))\Hom_{\bD}(\phi(C),\phi(C'))$ %
and $\Hom_{\bD}(\phi(C),\phi(C'))\phi(\End_{\bC}(C))$ generate dense linear subspaces in $\Hom_{\bD}(\phi(C),\phi(C'))$.
\end{ddd}

\begin{rem} One should not confuse  the notion of non-degeneracy  from Definition \ref{fkjbofgbwtgdgbdfgb} with the notion of a non-degenerate representation on $\Hilb(\C)^{\la}$ introduced in Definition \ref{ewkogwtrgwegwergferf}.
A representation $\sigma:\bC\to \Hilb(\C)^{\la}$ which is non-degenerate 
according to Definition \ref{ewkogwtrgwegwergferf} will in general not be 
non-degenerate according to Definition \ref{fkjbofgbwtgdgbdfgb}. But note that the converse holds.
\hB
\end{rem}
\begin{ex}\label{nijdohnfdndfgndfgndfgnd}
We claim that for every $\bC$ in $\nCcat$ the canonical morphism $\bC\to \bW^{\mathrm{nu}}\bC$
is non-degenerate. 
Assume the contrary. Then there exists an object $C$ of $\bC$ such that
$\End_{\bC}(C)$ is not norm-dense in $\End_{\bW^{\mathrm{nu}}\bC}(C)$. 
 By Hahn--Banach we can find a non-trivial state $\lambda$ of $\End_{\bW^{\mathrm{nu}}\bC}(C)$  which vanishes on $\End_{\bC}(C)$. 
 The  GNS-representation $\sigma_{\lambda}$  
 annihilates $\End_{\bC}(C)$,  but $\sigma_{\lambda}(C)\not=0$.
  This contradicts the assertion of Lemma \ref{wtrogkprgerwgwegw} that
  $\bC\to \bW^{\mathrm{nu}}\bC\stackrel{\sigma_{\lambda}}{\to} 
  \Hilb(\C)^{\la}$ is non-degenerate.
\hB
\end{ex}

\begin{lem}\label{wrtjioghferferwfrfrw}
A faithful and non-degenerate morphism $\kappa:\bC\to \bD$ in $\nCcat$ with $\bD$ a $W^{*}$-category uniquely extends to a faithful  morphism
$\bM\kappa:\bM\bC\to \bD$.
\end{lem}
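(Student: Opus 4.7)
The plan is to reduce the statement to the representation-theoretic Lemma \ref{qeirogqergqfqefqewfq} by composing with a faithful normal representation of $\bD$. Concretely, I invoke \cite[Prop.\ 2.13]{ghr} to fix a faithful normal representation $\rho\colon\bD\to\Hilb(\IC)^{\la}$ whose image $\rho(\bD)$ is $\sigma$-weakly closed. Then $\sigma:=\rho\circ\kappa\colon\bC\to\Hilb(\IC)^{\la}$ is faithful, and I will show it is non-degenerate in the Hilbert-space sense of Definition \ref{ewkogwtrgwegwergferf}; this lets me apply Lemma \ref{qeirogqergqfqefqewfq} to obtain a unique unital faithful extension $\bM\sigma\colon\bM\bC\to\Hilb(\IC)^{\la}$.

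To verify non-degeneracy of $\sigma$, given $x\in\sigma(C)=\rho(\kappa(C))$ and $\varepsilon>0$, I use the non-degeneracy of $\kappa$ to find a finite sum $\sum_{k}\kappa(a_{k})b_{k}$ approximating $\id_{\kappa(C)}\in\End_{\bD}(\kappa(C))$ in norm, with $a_{k}\in\End_{\bC}(C)$ and $b_{k}\in\End_{\bD}(\kappa(C))$. Applying the unital representation $\rho$ and evaluating at $x$ gives the vector $\sum_{k}\sigma(a_{k})y_{k}$ with $y_{k}:=\rho(b_{k})x$, which approximates $\rho(\id_{\kappa(C)})x=x$ in norm; this exhibits $x$ in the closure of $\sigma(\End_{\bC}(C))\sigma(C)$.

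The heart of the argument is the factorization $\bM\sigma(\bM\bC)\subseteq\rho(\bD)$. For $f\colon C\to C'$ in $\bM\bC$, Remark \ref{qirojegwergwergrweg} supplies a uniformly bounded net $(f_{i})_{i}$ in $\Hom_{\bC}(C,C')$ with $f_{i}\to f$ strictly (concretely, $f_{i}:=fe_{i}$ for an approximate unit $(e_{i})_{i}$ of $\End_{\bC}(C)$). On vectors of the form $x=\sigma(a)y$ with $a\in\End_{\bC}(C)$ and $y\in\sigma(C)$ I compute $\sigma(f_{i})(x)=\sigma(f_{i}a)y$, and since $f_{i}a\to fa$ in norm by strict convergence of $f_{i}\to f$, this tends to $\sigma(fa)y=\bM\sigma(f)(x)$ by the construction of $\bM\sigma$ in Lemma \ref{qeirogqergqfqefqewfq}. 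The non-degeneracy established in the previous paragraph together with the uniform bound $\|\rho(\kappa(f_{i}))\|\le\|f\|$ then allows a standard $3\varepsilon$-argument to extend this to strong-operator convergence $\rho(\kappa(f_{i}))\to\bM\sigma(f)$ on all of $\sigma(C)$. Strong convergence implies $\sigma$-weak convergence, and since $\rho(\bD)$ is $\sigma$-weakly closed the limit $\bM\sigma(f)$ lies in $\rho(\bD)$. I then define $\bM\kappa(f)$ as the unique element of $\Hom_{\bD}(\kappa(C),\kappa(C'))$ mapping to $\bM\sigma(f)$ under $\rho$.

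Functoriality of $\bM\kappa$, compatibility with the $*$-operation, the identity $\bM\kappa|_{\bC}=\kappa$, and faithfulness are then immediate from the corresponding properties of $\bM\sigma$ combined with the faithfulness of $\rho$. Uniqueness of the extension follows by the same route: any other faithful extension $\psi\colon\bM\bC\to\bD$ yields a unital faithful extension $\rho\circ\psi$ of $\sigma$ (unitality is forced by $\psi(1_{C})\kappa(a)=\kappa(a)$ together with the approximation of the identity from Paragraph~2), which must agree with $\bM\sigma=\rho\circ\bM\kappa$ by the uniqueness clause of Lemma \ref{qeirogqergqfqefqewfq}; faithfulness of $\rho$ then gives $\psi=\bM\kappa$. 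The main obstacle is the factorization step, which requires a careful interplay between the strict topology on $\bM\bC$, the strong (hence $\sigma$-weak) topology on $\Hilb(\IC)^{\la}$, and the $\sigma$-weak closedness of $\rho(\bD)$.
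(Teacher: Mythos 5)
Your proof is correct, and its skeleton matches the paper's: fix a faithful representation $\rho$ of $\bD$, apply Lemma \ref{qeirogqergqfqefqewfq} to $\sigma=\rho\circ\kappa$, and then show that $\bM\sigma$ factors through $\rho(\bD)$. The difference lies entirely in how that factorization is established. The paper argues algebraically: it computes on the dense subspace spanned by $\sigma(\End_{\bC}(C))\sigma(C)$ that $\bM\sigma(f)$ commutes with every self-intertwiner $v\in\End_{\Rep(\bD)}(\rho)$, and then invokes the bicommutant theorem via $\bD\cong\bD''_{\rho}$ (valid since $\bD$ is a $W^{*}$-category and $\rho$ is faithful); this needs nothing about the topology of the image of $\rho$. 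You instead argue topologically: you approximate $f$ strictly by the bounded net $(fe_{i})_{i}\subseteq\bC$, upgrade pointwise convergence on the non-degeneracy subspace to strong-operator convergence by the uniform bound $\|f\|$, pass to $\sigma$-weak convergence on bounded sets, and conclude by $\sigma$-weak closedness of $\rho(\bD)$ — which forces you to choose $\rho$ normal with $\sigma$-weakly closed image via \cite[Prop.~2.13]{ghr}, an extra input the paper's route does not need (your chain strict $\to$ strong $\to$ $\sigma$-weak is essentially a hands-on special case of Lemma \ref{ewtiojgowtrgwergwergf}). Two small merits of your write-up over the paper's: you explicitly verify that $\sigma$ is non-degenerate in the sense of Definition \ref{ewkogwtrgwegwergferf} (the hypothesis of Lemma \ref{qeirogqergqfqefqewfq}, which the paper applies without comment), and you spell out the uniqueness argument, which the paper omits.
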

\begin{proof}
We can choose a faithful representation $\rho:\bD\to \Hilb(\C)^{\la}$. Then the canonical inclusion $\bD\to \bD''_{\rho}$ is an isomorphism. By Lemma \ref{qeirogqergqfqefqewfq} 
 the composition  $\sigma\colon \bC\stackrel{\kappa}{\to} \bD\stackrel{\rho}{\to}  \Hilb(\C)^{\la}$   extends to a faithful morphism $\bM\sigma:\bM\bC\to \Hilb(\C)^{\la}$. 
  It remains to show that  for every two objects $C,C'$ in $\bC$
  we have \begin{equation}\label{vfsdfvqrwevafv}
\bM\sigma(\Hom_{\bM\bC}(C,C'))\subseteq \rho( \Hom_{\bD}(\kappa(C),\kappa(C')))\ .
\end{equation}
  Then 
 we can define $\bM\kappa$ as $\kappa$ on the level of objects and by $\rho^{-1}\circ \bM\sigma$ on the level of morphisms.
   
   We now verify the relation \eqref{vfsdfvqrwevafv}.
 Let $v=(v_{C})_{C\in \bC}$ be in $\End_{\Rep(\bD)}(\rho)$. Then using the notation from the proof of Lemma \ref{qeirogqergqfqefqewfq}, we have for $f$ in $\Hom_{\bM\bC}(C,C')$ that
\begin{align*}\bM\sigma(f)v_{C}x=\bM\sigma(f)v_{C}\sum_{i}\sigma(u_{i})y_{i}&=
 \bM\sigma(f)\sum_{i}\sigma(u_{i})v_{C} y_{i}\\=&\sum_{i}\sigma(fu_{i})v_{C}y_{i}=v_{C'}\sum_{i}\sigma(fu_{i})y_{i}=v_{C'}\bM\sigma(f)x\ .\end{align*} We conclude that 
 $\bM\sigma(f)v_{C}=v_{C'}\bM\sigma(f)$. 
 Since $v$ is arbitrary this shows that $\bM\sigma(f)$ belongs to $\Hom_{\bD_{\rho}''}(\kappa(C),\kappa(C')) $, and hence  to   $\rho( \Hom_{\bD}(\kappa(C),\kappa(C')))$, both viewed as subspaces of $\Hom_{\Hilb(\C)^{\la}}(\sigma(C),\sigma(C'))$.
 \end{proof}

Let $\bC$ be in $\nCcat$. By Lemma  \ref{wrtjioghferferwfrfrw} the faithful and non-degenerate (see Example \ref{nijdohnfdndfgndfgndfgnd}) morphism
$\bC\to \bW^{\mathrm{nu}}\bC$ uniquely extends to   a faithful morphism $ \bM\bC\to \bW^{\mathrm{nu}}\bC$.

%
%
%

\begin{theorem}\label{jigogewrgeferfref}\mbox{}
The morphism $ \bM\bC\to   \bW^{\mathrm{nu}}\bC$
 identifies $\bM\bC$ with the idealizer of $\bC$ in $ \bW^{\mathrm{nu}}\bC$ and
 presents $\bW^{\mathrm{nu}}\bC$ as  the $W^{*}$-envelope of $\bM\bC$. 
\end{theorem}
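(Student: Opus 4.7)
My plan is to verify the two assertions in order. For the identification of $\bM\bC$ with the idealizer I would check the two inclusions; for the $W^{*}$-envelope statement I would verify the universal property of Theorem \ref{rijogegergwerg9} applied to the inclusion $\bM\bC\to \bW^{\mathrm{nu}}\bC$, reducing it to the already-established universal property of $\bW\bC^{+}$.

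The inclusion $\bM\bC \subseteq \mathrm{Idealizer}(\bC)$ is immediate from Theorem \ref{mmain}.\ref{qeroigjoqergfqewfqwefqewf}: since $\bC$ is an ideal in $\bM\bC$ and the morphism $\bM\bC \to \bW^{\mathrm{nu}}\bC$ of Lemma \ref{wrtjioghferferwfrfrw} is a morphism of $C^{*}$-categories, the image of $\bM\bC$ must normalise $\bC$ in $\bW^{\mathrm{nu}}\bC$. Conversely, given a morphism $f$ of $\bW^{\mathrm{nu}}\bC$ lying in the idealizer of $\bC$, I would construct a multiplier $(L_{f},R_{f})$ on $\bC$ by $L_{f}(h)\coloneqq fh$ and $R_{f}(g)\coloneqq gf$ with products formed inside $\bW^{\mathrm{nu}}\bC$; the idealizer condition places the outputs in $\bC$, boundedness with bound $\|f\|$ is automatic, and the compatibility \eqref{3r2fpokpwerverfw} is just associativity, so this defines an element of $\bM\bC$. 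To verify that this multiplier maps back to $f$, I would choose a faithful normal representation $\rho\colon \bW^{\mathrm{nu}}\bC \to \Hilb(\C)^{\la}$, which restricts non-degenerately to $\bC$ by Lemma \ref{wtrogkprgerwgwegw}, and trace through the explicit extension formula from the proof of Lemma \ref{qeirogqergqfqefqewfq}: on vectors $\rho(u)y$ with $u\in \End_{\bC}(C)$ and $y\in\rho(C)$, the extended representation of our multiplier yields $\rho(fu)y=\rho(f)\rho(u)y$, so by density on $\rho(C)$ and faithfulness of $\rho$ the multiplier indeed lifts back to $f$.

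For the second assertion the $\sigma$-weak density of $\bM\bC$ in $\bW^{\mathrm{nu}}\bC$ is immediate from $\bC\subseteq \bM\bC$ and Definition \ref{jgiewrgojwerfewrfeff}. For the extension property, given a unital $\sigma\colon \bM\bC \to \Hilb(\C)^{\la}$ I would use the canonical morphism $\bC^{+}\to \bM\bC$ (from the universal property of the unitalisation applied to $\bC\hookrightarrow \bM\bC$) to restrict $\sigma$ to a unital representation of $\bC^{+}$, extend it normally to $\hat\tau\colon \bW\bC^{+}\to \Hilb(\C)^{\la}$ using the $W^{*}$-envelope property of $\bC^{+}$ from Theorem \ref{rijogegergwerg9}, and then restrict $\hat\tau$ to $\bW^{\mathrm{nu}}\bC\subseteq \bW\bC^{+}$ to obtain a normal candidate $\hat\sigma$. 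The candidate is normal by construction; that $\hat\sigma$ agrees with $\sigma$ on all of $\bM\bC$ would then be deduced from part (1) by characterising $f\in \bM\bC$ through its action on $\bC$: the identity $\hat\sigma(f)\sigma(h)=\sigma(fh)=\sigma(f)\sigma(h)$ for all $h\in \bC$ together with non-degeneracy of the relevant part of $\sigma|_{\bC}$ pins down $\hat\sigma(f)=\sigma(f)$, and uniqueness of the normal extension follows from the $\sigma$-weak density.

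The main obstacle is the last step of the second assertion: a unital representation $\sigma$ of $\bM\bC$ may a priori contain a summand on which $\bC$ acts by zero while $\sigma(1_{\bM\bC})$ remains the identity, so agreement on $\bC$ does not automatically control $\sigma$ on $\bM\bC$ on such a summand. I expect the resolution to use that the unit of $\bW^{\mathrm{nu}}\bC$ at each object, realised inside $\bW\bC^{+}$ as the $\sigma$-weak limit of an approximate unit of $\bC$, is in general a strict sub-projection of the unit of $\bW\bC^{+}$; this is precisely the mechanism by which the normal extension through $\bW\bC^{+}$ followed by restriction to $\bW^{\mathrm{nu}}\bC$ drops any degenerate summand, so that after the compatibility argument above the resulting $\hat\sigma$ does agree with $\sigma$ on all of $\bM\bC$.
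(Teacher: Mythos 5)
Your treatment of the first assertion is correct and is essentially the paper's argument in different packaging: the paper also gets the map $\bM\bC\to$ idealizer by restricting the target of $\bM\bC\to\bW^{\mathrm{nu}}\bC$, obtains the candidate inverse from the universal property of the multiplier category rather than by writing $(L_{f},R_{f})$ explicitly, and then proves that this inverse is injective by exactly your mechanism — a faithful unital representation of $\bW^{\mathrm{nu}}\bC$ combined with the non-degeneracy statement of Lemma \ref{wtrogkprgerwgwegw}. So the two proofs of part one are interchangeable.

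For the second assertion your construction (restrict $\sigma$ along $\bC^{+}\to\bM\bC$, extend normally over $\bW\bC^{+}$, restrict to $\bW^{\mathrm{nu}}\bC$) is literally the paper's diagram with the arrows $1,2,3$, and the obstacle you flag at the end is exactly the point at which the paper's own proof also stops: it constructs the arrow $3$ and never verifies that its restriction to $\bM\bC$ recovers $\sigma$. Your proposed resolution does not close this. The normal extension of $\sigma\circ 1$ is determined only by $\sigma|_{\bC^{+}}$; restricting it to $\bW^{\mathrm{nu}}\bC$ does make $\hat\sigma$ vanish on the summand where $\bC$ acts degenerately, but $\sigma$ itself need not vanish there, so $\hat\sigma$ and $\sigma$ can genuinely disagree on $\bM\bC$. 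Concretely, take $\bC=K(H)$, so that $\bM\bC=B(H)$, $\bW\bC^{+}\cong B(H)\oplus\C$ and $\bW^{\mathrm{nu}}\bC\cong B(H)\oplus 0$, and let $\sigma$ be a nonzero representation of $B(H)$ that annihilates $K(H)$ (e.g.\ one factoring through the Calkin algebra). Then $\sigma\circ 1$ annihilates $K(H)$, its normal extension to $B(H)\oplus\C$ annihilates the summand $B(H)\oplus 0$, and the resulting $\hat\sigma$ on $\bW^{\mathrm{nu}}\bC$ is zero — it does not extend $\sigma$. Non-degeneracy of "the relevant part of $\sigma|_{\bC}$" controls $\hat\sigma$ only on that part and says nothing on its orthogonal complement, which is where the failure lives.

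So the gap is genuine and cannot be repaired by the mechanism you describe: the extension property (Condition \ref{wemiotgwegergerfw} in the proof of Theorem \ref{rijogegergwerg9}) can only be expected for representations of $\bM\bC$ whose restriction to $\bC$ is non-degenerate — for those, uniqueness of the extension in Lemma \ref{qeirogqergqfqefqewfq} forces $\hat\sigma=\sigma$ on $\bM\bC$ and your compatibility argument goes through verbatim. For an arbitrary representation of $\bM\bC$ the claim fails, and with it the literal assertion that $\bW^{\mathrm{nu}}\bC$ satisfies the universal property of $\bW(\bM\bC)$ as stated. You should either restrict the class of representations for which you prove the extension property, or be explicit that you are only matching what the paper actually establishes, namely the construction of a normal morphism $\bW^{\mathrm{nu}}\bC\to\Hilb(\C)^{\la}$ agreeing with $\sigma$ on $\bC$.
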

\begin{proof}
Recall that the idealizer $\bI(\bC\subseteq  \bW^{\mathrm{nu}}\bC)$ of $\bC$ in $ \bW^{\mathrm{nu}}\bC$ is the wide subcategory consisting of all morphisms $f$ with the property that all compositions of $f$ with morphisms from $\bC$ again belong to $\bC$. Using that  $\bC$ is an ideal in $\bM\bC$,
by restricting the target of  $ \bM\bC\to   \bW^{\mathrm{nu}}\bC$  we get a morphism $i_{\bC}:\bM\bC\to \bI(\bC\subseteq  \bW^{\mathrm{nu}}\bC)$.
Since  $\bC$ is closed in $ \bW^{\mathrm{nu}}\bC$ 
we conclude that $\bI(\bC\subseteq \bW^{\mathrm{nu}}\bC)$ is a closed subcategory
of $ \bW^{\mathrm{nu}}\bC$. It clearly contains $\bC$.
By the universal property of the multiplier category  we get   a canonical morphism
$k_{\bC}:\bI(\bC\subseteq \bW^{\mathrm{nu}}\bC)\to \bM\bC$ which sends a morphism in the idealizer to the corresponding multiplier.
We check that   $i_{\bC}$ and $k_{\bC}$ are inverses to each other.
From the uniqueness clause of the  universal property of the multiplier category   it is clear that $k_{\bC}i_{\bC}=\id_{\bM\bC}$. 
We claim that $k_{\bC}$ is injective. The claim implies that also $i_{\bC}k_{\bC}=\id_{\bI(\bC\subseteq \bW^{\mathrm{nu}}\bC)}$.
In order to show the claim assume that  $f:C\to C'$ in $\bW^{\mathrm{nu}}\bC$ is a morphism  with $k_{\bC}(f)=0$. 
By \cite[Prop. 2.13]{ghr} we can choose a unital and faithful representation  $\sigma:\bW^{\mathrm{nu}}\bC\to \Hilb(\C)^{\la}$.
Then for every $x$ in $\sigma(C)$ and $u$ in $ \End_{\bC}(C)$ we have
$fu=0$ and hence  $ \sigma(f) \sigma(u)x=\sigma(fu)x=0$.
 Using Lemma \ref{wtrogkprgerwgwegw} we conclude that
 $\sigma(f)=0$. Since $\sigma$ is faithful we get $f=0$.

In order to show the second assertion we verify the universal properties stated in the proof of Theorem \ref{rijogegergwerg9}.
Since the image of $\bC$ is $\sigma$-weakly dense in $\bW^{\mathrm{nu}}\bC$, so is 
the image of $\bM\bC$. This shows Condition \ref{iiuaoidfafasdfasdfasdf}.
In order verify Condition \ref{wemiotgwegergerfw} we consider
a representation $\sigma:\bM\bC\to \Hilb(\C)^{\la}$.
We then consider the following diagram
$$\xymatrix{\bC\ar[r]^{u} \ar[dd]&\bC^{+} \ar@{..>}[ddl]_{1}\ar[r]&\bW\bC^{+}\ar@{..>}[dd]^{2}\\&\bW^{\mathrm{nu}}\bC\ar@{..>}[dr]^{3}\ar[ur]&\\\bM\bC\ar[ur]\ar[rr]^{\sigma}&&\Hilb(\C)^{\la}}$$
The arrow marked by $1$ is the canonical extension of $\bC\to \bM\bC$    given by the universal property of the unitalization $u$.
The morphism
$2$ is  the canonical extension of $\sigma\circ 1$ given by the universal property of the $W^{*}$ envelope of $\bC^{+}$. Finally, the morphism $3$ is given by the composition of the inclusion $\bW^{\mathrm{nu}}\bC\to \bW\bC^{+}$
with the arrow $2$.
\end{proof}

We now study the functoriality of the multiplier category.
 Let $\phi:\bC\to \bD$ be a morphism in $\nCcat$.
The following is a generalization of \cite[Prop. 3.12]{busby_double_centralizers}.
\begin{prop}\label{stkghosgfgsrgsegs}
If $\phi$ is  full    (non-degenerate, resp.), then it has a unique   extension 
$\bM\phi:\bM\bC\to \bM\bD$ which is strictly continuous (strictly continuous on  bounded subsets, resp.). 
If $\phi$ is fully faithful, then so is $\bM\phi$.
\end{prop}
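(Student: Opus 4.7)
For uniqueness, the subcategory $\bC$ is strictly dense in $\bM\bC$ by Proposition~\ref{wrtohijworthrtwregergwerg}(\ref{oiergjoiwegwrrgwr1}), so any extension of $\phi$ that is strictly continuous on bounded subsets is determined by its values on $\bC$. For existence, I would use a strict-limit formula: given $M=(L,R)$ in $\Hom_{\bM\bC}(C,C')$, fix a self-adjoint multiplicative approximate unit $(h_{i})_{i}$ in $\End_{\bC}(C)$, that is, one satisfying $h_{i}h_{j}=h_{i}=h_{j}h_{i}$ for $i\le j$ (such exists in any $C^{*}$-algebra). The proof of Proposition~\ref{wrtohijworthrtwregergwerg}(\ref{oiergjoiwegwrrgwr1}) shows $M_{L(h_{i})}\to M$ strictly in $\bM\bC$ with $\|L(h_{i})\|\le\|M\|$, and my candidate $\bM\phi(M)$ is the would-be strict limit of the net $M_{\phi(L(h_{i}))}$ in $\bM\bD$, built symmetrically from an approximate unit of $\End_{\bC}(C')$ on the right-multiplier side.

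The decisive intermediate step is to upgrade non-degeneracy, which by Definition~\ref{fkjbofgbwtgdgbdfgb} only controls morphism spaces with both endpoints in the image of $\phi$, to a statement valid for morphisms with arbitrary source: if $\phi$ is non-degenerate and $d\in\Hom_{\bD}(E,\phi(C))$ for any object $E$ of $\bD$, then $\lim_{i}\phi(h_{i})d=d$ in norm. Applying non-degeneracy to $C=C'$ yields that $\phi(\End_{\bC}(C))\cdot\End_{\bD}(\phi(C))$ is dense in $\End_{\bD}(\phi(C))$, whence $(\phi(h_{i}))$ is a bounded two-sided approximate unit of the $C^{*}$-algebra $\End_{\bD}(\phi(C))$. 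Since $dd^{*}\in\End_{\bD}(\phi(C))$, the $C^{*}$-identity
\[
\|\phi(h_{i})d-d\|^{2}=\|\phi(h_{i})(dd^{*})\phi(h_{i})-\phi(h_{i})(dd^{*})-(dd^{*})\phi(h_{i})+dd^{*}\|
\]
together with the approximate-unit property sends each of the four terms in norm to $dd^{*}$, so the right-hand side tends to $0$.

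Using this lemma, the telescoping identity for $i\le j$
\[
\phi(L(h_{i}))d-\phi(L(h_{j}))d=\phi(L(h_{i}))(d-\phi(h_{j})d)+\phi(L(h_{j}))(\phi(h_{i})d-d)\,,
\]
which rests on $L(h_{i}h_{j})=L(h_{i})=L(h_{j}h_{i})$ and naturality of $L$, combined with $\|\phi(L(h_{k}))\|\le\|M\|$, shows that $(\phi(L(h_{i}))d)_{i}$ is norm-Cauchy. Its limit $\tilde L(d)$ and the analogously defined $\tilde R$ form a multiplier in $\bM\bD$: compatibility~\eqref{3r2fpokpwerverfw} passes from the corresponding relation for the $M_{L(h_{i})}$ to the limit, and independence of $(h_{i})$ together with the equality $\bM\phi|_{\bC}=\phi$ follow from the same strict-limit arguments. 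Functoriality of $\bM\phi$ and strict continuity on bounded subsets then reduce to separate strict continuity of composition on bounded subsets in $\bM\bD$.

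In the full case, any $g\in\Hom_{\bD}(E,\phi(C))$ between image objects equals $\phi(g')$ for some morphism $g'$ of $\bC$, and then $\phi(L(h_{i}))g=\phi(L(h_{i}g'))\to\phi(L(g'))$ in norm by Lemma~\ref{multmult} and continuity of $L$; the resulting explicit formula removes the boundedness restriction from strict convergence. For the preservation of fully faithfulness, faithfulness of $\phi$ makes it isometric, and combined with the explicit approximation $\|M\|=\sup_{i}\|L(h_{i})\|$ this gives $\|\bM\phi(M)\|=\|M\|$, so $\bM\phi$ is faithful. Fullness of $\bM\phi$ is obtained by lifting: given $N\in\Hom_{\bM\bD}(\phi(C),\phi(C'))$, the bounded net $N\phi(h_{i})$ in $\Hom_{\bD}(\phi(C),\phi(C'))$ lifts through fullness of $\phi$ to a strictly Cauchy net in $\Hom_{\bC}(C,C')$ whose strict limit in $\bM\bC$ is mapped by $\bM\phi$ to $N$. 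The main obstacle throughout is exactly the approximate-unit lemma of the second paragraph, where the gap between non-degeneracy as stated (only for morphisms between images) and the control needed over all morphisms with target $\phi(C)$ must be bridged by the $C^{*}$-identity argument.
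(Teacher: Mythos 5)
Your proposal is correct in its overall strategy but takes a genuinely different route from the paper. The paper does not build $\bM\phi$ by hand: it applies the functor $\bW^{\mathrm{nu}}$ (which is defined on all morphisms), identifies $\bM\bC$ and $\bM\bD$ with the idealizers of $\bC$ and $\bD$ inside their $W^{*}$-envelopes via Theorem~\ref{jigogewrgeferfref}, and then only has to check the essentially algebraic statement that $\bW^{\mathrm{nu}}\phi(f)$ idealizes $\bD$, which follows from $\bW^{\mathrm{nu}}\phi(f)\phi(u)h=\phi(fu)h$ together with the density of such compositions. All the analytic well-definedness you wrestle with (independence of the approximate unit, passing the relation \eqref{3r2fpokpwerverfw} to the limit) is absorbed into the $W^{*}$-machinery. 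Your direct Busby/Lance-style construction by strict limits is a legitimate, self-contained alternative at the price of more bookkeeping; your upgraded approximate-unit lemma (from morphisms between image objects to arbitrary morphisms with target $\phi(C)$) is correct and is exactly the non-degeneracy input the paper also uses, and your isometry and fullness arguments for $\bM\phi$ agree in substance with the paper's.

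There is, however, one genuine gap: the parenthetical claim that every $C^{*}$-algebra admits an approximate unit with $h_{i}h_{j}=h_{i}$ for $i\le j$. This is standard for $\sigma$-unital algebras (functional calculus on a strictly positive element), but $\End_{\bC}(C)$ need not be $\sigma$-unital, and the existence of such a net in a general $C^{*}$-algebra is not a standard fact you may simply assert. Since the telescoping identity is the only place it enters, replace it: with any self-adjoint approximate unit $(h_{i})$ of norm at most $1$, write $L(h_{i})-L(h_{j})=M(h_{i}-h_{j})$ as a composition in $\bM\bC$ and use $M^{*}M\le\|M\|^{2}1_{C}$ in $\End_{\bM\bC}(C)$ to get
\[
\|\phi(L(h_{i}))d-\phi(L(h_{j}))d\|^{2}=\bigl\|d^{*}\phi\bigl((h_{i}-h_{j})M^{*}M(h_{i}-h_{j})\bigr)d\bigr\|\le\|M\|^{2}\,\|\phi(h_{i})d-\phi(h_{j})d\|^{2}\,,
\]
whose right-hand side is Cauchy by your approximate-unit lemma. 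A smaller imprecision: in the full case you test strict convergence only against morphisms between image objects, whereas the seminorms defining the strict topology on $\bM\bD$ involve arbitrary domains $E$ in $\bD$; the clean way to drop the boundedness restriction there is Cohen factorization, writing $d=\phi(u)d'$ exactly, which is available for every object $E$ of $\bD$.
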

\begin{proof}
We consider the more complicated case where $\phi$ is non-degenerate. The argument in the case where $\phi$ is full  is similar.
We consider the diagram
$$\xymatrix{\bC\ar[r]\ar[d]^{\phi}&\bM\bC\ar@{..>}[d]^{\bM\phi}\ar[r]&\bW^{\mathrm{nu}}\bC\ar[d]^{\bW^{\mathrm{nu}}\phi}\\\bD\ar[r]&\bM\bD\ar[r]&\bW^{\mathrm{nu}}\bD}$$
We claim that $\bW^{\mathrm{nu}}\phi$ restricts to a morphism
$ \bM\phi:\bM\bC\to \bM\bD$ which is in addition strictly continuous on bounded subsets.
Let $f$ be a morphism in $\bM\bC$.  Since $\phi$ is non-degenerate,  any morphism  in  $\bD$ can be approximated in norm by  linear compositions of compositions
$\phi(u)h$  for a morphisms $u$   in $\bC$ and morphisms $h$ in $\bD$. Then using that $f$  idealizes $\bC$   we see that $\bW^{\mathrm{nu}}\phi(f)\phi(u)h = \phi(fu)h $ is a morphism in $\bD$. 
 This implies that $\bW^{\mathrm{nu}}(\phi)(f)$ belongs to the idealizer of $\bD$ in $\bW^{\mathrm{nu}}\bD$ and hence to $\bM\bD$.
 We thus get the morphism $\bM\phi$. 
 
 In order to see that $\bM\phi$ is strictly continuous on bounded subsets let $(f_{i})_{i\in I}$ be a bounded   net  in $\bM\bC$ such that $\lim_{i}f_{i}=f$ strictly. Then $(\bM\phi(f_{i}))_{i\in I}$ is  a bounded net  in $\bM\bD$.  Hence it suffices to test left strict convergence on the set of morphisms of the form $\phi(u) h$ as above. In fact we have
$$\lim_{i} \bM\phi (f_{i}) \phi(u)h=\lim_{i}\phi(f_{i}u)h=\phi(fu)h=\bM(f)\phi(u)h\ .$$
Right strict convergence can be shown similarly.
If $\phi$ is full, then using a similar argument one can drop the condition that the net is bounded.

The uniqueness of the extension $\bM\phi$ follows from  continuity 
and the fact that $\bC$ is strictly dense in $\bM\bC$ by Proposition \ref{wrtohijworthrtwregergwerg}.\ref{oiergjoiwegwrrgwr1} together with Remark \ref{qirojegwergwergrweg}.

Now assume that $\phi$ is fully faithul. We first show that $\bM\phi$ is an isometric inclusion.
For  a morphism $(L,R):C\to C'$  in $\bM\bC$ write $\bM\phi(L,R)=(L',R')$. We have  the inequalities 
$$ \|(L,R)\|  \ge  \|\ (L',R')\|= \sup_{f} \| L'(f) \|\ge 
\sup_{g} \| L(g)\|=\|(L,R)\|\ , $$ where
$f$ runs over all morphisms in $\bD$ with target $\phi(C)$ and $\|f\|\le 1$, and $g$ runs over all morphisms in $\bC$ with target $C$ and $\|g\|\le 1$.
  The second inequality holds since $\phi$ is fully faithful and  the collection of $f$'s is bigger than the collection of $g$'s as $f$ may have a domain which does not belong to the image of $\phi$. We further used the fact observed in the proof of Theorem \ref{mmain}  that the norm of a multiplier is equal to the  norm of its left multiplier.
The chain of inequalities implies that
$\bM\phi$ is an isometric inclusion.

Finally we show that $\bM\phi$ is  full. To this end
  we show that $\bM\phi$ detects strict convergence.
If $(f_{i})_{i}$ is a net in $\Hom_{\bM\bC}(C,C')$  such that 
$(\bM\phi(f_{i}))_{i}$ strictly converges in $\Hom_{\bM\bD}(\phi(C),\phi(C'))$, then  for every
$h$ in $\Hom_{\bC}( C' , C'' )$  the net $(\phi(h) \bM\phi(f_{i}))_{i}$ in $\bD$ converges in norm.
Using the identities $\phi(h) \bM\phi(f_{i}) =   \phi(h f_{i})$ and that $\phi$ is fully faithful 
we see that the net $(hf_{i})_{i}$  in $\bC$ converges in norm. This shows that $\bM\phi$ detects right-strict convergence. Similarly we show that it detects left-strict convergence.

Since $\phi$ is fully faithful,  $\bM\phi$ is strictly continuous and detects  strict convergence, and  $\bD$ is strictly  dense in $\bM\bD$ by 
Proposition \ref{oiergjoiwegwrrgwr1}, we can conclude that $\bM\phi$ is surjective.
  \end{proof}

  \begin{kor}\label{wropr}  If $\phi:\bC\to \bD$ is a fully faithful morphism in $\nCcat$, then $\bW^{\mathrm{nu}}\phi:\bW^{\mathrm{nu}}\bC\to \bW^{\mathrm{nu}}\bD$ is fully faithful.
  \end{kor}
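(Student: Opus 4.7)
The plan is to reduce the statement to Proposition \ref{rjigowergwregregwfer} via the multiplier category.

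First, I would apply Proposition \ref{stkghosgfgsrgsegs}: since $\phi\colon \bC\to\bD$ is fully faithful, it is in particular non-degenerate, so it extends to a fully faithful unital morphism $\bM\phi\colon \bM\bC\to \bM\bD$ in $\Ccat$. Second, I would invoke Proposition \ref{rjigowergwregregwfer}, which says that the $W^{*}$-envelope functor $\bW\colon \Ccat\to \Wcat$ preserves fully faithfulness, to conclude that $\bW(\bM\phi)\colon \bW\bM\bC\to \bW\bM\bD$ is fully faithful.

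Third, by Theorem \ref{jigogewrgeferfref}, the faithful morphism $\bM\bC\to \bW^{\mathrm{nu}}\bC$ presents $\bW^{\mathrm{nu}}\bC$ as the $W^{*}$-envelope of $\bM\bC$, and similarly for $\bD$. Under these canonical identifications $\bW\bM\bC\cong \bW^{\mathrm{nu}}\bC$ and $\bW\bM\bD\cong \bW^{\mathrm{nu}}\bD$, the morphism $\bW(\bM\phi)$ is identified with $\bW^{\mathrm{nu}}\phi$. Indeed, both morphisms extend $\phi\colon \bC\to \bD$ and are normal, and the uniqueness of extensions follows from the $\sigma$-weak density of the image of $\bC$ in $\bW^{\mathrm{nu}}\bC$ together with the $\sigma$-weak continuity of normal morphisms, as used in the proof of Theorem \ref{rijogegergwerg9}.

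The only subtle point is the identification in the third step: one needs to check that the square
\[
\xymatrix{\bM\bC \ar[r]^{\bM\phi} \ar[d] & \bM\bD \ar[d] \\ \bW^{\mathrm{nu}}\bC \ar[r]^{\bW^{\mathrm{nu}}\phi} & \bW^{\mathrm{nu}}\bD}
\]
commutes, which in turn follows from the fact that both $\bW^{\mathrm{nu}}\phi \circ (\bM\bC\to \bW^{\mathrm{nu}}\bC)$ and $(\bM\bD\to \bW^{\mathrm{nu}}\bD)\circ \bM\phi$ restrict to the common morphism $\bC\to \bW^{\mathrm{nu}}\bD$, so they must agree on $\bM\bC$ by the uniqueness in Lemma \ref{wrtjioghferferwfrfrw}. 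This reduces the corollary to applying $\bW$ to the already-established fully faithful morphism $\bM\phi$. The main potential obstacle is verifying the compatibility of these universal extensions, but this is essentially formal once the uniqueness statements of Lemma \ref{wrtjioghferferwfrfrw} and Theorem \ref{jigogewrgeferfref} are in place.
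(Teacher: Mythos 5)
Your proposal is correct and follows essentially the same route as the paper: extend $\phi$ to a fully faithful $\bM\phi$ via Proposition \ref{stkghosgfgsrgsegs}, apply Proposition \ref{rjigowergwregregwfer} to get that $\bW\bM\phi$ is fully faithful, and identify $\bW\bM\phi$ with $\bW^{\mathrm{nu}}\phi$ using Theorem \ref{jigogewrgeferfref}. The paper states this identification without elaboration, whereas you spell out the (correct) uniqueness argument; note also that fullness of $\phi$ already suffices to invoke Proposition \ref{stkghosgfgsrgsegs}, so the detour through non-degeneracy is not needed.
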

  \begin{proof}
  By Proposition \ref{stkghosgfgsrgsegs} the morphism $\bM\phi$ is fully faithful. Therefore $\bW\bM\phi$ is fully faithful by Proposition \ref{rjigowergwregregwfer}. By Theorem \ref{jigogewrgeferfref} we have an isomorphism
  $\bW\bM\phi\cong \bW^{\mathrm{nu}}\phi$ and the assertions follows.
    \end{proof}

The following extends the notion of a (unitary) natural transformation between morphisms in $\Ccat$ to the non-unital case.
Let $\phi,\phi':\bC\to \bD$ be morphisms in $\nCcat$.
\begin{ddd}
A (unitary) natural multiplier transformation $u:\phi\to \phi'$ is a family $u:=(u_{C})_{C\in \Ob(\bC)}$ of (unitary) morphisms in $\bM\bD$ such that
for every morphism $f:C\to C'$ in $\bC$ we have $u_{C'}\phi(f)=\phi(f)u_{C}$.
\end{ddd}

Recall that a morphism $\phi:\bC\to \bD$ in $\Ccat$  is  a unitary equivalence of 
it admits an inverse up to unitary isomorphism. Equivalently,  it is a fully faithful morphism that is in addition essentially surjective, i.e., every object in $\bD$ is unitarily isomorphic to an object in the image of $\phi$. Using the functoriality of the
multiplier category for fully faithful morphisms we extend the notion of a unitary equivalence to the non-unital case as follows. 

Let $\phi:\bC\to \bD$ in $\nCcat$ be a  morphism.  If $\phi$ is fully faithful, then  $\bM\phi:\bM\bC\to \bM\bD$ is defined  and also fully faithful by Proposition \ref{stkghosgfgsrgsegs}.
\begin{ddd}\label{ihjigwjegoerwgwerffwerfw}
$\phi$ is called a unitary equivalence if it is fully faithful and  $\bM\phi$ is a unitary equivalence.
\end{ddd}

\begin{rem} \label{retjheirotjhoergrtgertgetrgterg}
For morphisms in $\Ccat$ Definition  \ref{ihjigwjegoerwgwerffwerfw} reproduces  the classical notion.

For a general morphism  $\phi:\bC\to \bD$ in $\nCcat$ we provide three further  equivalent 
characterisations of being a unitary equivalence.

\begin{enumerate} \item
 The morphism $\phi:\bC\to \bD$ in $\nCcat$ is a unitary equivalence if and only it is fully faithful and   every object of $\bD$ is isomorphic  by a unitary multiplier morphism to an object in the image of $\phi$.
\item
The morphism  
 $\phi:\bC\to \bD$  in $\nCcat$ is a unitary equivalence if and only if it admits an inverse up to a   unitary natural  multiplier isomorphisms.   
 \item \label{etherhthgetgetrgetget}
 The morphism $\phi$ is a unitary equivalence if and only if it is fully faithful and a   part
 of a square $$\xymatrix{\bC\ar[d]^{\phi}\ar[r]&\bE\ar[d]^{\psi}\\\bD\ar[r]&\bF}$$ 
 where the horizontal morphisms are ideal inclusions
 and the morphism $\psi$ is a unitary equivalence in $\Ccat$.  Indeed, if $\phi$ is a unitary equivalence in the sense of Definition  \ref{ihjigwjegoerwgwerffwerfw},
 then we can take $\psi=\bM\phi:\bM\bC\to \bM\bD$.
 Vice versa, if we have the data of such a square, then any  object of $\bD$  is isomorphic to an object in the image of $\phi$ by a unitary in $\bF$. The image of this unitary under the canonical map $\bF\to \bM\bD$ yields a multiplier isomorphism of  our object with an object in the image of $\bD$. 
  \end{enumerate}
 The data in Point \ref{etherhthgetgetrgetget} could also be called a relative unitary equivalence.
 \hB
\end{rem}

We concider $\bC$ in $\nCcat$.
Note that we have the strict topology on the morphism spaces of $\bM\bC$ and the weak operator topology described in  Definition  \ref{weiojgowegerfwrf} on the morphism spaces of $\bW^{\mathrm{nu}}\bC$.

\begin{lem}\label{ewtiojgowtrgwergwergf}
The restriction of the inclusion  morphism $\bM\bC\to \bW^{\mathrm{nu}}\bC$ to bounded subsets 
  is  continuous with respect to the strict topology on the domain and the weak operator topology on the target.
\end{lem}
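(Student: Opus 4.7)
The plan is to check continuity against the generating family of the weak operator topology. By Definition \ref{weiojgowegerfwrf}, it suffices to show that for every normal (hence unital) representation $\sigma \colon \bW^{\mathrm{nu}}\bC \to \Hilb(\C)^{\la}$ and every pair of vectors $x \in \sigma(C)$, $x' \in \sigma(C')$, the functional
\[
\Hom_{\bM\bC}(C,C') \to \IC\,, \qquad f \mapsto \langle x', \sigma(f) x\rangle\,,
\]
is continuous on norm-bounded subsets with respect to the strict topology from Definition \ref{erguihierverqerf}.

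The crucial input is Lemma \ref{wtrogkprgerwgwegw}, which ensures that the composition $\bC \to \bW^{\mathrm{nu}}\bC \xrightarrow{\sigma} \Hilb(\C)^{\la}$ is non-degenerate in the sense of Definition \ref{ewkogwtrgwegwergferf}. Consequently the linear span of $\sigma(\End_{\bC}(C))\sigma(C)$ is norm-dense in $\sigma(C)$. This is what will allow me to replace $x$ by a controlled approximant on which the strict convergence can be exploited.

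Concretely, I would argue as follows. Let $(f_i)_i$ be a net in $\Hom_{\bM\bC}(C,C')$ with $\sup_i \|f_i\| \le M$ converging strictly to $f$; note that $\|f\|\le M$ as well, since for any $g \in \Hom_{\bC}(-,C)$ with $\|g\|\le 1$ we have $L_f(g) = \lim_i L_{f_i}(g)$ in norm and hence $\|L_f(g)\| \le M$. Given $\varepsilon > 0$ (and assuming $x' \neq 0$), by non-degeneracy pick $x_0 = \sum_{k=1}^n \sigma(u_k) y_k$ with $u_k \in \End_{\bC}(C)$ and $y_k \in \sigma(C)$ such that $\|x - x_0\| < \varepsilon/(4M\|x'\|)$. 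The two tail terms $|\langle x', \sigma(f_i)(x-x_0)\rangle|$ and $|\langle x', \sigma(f)(x-x_0)\rangle|$ are each bounded by $\varepsilon/4$ by the Cauchy--Schwarz inequality, the uniform bound $M$, and the fact that $\sigma$ is norm-contractive. For the main term, since $\bC$ is an ideal in $\bM\bC$ by Theorem \ref{mmain}.\ref{qeroigjoqergfqewfqwefqewf}, each $f_i u_k$ and $f u_k$ lies in $\Hom_{\bC}(C,C')$, and
\[
\langle x', \sigma(f_i - f) x_0\rangle = \sum_{k=1}^n \langle x', \sigma((f_i - f) u_k) y_k\rangle\,.
\]

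The key point is now that strict convergence of $(f_i)_i$ to $f$ exactly means $\|L_{f_i}(u_k) - L_f(u_k)\| = \|(f_i - f) u_k\| \to 0$ in $\bC$ for each fixed $u_k$ (these are precisely the seminorms $l_{u_k}$ from \eqref{eq_left_strict_topology}); since $\sigma$ is norm-continuous and the sum is finite, the right-hand side tends to zero. This finishes the estimate and hence the proof. The only mildly subtle step is controlling $\|f\|$ by $M$ so that the approximation error near $x$ is under control; everything else is a routine $\varepsilon/3$-argument once non-degeneracy has cleared the way to vectors of the form $\sigma(u)y$.
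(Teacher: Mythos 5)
Your proof is correct and follows essentially the same route as the paper's: reduce via non-degeneracy (Lemma \ref{wtrogkprgerwgwegw}) and uniform boundedness to vectors built from $\sigma(\End_{\bC}(C))\sigma(C)$, then exploit that the seminorms $l_{u_k}$ give norm convergence of $(f_i-f)u_k$ in $\bC$. The only (cosmetic) difference is that the paper also replaces $x'$ by $\sigma(u')y'$ and works with $u'^{\,*}f_iu$, whereas you keep $x'$ arbitrary and absorb it via Cauchy--Schwarz, which additionally requires your (correct) observation that the strict limit satisfies $\|f\|\le M$.
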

\begin{proof} 

We can assume that the representations used to define the weak  operator topology described in  Definition  \ref{weiojgowegerfwrf} are unital.
Let $\sigma:\bW^{\mathrm{nu}}\bC\to \Hilb(\C)^{\la}$ be any normal and unital  representation.

Let $(f_{i})_{i\in I}$ be a bounded net in  $\bM\bC$ of   morphisms from $C$ to $C'$  which strictly converges to $f$. 
Let $x'$ be in $\sigma(C')$ and $x$ be in $\sigma(C)$. Then we must show 
that  the net $(\langle x', \sigma(f_{i})x\rangle)_{i\in I}$ in $\C$ converges to
$  \langle x', \sigma(f)x\rangle$

By Lemma \ref{wtrogkprgerwgwegw} we know that $\sigma:\bC\to \bW^{\mathrm{nu}}\bC\to \Hilb(\C)^{\la}$ is non-degenerate.
Since $ (\|\sigma(f_{i})\|)_{i\in I}$ is bounded    it suffices by linearity to show this convergence for all $x$ in the  subset $\sigma(\End_{\bC}(C))\sigma(C)$ of $\sigma(C)$  and 
$x'$ in the   subset $\sigma(\End_{\bC}(C'))\sigma(C')$ of $\sigma(C')$.
But then there are $u$ in $  \End_{\bC}(C)$ and $y$ in $  \sigma(C)$ such that $x=\sigma(u)y$, and $u'$ in $ \End_{\bC}(C')$ and $y'$ in $  \sigma(C')$
such that $x'=\sigma(u')y'$.
We then have
$\langle x', \sigma(f_{i})x\rangle=\langle y', \sigma(u^{\prime,*}f_{i} u)y\rangle$.
Since $ \lim_{i}u^{\prime,*}f_{i} u=u^{\prime,*}f u$ in norm
 we have 
 $$\lim_{i\in I}\langle x', \sigma(f_{i})x\rangle= 
\lim_{i\in I}\langle y', \sigma(u^{\prime,*}f_{i} u)y\rangle=\langle y', \sigma( u^{\prime,*}f u ),y\rangle=   \langle x', \sigma(f)x\rangle$$
as needed.
  \end{proof}

%
\section{Weakly equivariant functors}\label{efigosgsfgsfdgsdfg}




In this section we consider non-unital $C^{*}$-categories with strict $G$-action. Morphisms in $\Fun(BG,\nCcat)$ are equivariant functors.  
By introducing the notion of weakly equivariant morphism we relax the equivariance condition.  We will see that a morphism  in $\Fun(BG,\nCcat)$ which is a unitary equivalence 
in the sense of Definition \ref{ihjigwjegoerwgwerffwerfw} non-equivariantly
  admits a weakly equivariant inverse up to unitary multiplier isomorphism
  of weakly equivariant morphisms, but not an equivariant  inverse in general.
 The relaxed equivariance introduced in the present section  is relevant since, e.g., the Yoneda type embedding considered in Section \ref{wtogwepgfereggwrferf}   is not equivariant, but only weakly equivariant.   

 The following definition
extends  \cite[Def. 7.10]{crosscat} from unital to non-unital categories.
Let $\bC,\bC'$ be in $\Fun(BG,\nCcat)$. 
\begin{ddd}
\label{rgjeqrgoieqrgrgqewgfq}
A weakly equivariant  morphism from $\bC$ to $\bC'$ is a pair $(\phi,\rho)$ consisting of the following data:
\begin{enumerate} 
\item a morphism
$\phi\colon  \bC \to  \bC' $ between the underlying $C^{*}$-categories in $\nCcat$,
\item a family $\rho=(\rho(g))_{g\in G}$ of unitary multiplier   transformations $\rho(g)\colon\phi\to g^{-1}\phi g$ 
 such that for all $g,g'$ in $G$ we have $g^{-1}\rho(g') g \circ  \rho(g)=\rho(g'g)$.
 \end{enumerate}
\end{ddd}
If $\phi \colon  \bC \to \bC' $ is a morphism  between the underlying $C^{*}$-categories in $\nCcat$, then weak equivariance of $\phi$ is an additional structure.
\begin{ex}\label{wrthjowthoewrgwergwerg}
If $  \phi \colon  \bC\to  \bC'$ is a  morphism in $\Fun(BG,\nCcat)$, i.e., an equivariant functor, then $(\phi,(\id_{\phi}))$ is a weakly  equivariant morphism from $\bC$ to $\bC'$. Here $(\id_{\phi})$ denotes the constant family on the multiplier morphism  $\id_{\phi}$. 
\hB
\end{ex}

In the unital case weakly equivariant functors can be composed, see   
\cite[(7.8)]{crosscat}. In contrast, the composition of 
weakly equivariant functors in the non-unital case is only partially defined. 
The reason is that not every morphism in $\phi:\bC\to \bC'$ in
$\nCcat$ extends to the multiplier categories  in the sense that the morphism
$\bW^{\mathrm{nu}}\phi:\bW^{\mathrm{nu}}\bC\to \bW^{\mathrm{nu}}\bD$ 
sends  $\bM\bC$ considered by Theorem \ref{jigogewrgeferfref} as a subcategory of $ \bW^{\mathrm{nu}}\bC$ to 
  $\bM\bD$  considered as a subcategory of $\bW^{\mathrm{nu}}\bD$.

 Assume that $(\phi,\rho):\bC\to \bC'$ and $(\phi',\rho'):\bC'\to \bC''$ are weakly equivariant morphisms  between objects of $\Fun(BG,\nCcat)$.
 Then we want to define a composition
 $\rho'\circ \rho=((\rho'\circ\rho)(g))_{g\in G}$, where  
 $(\rho'\circ\rho)(g)$ is the  composition of
  natural multiplier transformations
\begin{equation}\label{wergrefwerfwerfwerfef}
\phi'\circ \phi\stackrel{\phi'\circ \rho(g)}{\to} \phi'\circ g^{-1}\phi g\stackrel{\rho'(g)\circ g^{-1}\phi g}{\to} g^{-1}(\phi'\circ \phi)g\ .
\end{equation}   
For the moment   we  interpret $ \phi'\circ \rho(g)$    as the family  $(\bW^{\mathrm{nu}}\phi'(\rho_{C}(g))_{C\in \Ob(\bC)}$ of morphisms in $\bW^{\mathrm{nu}}\bC'$. Similarly the whole composition in \eqref{wergrefwerfwerfwerfef} a priori consists of morphisms in $\bW^{\mathrm{nu}}\bC''$. 
We say that the composition of the weakly equivariant functor is defined if the family  $(\bW^{\mathrm{nu}}\phi'(\rho_{C}(g))_{C\in \Ob(\bC)}$  belongs to the subcategory $\bM\bC'$. 
  If the composition of the weakly equivariant functor is defined, then  it is given by  
 $$(\phi',\rho')\circ (\phi,\rho):=(\phi'\circ \phi,\rho'\circ \rho)\ .$$
 If $\bW^{\mathrm{nu}}\phi'$ restricts to a functor $\bM\phi':\bM\bC\to \bM\bD$, then a  composition with $(\phi',\rho')\circ-$ is defined. This is the case e.g. if $\phi'$ is fully faithful.
 Similarly, since $\bW^{\mathrm{nu}}\phi'$ is unital,  for an equivariant morphism $\phi$ a  composition  
 $-\circ (\phi,(\id_{\phi}))$ is defined.

 Let $(\phi,\rho)$ and $(\phi',\rho'):\bC\to \bC'$ be weakly equivariant morphisms. 
 \begin{ddd}
 A (unitary) natural multiplier  transformation $\kappa:(\phi,\rho)\to (\phi',\rho')$  
 is a (unitary) natural multiplier transformation $\kappa:\phi\to \phi'$ such that
 for every $g$  in $G$ we have $g^{-1}\kappa g\circ \rho(g)=\rho'(g)\circ \kappa$. 
   \end{ddd}
   Explicitly this condition means that for every $g$ in $G$ and object $C$ in $\bC$ we have the equality
   $g^{-1}\kappa_{gC}\circ \rho(g)_{C}=\rho'(g)_{C}\circ \kappa_{C}$.

Let $(\phi,\rho):\bC\to \bD$ be a weakly equivariant   morphism between objects of $\Fun(BG,\nCcat)$.
\begin{ddd}
Let  $(\phi,\rho):\bC\to \bD$  be called a unitary equivalence if $\phi$ is a unitary equivalence in the sense of Definition \ref{ihjigwjegoerwgwerffwerfw}. 
\end{ddd}
If $\phi$ is equivariant, then we apply this definition to $(\phi,(\id_{\phi}))$, see Example \ref{wrthjowthoewrgwergwerg}.
If $(\phi,\rho)$ is   a weakly equivariant unitary equivalence,
then we can choose an inverse $\psi:\bD\to \bC$ in $\nCcat$ such that there are natural  unitary multiplier isomorphisms
$\theta:  \phi\circ \psi\to \id_{\bD}$ and $\kappa:\id_{\bC} \to\psi\circ \phi$.

\begin{lem}\label{ekgjosgwregrewgwerwfrwef}
There exists an extension $(\psi,\lambda)$ of $\psi$ to a weakly equivariant
functor such that  $\theta: (\phi,\rho)\circ (\psi,\lambda) \to (\id_{\bD},(\id_{\id_{\bD}})) $ and $\kappa:(\id_{\bC},(\id_{\id_{\bC}})) \to(\psi,\lambda)\circ (\phi,\rho)$ are unitary multiplier isomorphisms between weakly equivariant functors.
\end{lem}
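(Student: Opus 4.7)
The hypothesis that $(\phi,\rho)$ is a unitary equivalence in the sense of Definition \ref{ihjigwjegoerwgwerffwerfw} forces $\phi$ to be fully faithful, so by Proposition \ref{stkghosgfgsrgsegs} the extension $\bM\phi\colon\bM\bC\to\bM\bD$ is itself fully faithful. This one fact will carry the whole construction.

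I would begin by solving backwards: requiring that $\theta$ become a unitary multiplier transformation between weakly equivariant functors means, after unwinding the composition \eqref{wergrefwerfwerfwerfef} applied to $(\phi,\rho)\circ(\psi,\lambda)$ together with the trivial equivariance structure on $\id_\bD$, that
\[
g^{-1}\theta_{gD}\circ\rho(g)_{g^{-1}\psi(gD)}\circ\phi(\lambda(g)_D)=\theta_D
\]
for every $g\in G$ and every object $D$ of $\bD$. I would therefore \emph{define} $\lambda(g)_D$ to be the unique unitary multiplier $\psi(D)\to g^{-1}\psi(gD)$ in $\bM\bC$ whose image under $\bM\phi$ equals
\[
\rho(g)^{-1}_{g^{-1}\psi(gD)}\circ g^{-1}\theta_{gD}^{-1}\circ\theta_D.
\]
Such $\lambda(g)_D$ exists and is unitary because $\bM\phi$ is fully faithful and the right hand side is a composition of unitary multipliers; compatibility of $\theta$ with the yet-to-be-verified weak equivariance structure on $\psi$ is then built in by construction.

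The three remaining verifications---naturality of $\lambda(g)$ in $D$, the cocycle identity $g^{-1}\lambda(g')g\circ\lambda(g)=\lambda(g'g)$, and the analogous compatibility for $\kappa$---will each be dispatched by applying the faithful $\bM\phi$, substituting the defining formula, and pulling back the resulting equality. Naturality in $D$ reduces to the naturality of $\theta$ and of $\rho(g)$, once one records the derived natural isomorphism $\rho(g)_{g^{-1}(-)}\colon\phi g^{-1}\Rightarrow g^{-1}\phi$. The cocycle identity reduces, after the inner $g^{-1}\theta_{gD}\circ g^{-1}\theta_{gD}^{-1}$ telescopes, precisely to the $\rho$-cocycle identity $\rho(g)^{-1}\circ g^{-1}\rho(g')^{-1}=\rho(g'g)^{-1}$ evaluated at the appropriate object. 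For the $\kappa$-compatibility one must check $g^{-1}\kappa_{gC}=\lambda(g)_{g^{-1}\phi(gC)}\circ\bM\psi(\rho(g)_C)\circ\kappa_C$; here $\bM\psi$ is available because $\psi$ is also fully faithful (conjugation by the unitary multiplier iso $\theta$ identifies $\phi\circ\psi$ with $\id_\bD$ on hom spaces, so $\phi\circ\psi$ and hence $\psi$ is bijective on morphisms). Again $\bM\phi$ applied to both sides simplifies, using the fact that $\bM\phi\circ\bM\psi$ is sent to the identity functor via conjugation by $\theta$ together with naturality of $\theta$ and $\rho$, to the desired equality.

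The main obstacle is bookkeeping: $\bM\phi$ does not commute strictly with the strict $G$-action but only up to conjugation by $\rho$, so each of the three verifications requires consistently rewriting $\bM\phi$ applied to $g^{-1}(-)$ as the $\rho(g)$-conjugate of $g^{-1}\bM\phi(-)$ before any cancellations become visible. Once this rewriting is done systematically, each identity reduces to the naturality and cocycle data already present in $(\phi,\rho,\theta,\kappa)$.
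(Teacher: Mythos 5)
Your proposal is correct and follows essentially the same route as the paper: the paper also defines $\lambda(g)_{D}$ as the unique multiplier morphism determined via the fully faithful $\bM\phi$ by the identity $g^{-1}\theta_{gD}\circ\rho(g)_{g^{-1}\psi(gD)}\circ\bM\phi(\lambda(g)_{D})=\theta_{D}$, and then leaves the naturality, cocycle, and $\kappa$-compatibility checks to the reader. Your write-up simply spells out those remaining verifications in more detail than the paper does.
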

Note that the compositions above are defined since $\phi$ and $\psi$ are fully faithful.
\begin{proof}
For $g$ in $G$ and $D$ in $\bD$ the multiplier morphism $\lambda(g)_{D}$  must satisfy  \begin{equation}\label{ewfqwedwdwedweddeq}
g^{-1}\theta_{g   D  } \circ    \rho(g)_{g^{-1}\psi(gD)}   \circ \bM\phi(\lambda(g)_{D})=\theta_{D}\ .
\end{equation}
 
Since
$\bM\phi$ is fully faithful this determines $\lambda(g)_{D}$
uniquely. One checks that
$\lambda(g):=(\lambda(g)_{D})_{D\in \bD}$ is a natural multiplier isomorphism
from $\psi$ to $g^{-1}\psi g$ and that $\lambda:=(\lambda(g))_{g\in G}$ extends
$\psi$ to a weakly invariant functor.
One further checks that $\kappa$ and $\theta$ are then multiplier isomorphisms between weakly invariant functors.  
 \end{proof}

The following discussion   shows that if we invert
  unitary equivalences in  $\Fun(BG,\nCcat)$, then
every weakly equivariant morphism becomes equivalent to 
an equivariant one. To this end we construct  an endofunctor $Q$ of the 
$\Fun(BG,\nCcat)$ as follows:
  \begin{enumerate}
   \item 
 objects: For $\bD$ in $\Fun(BG,\nCcat)$ the category $Q(\bD)$ is given by:
  \begin{enumerate}
\item objects: The set of objects of $Q(\bD)$ is the set $\Ob(\bD)\times G$.
\item morphisms: For $(D,g)$ and $(D',g')$ in $Q(\bD)$ we define
$\Hom_{Q(\bD)}((D,g),(D',g')):=\Hom_{\bD}(D,D')$.
\item composition and involution: These structures are inherited from $\bD$.
\item $G$-action: The element $k$ in $G$ sends $(D,g)$ to $(kD,gk^{-1})$
and the morphism $f:(D,g)\to (D',g')$ to the morphism
$kf:(kD,gk^{-1})\to (kD',g'k^{-1})$.
\end{enumerate}
\item morphisms: If $\phi:\bD\to \bD'$  is a morphism in $\Fun(BG,\nCcat)$, then
we define the morphism  $Q(\phi)$ as follows:
\begin{enumerate}
\item objects: We set $Q(\phi)(D,g):=(\phi(D),g)$
\item morphisms: For  $f:(D,g)\to (D',g')$  in $Q(\bD)$ we set  $Q(\phi)(f):=\phi(f):(\phi(D),g)\to (\phi(D'),g)$ in $Q(\bD')$.
\end{enumerate}
\end{enumerate}
We have a natural transformation
$p:Q\to \id$ in $\Fun(BG,\nCcat)$ given by $p=(p_{\bD})_{\bD\in \Fun(BG,\nCcat)}$, where
$p_{\bD}:Q(\bD)\to \bD$ is the functor given by:
\begin{enumerate}
\item objects: $p_{\bD}(D,g):=D$ for every object $(D,g)$ of $Q(\bD)$.
\item morphisms: $p_{\bD}(f):=f$ for every morphism $f:(D,g)\to (D',g')$  in $Q(\bD)$.
\end{enumerate}
The morphism $p_{\bD}$ is a unitary equivalence. Indeed, 
we can choose a non-equivariant  inverse
$q_{\bD}:\bD\to Q(\bD)$ given by
\begin{enumerate}
\item objects: $q_{\bD}(D):=(D,e)$ 
\item morphisms: $q_{\bD}(f):=f$.
\end{enumerate}
Then $p_{\bD}\circ q_{\bD}=\id_{\bD}$ and
there is a unitary multiplier isomorphism 
$  \theta:q_{\bD}\circ p_{\bD}\to \id_{Q(\bD)} $
given by $\theta=(\theta_{(D,g)})_{(D,g)\in \Ob(Q(\bD))}$ with
$\theta_{(D,g)}=1_{D}:(D,e)\to (D,g)$.
Note that $1_{D}$ is only a multiplier isomorphism if $\bD$ is not unital.
By Lemma \ref{ekgjosgwregrewgwerwfrwef} we can extend $q_{\bD}$ to a weakly invariant morphism $(q_{\bD},\lambda):\bD\to Q(\bD)$.
In this case the formula \eqref{ewfqwedwdwedweddeq}  for
$\lambda(g)_{(D,k)}$ gives
$\lambda(g)_{(D,k)}=1_{D}:(D,e)\to (D,g)$.

%
%
 
Going from $\bD$ to $Q(\bD)$ has the effect of making the $G$-action on the set of objects  free.  The functor $Q$ is a non-unital analog of the cofibrant replacement functor for the projective model category structure on $\Fun(BG,\Ccat)$ considered in \cite[Sec. 15]{startcats}.  

We  consider $\bE$ in $\Fun(BG,\nCcat)$ and a weakly  equivariant morphism  $(\phi,\rho)\colon \bD\to \bE$. 
\begin{lem}\label{eoigjosegbsfdgsertsfd}
There exists an equivariant morphism $\hat \phi:Q(\bD)\to \bE$ such that the triangle $$\xymatrix{&Q(\bD)\ar[dr]^{\hat \phi}\ar[dl]_{p_{\bD}}&\\\bD\ar[rr]^{\phi}&&\bE}$$ commutes up to a 
unitary  natural multiplier  isomorphism between weakly equivariant functors.
\end{lem}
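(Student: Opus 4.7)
The plan is to define $\hat\phi$ by twisting $\phi$ along the unitary multiplier cocycle $\rho$. On objects, I set
\[
\hat\phi(D, g) \coloneqq g^{-1}\phi(gD),
\]
which is strictly equivariant, since $\hat\phi(k(D, g)) = \hat\phi(kD, gk^{-1}) = (gk^{-1})^{-1}\phi((gk^{-1})(kD)) = kg^{-1}\phi(gD) = k\hat\phi(D, g)$. For a morphism $f\colon (D, g)\to (D', g')$ of $Q(\bD)$, whose underlying morphism in $\bD$ is some $f\colon D\to D'$, I set
\[
\hat\phi(f) \coloneqq \rho(g')_{D'} \circ \phi(f) \circ \rho(g)_D^{*}.
\]
Since $\bE$ is an ideal in $\bM\bE$ by Theorem \ref{mmain}, composing the ordinary morphism $\phi(f)\in \bE$ on either side with a unitary multiplier from $\bM\bE$ yields an ordinary morphism of $\bE$; hence $\hat\phi(f)$ is an honest morphism $g^{-1}\phi(gD)\to g^{\prime,-1}\phi(g'D')$ in $\bE$. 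Functoriality follows by telescoping $\rho(g')_{D'}^{*}\circ \rho(g')_{D'} = \id$, and $\mathbb{C}$-linearity, preservation of $*$, and the bound $\|\hat\phi(f)\|\le \|f\|$ are routine consequences of the unitarity of the $\rho(g)_D$.

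Equivariance $\hat\phi(kf) = k\hat\phi(f)$ will follow from two applications of the cocycle. Applying the cocycle identity $k^{-1}\rho(gk^{-1})k\circ \rho(k) = \rho(g)$ at $D$ gives
\[
\rho(gk^{-1})_{kD} = k\rho(g)_D \circ k\rho(k)_D^{*},
\]
and similarly at $D'$. Combined with the identity $\phi(kf) = k\rho(k)_{D'} \circ k\phi(f) \circ k\rho(k)_D^{*}$, which comes from the naturality square of $\rho(k)$ at $f$, substitution into
\[
\hat\phi(kf) = \rho(g'k^{-1})_{kD'} \circ \phi(kf) \circ \rho(gk^{-1})_{kD}^{*}
\]
causes the inner $k\rho(k)$-factors to cancel, leaving $k\hat\phi(f)$.

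For the comparison with $(\phi, \rho)\circ (p_\bD, (\id))$ I define the unitary natural multiplier transformation by
\[
\kappa_{(D, g)} \coloneqq \rho(g)_D^{*}\colon \hat\phi(D, g) \to \phi(D) = (\phi\circ p_\bD)(D, g).
\]
Naturality in $f$ is immediate from the formula for $\hat\phi(f)$. Since $\hat\phi$ carries the trivial cocycle while $(\phi,\rho)\circ(p_\bD,(\id))$ carries $(\rho\circ \id)(g)_{(D, k)} = \rho(g)_D$, the compatibility of $\kappa$ with the equivariant structures reduces to the identity $g^{-1}\kappa_{g(D, k)} = \rho(g)_D\circ \rho(k)_D^{*}$. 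The left hand side equals $g^{-1}\rho(kg^{-1})_{gD}^{*}$, and the cocycle $g^{-1}\rho(kg^{-1})g\circ \rho(g) = \rho(k)$ evaluated at $D$ and adjointed yields exactly this. The only obstacle is systematic bookkeeping of the cocycle identities and tracking which expressions live in $\bE$ versus $\bM\bE$; the ideal property $\bE\subset \bM\bE$ from Theorem \ref{mmain} is what ensures every composite above defines an honest morphism of $\bE$ rather than a mere multiplier.
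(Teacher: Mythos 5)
Your proof is correct and follows essentially the same route as the paper: the same object assignment $\hat\phi(D,g)=g^{-1}\phi(gD)$, the same conjugation formula on morphisms (your $\rho(g')_{D'}\,\phi(f)\,\rho(g)_D^{*}$ is the correct version of the paper's formula, which has a typo $\rho(g)_{D'}$ in place of $\rho(g')_{D'}$), and the same filler $\rho(g)_D$ up to reversing its direction. The only difference is that you write out the cocycle and naturality verifications that the paper leaves as ``one checks,'' and these are all correct.
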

\begin{proof}
Note that the composition of weakly equivariant morphisms
$\phi\circ p_{\bD}$ exists. 
We define the morphism $\hat \phi$ as follows:
\begin{enumerate}
\item objects: For every object $(D,g)$ of $Q(\bD)$ we set 
$\hat \phi(D,g):=g^{-1}\phi(g D)$.
\item morphisms:  For every morphism  $f:(D,g)\to (D',g')$ in $Q(\bD)$ we set 
$\hat \phi(f) := \rho(g)_{D'} \phi(f) \rho(g)_{D}^{-1}$. 
Since $\bE$ is an ideal in $\bM\bE$ this formula defines a morphism in $\bE$. \end{enumerate}
One checks that 
$\hat \phi$ is an equivariant morphism. 
Further, the unitary multiplier isomorphism $\phi\circ p_{\bD}\to \hat \phi$ filling the triangle is given by the family
$(\rho(g)_{(D,g)})_{(D,g)\in \Ob(Q(\bD))}$.
 \end{proof}
 
\section{Orthogonal sums in  \texorpdfstring{$\bm{C^{*}}$}{C-star}-categories}\label{qroifjqerofwefewfefwqfe}

The notion of a finite  orthogonal  sum of objects  {in a unital $C^{*}$-category} can be defined in the standard way. 
 In the present section we are mainly interested in infinite sums of objects in $C^{*}$-categories. 
 After briefly recalling the finite case (see e.g.\ \cite{MR3123758}) we introduce our notion of an orthogonal sum of an arbitrary family of objects in a unital $C^{*}$-category. A posteriori it is equivalent to the concept introduced in \cite{fritz}, see Remark \ref{euwifhqiufqwfewffqewffqfef}.
 
 


Let $\bC$ be in $\nClincat$, and let $(e_{i})_{i\in I}$ be a family of morphisms $C_{i}\to C$ in $\bC$  with the same target.
\begin{ddd}\label{wtihojwegwergwergef}
The family $(e_{i})_{i\in I}$ is mutually orthogonal if for all $i,j$ in $I$ with $i\not=j$ we have $e_{j}^{*}e_{i}=0$.
\end{ddd}


Let $(C_{i})_{i\in I}$ be a finite family  {of} objects in $\bC$.
\begin{ddd}\label{regiuhqrogefewfqwfqef}
 An  orthogonal  sum of the family $(C_{i})_{i\in I}$ is a pair $(C,(e_{i})_{i\in I})$ of an object $C$ in $\bC$ and a family of isometries $e_{i}\colon C_{i}\to C$ such that:
\begin{enumerate}
\item\label{asfvasvadsvsdvasdvisddddda} The family $(e_{i})_{i\in I}$ is mutually orthogonal. 
\item\label{asfvasvadsvsdvasdvisddddda1} $\sum_{i\in I} e_{i}e_{i}^{*}=\id_{C}$.
 \end{enumerate}
 \end{ddd}

Note that by this definition only families of unital objects can admit orthogonal sums.
The sum of such a family is also unital. Since any morphism in $\Ccat$ is unital and linear on morphism spaces it preserves orthogonal sums.

\begin{ex}
{The} orthogonal sum of {an} empty family is a zero object.
\hB
\end{ex}

 If $(C,(e_{i})_{i\in I})$ is an orthogonal sum of the finite  family $(C_{i})_{i\in I}$, then it represents the categorical coproduct of the family  $(C_{i})_{i\in I}$.
 The pair $(C,(e_{i}^{*})_{i\in I})$ represents the   categorical product 
 of the family  $(C_{i})_{i\in I}$. In particular, an orthogonal sum is uniquely determined   up to unique isomorphism. By the following lemma this isomorphism is actually unitary.
 
 \begin{lem}
 An orthogonal sum of a finite family  is unique up to unique unitary isomorphism.
 \end{lem}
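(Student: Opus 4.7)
The plan is to construct the unitary isomorphism explicitly as a sum of the obvious ``matrix coefficients'' and then verify it has the required properties, with uniqueness following from the completeness relation $\sum_i e_i e_i^* = \id_C$.

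Let $(C,(e_i)_{i\in I})$ and $(C',(e'_i)_{i\in I})$ be two orthogonal sums of the finite family $(C_i)_{i\in I}$. First I would record the basic orthonormality relation $e_i^* e_j = \delta_{ij}\,\id_{C_j}$: the diagonal case follows from each $e_i$ being an isometry, and the off-diagonal case is precisely the mutual orthogonality in Definition \ref{wtihojwegwergwergef}. The analogous relation holds for the family $(e'_i)_{i\in I}$.

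Next I would define $u := \sum_{i\in I} e'_i e_i^* \colon C\to C'$ and $v := \sum_{i\in I} e_i e_i^{\prime,*} \colon C'\to C$; these are well-defined since $I$ is finite. Using the orthonormality relation together with condition \ref{regiuhqrogefewfqwfqef}(\ref{asfvasvadsvsdvasdvisddddda1}) one computes $u\circ e_j = \sum_i e'_i e_i^* e_j = e'_j$ and, symmetrically, $v\circ e'_j = e_j$. Composing and again applying orthonormality, $v\circ u = \sum_{i,j} e_i e_i^{\prime,*} e'_j e_j^* = \sum_i e_i e_i^* = \id_C$, and similarly $u\circ v = \id_{C'}$. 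Hence $u$ is a unitary with $u e_i = e'_i$ for all $i$, whose inverse (which is then also its adjoint) is $v$.

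For uniqueness, suppose $u'\colon C\to C'$ is any morphism satisfying $u' e_i = e'_i$ for all $i$. Then, using \ref{regiuhqrogefewfqwfqef}(\ref{asfvasvadsvsdvasdvisddddda1}) in $C$,
\[
u' = u' \circ \id_C = u' \circ \sum_{i\in I} e_i e_i^* = \sum_{i\in I} (u' e_i) e_i^* = \sum_{i\in I} e'_i e_i^* = u,
\]
so the unitary is uniquely determined by the intertwining property $u\, e_i = e'_i$. There is no real obstacle here; the only subtlety is ensuring that the formal sums defining $u$ and $v$ make sense, which is immediate because $I$ is finite and the $C^*$-category is $\C$-linear.
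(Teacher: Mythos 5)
Your proof is correct and follows essentially the same route as the paper, which also defines the comparison map as $v\coloneqq\sum_{i\in I}e_i'e_i^*$ and notes it is the unique unitary with $ve_j=e_j'$. You merely spell out the verification (orthonormality, $u^*=v$, and uniqueness via $\sum_i e_ie_i^*=\id_C$) that the paper leaves implicit.
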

 
 \begin{proof}
 Let  $(C,(e_{i})_{i\in I})$ and  $(C',(e'_{i})_{i\in I})$ be two  orthogonal  sums of the finite family  $(C_{i})_{i\in I}$. 
 Then $v \coloneqq \sum_{i\in I} e_{i}'e_{i}^{*}\colon C\to C'$ is the unique unitary isomorphism such that $ve_{j}=e_{j}'$ for all $j$ in $I$.
 \end{proof}

Let $\bC$ be in $\nClincat$.
\begin{ddd}\label{ergiehjioferfqffrf}
$\bC$ is additive if it admits  orthogonal  sums for all finite families of objects.
\end{ddd}

If $\bC$ is additive, then it  is unital since it must admit sums of all one-member families.
 
\begin{ex}
If $A$ is a very small $C^{*}$-algebra, then the full subcategory $\Hilb^{\fg}(A)$ of $\Hilb(A)$ of finitely generated Hilbert $A$-modules  is additive.
\hB
\end{ex}

 
For the discussion of infinite orthogonal  sums we specialize to unital $C^{*}$-categories. 
Let $\bC$ be in $\Ccat$.


 Let $(C_{i})_{i\in I}$ be a family of objects in $\bC$ and $(C,(e_{i})_{i\in I})$ be a pair consisting of
  an object $C$ of $\bC$ and 
 a mutually orthogonal family  of isometries $e_{i}\colon C_{i}\to C$.
Using this data we are going to define  two subfunctors
\[
\mathbb{K}(-,C)\colon \bC^{\op}\to \Ban   \, , \quad \mathbb{K}(C,-)\colon \bC\to\Ban 
\]
of $\Hom_{\bC}(-,C)$, or of $\Hom_{\bC}(C,-)$, respectively.
 
 Let $D$ be an object of $\bC$. A morphism $f\colon D\to C$ of the form $f=e_{i}\tilde f$ for some morphism $\tilde f\colon D\to C_{i}$ is called a generator for  $\mathbb{K}(D,C)$.  
Similarly, a morphism $f'\colon C\to D$ of the form $f'=f'_{i}e_{i}^{*}$ for some morphism $f'_{i}\colon C_{i}\to D$ is a called a generator for $\mathbb{K}(C,D)$. 

 A finite linear combination of generators will be called finite. One checks that the subspaces of finite morphisms form subfunctors 
 $   \Hom^{\fin}_{\bC}(-,C)$ and  $ \Hom^{\fin}_{\bC}(C,-) $ of   $\Hom_{\bC}(-,C)$ and  $\Hom_{\bC}(C,-)$, respectively, considered as $\C$-vector space valued functors.

\begin{ddd} 
 We define the subfunctors
\[
\mathbb{K}(-,C)\colon \bC^{\op}\to \Ban\ , \quad 
  \mathbb{K}(C,-)\colon \bC\to \Ban
\]
of $\Hom_{\bC}(-,C)$  and  $\Hom_{\bC}(C,-)$ 
  by taking objectwise the norm closures of $   \Hom^{\fin}_{\bC}(-,C)$ and  $ \Hom^{\fin}_{\bC}(C,-) $.
\end{ddd}
  
 In order to see that these subfunctors are well-defined we use
  the {sub-multiplicativity of the norm on $\bC$ in order to check} 
  these subspaces are preserved by precompositions or 
postcompositions with morphisms in $\bC$, respectively. 
The involution of $\bC $ provides  an antilinear isomorphism between 
$\mathbb{K}(C,D)$ and $
\mathbb{K}(D,C)$ for all $D$ in $\bC$.

Note that $\mathbb{K}(C,D)$ and $
\mathbb{K}(D,C)$ depend on $C$ and the family $(e_{i})_{i\in I}$.
 If we want to stress  the dependence of these subspaces on the family $(e_{i})_{i\in I}$, then  we will write $\mathbb{K} ((C,(e_{i})_{i\in I}),D)$ and $\mathbb{K}(D,(C,(e_{i})_{i\in I}))$.  This notation will in particular be used in order  to avoid confusion if we want to consider the case where $D=C$. 
 
\begin{ex}
For $i$ in $I$ the morphism $e_{i}\colon C_{i}\to C$ belongs to $\mathbb{K}(C_{i},C)$. It is actually a generator.  Similarly, $e_{i}^{*}$ is a generator in $\mathbb{K}(C,C_{i})$. In fact, $\mathbb{K}(-,C)$ is the smallest $\Ban$-valued subfunctor of $\Hom_{\bC}(-,C)$ whose value on $C_{i}$ contains $e_{i}$ for every $i$ in $I$. Similarly, $\mathbb{K}(C,-)$  is the smallest $\Ban$-valued subfunctor of $\Hom_{\bC}(C,-)$ whose value on $C_{i}$ contains $e_{i}^{*}$ for every $i$ in $I$.
\hB
\end{ex}

 \begin{ex}
 Let  $A$ be a very small $C^{*}$-algebra and consider the $C^{*}$-category $\Hilb(A)$ of Hilbert $A$-modules and continuous adjointable operators. Let $C$ be in $\Hilb(A)$ and assume that
 $(e_{i})_{i\in I}$ is a mutually orthogonal  family of isometries $e_{i}\colon C_{i}\to C$. Furthermore, assume that the
 images of  the morphisms $e_{i}$  together generate $C$ as an Hilbert $A$-module. Then we have inclusions 
 \begin{equation}\label{eroiheoivjoevsef}
K(D,C)\subseteq \mathbb{K}(D,C)\, , \quad K(C,D)\subseteq \mathbb{K}(C,D)\, ,
\end{equation}
 where  $K(D,C)$ and $K(C,D)$ denote the spaces of all compact operators (in the sense of Hilbert $A$-modules) from $D$ to $C$ and vice versa.
{In general, these inclusions are proper. But 
%
if }$\id_{C_{i}}$ is compact   for every $i$ in $I$, then the  inclusions  in \eqref{eroiheoivjoevsef} are equalities.
\hB
\end{ex}

Let $\cC$ be a category and consider   two functors  $F,F'\colon \cC\to \Ban$. 
We let $\Hom^{\bd}_{\Fun(\cC,\Ban)}(F,F')$ denote the Banach space of 
all uniformly bounded natural transformations.
We let $(C,(e_{i})_{i\in I})$ be as above and fix an object $D$.

\begin{ddd}\label{eriguqegwfwefqwefqwef}\mbox{}
\begin{enumerate}
\item\label{qrgjuiergqwfwefqwfqwefq} 
The Banach space  of   right multipliers  from $D$ to $C$ is defined by
$$\RM(D,C)\coloneqq \Hom^{\bd}_{\Fun(\bC,\Ban)}(\mathbb{K}(C,-), \Hom_{\bC}(D,-))\, .$$
\item 
The Banach space  of   left multipliers  from $C$ to $D$ is defined by
$$\LM(C,D)\coloneqq \Hom^{\bd}_{\Fun(\bC^{\op},\Ban)}(\mathbb{K}(-,C), \Hom_{\bC}(-,D))\, .$$
 \end{enumerate}
\end{ddd}

If we want to stress  the dependence of these objects on the family $(e_{i})_{i\in I}$ or insert $C$ in place of $D$,  then we will write  
$\RM (D,(C,(e_{i})_{i\in I}))$ and $\LM ((C,(e_{i})_{i\in I}),D)$. 

A right multiplier in $\RM(D,C)$ is given by a uniformly bounded  family $R=(R_{D'})_{D'\in \Ob(\bC)}$ of  {bounded linear} maps $R_{D'}\colon \mathbb{K}(C,D') \to \Hom_{\bC}(D,D')$ satisfying the conditions for a natural transformation. {In particular,}  for a morphism $f$ in $\Hom_{\bC}(D',D'')$ and $g$ in $\mathbb{K}(C,D')$ we have $R_{D''}(fg) = f R_{D'}(g)$. We use a similar notation  $(L_{D'})_{{D'\in \Ob(\bC)}}$ for left multipliers. The involution of $\bC$ induces an antilinear isomorphism between
$\RM(D,C)$ and $\LM(C,D)$.

Let $(C_{i})_{i\in I}$ be a family of objects of $\bC$, and let $(h_{i})_{i\in I}$ and $(h_{i}')_{i\in I}$ be families of morphisms $h_{i} \colon D\to C_{i}$ and $h_{i}' \colon C_{i}\to D$ in $\bC$.

\begin{ddd}\label{oijfqoifweeqfewfqefqfewf}\mbox{}
We say that $(h_{i})_{i\in I}$  is square summable if
\begin{equation} \label{eq_condition_multiplier}
\sup_{J\subseteq I} \big\|\sum_{i\in J} h_{i}^{*}h_{i}\big\| < \infty  \, ,
\end{equation} where $J$ runs over the {set of} finite subsets of $I$.
 We say that  $(h_{i}')_{i\in I}$   is square summable if
$(h_{i}^{\prime,*})_{i\in I}$ is square summable
%
\end{ddd}
 
 \begin{lem}\label{qoirgujoewgwergwergw}
 If   the family $(h_{i}^{*})_{i\in I}$ is mutually orthogonal (see Definition \ref{wtihojwegwergwergef}) and  uniformly bounded, {then $(h_{i})_{i\in I}$ is square summable.} 
 \end{lem}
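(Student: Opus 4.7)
The plan is to estimate $\bigl\|\sum_{i \in J} h_i^* h_i\bigr\|$ uniformly in finite $J \subseteq I$ by exploiting the fact that the orthogonality hypothesis makes the cross terms in the square of this sum vanish, and then applying a standard $C^*$-algebraic majorization argument inside $\End_\bC(D)$.

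Concretely, let $M \coloneqq \sup_{i \in I} \|h_i\| < \infty$, which is finite by the uniform boundedness hypothesis, and for a finite subset $J \subseteq I$ set $S_J \coloneqq \sum_{i \in J} h_i^* h_i$, a positive element of the $C^*$-algebra $\End_\bC(D)$. First I would expand
\[
S_J^2 \;=\; \sum_{i,j \in J} h_i^* h_i h_j^* h_j
\]
and observe that mutual orthogonality of $(h_i^*)_{i \in I}$ in the sense of Definition \ref{wtihojwegwergwergef} says exactly that $(h_j^*)^* h_i^* = h_j h_i^* = 0$ whenever $i \ne j$, so all off-diagonal terms vanish and $S_J^2 = \sum_{i \in J} h_i^*\, (h_i h_i^*)\, h_i$.

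Next I would use that $h_i h_i^*$ is a positive element of $\End_\bC(C_i)$ with norm at most $M^2$, so $h_i h_i^* \le M^2 \,\mathrm{id}_{C_i}$ in the $C^*$-algebra $\End_\bC(C_i)$; conjugating by $h_i$ (which preserves the order on positive elements between $C^*$-algebras of endomorphisms) gives $h_i^*(h_i h_i^*) h_i \le M^2\, h_i^* h_i$ in $\End_\bC(D)$. Summing over $i \in J$ yields $S_J^2 \le M^2 S_J$ in $\End_\bC(D)$. Taking the $C^*$-norm and using the $C^*$-identity gives $\|S_J\|^2 = \|S_J^2\| \le M^2 \|S_J\|$, hence $\|S_J\| \le M^2$. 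Since this bound is independent of $J$, the defining inequality \eqref{eq_condition_multiplier} holds with supremum at most $M^2$, which proves square summability of $(h_i)_{i \in I}$.

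There is no serious obstacle here; the only subtle point is making sure that the order-theoretic estimate $h_i h_i^* \le M^2\,\mathrm{id}_{C_i}$, which a priori lives in the $C^*$-algebra $\End_\bC(C_i)$, transfers correctly to an estimate in $\End_\bC(D)$ after conjugation by $h_i$. This is a standard consequence of the fact that for any morphism $x\colon D \to C_i$ in a $C^*$-category, the map $a \mapsto x^* a x$ from $\End_\bC(C_i)$ to $\End_\bC(D)$ is positive, which in turn can be seen either by noting that it is a completely positive map or directly from the identity $x^*(b^*b)x = (bx)^*(bx) \ge 0$ for $b \in \End_\bC(C_i)$.
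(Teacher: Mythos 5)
Your proof is correct, but it takes a genuinely different route from the paper's. The paper also uses orthogonality to kill the cross terms, but it does so in the $2^k$-th power: it shows $\bigl(\sum_{i\in J} h_i^*h_i\bigr)^{2^k}=\sum_{i\in J}(h_i^*h_i)^{2^k}$, applies the triangle inequality to get a bound with a factor $|J|$, and then takes $2^k$-th roots and lets $k\to\infty$ so that $|J|^{1/2^k}\to 1$ — a spectral-radius style argument that uses only the $C^*$-identity and the triangle inequality. You instead square once, use the operator inequality $h_ih_i^*\le M^2\,\id_{C_i}$ together with positivity of conjugation $x\mapsto h_i^*xh_i$ to get $S_J^2\le M^2S_J$ in $\End_\bC(D)$, and conclude $\|S_J\|\le M^2$ from the $C^*$-identity. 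Your argument is shorter and avoids the limit, at the cost of invoking the order structure on positive elements (positivity of $x^*ax$ and monotonicity of the norm on positives), which is available here since $\bC$ is a unital $C^*$-category in the sense of the paper; one could even sidestep the identity $\id_{C_i}$ by writing $h_i^*(h_ih_i^*)h_i=(h_i^*h_i)^2\le\|h_i^*h_i\|\,h_i^*h_i$. Both arguments yield the same bound $\sup_{i\in I}\|h_i\|^2$.
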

\begin{proof}
Let $J$ be a finite subset of $I$.
Using the fact that $(h^{*}_{i})_{i\in I}$ is mutually orthogonal we calculate that for every $k$ in $\nat$  
\[\big(\sum_{i\in J} h_{i}^{*}h_{i}\big)^{2^{k}} = \sum_{i\in J} (h_{i}^{*}h_{i})^{2^{k}}\,.\]
 Using repeatedly the $C^{*}$-equality, that $\sum_{i\in J} h_{i}^{*}h_{i}$ is self-adjoint, and the triangle inequality for the norm
 we get
\begin{align*}
\big\|\sum_{i\in J} h_{i}^{*}h_{i}\big\|^{2^{k}} & = \big\|\big(\sum_{i\in J} h_{i}^{*}h_{i}\big)^{2^{k}}\big\|
 = \big\|\sum_{i\in J} (h_{i}^{*}h_{i})^{2^{k}}\big\|\\
 & \le \sum_{i\in J} \|h_{i}^{*}h_{i}\|^{2^{k}}
 \le |J| \max_{i\in J}  \|h_{i}^{*}h_{i}\|^{2^{k}}\,.
\end{align*}
We take the $2^{k}$-th root and form the limit for $k\to \infty$. Using that $\lim_{k\to \infty} |J|^{\frac{1}{2^{k}}}=1$ we get 
$$\big\|\sum_{i\in J} h_{i}^{*}h_{i}\big\|\le \max_{i\in J}  \|h_{i}^{*}h_{i}\| \le \sup_{i\in I} \|h_{i}\|^{2}\, .$$
Since the right-hand side is finite by assumption and does not depend on $J$, {we conclude the square summability of $(h_{i})_{i\in I}$.} 
 \end{proof}


The following lemma provides a tool to construct interesting multipliers.  {Let $(C,(e_{i})_{i\in I})$ be a pair consisting of an object $C$ of $\bC$ and a mutually orthogonal family  of  isometries $ e_{i}\colon C_{i}\to C$.}
Let $h:=(h_{i})_{i\in I}$ be a family of morphisms $h_{i}\colon D\to C_{i}$, and let $h' \coloneqq (h'_{i})_{i\in I}$ be a family of morphisms $h'_{i}\colon C_{i}\to D$. 
\begin{lem}\label{qergijoqregqewfewfqewfwef}\mbox{}
\begin{enumerate} 
\item\label{qerjgqkrfewfewfqfefqef}
There exists a  right-multiplier $R(h)$ in $\RM(D,C)$ with $R(h)_{C_{i}}(e^{*}_{i})=h_{i}$ for all $i$ in $I$ if and only if  {$h$ is square summable.}

{If  $h$ is square summable}, 
then $R(h)$ is uniquely determined {and its norm is given by}
\begin{equation}\label{eq_multiplier_norm}
 {\|R(h)\| = \sqrt{\sup_{J\subseteq I} \|\sum_{i\in J} h_{i}^{*}h_{i}\|}\,.}
\end{equation}

\item\label{qerjgqkrfewfewfqfefqef1}
 There exists a   left-multiplier $L(h')$ in $\LM(C,D)$ with $L(h')_{C_{i}}(e_{i})=h'_{i}$ for all $i$ in $I$ if and only if $h'$ {is square summable.}

{If  $h'$ is square summable}, then $L({h^\prime})$ is uniquely determined  {and its norm is given by}
\begin{equation}\label{eq_multiplier_norm_L}
 {\|L(h^\prime)\| = \sqrt{\sup_{J\subseteq I} \|\sum_{i\in J} h^\prime_{i} h^{\prime,*}_{i}\|}\,.}
\end{equation}
\end{enumerate}
\end{lem}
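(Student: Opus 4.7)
The proof plan has two parts: (1) for right-multipliers and (2) for left-multipliers. Part (2) follows from (1) by applying the antilinear involution $(-)^*$ of $\bC$, which intertwines $\RM$ with $\LM$ and converts the condition $(h_i)_{i\in I}$ square summable into $(h_i^{\prime,*})_{i\in I}$ square summable. I therefore focus on part (1).

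\textbf{Uniqueness.} Any right-multiplier $R$ with $R_{C_i}(e_i^*) = h_i$ must, by naturality with respect to precomposition, satisfy $R_{D'}(f_i' e_i^*) = f_i' h_i$ on every generator $f_i' e_i^*$ of $\mathbb{K}(C,D')$, hence is determined on $\Hom^{\fin}_\bC(C,D')$ by linearity and then on all of $\mathbb{K}(C,D')$ by continuity.

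\textbf{Necessity.} Given such $R(h)$, fix a finite $J \subseteq I$ and set $x_J \coloneqq \sum_{i\in J} h_i^* e_i^* \in \Hom^{\fin}_\bC(C,D)$. Using the $C^*$-identity together with mutual orthogonality $e_i^* e_j = \delta_{ij}\id_{C_i}$ one computes
\[\|x_J\|^2 = \|x_J x_J^*\| = \bigl\|\sum_{i\in J} h_i^* h_i\bigr\|,\]
and from the defining property of $R(h)$ together with linearity one gets $R(h)_{D}(x_J) = \sum_{i\in J} h_i^* h_i$. The operator bound $\|R(h)_D(x_J)\| \le \|R(h)\|\|x_J\|$ then yields $\sqrt{\|\sum_{i\in J} h_i^* h_i\|} \le \|R(h)\|$ uniformly in $J$, which establishes square summability and the $\ge$ half of \eqref{eq_multiplier_norm}.

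\textbf{Sufficiency.} Assuming square summability, define on generators $R(h)_{D'}(f_i' e_i^*) \coloneqq f_i' h_i$ and extend linearly to $\Hom^{\fin}_\bC(C,D')$; well-definedness is immediate from the observation $xe_k = f_k'$ whenever $x = \sum_{i\in J} f_i' e_i^*$ and $k \in J$ (zero otherwise), giving the intrinsic formula $R(h)_{D'}(x) = \sum_i (x e_i) h_i$. Naturality in $D'$ is then evident. The central step is the boundedness estimate: by Lemma \ref{ewrgoipwergwregrwefw} the maximal norm is computed by unital representations $\sigma\colon \bC \to \Hilb(\mathbb{C})$, so it suffices to estimate $\|\sigma(R(h)_{D'}(x))\|$. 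For $x=\sum_{i\in J} f_i' e_i^*$ introduce the bounded operator
\[T \coloneqq \sum_{i\in J}\sigma(e_i)\sigma(h_i)\colon \sigma(D) \to \sigma(C).\]
Mutual orthogonality of the $\sigma(e_i)$ gives $T^*T = \sigma\bigl(\sum_{i\in J} h_i^* h_i\bigr)$, so $\|T\|^2 \le \sup_{J'} \|\sum_{i\in J'} h_i^* h_i\|$. Since $\sigma(R(h)_{D'}(x)) = \sigma(x) \circ T$, we conclude
\[\|R(h)_{D'}(x)\|^2 \le \|x\|^2 \cdot \sup_{J'\subseteq I}\bigl\|\sum_{i\in J'} h_i^* h_i\bigr\|.\]
This produces a uniformly bounded natural transformation on $\Hom^{\fin}_\bC(C,-)$ which extends by continuity to $\mathbb{K}(C,-)$ and delivers the $\le$ half of \eqref{eq_multiplier_norm}.

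The genuine obstacle is the boundedness step: because $\bC$ is not assumed to admit the orthogonal sum $\bigoplus_{i\in J} C_i$, one cannot assemble the $h_i$ into a single morphism internally, and passing to a faithful representation (Lemma \ref{ewrgoipwergwregrwefw}) is the natural device that makes the $C^*$-algebraic operator $T$ available and the estimate clean.
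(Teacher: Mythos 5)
Your proof is correct, and its skeleton (uniqueness by density of the finite morphisms, necessity by testing $R(h)$ on $\sum_{i\in J}h_i^*e_i^*$, sufficiency by defining $R(h)$ on generators and proving a uniform bound) matches the paper's. Two points of divergence are worth recording. First, your justification for passing to Hilbert-space representations rests on a misreading of the setup: the family $(e_i)_{i\in I}$ consists of isometries $e_i\colon C_i\to C$ into a \emph{fixed} object $C$ of $\bC$, so for finite $J$ the morphism $\sum_{j\in J}e_jh_j\colon D\to C$ exists internally in $\bC$ --- no orthogonal sum $\bigoplus_{i\in J}C_i$ is needed. This is exactly the morphism $R^{J}(h)$ of \eqref{eq_partial_multiplier} that the paper uses, and the key estimate $\|f\sum_{j\in J}e_jh_j\|^2\le \|f\|^2\,\|\sum_{j\in J}h_j^*h_j\|$ is carried out directly in $\bC$ via the $C^*$-identity and mutual orthogonality; your operator $T$ is just $\sigma(R^{J}(h))$, so the detour through Lemma \ref{ewrgoipwergwregrwefw} is sound but buys nothing, and the sentence presenting it as overcoming a ``genuine obstacle'' should be dropped. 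Second, in the necessity direction your exact computation $\|\sum_{i\in J}h_i^*e_i^*\|^2=\|\sum_{i\in J}h_i^*h_i\|$ via the $C^*$-identity is slightly more direct than the paper's argument, which instead bounds $\|\sum_{i\in J}h_i^*e_i^*\|$ by $\|R(h)^*\|$ using the adjoint left multiplier and the projections $e_ie_i^*$; both routes yield $\|\sum_{i\in J}h_i^*h_i\|\le\|R(h)\|^2$, hence square summability and the remaining half of \eqref{eq_multiplier_norm}.
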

Note that the mere existence of the multiplier with the indicated property implies  the  square summability of the corresponding family. In turn it follows the multiplier is actually uniquely determined by the property.  
\begin{proof}
It suffices to prove Assertion~\ref{qerjgqkrfewfewfqfefqef}. Assertion \ref{qerjgqkrfewfewfqfefqef1} then follows from  \ref{qerjgqkrfewfewfqfefqef} using  the  involution. 

We first assume that $h$ is square summable.  
Let $J$ be a finite subset of $I$ and consider
\begin{equation}\label{eq_partial_multiplier}
R^{J}(h) \coloneqq \sum_{j\in J} e_{j}h_{j}
\end{equation}
in $\Hom_{\bC}(D,C)$. Right composition with $R^{J}(h)$  provides a right-multiplier  
$(R^{J}(h)_{D'})_{D'\in \Ob(\bC)}$ such that $R^{J}(h)_{D'}$  sends $f$ in $\mathbb{K}(C,D')$ to $
 \sum_{j\in J} f e_{j}h_{j}$ in $\Hom_{\bC}(D,D')$.    
  In particular, we have $R^{J}(h)_{C_{i}}(e_{i}^{*})=h_{i}$ provided $i\in J$.

We now show that $\lim_{J\subseteq I} R^{J}(h)$ exists pointwise on finite morphisms and defines the required multiplier $R(h)$ by continuous extension. Here the limit is taken over the filtered poset of finite subsets of $I$.
So let $D'$ be in $\Ob(\bC)$ and let $f$ be in  $\Hom^{\fin}_{\bC}(C,D')$.   Then $J\mapsto  R^{J}(h)_{D'}(f)$ is eventually constant and the limit  $\lim_{J\subseteq I} R^{J}(h)_{D'}(f)$ exists.    
We get a natural transformation 
$$\lim_{J\subseteq I}R^{J}(h) \colon \Hom_{\bC}^{\fin}(C,-)\to \Hom_{\bC}(D,-)$$
of $\C$-vector space valued functors.  
We now argue that $ \lim_{J\subseteq I}R^{J}(h)$ extends by continuity to a natural transformation
$$R(h)\colon \mathbb{K}(C,-)\to  \Hom_{\bC}(D,-)\, .$$  
To this end it suffices to verify that $  \lim_{J\subseteq I}R^{J}(h)$ is  bounded  for any $D'$ in $\Ob(\bC)$ separately. 
Let $f$ be in $\Hom^{\fin}(C,D')$.
Then   for sufficiently large finite subsets  $J_{f}$ of $I$   we calculate  using the sub-multiplicativity of the norm under composition, the $C^{*}$-equality for the norm on a $C^{*}$-category,  the mutual orthogonality of the family $(e_{i})_{i\in I}$,  and that $e_i^* e_i = \id_{C_i}$ for every $i$ in $I$ that
\begin{align}
\|\lim_{J\subseteq I}R^{J}(h)_{D'}(f)\|^{2} & = \big\| f\sum_{j\in J_{f}}   e_{j}h_{j}\big\|^{2}\label{eq_estimate_norm_Rmult_onesided}
 \le \|f\|^{2} \cdot \big\|\sum_{j\in J_{f}}   e_{j}h_{j}\big\|^{2}\\
 & = \|f\|^{2} \cdot  \big\|\big( \sum_{i\in J_{f}}    e_{i}h_{i}\big)^{*}  \big(\sum_{j\in J_{f}}e_{j}h_{j}\big)  \big\|
 = \|f\|^{2}  \cdot \big\|\sum_{i\in J_{f}}     {h_{i}^*e_{i}^*e_{i}h_{i}} \big\|\notag\\
 & = \|f\|^2  \cdot \big\|\sum_{i\in J_{f}} h_{i}^* h_{i} \big\|
 \le \|f\|^2  \cdot \sup_{J \subseteq I} \big\|\sum_{i\in J} h_{i}^* h_{i} \big\|\,,\notag
\end{align}
where the supremum runs over all finite subsets of $I$. Using the assumption of square summability of  the family $(h_{i})_{i\in I}$  and   \eqref{eq_condition_multiplier} it follows that
$\lim_{J\subseteq I}R^{J}(h)_{D'}$ is bounded.
Since the right-hand side does not depend on $D'$  we further see that $\lim_{J\subseteq I}R^{J}(h)$ is uniformly bounded.  The above estimate also implies that
\begin{equation}\label{wergoijoiwergwegwegr}
\|R(h)\|^2 \le \sup_{J \subseteq I} \big\|\sum_{i\in J} h_{i}^* h_{i}\big\|\, .
\end{equation}  
The converse estimate implying the equality \eqref{eq_multiplier_norm} will
be shown below while proving the converse to the existence statement.

We now assume the existence of a right-multiplier $R(h)$ in $\RM(D,C)$ with $R(h)_{C_{i}}(e^{*}_{i})=h_{i}$ for all $i$ in $I$ and verify that {$h$ is square summable.} 
So let $J$ be a finite subset of $I$. Then
\[
R(h)_D \big( \sum_{i \in J} h_i^* e_i^* \big) = \sum_{i \in J} h_i^* R(h)_{C_i}(e_i^*) = \sum_{i \in J} h_i^* h_i
\]
and consequently
\[
\big\| \sum_{i \in J} h_i^* h_i \big\| = \big\| R(h)_D \big( \sum_{i \in J} h_i^* e_i^* \big) \big\| \le \| R(h) \| \cdot \big\| \sum_{i \in J} h_i^* e_i^* \big\| \,.
\]
Using the involution we get a left-multiplier $R(h)^*$ in $\LM(C,D)$ with $R(h)^*_{C_i}(e_i) = h_i^*$ for all $i$ in $I$. It satisfies
\[
\sum_{i \in J} h_i^* e_i^* = \sum_{i \in J} R(h)^*_{C_i}(e_i) e_i^* = \sum_{i \in J} R(h)^*_{C}(e_i e_i^*) = R(h)^*_{C} \big( \sum_{i \in J} e_i e_i^* \big)
\]
and consequently
\[
\big\| \sum_{i \in J} h_i^* e_i^* \big\| = \big\| R(h)^*_C \big( \sum_{i \in J} e_i e_i^* \big) \big\| \le \| R(h)^* \| \cdot \big\| \sum_{i \in J} e_i e_i^* \big\| \le \| R(h)^* \|\,,
\]
where the last inequality sign holds because $(e_i e_i^*)_{i \in I}$ is a mutually orthogonal family of projections. Putting all together, we conclude the inequality
$$\big\|\sum_{i\in J} h_{i}^{*}h_{i}\big\| \le \| R(h) \|^2$$
for every finite subset $J$ of $I$. This implies {square summability of $h$.} 
Applying $\sup_{J}$ we get the 
converse inequality to  \eqref{wergoijoiwergwegwegr} which finishes  the {verification of the equality} \eqref{eq_multiplier_norm}.

We now assume that the family $h$ is square summable  and $R'$ is a multiplier with $R'_{C_{i}}(e_{i}^{*})=h_{i}$ for all $i$ in $I$. Then we have $(R(h)_{C_{i}}-R'_{C_{i}})(e_{i}^{*})=0$ and hence
$(R(h)-R)(f)=0$ for all finite morphisms $f$. By continuity of $R(h)$ and $R'$  we conclude that $R(h)=R'$.
\end{proof}

Let $(C,(e_{i})_{i\in I})$ be a pair consisting of an object $C$ of $\bC$ and a mutually orthogonal family  of  isometries $ e_{i}\colon C_{i}\to C$.
The following maps will play a crucial role in the characterization of infinite orthogonal sums. They send morphisms to the corresponding multipliers.
\begin{ddd}
{For every object $D$ in $\bC$  we define the associated right multiplier map}
\begin{eqnarray}\label{wergwergwggwegegr42t}
\lefteqn{{m^{R}_{D}} \colon \Hom_{\bC}(D,C) \to \Hom^{\bd}_{\Fun (\bC,\Ban)}(\Hom_{\bC}(C,-),\Hom_{\bC}(D,-))}&&\\&&\quad \to   \Hom^{\bd}_{\Fun (\bC,\Ban)}( \mathbb{K}(C,-),\Hom_{\bC}(D,-))= \RM(D,C)\, .  \nonumber
\end{eqnarray}
Similarly we define the associated left multiplier map
\begin{eqnarray}\label{wergwergwggwegegr42t1}
\lefteqn{{m^{L}_{D}} \colon \Hom_{\bC}(C,D)\to  \Hom^{\bd}_{\Fun (\bC^{\op},\Ban)}(\Hom_{\bC}(-,C),\Hom_{\bC}(-,D))}&&\\ && \quad \to   \Hom^{\bd}_{\Fun (\bC^{\op},\Ban)}(\mathbb{K}(-,C),\Hom_{\bC}(-,D))= \LM(C,D)\, .  \nonumber
\end{eqnarray}
\end{ddd}
We have
\begin{equation}\label{vjfjosjvvsfvsvsfvsfv}
m^{R} \coloneqq (m^{R}_{D})_{D\in \Ob(\bC)}\in \Hom^{\bd}_{\Fun(\bC^{\op},\Ban)}(\Hom_{\bC}(-,C), {\RM}(-,C)) 
 \, , \quad \|m^{R}\|\le 1
\end{equation}
and 
\begin{equation}\label{vjfjosjvvsfvsvsfvsfv1}
m^{L} \coloneqq (m^{L}_{D})_{D\in \Ob(\bC)}\in \Hom^{\bd}_{\Fun(\bC,\Ban)}(\Hom_{\bC}(C,-), {\LM}(C,-))\, , \quad  \|m^{L}\|\le 1\, ,
\end{equation} 
  {where the norm estimate  follows from the sub-multiplicativity of the norm on $\bC$.  

We can now define the notion of an orthogonal sum of a family $(C_{i})_{i\in I}$ of objects in $\bC$.
\begin{ddd}\label{erogwfsfdvbsbfdbsdfbsfdbv}
An  orthogonal sum of the family $(C_{i})_{i\in I}$ is a pair $(C,(e_{i})_{i\in I})$ of an object $C$ in $\bC$ together with a  {mutually} orthogonal  family of  isometries 
$e_{i}\colon C_{i}\to C$ such that the  associated 
 multiplier transformations 
    \eqref{wergwergwggwegegr42t} and \eqref{wergwergwggwegegr42t1} are  bijective for any object $D$ of~$\bC$.
%
%
%
\end{ddd}

If we want to stress the surrounding category, then we talk about  an orthogonal sum in $\bC$.
%
%
%

\begin{rem}\label{rem_bounded_inverse}
 Note that {the associated 
  multiplier transformations} \eqref{wergwergwggwegegr42t} and \eqref{wergwergwggwegegr42t1} are continuous linear maps between Banach spaces. Therefore, if they are bijective, then by the open mapping theorem their inverses are also continuous. In Proposition   \ref{lem_sum_isos_isometric} below we will show that  bijectivity  implies isometry. {In this case it then follows that} the families  of inverses $(m^{R,-1}_{D})_{D\in \Ob(\bC)}$ and $(m^{L,-1}_{D})_{D\in \Ob(\bC)}$ are uniformly bounded, and  the  transformations
 $m^{R}$ and $m^{L}$ in \eqref{vjfjosjvvsfvsvsfvsfv} and \eqref{vjfjosjvvsfvsvsfvsfv1}  are isomorphisms as well.
\hB
\end{rem}

\begin{rem}\label{qrguihqeriufwefqwefqewfqef}
If $I$ is a finite set, then it is an easy exercise to show that the notion of an orthogonal sum according to Definition \ref{erogwfsfdvbsbfdbsdfbsfdbv} coincides with the notion of an orthogonal sum 
according to Definition \ref{regiuhqrogefewfqwfqef}. \hB
%
%
%
%
%
%
\end{rem}

\begin{lem}\label{rgiojqeiovevqeve9}
An orthogonal sum of a family of objects is unique up to unique unitary isomorphism.
\end{lem}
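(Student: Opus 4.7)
The strategy is to exploit the bijectivity of the multiplier maps $m^{L}$ and $m^{R}$ built into Definition~\ref{erogwfsfdvbsbfdbsdfbsfdbv} in order to construct a canonical morphism $u\colon C\to C'$ between two orthogonal sums $(C,(e_{i})_{i\in I})$ and $(C',(e'_{i})_{i\in I})$ of the same family $(C_{i})_{i\in I}$, and then to verify that $u$ is unitary. Uniqueness of $u$ among morphisms satisfying $u e_{i}=e'_{i}$ will be automatic from the injectivity of $m^{L}_{C'}$.

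First I would observe that since $(e'_{i})_{i\in I}$ is a mutually orthogonal family of isometries into $C'$, applying Lemma~\ref{qoirgujoewgwergwergw} to $(e^{\prime,*}_{i})$ shows that this family is square summable, and hence, by the adjoint clause of Definition~\ref{oijfqoifweeqfewfqefqfewf}, so is $(e'_{i})$. Lemma~\ref{qergijoqregqewfewfqewfwef}(\ref{qerjgqkrfewfewfqfefqef1}) then produces a unique left multiplier $L\in\LM((C,(e_{i})_{i\in I}),C')$ with $L_{C_{i}}(e_{i})=e'_{i}$, and the bijectivity of $m^{L}_{C'}$ yields a unique $u\colon C\to C'$ with $u e_{i}=e'_{i}$ for every $i$. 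Symmetrically I would construct a unique $v\colon C'\to C$ with $v e'_{i}=e_{i}$ for every $i$.

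Next I would check that $v$ is a two-sided inverse of $u$: the identities $(vu)e_{i}=v e'_{i}=e_{i}=\id_{C} e_{i}$ show that the left multipliers $m^{L}_{C}(vu)$ and $m^{L}_{C}(\id_{C})$ agree on every generator of $\mathbb{K}(-,C)$, hence on all finite morphisms, and by continuity on all of $\mathbb{K}(-,C)$; injectivity of $m^{L}_{C}$ then forces $vu=\id_{C}$, and $uv=\id_{C'}$ follows by the symmetric argument.

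The main obstacle is to promote $v=u^{-1}$ to $v=u^{*}$, i.e.\ to establish unitarity rather than mere invertibility. For this I would compute, for arbitrary $i,j$ in $I$, that $e^{\prime,*}_{i}v^{*}e_{j}=(v e'_{i})^{*}e_{j}=e_{i}^{*}e_{j}=\delta_{ij}\id_{C_{j}}=e^{\prime,*}_{i}e'_{j}$, so that the right multipliers in $\RM(C_{j},C')$ associated to $v^{*}e_{j}$ and to $e'_{j}$ agree on every generator $e^{\prime,*}_{i}$ of $\mathbb{K}(C',-)$. Here the subtlety to watch is that these multipliers must be taken with respect to the sum structure $(C',(e'_{i}))$ rather than $(C,(e_{i}))$; keeping the two multiplier structures carefully apart is the step I expect to require the most care. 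Injectivity of $m^{R}_{C_{j}}$ for the sum on the $C'$-side then forces $v^{*}e_{j}=e'_{j}=u e_{j}$ for all $j$, and a final appeal to injectivity of $m^{L}_{C'}$ gives $v^{*}=u$, so $u^{*}u=vu=\id_{C}$ and $u u^{*}=uv=\id_{C'}$, as required.
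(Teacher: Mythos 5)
Your proposal is correct and follows essentially the same route as the paper: both construct the mutually inverse morphisms $u$ and $v$ from the square summability of the structure isometries via the bijective left multiplier maps, and then upgrade invertibility to unitarity by a matrix‑element computation combined with the injectivity of the associated right and left multiplier maps. The only (inessential) difference is in the last step, where the paper tests $v^{*}e_{i}'$ against general finite morphisms $f$ out of $C$ and decomposes $f=\sum_{j\in J}fe_{j}e_{j}^{*}$, whereas you test directly against the generators $e_{i}'^{*}$ of $\mathbb{K}(C',-)$.
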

\begin{proof}
Let $(C,(e_{i})_{i\in I})$ and $(C',(e'_{i}))_{i\in I}$ be  two orthogonal sums of the family $(C_{i})_{i\in I}$. 
{In the following argument we will use repeatedly that the associated multiplier maps are bijective.}

By the Lemmas \ref{qoirgujoewgwergwergw} and \ref{qergijoqregqewfewfqewfwef}
 {the}  family $e' \coloneqq (e'_{i})_{i\in I}$ induces a left multiplier $L(e')$ in $\LM((C,(e_{i})_{i\in I}),C')$ satisfying $L(e')(e_{i})=e_{i}'$ for all $i$ in $I$. {It is the associated left multiplier of a uniquely determined 
  morphism}  $v\colon C\to C'$ satisfying $ve_{i}=e_{i}'$ for all $i$ in $I$.
  
  Analogously, the family $e  \coloneqq (e_{i})_{i\in I}$ defines a left multiplier $L(e )$ in $\LM((C',(e_{i}')_{i\in I}),C)$ satisfying $L(e )(e'_{i} )=e_{i}$  for all $i$ in $I$ {which is the associated left mulitplier 
  of a uniquely determined  morphism} $w\colon C'\to C$ such that $w e'_{i}=e_{i} $  for all $i$ in $I$.
 
   The associated left multiplier of  $v w$ is 
  $v_{*}L(e )$ in $\LM((C',(e_{i}')_{i\in I}),C')$ satisfying
  \[(v_{*}L(e))(e'_{i})=ve_{i}=e_{i}'\]
  for all $i$ in $I$. 
   Since {the associated left multiplier} of $\id_{C'}$  
   has the same property we conclude that $vw=\id_{C'}$. In a similar manner we show that $wv=\id_{C}$.
    
 We finally argue that $w=v^{*}$. 
 For every $i$ in $I$ and $f$ in $\Hom_{\bC}^{\fin}(C,D)$ we have for a sufficiently large finite subset $J$ of $I$, {using $f = \sum_{j \in J} f e_j e_j^*$,}
 \begin{eqnarray*}
  f v^{*}e_{i}'&=&({e^{\prime,*}_{i}} v f^{*})^{*}=
   {(e^{\prime,*}_{i} v \sum_{j \in J} e_j  e_j^* f^*)^*} =  {(e^{\prime,*}_{i} \sum_{j \in J} e^\prime_j  e_j^* f^*)^*}\\&=&
   (e_{i}^{*}f^{*})^{*}=fe_{i}=fwe_{i}'\, .
    \end{eqnarray*}
  This implies $v^{*}e_{i}'=we_{i}'$ for all $i$ in $I$. By the injectivity of the associated left multiplier map 
  \eqref{wergwergwggwegegr42t1} we get $v^{*}=w$.   
\end{proof}

The following  lemma  prepares the proof of Proposition~\ref{lem_sum_isos_isometric} which states that for an orthogonal sum the associated multiplier maps   \eqref{wergwergwggwegegr42t} and \eqref{wergwergwggwegegr42t1} are isometric. 
Let $A$ be in $\Calg$ and $I$ be a left-ideal in $A$. Recall  {(\cite[Thm.~3.1.2]{murphy})} that left-ideals admit approximate right-units, i.e., there is a net $(u_\nu)_{\nu \in N}$ of positive elements of $I$ with $\lim_\nu x u_\nu = x$ for every $x$ in $I$. For an element $a$ of $A$ we define  its right-multiplier norm on $I$ by
\[
\|a\|_{\cR(I)} \coloneqq \sup_{x \in I, \|x\| \le 1} \|xa\|\,.
\]
{A  family of elements  $(v_{\kappa})_{\kappa\in K}$  of $A$ is called right-essential   if for every non-zero $a$ in $A$ exists some $\kappa$ in $K$ such that $a v_{\kappa}$} is not zero.
We define the notion of a left-essential  {subset }analogously using multiplication from the left.
\begin{lem}\label{lem_norm_multipliers_general}
If $I$ admits an approximate unit $(u_\nu)_{\nu \in N}$ which is right-essential in $A$, then
%
%
for every $a$ in $A$ we have
\[
\|a\| = \|a\|_{\cR(I)}\,.
\]
\end{lem}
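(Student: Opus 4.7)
The plan is to prove the non-trivial inequality $\|a\|_{\cR(I)} \ge \|a\|$; the reverse $\|a\|_{\cR(I)} \le \|a\|$ is immediate from sub-multiplicativity, since $\|xa\| \le \|x\|\|a\| \le \|a\|$ for every $x$ in $I$ with $\|x\|\le 1$. My strategy is to exhibit elements of $I$ whose action from the left on $a$ exhausts the norm $\|a\|$, using square roots of the approximate unit together with a passage to the enveloping von Neumann algebra.

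I would first replace $(u_{\nu})_{\nu \in N}$ by a monotone increasing positive contractive approximate right-unit for $I$; such a net always exists inside a closed left-ideal of a $C^{*}$-algebra (cf.\ Murphy) and the right-essentiality hypothesis is preserved. Since $I$ is closed and $u_{\nu}$ is positive, continuous functional calculus places $u_{\nu}^{1/2}$ in $I$ with $\|u_{\nu}^{1/2}\| \le 1$, and hence
\[
\|a\|_{\cR(I)}^{2} \;\ge\; \|u_{\nu}^{1/2} a\|^{2} \;=\; \|a^{*} u_{\nu} a\|
\]
for every $\nu$. Passing to the enveloping von Neumann algebra $A^{**}$, the bounded monotone net $(u_{\nu})$ has a $\sigma$-weak supremum which is a projection $p$ (the open projection of $I$) satisfying $u_{\nu} = u_{\nu} p = p u_{\nu}$, because $I = \{x \in A : xp = x\}$ and $u_{\nu}$ is self-adjoint. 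Thus $a^{*} u_{\nu} a$ is a bounded monotone increasing net of positive elements with $\sigma$-weak limit $a^{*} p a$, and since the norm of a bounded increasing net of positives converges to the norm of its supremum we obtain $\sup_{\nu}\|a^{*} u_{\nu} a\| = \|a^{*} p a\| = \|pa\|^{2}$. Consequently $\|a\|_{\cR(I)} \ge \|pa\|$.

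The hard part will be to show $\|pa\| = \|a\|$. First I would translate the right-essentiality of $(u_{\nu})$ into a property of $p$: since $u_{\nu} = p u_{\nu}$, one has $a u_{\nu} = (ap) u_{\nu}$ and $a u_{\nu} \to ap$ $\sigma$-weakly, so the condition ``$au_{\nu}=0$ for all $\nu$'' is equivalent to ``$ap=0$''. Right-essentiality therefore becomes the separating property $ap = 0 \Rightarrow a = 0$ for $a$ in $A$ (equivalently, by adjointing, $pa = 0 \Rightarrow a = 0$). To deduce the isometric identity $\|pa\| = \|a\|$ from this separating property I would argue by contradiction: assuming $\|pa\| = \alpha < \beta = \|a\|$, one has $pa^{*}ap \le \alpha^{2} p$ in $A^{**}$, while the spectrum of $a^{*}a \in A$ contains $\beta^{2} > \alpha^{2}$. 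Continuous functional calculus applied to $a^{*}a$ with a bump function supported on the spectral interval $(\alpha^{2}, \beta^{2}]$ produces a non-zero positive element $b$ in $A$ whose spectral support lies above $\alpha^{2}$, and a careful spectral analysis, performed inside the commutative $C^{*}$-subalgebra generated by $a^{*}a$ together with its interaction with $p$ in $A^{**}$, should force $pb = 0$, contradicting the separating property. The most delicate point, where I expect the bulk of the work to lie, is controlling the passage between spectral projections of $a^{*}a$ in $A^{**}$ and genuine elements of $A$ obtained through functional calculus. Combined with the trivial reverse inequality, this yields $\|a\|_{\cR(I)} = \|a\|$.
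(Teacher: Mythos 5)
Your overall architecture --- the trivial inequality from sub-multiplicativity, the passage to $A^{**}$, the supremum $p=\pi_I$ of the approximate unit, a spectral projection at the end --- runs parallel to the paper's, and everything up to and including $\|a\|_{\cR(I)}\ge\|pa\|$ is correct. The gap is your final claim $\|pa\|=\|a\|$. You reduce right-essentiality to the separating property ``$pa=0\Rightarrow a=0$ for $a\in A$'' and then try to derive isometry by contradiction via a bump function $b=f(a^*a)$ supported above $\alpha^2$. This mechanism does not close: from $\|pa\|=\alpha<\beta=\|a\|$ one gets $p\,aa^*p\le\alpha^2 p$ (note it is $paa^*p$, not $pa^*ap$, that is controlled by $\|pa\|$), and for the spectral projection $q=1_{[\gamma,\beta^2]}(aa^*)$ with $\alpha^2<\gamma\le\beta^2$ the inequality $aa^*\ge\gamma q$ only yields $\gamma\,pqp\le paa^*p\le\alpha^2p$, i.e.\ $\|pq\|^2\le\alpha^2/\gamma<1$. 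So $p$ and $q$ are merely at a positive angle; for $b=f(aa^*)$ with spectral support above $\gamma$ one gets $\|pb\|\le\|pq\|\,\|b\|<\|b\|$ but certainly not $pb=0$, and no contradiction with the separating property materializes. Iterating the construction does not improve the bound either. In short, the separating property is (at best) not obviously strong enough to give isometry, and your proposed spectral analysis does not supply the missing step.

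What actually closes the argument --- and is the paper's key step --- is that right-essentiality forces $\pi_I=1_{A^{**}}$, not merely that $\pi_I$ is separating. Since $u_\nu=\pi_Iu_\nu=u_\nu\pi_I$, right-essentiality says exactly that $a\mapsto a\pi_I$ is injective on $A$; its normal extension $z\mapsto z\pi_I$ to $A^{**}$ is then also injective (the paper invokes \cite[III.5.2.10]{blackadar_operator_algebras} for this), and applying injectivity to $z=1-\pi_I$ gives $\pi_I=1$, equivalently $\bar I=I^{**}=A^{**}$. Once $p=1$, your own chain $\|a\|_{\cR(I)}\ge\|pa\|$ already finishes the proof; the paper instead takes $\sup\|ya\|$ over the unit ball of $A^{**}=\bar I$ (using Kaplansky density to pass from $I$ to $\bar I$) and inserts the spectral projection $q=1_{[\|a\|-\varepsilon,\|a\|]}(|aa^*|^{1/2})$, which is the same idea as your endgame. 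So the repair is to replace your step ``separating $\Rightarrow$ isometric'' by the proof that $\pi_I=1$.
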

 
\begin{proof}
%
%
%
%
%
The inequality $\|a\|_{\cR(I)} \le \|a\|$ immediately follows from the sub-multiplicativity of the norm on $A$. We now show the reverse inequality.\footnote{The argument is a modification of the argument that Ozawa provided to answer the MathOverflow question \cite{ozawa_MO}.}


We let $A^{**}$ be the von Neumann algebra given by the  double commutant   of the image  of  $A$ under its universal representation.\footnote{By \cite[III.5.2.7]{blackadar_operator_algebras} it is also {isometrically isomorphic to the} double dual of $A$ considered as a Banach space.  {This explains the notation $A^{**}$.}} The weak closure of $I$ in $A^{**}$ will be denoted by $I^{**}$. It is  a weakly closed left-ideal  in $A^{**}$ and therefore of the form
$A^{**}\pi_{I}$ for some projection $\pi_{I}$ in $A^{**}$. In fact{,} $\pi_{I}$ is the strong limit of  $(u_\nu)_{\nu \in N}$   in $A^{**}$. 
We refer to \cite[III.1.1.13]{blackadar_operator_algebras} for these statements.  

We let $\bar I$ denote the strong closure of $I$ in $A^{**}$. By   \cite[Thm.~4.2.7]{murphy}
we know that $\bar I$ is also weakly closed. Hence the canonical inclusion $\bar I\subseteq I^{**}$ is an equality.

 Since we assume that $(u_\nu)_{\nu \in N}$ is  right-essential
 we can conclude that the map $A \to A^{**}$, $a \mapsto a\pi_I$ is injective. Hence its extension to  a map $A^{**} \to A^{**}$, $z \mapsto z\pi_I$, is also injective \cite[III.5.2.10]{blackadar_operator_algebras}. But this implies   that $\pi_I=1_{A^{**}}$ and therefore   $\bar I=I^{**}=A^{**}$.

For every $a$ in $A$ we have the chain of equalities
$$
\|a\|_{\cR(I)}   = \sup_{x \in I, \|x\|\le 1} \|xa\|  \stackrel{!}= \sup_{y \in \bar I, \|y\|\le1} \|y a\| 
  \stackrel{!!}= \sup_{y \in A^{**}, \|y\|\le 1} \|y a\|\,,
$$
where in the equality marked $!$ we use that $\bar I $ is the strong   closure of $I$  in $A^{**}$ and that $A\to A^{**}$ is isometric, and  the equality marked by $!!$  follows from $I^{**}=A^{**}$ as shown above.
 
%

Since   $A^{**}$ is a von Neumann algebra it admits a measurable function calculus for self-adjoint operators. For any $\varepsilon $ in $(0,\infty)$  we can define the  projection $q \coloneqq 1_{[\|a\|-\varepsilon,\|a\|]}(|a a^*|^{1/2})$ in $A^{**}$.  Since 
$\sup\sigma(|aa^{*}|^{1/2})=\|a\|^{2}$ we have $ \sigma(|a^{*}a|^{1/2}) \cap [\|a\|-\varepsilon,\|a\|]\not=\emptyset$ and therefore $q\not=0$.
The spectral theorem implies 
 the inequality
$
a a^* \ge (\|a\| - \varepsilon)^2 q
$
of self-adjoint operators. By  \cite[Thm.~2.2.5(2)]{murphy}) we then also have the inequality
$qaa^{*}q\ge  (\|a\| - \varepsilon)^2 q$. Using the $C^*$-identity for the first equality
we therefore get the estimate
\[
{\|qa\|^2 = \|q a a^* q\| \ge (\|a\| - \varepsilon)^2 \|q\| \stackrel{q\not= 0}= (\|a\| - \varepsilon)^2\,.}
\]
%
Finally  we get
\[\|a\|_{\cR(I)} =
\sup_{y \in A^{**}, \|y\|\le 1} \|y a\| \ge \|q a\| \ge \|a\|-\varepsilon\,.
\]
Since $\varepsilon$ was arbitrary, the desired inequalilty $\|a\|_{\cR(I)} \ge \|a\|$ follows.
%
%
\end{proof}

\begin{rem}
Since the {members}    of the net  $(u_\nu)_{\nu \in N}$   are positive and therefore self-adjoint, the assumption of Lemma~\ref{lem_norm_multipliers_general} is equivalent to  the assumption that {this net} 
is left-essential.
Hence the proof of Lemma~\ref{lem_norm_multipliers_general} also shows that $\|a\| = \|a\|_{\cL(I)}$, where
\[
\|a\|_{\cL(I)} \coloneqq \sup_{x \in I, \|x\|\le 1} \|ax\|
\]
is the norm of $a$ considered as a left-multiplier on $I$.
%
\hB
\end{rem}

Let $\bC$ be in $\Ccat$.
Let $(C_{i})_{i\in I}$ be a family of objects in $\bC$ and assume that $(C,(e_{i})_{i\in I})$ represents an orthogonal sum of $(C_{i})_{i\in I}$ according to Definition \ref{erogwfsfdvbsbfdbsdfbsfdbv}.
This is equivalent to the fact that the associated multiplier maps  \eqref{wergwergwggwegegr42t} and \eqref{wergwergwggwegegr42t1} are bijective for every object $D$ in $\bC$. 

\begin{prop}\label{lem_sum_isos_isometric}
The  associated multiplier maps  \eqref{wergwergwggwegegr42t} and \eqref{wergwergwggwegegr42t1} are isometric for every object $D$ in $\bC$.
\end{prop}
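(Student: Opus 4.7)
The plan is to invoke Lemma \ref{lem_norm_multipliers_general} with the $C^{*}$-algebra $A = \End_{\bC}(C)$ and the left-ideal $\mathbb{K}(C,C) \subseteq \End_{\bC}(C)$, whose left-ideal property follows from the functoriality of $\mathbb{K}(C,-)$ under left composition. Using the involution, one checks that $(m^{L}_{D}(f))^{\vee} = m^{R}_{D}(f^{*})$, where $(-)^{\vee} \colon \LM(C,D) \to \RM(D,C)$ is the isometric anti-linear bijection given by $L^{\vee}_{E}(g) \coloneqq L_{E}(g^{*})^{*}$. Hence $\|m^{L}_{D}(f)\| = \|m^{R}_{D}(f^{*})\|$, so it suffices to prove the isometry statement for $m^{R}_{D}$. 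Since $\|m^{R}_{D}(f)\| \le \|f\|$ has already been recorded in \eqref{vjfjosjvvsfvsvsfvsfv}, we focus on the reverse inequality.

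The central step is to produce an approximate right-unit of $\mathbb{K}(C,C)$ that is right-essential in $\End_{\bC}(C)$. The natural candidate is the net $(p_{J})_{J}$ of projections $p_{J} \coloneqq \sum_{i \in J} e_{i}e_{i}^{*}$, indexed by the filtered poset of finite subsets $J \subseteq I$. Each $p_{J}$ lies in $\mathbb{K}(C,C)$ since $e_{i}e_{i}^{*}$ is a generator. For any generator $f'_{j}e_{j}^{*}$ and any $J \ni j$, mutual orthogonality of the $e_{i}$ yields $(f'_{j}e_{j}^{*})\,p_{J} = f'_{j}e_{j}^{*}$, and a standard three-$\varepsilon$ argument extends $g p_{J} \to g$ to arbitrary $g \in \mathbb{K}(C,C)$. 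For right-essentiality, assume $a \in \End_{\bC}(C)$ satisfies $a p_{J} = 0$ for every finite $J \subseteq I$. Taking singletons $J = \{i\}$ gives $a e_{i} = a e_{i} e_{i}^{*} e_{i} = 0$ for every $i \in I$. Hence the associated left multiplier $m^{L}_{C}(a)$ vanishes on every generator $e_{i}\tilde f$ of $\mathbb{K}(-,C)$, so by continuity $m^{L}_{C}(a) = 0$; the injectivity of $m^{L}_{C}$ from Definition \ref{erogwfsfdvbsbfdbsdfbsfdbv} then forces $a = 0$.

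With these ingredients, Lemma \ref{lem_norm_multipliers_general} applied to $ff^{*} \in \End_{\bC}(C)$ gives
\[
\|f\|^{2} = \|ff^{*}\| = \sup_{\substack{g \in \mathbb{K}(C,C)\\ \|g\| \le 1}} \|g f f^{*}\| \le \|f\| \cdot \sup_{\substack{g \in \mathbb{K}(C,C)\\ \|g\| \le 1}} \|gf\|,
\]
where the last step uses sub-multiplicativity together with $\|f^{*}\| = \|f\|$. The case $f = 0$ being trivial, we may divide by $\|f\|$. Since $\sup_{g \in \mathbb{K}(C,C),\, \|g\| \le 1} \|gf\| = \sup_{g} \|m^{R}_{D}(f)(g)\|$ is majorised by $\|m^{R}_{D}(f)\|$, obtained by restricting the supremum defining the multiplier norm to $D' = C$, we conclude $\|f\| \le \|m^{R}_{D}(f)\|$.

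The principal obstacle is the verification of right-essentiality, since this is the only step that genuinely uses the orthogonal-sum hypothesis: without the injectivity of $m^{L}_{C}$, the relations $a e_{i} = 0$ for all $i \in I$ would not by themselves suffice to force $a = 0$.
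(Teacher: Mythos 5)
Your proof is correct and follows essentially the same route as the paper's: reduce to the right multiplier map via the involution, apply Lemma \ref{lem_norm_multipliers_general} to the left ideal $\mathbb{K}(C,C)$ in $\End_{\bC}(C)$ with the approximate right-unit $(p_J)_J$, and use the injectivity of the associated left multiplier map to establish right-essentiality. The only cosmetic difference is that you apply the lemma directly to $ff^{*}$ in one step, whereas the paper first isolates the endomorphism case $\|f\|\le\|R(f)\|$ as a separate claim and then deduces the general case by the same $C^{*}$-identity manipulation.
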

 \begin{proof}
We only discuss the case of the associated right muliplier map \eqref{wergwergwggwegegr42t}. Then the case of left multipliers  \eqref{wergwergwggwegegr42t1} can be deduced using the involution of $\bC$.  Let $h\colon D\to C$ be a morphism, and denote by $R(h) \coloneqq m_{D}^{R}(h)$   the associated right multiplier.   The estimate in \eqref{vjfjosjvvsfvsvsfvsfv}   immediately implies $\|R(h)\| \le \|h\|$.

In order to show the reverse estimate, we claim that it suffices to prove it for endomorphisms of the object $C$. To show the claim assume that $\|f\| \le \|R(f)\|$ for all endomorphisms $f$ of $C$. Applying this to $f=hh^{*}$ with $h\not=0$ (the case $h=0$ is obvious) we conclude $\|hh^*\| \le \|R(hh^*)\|$. Using the $C^*$-identity and that the involution is an isometry, we further get the equality $\|h h^*\| = \|h^*\|^2 = \|h^*\| \|h\|$.
On the other hand, by the definition of the right multiplier norm we  have the inequality $\|R(hh^*)\| \le \|R(h)\| \|h^*\|$. Combining everything and dividing by $\|h^*\|$, we arrive at the desired inequality $\|h\| \le \|R(h)\|$.

In order to show $\|f\| \le \|R(f)\|$ for every endomorphism $f$ of $C$ we employ Lemma~\ref{lem_norm_multipliers_general}. Recall that $\mathbb{K}((C,(e_{i})_{i\in I}),C)$ is generated by morphisms $f' \colon C\to C$ of the form $f' = f'_{i} e_{i}^{*}$ for some morphism $f'_{i}\colon C_{i}\to C$. It follows that we have the inclusion
\[
\Hom_{\bC}(C,C) \cdot \mathbb{K}((C,(e_{i})_{i\in I}),C) \subseteq \mathbb{K}((C,(e_{i})_{i\in I}),C)\,,
\]
i.e., $\mathbb{K}((C,(e_{i})_{i\in I}),C)$ is a left-ideal in the $C^*$-algebra $\Hom_{\bC}(C,C)$.
For every   finite subset $J$ of $I$ we define  
$p_J \coloneqq \sum_{i \in J} e_i e_i^*$ in $\End_{\bC}(C)$. 
It is immediate that the family $(p_J)_{J }$ with   $J$ running through the poset of   finite subsets of $I$ is an approximate right-unit for $\mathbb{K}((C,(e_{i})_{i\in I}),C)$.
In order to   apply Lemma~\ref{lem_norm_multipliers_general} we must check that $(p_J)_{J }$ is right-essential in  $\End_{\bC}(C)$.   This follows from the injectivity of the associated left multiplier map.  Indeed, if $f$ is a non-zero morphism in  in $\End_{\bC}(C)$, then 
 there is an $i$ in $I$ such that $f e_i$, and hence also $f p_{\{i\}}$, is non-zero.
\end{proof}

\section{Morphisms into and out of orthogonal sums}
\label{sec_morphisms_sums}

In the following we 
 explain methods to produce morphisms into or out of an orthogonal sum. This will be  used in Proposition \ref{lem_sum_characterization_morphisms} to  provide an alternative  characterization of orthogonal sums. 
  We then show in Remark \ref{euwifhqiufqwfewffqewffqfef} that our definition of an orthogonal sum is equivalent with the one introduced in \cite{fritz}.
 {We  furthermore provide} some technical results preparing \cite{coarsek}.

 {Let $\bC$ be in $\Ccat$, let} $(C_{i})_{i\in I}$ be a family of objects of $\bC$, and assume that $(C_{i})_{i\in I}$ admits an orthogonal sum $({C}, (e_{i})_{i\in I})$. Let   $D$ be an object of $\bC$, and 
  let $(h_{i})_{i\in I}$ and $ (h'_{i})_{i\in I}$  be   families of morphisms $h_{i}\colon D\to C_{i}$ and $h'_{i}\colon C_{i}\to D$. 

\begin{kor}\label{qergijoqregqewfewfqewfwef1}\mbox{}
\begin{enumerate} 
\item\label{qerjgqkrfewfewfqfefqef111}
{  There exists a morphism $h\colon D\to C$ (often denoted by $\sum_{i\in I} e_{i}h_{i}$) with $e_{j}^{*}h=h_{j}$ for all $j$ in $I$ if and only if {$(h_{i})_{i\in I}$ is square summable.}

{If {$(h_{i})_{i\in I}$ is square summable,} 
then $h$ is uniquely determined and $\|h\|^{2}=\sup_{J} \|\sum_{i\in J} h_{i}^{*}h_{i}\|$, where $J$ runs over the finite subsets of $I$.} 
\item\label{qerjgqkrfewfewfqfefqef11111}
 There exists a morphism $h'\colon C \to D$ (often denoted by $\sum_{i\in I} h'_{i}e_{i}^{*}$) with $h' e_{j}=h'_{j}$ for all $j$ in $I$ if and only if {$(h'_{i})_{i\in I}$ is square summable.}

If {$(h'_{i})_{i\in I}$ is square summable,} 
then $h'$ is uniquely determined and $\|h'\|^{2}=\sup_{J} \|\sum_{i\in J} h_{i}'h^{\prime,*}_{i}\|$, where $J$ runs over the finite subsets of $I$.} 
\end{enumerate}
\end{kor}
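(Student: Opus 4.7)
The proof plan is to derive the corollary directly from the bijectivity (plus isometry) of the associated multiplier maps combined with Lemma \ref{qergijoqregqewfewfqewfwef}.

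First I would treat Assertion \ref{qerjgqkrfewfewfqfefqef111}. Recall from \eqref{wergwergwggwegegr42t} that the associated right multiplier map $m_{D}^{R}$ sends a morphism $h\colon D\to C$ to the right multiplier given by precomposition with $h$; in particular
\[
m_{D}^{R}(h)_{C_{j}}(e_{j}^{*}) = e_{j}^{*}h
\]
for every $j\in I$. Therefore the condition $e_{j}^{*}h = h_{j}$ for all $j\in I$ is equivalent, under $m_{D}^{R}$, to the condition $m_{D}^{R}(h)_{C_{j}}(e_{j}^{*}) = h_{j}$ for all $j\in I$.

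Since by Definition \ref{erogwfsfdvbsbfdbsdfbsfdbv} the map $m_{D}^{R}$ is bijective (and isometric by Proposition \ref{lem_sum_isos_isometric}), the existence of such an $h$ is equivalent to the existence of a right multiplier $R$ in $\RM(D,C)$ satisfying $R_{C_{j}}(e_{j}^{*}) = h_{j}$ for all $j\in I$. By Lemma \ref{qergijoqregqewfewfqewfwef}.\ref{qerjgqkrfewfewfqfefqef}, such a multiplier exists if and only if the family $(h_{i})_{i\in I}$ is square summable, and in this case it is unique and its norm is given by \eqref{eq_multiplier_norm}. The uniqueness of $h$ then follows from the injectivity of $m_{D}^{R}$, and the norm identity
\[
\|h\|^{2} = \|m_{D}^{R}(h)\|^{2} = \sup_{J\subseteq I}\big\|\sum_{i\in J} h_{i}^{*}h_{i}\big\|
\]
follows from the isometry of $m_{D}^{R}$ together with \eqref{eq_multiplier_norm}; here $J$ runs over finite subsets of $I$. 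The notation $h = \sum_{i\in I} e_{i}h_{i}$ is natural because a finite partial sum $\sum_{i\in J}e_{i}h_{i}$ clearly satisfies $e_{j}^{*}(\sum_{i\in J}e_{i}h_{i}) = h_{j}$ for $j\in J$, matching the defining property of $h$.

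Assertion \ref{qerjgqkrfewfewfqfefqef11111} is proved by the exact same argument, using the associated left multiplier map $m_{D}^{L}$ from \eqref{wergwergwggwegegr42t1}, which sends $h'\colon C\to D$ to the left multiplier given by postcomposition, so that $m_{D}^{L}(h')_{C_{j}}(e_{j}) = h'e_{j}$. Applying Lemma \ref{qergijoqregqewfewfqewfwef}.\ref{qerjgqkrfewfewfqfefqef1} together with the bijectivity and isometry of $m_{D}^{L}$ yields existence, uniqueness, and the stated norm formula. There is no real obstacle here since all the heavy lifting has already been done in Lemma \ref{qergijoqregqewfewfqewfwef} and Proposition \ref{lem_sum_isos_isometric}; the corollary is essentially a translation along the isomorphisms $m_{D}^{R}$ and $m_{D}^{L}$.
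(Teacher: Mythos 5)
Your proposal is correct and follows exactly the paper's own route: translate the condition $e_j^*h=h_j$ (resp.\ $h'e_j=h'_j$) along the associated multiplier maps, invoke Lemma \ref{qergijoqregqewfewfqewfwef} for existence, uniqueness and the norm of the multiplier, and use the bijectivity from Definition \ref{erogwfsfdvbsbfdbsdfbsfdbv} together with the isometry from Proposition \ref{lem_sum_isos_isometric} to lift back to a morphism. The paper's proof is just a terser statement of the same argument, so there is nothing to add.
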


\begin{proof}
By Lemma \ref{qergijoqregqewfewfqewfwef} we obtain multipliers corresponding to $h$ or $h'$ if and only{ if the corresponding families are square summable.}
In view of the Definition \ref{erogwfsfdvbsbfdbsdfbsfdbv} of an orthogonal sum these multipliers lift {uniquely} to the desired morphisms under {the associated multiplier morphism maps} \eqref{wergwergwggwegegr42t} or \eqref{wergwergwggwegegr42t1}, respectively. The assertion about the norms follows from Proposition \ref{lem_sum_isos_isometric}.
\end{proof}

The following corollary states that a map  into an orthogonal sum, or a map out of an orthogonal sum, respectively, is uniquely determined by its compositions with the structure maps of the sum. 
{We keep the notation introduced before Corollary \ref{qergijoqregqewfewfqewfwef1}. We consider  pairs of morphisms
 $f,f' \colon D \to C$,    $k,k' \colon C \to D$,  and  $g,g' \colon C \to C$.} 
\begin{kor}\label{kor_morphisms_sum_characterization}\mbox{} 
\begin{enumerate}
\item\label{item_morphisms_sum_characterization_one}  If $e_j^* f = e_j^* f'$  for all $j$ in $I$, then $f = f'$.
 \item\label{item_morphisms_sum_characterization_one1} If  $k e_j = k' e_j$  for all $j$ in $I$, then $k = k'$.
\item\label{item_morphisms_sum_characterization_both}  If $e_i^* g e_j = e_i^* g' e_j$ for all $i,j$ in $I$, then $g= g'$.
\end{enumerate}
\end{kor}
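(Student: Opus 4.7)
The plan is to deduce all three statements from the injectivity of the associated multiplier maps $m_D^R$ and $m_D^L$, which is built into the definition of an orthogonal sum (Definition \ref{erogwfsfdvbsbfdbsdfbsfdbv}).

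For Assertion \ref{item_morphisms_sum_characterization_one}, I would consider the difference $h \coloneqq f - f'$ in $\Hom_{\bC}(D,C)$ and its associated right multiplier $R(h) \coloneqq m_D^R(h)$ in $\RM(D,C)$. By the definition of $m_D^R$ in \eqref{wergwergwggwegegr42t} we have $R(h)_{C_j}(e_j^*) = e_j^* h = 0$ for every $j$ in $I$. Naturality of $R(h)$ then gives, for any morphism $g \colon C_j \to D'$, the identity $R(h)_{D'}(g e_j^*) = g R(h)_{C_j}(e_j^*) = 0$, so $R(h)$ vanishes on every generator of $\mathbb{K}(C,-)$ and hence on the dense subspace of finite morphisms. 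By continuity $R(h) = 0$, and by the injectivity of $m_D^R$ (which is part of the definition of an orthogonal sum) we conclude $h = 0$, i.e.\ $f = f'$.

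Assertion \ref{item_morphisms_sum_characterization_one1} follows by the completely symmetric argument using the associated left multiplier map $m_D^L$ from \eqref{wergwergwggwegegr42t1}: the hypothesis $(k-k')e_j = 0$ for all $j$ forces $m_D^L(k-k')$ to vanish on every generator of $\mathbb{K}(-,C)$, hence identically, and injectivity of $m_D^L$ yields $k=k'$.

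For Assertion \ref{item_morphisms_sum_characterization_both} I would reduce to the previous two parts. For each fixed $j$ in $I$ the morphisms $g e_j$ and $g' e_j$ both belong to $\Hom_{\bC}(C_j, C)$ and satisfy $e_i^*(g e_j) = e_i^*(g' e_j)$ for every $i$ in $I$, so Assertion \ref{item_morphisms_sum_characterization_one} applied with $D = C_j$ gives $g e_j = g' e_j$. Since this holds for every $j$, Assertion \ref{item_morphisms_sum_characterization_one1} now yields $g = g'$. I do not anticipate any genuine obstacle; the only point to be careful about is that the reduction of Assertion \ref{item_morphisms_sum_characterization_both} really uses both of the preceding items in sequence rather than being a single direct argument.
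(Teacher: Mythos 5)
Your proposal is correct and follows essentially the same route as the paper: the paper derives parts (1) and (2) from the uniqueness clause of Corollary \ref{qergijoqregqewfewfqewfwef1} (which is exactly the statement that the associated multiplier, and hence the morphism, is determined by its values on the generators $e_j^*$, resp.\ $e_j$), and you simply unpack that uniqueness at the level of the injectivity of $m^R_D$ and $m^L_D$. Your reduction of part (3) — first fixing $j$ and applying part (1) with $D=C_j$ to get $ge_j=g'e_j$, then applying part (2) — is precisely the paper's argument.
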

\begin{proof}
Assertions~\ref{item_morphisms_sum_characterization_one} and \ref{item_morphisms_sum_characterization_one1} immediately follow from the uniqueness statements in   Corollary~\ref{qergijoqregqewfewfqewfwef1}. 

We show Assertion~\ref{item_morphisms_sum_characterization_both}.
Fixing $j$ in $I$, the uniqueness statement in  Corollary~\ref{qergijoqregqewfewfqewfwef1}.\ref{qerjgqkrfewfewfqfefqef111} (applied to the family of morphisms $(h_i)_{i \in I}\colon C_j \to C_i$ defined by $h_i\coloneqq e_i^* g e_j$ for all $i$ in $I$) implies that $g e_j = g' e_j$. Then the uniqueness statement in Corollary~\ref{qergijoqregqewfewfqewfwef1}.\ref{qerjgqkrfewfewfqfefqef11111} (applied to $h_i^\prime\coloneqq g e_i\colon C_i \to C$ for every $i$ in $I$) implies that $g = g'$.
\end{proof}
   
   Let $(C_{i})_{i\in I}$ and $(C_{i}')_{i\in I}$ be two families of objects in $\bC$ with the same index set.
 We assume that  they admit orthogonal sums $(C, (e_{i})_{i\in I} )$ and $  (C', (e'_{i})_{i\in I} )$.
Let $(f_{i})_{i\in I}$ be a uniformly bounded family of morphisms $f_{i} \colon C_{i}\to C'_{i}$.  
By Lemma \ref{qoirgujoewgwergwergw} the families $(f_{i}e^{*}_{i})_{i\in I}$ 
and $(e'_{i}f_{i})_{i\in I}$ are  square summable.
Using Corollary \ref{qergijoqregqewfewfqewfwef1}.\ref{qerjgqkrfewfewfqfefqef111} applied to
$(f_{i}e_{i}^{*})_{i\in I}$ we get a unique morphism $f \colon C  \to  C'$ such that
$e_{j}^{\prime,*}f=f_{j}e_{j}^{*}$  for all $j$ in $I$. 
On the other hand, using Corollary \ref{qergijoqregqewfewfqewfwef1}.\ref{qerjgqkrfewfewfqfefqef11111} applied to the family
$(e'_{i}f_{i})_{i\in I}$ we get a unique  morphism 
$f'\colon C\to  C'$ satisfying   
$ f'e_{j}=e_{j}'f_{j} $  for all $j$ in $I$. 

\begin{lem}\label{ruihqiuwefwefqfqwefq}
 We have $f=f'$. 
 \end{lem}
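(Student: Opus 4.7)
The plan is to prove $f = f'$ by applying the uniqueness statements in Corollary \ref{kor_morphisms_sum_characterization} twice, comparing how $f$ and $f'$ interact with the structure maps of both orthogonal sums. The key observation is that although the defining properties of $f$ and $f'$ look different (one characterized by left-composition with $e'_j{}^{*}$, the other by right-composition with $e_j$), they encode the same ``matrix entries'' with respect to the two families $(e_i)_{i \in I}$ and $(e'_i)_{i \in I}$.

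More concretely, I would first compute $e_i^{\prime,*} f' e_j$ for arbitrary $i,j$ in $I$. Using $f' e_j = e'_j f_j$ together with the mutual orthogonality condition $e_i^{\prime,*} e'_j = \delta_{ij}\, \id_{C'_i}$ (which follows from the $e'_i$ being mutually orthogonal isometries, see Definitions \ref{wtihojwegwergwergef} and \ref{regiuhqrogefewfqwfqef}), this evaluates to $f_i$ when $i = j$ and to $0$ otherwise; on the other hand, $f_i e_i^* e_j$ takes exactly the same values. Hence $e_i^{\prime,*} f' e_j = f_i e_i^* e_j$ for all $j$ in $I$, with $i$ fixed.

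Next I would apply Corollary \ref{kor_morphisms_sum_characterization}.\ref{item_morphisms_sum_characterization_one1} (to the orthogonal sum $(C,(e_j)_{j\in I})$) to deduce $e_i^{\prime,*} f' = f_i e_i^*$ for every $i$ in $I$. Since by the defining property of $f$ we also have $e_i^{\prime,*} f = f_i e_i^*$, the two morphisms $f$ and $f'$ agree after left-composition with each $e_i^{\prime,*}$. Applying Corollary \ref{kor_morphisms_sum_characterization}.\ref{item_morphisms_sum_characterization_one} (to the orthogonal sum $(C',(e'_i)_{i\in I})$) then yields $f = f'$.

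The argument is essentially formal once the two uniqueness assertions are in hand; there is no genuine obstacle. The only point requiring minor care is making sure that the mutual orthogonality and isometry conditions $e_i^{\prime,*} e'_j = \delta_{ij}\, \id_{C'_i}$ and $e_i^* e_j = \delta_{ij}\, \id_{C_i}$ are used correctly, but these are immediate from Definition \ref{regiuhqrogefewfqwfqef}.
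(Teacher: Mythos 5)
Your proof is correct and follows essentially the same route as the paper: compute the ``matrix entries'' $e_i^{\prime,*}fe_j$ and $e_i^{\prime,*}f'e_j$, observe they agree, and invoke the uniqueness statements of Corollary \ref{kor_morphisms_sum_characterization}. The only cosmetic difference is that the paper appeals to part \ref{item_morphisms_sum_characterization_both} in one step, whereas you chain parts \ref{item_morphisms_sum_characterization_one1} and \ref{item_morphisms_sum_characterization_one} — which is exactly how part \ref{item_morphisms_sum_characterization_both} is proved anyway.
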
 \begin{proof} For all $i,j$ in $I$ with $i\not=j$ we have
$e_{i}^{\prime,*}f e_{j}=0= e_{i}^{\prime,*}f' e_{j}$. Furthermore
$e_{i}^{\prime,*}fe_{i}=f_{i}e_{i}^{*}e_{i}=f_{i}=e_{i}^{\prime,*}e'_{i}f_{i}= {e_{i}^{\prime,*}f'e_{i}}$.
This first implies that $e_{i}^{\prime,*}(f-f')e_{j}=0$ for all $i,j$ in $I$,  {which in turn} implies $f=f'$ by Corollary \ref{kor_morphisms_sum_characterization}.\ref{item_morphisms_sum_characterization_both}.
\end{proof}
We will usually use the suggestive notation
\begin{equation}\label{ervfdvsdfvfdvfsdvsfdv}
\oplus_{i\in I} f_{i} \colon C \to C'
\end{equation}
for the morphism $f$ (or equivalently, $f'$) considered above.


We consider a full subcategory $\bD\subseteq \bC$ in $\Ccat$  such that $\bC$ admits
all  finite orthogonal sums. 
We consider two  families of objects $(A_{i})_{i\in I}$ and
$(B_{j})_{j\in J}$ in $\bD$ and   assume that  there exist orthogonal sums 
$(A,(e_{i})_{i\in I})$ and $(B,(f_{j})_{j\in J})$  of these families in $\bC$.
By Corollary \ref{kor_morphisms_sum_characterization} every morphism $h \colon A\to B$ is uniquely determined by the family  
$(h_{ji} )_{i\in I,j\in J}$ of morphisms $h_{ji} \coloneqq f_{j}^{*}he_{i}:A_{i}\to B_{j}$ in $\bD$.  We claim that one can  describe the Banach space
$\Hom_{\bC}(A,B)$ completely in the language of $\bD$. In other words, $\bD$ determines 
 which families $(h_{ji})_{i\in I,j\in J}$
correspond to morphisms  $h$  and its norms.
In order to formulate  this claim technically  we consider a second full
inclusion $\bD\subseteq \bC'$ where $\bC'$  also admits all  finite small orthogonal sums  
and the   orthogonal sums
$(A',(e'_{i})_{i\in I})$ and $(B',(f'_{j})_{j\in J})$    in $\bC'$ of the families $(A_{i})_{i\in I}$ and
$(B_{j})_{j\in J}$.
 
\begin{prop}\label{woijotrgwergeferfwerfewrf}
For a family
$(h_{ji})_{i\in I,j\in J}$ of morphisms $h_{ji} \colon A_{i}\to B_{j}$ in $\bD$
 the  following assertions are equivalent:
\begin{enumerate}
\item \label{gwoeirgujowregregfwfer} There is a   morphism
$h \colon A\to B$ such that $h_{ji}=f_{j}^{*}
h e_{i}$ for all $i$ in $I$ and $j$ in $J$.
\item \label{gwoeirgujowregregfwfer1}  There is a   morphism
$h' \colon A'\to B'$ such that $h_{ji}=f_{j}^{*,\prime}
h' e'_{i}$ for all $i$ in $I$ and $j$ in $J$.
 \end{enumerate}
 If these conditions are satisfied, then $\|h\|=\|h'\|$.
\end{prop}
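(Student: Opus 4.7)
The plan is to characterise both the existence of $h$ and its norm intrinsically in terms of the data $(h_{ji})$ in $\bD$, in a way that does not refer to the ambient category $\bC$, and then to observe that the same characterisation applied inside $\bC'$ yields $h'$ with the same norm. For finite subsets $I'\subseteq I$ and $J'\subseteq J$ set $p_{I'}\coloneqq\sum_{i\in I'}e_{i}e_{i}^{*}\in\End_{\bC}(A)$ and $q_{J'}\coloneqq\sum_{j\in J'}f_{j}f_{j}^{*}\in\End_{\bC}(B)$, and analogously $p'_{I'},q'_{J'}$ in $\bC'$. Since $\bC$ admits finite orthogonal sums, the image of $p_{I'}$ is a finite orthogonal sum of $(A_{i})_{i\in I'}$, and likewise for $q_{J'}$; by the universal property of finite orthogonal sums the resulting $C^{*}$-algebra $\End_{\bC}(p_{I'}A)$ and the Banach space $\Hom_{\bC}(p_{I'}A,q_{J'}B)$ are determined, as matrix constructions, by the $\Hom$-spaces of $\bD$ alone. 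Thus the truncated norm $\|q_{J'}hp_{I'}\|$ depends only on the matrix $(h_{ji})_{i\in I',j\in J'}$ and on $\bD$, and coincides with $\|q'_{J'}h'p'_{I'}\|$ whenever both sides are defined.

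The central technical step is the identity
\[
\|h\|\;=\;\sup_{I',J'\ \text{finite}}\|q_{J'}hp_{I'}\|
\]
for every morphism $h\colon A\to B$ in $\bC$ (and likewise in $\bC'$). The inequality $\ge$ is immediate since $\|p_{I'}\|,\|q_{J'}\|\le 1$. For $\le$ we invoke Proposition~\ref{lem_sum_isos_isometric}: the associated left multiplier map of the orthogonal sum $(A,(e_{i})_{i\in I})$ is isometric, so $\|h\|=\sup_{D'}\sup_{g}\|hg\|$ as $g$ ranges over $\mathbb{K}(D',A)$ with $\|g\|\le 1$. Any finite morphism in $\Hom^{\fin}_{\bC}(D',A)$ is a linear combination of terms $e_{i}g_{i}$ and hence satisfies $p_{I'}g=g$ once $I'$ is large enough; by density of finite morphisms in $\mathbb{K}(D',A)$ and continuity this yields $\|h\|=\sup_{I'}\|hp_{I'}\|$. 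Applying the same argument to $hp_{I'}$ using the associated right multiplier map of $(B,(f_{j})_{j\in J})$ (also isometric by Proposition~\ref{lem_sum_isos_isometric}) gives $\|hp_{I'}\|=\sup_{J'}\|q_{J'}hp_{I'}\|$, and combining the two identities proves the claim.

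Granted the key identity, the proposition follows. If $h$ exists, then $(h_{ji})=(f_{j}^{*}he_{i})$ is a family in $\bD$ whose truncation norms $\|q_{J'}hp_{I'}\|$ are uniformly bounded by $\|h\|$, a $\bD$-intrinsic condition. Conversely, given such a family with a uniform bound on the $\bD$-intrinsic truncation norms, we construct $h$ by two applications of Corollary~\ref{qergijoqregqewfewfqewfwef1}: specialising the truncation bound to $I'=\{i\}$ gives $\sup_{J'\subseteq J}\|\sum_{j\in J'}h_{ji}^{*}h_{ji}\|<\infty$, yielding a unique $\tilde h_{i}\colon A_{i}\to B$ in $\bC$ with $f_{j}^{*}\tilde h_{i}=h_{ji}$; then the positivity of $\sum_{i\in I'}\tilde h_{i}\tilde h_{i}^{*}$ and the key identity give $\|\sum_{i\in I'}\tilde h_{i}\tilde h_{i}^{*}\|=\sup_{J'}\|q_{J'}\bigl(\sum_{i\in I'}\tilde h_{i}\tilde h_{i}^{*}\bigr)q_{J'}\|$, whose right-hand side is $\bD$-intrinsic and uniformly bounded by hypothesis, so $(\tilde h_{i})_{i\in I}$ is square summable and provides the desired $h\colon A\to B$ with $f_{j}^{*}he_{i}=h_{ji}$. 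The same construction carried out in $\bC'$ produces $h'$, and $\|h\|=\|h'\|$ by the $\bD$-intrinsic characterisation of both norms via the key identity. The main obstacle is the key identity, which is a multiplier-theoretic refinement of Proposition~\ref{lem_sum_isos_isometric}.
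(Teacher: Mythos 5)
Your proposal is correct and follows essentially the same route as the paper: both reduce $\|h\|$ to a supremum of norms of finite matrices over $\bD$ (your truncated quantities $\|q_{J'}hp_{I'}\|$ coincide, via the partial isometries identifying $p_{I'}A$ and $q_{J'}B$ with finite sums of the $A_{i}$ and $B_{j}$ formed inside $\bD_{\oplus}$, with the paper's expressions $\sup_{M}\|\sum_{j\in M}\sum_{l,l'}c_{l}h_{jl}^{*}h_{jl'}c_{l'}^{*}\|$), both observe that these norms are intrinsic to $\bD$ because the identity-on-$\bD$ equivalence between the categories of finite sums formed in $\bC$ and in $\bC'$ is automatically isometric, and both reconstruct $h'$ through the square-summability criterion of Corollary \ref{qergijoqregqewfewfqewfwef1}. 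Your ``key identity'' $\|h\|=\sup_{I',J'}\|q_{J'}hp_{I'}\|$, derived from Proposition \ref{lem_sum_isos_isometric}, is just a repackaging of the paper's double application of the norm formula in that corollary, since $\|hp_{I'}\|^{2}=\|\sum_{i\in I'}h_{i}h_{i}^{*}\|$.
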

\begin{proof}
We let $\bD_{\oplus}$ denote the full subcategory of $\bC$
on objects which are isomorphic to finite sums of objects from $\bD$.
We define $\bD'_{\oplus}$ similarly.  
Then it is easy to construct an  equivalence $\bD_{\oplus}\to \bD_{\oplus}'$ in $\Clincat$ which extends the identity of $\bD$. This equivalence is then necessarily an equivalence in $\Ccat$.

By the symmetry of the assertions it suffices to show that Assertion \ref{gwoeirgujowregregfwfer} implies Assertion \ref{gwoeirgujowregregfwfer1}. Thus assume that $h$ exists.

  If we set $h_{i} \coloneqq he_{i} \colon A_{i}\to B$, then by Corollary \ref{qergijoqregqewfewfqewfwef1}  the  family $(h_{i})_{i\in I}$ is square summable  and we have 
$$\|h\|^{2} \coloneqq \sup_{L}\|\sum_{l\in L} h_{l}h_{l}^{*}\|\ ,$$ where $L$ runs over the finite subsets of $I$.  
We now fix $L$ and choose a sum $(C,(c_{l})_{l\in L})$ of the finite family
$(A_{l})_{l\in L}$ in $\bD_{\oplus}$.  
Then using Definition \ref{regiuhqrogefewfqwfqef}.\ref{asfvasvadsvsdvasdvisddddda1}
$$\|\sum_{l\in L} h_{l}h_{l}^{*}\|=\|\sum_{l\in L} h_{l}c^{*}_{l}( \sum_{l'\in L}h_{l'}c^{*}_{l'} )^{*}\|=\|\sum_{l\in L}  h_{l}c_{l}^{*}\|^{2}\ .$$  
Again by   Corollary \ref{qergijoqregqewfewfqewfwef1}  the morphism  $$g^{L} \coloneqq \sum_{l\in L}  h_{l}c_{l}^{*} \colon C\to B$$
 gives rise to  the 
  square summable family $(g_{j}^{L})_{j\in J}$ with 
$g^{L}_{j} \coloneqq f_{j}^{*} g^{L}=\sum_{l\in L} h_{jl}c_{l}^{*}$ and $$\|g^{L}\|^{2}= \sup_{M} \|\sum_{j\in M}g^{L,*}_{j} g^{L}_{j}\| 
=  
\sup_{M} \|\sum_{j\in M} 
\sum_{l,l'\in L}  c_{l} h_{jl}^{*}
   h_{jl'}c_{l'}^{*} \|
\ .$$
On the right-hand side we have the norms of   endomorphisms of
$C$ which are completely determined by  the structure of $\bD_{\oplus}$.
We let
$(C',(e'_{l})_{l\in L})$ be the image of $(C,(c_{l})_{l\in L})$ under a unitary
equivalence $\bD_{\oplus}\to \bD_{\oplus}'$ under $\bD$.
We then consider the morphisms $g_{j}^{L,\prime} \coloneqq \sum_{l\in L}   h_{jl}c^{\prime,*}_{l} \colon C'\to B_{j}$.
We have the equality  
\begin{equation}\label{weqfihfiuhwefiuqhweufiwefqwefqwefqewf}
\|g^{L}\|^{2}= \sup_{M} \|\sum_{j\in M} 
\sum_{l,l'\in L}  c_{l} h_{jl}^{*}
   h_{jl'}c_{l'}^{*} \|=  \sup_{M} \|\sum_{j\in M} 
\sum_{l,l'\in L}  c'_{l} h_{jl}^{*}
   h_{jl'}c_{l'}^{\prime,*} \| = \sup_{M}\|\sum_{j\in M}g^{L,\prime,*}_{j} g^{L,\prime}_{j}\|\ .
\end{equation}     This implies that $(g^{L,\prime}_{j})_{j\in J}$ is a  square summable family and determines  by   Corollary \ref{qergijoqregqewfewfqewfwef1} a morphism
   $g^{L,\prime} \colon C'\to B'$ such that $f_{j}^{\prime,*} g^{L,\prime}=g^{L,\prime}_{j}$ for all $j$ in $J$ with $\|g^{L,\prime}\|=\|g^{L}\|$.
 We now set $h'_{i} \coloneqq g^{L,\prime}c'_{i} \colon A_{i}\to B'$.  Then $f_{j}'h'_{i}=h_{ji}$ for every $j$ in $J$ so that $h'_{i}$ does not depend on the choice of $L$ provided $i\in L$.
 We furthermore have  
 $g^{L,\prime}=\sum_{l\in L} h_{l}' c_{l}^{\prime,*}$
  and  $$\sup_{L}\|\sum_{l\in L} h'_{l}h_{l}^{\prime,*}\|=\sup_{L}\|g^{L,\prime}\|^{2}=\sup_{L}\|g^{L}\|^{2}= 
\sup_{L}\|\sum_{l\in L} h_{l}h_{l}^{*}\|=\|h\|^{2}\ .$$
This shows that the family $(h_{i}')_{i\in I}$ is square summable and determines by   Corollary \ref{qergijoqregqewfewfqewfwef1} a morphism $h' \colon A\to B$ such that $f_{j}^{\prime,*} h' e'_{i}=h_{ji}$
for all $i$ in $I$ and $j$ in $J$ and 
$ \|h'\|^{2} =\|h\|^{2}$.
 \end{proof} 


{The following proposition provides an alternative characterization of orthogonal sums in terms of morphisms.}
Let $\bC$ be in $\Ccat$, let $(C_i)_{i \in I}$ be a family of objects of $\bC$, and let $C$ be an object of $\bC$ with a family of mutually orthogonal isometries $(e_i)_{i \in I}$ with $e_i\colon C_i \to C$ for every $i$ in $I$.
\begin{prop}\label{lem_sum_characterization_morphisms} 
$(C,(e_i)_{i \in I})$ is an orthogonal sum in $\bC$ of the family $(C_i)_{i \in I}$ if and only if the following two equivalent conditions are satisfied:
\begin{enumerate}
\item\label{item_sum_characterization_1} For every object $D$ of $\bC$ and every {square summable family}   {$(h_i)_{i \in I}$ of morphisms  $h_i\colon D \to C_i$}  
there exists a unique morphism $h\colon D \to C$ with $e_i^* h = h_i$ for all $i$ in $I$.
\item\label{item_sum_characterization_2} For every object $D$ of $\bC$ and every {square summable family}  {$(h'_i)_{i \in I}$ of morphisms}    $h'_i\colon C_i \to D$ 
there exists a unique morphism $h'\colon C \to D$ with $h' e_i = h'_i$ for all $i$ in $I$.
\end{enumerate}
\end{prop}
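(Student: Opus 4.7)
My plan is to first establish that conditions \ref{item_sum_characterization_1} and \ref{item_sum_characterization_2} are equivalent, then observe that the forward direction of the proposition is essentially a restatement of Corollary \ref{qergijoqregqewfewfqewfwef1}, and finally prove the backward direction by showing that \ref{item_sum_characterization_1} alone implies bijectivity of the associated right multiplier map $m^R_D$ in \eqref{wergwergwggwegegr42t}. By the equivalence \ref{item_sum_characterization_1} $\Leftrightarrow$ \ref{item_sum_characterization_2}, the same argument applied to adjoints will yield bijectivity of $m^L_D$.

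For the equivalence of \ref{item_sum_characterization_1} and \ref{item_sum_characterization_2}, I would exploit the involution. If $(h'_i)_{i \in I}$ is square summable with $h'_i \colon C_i \to D$, then by definition $(h_i^{\prime,*})_{i \in I}$ is a square summable family of morphisms $D \to C_i$. Applying \ref{item_sum_characterization_1} yields a unique $h \colon D \to C$ with $e_i^* h = h_i^{\prime,*}$, and then $h' := h^*$ is the required morphism with $h' e_i = h'_i$; uniqueness of $h'$ transfers via $*$ from uniqueness of $h$. The converse is symmetric. The forward direction of the proposition then follows directly from Corollary \ref{qergijoqregqewfewfqewfwef1}.

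For the backward direction, assume \ref{item_sum_characterization_1} and fix an object $D$. Injectivity of $m^R_D$ is easy: if $m^R_D(h) = m^R_D(h')$, then evaluating on $e_i^* \in \mathbb{K}(C, C_i)$ gives $e_i^* h = e_i^* h'$ for all $i$, so uniqueness in \ref{item_sum_characterization_1} applied to the family $(e_i^* h)_{i \in I}$ forces $h = h'$. For surjectivity, given $R \in \RM(D, C)$, I would set $h_i := R_{C_i}(e_i^*)$ and show square summability of $(h_i)_{i \in I}$ as follows: for every finite $J \subseteq I$ the element $p_J := \sum_{i \in J} e_i e_i^*$ is a projection in $\End_{\bC}(C)$, hence has norm $\le 1$, and by naturality of $R$,
\[
R_C(p_J) = \sum_{i \in J} e_i R_{C_i}(e_i^*) = \sum_{i \in J} e_i h_i\,,
\]
so $\|\sum_{i \in J} e_i h_i\| \le \|R\|$. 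Using mutual orthogonality $e_i^* e_j = \delta_{ij} \id_{C_i}$ together with the $C^*$-identity,
\[
\big\|\sum_{i \in J} h_i^* h_i\big\| = \big\|\big(\sum_{i \in J} e_i h_i\big)^*\big(\sum_{i \in J} e_i h_i\big)\big\| = \big\|\sum_{i \in J} e_i h_i\big\|^2 \le \|R\|^2\,,
\]
which is the desired square summability. Condition \ref{item_sum_characterization_1} then produces $h \colon D \to C$ with $e_i^* h = h_i$, and it remains to verify $R = m^R_D(h)$. By continuity and linearity this can be checked on the generators $f e_i^* \in \mathbb{K}(C, D')$ with $f \colon C_i \to D'$, where both sides equal $f h_i = f e_i^* h$. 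The main (and only) subtle point in the whole argument is this derivation of square summability from boundedness of the multiplier $R$; once that is in hand, everything else is a direct bookkeeping via the defining identities of orthogonal sums.
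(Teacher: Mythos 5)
Your proposal is correct and follows essentially the same route as the paper: involution for the equivalence of the two conditions, Corollary \ref{qergijoqregqewfewfqewfwef1} for the forward direction, and for the converse the assignment $h_i := R_{C_i}(e_i^*)$ followed by square summability and an appeal to Condition \ref{item_sum_characterization_1}. The only difference is that you re-derive square summability directly from $\|R_C(p_J)\|\le\|R\|$ where the paper simply cites the "only if" clause of Lemma \ref{qergijoqregqewfewfqewfwef}.\ref{qerjgqkrfewfewfqfefqef}; your inline argument is a valid (and slightly streamlined) version of the same estimate.
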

\begin{proof}
Using the involution we see that Condition~\ref{item_sum_characterization_1}    equivalent to Condition~\ref{item_sum_characterization_2}. 

If $(C,(e_i)_{i \in I})$ is an  orthogonal sum, then the  Conditions~\ref{item_sum_characterization_1} and~\ref{item_sum_characterization_2} are satisfied by Corollary \ref{qergijoqregqewfewfqewfwef1}.

To prove the {converse}
we assume Conditions~\ref{item_sum_characterization_1} and~\ref{item_sum_characterization_2}. We have to show that the associated multiplier  maps   \eqref{wergwergwggwegegr42t} and \eqref{wergwergwggwegegr42t1} are isomorphisms. We consider only the case {of the associated right multiplier map}  \eqref{wergwergwggwegegr42t} since the other case will then follow by using the involution. We fix an object $D$ of $\bC$ and let $R$ be in $\RM(D,C)$. We define a family of morphisms $(h_i)_{i \in I}$ with $h_i\colon D \to C_i$ for every $i$ in $I$ by setting $h_i \coloneqq R_{C_i}(e_i^*)$. By Lemma \ref{qergijoqregqewfewfqewfwef}.\ref{qerjgqkrfewfewfqfefqef} we see that {the family $(h_{i})_{i\in I}$ is square summable.}
Condition~\ref{item_sum_characterization_1} then implies the existence of a unique morphism $h\colon D \to C$
whose associated right multiplier is $R$.
This shows that the associated right multiplier map \eqref{wergwergwggwegegr42t} is bijective.
\end{proof}

  In the case of $W^{*}$-categories orthogonal sums have   a particularly simple characterization.   Let $\bC$ be in $\Ccat$, $(C_{i})_{i\in I}$ be a family of objects in $\bC$, and $(C,(e_{i})_{i\in I})$ be a pair of an object and a mutually
 orthogonal family of   isometries $e_{i} \colon C_{i}\to C$.  
  In view of Remark \ref{euwifhqiufqwfewffqewffqfef} below the following
 proposition is equivalent to \cite[Thm. 5.1]{fritz}. Using the definition of an orthogonal sum in a $W^{*}$-category according to \cite[Sec. 6]{ghr}
 it even becomes a tautology.

 \begin{prop}\label{qr3gijoerwgergwefwerferwf}
 If 
  $\bC$ is a $W^{*}$-category, then  the following assertions are equivalent.
  \begin{enumerate}
  \item  \label{ejrgiowegewrgergrefwefe1}We have
 $\sum_{i\in I} \sigma(e_{i})\sigma(e_{i})^{*}=1_{\sigma(C)}$ weakly 
 for some unital, normal and faithful representation 
 $\sigma \colon \bC\to \Hilb(\C)^{\la}$ such that $\bC\cong \bC_{\sigma}''$.
 \item\label{ejrgiowegewrgergrefwefe2} 
 The pair  $(C,(e_{i})_{i\in I})$ represents the sum of the family $ (C_{i})_{i\in I}$ in $\bC$.
\item\label{ejrgiowegewrgergrefwefe3} We have $\sum_{i\in I}e_{i}e_{i}^{*}=1_{C}$ in the $\sigma$-weak   topology.
\item \label{ejrgiowegewrgergrefwefe4}
We have $\sum_{i\in I}e_{i}e_{i}^{*}=1_{C}$ in the weak  operator topology.
 \end{enumerate}
 \end{prop}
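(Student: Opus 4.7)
The strategy is to establish the cycle (2) $\Rightarrow$ (3) $\Rightarrow$ (2) together with the equivalences (1) $\Leftrightarrow$ (3) $\Leftrightarrow$ (4) among the three topological conditions.

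For any finite subset $J \subseteq I$ set $p_{J} := \sum_{i \in J} e_{i} e_{i}^{*} \in \End_{\bC}(C)$. Mutual orthogonality of $(e_{i})_{i \in I}$ ensures that each $p_{J}$ is a projection, so $(p_{J})_{J}$ is an increasing, norm-bounded net of positive elements in the $W^{*}$-algebra $\End_{\bC}(C)$. In any $W^{*}$-algebra such a net converges in the weak operator topology if and only if it converges $\sigma$-weakly, its limit in either topology being its supremum; moreover, a normal faithful representation of a $W^{*}$-algebra preserves suprema. Hence, in the presence of the identification $\bC = \bC''_{\sigma}$, weak convergence of $(\sigma(p_{J}))_{J}$ to $1_{\sigma(C)}$ is equivalent to $\sup_{J} p_{J} = 1_{C}$ in $\End_{\bC}(C)$. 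This yields (1) $\Leftrightarrow$ (3) $\Leftrightarrow$ (4).

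For (2) $\Rightarrow$ (3), let $p_{\infty}$ be the $\sigma$-weak supremum of $(p_{J})_{J}$ in $\End_{\bC}(C)$. For each $i_{0} \in I$ and every $J$ containing $i_{0}$, mutual orthogonality gives $e_{i_{0}}^{*} p_{J} = e_{i_{0}}^{*}$. By separate $\sigma$-weak continuity of composition we obtain $e_{i_{0}}^{*} p_{\infty} = e_{i_{0}}^{*}$, i.e., $e_{i_{0}}^{*}(1_{C} - p_{\infty}) = 0$ for every $i_{0} \in I$. Corollary \ref{kor_morphisms_sum_characterization}.\ref{item_morphisms_sum_characterization_one} then forces $1_{C} - p_{\infty} = 0$, which proves (3).

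For (3) $\Rightarrow$ (2) we verify Condition \ref{item_sum_characterization_1} of Proposition \ref{lem_sum_characterization_morphisms}. Given a square summable family $(h_{i})_{i \in I}$ with $h_{i} \colon D \to C_{i}$, set $M^{2} := \sup_{J} \|\sum_{i \in J} h_{i}^{*} h_{i}\|$ and form partial sums $h_{J} := \sum_{i \in J} e_{i} h_{i}$. Mutual orthogonality gives $h_{J}^{*} h_{J} = \sum_{i \in J} h_{i}^{*} h_{i}$, so $\|h_{J}\| \le M$ uniformly in $J$. Since $\bC$ is a $W^{*}$-category, $\Hom_{\bC}(D, C)$ is the dual of a Banach space, so by Banach--Alaoglu the bounded net $(h_{J})_{J}$ admits $\sigma$-weak cluster points. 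For any such cluster point $h$, passing to a subnet in $e_{i_{0}}^{*} h_{J}$ gives $e_{i_{0}}^{*} h = h_{i_{0}}$, because $e_{i_{0}}^{*} h_{J} = h_{i_{0}}$ for every $J \ni i_{0}$. If $h, h'$ are two cluster points, then $e_{i}^{*}(h - h') = 0$ for all $i$, so $p_{J}(h - h') = 0$ for every $J$; passing to the $\sigma$-weak limit and using (3) yields $h = h'$. The same argument shows that any morphism $h\colon D\to C$ with $e_{i}^{*} h = h_{i}$ for all $i$ is uniquely determined, so the hypothesis of Proposition \ref{lem_sum_characterization_morphisms} is met and $(C, (e_{i})_{i \in I})$ is an orthogonal sum.

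The main obstacle is the implication (3) $\Rightarrow$ (2): the partial sums $h_{J}$ are not norm Cauchy in general (consider a countable orthonormal basis of a Hilbert space), so $h$ cannot be produced by a norm-completeness argument. Condition (3) is used twice in the argument above, first implicitly through the $W^{*}$-structure to bound the partial sums and extract a $\sigma$-weak cluster point, and then crucially to force uniqueness of this cluster point, thereby promoting it to a genuine $\sigma$-weak limit.
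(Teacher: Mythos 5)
Your proof is correct, but it routes the hard implication differently from the paper. The paper proves the cycle $(1)\Rightarrow(2)\Rightarrow(3)\Rightarrow(4)\Rightarrow(1)$, and its construction of the morphism $h$ in the step towards $(2)$ is spatial: starting from the faithful normal representation $\sigma$ with $\bC=\bC''_{\sigma}$ in $(1)$, it defines $\tilde h(x):=\sum_{i}\sigma(e_{i})\sigma(h_{i})(x)$, checks norm convergence on each vector and boundedness, verifies that $\tilde h$ commutes with $\End_{\Rep(\bC)}(\sigma)$, and then invokes the bicommutant identification to pull $\tilde h$ back to a morphism of $\bC$; the remaining implications are handled exactly as you do ($(2)\Rightarrow(3)$ via the supremum of the increasing net and Corollary~\ref{kor_morphisms_sum_characterization}, and $(3)\Leftrightarrow(4)\Leftrightarrow(1)$ via the comparison of topologies and normality). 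You instead prove $(3)\Rightarrow(2)$ intrinsically: the predual structure makes the ball of radius $M$ in $\Hom_{\bC}(D,C)$ $\sigma$-weakly compact, Banach--Alaoglu yields a cluster point $h$ of the partial sums, and condition $(3)$ together with separate $\sigma$-weak continuity of composition forces both $e_{i}^{*}h=h_{i}$ and uniqueness. Your argument is representation-free and in addition shows that the net $(h_{J})_{J}$ actually $\sigma$-weakly converges (unique cluster point in a compact set), whereas the paper's argument produces $h$ by an explicit formula in a concrete Hilbert-space picture at the cost of invoking the bicommutant theorem. Two small points you share with the paper rather than diverge on: the direction $(3)/(4)\Rightarrow(1)$ tacitly uses the existence of at least one unital, normal, faithful $\sigma$ with $\bC=\bC''_{\sigma}$ (guaranteed by \cite[Prop.\ 2.13]{ghr}), and your appeal to Corollary~\ref{kor_morphisms_sum_characterization} inside $(2)\Rightarrow(3)$ is legitimate precisely because $(2)$ is the hypothesis there; you correctly avoid it in $(3)\Rightarrow(2)$, where it would be circular.
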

 \begin{proof}
 \ref{ejrgiowegewrgergrefwefe1}$\Rightarrow$\ref{ejrgiowegewrgergrefwefe2}: 
 Let    
 $\sigma:\bC\to \Hilb(\C)^{\la}$  be unital, normal and faithful representation such that $\bC\cong \bC_{\sigma}''$ and $\sum_{i\in I} \sigma(e_{i})\sigma(e_{i})^{*}=1_{\sigma(C)}$ weakly.
   This
  implies that $(\sigma(e_{i}))_{i\in I}$ represents
$\sigma(C)$ as the orthogonal Hilbert sum of the family $(\sigma(C_{i}))_{i\in I}$.
 We  will use Proposition \ref{lem_sum_characterization_morphisms} in order to see that $ (C,(e_{i})_{i\in I})$ represents the sum of the family
$  (C_{i})_{i\in I}$ in $\bC$. To this end
we  verify the Condition   \ref{lem_sum_characterization_morphisms}.\ref{item_sum_characterization_1}. 

We let $D$ be an object of $\bC$ and consider a square-summable family
$(h_{i})_{i\in I}$ of morphisms $h_{i}:D\to C_{i}$ in $\bC$. 
Then we must show that there exists a unique 
morphism $h:D\to C$ in $ \bC$ such that $e_{i}^{*}h=h_{i}$ for all $i$ in $I$.

We  want to define the operator $\tilde h:\sigma(D)\to \sigma(C)$ by  $$\tilde h(x):=\sum_{i\in I} \sigma(e_{i})  \sigma(h_{i})(x)\ , \quad x\in \sigma(D)\ .$$
 For every finite subset $J$ of $I$ we have 
$$\sum_{i\in J}\| \sigma(e_{i}) \sigma( h_{i})(x) \|^{2}= \| \sum_{i\in J} \sigma(e_{i}) \sigma( h_{i})(x) \|^{2}\le  \|  \sum_{i\in J}  h_{i}^{*}h_{i} \| \|x\|^{2}\le  \sup_{J\subseteq I}  \|\sum_{i\in J} h_{i}^{*}h_{i}\|  \|x\|^{2}\ .$$
 Since $(h_{i})_{i\in I}$ is square summable  the sum defining $\tilde h(x)$ converges in norm and defines 
 a bounded 
     bounded operator $\tilde h$. 
 By construction we have
 $\sigma(e_{i})^{*}\tilde h=\sigma(h_{i})$. 
 
 It remains to show that $\tilde h$ belongs to $\sigma(\bC)$. 
 Let $v$ be in $\End_{\Rep(\bC)}(\sigma)$.
 Then   for every $i$ in $I$  we have the equalities 
 $ v_{C_{i}}\sigma(h_{i})=\sigma(h_{i})v_{D}$ and  
   $\sigma(e_{i})v_{C_{i}}=v_{C}\sigma(e_{i})$.
 This implies that $ v_{C}\sigma(e_{i}) \sigma(h_{i})=\sigma(e_{i}) \sigma( h_{i}) v_{D}$.
 Since $v_{C}$ and $v_{D}$ are continuous
 we conclude that
 $v_{C}\tilde h= \tilde hv_{D}$. Since $v$ is arbitrary this implies that $\tilde h$ belongs to $\bC_{\sigma}''$. 
  We finally let  $h$ in $\bC$ be the unique morphism such that $\sigma(h)=\tilde h$.
  
  \ref{ejrgiowegewrgergrefwefe2}$\Rightarrow$\ref{ejrgiowegewrgergrefwefe3}:  
  The uniformly bounded net $(\sum_{i\in J}e_{i}e_{i}^{*})_{J}$ of non-negative operators with $J$ running over finite subsets of $I$ is monotoneously increasing. Hence $\sum_{i\in I}e_{i}e_{i}^{*}=\lim_{J}\sum_{i\in J}e_{i}e_{i}^{*}$ $\sigma$-weakly converges  to  $\sup_{J}\sum_{i\in J}e_{i}e_{i}^{*}=:p$  in $\End_{\bC}(C)$.
  We have $ pe_{i}= e_{i}=1_{C}e_{i}$ for all $i$ in $I$.
  Hence by Corollary \ref{kor_morphisms_sum_characterization} we have 
 $p =1_{C}$.
   
   \ref{ejrgiowegewrgergrefwefe3}$\Rightarrow$\ref{ejrgiowegewrgergrefwefe4}:
   The implication follows from the fact that the $\sigma$-weak topology contains the weak operator topology.  
   
   \ref{ejrgiowegewrgergrefwefe4}$\Rightarrow$\ref{ejrgiowegewrgergrefwefe1}:  
  This implication is clear since by definition of the weak operator topology  any 
  normal $\sigma$ is continuous  for the  weak operator topology  on the domain and the weak topology on the target.
 \end{proof}

Using $W^{*}$-envelopes we can give the following extrinsic characterization  of orthogonal sums in an arbitrary  unital $C^{*}$-category 
  $\bC$. Let  $(C_{i})_{i\in I}$ be a family of objects in $\bC$ and $(C,(e_{i})_{i\in I})$ be a pair of an object $C$ of $\bC$ and a mutually orthogonal family of isometries $e_{i}:C_{i}\to C$.

\begin{prop} \label{geoirgjiskldfnmgsre} The pair
$(C,(e_{i})_{i\in I})$ is an orthogonal sum of the family $(C_{i})_{i\in I}$ in $\bC$ if 
it is an orthogonal sum of this family in $\bW\bC$ and one of the following equivalent conditions is satisfied:
\begin{enumerate}
\item \label{fpogjskfpdgsfgsfdgsfdg} For every object $D$ of $\bC$ and morphism $f$ in $\Hom_{\bW\bC}(C,D)$ the condition that $fe_{i}\in  \Hom_{\bC}(C_{i},D)$ for all $i$ in $I$ implies   that  $f\in \Hom_{\bC}(C,D)$.
\item \label{fpogjskfpdgsfgsfdgsfdg1}  For every object $D$ of $\bC$ and morphism $f $ in $\Hom_{\bW\bC}(D,C)$ the condition that $e_{i}^{*}f\in  \Hom_{\bC}(D,C_{i})$ for all $i$ in $I$   implies   that    $f\in \Hom_{\bC}(D,C)$.
\end{enumerate}
\end{prop}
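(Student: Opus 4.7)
The first task is to verify that Conditions \ref{fpogjskfpdgsfgsfdgsfdg} and \ref{fpogjskfpdgsfgsfdgsfdg1} are indeed equivalent. Since $\bC$ is a $*$-subcategory of $\bW\bC$, a morphism $g$ of $\bW\bC$ lies in $\bC$ if and only if $g^{*}$ does. Given $f$ in $\Hom_{\bW\bC}(C,D)$, the conditions $fe_{i}\in \Hom_{\bC}(C_{i},D)$ for all $i$ are equivalent to $e_{i}^{*}f^{*}\in \Hom_{\bC}(D,C_{i})$ for all $i$, so applying the involution exchanges \ref{fpogjskfpdgsfgsfdgsfdg} and \ref{fpogjskfpdgsfgsfdgsfdg1}.

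Assuming that $(C,(e_{i})_{i\in I})$ is an orthogonal sum in $\bW\bC$ and that (say) Condition \ref{fpogjskfpdgsfgsfdgsfdg1} holds, the plan is to verify the characterization of orthogonal sums given by Proposition \ref{lem_sum_characterization_morphisms}, namely, that for every object $D$ of $\bC$ and every square summable family $(h_{i})_{i\in I}$ of morphisms $h_{i}\colon D\to C_{i}$ in $\bC$ there is a unique morphism $h\colon D\to C$ in $\bC$ with $e_{i}^{*}h=h_{i}$ for all $i$ in $I$. Since the inclusion $\bC\to \bW\bC$ is isometric (see Proposition \ref{rjigowergwregregwfer}), the family $(h_{i})_{i\in I}$ is also square summable when viewed as a family in $\bW\bC$. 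Using the assumption that $(C,(e_{i})_{i\in I})$ is an orthogonal sum in $\bW\bC$ together with Corollary \ref{qergijoqregqewfewfqewfwef1}.\ref{qerjgqkrfewfewfqfefqef111} applied inside $\bW\bC$, we obtain a unique morphism $h\colon D\to C$ in $\bW\bC$ satisfying $e_{i}^{*}h=h_{i}$ for all $i$ in $I$.

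It remains to show that $h$ actually lies in $\Hom_{\bC}(D,C)$. This is precisely where Condition \ref{fpogjskfpdgsfgsfdgsfdg1} enters: the morphisms $e_{i}^{*}h=h_{i}$ belong to $\Hom_{\bC}(D,C_{i})$ by hypothesis on $(h_{i})_{i\in I}$, so the condition forces $h\in \Hom_{\bC}(D,C)$. Uniqueness of $h$ in $\bC$ follows from uniqueness in $\bW\bC$ (alternatively from Corollary \ref{kor_morphisms_sum_characterization}.\ref{item_morphisms_sum_characterization_one1} applied to the image of $(e_{i})_{i\in I}$, using the involution). This verifies Condition \ref{item_sum_characterization_1} of Proposition \ref{lem_sum_characterization_morphisms} and hence shows that $(C,(e_{i})_{i\in I})$ is an orthogonal sum in $\bC$.

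The main conceptual point — and the only non-routine step — is the passage from $\bW\bC$ back to $\bC$, which is exactly the content of the additional Condition \ref{fpogjskfpdgsfgsfdgsfdg1}. Without such a condition, the orthogonal sum in $\bW\bC$ could a priori produce morphisms that do not belong to $\bC$, so some hypothesis of this type is genuinely necessary. All other steps reduce to bookkeeping with the isometric inclusion $\bC\hookrightarrow \bW\bC$ and to invoking Proposition \ref{lem_sum_characterization_morphisms} and Corollary \ref{qergijoqregqewfewfqewfwef1}.
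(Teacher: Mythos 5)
Your proposal is correct and follows essentially the same route as the paper: use the involution for the equivalence of the two conditions, apply Corollary \ref{qergijoqregqewfewfqewfwef1}.\ref{qerjgqkrfewfewfqfefqef111} inside $\bW\bC$ to produce the morphism $h$, use Condition \ref{fpogjskfpdgsfgsfdgsfdg1} to pull it back into $\bC$, and conclude via Proposition \ref{lem_sum_characterization_morphisms}.\ref{item_sum_characterization_1}. (One cosmetic point: the isometry of $\bC\to\bW\bC$ follows simply from faithfulness of that canonical morphism, not from Proposition \ref{rjigowergwregregwfer}, which concerns $\bW\phi$ for a morphism $\phi$; this does not affect the argument.)
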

\begin{proof}
We use the involution in order to see that the two conditions are equivalent.

Assume that $(C,(e_{i})_{i\in I})$ is an orthogonal sum of the family $(C_{i})_{i\in I}$  in $\bW\bC$.
Let $(f_{i})_{i\in I}$ be a square summable family of morphisms $f_{i}:D\to C_{i}$ in $\bC$. By 
  Corollary
\ref{qergijoqregqewfewfqewfwef1}.\ref{qerjgqkrfewfewfqfefqef111}
there exists a unique morphism 
  $f$   in $\Hom_{\bW\bC}(D,C)$ such that   $f_{i}:=
e_{i}^{*}f$ for every $i$ in $I$. Using    Condition \ref{fpogjskfpdgsfgsfdgsfdg1}
we conclude that $f$ belongs to $\bC$.

Since this holds for any $D$ and square integrable family as above we can 
  now use  Proposition \ref{lem_sum_characterization_morphisms}.\ref{item_sum_characterization_1} in order to conclude that
$(C,(e_{i})_{i\in I})$ is an orthogonal sum of the family $(C_{i})_{i\in I}$  in $\bC$.
 \end{proof}

%

Prior to the present paper  a notion of an orthogonal sum  of a family $(C_{i})_{i\in I}$ of objects in a unital $C^{*}$-category $\bC$   has already been introduced in   \cite{fritz}.  The family of objects gives rise to a functor  
  $S:\bC\to \Ban$ which sends an object $D$ in $\bC$   to the Banach space
  of square-summable (see Definition \ref{oijfqoifweeqfewfqefqfewf}) families $(h'_{i})_{i\in I}$ of morphisms $h'_{i} \colon C_{i}\to D$  
  with the norm from \eqref{eq_multiplier_norm_L}. 
  \begin{ddd}[{\cite[Def. 4.2]{fritz}}]\label{qwifhofqewfqwfewfq}
  An orthogonal sum of the family $(C_{i})_{i\in I}$ is an object $C$ of $\bC$ together with an isomorphism $\Hom_{\bC}(C,-)\cong S$ of $\Ban$-valued functors.
  \end{ddd}
   
 \begin{rem}\label{euwifhqiufqwfewffqewffqfef}
   It is an immediate consequence 
   of Proposition \ref{lem_sum_characterization_morphisms} and the norm calculation in Lemma \ref{qergijoqregqewfewfqewfwef}.\ref{qerjgqkrfewfewfqfefqef1} that
   the Definitions  \ref{qwifhofqewfqwfewfq} and \ref{erogwfsfdvbsbfdbsdfbsfdbv} provide equivalent notions of {orthogonal} sums. 
   
For the  different notion of an orthogonal sum according to Antoun--Voigt see Section \ref{sec_comparison_other_definition} below.
   \hB
   \end{rem}

We  now present  two illustrative examples of    orthogonal sums of   infinite families of objects. In view of Remark \ref{euwifhqiufqwfewffqewffqfef} more examples are given by \cite[Prop.  5.3]{fritz}. The case of Hilbert $A$-modules will be discussed separately in Section \ref{hilbert}.


\begin{ex}\label{example_basic_orthogonal_sum_diagonal}
Let $X$ be a countable infinite set.
We define a $C^*$-category $\bX$ as follows:
{
\begin{enumerate}
\item {objects:} The set of objects of $\bX$ is the set $X \cup \{X\}$. 
\item morphisms:  The   morphism spaces  are defined as subspaces of $B(\ell^{2}(X))$.   
For every two subsets $Y,Y'$ of $X$ we can consider
$B(\ell^{2}(Y),\ell^{2}(Y'))$ as a block subspace of $B(\ell^{2}(X))$ in the natural way.
\begin{enumerate}
\item For $x$ in $X$ the  algebra  $\End_{\bX}(x)$ is the  subalgebra $B(\ell^{2}(\{x\}))$. It is isomorphic to $\C$.
\item  For    {$x,x^\prime$ in} $X$ with $x \not= x^\prime$ we set 
 $\Hom_{\bX}(x,x^\prime) \coloneqq 0$. 
 \item The  algebra $\End_{\bX}(X)$ is the  subalgebra of diagonal operators in $B(\ell^{2}(X))$. It is isomorphic to $\ell^{\infty}(X)$.  
\item For $x$ in $X$  we let   $\Hom_{\bX}(x,X)$ be the subspace of 
$ B(\ell^{2}(\{x\}),\ell^{2}(X))$  generated by the canonical inclusion $e_{x}$. Similarly we let 
 $\Hom_{\bX}(X,x)$ be the subspace of $B(\ell^{2}(X), \ell^{2}(\{x\}))$ generated by $e_{x}^{*}$. These spaces are one-dimensional.
\end{enumerate}
\item The composition and the involution of $\bX$ are induced from $B(\ell^{2}(X))$.\end{enumerate}}
 
 We claim that $(X, (e_x)_{x \in X})$ is the orthogonal sum in $\bX$ of the family $(x)_{x \in X}$.
 In order to show the claim we use Proposition \ref{lem_sum_characterization_morphisms}.
 We consider only one of the four cases. The remaining are left as an exercise.
  Let $(h'_{x})_{x\in X}$ be a square summable family of morphisms $h'_{x} \colon x\to X$.  Then for every $x$ in $X$ we have $h'_{x}=\lambda'_{x}e_{x}$ for some uniquely determined $\lambda'_{x}$ in $\C$.  Furthermore $\|\sum_{x\in J} h'_{x}h_{x}^{\prime,*}\|= \|\sum_{x\in J} |\lambda'_{x}|^{2} e_{x}e_{x}^{*}\|=\max_{x\in J} |\lambda'_{x}|^{2}$ for every finite family $J$ of $X$. 
 Since $(h'_{x})_{x\in X}$ is square summable 
  it follows that $(\lambda'_{x})_{x\in X}$ is uniformly bounded. 
The unique morphism $h' \colon X\to X$ with $h'e_{x}=h'_{x}$  for all $x$ in $X$ is then given by the diagonal operator on $\ell^{2}(X)$ given by multiplication by the bounded function  $x\mapsto \lambda'_{x}$.
\hB
\end{ex}
 
\begin{ex}\label{example_basic_orthogonal_sum_full}
Let $X$ be a countable infinite set. 
We define a $C^*$-category $\bX^\prime$ as follows:
\begin{enumerate}
\item {objects:} The set of objects of $\bX^\prime$ is the set $X \cup \{X\}$.
\item morphisms: As in Example \ref{example_basic_orthogonal_sum_diagonal} the   morphism spaces  are defined as subspaces of $B(\ell^{2}(X))$.
\begin{enumerate}
 \item For $x,x'$ in $X$  we set  $\Hom_{\bX'}(x,x')$ as   $B(\ell^{2}(\{x\}),\ell^{2}(\{x'\}))$. It is one-dimensional.
\item For any $x$ in $X$ we set  $\Hom_{\bX^\prime}(x,X) \coloneqq B(\ell^2(\{x\}),\ell^2(X))$ and $\Hom_{\bX^\prime}(X,x) \coloneqq B(\ell^2(X), \ell^2(\{x\} ))$.
 \item $\End_{\bX'}(X) \coloneqq B(\ell^{2}(X))$.
 \end{enumerate}
\item The composition and the involution of $\bX^\prime$ are induced from $B(\ell^{2}(X))$.
\end{enumerate}
For every $x$ in $X$ we denote by $e_x$ the {canonical inclusion 
  in 
$B(\ell^2(\{x\}),\ell^2(X))$.} We claim that 
  $(X, (e_x)_{x \in X})$ is the orthogonal sum in $\bX^\prime$ of the family $(x)_{x \in X}$.    In order to show this claim  we consider $\bX'$ as a full subcategory 
  of $\Hilb(\C)$ in the obvious manner. Since $(\ell^{2}(X), (e_{x})_{x\in X})$ is the classical orthogonal sum of  Hilbert $\C$-modules $(x)_{x\in X}$, by Proposition \ref{rthkprghtregwergwerg}
  it is
  an  orthogonal sum of $(x)_{x\in X}$ in $\Hilb(\C)$ in the sense of Definition \ref{erogwfsfdvbsbfdbsdfbsfdbv}.  By Corollary \ref{wetoighjowtgwergwregwergwerg}  it is also an orthogonal sum of this family in $\bX'$.
    \hB
\end{ex}

Let $\bC$ be in $\Ccat$.
If $(C,(e_{i})_{i\in I})$ is an orthogonal sum of a family
$(C_{i})_{i\in I}$ of objects in $\bC$, then  one can ask  whether subsets of $I$
determine subobjects of $C$ representing the orthogonal sum of    the corresponding subfamilies, and whether $C$ is the sum of its  subobjects 
corresponding to a partition of $I$. The following results show that
all expected  assertions are true.
 
Let $(C_{i})_{i\in I}$ be a family of objects in $\bC$ and assume that it admits  an orthogonal sum  $(\bigoplus_{i\in I}C_{i}, (e_{i})_{i\in I})$.   Let $J$ be a subset of $I$. Then we consider the family $(e^{*}_{j})_{j\in J}$ of morphisms $e_{j}^{*}\colon \bigoplus_{i\in I} C_{i}\to C_{j}  $.
 If we  extend this family by zero to a family indexed by $I$, then 
 {by}  Corollary \ref{qergijoqregqewfewfqewfwef1}.\ref{qerjgqkrfewfewfqfefqef111} applied with $D:= \bigoplus_{i\in I} C_{i}$ we can form $p:=\sum_{j\in J} e_{j}e_{j}^{*}$ in $\End_{\bC}( \bigoplus_{i\in I} C_{i})$. Note that $pe_{j}=e_{j}$ for all $j$ in $J$ and $pe_{i}=0$ for $i$ in $I\setminus J$.
 \begin{lem}\label{ethgiowgergwergerg}\mbox{} 
\begin{enumerate}
 \item \label{tihgjretigergewrgwerg} $p$ is a projection. 
 \item \label{jwqefiofwqefewfqfqewfqefq}If $p$ is effective and $(E,u)$ presents an image of $p$  {(Definition~\ref{gewergregwegegwerg})}, then $(E, (u^{*}e_{i} )_{i\in J})$
 represents the sum of the subfamily 
 $(C_{j})_{j\in J}$.
 \item \label{ewiorjgoewgergerwfe} If $\bC$ admits very small sums and the projections $e_{j}e_{j}^{*}$ are effective for  all
 $j$ in $J$, then $p$ is effective.
\end{enumerate}
\end{lem}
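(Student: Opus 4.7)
The three assertions are interrelated but each admits a short proof based on the uniqueness clauses already established for orthogonal sums. My plan is to treat them in order, in each step reducing identities of morphisms to identities after composition with the structure maps $e_i$, which are then verified case by case.

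For assertion \ref{tihgjretigergewrgwerg}, I would first note that the family $(e_j)_{j \in J}$ extended by zero over $I$ is mutually orthogonal and uniformly bounded (all its members are isometries), hence square summable by Lemma~\ref{qoirgujoewgwergwergw}. Corollary~\ref{qergijoqregqewfewfqewfwef1}.\ref{qerjgqkrfewfewfqfefqef111} then shows that $p$ is characterized by $e_j^{*}p = e_j^{*}$ for $j \in J$ and $e_i^{*}p = 0$ for $i \in I\setminus J$, while Corollary~\ref{qergijoqregqewfewfqewfwef1}.\ref{qerjgqkrfewfewfqfefqef11111} gives the dual characterization $pe_j = e_j$ and $pe_i = 0$ (the two formulations coincide by Lemma~\ref{ruihqiuwefwefqfqwefq}). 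Self-adjointness $p^{*}=p$ is immediate by taking adjoints and applying Corollary~\ref{kor_morphisms_sum_characterization}.\ref{item_morphisms_sum_characterization_one1}, and idempotency $p^{2}=p$ follows by computing $p^{2}e_i$ from the above formulas and invoking the same corollary.

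For assertion \ref{jwqefiofwqefewfqfqewfqefq}, given an image $(E,u)$ with $u^{*}u=\id_{E}$ and $uu^{*}=p$, I would verify the criterion of Proposition~\ref{lem_sum_characterization_morphisms}. Mutual orthogonality and the isometry property of the family $(u^{*}e_j)_{j\in J}$ follow directly from $(u^{*}e_i)^{*}(u^{*}e_j)=e_i^{*}pe_j=e_i^{*}e_j=\delta_{ij}\id_{C_j}$. For the universal property I would take any square summable family $(h_j)_{j\in J}$ with $h_j\colon D\to C_j$, extend it by zero to an $I$-indexed family (still square summable), and use the universal property of the original sum to obtain a unique $\tilde h\colon D\to \bigoplus_{i\in I}C_i$. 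A check via Corollary~\ref{kor_morphisms_sum_characterization}.\ref{item_morphisms_sum_characterization_one1} shows $p\tilde h=\tilde h$, so setting $h\coloneqq u^{*}\tilde h$ yields $e_j^{*}uh=e_j^{*}p\tilde h=h_j$ for $j\in J$, and uniqueness of $h$ is inherited from that of $\tilde h$ via the isometry property of $u$.

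For assertion \ref{ewiorjgoewgergerwfe}, under the additional hypotheses I would form the orthogonal sum $(E,(f_j)_{j\in J})$ of the subfamily in $\bC$, and exhibit an image of $p$ as follows. Using Proposition~\ref{lem_sum_characterization_morphisms} applied to $E$ together with square summability of $(e_j)_{j\in J}$ (Lemma~\ref{qoirgujoewgwergwergw}), I obtain a unique $u\colon E\to\bigoplus_{i\in I}C_i$ with $uf_j=e_j$ for all $j\in J$. To see $u^{*}u=\id_{E}$, I would apply Corollary~\ref{kor_morphisms_sum_characterization}.\ref{item_morphisms_sum_characterization_both} in $E$ and compute $f_i^{*}u^{*}uf_j=e_i^{*}e_j=\delta_{ij}\id_{C_j}=f_i^{*}f_j$. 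To see $uu^{*}=p$, I would first establish $u^{*}e_k=f_k$ for $k\in J$ and $u^{*}e_k=0$ for $k\in I\setminus J$ by composing each candidate with $f_j^{*}$ and using the uniqueness clause for the sum $E$; then a case-by-case check of $e_i^{*}uu^{*}e_k$ versus $e_i^{*}pe_k$ combined with Corollary~\ref{kor_morphisms_sum_characterization}.\ref{item_morphisms_sum_characterization_both} finishes the proof.

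The main obstacle is really only careful bookkeeping: every step reduces to the uniqueness clauses of Corollaries~\ref{qergijoqregqewfewfqewfwef1} and~\ref{kor_morphisms_sum_characterization}, but one must consistently track the dichotomy $i\in J$ versus $i\in I\setminus J$ and verify square summability whenever a new sum is introduced. No genuinely new analytic input is required.
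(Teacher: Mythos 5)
Your proposal is correct, and for parts \ref{tihgjretigergewrgwerg} and \ref{ewiorjgoewgergerwfe} it is essentially the paper's own argument: part \ref{tihgjretigergewrgwerg} reduces $p^{*}=p$ and $p^{2}=p$ to compositions with the $e_{k}$ exactly as in the text, and your part \ref{ewiorjgoewgergerwfe} is the paper's construction specialized to the canonical images $(C_{j},e_{j})$ of $e_{j}e_{j}^{*}$ (which, as your argument implicitly shows, makes the effectiveness hypothesis automatic in this unital setting). The only genuine divergence is in part \ref{jwqefiofwqefewfqfqewfqefq}: the paper verifies Definition~\ref{erogwfsfdvbsbfdbsdfbsfdbv} directly by proving surjectivity and injectivity of the associated right multiplier map $m_{D}^{E,R}$ via pre-composition with $\ell(h)$ and $\ell(h^{*})$, whereas you verify the morphism-level criterion of Proposition~\ref{lem_sum_characterization_morphisms} by extending a square summable $J$-indexed family by zero and transporting along $u$; since that proposition is an equivalent characterization, the two routes carry the same content, yours being slightly more elementary in presentation at the cost of invoking the equivalence. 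Two small slips worth fixing: the identity $p\tilde h=\tilde h$ (and likewise the characterization of $p$ by $e_{j}^{*}p$) is an instance of Corollary~\ref{kor_morphisms_sum_characterization}.\ref{item_morphisms_sum_characterization_one}, not \ref{item_morphisms_sum_characterization_one1}; and in the uniqueness step of part \ref{jwqefiofwqefewfqfqewfqefq} you should record that $u=pu$, which is what guarantees $e_{i}^{*}uh'=0$ for $i\in I\setminus J$ before uniqueness of $\tilde h$ can be applied.
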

\begin{proof} For every $k$ in $I$ we have 
$$
p^{2}e_{k}=\sum_{j\in J} e_{j}e_{j}^{*} \big(\sum_{i\in J}e_{i}e_{i}^{*} e_{k} \big) = c_{k}e_{k}=  
  \sum_{j\in J} e_{j}e_{j}^{*} e_{k}\\=pe_{k}\ ,$$
  where $$c_{k}:=\left\{\begin{array}{cc}1&k\in J\\0&else\end{array} \right.\ .$$  Using Corollary \ref{kor_morphisms_sum_characterization}.\ref{item_morphisms_sum_characterization_one1} we  conclude that $p^{2}=p$.  We verify similarly  that $p^{*}=p$. This finishes the proof of Assertion \ref{tihgjretigergewrgwerg}

In order to show Assertion \ref{jwqefiofwqefewfqfqewfqefq}
we  assume that $p$ is effective, and that $u\colon E\to \bigoplus_{i\in I} C_{i}$ presents an image of $p$. 
The family $( u^{*}e_{i})_{i\in J}$ of morphisms $u^{*}e_{i} \colon C_{i}\to E$  is a mutually orthogonal family of isometries.

We now consider the left and right multipliers for $(E,(u^{*}e_{i})_{i\in J})$.
We must show that the associated multiplier morphisms  
$$m^{E,R}_{D} \colon \Hom_{\bC}(D,E)\to \RM(D,E)\, , \quad m^{E,L}_{D} \colon \Hom_{\bC}(E,D)\to \LM(E,D)$$
are isomorphisms for all $D$ in $\bC$, where we added a superscript $E$ in order to indicate the dependence on $E$.
  It suffices to  consider the case of $m_{D}^{E,R}$. 
  The other case then follows by applying the involution.

    We first show surjectivity. 
Let $R \coloneqq (R_{D'})_{D'\in \Ob(\bC)}$ be in $\RM(D,E)$. Pre-composition with {$u$} induces a map
$$-\circ u \colon \Hom_{\bC}^{\fin}( (\bigoplus_{i\in I} C_{i}, (e_{i})_{i\in I}),D' )\to \Hom_{\bC}^{\fin}(  (E,(u^{*}e_{i})_{i\in J}),D')$$
and therefore extends by continuity to a map
 $$-\circ u \colon \mathbb{K}( (\bigoplus_{i\in I} C_{i}, (e_{i})_{i\in I}),D')\to \mathbb{K}((E,(u^{*}e_{i})_{i\in J}),D')\, .$$
Then $Ru \coloneqq (R_{D'}\circ (-\circ u))_{D'\in \Ob(\bC)}$ belongs to $\RM(D , \bigoplus_{i\in I} C_{i})$. Hence there exists a uniqely determined 
 morphism $f\colon D\to \bigoplus_{i\in I} C_{i}$ such that $m^{C,R}_{D}(f)=Ru$. Then 
$m^{E,R}_{D}(u^{*}f)=R$.  

Assume now that $f\colon D\to E$ is a morphism such that  $m_{D}^{E,R}(f)=0$.  
This means that $hf=0$ for all objects $D'$ and generators $h$ of $\mathbb{K}(E,D')$. Note that ${e^{*}_{i}}u$ is a generator of  $\mathbb{K}(E,C_{i})$. Hence in
 particular we have $ e_{i}^{*}uf=0$ for all $i$ in $I$. This implies $uf=0$ and therefore $f=u^{*}uf=0$.
 
 We finally show Assertion \ref{ewiorjgoewgergerwfe}.
Using the assumption that $e_{j}e_{j}^{*}$ is effective for every $j$ in $J$ we choose   an image  $(D_{j},u_{j})$
of $e_{j}e_{j}^{*}$. Since $\bC$ admits very small orthogonal sums by assumption   we find  an orthogonal sum $(D,(f_{j})_{j\in J})$ of the family $(D_{j})_{j\in J}$. By Corollary \ref{qergijoqregqewfewfqewfwef1}  we get an isometry $v:=\sum_{j\in J} e_{j}f_{j}^{*}:D\to C$. The pair $(D,v)$ represents an image of $p$. 
   \end{proof}

The following results fit into the present discussion but will only  be used in the follow up {paper} \cite{coarsek}.
 Let $\bC$ be in $\Ccat$.  Let
  $(C_{i})_{i\in I}$ be a family  of objects in $\bC$ and $(C,(e_{i})_{i\in I})$ be an orthogonal sum of the family. Let furthermore  $(J_{k})_{k\in K}$ be a partition of the set $I$. For every $k$ in $K$ we can form the projection
  $p_{k}:=\sum_{i\in J_{k}} e_{i}e_{i}^{*}$ by Lemma \ref{ethgiowgergwergerg}.
\begin{lem}\label{etgoijoergergwegwergwrg}
Assume  that for any $k$ in $K$ 
 the projection $p_{k }$ is effective with image $(E_{k},u_{k})$.
 Then the sum of the family $(E_{k})_{k\in K}$ exists and is represented by
 $(C, (u_{k})_{k\in K})$.   
\end{lem}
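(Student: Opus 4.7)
The plan is to apply Proposition \ref{lem_sum_characterization_morphisms} to the candidate pair $(C,(u_k)_{k\in K})$. First I would verify that $(u_k)_{k\in K}$ is a mutually orthogonal family of isometries: each $u_k$ is an isometry by definition of image, and for $k\neq k'$ the relation $p_k p_{k'} = 0$ (which follows from the disjointness of the $J_k$ and the mutual orthogonality of $(e_i)_{i\in I}$) gives $u_k u_k^* u_{k'} u_{k'}^* = 0$, hence $u_k^* u_{k'} = u_k^* u_k u_k^* u_{k'} u_{k'}^* u_{k'} = 0$.

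Next, for each $i\in I$ let $k(i)\in K$ denote the unique index with $i\in J_{k(i)}$. The key identity is $p_{k(i)} e_i = e_i$, which gives $e_i = u_{k(i)} (u_{k(i)}^* e_i)$ and dually $e_i^* p_{k(i)} = e_i^*$. Given a square summable family $(h_k)_{k\in K}$ with $h_k\colon D\to E_k$, I would define a candidate family of morphisms $D\to C_i$ by $g_i := e_i^* u_{k(i)} h_{k(i)}$ and produce $h\colon D\to C$ via Corollary \ref{qergijoqregqewfewfqewfwef1}.\ref{qerjgqkrfewfewfqfefqef111} applied to $(g_i)_{i\in I}$ in the sum $(C,(e_i)_{i\in I})$.

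The main technical step will be the verification that $(g_i)_{i\in I}$ is square summable. For a finite $L\subseteq I$ and $L_k := L\cap J_k$ (only finitely many nonempty), one rewrites
\[
\sum_{i\in L} g_i^* g_i \;=\; \sum_{k}\, h_k^* u_k^* \Big(\sum_{i\in L_k} e_i e_i^*\Big) u_k h_k,
\]
and observes that the inner projection is dominated by $p_k = u_k u_k^*$, whence $u_k^*(\sum_{i\in L_k} e_i e_i^*) u_k \le \id_{E_k}$ in $\End_\bC(E_k)$. This yields the positivity estimate $\sum_{i\in L} g_i^* g_i \le \sum_{k} h_k^* h_k$ in $\End_\bC(D)$, and square summability of $(h_k)_{k\in K}$ transfers to $(g_i)_{i\in I}$.

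Finally, to check $u_k^* h = h_k$, I would use the identity $e_i^* h = g_i = e_i^* u_k h_k$ for $i\in J_k$, rewritten as $(u_k^* e_i)^*(u_k^* h - h_k) = 0$. Since $(E_k,(u_k^* e_i)_{i\in J_k})$ represents the orthogonal sum of the subfamily $(C_i)_{i\in J_k}$ by Lemma \ref{ethgiowgergwergerg}.\ref{jwqefiofwqefewfqfqewfqefq}, Corollary \ref{kor_morphisms_sum_characterization}.\ref{item_morphisms_sum_characterization_one} forces $u_k^* h = h_k$. Uniqueness of $h$ with this property is immediate: if $u_k^* h = u_k^* h'$ for all $k$, then $p_k h = p_k h'$ for all $k$, and composing with $e_i^*$ (using $e_i^* p_{k(i)} = e_i^*$) shows $e_i^* h = e_i^* h'$ for all $i\in I$, so $h=h'$ by Corollary \ref{kor_morphisms_sum_characterization}.\ref{item_morphisms_sum_characterization_one}. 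Together with Proposition \ref{lem_sum_characterization_morphisms}.\ref{item_sum_characterization_1} this concludes the proof.
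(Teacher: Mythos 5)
Your proof is correct, but it takes a different route from the paper's. The paper argues entirely at the level of multipliers: it uses the inclusion $\mathbb{K}((C,(e_{i})_{i\in I}),D')\subseteq \mathbb{K}((C,(u_{k})_{k\in K}),D')$ to obtain a restriction map $!$ from the multiplier space of the coarser decomposition to that of the finer one, deduces injectivity of $m^{\prime,R}_{D}$ from the already-known bijectivity of $m^{R}_{D}$, proves that $!$ is injective (this is where Lemma \ref{ethgiowgergwergerg}.\ref{jwqefiofwqefewfqfqewfqefq} enters), and concludes surjectivity of $m^{\prime,R}_{D}$ by a diagram chase. No norm estimate is needed. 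You instead work through the morphism-level characterization of Proposition \ref{lem_sum_characterization_morphisms}: you assemble $h$ explicitly from a square-summable family $(h_{k})_{k\in K}$ by reindexing along $i\mapsto k(i)$ and invoking Corollary \ref{qergijoqregqewfewfqewfwef1} for the sum $(C,(e_{i})_{i\in I})$, which costs you the positivity estimate $\sum_{i\in L} g_{i}^{*}g_{i}\le \sum_{k} h_{k}^{*}h_{k}$ (your estimate is correct: $\sum_{i\in L_{k}}e_{i}e_{i}^{*}\le p_{k}$, conjugation by $u_{k}^{*}$ preserves order, and $0\le a\le b$ gives $\|a\|\le\|b\|$). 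Your identification of $u_{k}^{*}h$ with $h_{k}$ via the subfamily sum $(E_{k},(u_{k}^{*}e_{i})_{i\in J_{k}})$ and Corollary \ref{kor_morphisms_sum_characterization} is the same key input the paper uses for injectivity of $!$, just deployed at a different point. The trade-off: the paper's argument is shorter and purely formal; yours makes the construction of the morphism out of the blocks $h_{k}$ completely explicit, which is arguably more transparent but requires the extra square-summability transfer.
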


\begin{proof}
Since the members of the family $(J_{k})_{k\in K}$ are mutually disjoint we have $p_{k}p_{k'}=0$ for all $k, k'$ in $K$ with $k\not=k$. This implies that
$(u_{k})_{k\in K}$ is a mutually orthogonal family of isometries.

We must show that the  associated multiplier morphisms \eqref{wergwergwggwegegr42t} and \eqref{wergwergwggwegegr42t1} 
\begin{align*}
m_{D}^{\prime,R} \colon \Hom_{\bC}(D,C) & \to \RM (D,(C,(u_{k})_{k\in K}))\,,\\
m_{D}^{\prime,L} \colon \Hom_{\bC}(C,D) & \to \LM ((C,(u_{k})_{k\in K}),D)
\end{align*}
are isomorphisms for all  objects $D$ in $\bC$. Here the superscript $\prime$ is added in order to distinguish these maps  from the associated multiplier maps $m_{D}^{R}$ and $m_{D}^{L}$ of $(C,(e_{i})_{i\in I})$.

We again consider the case of right multipliers.  The case of left multipliers then follows by applying the involution.
We have  inclusions $$\mathbb{K} ((C,(e_{i})_{i\in I}),D')\subseteq \mathbb{K} ((C,(u_{k})_{k\in K}),D')$$ for all $D'$ in $\bC$. 
Hence we have a restriction map $!$ fitting into the diagram
$$\xymatrix{&\Hom_{\bC}(D,C)\ar[dl]_-{m^{\prime,R}_{D}} \ar[dr]^-{m^{R}_{D}}_{\cong}&\\ \RM (D,(C,(u_{k})_{k\in K}))\ar[rr]^-{!}&& \RM (D,(C,(e_{i})_{i\in I}))}$$ 
This already implies that the map   $m^{\prime,R}_{D}$ is injective. 

 We will now show   that $!$ is injective.  
To this end we assume that $R=(R_{D'})_{D'\in \Ob(\bC)}$ is in $\RM (D,(C,(u_{k})_{k\in K}))$ and is sent to zero by $!$.
We must show that $R_{E_{k}}(u_{k}^{*})=0$ for all $k$ in $K$.
We have for all $i$ in $J_{k}$ that 
$$ e_{i}^{*}u_{k}R_{E_{k}}(u_{k}^{*})= R_{C_{i}}(e_{i}^{*}u_{k} u_{k}^{*})= R_{C_{i}}(e_{i}^{*}p_{k} )=
R_{C_{i}}(e_{i}^{*}  )=0\, ,$$
where for the last  equality we use the assumption on $R$. This implies by Lemma  \ref{ethgiowgergwergerg}.\ref{jwqefiofwqefewfqfqewfqefq}  and the uniqueness assertion in Corollary \ref{qergijoqregqewfewfqewfwef1}.\ref{qerjgqkrfewfewfqfefqef111} (applied to the sum $(E_{k},(e_{i}u^{*}_{k})_{i\in J_{k}})$ and the family 
of morphisms $ (e_{i}^{*}u_{k}R_{E_{k}}(u_{k}^{*}))_{i\in J_{k}}$)
 that  $R_{E_{k}}(u_{k}^{*})=0$.
 
The injectivity of $!$ implies  by a diagram chase that $m^{\prime,R}_{D}$  is surjective. 
\end{proof}

Let $\bC$ be in $\Ccat$,   
 $C$ be an object of $\bC$, and $(p_{i})_{i\in I}$ be a   mutually orthogonal    family of projections on $C$.
\begin{ddd}\label{rgoigsgsgfgg}
We say that $C$ is the orthogonal sum of the images of the family of projections if the following are satisfied:
\begin{enumerate}
\item \label{oiejfioqwjefewfewfqef} For every $i$ in $I$ the projection $p_{i}$ is effective  (see Definition \ref{regiuhjigwergwgrewgrg}).
 \item If $(D_{i},u_{i})$ is a choice of an image of $p_{i}$ for every $i$ in $I$ (see Definition \ref{gewergregwegegwerg}), 
 then $(C,{(u_{i})_{i\in I}})$ represents the orthogonal sum of the family $(D_{i})_{i\in I}$.
  \end{enumerate}
 \end{ddd}
 
Note that the validity of the conditions in   Definition \ref{rgoigsgsgfgg} does not depend on the choices involved in the images and the direct sum.

\section{Antoun--Voigt sums}
\label{sec_comparison_other_definition}

In Definition \ref{peofjopwebgwregwgreg}    we recall the notion of an orthogonal sum according to Antoun--Voigt \cite[Defn.\ 2.1]{antoun_voigt}.  The precise relation between this concept and the notion of an orthogonal sum  introduced in Definition \ref{erogwfsfdvbsbfdbsdfbsfdbv} will then be explained  in Theorem \ref{wergijowergerfwrefwerf}.   We furthermore provide some more technical statements which will be used in the subsequent papers \cite{coarsek} and \cite{bel-paschke}.

We consider $\bK$ in $\nCcat$, let  $\bM\bK$  in $\Ccat$ denote  a multiplier category  characterized in   Definition \ref{weighwoegerwferfwef}, and let 
  $\bC:= \bW\bM\bK$ be a $W^{*}$-envelope of $\bM\bK$ introduced in Definition \ref{wijtgoewgewgefw}.
By Proposition \ref{jigogewrgeferfref} we also have an isomorphism $\bC\cong \bW^{\mathrm{nu}}\bK$. By choosing appropriate models for the multiplier category and the $W^{*}$-envelope we can and will assume that 
we have  isometric inclusions \begin{equation}\label{qrefqewfdewdqwedwed}
\bK\subseteq \bM\bK\subseteq \bC
\end{equation}
which are identities on the level of objects.
Morphisms in   $\bM\bK$ will be called multiplier morphisms.
By Proposition \ref{jigogewrgeferfref} the category $\bM\bK$ is the idealizer of $\bK$ in $\bC$.

Let $(C_i)_{i \in I}$ be a family of objects of $\bK$, and let $C$ be an object of $\bK$ together with a family $(e_i)_{i \in I}$ of mutually orthogonal multiplier morphisms $e_i\colon C_i \to C$.
Recall the Definition \ref{erguihierverqerf} of the strict topology on the morphism spaces of the multiplier category $\bM\bK$.
\begin{ddd}[{\cite[Defn.\ 2.1]{antoun_voigt}}]\label{peofjopwebgwregwgreg} The pair 
$(C,(e_i)_{i \in I})$ is an orthogonal AV-sum in $\bK$ of the family $(C_i)_{i \in I}$ if the sum $\sum_{i \in I} e_i e_i^*$ converges strictly to the identity multiplier morphism of $C$.
\end{ddd}
We use the term AV-sum (AV stands for Antoun--Voigt) in order to distinguish this notion from the one defined in Definition \ref{erogwfsfdvbsbfdbsdfbsfdbv}.
     \begin{rem}\label{rem_sums_unital_nonunital}
 An orthogonal AV-sum of a family $(C_{i})_{i\in I}$ of unital objects (see Definition \ref{ebgiojoigrevsfdvsdfvsfdvsdfv})   in $\bK$  can only be a unital object  of $\bK$  if the set of non-zero members of the family 
  is finite. In fact, if the $AV$-sum is unital,   then Lemma \ref{lem_unital_strict_equals_norm}.\ref{eqroigqwgwefqewfqf1}  implies 
  that the sum of mutually orthogonal projections 
  $\sum_{i \in I} e_i e_i^*$   converges in norm. This is only possible if the sum has finitely many non-zero terms.
   \hB
\end{rem}

 \begin{theorem}\label{wergijowergerfwrefwerf}
 If $(C,(e_{i})_{i\in I}) $ is an AV-sum in $\bK$  of a family  $(C_{i})_{i\in I}$, then
 it is also an orthogonal  sum in $\bC$
of this family in the sense of Definition \ref{erogwfsfdvbsbfdbsdfbsfdbv}.   
 \end{theorem}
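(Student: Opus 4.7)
The plan is to combine Lemma~\ref{ewtiojgowtrgwergwergf} with Proposition~\ref{qr3gijoerwgergwefwerferwf}, reducing the assertion to a routine check.

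First I would observe that, via the faithful inclusions \eqref{qrefqewfdewdqwedwed}, the $e_{i}$ are mutually orthogonal isometries $C_{i}\to C$ in the $W^{*}$-category $\bC$. For every finite subset $J\subseteq I$ set $p_{J}:=\sum_{i\in J} e_{i}e_{i}^{*}$, which is a projection in $\bM\bK$; in particular, $\|p_{J}\|\le 1$, so the net $(p_{J})_{J}$ (indexed by the filtered poset of finite subsets of $I$) is norm-bounded. By hypothesis, the AV-sum condition says $\lim_{J}p_{J}=\id_{C}$ in the strict topology of $\bM\bK$.

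Next, I would apply Lemma~\ref{ewtiojgowtrgwergwergf} (with $\bK$ in place of $\bC$) to the inclusion $\bM\bK\to \bW^{\mathrm{nu}}\bK\cong\bC$. Since $(p_{J})_{J}$ is bounded, strict convergence in $\bM\bK$ forces convergence in the weak operator topology of $\bC$. Hence
\[
\sum_{i\in I} e_{i}e_{i}^{*} \;=\; \id_{C}
\]
holds in the weak operator topology of $\bC$. This is precisely condition~\ref{qr3gijoerwgergwefwerferwf}.\ref{ejrgiowegewrgergrefwefe4}.

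Finally, since $\bC$ is a $W^{*}$-category, Proposition~\ref{qr3gijoerwgergwefwerferwf} yields the equivalence of condition~\ref{ejrgiowegewrgergrefwefe4} with condition~\ref{ejrgiowegewrgergrefwefe2}, which states that $(C,(e_{i})_{i\in I})$ represents the orthogonal sum of $(C_{i})_{i\in I}$ in $\bC$ in the sense of Definition~\ref{erogwfsfdvbsbfdbsdfbsfdbv}. There is no genuine obstacle here: the whole proof is a straightforward assembly of Lemma~\ref{ewtiojgowtrgwergwergf} (passage from strict to weak operator topology on bounded sets) and Proposition~\ref{qr3gijoerwgergwefwerferwf} (weak operator convergence of the partition of unity characterises orthogonal sums in $W^{*}$-categories). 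The only minor point to keep in mind is boundedness of the net of partial sums, which is automatic because the $p_{J}$ are projections.
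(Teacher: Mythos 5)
Your proposal is correct and follows exactly the paper's own argument: the paper likewise notes that the net of finite partial sums $\bigl(\sum_{i\in J}e_{i}e_{i}^{*}\bigr)_{J}$ is bounded, invokes Lemma~\ref{ewtiojgowtrgwergwergf} to pass from strict convergence to convergence in the weak operator topology, and then concludes via Proposition~\ref{qr3gijoerwgergwefwerferwf}. You have merely spelled out the intermediate steps (boundedness of projections, the chain of equivalences in the proposition) in slightly more detail.
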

\begin{proof}
 
We assume that $ (C,(e_{i})_{i\in I})$ represents an AV-sum of $(C_{i})_{i\in I}$. Then $\sum_{i\in I } e_{i}e_{i}^{*}$ converges strictly to $1_{C}$.
The net $(\sum_{i\in J}  e_{i}e_{i}^{*})_{J }$ for $J$ running over the finite subsets of $I$   in $\bC$ is bounded. 
By Lemma   \ref{ewtiojgowtrgwergwergf} we know that
it converges  to $1_{C}$ in the weak operator topology.
Now Proposition \ref{qr3gijoerwgergwefwerferwf} implies the assertion.
 \end{proof}


%

 
 Let $(C_i)_{i \in I}$ be a family of objects of $\bK$, and let  $(C,(e_i)_{i \in I})$  represent an AV-sum of this family.
Let  $(h_i)_{i \in I}$ be a uniformly bounded family of morphisms $h_i\colon D   \to C_i$. 
\begin{lem} \label{wejbhviwevfvsdfvfdv}
\mbox{}
\begin{enumerate}
\item  \label{wetkgowpegopregwpegwergwreg0} If   $\sum_{i \in I} e_{i}h_{i}$ converges strictly, then there is a unique multiplier morphism $h \colon D \to C$ with $e_{i}^{*}h = h_i$ for all $i$ in $I$.  
\item \label{wetkgowpegopregwpegwergwreg}If $ \sum_{i\in I} e_{i}h_{i}$ converges strictly, then
$(h_{i})_{i\in I}$ is square summable.
\item \label{wetkgowpegopregwpegwergwreg1} If $(h_{i})_{i\in I}$ is square summable, then
$\sum_{i\in I} e_{i}h_{i}$ converges right-strictly.
\end{enumerate}
\end{lem}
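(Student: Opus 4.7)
The plan is to base everything on the identity $g_J^* g_J = \sum_{i \in J} h_i^* h_i$ for the finite partial sums $g_J := \sum_{i \in J} e_i h_i$ in $\bM\bK$, which follows from the mutual orthogonality $e_i^* e_j = \delta_{ij} \id_{C_i}$ of the isometries. A companion observation is that $p_J := \sum_{i \in J} e_i e_i^*$ is a sum of mutually orthogonal projections in $\bM\bK$ and hence itself a projection of norm at most $1$.

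For Assertion \ref{wetkgowpegopregwpegwergwreg0}, suppose $h := \lim_J g_J$ exists in the strict topology. Separate strict continuity of composition in $\bM\bK$ (Proposition \ref{wrtohijworthrtwregergwerg}) lets $e_j^*$ pass into the limit; since $e_j^* g_J = h_j$ already in norm for $J \ni j$, this forces $e_j^* h = h_j$. For uniqueness, if $f \colon D \to C$ is a multiplier morphism with $e_j^* f = 0$ for all $j$, then for every $k \in \Hom_\bK(C,E)$ the AV-sum condition gives $k = \lim_L \sum_{i \in L} k e_i e_i^*$ in norm, so $kf = \lim_L \sum_{i \in L} k e_i (e_i^* f) = 0$; this kills the right-multiplier component of $f$, hence $f = 0$ via the equality $\|f\| = \|R_f\|$ established in the proof of Theorem \ref{mmain}. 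For Assertion \ref{wetkgowpegopregwpegwergwreg}, applying Assertion \ref{wetkgowpegopregwpegwergwreg0} gives $h_i = e_i^* h$, whence $\sum_{i \in J} h_i^* h_i = h^* p_J h$, and the bound $\|\sum_{i \in J} h_i^* h_i\| \le \|h\|^2$ follows immediately.

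The substantive part is Assertion \ref{wetkgowpegopregwpegwergwreg1}. Setting $M := \sqrt{\sup_J \|\sum_{i \in J} h_i^* h_i\|}$, the key identity yields the uniform bound $\|g_J\| \le M$ for all finite $J \subseteq I$. To verify that $(g_J)$ is right-strictly Cauchy, fix $k \in \Hom_\bK(C,E)$ and $\varepsilon > 0$; using the AV-sum condition, choose a finite $L_0$ with $\|k - k_{L_0}\| < \varepsilon/(3M)$, where $k_{L_0} := \sum_{i \in L_0} k e_i e_i^*$. The crucial algebraic collapse is that for any finite $J \supseteq L_0$ the orthogonality relations yield
\[
k_{L_0}\, g_J = \sum_{i \in L_0,\, j \in J} k e_i (e_i^* e_j) h_j = \sum_{i \in L_0} k e_i h_i\,,
\]
which is independent of $J$. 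Combining this stationarity with the uniform bound $\|g_J\|, \|g_{J'}\| \le M$ via a three-epsilon estimate gives $\|k g_J - k g_{J'}\| < \varepsilon$ for all finite $J, J' \supseteq L_0$, so $(kg_J)_J$ is norm-Cauchy for every admissible $k$, which is precisely right-strict convergence. The main obstacle is that square summability bounds but does not make $\sum_i h_i^* h_i$ norm-Cauchy; this is circumvented by the AV-sum hypothesis, which allows one to replace $k$ by the stationary truncation $k_{L_0}$.
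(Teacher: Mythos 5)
Your proof is correct. For Assertions \ref{wetkgowpegopregwpegwergwreg0} and \ref{wetkgowpegopregwpegwergwreg1} it is essentially the paper's own argument: the identity $e_j^*h=h_j$ and the uniqueness both come from separate strict continuity of composition together with the strict relation $\sum_{i\in I}e_ie_i^*=\id_C$ (the paper phrases uniqueness as $h=(\sum_i e_ie_i^*)h=(\sum_i e_ie_i^*)h'=h'$, which is the same computation as your vanishing-of-the-right-multiplier argument combined with the norm identity $\|L\|=\|R\|$ from the proof of Theorem \ref{mmain}); and for right-strict convergence the paper likewise combines the uniform bound $\|\sum_{i\in J}e_ih_i\|^{2}=\|\sum_{i\in J}h_i^{*}h_i\|$ with the observation that the net becomes eventually constant after composing with the norm-dense set of truncated test morphisms $k\sum_{i\in L_0}e_ie_i^{*}$, exactly your three-epsilon scheme. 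Where you genuinely depart from the paper is Assertion \ref{wetkgowpegopregwpegwergwreg}: the paper deduces square summability by passing through Theorem \ref{wergijowergerfwrefwerf} (an AV-sum is an orthogonal sum in $\bW\bM\bK$) and then invoking Corollary \ref{qergijoqregqewfewfqewfwef1}, whereas your one-line identity $\sum_{i\in J}h_i^{*}h_i=h^{*}\bigl(\sum_{i\in J}e_ie_i^{*}\bigr)h$ together with $\|\sum_{i\in J}e_ie_i^{*}\|\le 1$ gives the bound $\|h\|^{2}$ directly; this is shorter, self-contained, and avoids the $W^{*}$-envelope entirely. One cosmetic point: the separate strict continuity of composition is recorded in the paper in the sentence following Proposition \ref{wrtohijworthrtwregergwerg} rather than as part of that proposition, so the citation should be adjusted accordingly.
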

\begin{proof} 

We start with Assertion \ref{wetkgowpegopregwpegwergwreg0}.
By assumption the sum converges strictly  to a multiplier morphism $ h$. Since the composition of multiplier morphisms is separately continuous for the strict topology 
  we have  $e_{i}^{*}h= \sum_{j \in I} e_{i}^{*}e_j  h_j =h_{i}$ for every $i$ in $I$. 
 If $h' $ is a second multiplier morphism  from $C$ to $D$ such that $e_{i}^{*}h' =h_{i}$ for all $i$ in $I$, then 
\[h=   \left(\sum_{i}e _{i}e_{i}^{*}\right)h  =\sum_{i}e _{i}e_{i}^{*} h= \sum_{i}e _{i}e_{i}^{*}h'= \left(\sum_{i}e _{i}e_{i}^{*}\right)h'=h'\, \ .\]

We now show Assertion \ref{wetkgowpegopregwpegwergwreg}.
If $\sum_{i\in I} e_{i}h_{i}$ converges strictly, then  by Assertion \ref{wetkgowpegopregwpegwergwreg0}
  there exists a unique multiplier morphism
$h:C\to D$ such that $e_{i}^{*}h=h_{i}$ for all $i$ in $I$.
By Theorem \ref{wergijowergerfwrefwerf} the pair  $(C,(e_{i})_{i\in I})$ represents a sum of the family $(C_{i})_{i\in I}$ in the sense of Definition \ref{erogwfsfdvbsbfdbsdfbsfdbv}.  Since $h$ is in particular a morphism in $\bC$ we conclude by  Corollary  \ref{qergijoqregqewfewfqewfwef1}.\ref{qerjgqkrfewfewfqfefqef111} that $(h_{i})_{i\in I}$ is square summable.

We  finally  show  Assertion \ref{wetkgowpegopregwpegwergwreg1}. We 
 assume that  $(h_{i})_{i\in I}$ is square summable. We must show that 
  for every
$f$ in $\Hom_{\bK}(C,D)$   the sum $\sum_{i\in I}   f e_{i} h_{i}
$ converges in norm. To this end 
first observe that the net 
$(\sum_{i\in J} e_{i} h_{i})_{J}$ for $J$ running over the finite subsets of $I$ is uniformly bounded since 
$$\|\sum_{i\in J} e_{i} h_{i}\|^{2}=
\|\sum_{i,j\in J} h_{j}^{*}e_{j}^{*}e_{i} h_{i}\| =\|\sum_{i\in J} h_{i}^{*}  h_{i}\| $$
and $(h_{i})_{i\in I}$ is square summable.
It therefore suffices to show the norm convergence of 
 $\sum_{i\in I}   f e_{i} h_{i}
$  for $f$ of the special form $f=\sum_{i\in J} e_{i}e_{i}^{*}g$ for some $g$ in $\Hom_{\bK}(D,C)$ since the subspace of those morphisms is dense in $ 
\Hom_{\bK}(D,C)$.  But if $f$ has this form, then
$\sum_{i\in J'}   f e_{i} h_{i}=\sum_{i\in J}   f e_{i} h_{i}$
for all finite subsets  $J'$ of $I$ containing $J$.  
\end{proof}
\begin{rem}\label{wijb9eovdfvsdfvsdvsdfv} 

In the situation of Lemma \ref{wejbhviwevfvsdfvfdv}.\ref{wetkgowpegopregwpegwergwreg1} in general we can not conclude that 
  $\sum_{i\in I}   e_{i} h_{i}$  converges left-strictly in the sense that
$ \sum_{i\in I}   e_{i} h_{i}f'$ converges in norm for all objects $E$ of $\bC$ and $f'$ in $\Hom_{\bK}(E,D)$.
An example  where this happens will be given in Example  \ref{qefguifggsdfgsdfgsfdg} below. This example also shows that the converse of  the assertion of Theorem \ref{wergijowergerfwrefwerf} is not true in general.
\end{rem}

%
%
 
 The following was stated in \cite{antoun_voigt}.
 \begin{prop}\label{weijgowgwerfwrefwr}
 An AV-sum is unique up to unique unitary multiplier morphism.
 \end{prop}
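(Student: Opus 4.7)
The plan is to reduce the uniqueness assertion for AV-sums to the already-established uniqueness of orthogonal sums inside the ambient $W^{*}$-envelope $\bC=\bW^{\mathrm{nu}}\bK$ (Lemma \ref{rgiojqeiovevqeve9}) and then verify that the resulting unitary in $\bC$ actually lies in $\bM\bK$. Concretely, given two AV-sums $(C,(e_{i})_{i\in I})$ and $(C',(e'_{i})_{i\in I})$ of the family $(C_{i})_{i\in I}$ in $\bK$, Theorem \ref{wergijowergerfwrefwerf} says that both pairs represent orthogonal sums of the same family inside $\bC$ in the sense of Definition \ref{erogwfsfdvbsbfdbsdfbsfdbv}. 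Lemma \ref{rgiojqeiovevqeve9} then supplies a unique unitary $v\colon C\to C'$ in $\bC$ with $ve_{i}=e'_{i}$ for every $i$ in $I$.

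It remains to show $v\in\bM\bK$. By Theorem \ref{jigogewrgeferfref} this amounts to checking that $v$ idealizes $\bK$, i.e.\ that $vk\in\bK$ and $k'v\in\bK$ for every morphism $k$ of $\bK$ with target $C$ and every $k'$ of $\bK$ with domain $C'$. For $vk$ I would use that the AV-sum property of $(C,(e_{i})_{i\in I})$ gives $\sum_{i\in J}e_{i}e_{i}^{*}k\to k$ in norm as $J$ runs through the finite subsets of $I$; applying the bounded morphism $v$ of $\bC$ on the left and using $ve_{i}=e'_{i}$ yields $\sum_{i\in J}e'_{i}(e_{i}^{*}k)\to vk$ in norm. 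Each partial sum lies in $\bK$: the composition $e_{i}^{*}k$ is a multiplier composed with an ideal morphism, hence $e_{i}^{*}k\in\bK$, and then $e'_{i}(e_{i}^{*}k)\in\bK$ by the same reason. Since $\bK$ is norm-closed in $\bC$ (it is closed in $\bM\bK$ and the inclusion $\bM\bK\to\bC$ is isometric), I conclude $vk\in\bK$.

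For $k'v$ the analogous argument starts from $k'\sum_{i\in J}e'_{i}e_{i}^{\prime,*}\to k'$ (AV-sum property of $(C',(e'_{i})_{i\in I})$) and, after right multiplication by $v$, produces partial sums of the form $\sum_{i\in J}(k'e'_{i})(e_{i}^{\prime,*}v)$ converging to $k'v$ in norm. The crucial step here is the identification $e_{i}^{\prime,*}v=e_{i}^{*}$: composing with an arbitrary $e_{j}$ gives $e_{i}^{\prime,*}ve_{j}=e_{i}^{\prime,*}e'_{j}=\delta_{ij}\id_{C_{i}}$, so because $(C,(e_{j})_{j\in I})$ is an orthogonal sum in $\bC$, Corollary \ref{kor_morphisms_sum_characterization}.\ref{item_morphisms_sum_characterization_one1} forces $e_{i}^{\prime,*}v=e_{i}^{*}$. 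Consequently each partial sum equals $\sum_{i\in J}(k'e'_{i})e_{i}^{*}$, which lies in $\bK$ by the ideal property, and closedness of $\bK$ in $\bC$ yields $k'v\in\bK$. This completes the verification $v\in\bM\bK$.

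Finally, for uniqueness, if $v'$ is another unitary multiplier morphism from $C$ to $C'$ with $v'e_{i}=e'_{i}$ for every $i$, then regarded as a morphism of $\bC$ it satisfies the same defining property as $v$, so $v'=v$ by the uniqueness clause of Lemma \ref{rgiojqeiovevqeve9}. The only conceptually non-routine step is the identification $e_{i}^{\prime,*}v=e_{i}^{*}$ inside $\bC$, which I expect to be the main (though still short) obstacle; everything else is standard manipulation with strict convergence and the ideal property of $\bK$ in $\bM\bK$.
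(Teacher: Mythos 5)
Your proof is correct, but it takes a genuinely different route from the paper's. The paper constructs the comparison unitary directly as the sum $\sum_{i\in I}e'_{i}e_{i}^{*}$, invoking Lemma \ref{wejbhviwevfvsdfvfdv}.\ref{wetkgowpegopregwpegwergwreg1} (twice, once after applying the involution) to get left and right strict convergence from square summability of $(e_{i}^{*})_{i\in I}$ and $(e_{i}^{\prime,*})_{i\in I}$, and then Lemma \ref{wejbhviwevfvsdfvfdv}.\ref{wetkgowpegopregwpegwergwreg0} to package the strict limit as the desired unitary multiplier. You instead first pass to $\bC=\bW^{\mathrm{nu}}\bK$ via Theorem \ref{wergijowergerfwrefwerf}, obtain existence and uniqueness of the unitary $v$ from Lemma \ref{rgiojqeiovevqeve9}, and then descend to $\bM\bK$ through the idealizer characterization of Theorem \ref{jigogewrgeferfref}. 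Both are valid and non-circular; in substance your two idealizer checks ($vk\in\bK$ and $k'v\in\bK$, each exhibited as a norm limit of partial sums lying in the ideal $\bK$) are the same estimates that underlie the left/right strict convergence in the paper's argument, so the computational content overlaps, but your framing buys the uniqueness statement for free from Lemma \ref{rgiojqeiovevqeve9} (and, as you note, uniqueness there does not even require the competitor $v'$ to be unitary, only that $v'e_{i}=e'_{i}$ for all $i$), whereas the paper's route keeps everything inside the multiplier category and never needs the $W^{*}$-envelope. Your identification $e_{i}^{\prime,*}v=e_{i}^{*}$ via Corollary \ref{kor_morphisms_sum_characterization}.\ref{item_morphisms_sum_characterization_one1} is fine, though it also falls out of the proof of Lemma \ref{rgiojqeiovevqeve9}, which already shows $v^{*}e'_{i}=e_{i}$.
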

\begin{proof}
Let $(C,(e_{i})_{i\in I})$ and $(C',(e'_{i})_{i\in I})$ both represent AV-sums of the family $(C_{i})_{i\in I}$ in $\bK$. We will show    that $\sum_{i\in I} e_{i}'e_{i}^{*}$ converges strictly to a unitary  multiplier isomorphism $u:C\to C'$ such that $e_{i}'=ue_{i}$ for all $i$ in $I$.

The family $(e^{*}_{i})_{i\in I}$ is square integrable.  
By Lemma  \ref{wejbhviwevfvsdfvfdv}.\ref{wetkgowpegopregwpegwergwreg1}
the sum   $ \sum_{i\in I} e_{i}'e_{i}^{*}$ converges right strictly. 
Since also $(e^{\prime,*}_{i})_{i\in I}$ is  square integrable we can conclude, using the involution and  Lemma  \ref{wejbhviwevfvsdfvfdv}.\ref{wetkgowpegopregwpegwergwreg1} again, that $ \sum_{i\in I} e_{i}'e_{i}^{*}$ also converges left strictly. 
We then get $u:= \sum_{i\in I} e_{i}'e_{i}^{*}$ with the desired properties from 
 Lemma  \ref{wejbhviwevfvsdfvfdv}.\ref{wetkgowpegopregwpegwergwreg0}. 
%
%
%
%
%
%
\end{proof}

 We consider a commutative diagram \begin{equation}\label{qwefoijofwqed}
\xymatrix{\bK\ar[r]\ar[d]^{\phi}& \bM\bK\ar[d]^{\psi}\ar[r]&\bC:=\bW\bM\bK\ar[d]^{\bW\psi}\\ \bL\ar[r]& \bM\bL\ar[r]&\bD:=\bW\bM\bL}
\end{equation}
 in $\nCcat$ where $\psi$ is unital and strictly continuous on bounded subsets. By Proposition \ref{stkghosgfgsrgsegs} this situation,
e.g., arises if $\phi$ is a non-degenerate morphism in $\nCcat$, and
$\psi=\bM\phi$.
   The following is an AV-analog of Corollary \ref{wetoighjowtgwergwregwergwerg}.
   
    \begin{kor}\label{wriojtghowtrgfrefw}\mbox{}\begin{enumerate}
\item \label{ertihjeorger90g0weerg} $\psi$ preserves AV-sums.
\item \label{ertihjeorger90g0weerg1}  If $\psi$ is faithful, then it detects AV-sums.
\end{enumerate}
 \end{kor}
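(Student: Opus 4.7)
For assertion \ref{ertihjeorger90g0weerg}, the plan is to directly translate the strict convergence witnessing the AV-sum via the strict continuity of $\psi$. Starting from an AV-sum $(C,(e_{i})_{i\in I})$ in $\bK$, I would form the net of partial sums $p_{J}\coloneqq \sum_{i\in J}e_{i}e_{i}^{*}$ over finite $J\subseteq I$. Since the $e_{i}e_{i}^{*}$ are mutually orthogonal projections in $\End_{\bM\bK}(C)$, the partial sums are projections and in particular $\|p_{J}\|\le 1$, so the net is bounded. By Definition \ref{peofjopwebgwregwgreg} we have $p_{J}\to 1_{C}$ strictly in $\bM\bK$. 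Applying the strict continuity of $\psi$ on bounded sets I conclude $\psi(p_{J})\to \psi(1_{C})$ strictly in $\bM\bL$. Because $\psi$ is a $*$-homomorphism, $\psi(p_{J})=\sum_{i\in J}\psi(e_{i})\psi(e_{i})^{*}$, and because $\psi$ is unital, $\psi(1_{C})=1_{\psi(C)}$. Mutual orthogonality of the family $(\psi(e_{i}))_{i\in I}$ is automatic from $\psi(e_{j})^{*}\psi(e_{i})=\psi(e_{j}^{*}e_{i})$. This produces the AV-sum in $\bL$.

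For assertion \ref{ertihjeorger90g0weerg1}, I would reverse the above argument, using faithfulness of $\psi$ to transfer strict convergence back. Let $(C,(e_{i})_{i\in I})$ be a pair in $\bK$ with mutually orthogonal multiplier morphisms such that $(\psi(C),(\psi(e_{i}))_{i\in I})$ is an AV-sum in $\bL$; I must show $p_{J}\to 1_{C}$ strictly in $\bM\bK$. By Definition \ref{erguihierverqerf} the strict topology is generated by the seminorms $l_{f}$ and $r_{h}$ for morphisms $f,h$ in $\bK$. Given $f\colon D\to C$ in $\bK$, I would observe that $\psi(p_{J}f)=\sum_{i\in J}\psi(e_{i})\psi(e_{i})^{*}\psi(f)$ converges in norm to $\psi(f)$ by the AV-sum hypothesis in $\bL$. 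Faithful morphisms in $\nCcat$ are isometric on morphism spaces, so I deduce $\|p_{J}f-f\|=\|\psi(p_{J}f-f)\|\to 0$. The case of $hp_{J}\to h$ for $h\colon C\to D$ in $\bK$ is symmetric via the involution.

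I do not expect a genuine obstacle in either direction; both arguments rely only on the formal properties of $\psi$ already supplied by the diagram and its hypotheses. The one point requiring a bit of care is conceptual rather than technical: strict continuity of $\psi$ is only assumed on bounded subsets, so in assertion \ref{ertihjeorger90g0weerg} I must explicitly invoke that the partial sums $p_{J}$ form a bounded net, which is immediate from their being projections. Beyond that, the whole argument reduces to the facts that $\psi$ intertwines compositions and involutions, is unital, and is isometric when faithful.
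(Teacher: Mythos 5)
Your proof is correct and follows essentially the same route as the paper's: assertion (1) is the strict continuity of $\psi$ on the bounded net of partial-sum projections combined with unitality, and assertion (2) pulls back the norm convergence of the tested compositions via the isometry of the faithful morphism. The paper's own argument is terser (it does not spell out the boundedness of the net or the isometry step), but the mechanism is identical.
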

 \begin{proof}
 We start with Assertion \ref{ertihjeorger90g0weerg}.
 Let $(C_{i})_{i\in I}$ be a family of objects in $\bK$ and
$(C,(e_{i})_{i\in I})$ be a pair of an object  in $\bK$ and a family of multiplier morphisms $e_{i}:C_{i}\to C$.  
 If $(C,(e_{i})_{i\in I}) $ is an AV-sum in $\bK$ of the family  $(C_{i})_{i\in I}$, 
 then $\sum_{i\in I} e_{i}e_{i}^{*}$ strictly converges to $\id_{C}$.
 Since $\psi$ is strictly continuous and unital  
$\sum_{i\in I} \psi(e_{i})  \psi(e_{i})^{*}$  strictly converges to $ 1_{\psi(C)}$.  Hence $(\psi(C),(\psi(e_{i}))_{i\in I}) $ is an AV-sum in $\bL$ of the family  $(\psi(C_{i}))_{i\in I}$.

In order to show   Assertion \ref{ertihjeorger90g0weerg1} assume that  $\sum_{i\in I} \psi(e_{i})  \psi(e_{i})^{*}$  converges strictly to $1_{\psi(C)}$.
 Using that $\psi$ is faithful and unital we can then conclude that 
$\sum_{i\in I}e_{i}e_{i}^{*}$ converges strictly to $1_{C}$. 
%
%
%
\end{proof}

%
%
 %
%

We now consider the AV-analog of  Lemma \ref{ruihqiuwefwefqfqwefq}.
Let $(C_{i})_{i\in I}$ and $(C_{i}')_{i\in I}$ be two families of objects in $\bK$ with the same index set.
 We assume that  they admit  AV-sums $( C, (e_{i})_{i\in I} )$ and $  ( C' , (e'_{i})_{i\in I} )$ in $\bK$. By Theorem \ref{wergijowergerfwrefwerf} 
 they are also orthogonal sums in $\bC$.  Let $(f_{i})_{i\in I}$ be a uniformly bounded family of multiplier morphisms $f_{i} \colon C_{i}\to C'_{i}$.   

\begin{lem}\label{wrtjihordbvdfgbdb}
The morphism $\oplus_{i\in I} f_{i}: C \to  C' $   in $\bC$  from \eqref{ervfdvsdfvfdvfsdvsfdv}
is a multiplier morphism {given by the strictly convergent sum $\sum_{i\in I} e_{i}'f_{i}e_{i}^{*}$.}
 \end{lem}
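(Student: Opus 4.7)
The strategy is to first establish that the net of partial sums $(S_J)_J$ with $S_J := \sum_{i \in J} e_i' f_i e_i^*$, indexed by finite subsets $J \subseteq I$, is Cauchy in $\bM\bK$ with respect to the strict topology, and then to identify its limit $F$ with the morphism $\oplus_{i \in I} f_i$ in $\bC$ using the characterizing relations of Lemma~\ref{ruihqiuwefwefqfqwefq}.

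For right-strict convergence, fix $g \in \Hom_\bK(D,C)$ and set $M := \sup_{i \in I} \|f_i\| < \infty$. For finite $J \subseteq J' \subseteq I$, mutual orthogonality of the family $(e_i')_{i \in I}$ combined with the $C^*$-identity yields
\begin{align*}
\big\|(S_{J'}-S_J)\, g\big\|^2
&= \Big\|\sum_{i \in J' \setminus J} g^* e_i f_i^* f_i e_i^* g\Big\|
\le M^2 \Big\|g^*\Big(\sum_{i \in J'\setminus J} e_i e_i^*\Big) g\Big\|,
\end{align*}
where the inequality uses that $f_i^* f_i \le M^2 \id_{C_i}$ together with the monotonicity of the norm on positive elements. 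Since $(C,(e_i)_{i \in I})$ is an AV-sum, the partial sums $\sum_{i \in J} e_i e_i^* g$ converge to $g$ in norm, so the right-hand side tends to zero as $J$ grows; consequently $(S_J g)_J$ is Cauchy in $\bK$. (The same estimate with $J = \emptyset$ gives $\|S_{J'} g\| \le M \|g\|$, so the partial sums are uniformly bounded.) Left-strict convergence is obtained symmetrically: for $g' \in \Hom_\bK(C',D)$ the analogous computation for $g' S_{J'\setminus J}$ uses the orthogonality of $(e_i)_{i \in I}$ and the strict convergence of $\sum_{i \in I} e_i' e_i'^*$ to $\id_{C'}$.

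Since $\bM\bK$ is complete in the strict topology by Proposition~\ref{wrtohijworthrtwregergwerg}.\ref{oiergjoiwegwrrgwr2} and every $S_J$ lies in $\bM\bK$, the net $(S_J)_J$ converges strictly to a multiplier morphism $F \colon C \to C'$. To identify $F$ with $\oplus_{i \in I} f_i$, fix $j \in I$ and use that left composition with the multiplier morphism $e_j'^*$ is strictly continuous (by the separate strict continuity of composition in $\bM\bK$):
\[
e_j'^* F \;=\; \lim_{J} \sum_{i \in J} e_j'^* e_i' f_i e_i^* \;=\; f_j e_j^*,
\]
since only the term $i=j$ survives. By the characterization of $\oplus_{i \in I} f_i$ recalled before \eqref{ervfdvsdfvfdvfsdvsfdv} (together with the uniqueness clause of Lemma~\ref{ruihqiuwefwefqfqwefq} applied to the orthogonal sums $(C,(e_i))$ and $(C',(e_i'))$ in $\bC$, which exist by Theorem~\ref{wergijowergerfwrefwerf}), this identifies $F$ with $\oplus_{i \in I} f_i$, concluding the proof.

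The main obstacle is the norm estimate in the Cauchy argument: one must carefully use the mutual orthogonality of $(e_i')$ to collapse the double sum and then exploit that the remainder $\sum_{i \in J'\setminus J} e_i e_i^*$ acts as a tail of an AV-sum on $g$. Everything else is a bookkeeping exercise organizing the interplay between $\bK$, $\bM\bK$, and $\bC$.
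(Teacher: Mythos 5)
Your proof is correct and follows essentially the same route as the paper: the paper deduces right- and left-strict convergence of $\sum_{i\in I} e_i' f_i e_i^*$ from the square summability of $(f_ie_i^*)_{i\in I}$ and $(e_i'f_i)_{i\in I}$ via Lemma~\ref{wejbhviwevfvsdfvfdv}.\ref{wetkgowpegopregwpegwergwreg1} and then identifies the limit multiplier using Lemma~\ref{wejbhviwevfvsdfvfdv}.\ref{wetkgowpegopregwpegwergwreg0} and the component characterization of $\oplus_{i\in I}f_i$, exactly as you do. The only difference is that you re-derive the convergence inline with a direct tail estimate $\|(S_{J'}-S_J)g\|^2\le M^2\|g\|\,\|(\sum_{i\in J'\setminus J}e_ie_i^*)g\|$ instead of citing that lemma (whose proof uses uniform boundedness plus density of finite morphisms), and your uniqueness reference should really point to Corollary~\ref{qergijoqregqewfewfqewfwef1} rather than Lemma~\ref{ruihqiuwefwefqfqwefq}; both are cosmetic.
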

 \begin{proof}
 The family $(f_{i}e_{i}^{*})_{i\in I}$ is square summable which implies by  Lemma \ref{wejbhviwevfvsdfvfdv}.\ref{wetkgowpegopregwpegwergwreg1}   that 
$\sum_{i\in I} e_{i}'f_{i}e_{i}^{*}$ converges right strictly.  Using the involution and  Lemma \ref{wejbhviwevfvsdfvfdv}.\ref{wetkgowpegopregwpegwergwreg1}  again we 
 see in this  this case that  it also converges left strictly. 
 We conclude by Lemma \ref{wejbhviwevfvsdfvfdv}.\ref{wetkgowpegopregwpegwergwreg0}    that the morphism 
 $\oplus_{i\in I} f_{i}: C \to  C' $   is  
  a multiplier morphism.
 \end{proof}

Let $(C_{i})_{i\in I}$ be a family of objects in $\bK$ and assume that it admits  an AV-sum  $( C, (e_{i})_{i\in I})$. By Theorem \ref{wergijowergerfwrefwerf} it is also an orthogonal sum of this family interpreted in $\bC$.  
   Let $J$ be a subset of $I$ and form the projection  $p:=\sum_{j\in J} e_{j}e_{j}^{*}$ in $\End_{\bC}( C)$ as explained  before Lemma \ref{ethgiowgergwergerg}.
   The following lemma  is the AV-analog of Lemma \ref{ethgiowgergwergerg}.
\begin{lem}\label{ethgiowgergwergerggqewfuzgwef}\mbox{} 
\begin{enumerate}
 \item\label{fdvfdvsdfvsfdvfdvsv}  The projection $p$ is a multiplier morphism. 
 \item \label{jwqefiofwqefewfqfqewfwerferfrwefreqefq} If $p$ is effective  and $(E,u)$ presents an image of $p$ in $\bM\bK$ {(Definition~\ref{gewergregwegegwerg})}, then $(E, (u^{*}e_{i} )_{i\in J})$
 represents the AV-sum of the subfamily 
 $(C_{j})_{j\in J}$.
  \item \label{ewiorjgoewgergerwfeav} If $\bK$ admits very small AV-sums and the projection $e_{j}e_{j}^{*}$ is effective in $\bM\bK$ for every
 $j$ in $J$, then $p$ is effective in $\bM\bK$.
\end{enumerate}
\end{lem}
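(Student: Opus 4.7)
The overall plan mirrors the proof of the analogous Lemma \ref{ethgiowgergwergerg}: I will take the same candidate multiplier morphisms, but verify strict convergence in $\bM\bK$ by exploiting that by Theorem \ref{wergijowergerfwrefwerf} the given AV-sum is also an orthogonal sum in $\bC=\bW^{\mathrm{nu}}\bK$, together with the fact that composition in $\bM\bK$ is separately strictly continuous. The central technical tool throughout will be Lemma \ref{wejbhviwevfvsdfvfdv}.

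For assertion \ref{fdvfdvsdfvsfdvfdvsv}, I would extend the subfamily $(e_{j})_{j\in J}$ to an $I$-indexed family $(h_{i})_{i\in I}$ by setting $h_{i}=e_{i}^{*}$ for $i\in J$ and $h_{i}=0$ otherwise. Each finite partial sum $\sum h_{i}^{*}h_{i}=\sum_{i\in J'\cap J}e_{i}e_{i}^{*}$ is an orthogonal projection in $\End_{\bC}(C)$ of norm $\le 1$, so $(h_{i})_{i\in I}$ is square summable. Lemma \ref{wejbhviwevfvsdfvfdv}.\ref{wetkgowpegopregwpegwergwreg1} then yields right-strict convergence of $\sum_{j\in J}e_{j}e_{j}^{*}$; applying the involution to the analogous square summable family $(h_{i}^{*})$ gives left-strict convergence. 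By Lemma \ref{wejbhviwevfvsdfvfdv}.\ref{wetkgowpegopregwpegwergwreg0} the sum then defines a multiplier morphism $p\in\End_{\bM\bK}(C)$, and separate strict continuity of composition transports the relations $q^{2}=q$ and $q^{*}=q$ from the finite partial sums $q$ to $p$.

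For assertion \ref{jwqefiofwqefewfqfqewfwerferfrwefreqefq}, assuming $(E,u)$ presents an image of $p$, the identities $pe_{j}=e_{j}$ (which follow because each finite partial sum $\sum_{k\in J'}e_{k}e_{k}^{*}$ eventually satisfies the same identity by orthogonality of the $e_{k}$) and $e_{k}^{*}e_{j}=\delta_{jk}\id_{C_{j}}$ immediately show that $(u^{*}e_{j})_{j\in J}$ is a mutually orthogonal family of isometries $u^{*}e_{j}\in\Hom_{\bM\bK}(C_{j},E)$. The essential point is then strict convergence of $\sum_{j}u^{*}e_{j}e_{j}^{*}u$ to $\id_{E}$. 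For $f\in\Hom_{\bK}(D,E)$ the composition $uf$ lies in the ideal $\bK$, so by assertion \ref{fdvfdvsdfvsfdvfdvsv} we have $\sum_{j}e_{j}e_{j}^{*}(uf)\to p(uf)=uu^{*}uf=uf$ in norm; multiplying on the left by $u^{*}$ produces $\sum_{j}u^{*}e_{j}e_{j}^{*}uf\to u^{*}uf=f$, and the symmetric argument yields right-strict convergence. Hence $(E,(u^{*}e_{j})_{j\in J})$ is the desired AV-sum.

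For assertion \ref{ewiorjgoewgergerwfeav}, I would choose images $(D_{j},u_{j})$ of $e_{j}e_{j}^{*}$ in $\bM\bK$ and an AV-sum $(D,(f_{j})_{j\in J})$ of $(D_{j})_{j\in J}$ in $\bK$. The key computation is $u_{j}^{*}u_{k}=\delta_{jk}\id_{D_{j}}$, which for $j\ne k$ follows from $u_{j}u_{j}^{*}u_{k}u_{k}^{*}=e_{j}e_{j}^{*}e_{k}e_{k}^{*}=0$ via the $C^{*}$-identity applied to $u_{k}^{*}u_{j}u_{j}^{*}u_{k}$. For $f\in\Hom_{\bK}(E,D)$ this orthogonality collapses $\|\sum_{j\in J_{2}\setminus J_{1}}u_{j}f_{j}^{*}f\|^{2}$ to $\|f^{*}(\sum_{j\in J_{2}\setminus J_{1}}f_{j}f_{j}^{*})f\|$, which tends to zero by the AV-sum property of $(D,(f_{j})_{j\in J})$; analogously, composition on the left by $h\in\Hom_{\bK}(C,E)$ reduces the corresponding Cauchy estimate to $\|h(\sum_{j\in J_{2}\setminus J_{1}}e_{j}e_{j}^{*})h^{*}\|$, which tends to zero by assertion \ref{fdvfdvsdfvsfdvfdvsv}. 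Hence $v:=\sum_{j}u_{j}f_{j}^{*}$ converges strictly to a multiplier morphism in $\Hom_{\bM\bK}(D,C)$, and the identities $v^{*}v=\sum_{j}f_{j}f_{j}^{*}=\id_{D}$ and $vv^{*}=\sum_{j}u_{j}u_{j}^{*}=\sum_{j}e_{j}e_{j}^{*}=p$ follow by separate strict continuity of composition, so $(D,v)$ presents an image of $p$ in $\bM\bK$. The main technical obstacle throughout will be ensuring strict (rather than merely $\sigma$-weak) convergence of these formal sums, in order to stay within $\bM\bK$ rather than only $\bC$.
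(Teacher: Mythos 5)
Your proof is correct and follows essentially the same route as the paper: assertion (1) via Lemma \ref{wrtjihordbvdfgbdb} (which you simply unfold into its ingredients from Lemma \ref{wejbhviwevfvsdfvfdv}), assertion (2) by testing strict convergence against morphisms $uf$ in the ideal $\bK$, and assertion (3) by assembling the isometry $v=\sum_{j}u_{j}f_{j}^{*}$ from the chosen images and an AV-sum of the $D_{j}$. Your formula $v=\sum_{j}u_{j}f_{j}^{*}$ is in fact the type-correct version of the paper's $\sum_{j}e_{j}f_{j}^{*}$, and your explicit Cauchy estimates for the strict convergence of $v$ supply details the paper leaves to the reader.
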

\begin{proof}
Assertion \ref{fdvfdvsdfvsfdvfdvsv}  is an immediate consequence of Lemma   \ref{wrtjihordbvdfgbdb}
applied to the family
$(f_{i})_{i\in I}$ given by
$$f_{i}:=     \left\{\begin{array}{cc}e_{i}e_{i}^{*}&i\in J\\0&else\end{array}\right. \ .$$
In fact we have $p=\oplus_{i\in I} f_{i}$.

For Assertion \ref{jwqefiofwqefewfqfqewfwerferfrwefreqefq}
we must show that 
$\sum_{i\in J}    u^{*}e_{i}e_{i}^{*}u$ converges strictly to the identity of $E$.
Note that we can replace $J$ by $I$ since the additional summands vanish.
The assumption in \ref{jwqefiofwqefewfqfqewfwerferfrwefreqefq} implies that $u$ is a multiplier morphism. Since $\bK$ is an ideal in $\bM\bK$ for every morphism $f:D\to E$ in $\bK$ we have $uf\in \bK$.  Since  $\sum_{i\in I}     e_{i}e_{i}^{*}$ converges strictly to $1_{C}$  we conclude that 
$\sum_{i\in J}    u^{*}e_{i}e_{i}^{*}uf$ converges in norm to $u^{*}uf=f$.
Hence
$\sum_{i\in J}    u^{*}e_{i}e_{i}^{*}u$ converges left strictly to $1_{E}$. 
Right-strict convergence of $\sum_{i\in J}    u^{*}e_{i}e_{i}^{*}u$ is seen similarly.

We finally show Assertion \ref{ewiorjgoewgergerwfeav}.
Using the assumption that $e_{j}e_{j}^{*}$ is effective for every $j$ in $J$ we choose   an image  $(D_{j},u_{j})$
of $e_{j}e_{j}^{*}$ in $\bM\bK$. Since $\bK$ admits very small AV-sums by assumption   we find  an  AV-sum $(D,(f_{j})_{j\in J}$ of the family $(D_{j})_{j\in J}$. 
Using that $C$ and $D$ are presented as AV-sums we 
 check that that the sum
$v:=\sum_{j\in J} e_{j}f_{j}^{*}:D\to C$ converges strictly.
Using Lemma 
  \ref{wejbhviwevfvsdfvfdv}.\ref{wetkgowpegopregwpegwergwreg0}  we  check that $v^{*}v=\id_{D}$ and $vv^{*}=p$.  Hence
the pair $(D,v)$ then represents an image of $p$ in $\bM\bK$. 
  \end{proof}

Next we consider the AV-analog of Lemma \ref{etgoijoergergwegwergwrg}.
  Let
  $(C_{i})_{i\in I}$ be a family  of objects in $\bK$ and $(C,(e_{i})_{i\in I})$ be an AV-sum of the family. Let furthermore  $(J_{k})_{k\in K}$ be a partition of the set $I$. For every $k$ in $K$ we can form the multiplier projection
  $p_{k}:=\sum_{i\in J_{k}} e_{i}e_{i}^{*}$ by Lemma \ref{ethgiowgergwergerggqewfuzgwef}.\ref{jwqefiofwqefewfqfqewfwerferfrwefreqefq}.

\begin{lem}\label{etgoijoergergwgwergerrewegwergwrg}
Assume  that for any $k$ in $K$ 
 the projection $p_{k }$ is effective in $\bM\bK$ with image $(E_{k},u_{k})$.
 Then the AV-sum of the family $(E_{k})_{k\in K}$ exists and is represented by
 $(C, (u_{k})_{k\in K})$.   
\end{lem}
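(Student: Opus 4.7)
The plan is to verify the two defining conditions of an AV-sum from Definition \ref{peofjopwebgwregwgreg}: that $(u_k)_{k \in K}$ is a mutually orthogonal family of multiplier isometries $u_k\colon E_k \to C$, and that $\sum_{k \in K} u_k u_k^*$ converges strictly to the identity multiplier morphism of $C$.

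For the first condition, each $u_k$ is a multiplier isometry by the hypothesis that $(E_k,u_k)$ is an image of the multiplier projection $p_k$ in $\bM\bK$. For mutual orthogonality, I would use that the members $J_k$ of the partition are disjoint together with the mutual orthogonality of $(e_i)_{i \in I}$ to conclude $p_k p_{k'} = 0$ for $k \neq k'$. A short $C^{*}$-calculation then gives $u_{k'}^* u_k = 0$ from $u_k u_k^* u_{k'} u_{k'}^* = p_k p_{k'} = 0$, since $\|u_{k'}^* u_k\|^2 = \|u_k^* u_{k'} u_{k'}^* u_k\| = \|u_k^* p_k p_{k'} u_k\| = 0$.

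For the second condition, which is the main content, I proceed as follows. By Lemma \ref{ethgiowgergwergerggqewfuzgwef}.\ref{fdvfdvsdfvsfdvfdvsv}, each $p_k$ is the strict limit of its partial sums $\sum_{i \in J} e_i e_i^*$ for finite $J \subseteq J_k$. To check left-strict convergence of $\sum_{k \in K} u_k u_k^* = \sum_{k \in K} p_k$ to $1_C$, I fix a morphism $f\colon D \to C$ in $\bK$ and $\varepsilon > 0$. Using the AV-sum hypothesis for $(C,(e_i)_{i \in I})$, I choose a finite $J_0 \subseteq I$ such that $\|\sum_{i \in J} e_i e_i^* f - f\| < \varepsilon$ for every finite $J$ with $J_0 \subseteq J \subseteq I$. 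I set $K_0 \coloneqq \{k \in K : J_k \cap J_0 \neq \emptyset\}$, which is finite because $J_0$ is finite and the $J_k$'s are disjoint.

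For any finite $K' \subseteq K$ with $K_0 \subseteq K'$, set $I' \coloneqq \bigsqcup_{k \in K'} J_k$; then $J_0 \subseteq I'$. Using the norm-convergence of $\sum_{i \in J_k'} e_i e_i^* f \to p_k f$ for each of the finitely many $k \in K'$, the partial sum $\sum_{k \in K'} p_k f$ equals the norm-limit of $\sum_{i \in J'} e_i e_i^* f$ along the (cofinal) net of finite subsets $J' \subseteq I'$ of the form $J' = \bigsqcup_{k \in K'} J_k'$. Restricting this limit to the cofinal subnet of $J'$ additionally containing $J_0$ (and using $J' \subseteq I$), each such partial sum is within $\varepsilon$ of $f$, so $\|\sum_{k \in K'} p_k f - f\| \le \varepsilon$. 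This yields left-strict convergence; right-strict convergence follows by applying the involution, since the construction is self-adjoint. The main obstacle is justifying the interchange of limits in the double-indexed sum, which is resolved by the finiteness of $K'$ together with the cofinal-subnet argument above.
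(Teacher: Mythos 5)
Your proof is correct and follows the same route as the paper's: identify $u_k u_k^* = p_k = \sum_{i \in J_k} e_i e_i^*$ (strictly) and regroup the iterated sum into $\sum_{i\in I} e_i e_i^* = 1_C$. The paper states this regrouping in one line, whereas you carefully justify the interchange of limits via finite subsets and cofinal subnets (and also verify mutual orthogonality of the $u_k$, which the paper's AV-version leaves implicit); this is a more complete write-up of the same argument.
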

\begin{proof}
We must show that $\sum_{k\in K} u_{k}u_{k}^{*}$ converges strictly to
$1_{C}$. We have $u_{k}u_{k}^{*}=p_{k}=\sum_{i\in J_{k}} e_{i}e_{i}^{*}$ strictly.
Hence
$\sum_{k\in K} u_{k}u_{k}^{*}=\sum_{k\in K}\sum_{i\in J_{k}} e_{i}e_{i}^{*}=\sum_{i\in I}e_{i}e_{i}^{*}=1_{C}$ strictly. 
 \end{proof}

Let $\bK$ be in $\nCcat$,   
 $C$ be an object of $\bK$, and $(p_{i})_{i\in I}$ be a   mutually orthogonal    family of projections on $\bM\bK$. The following is the AV-analog of Definition \ref{rgoigsgsgfgg}.
\begin{ddd}\label{rgoigsgsgfggav}
We say that $C$ is the AV-sum of the images of the family of projections if the following are satisfied:
\begin{enumerate}
\item \label{oiejfioqwjefewfewfqefav} For every $i$ in $I$ the projection $p_{i}$ is effective  in $\bM\bK$. 
 \item $\sum_{i\in I}p_{i}$ converges strictly to $\id_{C}$.
  \end{enumerate}
 \end{ddd}

  \color{black}

\section{Hilbert \texorpdfstring{$\bm{C^{*}}$}{Cstar}-modules} \label{hilbert}


 In this section we discuss   the situation \eqref{qrefqewfdewdqwedwed}  for $\bK=\Hilb_{c}(A)$, see Example \ref{ex_hilbert_modules}, for a very small $C^{*}$-algebra $A$.  In Lemma \ref{wetiojgowegfgrefweferf} we first identify $\bM\Hilb_{c}(A)$ with $\Hilb(A)$. In Theorem \ref{ewrgowergwerfgrfwerf}  we then compare orthogonal AV-sums in $\Hilb_{c}(A)$ with classical orthogonal sums of Hilbert $A$-modules.
 
 The following was asserted in \cite{antoun_voigt}. It generalizes the well-known statement that for a Hilbert $A$-module $C$  the $C^{*}$-algebra $B(C)$ of bounded adjointable operators  on $C$ is the multiplier algebra of the  $C^{*}$-algebra of compact (in the sense of Hilbert $A$-modules) operators $K(C)$ \cite[Thm. 13.4.1]{blackadar}.
 \begin{lem}\label{wetiojgowegfgrefweferf}
 We have a canonical isomorphism $\Hilb(A)\cong \bM\Hilb_{c}(A)$.
 \end{lem}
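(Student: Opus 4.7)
Plan: First I would verify that $\Hilb_c(A) \hookrightarrow \Hilb(A)$ is an ideal inclusion in $\nCcat$: both categories have the same objects, the subspace of compact morphisms is closed, and standard arguments show that the composition of a compact operator with a bounded adjointable operator on either side remains compact. Invoking the universal property of the multiplier category (Theorem \ref{mmain}.\ref{rieogfergegwferf}) then produces a unique unital morphism
\[
\Phi \colon \Hilb(A) \to \bM\Hilb_c(A)
\]
which is the identity on objects and sends $T \in \Hom_{\Hilb(A)}(C,D)$ to the multiplier $(L_T, R_T)$ obtained from left and right composition by $T$. The task reduces to showing that $\Phi$ is fully faithful on morphism spaces.

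For faithfulness, assume $\Phi(T) = 0$ for some $T \colon C \to D$. In particular $T \circ \theta_{c,c} = 0$ for every $c$ in $C$, where $\theta_{c,c}$ is the rank-one compact operator from \eqref{ssrgfdsgsreg}. Evaluating at $c$ yields $T(c)\langle c, c\rangle_C = 0$ in $D$; taking the $D$-valued inner product against $T(c)$ gives $\langle c, c\rangle_C \,\langle T(c), T(c)\rangle_D\, \langle c, c\rangle_C = 0$ in $A$, and positivity together with the $C^{*}$-identity forces $\langle T(c), T(c)\rangle_D = 0$, hence $T(c) = 0$ for every $c$ in $C$.

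For fullness, let $M = (L, R)$ be a multiplier from $C$ to $D$ in $\bM\Hilb_c(A)$. Choose a selfadjoint approximate unit $(u_\lambda)_\lambda$ of the $C^{*}$-algebra $\End_{\Hilb_c(A)}(C)$; by standard arguments it satisfies $u_\lambda c \to c$ for every $c$ in $C$. Define $T(c) \coloneqq \lim_\lambda L_C(u_\lambda)(c)$. The net is Cauchy in $D$: combining the left-multiplier axiom $L_C(u_\lambda u_\mu) = L_C(u_\lambda)\circ u_\mu$ with the fact that $(u_\lambda - u_\mu)u_\nu \to 0$ in the norm of $\End_{\Hilb_c(A)}(C)$ as $\lambda,\mu \to \infty$ for any fixed $\nu$, together with the decomposition $c \approx u_\nu c + (c - u_\nu c)$ for $\nu$ large, supplies the required estimate. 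The resulting $T$ is $A$-linear and bounded by $\|L\|$. For adjointability, the same construction applied to the involution of $M$ in $\MHom_{\Hilb_c(A)}(D, C)$, using an approximate unit of $\End_{\Hilb_c(A)}(D)$, yields $S \colon D \to C$; the compatibility relation \eqref{3r2fpokpwerverfw} between $L$ and $R$ then gives $\langle T c, d\rangle_D = \langle c, S d\rangle_C$ for all $c,d$, so $T$ is adjointable with $T^{*} = S$. Finally, $\Phi(T) = M$: for any compact $f \colon E \to C$ the axiom $L_E(u_\lambda f) = L_C(u_\lambda) f$ combined with Lemma \ref{multmult} gives $L_E(f) = \lim_\lambda L_C(u_\lambda)\circ f = T\circ f$ in norm (strong convergence $L_C(u_\lambda) \to T$ post-composed with the compact $f$ is norm-convergent), and similarly $R_E(g) = g\circ T$.

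The main technical obstacle will be the careful verification that the defining limit for $T$ produces a genuinely bounded adjointable operator rather than merely a densely defined $A$-linear map: the convergence argument and the identification of $T^{*}$ through the involution of $M$ are direct off-diagonal generalizations of the classical identification $\bM(\End_{\Hilb_c(A)}(C)) \cong \End_{\Hilb(A)}(C)$ of \cite[Thm.~13.4.1]{blackadar}, but one must keep careful track of how the compatibility axiom \eqref{3r2fpokpwerverfw} tethers $L$ to $R$ in order to obtain adjointability and the identity $\Phi(T) = M$.
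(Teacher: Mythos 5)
Your overall route is the same as the paper's: both are off-diagonal generalizations of \cite[Thm.~13.4.1]{blackadar}, producing the map $\Hilb(A)\to \bM\Hilb_{c}(A)$ by composition and an inverse by a regularization procedure. The difference is only in the technical device. The paper defines the inverse on a multiplier $(L,R)$ directly by $S(c)=\lim_{\epsilon\downarrow 0}L(\theta_{c,c})(c\cdot[\langle c,c\rangle+\epsilon]^{-1})$ and $T(d)=\lim_{\epsilon\downarrow 0}R(\theta_{d,d})^{*}(d\cdot[\langle d,d\rangle+\epsilon]^{-1})$, and obtains boundedness and adjointability in one stroke from the fact that a formally adjoint pair of set maps between Hilbert modules is automatically a pair of bounded adjointable operators; you instead use an approximate unit $(u_\lambda)$ of $\End_{\Hilb_{c}(A)}(C)$, verify Cauchyness of $L_{C}(u_\lambda)(c)$ by hand, and build the adjoint from the involution of the multiplier. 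Both work; the paper's version trades your Cauchy estimate for the automatic-adjointability fact, and it does not separate faithfulness from fullness but checks $\psi\circ\phi=\id$ and $\phi\circ\psi=\id$ directly.

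The one step that does not work as written is your faithfulness argument. From $T(c)\langle c,c\rangle_{C}=0$ you pass to $\langle c,c\rangle\,\langle Tc,Tc\rangle\,\langle c,c\rangle=0$ and claim that positivity together with the $C^{*}$-identity forces $\langle Tc,Tc\rangle=0$. But for positive elements $a,b$ of a $C^{*}$-algebra the relation $aba=0$ only yields $b^{1/2}a=0$, hence $ab=ba=0$; it does not give $b=0$ (take $a,b$ orthogonal projections). What rescues the conclusion here is the additional relation between $a=\langle c,c\rangle$ and $b=\langle Tc,Tc\rangle$ coming from $c=\lim_{\epsilon\downarrow 0}c\cdot\langle c,c\rangle(\langle c,c\rangle+\epsilon)^{-1}$, whence $Tc=\lim_{\epsilon\downarrow 0}T(c\langle c,c\rangle)\cdot(\langle c,c\rangle+\epsilon)^{-1}=0$ by $A$-linearity and continuity of $T$. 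More simply: $\Phi(T)=0$ gives $T\circ u_\lambda=0$ for the approximate unit you already use in the fullness step, and $u_\lambda c\to c$ yields $Tc=\lim_\lambda T(u_\lambda c)=0$. With that repair the argument is complete.
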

 \begin{proof}
 We have a morphism 
 $\phi:\Hilb(A)\to \bM\Hilb_{c}(A)$ which is the identity on objects, and  which sends a morphism in $\Hilb(A)$ to the multiplier morphism 
 given by composition with the morphism.
 
 We claim that this morphism is an isomorphism. 
 To this end we construct an inverse $\psi: \bM\Hilb_{c}(A)\to  \Hilb(A)$.
 The following argument is the straightforward generalization  of the proof of \cite[Thm. 13.4.1]{blackadar}.
 
 We take advantage of the following fact. 
 Let $C,D$ be in $\Hilb(A)$  and let $S:C\to D$ and $T:D\to C$ be maps  (of the underlying sets)  such that
 $$\langle Sc,d\rangle=\langle c,Td\rangle$$ for all $c$ in $C$ and $d$ in $D$, then
 $S$ und $T$  are morphism in $\Hilb(A)$ with $S^{*}=T$.
 Let $(L,R):C\to D$ be a  morphism in $\bM\Hilb_{c}(A)$. 
 Then we define
 $S:C\to D$ and $T:D\to C$ by
 $$S(c):=\lim_{\epsilon\downarrow 0} L(\theta_{c,c})(c\cdot [\langle c,c\rangle+\epsilon]^{-1})$$
 and
 $$T(d):=\lim_{\epsilon\downarrow 0} R(\theta_{d,d})^{*}(d \cdot [\langle d,d\rangle+\epsilon]^{-1})\ ,$$ see \eqref{ssrgfdsgsreg} for notation. In the following we explain why  the limits exist.
 For any compact $K:C\to C$ and $u:D\to D$ we have 
, using \eqref{tokjpherthertgetrgetrg}, the operator inequality
 $$L(K)^{*}u^{*}uL(K)=K^{*}R(u)^{*}R(u) K \le K^{*}\|R(u)\|^{2}K\le \|L\|^{2}\|u\|^{2} K^{*}K\ .$$
 Letting $u$ run over a normalized approximate unit  we conclude that
 $$L(K)^{*} L(K)  \le \|L\|^{2} K^{*}K\ .$$
Using this operator inequality in the first step
 we estimate  for $\epsilon, \epsilon'$ in $(0,\infty)$  \begin{eqnarray*}
\lefteqn{\| L(\theta_{c,c})(c\cdot [\langle c,c\rangle+\epsilon]^{-1}- c\cdot [\langle c,c\rangle+\epsilon']^{-1})\|^{2}}&&\\&\le&
 \|L\|^{2} \|\theta_{c,c}(c\cdot [\langle c,c\rangle+\epsilon]^{-1}- c\cdot [\langle c,c\rangle+\epsilon']^{-1}) \|^{2}\\
 &=&\|L\|^{2} \| c \cdot \langle c ,c\rangle\cdot ( [\langle c,c\rangle+\epsilon]^{-1}-  [\langle c,c\rangle+\epsilon']^{-1}) \|^{2} \end{eqnarray*}
 We now claim that
 $$\lim_{\epsilon\downarrow 0}   c \cdot \langle c ,c\rangle\cdot  [\langle c,c\rangle+\epsilon]^{-1}=c\ .$$
 The family of maps $(S_{\epsilon})_{\epsilon\in (0,\infty)}$ given by 
 $c'\mapsto  S_{\epsilon  }(c'):=c' \cdot \langle c ,c\rangle\cdot  [\langle c,c\rangle+\epsilon]^{-1}$
 is uniformy bounded by $1$. On the dense subset of $C$ of elements of the form
 $c'=c'' \cdot\langle c,c\rangle^{\delta}$ for $\delta$ in $(0,\infty)$ we have
 $\lim_{\epsilon\downarrow 0} S_{\epsilon}(c')=c'$. This implies the claim.

 Using the claim we get 
  $$\lim_{\epsilon,\epsilon'\downarrow 0} \| L(\theta_{c,c})(c\cdot [\langle c,c\rangle+\epsilon]^{-1}- c\cdot [\langle c,c\rangle+\epsilon']^{-1})\|^{2}=0\ .$$
In particular note
 that $$\lim_{\epsilon\downarrow 0} 
 \theta_{c,c}(c \cdot [\langle c,c\rangle+\epsilon]^{-1})=c\ .$$
 We now calculate using that $\theta_{c,c}$ is selfadjoint
 \begin{eqnarray*}
\langle S(c),d\rangle&=&\lim_{\epsilon\downarrow 0}   \langle L(\theta_{c,c})(c\cdot [ \langle c,c\rangle+\epsilon]^{-1}),d\rangle\\&=&\lim_{\epsilon\downarrow 0}   \langle c\cdot [ \langle c,c\rangle+\epsilon]^{-1}), L(\theta_{c,c})^{*}\theta_{d,d}(d \cdot [\langle d,d\rangle+\epsilon]^{-1})\rangle\\&\stackrel{\eqref{3r2fpokpwerverfw}}{=}&\lim_{\epsilon\downarrow 0}   \langle c\cdot [ \langle c,c\rangle+\epsilon]^{-1},  \theta_{c,c} R(\theta_{d,d})^{*} (d \cdot [\langle d,d\rangle+\epsilon]^{-1})\rangle\\&=&
\lim_{\epsilon\downarrow 0}   \langle \theta_{c,c} (c\cdot[ \langle c,c\rangle+\epsilon]^{-1}),   R(\theta_{d,d})^{*} (d \cdot [\langle d,d\rangle+\epsilon]^{-1})\rangle\\&=&\langle c,T(d)\rangle\ .
\end{eqnarray*}
We define $\psi$ such that it sends $(L,R)$ to $S$.

It remains to  show $\psi$ and $\phi$ are inverses to each other. Note that this also implies automatically that $\psi$ is a morphism in $\Ccat$ so that we do not have to  check this fact separately.

Let $A:C\to D$ be a morphism in $\Hilb(A)$ and $(L,R)=(A\circ-, -\circ A)=\phi(A)$ be the corresponding multiplier morphism.
Then
$$\psi(L,R)(c)=\lim_{\epsilon\downarrow 0} A \theta_{c,c}(c\cdot [\langle c,c\rangle+\epsilon]^{-1})=A(c)\ .$$
This shows that $\psi\circ \phi=\id_{\Hilb(A)}$. 

In order to show that $\phi\circ \psi=\id_{\bM\Hilb_{c}(A)}$
we start with a multiplier morphism $(L,R)$ and let $S:=\psi(L,R)$.  We then consider a compact operator $\theta_{c,e}:E\to C$.  We must show that
$S\theta_{c,e}=L(\theta_{c,e})$. 
For every $d$ in $D$, $f$ in  some object $F$,   and $e'$ in $E$, setting    $c':=\theta_{c,e}(e') $ and using \eqref{3r2fpokpwerverfw}  twice, we  have  \begin{eqnarray*}
\theta_{f,d} S \theta_{c,e}(e') &= &
\lim_{\epsilon\downarrow 0}  \theta_{f,d} L(\theta_{c',c'})(c'\cdot[ \langle c',c'\rangle+\epsilon]^{-1})\\&=&
\lim_{\epsilon\downarrow 0}  R(\theta_{f,d}) \theta_{c',c'}(c'\cdot [\langle c',c'\rangle+\epsilon]^{-1})\\&=&R(\theta_{f,d})(c')\\&=&
R(\theta_{f,d})( \theta_{c,e}(e'))\\&=&
\theta_{f,d}L( \theta_{c,e})(e')\ .
\end{eqnarray*}
 This shows that  $S\theta_{c,e}=L(\theta_{c,e})$. 
 By a similar argument we show that $R(\theta_{f,d})=\theta_{f,d}S$ for any $f$ and $d$ as above.
 This finishes the verification of $\phi\circ \psi=\id_{\bM\Hilb_{c}(A)}$.
  \end{proof}

We let $\bW\Hilb(A)$ be the $W^{*}$-envelope of $\Hilb(A)$ as introduced in Definition \ref{wijtgoewgewgefw}. Then \begin{equation}\label{qerwffqwedwedewdewdqwd}
\Hilb_{c}(A)\subseteq \Hilb(A)\subseteq \bW\Hilb(A)
\end{equation}
  is an instance of \eqref{qrefqewfdewdqwedwed}.

In the following we discuss orthogonal sums and AV-sums in this context.
We first recall the construction of the classical  sum of Hilbert $A$-modules. 
 
\begin{rem} 
Note that orthogonal sums of a family of objects in $\Hilb(A)$ in the sense of Definition \ref{erogwfsfdvbsbfdbsdfbsfdbv} or AV-sums in the sense of Definition \ref{peofjopwebgwregwgreg} are objects of $\Hilb(A)$ with  additional  structure maps that are characterized by certain properties. In contrast, the classical sum of   a family of Hilbert $A$-modules is an object determined uniquely up to unitary isomorphism by the Construction \ref{ejirgowergrefweerf} below. \hB
\end{rem}

\begin{construction}\label{ejirgowergrefweerf}{\em
Let $(C_{i})_{i\in I}$ be a family in $\Hilb(A)$ indexed by a very small set. In order to construct the classical  orthogonal sum of this family  
we start with choosing
 an algebraic direct sum $$C^{\alg} \coloneqq \bigoplus_{i\in I} C_{i}$$  of $A$-right-modules with the $A$-valued scalar product
$$\langle \oplus_{i} c_{i}, \oplus_{i} c'_{i} \rangle \coloneqq \sum_{i\in I} \langle   c_{i},   c'_{i} \rangle_{i} \, ,$$
where $\langle -,-\rangle_{i}$ is the $A$-valued scalar product on $C_{i}$.
We then let $C$ be the closure of $C^{\alg}$ with respect to the norm induced by this scalar product. 
Note that for $c$ in $C$ we have
 \begin{equation}
\label{eq_norm_classical_Hilb_sum}
\|c\|^{2}=\|\sum_{i\in I} \langle e_{i}^{*}(c)  ,e_{i}^{*}(c)  \rangle_{i}\|\,.
\end{equation} 
The scalar product extends by continuity and equips $C$ with the structure of a Hilbert $A$-module. 
We have an obvious mutual orthogonal family $(e_{i})_{i\in I}$ of isometries $e_{i}\colon C_{i}\to C$.
 
 We will say that the pair  $(C,(e_{i})_{i\in I})$ represents   the classical orthogonal sum  of the family $(C_{i})_{i\in I}$ in  $\Hilb(A)$.}\hB
\end{construction}


 We now state the main theorem of this section.
 Let $(C_{i})_{i\in I}$ be a family of objects in $\Hilb_{c}(A)$ and
$(C,(e_{i})_{i\in I})$ be a pair of an object in $\Hilb_{c}(A)$   and a family of isometries
$e_{i}:C_{i}\to C$ in $\Hilb(A)$. 
We consider the following assertions: 
\begin{enumerate}
\item \label{q3roigjeiogrqegfewfqewf1}$(C,(e_{i})_{i\in I})$ represents the classical orthogonal sum of the family $(C_{i})_{i\in I}$ in $ \Hilb(A) $.
\item  \label{q3roigjeiogrqegfewfqewf2} $(C,(e_{i})_{i\in I})$ represents an $AV$-sum of the family $(C_{i})_{i\in I}$ in $  \Hilb_{c}(A) $.
\item  \label{q3roigjeiogrqegfewfqewf3}  $(C,(e_{i})_{i\in I})$ represents an orthogonal sum of the family $(C_{i})_{i\in I}$ in $\bW\Hilb(A)$.
\end{enumerate}
In order to interpret  Assertion   \ref{q3roigjeiogrqegfewfqewf2} 
we use the identification of $\Hilb(A)$ with $\bM\Hilb_{c}(A)$ by  Lemma \ref{wetiojgowegfgrefweferf}.

\begin{theorem}  \label{ewrgowergwerfgrfwerf}The Assertions \ref{q3roigjeiogrqegfewfqewf1} \&
\ref{q3roigjeiogrqegfewfqewf2}  are equivalent. Furthermore, both imply 
 Assertion~\ref{q3roigjeiogrqegfewfqewf3}.
\end{theorem}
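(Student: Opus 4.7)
We begin by using Lemma \ref{wetiojgowegfgrefweferf} to identify $\bM\Hilb_c(A)$ with $\Hilb(A)$. Under this identification, assertion \ref{q3roigjeiogrqegfewfqewf2} says that the net $(p_J)_J := (\sum_{i \in J} e_i e_i^*)_J$, indexed by finite subsets $J \subseteq I$, converges to $\id_C$ in the strict topology of Definition \ref{erguihierverqerf}. Since each partial sum $p_J$ is a projection (a finite sum of mutually orthogonal projections), the net is uniformly bounded by $1$. Consequently, it suffices to test strict convergence against a dense subset of the compact morphism spaces $\mathbb{K}$, namely the rank-one generators $\theta_{c,d}$ from \eqref{ssrgfdsgsreg}; the extension from this dense subset to all compacts is a routine uniform boundedness argument.

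The identities $p_J \circ \theta_{c,d} = \theta_{p_J(c),d}$ and $\theta_{d,c} \circ p_J = \theta_{d,\, p_J(c)}$ (the latter using selfadjointness of $p_J$) then show that strict convergence $p_J \to \id_C$ in $\bM\Hilb_c(A)$ is equivalent to the pointwise statement that $p_J(c) \to c$ in norm for every $c \in C$. This pointwise convergence is in turn equivalent to the assertion that the algebraic span of the subspaces $e_i(C_i)$ is dense in $C$, which (using \eqref{eq_norm_classical_Hilb_sum} to identify the norm) is precisely the characterization of $(C,(e_i)_{i \in I})$ as the classical orthogonal sum of $(C_i)_{i \in I}$ in $\Hilb(A)$ from Construction \ref{ejirgowergrefweerf}. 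This establishes \ref{q3roigjeiogrqegfewfqewf1} $\Leftrightarrow$ \ref{q3roigjeiogrqegfewfqewf2}.

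The implication \ref{q3roigjeiogrqegfewfqewf2} $\Rightarrow$ \ref{q3roigjeiogrqegfewfqewf3} is an immediate application of Theorem \ref{wergijowergerfwrefwerf} to $\bK := \Hilb_c(A)$: an AV-sum in $\bK$ is automatically an orthogonal sum in $\bW\bM\bK$ in the sense of Definition \ref{erogwfsfdvbsbfdbsdfbsfdbv}, and by Proposition \ref{jigogewrgeferfref} together with Lemma \ref{wetiojgowegfgrefweferf} we have $\bW\bM\Hilb_c(A) \cong \bW^{\mathrm{nu}}\Hilb_c(A) \cong \bW\Hilb(A)$. The main technical point of the proof is isolated in the first paragraph: verifying that strict convergence on the two-sided dense set of rank-one generators extends to all of $\mathbb{K}(D,C)$ and $\mathbb{K}(C,D)$ uniformly in $D$, and that the two one-sided conditions reduce to the single condition $p_J(c)\to c$. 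Both reductions are straightforward given the selfadjointness of the $p_J$ and their uniform bound.
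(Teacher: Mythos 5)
Your proof is correct in outline and arrives at the theorem by a partly different route than the paper. The implication \ref{q3roigjeiogrqegfewfqewf1}$\Rightarrow$\ref{q3roigjeiogrqegfewfqewf2} is handled in the paper by Proposition \ref{werjigowegwerfwrefwerf}, whose proof is exactly your computation: uniform boundedness of the projections $p_{J}$ reduces strict convergence to the generators $\theta_{c,d}$, where it follows from $p_{J}(c)\to c$. The implication \ref{q3roigjeiogrqegfewfqewf2}$\Rightarrow$\ref{q3roigjeiogrqegfewfqewf3} is Theorem \ref{wergijowergerfwrefwerf} in both treatments. Where you diverge is \ref{q3roigjeiogrqegfewfqewf2}$\Rightarrow$\ref{q3roigjeiogrqegfewfqewf1}: the paper does not invert the computation but instead invokes the uniqueness of AV-sums (Proposition \ref{weijgowgwerfwrefwr}) to produce a unitary in $\Hilb(A)=\bM\Hilb_{c}(A)$ comparing the given AV-sum with the classical sum, which is already known to be an AV-sum by the first implication. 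Your direct characterization (strict convergence $\Leftrightarrow$ pointwise convergence $p_{J}(c)\to c$ $\Leftrightarrow$ density of the span of the $e_{i}(C_{i})$ $\Leftrightarrow$ classical sum) is more self-contained and makes the equivalence transparent, at the cost of needing the converse estimate that strict convergence forces pointwise convergence. That step is the one place where "straightforward" is a little optimistic: for a fixed $c$ the norm $\|\theta_{x,c}\|$ does not control $\|x\|$ (take $x$ and $c$ with $\langle c,x\rangle$ small relative to $\|x\|\,\|c\|$), so testing only against $\theta_{c,c}$ is not enough. One clean fix is to test against $\theta_{c,a}\colon A\to C$ for $a$ in $A$, which gives $\|(p_{J}-\id_{C})\,c\,a^{*}\|\to 0$ for every fixed $a$, and then to let $a^{*}$ run through an approximate unit of $A$ and use the uniform bound on $(p_{J})_{J}$ to conclude $p_{J}(c)\to c$; alternatively one can factor $c=c'\langle c',c'\rangle^{1/2}$ and argue similarly. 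With that point supplied, your argument is complete; the paper's detour through Proposition \ref{weijgowgwerfwrefwr} buys exactly the avoidance of this converse estimate, at the price of relying on the square-summability machinery underlying the uniqueness statement.
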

The proof of the theorem will be  finished later in this section after the verification of  partial statements.

Let $(C_{i})_{i\in I}$ be a family in $\Hilb(A)$ and let   $(C,(e_{i})_{i\in I})$ represent    the classical orthogonal sum  of this family. 
The following assertion  was stated in \cite{antoun_voigt}.
 
\begin{prop}\label{werjigowegwerfwrefwerf}
The pair $(C,(e_{i})_{i\in I})$ is an AV-sum  in $\Hilb_{c}(A)$ of the family $(C_{i})_{i\in I}$. 
\end{prop}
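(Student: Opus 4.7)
The plan is to verify Definition \ref{peofjopwebgwregwgreg} directly, namely that the net of partial sums $p_J \coloneqq \sum_{i\in J} e_i e_i^*$, indexed by finite subsets $J$ of $I$, converges strictly in $\bM\Hilb_c(A)$ to the identity multiplier morphism of $C$. Under the identification $\bM\Hilb_c(A)\cong \Hilb(A)$ from Lemma~\ref{wetiojgowegfgrefweferf}, this identity multiplier corresponds to $\id_C$ in $\Hilb(A)$, so the task is to show $\lim_J f p_J = f$ in norm for every compact operator $f\colon C\to D$ in $\Hilb_c(A)$, and analogously $\lim_J p_J g = g$ for every $g\colon D\to C$ in $\Hilb_c(A)$.

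The first step is the pointwise statement: for every $c$ in $C$ we have $\lim_J p_J(c) = c$ in the norm of the Hilbert $A$-module $C$. For $c$ in the algebraic direct sum $C^{\alg}$ this holds trivially since $p_J(c)=c$ once $J$ contains the finite set of indices supporting $c$. For general $c$ in $C$ one approximates by elements of $C^{\alg}$ and uses that the projections $p_J$ are contractions, together with the formula \eqref{eq_norm_classical_Hilb_sum} to control the tail.

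The second step promotes this pointwise convergence to norm convergence when composed with a compact operator. Using that the finite-rank operators $\theta_{d,c}$ (see \eqref{ssrgfdsgsreg}) span a norm-dense subspace of $\Hom_{\Hilb_c(A)}(C,D)$, it suffices to check the right-strict convergence on such generators. For $\theta_{d,c}$ one computes $\theta_{d,c}\,p_J = \theta_{d,p_J(c)}$ and estimates
\begin{equation*}
\|\theta_{d,c} - \theta_{d,c} p_J\| = \|\theta_{d,c-p_J(c)}\| \le \|d\|\,\|c - p_J(c)\| \longrightarrow 0
\end{equation*}
by the first step. A standard $\varepsilon/3$-argument using $\|p_J\|\le 1$ then upgrades this to $\lim_J f p_J = f$ for every compact $f$. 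Left-strict convergence $\lim_J p_J g = g$ is handled by the same argument applied to $g^*$, invoking the involution.

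Combining these two steps shows that the partial sums converge strictly to $\id_C$, which is exactly Definition~\ref{peofjopwebgwregwgreg}. I expect no serious obstacle: the main subtle point is merely observing that the strict topology on the multiplier category of $\Hilb_c(A)$ translates, via Lemma~\ref{wetiojgowegfgrefweferf}, precisely to the $*$-strong-type convergence against compact operators on both sides, which is then controlled by density of finite-rank operators in $\Hilb_c(A)$.
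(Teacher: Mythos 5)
Your proposal is correct and follows essentially the same route as the paper's proof: reduce strict convergence of the partial sums $\sum_{i\in J}e_ie_i^*$ to the case of rank-one generators $\theta_{d,c}$ using their density in the compacts together with the uniform bound $\|\sum_{i\in J}e_ie_i^*\|\le 1$, then conclude from the pointwise norm convergence $\sum_{i\in J}e_ie_i^*c\to c$ in the Hilbert module $C$ via the estimate $\|\theta_{d,c}-\theta_{d,\sum_{i\in J}e_ie_i^*c}\|\le\|d\|\,\|c-\sum_{i\in J}e_ie_i^*c\|$. The only cosmetic difference is that the paper treats left multiplication on morphisms into $C$ explicitly and declares the other side analogous, whereas you do the right-strict side and invoke the involution.
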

\begin{proof}
According to Definition \ref{peofjopwebgwregwgreg} we must show that
$\sum_{i\in I}e_{i}e_{i}^{*}$ converges strictly to the identity multiplier morphism of $C$. Let $f:D\to C$ be any morphism in $\Hilb_{c}(A)$.  Then we must show that $\sum_{i\in I} e_{i}e_{i}^{*}f=f$, where the sum converges in norm. Similary, for any morphism $f':C\to D$  in $\Hilb_{c}(A)$  we must show that
$\sum_{i\in I}f'e_{i}e_{i}^{*}=f'$ in norm. 

We consider the first case. The second is analoguous. 
We first observe that for any finite subset $J$ if $I$ we have 
$$\|\sum_{i\in J} e_{i}e_{i}^{*}f\|\le  \|\sum_{i\in J} e_{i}e_{i}^{*}\|\|f\|=\|f\|\ ,$$ since 
$ \sum_{i\in J} e_{i}e_{i}^{*}$ is an orthogonal projection.
Since $f$ is compact it can be approximated in norm by linear combinations of finite-dimensional operators of the form $\theta_{c,d}:D\to C $. 
 Therefore it suffices to show  that
\begin{equation}\label{werferfrefwefefrefef}
\sum_{i\in I} e_{i}e_{i}^{*}\theta_{c,d}=\theta_{c,d}
\end{equation}  in  norm  for all $c$ in $C$ and $d$ in $D$. To this end  we use 
  the identity
$$\sum_{i\in J} e_{i}e_{i}^{*} \theta_{c,d}- \theta_{c,d}=\theta_{\sum_{i\in J}e_{i}e_{i}^{*}c-c,d}\ .$$
Using that   $\|\theta_{c,d}\| \le \|c\| \|d\| $ and that  
  $ \sum_{i\in I}e_{i}e_{i}^{*}c=c$ in norm we conclude  \eqref{werferfrefwefefrefef}.
\end{proof}

Let $(C_{i})_{i\in I}$ be a family in $\Hilb(A)$ and let   $(C,(e_{i})_{i\in I})$ represent    the classical orthogonal sum  of this family. 
Then combining Proposition \ref{werjigowegwerfwrefwerf} with 
Theorem \ref{wergijowergerfwrefwerf} we get the following result.
 \begin{kor}\label{wrthgrgwefwer} The pair 
$(C,(e_{i})_{i\in I})$ represents an orthogonal sum in the sense of  Definition \ref{erogwfsfdvbsbfdbsdfbsfdbv}  in $\bW\Hilb(A)$.
\end{kor}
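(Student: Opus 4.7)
The plan is to deduce this corollary as a direct formal consequence of the two results cited in its statement, together with the identification of multiplier and envelope categories from Section \ref{hilbert}. Specifically, I will specialize the general setup \eqref{qrefqewfdewdqwedwed} to $\bK := \Hilb_c(A)$; Lemma \ref{wetiojgowegfgrefweferf} then gives $\bM\Hilb_c(A) \cong \Hilb(A)$, and consequently $\bW\bM\Hilb_c(A) \cong \bW\Hilb(A)$, so the generic ambient $W^*$-envelope appearing in Theorem \ref{wergijowergerfwrefwerf} becomes precisely $\bW\Hilb(A)$ in this case.

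With this identification in place, I would argue as follows. First I note that the family $(C_i)_{i \in I}$, being a family of Hilbert $A$-modules, consists of objects of $\Hilb_c(A)$ (the distinction between $\Hilb(A)$ and $\Hilb_c(A)$ lies at the level of morphisms, not objects). The structure maps $e_i\colon C_i \to C$ are isometries in $\Hilb(A) = \bM\Hilb_c(A)$, i.e.\ they are mutually orthogonal multiplier isometries in the sense required by Definition \ref{peofjopwebgwregwgreg}. By Proposition \ref{werjigowegwerfwrefwerf}, the pair $(C,(e_i)_{i \in I})$ represents an AV-sum of $(C_i)_{i \in I}$ in $\Hilb_c(A)$.

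Then I apply Theorem \ref{wergijowergerfwrefwerf} with $\bK = \Hilb_c(A)$, $\bM\bK = \Hilb(A)$, and $\bC = \bW\Hilb(A)$. The theorem converts the AV-sum property established in the previous step into the statement that $(C,(e_i)_{i \in I})$ represents an orthogonal sum of $(C_i)_{i \in I}$ in $\bW\Hilb(A)$ in the sense of Definition \ref{erogwfsfdvbsbfdbsdfbsfdbv}, which is exactly the claim.

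There is no real obstacle to overcome here; the substantive work has already been done in Lemma \ref{wetiojgowegfgrefweferf}, Proposition \ref{werjigowegwerfwrefwerf} and Theorem \ref{wergijowergerfwrefwerf}. The only point one should double-check is the compatibility of the various inclusions $\Hilb_c(A) \subseteq \Hilb(A) \subseteq \bW\Hilb(A)$ with the abstract chain \eqref{qrefqewfdewdqwedwed}, so that the hypotheses of Theorem \ref{wergijowergerfwrefwerf} are literally fulfilled; this is immediate from the construction of $\bW\Hilb(A)$ as the $W^*$-envelope of $\Hilb(A) = \bM\Hilb_c(A)$ together with Proposition \ref{jigogewrgeferfref}.
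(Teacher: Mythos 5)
Your proposal is correct and coincides with the paper's own argument: the corollary is stated there as an immediate combination of Proposition \ref{werjigowegwerfwrefwerf} (the classical sum is an AV-sum in $\Hilb_{c}(A)$) with Theorem \ref{wergijowergerfwrefwerf}, using the identification $\bM\Hilb_{c}(A)\cong\Hilb(A)$ from Lemma \ref{wetiojgowegfgrefweferf} exactly as you describe. Your extra remarks on the compatibility of the chain \eqref{qerwffqwedwedewdewdqwd} with \eqref{qrefqewfdewdqwedwed} just make explicit what the paper leaves implicit.
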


In particular we see that in the  context of \eqref{qerwffqwedwedewdewdqwd}    we have the existence of AV-sums and orthogonal sums in the sense of Definition \ref{erogwfsfdvbsbfdbsdfbsfdbv} 
for every very small family of objects.

 \begin{proof} [Proof of Theorem \ref{ewrgowergwerfgrfwerf}]
By Lemma \ref{werjigowegwerfwrefwerf} we know that  Assertion
 \ref{q3roigjeiogrqegfewfqewf1} implies Assertion \ref{q3roigjeiogrqegfewfqewf2}.
 By Theorem \ref{wergijowergerfwrefwerf}  we know that 
 Assertion
 \ref{q3roigjeiogrqegfewfqewf2} implies Assertion \ref{q3roigjeiogrqegfewfqewf3}.
 We finally show that Assertion \ref{q3roigjeiogrqegfewfqewf2} implies 
 Assertion \ref{q3roigjeiogrqegfewfqewf1}.

 We assume that  $(C,(e_{i})_{i\in I})$ represents the  AV-sum of the family $(C_{i})_{i\in I}$  in $\Hilb_{c}(A)$.  
  Let $(C',(e'_{i})_{i\in I})$ represent the classical sum of the family $(C_{i})_{i\in I}$.  Since  
 $(C',(e'_{i})_{i\in I})$ is also  an AV-sum of the family $(C_{i})_{i\in I}$
 by the implication  \ref{q3roigjeiogrqegfewfqewf1}$\Rightarrow$\ref{q3roigjeiogrqegfewfqewf2}, by  the uniqueness of AV-sums asserted in  Proposition \ref{weijgowgwerfwrefwr}
 there exists a   unique unitary  morphism $u  :C\to C'$ in $\Hilb(A) $ with
 $e_{i}^{\prime, *}u=e^{*}_{i}$.   Hence  $(C,(e_{i})_{i\in I})$ also  represents the classical sum of the family $(C_{i})_{i\in I}$.
 %
%
\end{proof}

 Let $(C_{i})_{i\in I}$ be  a family of objects in $\Hilb(\C)$ and let
$(C,(e_{i})_{i\in I})$ represent the classical sum.
The following proposition is not a special case of Corollary \ref{wrthgrgwefwer} for $A=\C$ since the inclusion $\Hilb(\C)\to \bW\Hilb(\C)$  is not an isomorphism {(see Lemma~\ref{lem_kn2rlwef} below).}
\begin{prop} \label{rthkprghtregwergwerg}The pair $(C,(e_{i})_{i\in I})$ represents an orthogonal sum  in $\Hilb(\C)$ in   the sense of  Definition \ref{erogwfsfdvbsbfdbsdfbsfdbv}. 
\end{prop}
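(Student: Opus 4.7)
The plan is to invoke Proposition \ref{qr3gijoerwgergwefwerferwf} with $\bC := \Hilb(\C)$, exploiting that $\Hilb(\C)$ is a $W^{*}$-category. Specifically, I aim to verify condition \ref{qr3gijoerwgergwefwerferwf}.\ref{ejrgiowegewrgergrefwefe4}, namely that $\sum_{i \in I} e_{i} e_{i}^{*} = 1_{C}$ in the weak operator topology on $\End_{\Hilb(\C)}(C)$, and then read off the sum property from the implication \ref{qr3gijoerwgergwefwerferwf}.\ref{ejrgiowegewrgergrefwefe4} $\Rightarrow$ \ref{qr3gijoerwgergwefwerferwf}.\ref{ejrgiowegewrgergrefwefe2}.

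First I would compare the weak operator topology in the sense of Definition \ref{weiojgowegerfwrf} with the classical weak operator topology on $B(C)$. The inclusion $\Hilb(\C) \hookrightarrow \Hilb(\C)^{\la}$ is a unital, faithful, normal representation, so the functionals $\langle x', -\,x \rangle$ for $x, x' \in C$ occur among those generating the topology of Definition \ref{weiojgowegerfwrf}. Consequently, convergence of a bounded net in $\End_{\Hilb(\C)}(C)$ in the classical weak operator topology implies convergence in the sense of Definition \ref{weiojgowegerfwrf}, so it suffices to verify the former.

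Next I would verify the weak-operator convergence directly from the Construction \ref{ejirgowergrefweerf} of the classical sum. Each $e_{i} e_{i}^{*}$ is the orthogonal projection of $C$ onto the closed subspace $e_{i}(C_{i})$; by mutual orthogonality, for every finite $J \subseteq I$ the partial sum $p_{J} := \sum_{i \in J} e_{i} e_{i}^{*}$ is the orthogonal projection onto $\bigoplus_{i \in J} e_{i}(C_{i})$. Since by construction $C$ is the closure of the algebraic direct sum $\bigoplus_{i \in I}^{\alg} e_{i}(C_{i}) = \bigcup_{J} p_{J}(C)$, the uniformly bounded net $(p_{J})_{J}$ converges to $1_{C}$ in the strong, and a fortiori in the weak, operator topology.

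The main point is thus the elementary Hilbert-space fact that finite orthogonal partial sums of projections onto mutually orthogonal subspaces with dense total span converge strongly to the identity; the rest is bookkeeping via Proposition \ref{qr3gijoerwgergwefwerferwf}. I do not anticipate any serious obstacle, the only subtle point being the compatibility of the abstract weak operator topology of Definition \ref{weiojgowegerfwrf} with the classical one, which is handled by using the identity representation as a single normal faithful representation.
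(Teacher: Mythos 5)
Your route is essentially the paper's: the paper also observes that $\Hilb(\C)$ is a $W^{*}$-category, that $\sum_{i\in I}e_{i}e_{i}^{*}=1_{C}$ weakly, and then applies Proposition \ref{qr3gijoerwgergwefwerferwf} (via condition \ref{qr3gijoerwgergwefwerferwf}.\ref{ejrgiowegewrgergrefwefe1} for the identity representation, rather than condition \ref{qr3gijoerwgergwefwerferwf}.\ref{ejrgiowegewrgergrefwefe4} as you do). Your strong-convergence argument for the partial sums $p_{J}$ is correct and fills in the detail the paper leaves implicit.

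One step is argued backwards, however. The observation that the classical matrix-coefficient functionals $\langle x',(-)x\rangle$ occur among the functionals generating the topology of Definition \ref{weiojgowegerfwrf} shows that the abstract weak operator topology is \emph{finer} than the classical one; it therefore does \emph{not} follow ``consequently'' that classical weak-operator convergence implies convergence in the sense of Definition \ref{weiojgowegerfwrf}. The implication you need is nonetheless true for bounded nets, because every functional $\langle x',\sigma(-)x\rangle$ with $\sigma$ normal is $\sigma$-weakly continuous and the $\sigma$-weak and classical weak operator topologies agree on bounded subsets of $B(H)$ --- but this requires saying, and is not a formal consequence of your containment of generating functionals. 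Alternatively you can sidestep the comparison entirely by verifying condition \ref{qr3gijoerwgergwefwerferwf}.\ref{ejrgiowegewrgergrefwefe1} for the single representation $\sigma=\id$ (checking $\Hilb(\C)=\Hilb(\C)''_{\id}$), which is exactly what the paper does.
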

\begin{proof}
The category $\Hilb(\C)$ is a $W^{*}$-category (Example \ref{ex_HilbC_Wstar}). 
We have $\sum_{i\in I} e_{i}e_{i}^{*}=1_{C}$ in the weak topology.
Applying Proposition \ref{qr3gijoerwgergwefwerferwf} to the identity representation we  conclude that
$(C,(e_{i})_{i\in I})$ is a sum of the  family $(C_{i})_{i\in I}$
 in the sense of  Definition \ref{erogwfsfdvbsbfdbsdfbsfdbv}.
\end{proof}

\begin{ex}\label{qefguifggsdfgsdfgsfdg}
We show now by an example that Assertion \ref{q3roigjeiogrqegfewfqewf3} does not imply 
 Assertion \ref{q3roigjeiogrqegfewfqewf1} in general.

Assume that  
 $(C,(e_{i})_{i\in I})$ represents the orthogonal  sum of the family $(C_{i})_{i\in I}$  in $\bW\Hilb(A)$.  
  Let $(C',(e'_{i})_{i\in I})$ represent the classical sum of the family $(C_{i})_{i\in I}$.  Since  
 $(C',(e'_{i})_{i\in I})$ is also   a sum of the family $(C_{i})_{i\in I}$, 
 by the implication  \ref{q3roigjeiogrqegfewfqewf1}$\Rightarrow$\ref{q3roigjeiogrqegfewfqewf3} and the uniqueness of  sums stated in Lemma \ref{rgiojqeiovevqeve9}
 there exists a  unique  unitary  morphism $u  :C'\to C$ in $\bW\Hilb(A) $ such that 
 $ ue_{i}'=e_{i}$. The problem is that $u$ does not necessarily belong to $\Hilb(A)$.

Here is a concrete example where this happens. 
We consider the algebra $A:=B(\ell^{2})$ of bounded operators on the separable standard Hilbert space.  For $i$ in $\nat$ we let $p_{i}$ be the projection onto the one-dimensional subspace of $\ell^{2}$ generated by the $i$'th basis vector.

 We consider $C:=B(\ell^{2})$ as an object of $\Hilb(A)$.
We  consider the  submodules $C_{i}:=p_{i}C$ in $\Hilb(A)$ of $C$
and let $e_{i}:C_{i}\to C$ be the canonical inclusions. The adjoint of $e_{i}$ is given by left-multiplication by $p_{i}$.  One can check that the classical sum
of the family $(C_{i})_{i\in I}$ is represented by the pair  $(C', (e'_{i})_{i\in I})$, where $C'$ is the algebra of compact operators on $\ell^{2}$, and
$e_{i}':C_{i}\to C'$ is given by $e_{i}$ which happens to take values in compact operators.   We then have a unique unitary isomorphism $C'\to C$ in $\bW\Hilb(A)$ such that $ue_{i}'=e_{i}$. But this unitary does not belong to $\Hilb(A)$ since otherwise it must be the inclusion $K(\ell^{2})\to B(\ell^{2})$ which does not have an adjoint. 
Alternatively, if $u$ would belong to $\Hilb(A)$, then $(C,(e_{i})_{i\in I})$  also   represents an AV-sum of the family $(C_{i})_{i\in I}$.  But note that $C$ is a unital object of $\Hilb(A)$. This contradicts the observation made in Remark \ref{rem_sums_unital_nonunital}.

We consider the family $(e_{i}^{*})_{i\in I}$ of  morphisms $e_{i}^{*}\colon C\to C_{i}$ in $\Hilb(A)$. This family is square summable and  the sum 
$\sum_{i\in \nat} e_{i}'e_{i}^{*}$ converges right-strictly as shown in Remark \ref{wijb9eovdfvsdfvsdvsdfv}.    But it does not converge
left-strictly.  For, if it converged, then it would determine a unitary isomorphism 
between $C$ and $C'$ in $\Hilb(A)$ which does not exist.
\hB
\end{ex}

\begin{lem}\label{lem_kn2rlwef}
The inclusion $\Hilb(\C)\to \bW\Hilb(\C)$  is not an isomorphism.
\end{lem}
\begin{proof}
Let $H$ be an $\infty$-dimensional Hilbert space and consider the $C^*$-algebra $B(H)$ as a $C^*$-category with a single object. We have a fully faithful functor $\phi\colon B(H) \to \Hilb(\C)$ by mapping the unique object of $B(H)$ to the object $H$ of $\Hilb(\C)$. By Proposition~\ref{rjigowergwregregwfer} the functor $\bW\phi$ is also fully faithful. We consider the diagram
\[
\xymatrix{
B(H) \ar[r]^-{\phi} \ar[d] & \Hilb(\C) \ar[d] \\
\bW B(H) \ar[r]^-{\bW\phi} & \bW\Hilb(\C)
}
\]
Since $\phi$ and $\bW\phi$ are fully faithful, $\Hilb(\C)\to \bW\Hilb(\C)$ being an isomorphism would imply that $B(H) \to \bW B(H)$ is also an isomorphism. Now for a $C^*$-algebra the enveloping von Neumann algebra can be computed as its   double dual \cite[Sec.~III.5.2]{blackadar_operator_algebras} and hence $\bW B(H) \cong B(H)^{\ast\ast}$. But $B(H)$ is not isomorphic to its double dual $B(H)^{\ast\ast}$.
\end{proof}

\section{Isometric embeddings of \texorpdfstring{$\bm{C^*}$}{Cstar}-categories and orthogonal sums}
\label{sec_sums_many_examples}
By Corollary \ref{ewtkohijorthrhdrhrhrdth} the  notion of an orthogonal sum according to Definition \ref{erogwfsfdvbsbfdbsdfbsfdbv} is well adapted to normal morphisms between $W^{*}$-categories.
In this section we discuss  the interaction of the notion of an orthogonal sum with morphisms of $C^{*}$-categories further. The main result is Theorem \ref{weigjiowetgerwerer}. 
\begin{kor}[{\cite[Cor.~5.2]{fritz}}]\label{ewtkohijorthrhdrhrhrdth}
Every  morphism in $\Wcat$ preserves arbitrary orthogonal sums.
\end{kor}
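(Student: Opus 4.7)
The plan is to reduce the statement to the $\sigma$-weak convergence criterion from Proposition \ref{qr3gijoerwgergwefwerferwf} and then exploit normality of morphisms in $\Wcat$. Concretely, let $\phi\colon \bC\to \bD$ be a normal morphism between $W^{*}$-categories, let $(C_{i})_{i\in I}$ be a family of objects of $\bC$, and assume that $(C,(e_{i})_{i\in I})$ represents an orthogonal sum of this family in $\bC$ in the sense of Definition \ref{erogwfsfdvbsbfdbsdfbsfdbv}. I will show that $(\phi(C),(\phi(e_{i}))_{i\in I})$ represents an orthogonal sum of $(\phi(C_{i}))_{i\in I}$ in $\bD$.

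First I would observe that since $\phi$ respects composition, involution and units, the family $(\phi(e_{i}))_{i\in I}$ is again a mutually orthogonal family of isometries in $\bD$. Next, by the implication \ref{ejrgiowegewrgergrefwefe2}$\Rightarrow$\ref{ejrgiowegewrgergrefwefe3} of Proposition \ref{qr3gijoerwgergwefwerferwf} applied in $\bC$, the net $(\sum_{i\in J}e_{i}e_{i}^{*})_{J}$ indexed by finite subsets $J$ of $I$ converges $\sigma$-weakly to $1_{C}$ in $\End_{\bC}(C)$.

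The key step is to transport this convergence through $\phi$. Since $\phi$ is a morphism in $\Wcat$, the induced map of endomorphism $W^{*}$-algebras $\phi\colon \End_{\bC}(C)\to \End_{\bD}(\phi(C))$ is normal, hence $\sigma$-weakly continuous. Applying $\phi$ to the convergent net above and using that $\phi$ is unital yields that $(\sum_{i\in J}\phi(e_{i})\phi(e_{i})^{*})_{J}$ converges $\sigma$-weakly to $\phi(1_{C})=1_{\phi(C)}$ in $\End_{\bD}(\phi(C))$. Invoking the reverse implication \ref{ejrgiowegewrgergrefwefe3}$\Rightarrow$\ref{ejrgiowegewrgergrefwefe2} of Proposition \ref{qr3gijoerwgergwefwerferwf} in $\bD$ now shows that $(\phi(C),(\phi(e_{i}))_{i\in I})$ represents the orthogonal sum of $(\phi(C_{i}))_{i\in I}$ in $\bD$.

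There is essentially no obstacle beyond carefully citing Proposition \ref{qr3gijoerwgergwefwerferwf} in both directions; the whole content is packaged there. The only point requiring a touch of care is that normality of $\phi$ is stated objectwise (as $\sigma$-weak continuity on endomorphism algebras), but since the convergence $\sum_{i}e_{i}e_{i}^{*}=1_{C}$ lives in $\End_{\bC}(C)$ and the image $\sum_{i}\phi(e_{i})\phi(e_{i})^{*}=1_{\phi(C)}$ lives in $\End_{\bD}(\phi(C))$, this objectwise normality is exactly what is needed.
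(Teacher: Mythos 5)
Your proof is correct and follows essentially the same route as the paper: the paper's proof also passes through Proposition \ref{qr3gijoerwgergwefwerferwf} to translate the sum property into $\sigma$-weak convergence of $\sum_{i}e_{i}e_{i}^{*}$ to the identity, transports this through $\phi$ using normality ($\sigma$-weak continuity), and then applies the reverse implication in $\bD$. The only cosmetic difference is which of the equivalent items \ref{ejrgiowegewrgergrefwefe3} or \ref{ejrgiowegewrgergrefwefe4} of that proposition is cited, which is immaterial.
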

\begin{proof}
Let $\phi \colon \bC\to \bD$ be a morphism in $\Wcat$. We consider a  family of objects $(C_{i})_{i\in I}$  in $\bC$  and assume that $(C,(e_{i})_{i\in I})$ represents the orthogonal  sum of this family. 
 By the conclusion \ref{ejrgiowegewrgergrefwefe2}$\Rightarrow$\ref{ejrgiowegewrgergrefwefe3} in Proposition \ref{qr3gijoerwgergwefwerferwf} we know that 
$\sum_{i\in I}e_{i}e_{i}^{*}$ converges $\sigma$-weakly to $\id_{C}$. 
Since $\phi$ is normal and hence $\sigma$-weakly continuous we see that $ \sum_{i\in I} \phi(e_{i})\phi(e_{i})^{*}$ converges $\sigma$-weakly to $\id_{\phi(C)}$. 
Applying 
 Proposition \ref{qr3gijoerwgergwefwerferwf}(\ref{ejrgiowegewrgergrefwefe3}$\Rightarrow$\ref{ejrgiowegewrgergrefwefe2}) we finally
  deduce that  $(\phi(C),(\phi(e_{i}))_{i\in I})$ represents the orthogonal sum of the family 
$(\phi(C_{i})_{i\in I})$ in $\bD$.\end{proof}

 We now turn back to functors between $C^{*}$-categories. 
  The property of being an orthogonal sum of a given family of objects in general depends  {on} the surrounding category. But our 
main result is the following.
\begin{theorem} \label{weigjiowetgerwerer}A fully faithful inclusion in $\Ccat$ detects and preserves orthogonal sums.
\end{theorem}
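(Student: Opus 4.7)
The plan is to verify preservation and detection as separate statements. Detection is a direct consequence of Proposition~\ref{lem_sum_characterization_morphisms} combined with full faithfulness, while preservation will require the $W^{*}$-envelope machinery and Proposition~\ref{geoirgjiskldfnmgsre}.

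For detection, I would apply the characterization in Proposition~\ref{lem_sum_characterization_morphisms}.\ref{item_sum_characterization_1}. Assume $(\phi(C),(\phi(e_{i}))_{i\in I})$ is an orthogonal sum in $\bD$. Given a square summable family $(h_{i})_{i\in I}$ with $h_{i}\colon D\to C_{i}$ in $\bC$, the family $(\phi(h_{i}))_{i\in I}$ is square summable in $\bD$ since $\phi$ is isometric on morphism spaces. The sum property in $\bD$ supplies a unique morphism $\tilde h\colon \phi(D)\to \phi(C)$ in $\bD$ with $\phi(e_{i})^{*}\tilde h=\phi(h_{i})$ for every $i$. Full faithfulness of $\phi$ then yields $\tilde h=\phi(h)$ for a unique $h\colon D\to C$ in $\bC$, and the relations $e_{i}^{*}h=h_{i}$ follow by faithfulness; mutual orthogonality and isometry of $(e_{i})$ transfer through the faithful inclusion as well. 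Uniqueness of $h$ in $\bC$ follows from uniqueness of $\tilde h$ in $\bD$.

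For preservation, I would pass to the $W^{*}$-envelopes. By Proposition~\ref{rjigowergwregregwfer} the induced functor $\bW\phi\colon \bW\bC\to\bW\bD$ is fully faithful, and it is normal as a morphism in $\Wcat$. The argument proceeds in three steps. First, upgrade the sum in $\bC$ to a sum in $\bW\bC$: by Proposition~\ref{qr3gijoerwgergwefwerferwf} it suffices to establish the weak convergence $\sum_{i\in I}e_{i}e_{i}^{*}=1_{C}$ in $\End_{\bW\bC}(C)$. The bounded increasing net $\sum_{i\in J}e_{i}e_{i}^{*}$ converges weakly to a projection $p\le 1_{C}$, and the task is to rule out the complementary projection $q:=1_{C}-p$. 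The sum property in $\bC$ forces every $a$ in $\End_{\bC}(C)$ to be the unique morphism lifting the square summable family $(e_{i}^{*}a)_{i\in I}$; combining this with the $\sigma$-weak density of $\bC$ in $\bW\bC$ and the non-degeneracy of normal representations of $\bW\bC$ (Lemma~\ref{wtrogkprgerwgwegw}) should force $q=0$. Second, by Corollary~\ref{ewtkohijorthrhdrhrhrdth} the normal morphism $\bW\phi$ preserves this sum, so that $(\phi(C),(\phi(e_{i}))_{i\in I})$ is an orthogonal sum in $\bW\bD$. Third, descend from $\bW\bD$ to $\bD$ via Proposition~\ref{geoirgjiskldfnmgsre}: verify its Condition~\ref{fpogjskfpdgsfgsfdgsfdg1}, namely that for every object $D'$ in $\bD$ and every $f$ in $\Hom_{\bW\bD}(D',\phi(C))$ with $\phi(e_{i})^{*}f$ in $\Hom_{\bD}(D',\phi(C_{i}))$ for all $i$, the morphism $f$ lies in $\Hom_{\bD}(D',\phi(C))$. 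The verification combines the full faithfulness of $\bW\phi$ with the analogous condition for $(C,(e_{i}))$ inside $\bC\subseteq\bW\bC$, which is itself a consequence of the sum property in $\bC$.

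The main obstacle will be the first and third steps of the preservation argument, both of which involve delicate interaction between the universal property of the sum, the $\sigma$-weak topology, and non-degeneracy of normal representations. In particular, the descent condition in step three must be checked for test objects $D'$ in $\bD$ which need not lie in the essential image of $\phi$, and here the use of fully faithful $\bW\phi$ to relate $\bW\bD$-data to $\bW\bC$-data is the key point that I expect will require the most care.
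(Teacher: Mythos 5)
Your detection argument is correct and is essentially the paper's own: Corollary \ref{wetoighjowtgwergwregwergwerg} deduces detection from the morphism-level characterization of Proposition \ref{lem_sum_characterization_morphisms}, exactly as you do.

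The preservation half, however, breaks at its first step. You claim that an orthogonal sum $(C,(e_{i})_{i\in I})$ in $\bC$ remains an orthogonal sum in $\bW\bC$, i.e.\ that $\sum_{i\in I}e_{i}e_{i}^{*}$ converges $\sigma$-weakly to $1_{C}$ in $\End_{\bW\bC}(C)$, hoping that the universal property of the sum in $\bC$, $\sigma$-weak density, and Lemma \ref{wtrogkprgerwgwegw} force the defect projection $q$ to vanish. This is false, and the paper's own Example \ref{example_basic_orthogonal_sum_diagonal} is a counterexample: $(X,(e_{x})_{x\in X})$ is an orthogonal sum in $\bX$, but $\End_{\bX}(X)=\ell^{\infty}(X)$ carries a state $\nu$ vanishing on $c_{0}(X)$ and hence on every $e_{x}e_{x}^{*}$. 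Its GNS representation extends to a normal representation of $\bW\bX$ by the universal property in the proof of Theorem \ref{rijogegergwerg9}, and this normal representation annihilates every $e_{x}e_{x}^{*}$ while being nonzero on $1_{X}$; so $\sum_{x}e_{x}e_{x}^{*}\ne 1_{X}$ $\sigma$-weakly and, by Proposition \ref{qr3gijoerwgergwefwerferwf}, $(X,(e_{x})_{x\in X})$ is \emph{not} a sum in $\bW\bX$. The intrinsic Definition \ref{erogwfsfdvbsbfdbsdfbsfdbv} does not see the additional normal states of the envelope — this is precisely why Proposition \ref{geoirgjiskldfnmgsre} is stated only as a descent criterion from $\bW\bC$ and not as an equivalence. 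Since your step one already fails for $\phi=\id_{\bX}$, the chain sum in $\bC$ $\Rightarrow$ sum in $\bW\bC$ $\Rightarrow$ sum in $\bW\bD$ $\Rightarrow$ sum in $\bD$ cannot be completed.

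The paper's preservation argument avoids the $W^{*}$-envelope entirely and is also more modest: it assumes the family admits orthogonal sums $(D,(e_{i}'))$ in $\bD$ and $(C,(e_{i}))$ in $\bC$, uses Proposition \ref{eoitgjeogergwergwegwr} to produce the unique isometry $h\colon C\to D$ in $\bC$ with $he_{i}=e_{i}'$, and then shows in Proposition \ref{kor_compare_orthogonal_sums} that $hh^{*}=\id_{D}$ by testing the matrix entries $e_{i}^{\prime,*}hh^{*}e_{j}'=e_{i}^{\prime,*}e_{j}'$ against Corollary \ref{kor_morphisms_sum_characterization}. Full faithfulness enters exactly where your approach needs it least: one must know $\End_{\bD}(D)=\End_{\bC}(D)$ so that $hh^{*}$ is a morphism of $\bD$ before the uniqueness statement for the sum in $\bD$ can be applied to it. If you want to salvage your write-up, replace the $W^{*}$-envelope detour by this direct comparison of the two sums.
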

The proof of this theorem will be deduced from a collection of  more technical results below, some of  which also deal with   non-full subcategories.

 Assume that $\bD$ is a full subcategory   of $\bC$ in $\Ccat$,  that 
$(C_{i})_{i\in I}$ is a family of objects in $\bD$, and that $(C,(e_{i})_{i\in I})$ is an object of $\bD$ together with a mutually orthogonal  family of isometries $e_{i} \colon C_{i}\to C$. 

\begin{kor}\label{wetoighjowtgwergwregwergwerg}
If $(C,(e_{i})_{i\in I})$ represents an orthogonal sum of the family $(C_{i})_{i\in I}$ in $\bC$, then it also  represents an orthogonal  sum of this family  in $\bD$. 
\end{kor}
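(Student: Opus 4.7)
The plan is to reduce the assertion to the intrinsic characterization of orthogonal sums given by Proposition \ref{lem_sum_characterization_morphisms}. That proposition says that $(C,(e_{i})_{i\in I})$ is an orthogonal sum of $(C_{i})_{i\in I}$ if and only if for every object $D$ and every square summable family $(h_{i})_{i\in I}$ of morphisms $h_{i}\colon D\to C_{i}$ there exists a unique morphism $h\colon D\to C$ with $e_{i}^{*}h=h_{i}$ for every $i\in I$ (equivalently the dual condition for families with source $C_{i}$). The key observation is that this criterion only references morphism spaces $\Hom(D,C_{i})$ and $\Hom(D,C)$ between objects already present in $\bD$, together with the notion of square summability which depends only on norms of compositions inside these spaces.

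First I would invoke fullness to identify the relevant morphism spaces. Since $\bD\subseteq\bC$ is full and $D$, $C$, and all $C_{i}$ are objects of $\bD$, we have the equalities $\Hom_{\bD}(D,C_{i})=\Hom_{\bC}(D,C_{i})$ and $\Hom_{\bD}(D,C)=\Hom_{\bC}(D,C)$ as normed $\C$-vector spaces. Consequently, a family $(h_{i})_{i\in I}$ of morphisms $h_{i}\colon D\to C_{i}$ in $\bD$ is square summable in the sense of Definition \ref{oijfqoifweeqfewfqefqfewf} if and only if it is square summable in $\bC$, since the defining supremum $\sup_{J\subseteq I}\|\sum_{i\in J}h_{i}^{*}h_{i}\|$ is computed in $\End_{\bD}(D)=\End_{\bC}(D)$.

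Next I would transport the universal property from $\bC$ to $\bD$. Assume $(C,(e_{i})_{i\in I})$ represents the orthogonal sum in $\bC$ and let $(h_{i})_{i\in I}$ be a square summable family in $\bD$ with $h_{i}\colon D\to C_{i}$. By the step above it is also square summable in $\bC$, so Proposition \ref{lem_sum_characterization_morphisms} applied in $\bC$ yields a unique $h\colon D\to C$ in $\bC$ with $e_{i}^{*}h=h_{i}$ for every $i\in I$. Fullness of $\bD\subseteq\bC$ shows that $h$ lies in $\Hom_{\bD}(D,C)$, and its uniqueness in $\bC$ implies uniqueness in $\bD$. This verifies condition (\ref{item_sum_characterization_1}) of Proposition \ref{lem_sum_characterization_morphisms} for the pair $(C,(e_{i})_{i\in I})$ inside $\bD$, so $(C,(e_{i})_{i\in I})$ represents the orthogonal sum of $(C_{i})_{i\in I}$ in $\bD$.

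There is no real obstacle here; the argument is essentially a bookkeeping observation, and the main point is simply to notice that both the hypothesis (square summability) and the conclusion (existence and uniqueness of the factoring morphism) are formulated in terms of morphism spaces between objects that already belong to $\bD$, so fullness of the inclusion is exactly what is needed to move the statement between the two categories.
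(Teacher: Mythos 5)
Your proof is correct and follows exactly the route the paper takes: the paper's own proof is the one-line observation that the criterion of Proposition \ref{lem_sum_characterization_morphisms} is formulated entirely in the language of $\bD$, and your argument simply spells out the details of that observation (identifying morphism spaces and the notion of square summability via fullness, then transporting existence and uniqueness of the factoring morphism). Nothing is missing.
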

\begin{proof}
This immediately follows from the characterization of orthogonal sums given in the {Proposition \ref{lem_sum_characterization_morphisms}}
which only involves conditions formulated in the language of $\bD$.
%
\end{proof}


 

 Let  {$\bC$} be in $\Ccat$ and assume that $\bD$ is a {closed} unital sub-$C^*$-category of $\bC$. Let $(C_{i})_{i\in I}$ be a family of objects of $\bD$ and assume that it admits an orthogonal sum $(D, (e^\prime_{i})_{i\in I})$ in $\bD$ and an orthogonal sum $(C, (e_{i})_{i\in I})$ in $\bC$.
 
 \begin{prop}\label{eoitgjeogergwergwegwr}\mbox{}
 \begin{enumerate}
 \item\label{lem_sums_vor_zurueck}
 There exists a unique isometry $h\colon C\to D$ in $\bC$  such that $he_{i}=e'_{i}$   for all $i$ in $I$.
\item \label{qriogqowfgqewfqewfqewfq} For any object $E$ of $\bD$ the maps
\begin{equation}\label{wefwefefewffef}
\Hom_{\bD}(D,E)\to \Hom_{\bC}(C,E)\, , \quad f\mapsto fh 
\end{equation}
and
\begin{equation}\label{wefwefefewffef1}
 \Hom_{\bD}(E,D)\to \Hom_{\bC}(E,C) \, , \quad f\mapsto h^{*}f
\end{equation}
are isometric inclusions.
\item The map
\begin{equation}\label{wefwefefewffef2}
 \End_{\bD}(D)\to \End_{\bC}(C) \, , \quad  f\mapsto h^{*}fh
\end{equation}
identifies the $C^{*}$-algebra $\End_{\bD}(D)$
with a corner of $ \End_{\bC}(C)$.
\end{enumerate}
\end{prop}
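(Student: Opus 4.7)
The plan is to proceed in three stages, one for each part.

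For part (1), I would first observe that the family $(e_i')_{i \in I}$ of morphisms $C_i \to D$, viewed in $\bC$, is square summable by Lemma \ref{qoirgujoewgwergwergw} since it is a uniformly bounded mutually orthogonal family of isometries. Applying Corollary \ref{qergijoqregqewfewfqewfwef1}.\ref{qerjgqkrfewfewfqfefqef111} to the sum $(C,(e_i)_{i\in I})$ in $\bC$, I get a unique morphism $h\colon C \to D$ satisfying $he_i = e_i'$ for all $i$. To see that $h$ is an isometry, I would compute $h^* e_i'$ via its compositions with the $e_j^*$: from $e_j^* h^* e_i' = (he_j)^* e_i' = e_j^{\prime,*}e_i' = \delta_{ij}\id_{C_i} = e_j^* e_i$ and Corollary \ref{kor_morphisms_sum_characterization}.\ref{item_morphisms_sum_characterization_one}, I deduce $h^* e_i' = e_i$, hence $h^*h e_i = e_i = \id_C e_i$ for all $i$. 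Corollary \ref{kor_morphisms_sum_characterization}.\ref{item_morphisms_sum_characterization_one1} then yields $h^*h = \id_C$.

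For part (2), the argument is a direct comparison of the norm formulas provided by Corollary \ref{qergijoqregqewfewfqewfwef1}. Given $f \in \Hom_{\bD}(D,E)$, I set $f_i \coloneqq fe_i' \in \Hom_{\bD}(C_i,E)$. Since $(D,(e_i')_{i\in I})$ is the sum in $\bD$, the formula in Corollary \ref{qergijoqregqewfewfqewfwef1}.\ref{qerjgqkrfewfewfqfefqef11111} applied in $\bD$ gives $\|f\|^2 = \sup_{J} \|\sum_{i\in J} f_i f_i^*\|$, the supremum running over finite subsets of $I$. On the other hand, $fh \in \Hom_\bC(C,E)$ satisfies $(fh)e_i = f_i$, and applying the same corollary in $\bC$ to the sum $(C,(e_i)_{i\in I})$ yields the same formula for $\|fh\|^2$, since the norm of each partial sum $\sum_{i\in J}f_if_i^* \in \End(E)$ is intrinsic and unchanged by passing from $\bD$ to $\bC$. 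Hence $f\mapsto fh$ is isometric, and injectivity is immediate; the analogous statement for $f \mapsto h^{*}f$ follows by applying the involution.

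For part (3), the map $\phi \colon \End_\bD(D) \to \End_\bC(C)$, $f\mapsto h^*fh$, is manifestly $\mathbb{C}$-linear and preserves the involution. I would prove isometry by applying part (2) twice: the two successive isometric inclusions $f \mapsto fh$ and $g \mapsto h^* g$ combine with the fact that $h$ is an isometry to give $\|\phi(f)\| = \|f\|$. For multiplicativity I must verify $h^* f(\id_D - hh^*)g h = 0$ for every $f,g \in \End_{\bD}(D)$, and for the corner statement I would identify the projection $p \in \End_\bC(C)$ whose corner equals the image of $\phi$. The main obstacle is precisely this verification: since $h$ is only an isometry in $\bC$ (not a unitary, unless $\bD$ happens to be full, cf.\ Theorem \ref{weigjiowetgerwerer}), the complementary projection $q \coloneqq \id_D - hh^* \in \End_\bC(D)$ is generally nontrivial. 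The computations $e_k^{\prime,*}hh^* = e_k^{\prime,*}$ and $hh^*e_k' = e_k'$ show that on each pair $(e_k',e_\ell^{\prime,*})$ the projection $hh^*$ acts trivially; but to conclude $hh^* f e_k' = fe_k'$ in $\bC$ for $f \in \End_\bD(D)$, one needs to promote this pairwise triviality to an equality in $\bC$, exploiting the specific interaction between the sum structure of $(D,(e_i')_{i\in I})$ in $\bD$ and the ambient category $\bC$. I expect this to be the technical heart of the proof of (3), and likely to rely on Cor.~\ref{kor_morphisms_sum_characterization} combined with a careful density argument or with Proposition \ref{geoirgjiskldfnmgsre} that relates sums in $\bC$ to sums in $\bW\bC$.
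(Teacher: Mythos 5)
Your treatment of parts (1) and (2) is correct. Part (1) is essentially the paper's argument: the paper passes directly to the matrix elements $e_j^{*}h^{*}he_i=(e_j')^{*}e_i'=e_j^{*}e_i$ and invokes Corollary \ref{kor_morphisms_sum_characterization}.\ref{item_morphisms_sum_characterization_both}, while your two-step version via $h^{*}e_i'=e_i$ is equivalent. For part (2) you take a genuinely different and shorter route. The paper proves that \eqref{wefwefefewffef} is isometric by a diagram chase through the left-multiplier functors $\LM_{\bD}(D,E)$, $\LM_{\bC}(C,E)$ and $\LM_{\bC}|_{\bD}(D,E)$, using the isomorphism $\ell(h)$ between $\mathbb{K}_{\bC}(-,C)$ and $\mathbb{K}_{\bC}(-,D)$; you instead compare the two norm formulas of Corollary \ref{qergijoqregqewfewfqewfwef1}, namely $\|f\|^{2}=\sup_{J}\|\sum_{i\in J}(fe_i')(fe_i')^{*}\|$ for the sum in $\bD$ and $\|fh\|^{2}=\sup_{J}\|\sum_{i\in J}(fhe_i)(fhe_i)^{*}\|$ for the sum in $\bC$, which agree term by term because $fhe_i=fe_i'$ and the norm of a fixed element of $\End(E)$ is unchanged under the closed inclusion $\bD\subseteq\bC$. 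Both arguments ultimately rest on Proposition \ref{lem_sum_isos_isometric}; your packaging is cleaner.

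Part (3) is where your proposal has a genuine gap, and you have correctly located it. Multiplicativity of $f\mapsto h^{*}fh$ reduces, via Corollary \ref{kor_morphisms_sum_characterization}.\ref{item_morphisms_sum_characterization_both} applied to $(C,(e_i)_{i\in I})$, to the identity $hh^{*}ge_i'=ge_i'$ for all $g$ in $\End_{\bD}(D)$ and all $i$ in $I$. Both sides have the same compositions with every $(e_k')^{*}$, since $(e_k')^{*}hh^{*}=(e_k')^{*}$; but you cannot conclude equality from the uniqueness statement for the sum $(D,(e_i')_{i\in I})$, because that sum lives only in $\bD$ and $hh^{*}ge_i'$ is a priori only a morphism of $\bC$. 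The projection $hh^{*}$ does fix every generator $e_k'a$ and hence all of $\mathbb{K}(C_i,(D,(e_k')_{k\in I}))$, but a general element of $\Hom_{\bD}(C_i,D)$ need not lie in that subspace, so the density argument you gesture at does not close the gap; nor does Proposition \ref{geoirgjiskldfnmgsre} apply, since neither $(C,(e_i)_{i\in I})$ nor $(D,(e_i')_{i\in I})$ is known to be an orthogonal sum in $\bW\bC$. Be aware that the paper disposes of part (3) with the sentence that the remaining assertions follow by ``similar arguments'', so you will not find the missing step there. Note also that the ``corner'' conclusion itself requires interpretation: $h^{*}\id_{D}h=\id_{C}$ makes the map unital, whereas a corner $q\End_{\bC}(C)q$ containing $\id_{C}$ forces $q=\id_{C}$; so before attempting a proof you should pin down exactly which projection is meant to cut out the corner.
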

Here we omit to write the inclusion map of $\bD$ to $\bC$.
Note that $h$  identifies $C$ with a subobject of $D$ considered as an object of $\bC$.
\begin{proof} 
We start with Assertion~\ref{lem_sums_vor_zurueck}.
We apply the Corollary~\ref{qergijoqregqewfewfqewfwef1}.\ref{qerjgqkrfewfewfqfefqef11111} to the family $(e_{i}^{\prime})_{i\in I}$ of morphisms $  e_i^\prime\colon C_i \to D$ to get a unique morphism $h \colon C \to D$ in $\bC$   satisfying $h e_i = e'_i$ for all $i $ in $I$. 
 The composition 
$h^{*} h\colon C \to C$   satisfies $$ e_{j}^{*}h^{*} h e_i =    (e_j^\prime)^* e_i^\prime = {e_j^* e_i} \colon C_i \to C_j$$ for all $i,j$ in $I$.
By Corollary~\ref{kor_morphisms_sum_characterization}.\ref{item_morphisms_sum_characterization_both} these equalities together imply that $h^{*} h  = \id_C$, i.e., that $h$ is an isometry. In particular, 
$ 
p \coloneqq hh^{*}
 $ is a projection in $\End_{\bC}(D)$. 









We now show Assertion~\ref{qriogqowfgqewfqewfqewfq}.  
We will  use the notation
\begin{align*}
\LM_{\bD}(D,E) & :=\Hom^{\bd}_{\Fun(\bD^{\op},\Ban)}(\mathbb{K}_{{\bD}}(-,D), \Hom_{\bD}(-,E))\,,\\
\LM_{\bC}(C,E) & :=\Hom^{\bd}_{\Fun(\bC^{\op},\Ban)}(\mathbb{K}_{{\bC}}(-,C), \Hom_{\bC}(-,E))\,.
\end{align*}
We define
\[
\LM_{\bC}|_{\bD}(D,E) \coloneqq \Hom^{\bd}_{\Fun(\bD^{\op},\Ban)}(\mathbb{K}_{{\bD}}(-,D), \Hom_{\bC}(-,E))
\] and 
 get a  restriction   map $$-|_{\bD}\colon \LM_{\bC}(D,E) \to \LM_{\bC}|_{\bD}(D,E)\, .$$ 
We also have a canonical isometric inclusion
\begin{equation}\label{f324f23oifj32iof234f243f23f2}
\LM_{\bD}(D,E) \to \LM_{\bC}|_{\bD}(D,E)
\end{equation}  given by the isometric  inclusion of $\Hom_{\bD}(-,E)$ into $\Hom_{\bC}(-,E)$.

 Since $h^{*}e_{i}^{\prime}=e_{i}$ and $he_{i}=e_{i}'$ for all $i$ in $I$, 
 left-composition with $h $ and $h^{*}$  induces  natural transformations $$\ell(h):\mathbb{K}_{{\bC}}(-,C)\to \mathbb{K}_{{\bC}}(-,D)\ ,   \quad    \ell(h^{*})\colon \mathbb{K}_{{\bC}}(-,D)\to \mathbb{K}_{{\bC}}(-,C)\ .$$ We show that 
 these transformations are inverse to each other. First note that $h^{*}h=\id_{C}$ immediately implies that 
  $\ell(h^{*})\circ \ell(h)=\id_{\mathbb{K}_{{\bC}}(-,C)}$.
  Furthermore, since
  $hh^{*}=p$ satisfies $ hh^{*}e^{\prime}_{i}= e^{\prime}_{i}$ for every $i$ in $I$, left composition with $p$ acts as the identity on $ \mathbb{K}_{{\bC}}(-,D)$ and therefore
  also $\ell(h)\circ \ell(h^{*})=\id_{\mathbb{K}_{{\bC}}(-,D)}$.

 Precomposition by $\ell(h)$  and $\ell(h^{*})$ gives an isomorphism  \begin{equation}\label{eq_Rmult_by_h_LM} 
-\circ \ell(h^{*})\colon \LM_{\bC}(C,E) \to \LM_{\bC}(D,E)
\end{equation}
with inverse $-\circ \ell(h)$.

Let $E$ be an object of $\bD$. Then we consider the  commutative {diagram}
$$\xymatrix{
\Hom_{\bD}(D,E)\ar[d]\ar[drr]^{!} \ar[rr]^-{f\mapsto fh } && \Hom_{\bC}(C,E)\\
\Hom_{\bC}(D,E)  \ar[rr]^-{f\mapsto fp} && \Hom_{\bC}(D,E)p\ar[u]_{g\mapsto gh}
}$$
We must show that the upper horizontal map   is isometric. We   first observe that
the right vertical map is an isometry with inverse $l\mapsto lh^{*}$.  Note that $lh^{*}$ belongs to 
$\Hom_{\bC}(D,E)p$ since $lh^{*}=l(h^{*}h)h^{*}=(lh^{*})p$.

It remains to show that the map marked by $!$ is isometric. 
We consider the diagram
\begin{equation*}
\xymatrix{
\Hom_{\bC}(C,E) \ar[rr]^-{\eqref{wergwergwggwegegr42t1}, m^{L}_{E}}_-{\cong} \ar[d]^-{l\mapsto lh^{*}}_-{\cong} && \LM_{\bC}(C,E)\ar[d]^-{\eqref{eq_Rmult_by_h_LM}}_-{\cong}\\
\Hom_{\bC}(D,E)p \ar[r]^{\incl} &\Hom_{\bC}(D,E) \ar[r]^-{\eqref{wergwergwggwegegr42t1}, m^{L}_{D}}& \LM_{\bC}(D,E) \ar[d]^-{-|_{\bD}}\\
 && \LM_{\bC}|_{\bD}(D,E)\\
\Hom_{\bD}(D,E) \ar[uu]^{!}  \ar[rr]^-{\eqref{wergwergwggwegegr42t1}}_-{\cong} && \LM_{\bD}(D,E) \ar[u]_-{!!}
}
\end{equation*}
   The commutativity of the lower hexagon requires that the morphism marked by $!!$ is given by the composition of right composition by $p$ composed with the restriction \eqref{f324f23oifj32iof234f243f23f2}.  
  Thereby multiplying from the right by $p$ on $\mathbb{K}_{{\bD}}(-,D)$  is well-defined  and acts as the identity   since $pe_{i}'=e_{i}'$ for all $i$.  In particular 
   the map marked by $!!$ is  equal to  the canonical inclusion \eqref{f324f23oifj32iof234f243f23f2}. 


{In order to} prove that the map marked by $!$ is isometric, we first note that all  maps    in the above diagram are non-expansive. Furthermore, the associated left multiplier map \eqref{wergwergwggwegegr42t1} is isometric by Lemma~\ref{lem_sum_isos_isometric}, and the canonical inclusion \eqref{f324f23oifj32iof234f243f23f2}  is isometric as observed above.  
{The combination of these facts}  implies that $!$  is isometric.

The other assertions of the proposition  are shown by similar arguments.\end{proof}

\begin{rem}
In relation to the very first paragraph of the previous proof:
Note that also 
$e_{j}^{\prime,*}hh^{*}e_{i}'=e_{j}^{*}e_{i}=e_{j}^{\prime,*} e_{i}'$ for all $i,j$ in $I$.
But this does not imply that $hh^{*}=\id_{D}$ since $hh^{*}$ is a morphism in $\bC$ and not necessarily belongs to $\bD$.  Since $(D,(e_{i}')_{i\in I})$ represents a sum in $\bD$ the characterization of endomorphisms of  $D$  in terms of its matrix components in Corollary~\ref{kor_morphisms_sum_characterization}.\ref{item_morphisms_sum_characterization_both}   only applies to morphisms in $\bD$. \hB
 \end{rem}

For the next proposition we retain the notation introduced before Proposition \ref{eoitgjeogergwergwegwr}.

\begin{prop}\label{lem_surjectivity_corners}\mbox{}
\begin{enumerate}
\item\label{item_surjectivity_corners_homs_1} If $\Hom_{\bD}(C_i,E) = \Hom_{\bC}(C_i,E)$ for every $i$ in $I$, then \eqref{wefwefefewffef} is  an isomorphism.
\item\label{item_surjectivity_corners_homs_2} If $\Hom_{\bD}(E,C_i) = \Hom_{\bC}(E,C_i)$ for every $i$ in $I$, then \eqref{wefwefefewffef1} is an isomorphism.
\item\label{item_surjectivity_corners_endo} If $\Hom_{\bD}(C_i, C_j) = \Hom_{\bC}(C_i, C_j)$ for every $i,j$ in $I$, then \eqref{wefwefefewffef2} is an isomorphism.
\end{enumerate}
\end{prop}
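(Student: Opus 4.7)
The plan is to prove all three assertions by a common pattern: given a morphism in $\bC$, decompose it into matrix components using the structure maps of the orthogonal sum $(C,(e_i)_{i \in I})$, observe that by hypothesis these components actually lie in $\bD$, verify square summability, and then reassemble them into a morphism in $\bD$ using the orthogonal sum $(D,(e'_i)_{i \in I})$. The key compatibility, used repeatedly, is that $h^*h = \id_C$ together with $he_i = e'_i$ forces $h^* e'_j = e_j$ and hence $e'^*_j h = e_j^*$ in $\bC$.

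For item \ref{item_surjectivity_corners_homs_1}, given $g \in \Hom_{\bC}(C,E)$, I would set $g_i \coloneqq g e_i \in \Hom_{\bC}(C_i,E)$, which by the hypothesis lies in $\Hom_{\bD}(C_i,E)$. Square summability of $(g_i)_{i\in I}$ follows from
\[
\Big\|\sum_{i \in J} g_i g_i^*\Big\| = \Big\| g \Big(\sum_{i \in J} e_i e_i^*\Big) g^* \Big\| \le \|g\|^2
\]
uniformly in finite $J \subseteq I$ (and since $\bD$ is closed in $\bC$, the norms agree, so the family is square summable in $\bD$). Applying Corollary \ref{qergijoqregqewfewfqewfwef1}.\ref{qerjgqkrfewfewfqfefqef11111} to the orthogonal sum $(D,(e'_i)_{i\in I})$ in $\bD$ yields a unique $f \in \Hom_{\bD}(D,E)$ with $fe'_i = g_i$ for all $i$. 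Then $fh$ and $g$ satisfy $(fh)e_i = fe'_i = g e_i$ for all $i$, so Corollary \ref{kor_morphisms_sum_characterization}.\ref{item_morphisms_sum_characterization_one1} gives $fh = g$. Item \ref{item_surjectivity_corners_homs_2} follows by the exact adjoint argument: for $g \in \Hom_{\bC}(E,C)$ take $g_i \coloneqq e_i^* g$, prove square summability via $\sum_{i \in J} g_i^* g_i = g^*(\sum_{i\in J} e_i e_i^*) g$, apply Corollary \ref{qergijoqregqewfewfqewfwef1}.\ref{qerjgqkrfewfewfqfefqef111} to produce $f \in \Hom_{\bD}(E,D)$ with $e'^*_i f = g_i$, and conclude $h^* f = g$ via Corollary \ref{kor_morphisms_sum_characterization}.\ref{item_morphisms_sum_characterization_one}.

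The main obstacle is item \ref{item_surjectivity_corners_endo}, which requires a double decomposition. Given $g \in \End_{\bC}(C)$, form the matrix entries $g_{ji} \coloneqq e_j^* g e_i$, which lie in $\Hom_{\bD}(C_i,C_j)$ by hypothesis. For each fixed $i$, the family $(g_{ji})_{j \in I}$ is square summable (by the same $\|g\|^2$-estimate as above), so item \ref{item_surjectivity_corners_homs_2}'s method — applying Corollary \ref{qergijoqregqewfewfqewfwef1}.\ref{qerjgqkrfewfewfqfefqef111} in $\bD$ — gives a morphism $f_i \in \Hom_{\bD}(C_i,D)$ with $e'^*_j f_i = g_{ji}$ for every $j$. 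The delicate point is now to reassemble $(f_i)_{i \in I}$ into a morphism $f \in \End_{\bD}(D)$ via Corollary \ref{qergijoqregqewfewfqewfwef1}.\ref{qerjgqkrfewfewfqfefqef11111}, which demands that $(f_i)_{i\in I}$ be square summable, i.e.\ that $\sup_{K} \|\sum_{i\in K} f_i f_i^*\|$ be finite, and this is not transparent from the defining property of $f_i$.

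My plan for handling this obstacle is to identify $f_i$ with $h g e_i$ inside $\bC$. Indeed, both morphisms $C_i \to D$ lie in $\bC$ and satisfy $e'^*_j(\,\cdot\,) = e_j^* g e_i$ for every $j$, using $e'^*_j h = e_j^*$ from the preliminary observation; Corollary \ref{kor_morphisms_sum_characterization}.\ref{item_morphisms_sum_characterization_one} then forces $f_i = h g e_i$ in $\bC$. Consequently
\[
\Big\|\sum_{i \in K} f_i f_i^*\Big\| = \Big\| h g \Big(\sum_{i \in K} e_i e_i^*\Big) g^* h^* \Big\| \le \|g\|^2
\]
uniformly in finite $K$, and since $\bD$ is closed in $\bC$ the same bound holds in $\bD$. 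This yields the required $f \in \End_{\bD}(D)$ with $f e'_i = f_i$. Finally, $h^* f h$ and $g$ satisfy
\[
e_j^* (h^* f h) e_i = e'^*_j f e'_i = e'^*_j f_i = e_j^* g e_i
\]
for all $i,j \in I$, so Corollary \ref{kor_morphisms_sum_characterization}.\ref{item_morphisms_sum_characterization_both} gives $h^* f h = g$, completing the proof.
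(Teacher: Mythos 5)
Your treatment of Assertions \ref{item_surjectivity_corners_homs_1} and \ref{item_surjectivity_corners_homs_2} coincides with the paper's: the paper proves \ref{item_surjectivity_corners_homs_1} by exactly your decomposition--reassembly argument (taking injectivity from Proposition \ref{eoitgjeogergwergwegwr}.\ref{qriogqowfgqewfqewfqewfq}, as you implicitly do) and deduces \ref{item_surjectivity_corners_homs_2} by the involution; for \ref{item_surjectivity_corners_endo} the paper only says the argument is ``similar'', so there you are supplying a detail it omits. That detail, however, contains a genuine gap at the step ``$f_i = hge_i$''. You justify it by observing that both morphisms $C_i\to D$ satisfy $e_j^{\prime,*}(\,\cdot\,)=g_{ji}$ and then invoking Corollary \ref{kor_morphisms_sum_characterization}.\ref{item_morphisms_sum_characterization_one}. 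But that uniqueness statement is only available for an orthogonal sum in an ambient category containing both morphisms: $(D,(e'_j)_{j\in I})$ is an orthogonal sum in $\bD$, not in $\bC$, while $hge_i$ is a priori only a morphism of $\bC$ (and $f_i$ is the one in $\bD$), so neither version of the corollary applies. What you can legitimately deduce --- by applying the uniqueness to the sum $(C,(e_j)_{j\in I})$ \emph{in} $\bC$ --- is $h^*f_i=ge_i$; your claim therefore amounts to $hh^*f_i=f_i$, and the failure of $p=hh^*$ to act as the identity on morphisms into $D$ that merely have the right components is precisely the phenomenon the paper warns about in the remark inside the proof of Proposition \ref{eoitgjeogergwergwegwr} and in Example \ref{ex_sum_becomes_different}.

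The good news is that the square summability of $(f_i)_{i\in I}$ --- the only thing you use the identification for --- can be obtained without it. For a finite subset $K$ of $I$ the element $T_K\coloneqq\sum_{i\in K}f_if_i^*$ is positive in $\End_{\bD}(D)$, and since \eqref{wefwefefewffef1} with $E=D$ is isometric, the $C^*$-identity gives
\[
\|T_K\|=\|T_K^{1/2}\|^2=\|h^*T_K^{1/2}\|^2=\|h^*T_Kh\|=\Big\|\sum_{i\in K}(h^*f_i)(h^*f_i)^*\Big\|=\Big\|g\Big(\sum_{i\in K}e_ie_i^*\Big)g^*\Big\|\le\|g\|^2\,,
\]
using $h^*f_i=ge_i$. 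With this substitution your assembly of $f\in\End_{\bD}(D)$ and the final verification $e_j^*(h^*fh)e_i=e_j^{\prime,*}fe'_i=g_{ji}=e_j^*ge_i$, hence $h^*fh=g$ by Corollary \ref{kor_morphisms_sum_characterization}.\ref{item_morphisms_sum_characterization_both}, go through unchanged.
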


\begin{proof}
We show Assertion \ref{item_surjectivity_corners_homs_1}. We have already seen in Proposition \ref{eoitgjeogergwergwegwr}.\ref{qriogqowfgqewfqewfqewfq} that  \eqref{wefwefefewffef}  is an isometric inclusion. It therefore suffices to show that this map is also surjective.

Let $g$ be in $\Hom_{\bC}(C,E)$. 
Then by assumption $ge_{i} \colon C_{i}\to E$ is a morphism in $\bD$ for every $i$ in $I$. We apply Corollary \ref{qergijoqregqewfewfqewfwef1}.\ref{qerjgqkrfewfewfqfefqef11111} (to the orthogonal sum $D$ in $\bD$) to the family of morphisms $(g e_i)_{i\in I}$ in order to get a  morphism $f^\prime\colon D \to E$ in $\bD$ with $f^\prime e_i^\prime = g e_i$ for every $i$ in $I$. By Proposition \ref{eoitgjeogergwergwegwr}.\ref{lem_sums_vor_zurueck}  the composition $f^\prime h$ satisfies $f^\prime h e_i = f^\prime e_i^\prime$ for every $i$ in $I$, and hence Corollary~\ref{kor_morphisms_sum_characterization}.\ref{item_morphisms_sum_characterization_one} implies that $f^\prime h = g$. Therefore $f^\prime$ is a preimage of $g$ under \eqref{wefwefefewffef}.

The {Assertion}~\ref{item_surjectivity_corners_homs_2} follows from {Assertion}~\ref{item_surjectivity_corners_homs_1} by using the involution, and the argument for  {Assertion}~\ref{item_surjectivity_corners_endo} is similar.
\end{proof}


We retain the notation introduced before Proposition \ref{eoitgjeogergwergwegwr}.
 \begin{prop}\label{kor_compare_orthogonal_sums} 
If  $\End_{\bD}(D) = \End_{\bC}(D)$, then the morphism $h \colon C\to D$ constructed in the Proposition \ref{eoitgjeogergwergwegwr}.\ref{lem_sums_vor_zurueck}  is an isomorphism between the orthogonal sums $(C,(e_{i})_{i\in I})$ and $(D,(e_{i}')_{i\in I})$.
  \end{prop}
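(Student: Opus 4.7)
The plan is to leverage the isometry $h \colon C \to D$ already produced in Proposition \ref{eoitgjeogergwergwegwr}.\ref{lem_sums_vor_zurueck}, which satisfies $he_i = e'_i$ for all $i \in I$ and $h^*h = \id_C$. It remains only to verify that $p := hh^*$ equals $\id_D$, since combined with $h^*h = \id_C$ this makes $h$ a unitary isomorphism intertwining the two families of structure isometries.

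First I would compute the action of $p$ on the structure maps of the sum $(D,(e'_i)_{i\in I})$. Using $h^*h = \id_C$ and $he_i = e'_i$, one gets $h^*e'_i = h^*he_i = e_i$, and therefore
\[
pe'_i = hh^*e'_i = he_i = e'_i = \id_D \, e'_i
\]
for every $i \in I$. Thus $p$ and $\id_D$ have identical compositions with every $e'_i$.

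The key step is then to promote this pointwise equality into the genuine equality $p = \id_D$, and this is exactly where the hypothesis $\End_{\bD}(D) = \End_{\bC}(D)$ enters. A priori $p$ is only an endomorphism in $\bC$, so the uniqueness statement Corollary \ref{kor_morphisms_sum_characterization}.\ref{item_morphisms_sum_characterization_one1}, which refers to the orthogonal sum structure of $(D,(e'_i)_{i\in I})$ \emph{in $\bD$}, does not immediately apply. But under our assumption $p$ belongs to $\End_{\bD}(D)$, as does $\id_D$, and so Corollary \ref{kor_morphisms_sum_characterization}.\ref{item_morphisms_sum_characterization_one1} applied in $\bD$ to $p$ and $\id_D$ yields $p = \id_D$.

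Combining $hh^* = \id_D$ with $h^*h = \id_C$ shows that $h \colon C \to D$ is a unitary isomorphism in $\bC$, and by construction it satisfies $he_i = e'_i$ for every $i \in I$, so it is the desired isomorphism of orthogonal sums. The only subtlety is really the observation that the projection $p$ automatically lies in $\bD$ under the hypothesis; once this is noted the argument is immediate.
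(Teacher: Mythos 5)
Your argument is correct and follows essentially the same route as the paper: both reduce the claim to showing $hh^*=\id_D$, observe that the hypothesis $\End_{\bD}(D)=\End_{\bC}(D)$ is exactly what allows the uniqueness statement of Corollary~\ref{kor_morphisms_sum_characterization} to be applied to the sum $(D,(e'_i)_{i\in I})$ \emph{in $\bD$}, and conclude. The only (immaterial) difference is that you invoke the one-sided uniqueness \ref{kor_morphisms_sum_characterization}.\ref{item_morphisms_sum_characterization_one1} via $pe'_i=e'_i$, while the paper uses the matrix-coefficient version \ref{kor_morphisms_sum_characterization}.\ref{item_morphisms_sum_characterization_both} via $e'^{,*}_ihh^*e'_j=e'^{,*}_ie'_j$.
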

\begin{proof}
 It suffices to show that $ hh^{*}=\id_{D}$ in $\End_{\bC}(D)$. By assumption 
  $h h^{*}$ belongs to $\End_{\bD}(D)$.
Hence we may apply Corollary~\ref{kor_morphisms_sum_characterization}.\ref{item_morphisms_sum_characterization_both} to the sum $(D, (e_{i}^\prime)_{i\in I})$ in the category $\bD$ and the identities $e^{\prime,*}_{i}hh^{*}e_{j}'=e^{*}_{i}e_{j}=e^{\prime,*}_{i} e_{j}'$ for all $i,j$ in $I$  in order  to conclude
that $hh^{*}=\id_{D}$. 
\end{proof}

 \begin{proof}[Proof of Theorem \ref{weigjiowetgerwerer}]
 A fully faithful inclusion detects orthogonal sums by Corollary \ref{wetoighjowtgwergwregwergwerg}. It preserves orthogonal sums
 by Proposition \ref{kor_compare_orthogonal_sums}.
  \end{proof}

\begin{ex}\label{ex_sum_becomes_different}
In this example we construct an inclusion $\bD\subseteq \bC$ where the inclusion $\End_{\bD}(D) \subseteq \End_{\bC}(D)$ is proper and $h$ is not an isomorphism. This shows that the assumption in Proposition \ref{kor_compare_orthogonal_sums} can not be dropped.


Let $X$ be a countable infinite set. We let $\sim$ be the equivalence relation on the power set $P(X)$ of $X$ given by $A \sim B$ if and only if  the symmetric difference $A \Delta B$ is finite. Let $[-]\colon P(X) \to P(X)/\sim$ be the quotient map. The set $P(X)$ is a Boolean algebra under the operations of forming unions, intersections and complements.  These operations descend to the quotient $P(X)/\sim$. Using Stone's representation theorem for Boolean algebras \cite{representation_boolean_algebra}, we get a set $Y$ and an injective homomorphism of Boolean algebras $s\colon (P(X)/\sim)  \to P(Y)$.

For every $x$ in $X$ let $p_x$ be the orthogonal projection in $B(\ell^2(X))$ onto the one-dimensional subspace spanned by $x$, and for a subset $A$ of $X$ we consider the orthogonal projection $p_A:=\sum_{x \in A} p_x$  in $B(\ell^{2}(X))$ (the sum is  strongly convergent). Analogously we define for every subset $B$ of $Y$ an orthogonal projection $q_B$ in $B(\ell^2(Y))$.

We will use the notation conventions as in Example \ref{example_basic_orthogonal_sum_diagonal} in order to denote subspaces of  the algebra $B(\ell^{2}(Y\cup X))$. 
We define a $C^{*}$-category $\bC$ as follows:
\begin{enumerate}
\item objects: The set of objects of $\bC$ is $X\cup \{X ,Y\}$.
\item morphisms: The morphisms of $\bC$ are given as subspaces of $B(\ell^{2}(Y\cup X))$ as follows:
\begin{enumerate}
\item $\End_{\bC}(x) \coloneqq B(\ell^{2}(\{x\}))$ for $x$ in $X$.
\item \label{eoijqwofwefewfqwef}$\End_{\bC}(X)$ is the subalgebra of $B(\ell^{2}(Y\cup X))$ generated by the operators
$p'_{A}+q_{s(A)}$ for all subsets $A$ of $X$ and $B(\ell^{2}(X))$, where $p'_{A}$ is $p_{A}$ considered as an element of $\End_{\bC}(X)$. 
\item $\End_{\bC}(Y) \coloneqq B(\ell^{2}(X))$.
\item $\Hom_{\bC}(x,x') \coloneqq B(\ell^{2}(\{x\}),\ell^{2}(\{x'\}))$.
\item $\Hom_{\bC}(x,X) \coloneqq B(\ell^{2}(\{x\}),\ell^{2}(X))$ and  $\Hom_{\bC}(X,x) \coloneqq B(\ell^{2}(X),\ell^{2}(\{x\}))$.
\item \label{qetgoopkpofqwefewfq}$\Hom_{\bC}(x,Y) \coloneqq B(\ell^{2}(\{x\}),\ell^{2}(X))$ and  $\Hom_{\bC}(Y,x) \coloneqq B(\ell^{2}(X),\ell^{2}(\{x\}))$.
\item \label{qeoigjoqerfqfewf}$\Hom_{\bC}(X,Y) \coloneqq B(\ell^{2}(X))$ and  $\Hom_{\bC}(Y,X) \coloneqq B(\ell^{2}(X))$.
\end{enumerate}
\item involution and composition: are induced from $B(\ell^{2}(Y\cup X))$.
\end{enumerate}

We let  $e_{x}$ in $B(\ell^{2}(\{x\}),\ell^{2}(X))$
  be the canonical inclusion of $\ell^{2}(\{x\})$ into $\ell^{2}(X)$. We write
  $e_{x}''$ for the corresponding morphism from $x$ to $Y$ under the identification \ref{qetgoopkpofqwefewfq}.
The pair $(\{Y\},(e''_{x})_{x\in X})$ is an orthogonal sum of 
the family of objects $(x)_{x\in X}$ in $\bC$. This follows from a combination 
 of Example \ref{example_basic_orthogonal_sum_full} and Proposition \ref{kor_compare_orthogonal_sums} applied to the full subcategory  on the objects $X\cup \{Y\}$ of $\bC$.
We now describe an isometric embedding of the category $\bX$ from Example  \ref{example_basic_orthogonal_sum_diagonal} onto a subcategory $\bD$ of $\bC$.
\begin{enumerate}
\item objects: The embedding sends the objects $x$ and $\{X\}$ of $\bX$ to the corresponding objects of $\bC$ with the same name.
\item morphisms: 
\begin{enumerate}
\item The map $\End_{\bX}(x)\to \End_{\bC}(x)$ is given by the  identity of $B(\ell^{2}(\{x\}))$.
\item The map $\End_{\bX}(X)\to \End_{\bC}(X)$ sends the generator $p_{A}$ to $p'_{A}+q_{s(A)}$. 
\item Then map $ \Hom_{\bX}(x,X)\to \Hom_{\bC}(x,X)$ is the canonical inclusion
\[
\C e_{x}\to B(\ell^{2}(\{x\}),\ell^{2}(X))\,.
\]
\item  The  map $\Hom_{\bX}(X,x)\to \Hom_{\bC}(X,x)$    is the canonical inclusion
\[
\C e^{*}_{x}\to B(\ell^{2}(X),\ell^{2}(\{x\}) )\,.
\]
\end{enumerate}
\end{enumerate} The image in $\bD$ of the morphism $e_{x}$ in $\bX$ will be denoted by $e_{x}'$.
In order to see that this is compatible with the composition note that $e_{x}e_{x}^{*}=p_{x} $ in $\bX$ 
is sent to $p'_{x}+q_{s(\{x\})}=p'_{x}=e_{x}'e^{\prime,*}_{x}$  in $\bC$ since $s(\{x\})=\emptyset$ because of  $\{x\}\sim \emptyset$.

It is easy to see that $\End_{\bX}(X)\to \End_{\bC}(X)$ is injective and hence isometric. 
We conclude that $\bD$ is an isometric copy of $\bX$ in $\bC$. Note that $\End_{\bX}(X)\to \End_{\bC}(X)$ is not surjective.
  By   Example \ref{example_basic_orthogonal_sum_diagonal}
the pair $(X,(e_{x}')_{x\in X})$ is an orthogonal sum of the family $(x)_{x\in X}$ in $\bD$. 
The morphism $h \colon Y\to X$ constructed in Lemma \ref{lem_sums_vor_zurueck} is given by the identity of $B(\ell^{2}(X))$ under the identification \ref{qeoigjoqerfqfewf}.
The projection $hh^{*}$ in $ \End_{\bC}(X)$ is given by the image of $1_{B(\ell^{2}(X))}$ in $\End_{\bC}(X)$ under the identification \ref{eoijqwofwefewfqwef}. It is not the identity in $ \End_{\bC}(X)$ since, e.g., $hh^{*}(p'_{X}+q_{s(X)})=p'_{X}\not= p'_{X}+q_{s(X)}$.
 We conclude that $(X,(e'_{x})_{x\in X})$ does not represent the orthogonal sum of $(x)_{x\in X}$ in $\bC$ anymore.
 \hB
\end{ex}

\begin{ex}\label{ex_sum_vanishes}
We retain the notation from Example \ref{ex_sum_becomes_different}. Let $\bE$ be the full subcategory of $\bC$ with the same objects $X\cup \{X\}$ as $\bD$.  This $C^{*}$-category does not have any orthogonal sum anymore.  
\hB
\end{ex}

\section{A Yoneda-type embedding}  \label{wtogwepgfereggwrferf}
  
  In the following we associate to every small unital $C^{*}$-category $\bK$ a small $C^{*}$-algebra $A(\bK)$ and construct a Yoneda type embedding $M \colon \bK \to \Hilb_{c}(A(\bK))$, where $  \Hilb_{c}(A(\bK))$ is the large $C^{*}$-category of  small right Hilbert $A(\bK)$-modules and compact operators.   The Yoneda type embedding gives rise to the following   instance of \eqref{qwefoijofwqed}: 
$$\xymatrix{\bK\ar[r]\ar[d]^{M}&\bM\bK\ar[d]^{ \bM M}\ar[r]&\bC:=\bW \bM\bK\ar[d]^{\bW\bM M} \\ \Hilb_{c}(A(\bK))\ar[r]&\Hilb(A(\bK))\ar[r]&\bW \Hilb (A(\bK)) }$$

The main  result of this section is the following.
  \begin{theorem}\label{mainyondea}
  The Yoneda type embedding detects and preserves AV-sums  in $\bK$
  and   orthogonal sums in $\bC$. 
  \end{theorem}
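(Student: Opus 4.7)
The plan is to reduce both assertions to properties of the extensions of $M$ to multiplier categories and $W^*$-envelopes, using machinery already developed earlier in the paper.

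First I would verify the two basic structural properties of the Yoneda-type embedding $M$: that it is fully faithful and non-degenerate in the sense of Definition \ref{fkjbofgbwtgdgbdfgb}. Fully faithfulness is the standard Yoneda-type argument adapted to the $C^*$-context: an element of $\End_{\bK}(K)$ acts on the Hilbert $A(\bK)$-module representing $K$, and this action recovers the element (the object $K$ itself corresponds to a free rank-one module-like summand). Non-degeneracy follows from the fact that the representing modules are generated, as Hilbert $A(\bK)$-modules, by the image of the structure morphisms coming from $\bK$, so that the relevant mapping spaces in $\Hilb_c(A(\bK))$ are spanned by compositions of the form $M(u) \circ h$ and $h \circ M(u)$.

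Once these are in place, I invoke Proposition \ref{stkghosgfgsrgsegs}: since $M$ is fully faithful and non-degenerate, it extends uniquely to a unital morphism $\bM M \colon \bM\bK \to \Hilb(A(\bK))$ (using the identification $\bM\Hilb_c(A(\bK)) \cong \Hilb(A(\bK))$ from Lemma \ref{wetiojgowegfgrefweferf}) which is strictly continuous on bounded subsets and is again fully faithful. Corollary \ref{wropr} then shows that the induced morphism $\bW\bM M \colon \bC \to \bW\Hilb(A(\bK))$ on $W^*$-envelopes is also fully faithful.

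The assertions about AV-sums then follow immediately from Corollary \ref{wriojtghowtrgfrefw}: preservation from part \ref{ertihjeorger90g0weerg} (applied to the unital, strictly continuous $\bM M$), and detection from part \ref{ertihjeorger90g0weerg1} (since $\bM M$ is fully faithful, hence faithful). For the orthogonal sums in $\bC$, I apply Theorem \ref{weigjiowetgerwerer} directly to the fully faithful inclusion $\bW\bM M$ in $\Ccat$, which detects and preserves orthogonal sums.

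The main obstacle will be the first step: verifying fully faithfulness and, more delicately, non-degeneracy of $M$. This requires a concrete inspection of the construction of $A(\bK)$ and the modules $M(K)$, in particular showing that every compact operator between $M(K)$ and $M(K')$ can be approximated in norm by sums of compositions of elementary operators factoring through $M(\End_{\bK}(K'))$ (and symmetrically through $M(\End_{\bK}(K))$). Once this analytic input is secured, the rest of the theorem is a formal consequence of the categorical machinery assembled in the earlier sections.
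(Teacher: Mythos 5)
Your proposal is correct and follows essentially the same route as the paper: establish that $M$ is fully faithful, extend to a fully faithful, strictly continuous $\bM M\colon \bM\bK\to\Hilb(A(\bK))$, deduce the AV-sum statement from Corollary \ref{wriojtghowtrgfrefw}, and deduce the orthogonal-sum statement by passing to $W^{*}$-envelopes (Proposition \ref{rjigowergwregregwfer}) and invoking Theorem \ref{weigjiowetgerwerer}. The only organizational difference is that the paper constructs $\bM M$ explicitly (via the homomorphism $A(\bM\bK)\to MA(\bK)$ of Lemma \ref{weirugwergewferfr}) and proves fully faithfulness of $M$ and $\bM M$ together, using the latter to identify the compact morphisms, whereas you propose to settle fullness of $M$ on $\Hilb_{c}(A(\bK))$ first and then invoke Proposition \ref{stkghosgfgsrgsegs} for the extension; both orders work, and your non-degeneracy step is in fact subsumed by fullness.
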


The proof of this theorem will be given later in this section after recalling the construction of $A(\bK)$ and the Yoneda type embedding $M$.

%
%
 Let $\nCcat_{i}$ denote the wide subcategory of $\nCcat$ of morphisms which are injective on objects. 
 We consider the functor
\begin{equation}\label{frewfoirjviojvioeweverwvwev}
A\colon \nCcat_{i}\to \nCalg
\end{equation}
defined in  \cite[Def.~2]{joachimcat}, see also  \cite[Def.~6.5]{crosscat}.

\begin{rem}\label{rem_defn_A}
For the sake of self-containedness and in order to introduce the relevant notation we
  recall the construction of the functor~$A$. Let $\bK$ be in $\nCcat_{i}$. We start with the description of a $*$-algebra $A^{\alg}(\bK)$. The underlying $\C$-vector space of  $A^{\alg}(\bK)$ is the algebraic direct sum
\begin{equation}\label{efefwefeefefyyyyyyyefewfef}
A^{\alg}(\bK) \coloneqq \bigoplus_{C,C^{\prime} \in \Ob(\bK)} \Hom_{\bK}(C,C^{\prime})\, .
\end{equation}
A morphism $f:C\to C'$ in $\bK$ gives rise to an element in $A^{\alg}(\bK)$ which will  be denoted by  $f[C',C]$.
The product  on $A^{\alg}(\bK)$ is defined by  
\begin{equation}\label{fwerferwfwfwrefwrefer}
g[C''',C'']f[C',C] := \begin{cases} g  f[C''',C]& C'=C'' \\ 0 & \text{otherwise\,.}\end{cases}
\end{equation} 
The $*$-operation on $\bK$ induces an involution on $A^{\alg}(\bK)$ by
$f[C',C]^{*}:=f^{*}[C,C']$.  One can check that $A^{\alg}(\bK)$ is a pre-$C^{*}$-algebra.
We equip $A^{\alg}(\bK)$ with the maximal $C^{*}$-norm and define $A(\bK)$ as the completion  of $A^{\alg}(\bK)$.
We have a natural transformation of functors
\begin{equation}\label{qweflqjoijefqwefqwefqwef}
\id\to A\colon \nCcat_{i}\to \nCcat\, .
\end{equation} Its evaluation {at} $\bK$ is the  morphism $ \bK\to A(\bK)$ which sends all objects of $\bK$ to the unique object of $A(\bK)$ (we consider the $C^{*}$-algebra as a $C^{*}$-category with a single object), and every morphism $f:C\to C'$ in $\bK$ to the corresponding element  $f[C',C]$ of $A(\bK)$.  By \cite[Lem. 6.7]{crosscat} the morphism  $ \bK\to A(\bK)$ is isometric.

Note that the assignment $\bK \mapsto A(\bK)$ is not a functor on $\nCcat$ since non-composable morphisms in a $C^\ast$-category may become composable after applying a functor to another $C^\ast$-category which is incompatible with the product described in \eqref{fwerferwfwfwrefwrefer}.
\hB
\end{rem}

We consider $\bK$ in $\nCcat$ and its multiplier category $\bM\bK$.
The inclusion $\bK\to \bM\bK$ belongs to $\nCcat_{i}$ so that the functor $A$ can be applied.
Let $MA(\bK)$ denote the multiplier algebra of $A(\bK)$.
\begin{lem}\label{weirugwergewferfr}
There exists a unique homomorphism
$A(\bM\bK)\to MA(\bK)$ such that $$\xymatrix{&   A(\bK)\ar[dr]\ar[dl]&\\ A(\bM\bK) \ar[rr]&& MA(\bK)}$$
commutes.
\end{lem}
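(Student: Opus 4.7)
The plan is to exhibit $A(\bK)$ as an ideal inside $A(\bM\bK)$ and then invoke the universal property of the multiplier algebra of a $C^{*}$-algebra.

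First I would verify that the canonical morphism $A(\bK) \to A(\bM\bK)$ obtained by applying $A$ to the ideal inclusion $\bK \subseteq \bM\bK$ exhibits $A(\bK)$ as a closed two-sided ideal in $A(\bM\bK)$. On the algebraic level this is immediate from the product rule \eqref{fwerferwfwfwrefwrefer}: for a multiplier morphism $m\colon D \to D'$ in $\bM\bK$ and a morphism $f\colon C \to C'$ in $\bK$ the non-zero products $m[D',D]\cdot f[C',C] = mf[D',C]$ and $f[C',C]\cdot m[D',D]$ land again in $A^{\alg}(\bK)$, because $mf$ and $fm$ lie in $\bK$ by Theorem \ref{mmain}.\ref{qeroigjoqergfqewfqwefqewf}. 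Passing to the maximal $C^{*}$-completions then yields that $A(\bK)$ is a (closed) ideal in $A(\bM\bK)$. A subsidiary point to settle here is that $A(\bK) \to A(\bM\bK)$ is actually injective; for this I would use that every non-zero representation $\rho\colon \bK \to \Hilb(\C)^{\la}$ can, after replacing it by its non-degenerate part as in \eqref{dsfgdgsgfdsgdgdgrqe}, be extended by Lemma \ref{qeirogqergqfqefqewfq} to a representation of $\bM\bK$, so that the maximal norm of $A(\bM\bK)$ restricted to $A^{\alg}(\bK)$ dominates the maximal norm on $A^{\alg}(\bK)$ and is therefore equal to it.

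Next I would apply the standard universal property of the multiplier algebra: whenever a $C^{*}$-algebra $B$ contains $A(\bK)$ as an ideal, the assignments $L_{b}(x) \coloneqq bx$ and $R_{b}(x) \coloneqq xb$ define a bounded multiplier $(L_{b},R_{b}) \in MA(\bK)$ for every $b \in B$, and $b \mapsto (L_{b},R_{b})$ is a $*$-homomorphism $B \to MA(\bK)$ whose restriction to $A(\bK)$ is the canonical inclusion $A(\bK) \hookrightarrow MA(\bK)$. Applied to $B = A(\bM\bK)$ this produces the desired homomorphism and makes the triangle commute by construction.

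For uniqueness, suppose that $\phi\colon A(\bM\bK) \to MA(\bK)$ is any $*$-homomorphism filling the triangle. For $a \in A(\bM\bK)$ and $x \in A(\bK)$ the element $ax$ belongs to $A(\bK)$, and commutativity of the triangle forces $\phi(a) \cdot x = \phi(ax) = ax$ as well as $x \cdot \phi(a) = xa$ inside $MA(\bK)$. Since these relations hold for every $x \in A(\bK)$ and $A(\bK)$ is essential in $MA(\bK)$, they determine $\phi(a) = (L_{a},R_{a})$ uniquely. The main technical point in the whole argument is the injectivity step handled above; once that is settled the rest is a formal application of the universal property of $MA(\bK)$.
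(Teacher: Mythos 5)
Your proof is correct and follows essentially the same route as the paper: exhibit $A(\bK)$ as a (closed, essential) ideal in $A(\bM\bK)$ and then invoke the universal property of the multiplier algebra. The only differences are cosmetic — the paper simply cites that the functor $A$ preserves ideal inclusions (from \cite{crosscat}) where you verify the ideal property and the isometry of $A(\bK)\to A(\bM\bK)$ by hand, and the paper detours through the unitalization $A(\bM\bK)^{+}$ so as to apply its unital formulation of the universal property (Definition \ref{weighwoegerwferfwef}), whereas you use the standard non-unital version $b\mapsto (L_{b},R_{b})$ together with an explicit essentiality argument for uniqueness.
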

\begin{proof}
Since $A$ preserves  ideal inclusions by \cite[Prop. 8.9.2]{crosscat} and   $\bK\to \bM\bK$ is an ideal inclusion,  we have an   ideal inclusion $A(\bK)\to A(\bM\bK)\to A(\bM\bK)^{+}$.  The 
 universal property (see Definition \ref{weighwoegerwferfwef}) of the multiplier algebra then provides a unique unital homomorphism 
 $A(\bM\bK)^{+}\to MA(\bK)$ under $A(\bK)$. 
  The desired homomorphism is then the composition 
  $A(\bM\bK)\to A(\bM\bK)^{+} \to MA(\bK)$.
%
 \end{proof}

%

Let $\bK$ be in $\nCcat$.
For every object $C$ in $\bK$ we have the multiplier $1_{C}$ in $\End_{\bM\bK}(C)$ and consider the projection
$1_{C}[C,C]$ in $A(\bM\bK)$ and therefore  in $MA(\bK)$ by applying the morphism constructed in  Lemma  \ref{weirugwergewferfr}.
Then $1_{C}[C,C]A(\bK)$ is a submodule of $A(\bK)$ which we consider as an object of $\Hilb_{c}(A(\bK))$.
  
 \begin{ddd}\label{eiogjoewrgerfgerrfewfwer} We define the Yoneda type  functor
$M:\bK\to \Hilb_{c}(A(\bK))$ is follows.
\begin{enumerate}
\item objects: If $C$ is in $\bK$, then we set $M_{C}:=1_{C}[C,C] A(\bK)$. 
\item morphisms: If $f:C\to C'$ is a morphism in $\bK$, then we define
$M_{f}:=f[C',C]:M_{C}\to M_{C'}$. 
 \end{enumerate}
 \end{ddd}
A priori this defines a functor $M:\bK\to \Hilb(A(\bK))$.
In order to see that $M$  takes values in the ideal  $ \Hilb_{c}(A(\bK))$  we first consider $u$ in $\End_{\bK}(C')$.
Then $M_{uf}=\theta_{u,f^{*}[C,C']}$, i.e., $M_{uf}$ is compact. We now let $u$ run over an approximate unit of the $C^{*}$-algebra $\End_{\bK}(C')$ and get $M_{f}=\lim_{u} M_{uf}$. Hence $M_{f}$ is compact, too.

\begin{lem}\label{wkoegwgrregwerewferf}
The Yoneda  type functor extends canonically to 
 a functor $ M:\bM\bK\to \Hilb(A(\bK))$ such that
 $$\xymatrix{\bK\ar[r]^-{M}\ar[d]&\Hilb_{c}(A(\bK))\ar[d]\\   M\bK\ar[r]^-{\bM M}&\Hilb(A(\bK))}$$ commutes.
\end{lem}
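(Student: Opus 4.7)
The plan is to reduce the statement to Proposition \ref{stkghosgfgsrgsegs} applied to $M$: once we know that $M\colon\bK\to\Hilb_{c}(A(\bK))$ is non-degenerate in the sense of Definition \ref{fkjbofgbwtgdgbdfgb}, the proposition produces a strictly continuous extension $\bM M\colon\bM\bK\to\bM\Hilb_{c}(A(\bK))$. Using the canonical isomorphism $\bM\Hilb_{c}(A(\bK))\cong\Hilb(A(\bK))$ established in Lemma \ref{wetiojgowegfgrefweferf}, this gives the functor $\bM M\colon\bM\bK\to\Hilb(A(\bK))$ we are after. The commutativity of the square is automatic from the construction in Proposition \ref{stkghosgfgsrgsegs}, since under the identification of Lemma \ref{wetiojgowegfgrefweferf} the ideal inclusion $\Hilb_{c}(A(\bK))\to\Hilb(A(\bK))$ corresponds to the canonical inclusion $\Hilb_{c}(A(\bK))\to\bM\Hilb_{c}(A(\bK))$, and $\bM M$ restricts to $M$ along $\bK\to\bM\bK$ by construction.

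Hence the substantive step is the verification of non-degeneracy. Fix objects $C,C'$ of $\bK$ and a compact operator $T\in\Hom_{\Hilb_{c}(A(\bK))}(M_{C},M_{C'})$. By definition, $T$ lies in the norm closure of the linear span of rank-one operators $\theta_{x,y}$ with $x\in M_{C'}=1_{C'}[C',C']A(\bK)$ and $y\in M_{C}=1_{C}[C,C]A(\bK)$, so it suffices to approximate every such $\theta_{x,y}$ by elements of $M(\End_{\bK}(C'))\cdot\Hom_{\Hilb_{c}(A(\bK))}(M_{C},M_{C'})$ and of $\Hom_{\Hilb_{c}(A(\bK))}(M_{C},M_{C'})\cdot M(\End_{\bK}(C))$. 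Let $(u_{i})_{i}$ and $(v_{j})_{j}$ be approximate units of the $C^{*}$-algebras $\End_{\bK}(C')$ and $\End_{\bK}(C)$ respectively. Then $M_{u_{i}}\,\theta_{x,y}=\theta_{M_{u_{i}}x,\,y}$ and $\theta_{x,y}\,M_{v_{j}}=\theta_{x,\,M_{v_{j}}^{*}y}$, so the claim reduces to proving $\lim_{i}M_{u_{i}}x=x$ in $M_{C'}$ and $\lim_{j}M_{v_{j}}^{*}y=y$ in $M_{C}$.

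For this last step, recall from Remark \ref{rem_defn_A} that $M_{C'}=1_{C'}[C',C']A(\bK)$ is the norm-closure of the linear span of elementary tensors $g[C',C'']$ with $g\colon C''\to C'$ a morphism in $\bK$. By the multiplication rule \eqref{fwerferwfwfwrefwrefer} we have $M_{u_{i}}\cdot g[C',C'']=(u_{i}g)[C',C'']$, and $\lim_{i}u_{i}g=g$ in $\Hom_{\bK}(C'',C')$ by Lemma \ref{multmult}. Since the family $(M_{u_{i}})_{i}$ is uniformly bounded (by $1$), the same convergence extends by a standard $\varepsilon/3$-argument to all $x\in M_{C'}$. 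The symmetric argument handles $(v_{j})$. This establishes non-degeneracy of $M$, concluding the plan; the only genuinely technical point is this norm-approximation, which is routine given Lemma \ref{multmult}.
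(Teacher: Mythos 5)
Your proof is correct, but it takes a genuinely different route from the paper. The paper's own argument is a two-line explicit construction: using the homomorphism $A(\bM\bK)\to MA(\bK)$ from Lemma \ref{weirugwergewferfr}, it sets $\bM M_{f}:=f[C',C]$ acting by left multiplication on $M_{C}=1_{C}[C,C]A(\bK)$, and checks compatibility with composition and involution directly; commutativity of the square is then immediate from the formula. You instead invoke the abstract functoriality of the multiplier category (Proposition \ref{stkghosgfgsrgsegs}) together with the identification $\bM\Hilb_{c}(A(\bK))\cong\Hilb(A(\bK))$ of Lemma \ref{wetiojgowegfgrefweferf}, reducing everything to a verification of non-degeneracy of $M$ in the sense of Definition \ref{fkjbofgbwtgdgbdfgb}; that verification (rank-one operators, $M_{u_i}\theta_{x,y}=\theta_{M_{u_i}x,y}$, convergence on generators $g[C',C'']$ via Lemma \ref{multmult} plus uniform boundedness) is sound. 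What your approach buys: you get for free that $\bM M$ is strictly continuous on bounded subsets and is the unique such extension, facts the paper only establishes later (after proving fullness in Lemma \ref{wiotgowfrsfdgsfgefwrefrfwer}) and by a different mechanism. What it costs: the paper's explicit formula $\bM M_{f}=f[C',C]$ is used downstream (e.g.\ in the proof of fullness and in the computation \eqref{qwefqewfqwefewfqew}), so with your construction one would still need to identify the abstract extension with that formula -- which does follow from your uniqueness clause and strict density of $\bK$ in $\bM\bK$, but deserves a sentence. Two small points worth tightening: take the approximate units self-adjoint (or note that $(v_j^{*})_j$ is again an approximate unit) so that $M_{v_j}^{*}y\to y$; and note that your density argument applies verbatim with $M_{C}$ replaced by an arbitrary object $D$ of $\Hilb_{c}(A(\bK))$, which is the form of non-degeneracy actually exploited in the idealizer argument inside the proof of Proposition \ref{stkghosgfgsrgsegs}.
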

\begin{proof}
We must define the extension on the level of morphisms.
For a multiplier $f:C\to C'$ in $\bM\bK$ we define
$ M_{f}:M_{C}\to M_{C'}$ as the morphism
$f[C',C]:M_{C}\to M_{C'}$, where this formula  must be interpreted using    Lemma 
\ref{weirugwergewferfr}. This prescription is compatible with the involution and the composition.
\end{proof}

%
%
 
 \begin{lem}\label{wiotgowfrsfdgsfgefwrefrfwer}
The functors $M:\bK\to \Hilb_{c}(A(\bK))$   and
$ \bM M:\bM\bK\to \Hilb(A(\bK))$ are fully faithful.
\end{lem}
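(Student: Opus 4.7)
The plan is to reduce the lemma to two ingredients: the isometry of $\bK\to A(\bK)$ established in \cite[Lem.~6.7]{crosscat} (recalled in Remark~\ref{rem_defn_A}), and the standard description of compact operators between two projection-corners of a $C^{*}$-algebra as Hilbert modules. The second ingredient will give fullness of $M$ almost by inspection, and the full faithfulness of $\bM M$ will then follow from Proposition~\ref{stkghosgfgsrgsegs}.

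First I would identify $K(M_{C},M_{C'})$ explicitly. Writing $p_{C}:=1_{C}[C,C]$ and $p_{C'}:=1_{C'}[C',C']$, these are projections in $MA(\bK)$ by Lemma~\ref{weirugwergewferfr}, and $M_{C}=p_{C}A(\bK)$, $M_{C'}=p_{C'}A(\bK)$ as Hilbert $A(\bK)$-modules with scalar product $\langle a,b\rangle=a^{*}b$. A standard calculation with rank-one operators $\theta_{y,x}(z)=y\langle x,z\rangle=yx^{*}z$ shows that $K(p_{C}A(\bK),p_{C'}A(\bK))=\overline{p_{C'}A(\bK)p_{C}}$, with the isomorphism sending $b$ in the corner to left multiplication by $b$. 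Since $A(\bK)$ is an ideal in $MA(\bK)$, the closure is taken inside $A(\bK)$ itself, so the question becomes: identify the corner $p_{C'}A(\bK)p_{C}$ inside $A(\bK)$.

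Next I would compute this corner using the presentation \eqref{efefwefeefefyyyyyyyefewfef} of $A^{\alg}(\bK)$ as a direct sum and the multiplication rule \eqref{fwerferwfwfwrefwrefer}. Left multiplication by $p_{C'}$ applied to a generator $g[E,D]$ yields $g[C',D]$ when $E=C'$ and $0$ otherwise, and similarly right multiplication by $p_{C}$ picks out the summands indexed by $D=C$. Thus on the algebraic level $p_{C'}A^{\alg}(\bK)p_{C}$ is exactly the image of $\Hom_{\bK}(C,C')\hookrightarrow A^{\alg}(\bK)$, $f\mapsto f[C',C]$. By the isometry of $\bK\to A(\bK)$ this embedded copy of $\Hom_{\bK}(C,C')$ is already complete, hence norm-closed in $A(\bK)$, so the closure of $p_{C'}A(\bK)p_{C}$ coincides with it. Combining, we obtain
\[
K(M_{C},M_{C'})\;=\;\{\,M_{f}\ :\ f\in\Hom_{\bK}(C,C')\,\}
\]
and the map $f\mapsto M_{f}$ is both isometric and surjective, proving that $M$ is fully faithful.

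For the second assertion, I would invoke Proposition~\ref{stkghosgfgsrgsegs}: since $M$ is fully faithful, hence in particular full, it admits a unique strictly continuous fully faithful multiplier extension $\bM\bK\to\bM\Hilb_{c}(A(\bK))$. By Lemma~\ref{wetiojgowegfgrefweferf} the target is canonically $\Hilb(A(\bK))$. It remains only to check that this abstract extension coincides with the explicit extension defined in Lemma~\ref{wkoegwgrregwerewferf}; this follows from strict continuity of the homomorphism $A(\bM\bK)\to MA(\bK)$ of Lemma~\ref{weirugwergewferfr} together with strict density of $\bK$ in $\bM\bK$ and the uniqueness clause of Proposition~\ref{stkghosgfgsrgsegs}. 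The main obstacle I anticipate is not the fullness argument itself but this last bookkeeping step, matching the two a priori different definitions of $\bM M$; once this identification is verified the full faithfulness of $\bM M$ is handed to us by Proposition~\ref{stkghosgfgsrgsegs}.
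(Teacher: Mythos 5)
Your argument is correct, but it runs in the opposite direction from the paper's. The paper works directly with the explicit extension $\bM M$ of Lemma \ref{wkoegwgrregwerewferf}: it first checks faithfulness by evaluating $M_{f}$ on elements $u[C,C]$, then proves fullness of $\bM M$ by reading off an algebraic double centralizer $(L,R)$ from an adjointable $F\colon M_{C}\to M_{C'}$ (via $L(g)[C',E]:=F(g[C,E])$) and invoking Proposition \ref{wrtohijworthrtwregergwerg}.\ref{oiergjoiwegwrrgwr}, and finally deduces fullness of $M$ on $\Hilb_{c}(A(\bK))$ by the approximate-unit trick $F=\lim_{u}FM_{u}=\lim_{u}M_{R(u)}$. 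You instead prove full faithfulness of $M$ first, by identifying $K(M_{C},M_{C'})\cong\overline{p_{C'}A(\bK)p_{C}}=\Hom_{\bK}(C,C')[C',C]$, and then bootstrap to $\bM M$ via Proposition \ref{stkghosgfgsrgsegs}. Your corner computation is correct and arguably gives more: it describes the compact morphism spaces explicitly and makes the isometry of $M$ transparent, whereas the paper never needs this description. What your route costs is precisely the step you flag: identifying the abstract multiplier extension supplied by Proposition \ref{stkghosgfgsrgsegs} with the concrete functor of Lemma \ref{wkoegwgrregwerewferf}, which is the functor the lemma is actually about. That identification does go through, but not quite via ``strict continuity of $A(\bM\bK)\to MA(\bK)$'' (no such statement is available); the right lever is that the explicit $\bM M$ is strictly continuous on \emph{bounded} subsets --- test against rank-one operators $\theta_{y,x}$ with $y\in p_{C}A^{\alg}(\bK)$ and use that strict convergence $f_{i}\to f$ in $\bM\bK$ gives $f_{i}h\to fh$ in norm for $h$ in $\bK$ --- combined with Remark \ref{qirojegwergwergrweg} (every multiplier is a strict limit of a uniformly bounded net in $\bK$) and the uniqueness clause of Proposition \ref{stkghosgfgsrgsegs}. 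With that verification written out, your proof is complete; the paper's direct construction of the double centralizer avoids this bookkeeping altogether.
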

\begin{proof}
We first show that $ M:\bM\bK\to \Hilb(A(\bK))$ is faithful. Let $f \colon C\to C'$ be a morphism in $\bM\bK$.
 If $M_{f}=0$, then $0=M_{f}(u[C,C])$ for any $u$ in $\End_{\bK}(C)$. This implies $fu=0$ in $\Hom_{\bK}(C,C')$.  Since $u$ is arbitrary, this finally implies that $f  =0$.
 
 As a consequence also $M:\bK\to \Hilb_{c}(A(\bK))$  is faithful.

We now show that $  M:\bM\bK\to \Hilb(A(\bK))$ is full.
Let $F \colon M_{C}\to M_{C'}$ be a morphism in $\Hilb(A(\bK))$.
Then we define a multiplier $(L,R)$ (see Definition \ref{ewirjgoethwfegergewrg}) from $C\to C'$ as follows. 
We  define $L(g):E\to C'$ for any $g:E\to C$ uniquely by 
$L(g)[C',E]:=F(g[C,E])$. Furthermore, for    $h:C'\to D$ we  define  $R(h):C\to D$  by $F^{*}(h^{*}[C',D])=R(h)^{*}[C,D]$.
One checks that $(L,R)$ is indeed an algebraic double centralizer and hence
provides a morphism in $\bM\bK$ by Proposition \ref{wrtohijworthrtwregergwerg}.\ref{oiergjoiwegwrrgwr}. Furthermore, $M(L,R)=F$.
 
Assume now that $F:M_{C}\to M_{C'}$ belongs to $ \Hilb_{c}(A(\bK))$.
If $u$ runs over an approximate unit of $\End_{\bK}(C)$, then
$\lim_{u} F M_{u}= F$ by Lemma \ref{multmult}.  
Now note that  $FM_{u}=M_{R(u)}$
 with $R(u)$ in $\Hom_{\bK}(C,C')$.
 Since $M: \bM \bK\to \Hilb(A(\bK)) $ is fully faithful it is isometric. Consequently  $u\to R(u)$ converges in norm to $\lim_{u}R(u)$ in $\Hom_{\bK}(C,C')$ and 
  $F=M_{\lim_{u} R(u)}$.
  \end{proof}

By Lemma \ref{wetiojgowegfgrefweferf}  we can identify  $\Hilb(A(\bK))$ with $\bM\Hilb_{c}(A(\bK))$ and therefore get a strict topology on the morphism spaces of $\Hilb(A(\bK))$.
Then $\bM M:\bM\bK\to \Hilb(A(\bK)) $   is strictly continuous by 
Proposition \ref{stkghosgfgsrgsegs} since  $M$ is full
  by Lemma \ref{wiotgowfrsfdgsfgefwrefrfwer}.

\begin{proof}[Proof of Theorem \ref{mainyondea}]
Since $\bM M$ is fully faithful and strictly continuous, 
 Corollary \ref{wriojtghowtrgfrefw}   implies that $\bM M$ detects and preserves AV-sums.

 Since $\bW \bM M \colon \bC\to\bW  \Hilb (A(\bK))$ is   fully faithful  by Proposition \ref{rjigowergwregregwfer}  applied to $\phi=\bM M$ it follows from 
 Corollary \ref{weigjiowetgerwerer} that it detects and preserves orthogonal sums in $\bC$.
  \end{proof}

\begin{kor}\label{woiregjoiwergreggergw9333} 
Any small  $C^{*}$-category admits an  AV-sum   preserving embedding into a
large $C^{*}$-category admitting    AV-sums for all small families.
\end{kor}\begin{proof}
For $\bK$ in $\nCcat$ we can take the embedding $M:\bK\to \Hilb_{c}(A(\bK))$.
\end{proof}

\begin{kor}\label{woiregjoiwergreggergw9} 
For every  small  $C^{*}$-category $\bK$  the catgeory $\bW\bM\bK$  admits an  orthogonal sum   preserving embedding into a
large $C^{*}$-category admitting orthogonal sums    for all small families.
\end{kor}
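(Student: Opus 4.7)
The plan is to take as embedding the $W^{*}$-envelope of the Yoneda type functor from Lemma \ref{wkoegwgrregwerewferf}, namely $\bW\bM M \colon \bW\bM\bK \to \bW\Hilb(A(\bK))$. First I would check the required properties of this functor. Since $\bM M \colon \bM\bK \to \Hilb(A(\bK))$ is fully faithful by Lemma \ref{wiotgowfrsfdgsfgefwrefrfwer}, Corollary \ref{wropr} yields that $\bW\bM M$ is fully faithful as well. Moreover, Theorem \ref{mainyondea} asserts precisely that this functor preserves orthogonal sums in $\bW\bM\bK = \bC$. Hence it remains only to identify the target as a large $C^{*}$-category admitting orthogonal sums for all small families.

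For the latter I would observe that the $W^{*}$-envelope construction acts as the identity on objects: the bicommutant of Definition \ref{wrkbhorbgfdbdfgbdfgb} fixes objects, and the $\sigma$-weak closure in Definition \ref{jgiewrgojwerfewrfeff} likewise does not introduce new objects. Consequently every small family of objects in $\bW\Hilb(A(\bK))$ is already a small family of objects in $\Hilb(A(\bK))$. Construction \ref{ejirgowergrefweerf} then produces a classical Hilbert module direct sum within $\Hilb(A(\bK))$, and by Corollary \ref{wrthgrgwefwer} this classical sum already represents an orthogonal sum in $\bW\Hilb(A(\bK))$ in the sense of Definition \ref{erogwfsfdvbsbfdbsdfbsfdbv}. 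Thus $\bW\Hilb(A(\bK))$ admits orthogonal sums for all small families.

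The hard work has already been done upstream in this section; there is no serious obstacle left. The statement is a direct combination of the Yoneda type construction, Corollary \ref{wropr}, Theorem \ref{mainyondea} and Corollary \ref{wrthgrgwefwer}, and the only point requiring attention is the object-set observation that reduces the existence of sums in the $W^{*}$-envelope to the classical existence of Hilbert module sums in $\Hilb(A(\bK))$.
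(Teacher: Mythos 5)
Your proposal is correct and takes exactly the paper's route: the paper's entire proof is the single sentence "We can take the embedding $\bW\bM M: \bW\bM\bK\to \bW\Hilb(A(\bK))$," and your filled-in justifications (Theorem \ref{mainyondea} for sum preservation, Corollary \ref{wrthgrgwefwer} plus the observation that $\bW$ fixes objects for the existence of all small sums in the target) are the intended ones. The only cosmetic remark is that full faithfulness of $\bW\bM M$ is more directly Proposition \ref{rjigowergwregregwfer} applied to $\bM M$ (as in the proof of Theorem \ref{mainyondea}) rather than Corollary \ref{wropr}, though the conclusion is the same.
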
\begin{proof}
We can take the embedding $\bW\bM M: \bW\bM\bK\to \bW\Hilb(A(\bK))$.
\end{proof}
 
}

   We   consider $A(\bK)$   as an object in  $\Hilb(A(\bK))$. In view of the  Definition \ref{eiogjoewrgerfgerrfewfwer}   the algebraic sum $ \bigoplus^{\alg}_{C\in \Ob(\bK)} M_{C}$ is naturally an $A(\bK)$-submodule of $A(\bC)$.
 The sum in the following lemma is the classical sum of Hilbert $C^{*}$-modules explained in Construction \ref{ejirgowergrefweerf}, but note Theorem \ref{ewrgowergwerfgrfwerf}.
\begin{lem} \label{rtiohjirotgwegfwergewgegwerf9}
We have an isomorphism  
$A(\bK)\cong \bigoplus_{C\in  \Ob(\bK)} M_{C}$
in $\Hilb(A(\bK))$.
\end{lem}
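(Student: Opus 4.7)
The plan is to construct an explicit unitary isomorphism
$$\hat\phi\colon \bigoplus_{C\in\Ob(\bK)} M_C \stackrel{\cong}{\longrightarrow} A(\bK)$$
in $\Hilb(A(\bK))$ by extending the obvious summation map on the algebraic direct sum. The key observation is that $M_C = p_C A(\bK)$, where $p_C := 1_C[C,C]$ is the projection in $MA(\bK)$ obtained from Lemma \ref{weirugwergewferfr}, and these projections are mutually orthogonal by the product formula \eqref{fwerferwfwfwrefwrefer}, since $1_C[C,C]\cdot 1_{C'}[C',C']=0$ whenever $C\ne C'$.

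First I would define, on the algebraic direct sum $\bigoplus^{\alg}_{C}M_C$, the $A(\bK)$-linear map
$\phi((m_C)_{C})\coloneqq \sum_{C\in\Ob(\bK)} m_C$ (a finite sum), which is the sum of the inclusions $e_C\colon M_C\hookrightarrow A(\bK)$. To verify that $\phi$ is isometric with respect to the scalar products, note that for $m_C\in M_C=p_C A(\bK)$ and $m'_{C'}\in M_{C'}=p_{C'} A(\bK)$ with $C\ne C'$, writing $m_C=p_C n_C$ and using $p_C p_{C'}=0$, one gets $\langle m_C,m'_{C'}\rangle_{A(\bK)} = m_C^{*}m'_{C'}=n_C^{*}p_C p_{C'}m'_{C'}=0$. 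Hence the cross-terms vanish and the scalar product on $\bigoplus^{\alg}_C M_C$ (i.e.\ $\sum_C m_C^{*}m'_C$) agrees with $\phi((m_C))^{*}\phi((m'_C))$. By the construction of the classical Hilbert module sum (Construction \ref{ejirgowergrefweerf}), $\phi$ then extends continuously to an isometric map $\hat\phi\colon \bigoplus_{C} M_C\to A(\bK)$, which is automatically an adjointable $A(\bK)$-module map with closed image.

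For surjectivity I would use that every generator $f[C',C]$ of $A^{\alg}(\bK)$, with $f\in\Hom_{\bK}(C,C')$, satisfies $p_{C'}\cdot f[C',C]=(1_{C'}f)[C',C]=f[C',C]$ by \eqref{fwerferwfwfwrefwrefer}, so $f[C',C]\in M_{C'}$ lies in the image of $\phi$. Hence $A^{\alg}(\bK)\subseteq \mathrm{Im}(\phi)$, and since $A^{\alg}(\bK)$ is by construction norm-dense in $A(\bK)$, $\hat\phi$ has dense image. Combined with being isometric this yields surjectivity, so $\hat\phi$ is a unitary isomorphism in $\Hilb(A(\bK))$, completing the proof.

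No step here looks genuinely hard: the only thing that must be checked carefully is the vanishing of cross-terms in the scalar product, and this follows immediately from the mutual orthogonality of the projections $p_C$ inside $MA(\bK)$. In particular no appeal to the more delicate results about AV-sums or $W^{*}$-envelopes is needed, although a posteriori this identification combined with Theorem \ref{ewrgowergwerfgrfwerf} also exhibits $(A(\bK),(e_C)_{C})$ as the AV-sum of the family $(M_C)_{C\in\Ob(\bK)}$ in $\Hilb_c(A(\bK))$.
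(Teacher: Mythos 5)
Your proposal is correct and follows essentially the same route as the paper: both arguments identify the algebraic direct sum $\bigoplus^{\alg}_{C}M_{C}$ as a common dense subspace of $A(\bK)$ and of the classical sum, and show the two norms (equivalently, scalar products) agree there because the cross-terms $m_{C}^{*}m_{C'}$ vanish for $C\ne C'$, so that $\|m\|^{2}_{A(\bK)}=\|\sum_{C}m_{C}^{*}m_{C}\|$ matches \eqref{eq_norm_classical_Hilb_sum}. Your write-up merely makes explicit two points the paper leaves implicit (the summation map itself, and the density of $\bigoplus^{\alg}_{C}M_{C}$ in $A(\bK)$ via the generators $f[C',C]\in M_{C'}$), which is fine.
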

\begin{proof}
 We use that $ \bigoplus^{\alg}_{C\in  \Ob(\bK)} M_{C}$ is a dense subspace of both $A(\bK)$ and $\bigoplus_{C\in  \Ob(\bK)} M_{C}$.
Let $m=\oplus_{C}m_{C}
$ be an element of $\bigoplus^{\alg}_{C\in \Ob(\bK)} M_{C}$. Then 
$$\|m\|^{2}_{A(\bK)}=\|m^{*}m\|_{A(\bK)}=\|\sum_{C\in  \Ob(\bK)} m_{C}^{*}m_{C}\|=   \|  \langle m^{*},m\rangle_{ \bigoplus_{C\in  \Ob(\bK)} M_{C}}\|=\|m\|^{2}_{ \bigoplus_{C\in  \Ob(\bK)} M_{C}}\, .$$
By \eqref{eq_norm_classical_Hilb_sum} the right-hand side is the square of the norm of $m$ in the classical sum. 
This equality of norms implies the equality of closures. 
\end{proof}

\begin{rem} For an alternative proof of Lemma \ref{rtiohjirotgwegfwergewgegwerf9}
  one could  observe that 
$$\sum_{C\in \Ob(\bK)} 1_{C}[C,C]=1_{MA(\bK)}$$
in the strict topology of $$ MA(\bK)\cong \End_{\bM\Hilb_{c}(A(\bK))}(A(\bK))\cong \End_{\Hilb(A(\bK))}(A(\bK))\ .$$ In view of Definition \ref{peofjopwebgwregwgreg} this shows that
$A(\bK)$ is the AV-sum of the family $(M_{C})_{C\in \Ob(\bK)}$. Then one can apply Theorem \ref{ewrgowergwerfgrfwerf} in order to deduce the assertion of the lemma.\hB
\end{rem}

We now consider $C^{*}$-categories with a strict $G$-action and study the equivariance of the Yoneda type embedding. We will see that it is not strictly equivariant. But it extends to a weakly invariant morphism in the sense of   Definition \ref{rgjeqrgoieqrgrgqewgfq}.   We furthermore  extend Corollary \ref{woiregjoiwergreggergw9} to the equivariant case and study the
 compatibility of the Yoneda type embedding with equivariant morphisms.

 If $  \bK$ is in $\Fun(BG,\Ccat)$, then  $A(  \bK)$ is in $\Fun(BG,\nCalg)$, and we can consider the large $C^{*}$-category    $\Hilb_{c}(A(\bK))$ with the strict $G$-action as in Example \ref{qregiuheqrigegwegergwegergw}. 
Using Lemma \ref{wetiojgowegfgrefweferf} we will identify $\bM\Hilb_{c}(A(\bK))\cong \Hilb (A(\bK))$ in $\Fun(BG,\Ccat)$.   

\begin{lem}\label{qroihofewfqffqwefqewfqefqef} 
The    Yoneda type embedding
\[
 M:   \bK\stackrel{}{\to}\Hilb_{c}(A(  \bK)) 
\]
defined in \ref{eiogjoewrgerfgerrfewfwer}
extends to a  weakly equivariant  morphism of $C^{*}$-categories.
\end{lem}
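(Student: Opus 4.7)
The plan is to define for every $g$ in $G$ a family of unitary multiplier isomorphisms $\rho(g)_{C}:M_{C}\to g^{-1}M_{gC}$ by leveraging the natural $G$-action on the $C^{*}$-algebra $A(\bK)$. Recall from Remark \ref{rem_defn_A} that $A(\bK)$ is the completion of a $*$-algebra spanned by symbols $f[C',C]$, and since the functoriality of $A$ on $\nCcat_{i}$ applies to the $G$-action on $\bK$ by equivariant (hence object-injective) automorphisms, we obtain a strict $G$-action on $A(\bK)$ by $*$-automorphisms with $\sigma_{g}(f[C',C])=(gf)[gC',gC]$. The submodule $M_{C}=1_{C}[C,C]A(\bK)$ is spanned by the elements $h[C,E]$, and $\sigma_{g}$ carries these bijectively onto the generators $(gh)[gC,gE]$ of $M_{gC}$; thus $\sigma_{g}$ restricts to a $\C$-linear bijection $M_{C}\to M_{gC}$ of underlying vector spaces. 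I will define $\rho(g)_{C}$ to be this underlying bijection, viewed as a map into $g^{-1}M_{gC}$.

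First I would verify that $\rho(g)_{C}$ is a morphism in $\Hilb(A(\bK))$. The right $A(\bK)$-module structure on $g^{-1}M_{gC}$ is $m\cdot_{g^{-1}}a = m\cdot {}^{g}a$, and since $\sigma_{g}$ is multiplicative we get $\sigma_{g}(m\cdot a)=\sigma_{g}(m)\cdot\sigma_{g}(a)=\sigma_{g}(m)\cdot_{g^{-1}}a$, showing $A(\bK)$-linearity. The scalar product on $g^{-1}M_{gC}$ is $\langle m,m'\rangle_{g^{-1}M_{gC}}={}^{g^{-1}}\langle m,m'\rangle_{M_{gC}}$, and using that $\sigma_{g}$ is a $*$-homomorphism we compute $\langle\sigma_{g}(m),\sigma_{g}(m')\rangle_{g^{-1}M_{gC}}={}^{g^{-1}}\sigma_{g}(m^{*}m')=m^{*}m'=\langle m,m'\rangle_{M_{C}}$. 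Hence $\rho(g)_{C}$ is an $A(\bK)$-linear isometry with inverse $\rho(g^{-1})_{gC}$ (after matching structures), so in particular a unitary in $\Hilb(A(\bK))\cong\bM\Hilb_{c}(A(\bK))$, and thus a unitary multiplier morphism in the target category.

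Next I would verify naturality of $\rho(g)=(\rho(g)_{C})_{C\in\Ob(\bK)}$ as a transformation $M\to g^{-1}Mg$. For a morphism $f:C\to C'$ in $\bK$, the morphism $M_{f}$ acts by left multiplication by $f[C',C]$, while $(g^{-1}Mg)_{f}=g^{-1}(M_{gf})$ has underlying map left multiplication by $(gf)[gC',gC]$. Then $\rho(g)_{C'}(M_{f}(m))=\sigma_{g}(f[C',C]\cdot m)=(gf)[gC',gC]\cdot\sigma_{g}(m)=(g^{-1}Mg)_{f}(\rho(g)_{C}(m))$, which gives the required compatibility.

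Finally I would check the cocycle condition $g^{-1}\rho(g')g\circ\rho(g)=\rho(g'g)$. Evaluated at an object $C$, the left side is the composition $g^{-1}(\rho(g')_{gC})\circ\rho(g)_{C}$ whose underlying linear map is $\sigma_{g'}\circ\sigma_{g}=\sigma_{g'g}$, which is precisely the underlying map of $\rho(g'g)_{C}$. The main technical care will be bookkeeping the modifications of right-module structure and scalar product under $g^{-1}$ consistently, but no new analytic input beyond the fact that $\sigma_{g}$ is a $*$-automorphism of $A(\bK)$ is needed.
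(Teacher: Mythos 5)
Your proposal is correct and follows essentially the same route as the paper: the paper likewise defines $\sigma_{C}(g)\colon M_{C}\to M_{gC}$, $m\mapsto {}^{g}m$, checks using the explicit $G$-action on $\Hilb(A(\bK))$ from Example \ref{qregiuheqrigegwegergwegergw} that this is a unitary $gM_{C}\to M_{gC}$ intertwining $M_{f}$ with $M_{gf}$, and sets $\rho(g)=(g^{-1}\sigma_{C}(g))_{C}$ — exactly your $\rho(g)_{C}$. Your explicit verifications of module-linearity, isometry, naturality and the cocycle identity are all correct.
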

\begin{proof} For  $g$  in $G$ and object $C$ in $\bK$ we   define the  isomorphism
  \begin{equation}\label{efwqwefqewfwqdwed}
\sigma_{C}(g): {M_{C}= 1_{C}[C,C]A(\bK) \stackrel{m\mapsto {}^{g}m}{\to} 1_{gC}[gC,gC] A(\bK)=M_{gC}}
\end{equation}  of {complex} vector spaces. 
Using the notation introduced in Example \ref{qregiuheqrigegwegergwegergw} one checks that this isomorphism intertwines  the right $A(\bK)$-action $\cdot$ on $M_{gC}$ with the action $\cdot_{g}$ on $M_{C}$. Furthermore, it  is an isometry {if we equip $M_{gC}$ with the scalar product $\langle-,-\rangle_{M_{gC}}$ and} $ M_{C}$
with the scalar product ${}^{g} \langle -, -\rangle_{M_{C}}$. 
Therefore  $\sigma_{C}(g)$ can be interpreted as a unitary isomorphism between $gM_{C}$ and $M_{gC}$ in $\Hilb(A(\bC))$. 

If $f \colon C\to C'$ is a morphism in $\bK$, then  $$\xymatrix{{g}M_{C}\ar[r]^{g M_{f}}\ar[d]^{\sigma_{C}(g)}&{g}M_{C'}\ar[d]^{\sigma_{C'}(g)}\\M_{gC}\ar[r]^{ M_{gf}}&M_{gC'}}$$ {obviously}
commutes.

This shows that the family $ \rho(g):=(g^{-1}\sigma_{C}(g))_{C\in \Ob(\bK)}$ is a   unitary natural multiplier isomorphism $ M\to  g^{-1} M g$. 
Then $\rho:=( \rho(g))_{g\in G}$ is the family of unitary  multiplier  isomorphisms which extends $  M$ to a weakly equivariant morphism.
\end{proof}

Combining Theorem \ref{mainyondea} and Lemma \ref{qroihofewfqffqwefqewfqefqef} we obtain the desired  equivariant generalization of Corollary \ref{woiregjoiwergreggergw9}. 
 \begin{kor}\label{woiregjoiwergrreggergw9} 
Any small  $C^{*}$-category $\bL$ with strict $G$-action admits a weakly equivariant   orthogonal sum   preserving embedding into a
large $C^{*}$-category $\bK$ with strict $G$-action  such that $\bW\bM\bK$  admits  all small orthogonal sums.
\end{kor}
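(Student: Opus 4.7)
The plan is to construct the desired embedding by combining the non-equivariant Yoneda type embedding with the functoriality of $A$ on the wide subcategory $\nCcat_i$. Given $\bL$ in $\Fun(BG,\Ccat)$, I would first observe that since the action of each $g$ in $G$ on $\bL$ is a bijection on objects, the action maps lie in $\nCcat_i$, so the functor $A$ of \eqref{frewfoirjviojvioeweverwvwev} produces an object $A(\bL)$ in $\Fun(BG,\nCalg)$. The target category is then $\bK \coloneqq \Hilb_c(A(\bL))$, equipped with the induced strict $G$-action described in Example \ref{qregiuheqrigegwegergwegergw}.

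Next I would take the Yoneda type embedding $M\colon \bL \to \Hilb_c(A(\bL))$ from Definition \ref{eiogjoewrgerfgerrfewfwer}. By Lemma \ref{qroihofewfqffqwefqueefqef}, the non-equivariance of $M$ is controlled by the unitaries $\rho(g)_{C} = g^{-1}\sigma_{C}(g)$ defined via the $G$-action on the underlying vector spaces in \eqref{efwqwefqewfwqdwed}, and these assemble into a weakly equivariant structure on $M$ in the sense of Definition \ref{rgjeqrgoieqrgrgqewgfq}. The orthogonal sum preservation property follows directly from Theorem \ref{mainyondea}, which tells us that $M$ (together with its extensions $\bM M$ and $\bW\bM M$) detects and preserves the relevant notions of sums.

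The main point to verify is that $\bW\bM\bK = \bW\Hilb(A(\bL))$ admits orthogonal sums for all small families. Using the identification $\bM\Hilb_c(A(\bL)) \cong \Hilb(A(\bL))$ from Lemma \ref{wetiojgowegfgrefweferf} and the fact that the passage to the $W^*$-envelope does not alter the set of objects, a small family in $\bW\Hilb(A(\bL))$ is just a small family of Hilbert $A(\bL)$-modules. For such a family, the classical orthogonal sum of Hilbert modules (Construction \ref{ejirgowergrefweerf}) exists in $\Hilb(A(\bL))$, and by Corollary \ref{wrthgrgwefwer} this classical sum represents an orthogonal sum of the family in $\bW\Hilb(A(\bL))$ in the sense of Definition \ref{erogwfsfdvbsbfdbsdfbsfdbv}. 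This closes the argument.

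The only mild subtlety, which I would not expect to be a serious obstacle, is bookkeeping the compatibility between the $G$-action on $\bL$, the induced action on $A(\bL)$, the induced action on $\Hilb_c(A(\bL))$, and the formula \eqref{efwqwefqewfwqdwed} for $\rho(g)$, but this is already handled by Lemma \ref{qroihofewfqffqwefqueefqef}. Thus the proof reduces to citing the three ingredients Example \ref{qregiuheqrigegwegergwegergw}, Lemma \ref{qroihofewfqffqwefqueefqef}, and the combination of Theorem \ref{mainyondea} with Corollary \ref{wrthgrgwefwer}.
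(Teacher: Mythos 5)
Your proposal is correct and follows exactly the paper's own route: the paper proves this corollary by combining Lemma \ref{qroihofewfqffqwefqewfqefqef} (the weakly equivariant structure on the Yoneda type embedding $M\colon \bL\to \Hilb_{c}(A(\bL))$ with the $G$-action of Example \ref{qregiuheqrigegwegergwegergw}) with Theorem \ref{mainyondea}, and the existence of all small orthogonal sums in $\bW\Hilb(A(\bL))\cong\bW\bM\bK$ is justified, as you do, via Lemma \ref{wetiojgowegfgrefweferf} and Corollary \ref{wrthgrgwefwer}. No gaps; your write-up just spells out the ingredients the paper cites in one line.
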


A morphism $\phi \colon \bK\to \bL$ in $\nCcat_{i}$
  induces a homomorphism
$A(\phi):A(\bK)\to A(\bL)$ of $C^{*}$-algebras.
The latter induces a functor
$A(\phi)_{*} \colon \Hilb_{c}(A(\bK))\to \Hilb_{c}(A(\bL))$ given by
$$C\mapsto C\otimes_{A(\bK)} A(\bL), \quad f\mapsto f\otimes 1_{A(\bL)}\ ,$$ where $1_{A(\bL)}$ belongs to the multiplier algebra $MA(\bL)$. Note that the functor $A(\phi)_{*}$ depends on the choice of the tensor product and is therefore only unqiue up to a unitary multiplier isomorphism. Furthermore note that
$A(\phi_{*})$ has an obvious extension to the multiplier categories 
$\bM A(\phi_{*}):\Hilb (A(\bK))\to \Hilb (A(\bL))$ given by the same formulas.

     We now assume that $\phi: \bK\to  \bL$ belongs to 
$\Fun(BG,\nCcat_{i})$.   
\begin{lem}\label{wothpwgergewefwef}
The functor 
  $A(\phi)_{*}$ canonically extends to a weakly invariant functor 
such that   the   square 
$$\xymatrix{\bK\ar[r]^{\phi}\ar[d]^{ M^{\bK}}&\bL\ar[d]^{ M^{\bL}}\\ \Hilb_{c}(A(\bK))\ar[r]^{ A(\phi)_{*}}&\Hilb_{c}(A(\bL))}$$ commutes up to a canonical unitary  multiplier  morphism between weakly invariant functors.
\end{lem}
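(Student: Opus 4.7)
The plan is to construct the weak equivariance structure on $A(\phi)_*$ by exploiting the $G$-equivariance of the $C^*$-algebra homomorphism $A(\phi)\colon A(\bK)\to A(\bL)$, and then to exhibit the natural isomorphism filling the square as the canonical evaluation map $D\otimes_{A(\bK)} A(\bL) \to A(\phi)(D)\cdot A(\bL)$ applied at the objects $M^{\bK}_C$. First I would fix, once and for all, a choice of tensor product model so that $A(\phi)_*(D) = D\otimes_{A(\bK)} A(\bL)$ on objects and $A(\phi)_*(f) = f\otimes 1$ on morphisms. Changing this choice only changes everything by a unique unitary multiplier isomorphism, so the construction is canonical up to such an isomorphism.

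Next I would define, for each $D$ in $\Hilb_c(A(\bK))$ and each $g$ in $G$, a unitary $\tau(g)_D\colon g\,A(\phi)_*(D)\to A(\phi)_*(gD)$ by the formula $d\otimes a\mapsto d\otimes {}^{g}a$ on elementary tensors. A direct computation using that $A(\phi)$ intertwines the $G$-actions shows that this is compatible with the twisted tensor product relations: on $g\,A(\phi)_*(D)$ the identity $d\cdot_{g}c \otimes a = d\otimes A(\phi)(c) a$ becomes, after applying $\tau(g)_D$, the identity $d\cdot {}^{g^{-1}}c\otimes {}^{g}a = d\otimes A(\phi)({}^{g^{-1}}c)\cdot {}^{g}a$ in $A(\phi)_*(gD)$, which is just a reindexed form of the original tensor product relation. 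Naturality in $D$ is immediate, and the cocycle condition $g^{-1}\tau(g')g\circ\tau(g)=\tau(g'g)$ follows from the strictness of the $G$-actions on $A(\bK)$ and $A(\bL)$. Setting $\rho'(g)_D := g^{-1}\tau(g)_D$ then equips $A(\phi)_*$ with the structure of a weakly equivariant morphism $(A(\phi)_*,\rho')$, and the extension $\bM A(\phi)_*\colon\Hilb(A(\bK))\to \Hilb(A(\bL))$ inherits the same structure by Proposition \ref{stkghosgfgsrgsegs}.

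For the square, I would use the identifications $M^{\bK}_C = 1_C[C,C]\,A(\bK)$ and $M^{\bL}_{\phi(C)} = 1_{\phi(C)}[\phi(C),\phi(C)]\,A(\bL) = A(\phi)(1_C[C,C])\,A(\bL)$. The evaluation map
\[
\kappa_C\colon A(\phi)_*(M^{\bK}_C) = M^{\bK}_C\otimes_{A(\bK)} A(\bL) \longrightarrow M^{\bL}_{\phi(C)},\qquad m\otimes a\mapsto A(\phi)(m)\cdot a,
\]
is a unitary isomorphism in $\Hilb(A(\bL))$ (this is a standard fact about induction along corner projections, and can be read off from the scalar product calculation $\langle m\otimes a, m'\otimes a'\rangle = a^*A(\phi)(m^*m')a' = (A(\phi)(m)a)^*(A(\phi)(m')a')$). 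Naturality of $\kappa = (\kappa_C)_{C\in\Ob(\bK)}$ in morphisms of $\bK$ follows directly from the definitions, so $\kappa$ is a unitary multiplier natural transformation $A(\phi)_*\circ M^{\bK}\to M^{\bL}\circ \phi$ in the sense of Lemma~\ref{wkoegwgrregwerewferf}. Finally I would check that $\kappa$ is compatible with the weak equivariance structures, i.e.\ that $g^{-1}\kappa_{gC}\circ\rho'(g)_{M^{\bK}_C}\circ A(\phi)_*(\rho^{\bK}(g)_C) = \rho^{\bL}(g)_{\phi(C)}\circ \kappa_C$; this reduces, after unwinding $\tau(g)$ and the maps $\sigma_C(g)$ defined in \eqref{efwqwefqewfwqdwed}, to the equivariance $A(\phi)({}^{g}m) = {}^{g}A(\phi)(m)$ for $m\in A(\bK)$.

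The main obstacle will be bookkeeping the twisted module structures carefully: both $gD$ in $\Hilb_c(A(\bK))$ and $g\,A(\phi)_*(D)$ in $\Hilb_c(A(\bL))$ have the \emph{same} underlying complex vector space as $D$ and $A(\phi)_*(D)$ respectively, but with actions twisted by different groups, so one must be vigilant about which structure is in force at each step. Once this bookkeeping is in order, all the verifications reduce to direct checks on elementary tensors using the $G$-equivariance of $A(\phi)$, with no analytic subtlety involved.
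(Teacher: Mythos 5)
Your proposal is correct and follows essentially the same route as the paper: the weak equivariance structure is given by $\id\otimes A(g)$ on the tensor products (your $\tau(g)_D$ is, after the $g^{-1}$ twist, exactly the paper's $\rho_C(g)=\id_C\otimes A(g)$), and the filler of the square is the evaluation isomorphism $M^{\bK}_C\otimes_{A(\bK)}A(\bL)\cong 1_{\phi(C)}[\phi(C),\phi(C)]A(\bL)$. You in fact supply more detail than the paper on the unitarity of $\kappa_C$ and its compatibility with the equivariance data, which the paper leaves as an unstated check.
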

\begin{proof} 
For every $C$ in $\Hilb_{c}(A(\bK))$ 
we  define   the $\C$-linear  map
$$\rho_{C}(g):=\id_{C}\otimes A(g):C\otimes_{A(\bK)}A(\bL)\to g^{-1}(gC\otimes_{A(\bK)}A(\bL))\ .$$
One checks that it is well-defined and a unitary isomorphism in 
$\Hilb(A(\bL))$. The family
$\rho(g):=(\rho_{C}(g))_{C\in \Hilb(A(\bK))}$ is a unitary natural  multiplier   isomorphism
from $ A(\phi)_{*}$ to $g^{-1} A(\phi)_{*}g$. The family
$(\rho(g))_{g\in G}$ extends $ A(\phi)_{*}$ to a weakly invariant functor.

 Note that the  compositions $A(\phi)_{*}\circ  M^{\bK}$ and $M^{\bL}\circ \phi$ of weakly invariant functors are defined.

For every  object
  $C$ of $\bK$ we have the unitary  isomorphisms in $\Hilb(A(\bL))$ \begin{eqnarray*} 
\lefteqn{\hspace{-3cm}\kappa_{C}:A(\phi)_{*}M^{\bK}(C)\cong 1_{C}[C,C] A(\bK)\otimes_{A(\bK)} A(\bL)\cong
\bM\phi(1_{C}[\phi(C),\phi(C)])A(\bL)} && \\ &&\cong 1_{\phi(C)}[\phi(C),\phi(C)]A(\bL)\cong M^{\bL}  (\phi(C))\ . 
\end{eqnarray*} The family $\kappa:=(\kappa_{C})_{C\in \Ob(\bK)}$
 is the desired unitary multiplier isomorphism filling the 
square.
\end{proof}

\section{Orthogonal sums of functors}\label{qergoijeroigjeroigjqer90gu9384tuergqegerggw}


 {Let $\bC$ be in $\Ccat$ and consider a family $(\phi_{i})_{i\in I}$ of morphisms  $\phi_{i}:\bD\to \bC$. For every object $D$ in $\bD$ we get a family $(\phi_{i}(D))_{i\in I}$ of  objects in $\bC$ and can consider its orthogonal sum  in $\bC$ in the sense of Definition \ref{erogwfsfdvbsbfdbsdfbsfdbv}. In the present section we extend this to the notion of an orthogonal sum of functors $\oplus_{i\in I} \phi_{i}:\bD\to \bC$. In the equivariant case  we assume that 
$\bD,\bC$ belong to $\Fun(BG,\Ccat)$ and the morphisms $\phi_{i}$ are equivariant for all $i$ in $I$.  But due to the non-uniqueness of orthogonal sums of families of objects we can not expect that $ \oplus_{i\in I} \phi_{i}$ is again equivariant. 
But since orthogonal sums are unique up to unique unitary isomorphism by Lemma \ref{rgiojqeiovevqeve9} this sum of functors is still equivariant in a weaker sense. In the present section we discuss
 the details of these considerations. 
 
 We develop the case of AV-sums in $\bK$
 in a parallel manner using the notation from \eqref{qrefqewfdewdqwedwed}  by indicating the necessary modifications in brackets.  In this case $\bC=\bW\bM\bK$.

}

Assume that $\bC$ is in $\Ccat$ [or $\bK$ in $\nCcat$]. 
Let $I$ be a  very small set and assume that $\bC$ admits $I$-indexed orthogonal sums [or $\bK$ admits $I$-indexed AV-sums].


\begin{construction} \label{qergoieqgfreqfewfwefq} {\em 
We  construct a functor
\begin{equation}\label{qwekjbkjwqc}
\bigoplus_{I}:\prod_{I}\bC\to \bC \quad \quad \quad \left[\bigoplus_{I}:\prod_{I}\bM\bK\to \bM\bK\right]
\end{equation}
  as follows:
\begin{enumerate}
\item\label{wgbiwhejrioefrvweerv} objects: For every object $(C_{i})_{i\in I}$ of $\prod_{I}\bC$ [or $\prod_{I}\bM\bK$] we choose an orthogonal  sum  in $\bC$ [or AV-sum in $\bK$] 
\[(\bigoplus_{i\in I}C_{i},(e_{i})_{i\in I})\,.\]
This determines the action of the functor on objects.
\item morphisms: Let   $(C_{i})_{i\in I}$  and $(C'_{i})_{i\in I}$  be objects and   
    $(f_{i})_{i\in I}\colon (C_{i})_{i\in I}\to (C'_{i})_{i\in I}$ be a morphism in $\prod_{I}\bC$ [or $\prod_{I}\bM\bK$], where  $f_{i}\colon C_{i}\to C_{i}'$ [is a multiplier morphism] for all $i$ in $I$. 
Then we have $\sup_{i\in I} \|f_{i}\|<\infty$ and 
   \eqref{ervfdvsdfvfdvfsdvsfdv}  [or Lemma  \ref{wrtjihordbvdfgbdb}] provides the morphism [multiplier morphism]
$$\bigoplus_{I} (f_{i})_{i\in I} :=  \oplus_{i\in I}f_{i} \colon \bigoplus_{i\in I}C_{i}\to \bigoplus_{i\in I}C'_{i}\, .$$
 \end{enumerate} 
 
This construction is compatible with compositions and the involution.
Note that the functor \eqref{qwekjbkjwqc} depends on the choice of the objects representing the orthogonal sums.  By Lemma \ref{rgiojqeiovevqeve9}  [or Proposition \ref{weijgowgwerfwrefwr}] a different choice here leads to a uniquely unitarily isomorphic functor.  
} \hB
\end{construction}
%
%

\begin{construction}\label{eoiqjofcqwecqwc}{\em 
Let $\bD$  {and} $\bC$ be in $\Ccat$ [or $\bK$ and $\bL$ in $\nCcat$], and let $(\phi_{i})_{i\in I}$ be a  family of morphisms  in $\Hom_{\Ccat}(\bD,\bC)$
[or in $\Hom_{\Ccat}(\bM\bL,\bM\bK)$]. We   assume that
 $\bC$ admits $I$-indexed orthogonal sums [or $\bK$ admits $I$-indexed AV-sums]. 
 We  fix a choice for the functor \eqref{qwekjbkjwqc}. 
 We  define the orthogonal sum  
\begin{equation}\label{eiogrbwegergwerg9} \oplus_{i\in I}\phi_{i}:\bD\to \bC\quad\quad \left[\oplus_{i\in I}\phi_{i}:\bM\bL\to \bM\bK\right] \end{equation} of the family $(\phi_{i})_{i\in I}$ as the composition
$$\oplus_{i\in I} \phi_{i}:\bD\xrightarrow{\diag} \prod_{i\in I} \bD\xrightarrow{\prod_{i\in I}\phi_{i}} \prod_{i\in I} \bC\xrightarrow{\bigoplus_{ I}} \bC\, .$$
$$\left[\oplus_{i\in I} \phi_{i}:\bM\bL\xrightarrow{\diag} \prod_{i\in I} \bM\bL\xrightarrow{\prod_{i\in I}\phi_{i}} \prod_{i\in I} \bM\bK\xrightarrow{\bigoplus_{ I}} \bM\bK\, .\right]$$
Again, this sum depends on the choice adopted for $\bigoplus_{I}$. 
A different choice here leads to a uniquely unitarily isomorphic functor.
}\hB \end{construction}

%

{In order to show that a given $C^{*}$-category has trivial $K$-theory one often uses an Eilenberg swindle argument. In the present paper we formalize this using the notion of flasqueness. We   refer to Proposition \ref{ergiwoegwergwergwgrg} below for the application.}

 Let $\bC$ be in $\Ccat$.
 \begin{ddd}\label{bjiobdfbfsferq}
 $\bC$ is flasque  if it is    additive {(see Definition~\ref{ergiehjioferfqffrf})} and admits an endomorphism $S\colon \bC\to \bC$ such that $\id_{\bC}\oplus S$ is unitarily isomorphic to $ S$.
\end{ddd}
We say that $S$ implements flasqueness of $\bC$.
Note that the sum $\id_{\bC}\oplus S$ is defined  by   \eqref{eiogrbwegergwerg9}.


%
%

%

Let     $ \bD, \bC$ be in $\Fun(BG,\Ccat)$ [or $\bL,\bK$ in $\Fun(BG,\nCcat)$, where then $\bM\bL$ and $\bM\bK$ have induced strict $G$-actions].   
Let $I$ be a set
 and $((\phi_i,\rho_{i}))_{i\in I}$ be a family   of weakly  equivariant (Definition \ref{rgjeqrgoieqrgrgqewgfq}) morphisms from $\bD$ to $\bC$ [or from $\bM\bL$ to  $\bM\bK$].
 Assume that $  \bC$   admits $I$-indexed orthogonal  sums [$\bK$ admits $I$-indexed AV-sums].
Then we can construct a morphism \begin{equation}\label{qefqwefewdqd}
\oplus_{i\in I} \phi_{i}\colon  \Res^{G}(\bD) \to \Res^{G}(\bC) \quad\quad \left[\oplus_{i\in I} \phi_{i}\colon  \Res^{G}(\bM\bL) \to \Res^{G}(\bM\bK) 
\right]
\end{equation}
  as in  Construction  \ref{eoiqjofcqwecqwc}. 

\begin{prop} \label{rgoihegiqwefqfqwfqwefqwefq} 
The morphism $\oplus_{i\in I} \phi_{i}$ in \eqref{qefqwefewdqd}  has a canonical refinement to
 a  weakly equivariant  morphism
$$(\oplus_{i\in I} \phi_{i},\theta)\colon \bD\to \bC \quad \quad \left[(\oplus_{i\in I} \phi_{i},\theta):\bM\bL\to \bM\bK
\right]\, .$$ 
 \end{prop}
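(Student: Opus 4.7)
The plan is to define $\theta = (\theta(g))_{g\in G}$ where, for each $g$ in $G$ and each object $D$ of $\bD$ (respectively, each object of $\bM\bL$), the component $\theta(g)_D$ is the orthogonal (respectively AV) sum $\oplus_{i\in I} \rho_i(g)_D$ of the family of unitary multiplier morphisms $\rho_i(g)_D \colon \phi_i(D) \to g^{-1}\phi_i(gD)$ furnished by the weak equivariance of each $\phi_i$. Since each $\rho_i(g)_D$ is unitary, the family $(\rho_i(g)_D)_{i\in I}$ is uniformly bounded by $1$, so \eqref{ervfdvsdfvfdvfsdvsfdv} (respectively Lemma \ref{wrtjihordbvdfgbdb}) applies and produces a (multiplier) morphism
$$\theta(g)_D \colon (\oplus_{i\in I}\phi_i)(D) \to (g^{-1}(\oplus_{i\in I}\phi_i)g)(D)\,.$$
To see that $\theta(g)_D$ is unitary, I would apply the same sum construction to the family of inverses $(\rho_i(g)_D^{-1})_{i\in I}$ and check, via Corollary~\ref{kor_morphisms_sum_characterization}, that the two compositions reduce the identity after pre- and post-composition with the structure isometries of the respective sums.

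A key preliminary observation is that the $G$-action on $\bC$ (respectively $\bM\bK$) is by $*$-isomorphisms, so it automatically preserves orthogonal sums (and AV-sums, by Corollary~\ref{wriojtghowtrgfrefw}). Consequently $(g^{-1}(\oplus_{i\in I}\phi_i)g)(D) = g^{-1} \oplus_{i\in I}\phi_i(gD)$ is indeed an orthogonal (respectively AV) sum of the family $(g^{-1}\phi_i(gD))_{i\in I}$ with structure (multiplier) isometries $g^{-1}e_{i,gD}$, where $e_{i,gD}$ denotes the structure isometry of the chosen sum at $gD$. Thus $\theta(g)_D$ is defined between two genuine orthogonal (respectively AV) sums of index set $I$.

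Next I would verify the naturality of $\theta(g)$. For a morphism $f\colon D \to D'$ in $\bD$, I would show
$$\theta(g)_{D'}\circ (\oplus_{i\in I}\phi_i)(f) = (g^{-1}(\oplus_{i\in I}\phi_i)g)(f) \circ \theta(g)_D$$
by composing both sides on the left with the structure (multiplier) coisometries at index $i$ and on the right with the structure isometries at index $j$; using the characterization in Corollary~\ref{kor_morphisms_sum_characterization} (respectively Lemma~\ref{wejbhviwevfvsdfvfdv}.\ref{wetkgowpegopregwpegwergwreg0}), this reduces to the componentwise identity $\rho_i(g)_{D'} \circ \phi_i(f) = (g^{-1}\phi_i g)(f) \circ \rho_i(g)_D$, which is exactly the naturality of $\rho_i(g)$. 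The cocycle condition $g^{-1}\theta(g')g \circ \theta(g) = \theta(g'g)$ is handled in the same way: after composing with structure isometries it reduces componentwise to the cocycle condition for $\rho_i$.

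The main obstacle will be bookkeeping rather than analysis. Because $\bigoplus_I$ is defined only up to unique unitary (multiplier) isomorphism (Construction~\ref{qergoieqgfreqfewfwefq}.\ref{wgbiwhejrioefrvweerv}), the equalities $g \circ \bigoplus_I = \bigoplus_I \circ (g\times\cdots)$ and the identifications of $(g^{-1}(\oplus_i\phi_i)g)(D)$ with $g^{-1}(\oplus_i \phi_i(gD))$ hold only after inserting the canonical unitary comparison isomorphisms guaranteed by Lemma~\ref{rgiojqeiovevqeve9} (respectively Proposition~\ref{weijgowgwerfwrefwr}). All verifications above must be carried out with these comparison isomorphisms in place, and the uniqueness clauses of those two results are what force the cocycle relation to hold on the nose rather than only up to higher coherence. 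Once this bookkeeping is fixed, the whole construction is canonical: a different choice of $\bigoplus_I$ produces a weakly equivariant morphism uniquely unitarily isomorphic to the one above, so the construction descends to a well-defined refinement.
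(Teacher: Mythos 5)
Your proposal is correct and follows essentially the same route as the paper: both define $\theta(g)_D$ by the characterizing equation $\theta(g)_D e_i = g^{-1}e^{g}_i\circ\rho_i(g)_D$ and reduce naturality and the cocycle condition to the componentwise identities for the $\rho_i$. The only cosmetic difference is that the paper obtains $\theta(g)_D$ and its unitarity in one stroke by observing that $(g^{-1}\bigoplus_{i\in I}\phi_i(gD),(g^{-1}e^{g}_i\circ\rho_i(g)_D)_{i\in I})$ is another orthogonal sum of the same family and invoking Lemma~\ref{rgiojqeiovevqeve9} (resp.\ Proposition~\ref{weijgowgwerfwrefwr} in the AV-case), whereas you build it as $\oplus_{i\in I}\rho_i(g)_D$ and check unitarity separately.
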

\begin{proof}
We discuss the necessary modifications for the AV-case at the end.
 It remains 
 to construct  the family of unitary natural transformations  $\theta$. For  $D$ in $\bD$ we consider  the sum 
 $(\bigoplus_{i\in I}\phi_{i}(D),(e_{i})_{i\in I})$  with $e_{i}\colon \phi_{i}(D) \to \bigoplus_{i\in I}\phi_{i}(D)$
 underlying the construction of  the sum of morphisms in \eqref{qefqwefewdqd}. For  $g$ in $G$  we further consider 
 the object $ gD$  in $\bD$
 and let $(\bigoplus_{i\in I} \phi_{i}(gD), (  e^{g}_{i})_{i\in I} )$ with $e_{i}^{g}:\phi_{i}(gD) \to \bigoplus_{i\in I} \phi_{i}(gD)$  be the corresponding  choice of the sum in $\bC$ going into \eqref{qefqwefewdqd}. 
  Then the    object  $(g^{-1}\bigoplus_{i\in I} \phi_{i}(g D), (g^{-1}  e^{g}_{i}\circ \rho_{i}(g)_{D})_{i\in I} )$ also represents an orthogonal  sum for {the family of  objects} $(\phi_{i}(D))_{i\in I}$.    {From} Lemma \ref{rgiojqeiovevqeve9}
    we get a uniquely determined unitary  isomorphism $$\theta (g)_{D} \colon \bigoplus_{i\in I}\phi_{i}(D)\to g^{-1}\bigoplus_{i\in I} \phi_{i}(g D) $$ such  that  $\theta (g)_{D}e_{i}= g^{-1}  e^{g}_{i}\circ \rho_{i}(g)_{D} $  for all $i$ in $I$. One checks that the family $\theta(g):=(\theta(g)_{D})_{ D\in \bD}$
   is a natural transformation 
\[\theta(g)\colon\oplus_{i\in I} \phi_{i}\to {g^{-1} \big( \oplus_{I} \phi_{i}\big) g}\ .
\] Furthermore, the family $\theta:=(\theta(g))_{g\in G}$
    satisfies the cocycle relation  required in Definition \ref{rgjeqrgoieqrgrgqewgfq}.
     The pair $(\oplus_{i\in I}\phi_{i},\theta)$ is the desired canonical extension of $\oplus_{i\in I}\phi_{i}$ to a weakly equivariant  morphism from $\bD$ to $\bC$.
    [In the AV-case we replace $\bD$ by $\bM\bL$ and $\bC$ by $\bM\bK$.  We apply Proposition \ref{weijgowgwerfwrefwr} in order to
    get the unitary  multiplier morphisms $\theta(g)_{D}$.]
\end{proof}

  \begin{ex}\label{ex_countably_additive_flasque}
   If $\bC$  in $\Ccat$ 
  is countably additive, then it is flasque.
Indeed,   according to   Definition \ref{eoiqjofcqwecqwc}  we can construct   the endofunctor $S:=\oplus_{\nat} \id_{\bC}\colon \bC\to \bC$. 
One easily  finds  a unitary isomorphism between
$\id_{\bC}\oplus S$ and $S$.

Similarly, if $\bK$ is countably AV-additive, then $\bM\bK$ is flasque by the same argument.

If $\bC$  is in $\Fun(BG,\Ccat)$ [or $\bK$ is in $\Fun(BG,\nCcat)$] for some group $G$  such that the underlying $C^{*}$-category admits countable orthogonal sums [or   AV-sums], then by Proposition \ref{rgoihegiqwefqfqwfqwefqwefq} the endomorphism $S$  above can be refined to a weakly invariant morphism such that the isomorphism $\id_{\bC}\oplus S\cong S$  becomes an isomorphism of weakly invariant functors.  This witnesses the fact  that $\bC$ (or $\bM\bK$, respectively) is flasque in the sense of $C^{*}$-categories with $G$-action \cite[Def.~6.16]{KKG}.
\hB
\end{ex}

\section{Reduced crossed products}
\label{sec_reduced_crossed_products}

The maximal crossed product of a $C^{*}$-category with a strict action of a group $G$ {was} introduced and studied in \cite{crosscat}. In the present paper we  will introduce the
 reduced crossed product; it is an important ingredient in the subsequent papers \cite{coarsek}, \cite{KKG} and \cite{bel-paschke}.

 {In the case of a $C^{*}$-algebra with $G$-action
 $A$, as recalled   in  Definition \ref{woigwgewgerg9},}  the reduced norm on the algebraic crossed product $  A \rtimes^{\alg}G$  is induced from a representation on the Hilbert-$A$-module
 $L^{2}(G,  A)$,  see \eqref{qewfljqlwefqwfqewfwf} below. 
  In Definition  \ref{ewgiowegfregfrwegrfwerferf}  we will employ $G$-indexed  orthogonal sums of objects  in order to define   an analogue of this Hilbert $A$-module  for $C^{*}$-categories.    
 The main result of this section can be formulated as follows:
 \begin{theorem}\label{ejgwoierferfewrferfwe}
 There exists a construction of a functor
 $$-\rtimes_{r}G:\Fun(BG,\nCcat)\to \nCcat$$
 which receives a natural transformation
 $i:-\rtimes^{\alg}G\to -\rtimes_{r}G $ in $\nClincat$
 such that $i_{\bK}:\bK\rtimes^{\alg}G\to \bK\rtimes_{r}G $
 has dense image  for every $\bK$ in $\Fun(BG,\nCcat)$
 and whose values on $G$-$C^{*}$-algebras coincide with the classical reduced crossed products.  Furthermore, the functor $-\rtimes_{r}G$ preserves faithful morphisms and  fully  faithful morphisms.
 \end{theorem}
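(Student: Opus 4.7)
The plan is to construct $-\rtimes_{r}G$ by mimicking, at the categorical level, the classical construction of the reduced crossed product of a $C^{*}$-algebra via its regular representation on $L^{2}(G,A)$. The essential ingredient, beyond what is already in place for $C^{*}$-algebras, is the theory of infinite orthogonal sums developed in Sections \ref{qroifjqerofwefewfefwqfe}--\ref{wtogwepgfereggwrferf}.

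First I would fix $\bK$ in $\Fun(BG,\nCcat)$ and build an auxiliary $C^{*}$-category $\bL^{2}(G,\bK)$. By Lemma \ref{qroihofewfqffqwefqewfqefqef} the Yoneda type embedding $M\colon\bK\to \Hilb_{c}(A(\bK))$ is fully faithful and weakly equivariant, and by Corollary \ref{woiregjoiwergreggergw9} and its proof the $W^{*}$-envelope $\bW^{\mathrm{nu}}\bK$ admits all small orthogonal sums (interpreted inside $\bW\Hilb(A(\bK))$). For each object $C$ of $\bK$ I choose an orthogonal sum $(\bigoplus_{g\in G}gC,(e_{g}^{C})_{g\in G})$ in $\bW^{\mathrm{nu}}\bK$. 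I then define $\bL^{2}(G,\bK)$ to have the objects of $\bK$ and morphism spaces
\[\Hom_{\bL^{2}(G,\bK)}(C,C')\coloneqq \Hom_{\bW^{\mathrm{nu}}\bK}\bigl(\textstyle\bigoplus_{g\in G}gC,\bigoplus_{g\in G}gC'\bigr).\]
Composition and involution are those of $\bW^{\mathrm{nu}}\bK$; different choices for the sums give canonically isomorphic categories by Lemma \ref{rgiojqeiovevqeve9}. Functoriality of this construction in $\bK$ follows from Corollary \ref{wropr} and Lemma \ref{wothpwgergewefwef} applied to $\bW^{\mathrm{nu}}$.

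Next I would define the regular representation $\pi_{\bK}\colon\bK\rtimes^{\alg}G\to\bL^{2}(G,\bK)$. It is the identity on objects, and for a generator $f\cdot h$ with $f\colon C\to C'$ in $\bK$ and $h$ in $G$ it is the unique morphism $\pi_{\bK}(f\cdot h)\colon\bigoplus_{g}gC\to \bigoplus_{g}gC'$ determined (via Proposition \ref{lem_sum_characterization_morphisms}) by $(e_{gh}^{C'})^{*}\pi_{\bK}(f\cdot h) e_{g}^{C}=gf$ and all other matrix entries zero; here I use the weak equivariance structure on $\bK\hookrightarrow\bW^{\mathrm{nu}}\bK$ to realise the left translation. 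A direct verification using Corollary \ref{kor_morphisms_sum_characterization} shows that $\pi_{\bK}$ is a morphism in $\nClincat$. The reduced seminorm on $\bK\rtimes^{\alg}G$ is then $\|x\|_{r}\coloneqq\|\pi_{\bK}(x)\|$. Since $\pi_{\bK}$ is a $*$-functor into a $C^{*}$-category, $\|-\|_{r}$ satisfies the $C^{*}$-identity and the strong $C^{*}$-inequality; I define $\bK\rtimes_{r}G$ as the separated Hausdorff completion (after quotienting by $\ker\|-\|_{r}$). The canonical map $i_{\bK}\colon\bK\rtimes^{\alg}G\to\bK\rtimes_{r}G$ is a morphism in $\nClincat$ with dense image by construction.

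For functoriality and naturality of $i$, given an equivariant $\phi\colon\bK\to\bL$, Lemma \ref{wothpwgergewefwef} gives a morphism $\bW^{\mathrm{nu}}\phi$ compatible (up to canonical unitary multiplier isomorphism) with the chosen sums; this induces a strict intertwiner between $\pi_{\bK}$ and $\pi_{\bL}\circ(\phi\rtimes^{\alg}G)$, whence $\|(\phi\rtimes^{\alg}G)(x)\|_{r}\le\|x\|_{r}$ and we obtain $\phi\rtimes_{r}G$ by continuous extension. Uniqueness implies functoriality. If $\phi$ is faithful then $\bW^{\mathrm{nu}}\phi$ is faithful by Corollary \ref{wropr}, and a diagonal/amplification argument on the matrix entries against the sums shows $\phi\rtimes_{r}G$ is faithful; the fully faithful case is analogous, combining this with the density of $i$ and the fact that $\phi\rtimes^{\alg}G$ is already fully faithful on $\bK\rtimes^{\alg}G$. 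Comparison with the classical reduced crossed product for a $G$-$C^{*}$-algebra $A$ follows because in that one-object case the chosen orthogonal sum in $\bW^{\mathrm{nu}}A$ recovers (by Theorem \ref{ewrgowergwerfgrfwerf} and the identification $A\cong\End_{\Hilb_{c}(A)}(A)$) the standard Hilbert module $L^{2}(G,A)$, and $\pi_{A}$ is the usual regular representation.

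The main obstacle I anticipate is verifying cleanly that the representation $\pi_{\bK}$ lands in $\bL^{2}(G,\bK)$ and is well-defined, i.e.\ that the prescribed matrix coefficients $(gf)$ indexed by $G\times G$ really assemble into a morphism between the orthogonal sums in $\bW^{\mathrm{nu}}\bK$. This is where Proposition \ref{lem_sum_characterization_morphisms} and the square summability estimates of Corollary \ref{qergijoqregqewfewfqewfwef1} are used, together with the fact that only finitely many matrix blocks are nonzero for a given element of $\bK\rtimes^{\alg}G$, so summability is automatic. A secondary technical point is that the functor $\bL^{2}(G,-)$ is only well-defined up to unitary isomorphism because of the choices of sums, but all subsequent constructions are insensitive to these choices, so $-\rtimes_{r}G$ itself is an honest functor with values in $\nCcat$.
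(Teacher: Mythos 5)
Your overall strategy — induce the reduced norm from a regular representation on a category $\bL^{2}(G,-)$ built out of $G$-indexed orthogonal sums — is exactly the paper's, but there is a genuine gap at the very first step. You choose, for each object $C$ of $\bK$, an orthogonal sum $(\bigoplus_{g\in G}gC,(e^{C}_{g})_{g\in G})$ \emph{in} $\bW^{\mathrm{nu}}\bK$, citing Corollary \ref{woiregjoiwergreggergw9}. That corollary does not say $\bW^{\mathrm{nu}}\bK\cong\bW\bM\bK$ admits small sums; it only produces a sum-preserving fully faithful embedding $\bW\bM\bK\to\bW\Hilb(A(\bK))$ into a larger category that has them. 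Since $\bW^{\mathrm{nu}}\bK$ has the same objects as $\bK$, there is in general no object of it representing $\bigoplus_{g\in G}gC$ (already for a one-object $\bK$, i.e.\ a $G$-$C^{*}$-algebra $A$, the envelope is $A^{**}$ and certainly contains no such sum — the classical $L^{2}(G,A)$ lives in $\Hilb(A)$, not in $A^{**}$). So your morphism spaces $\Hom_{\bW^{\mathrm{nu}}\bK}(\bigoplus_{g}gC,\bigoplus_{g}gC')$ are not defined. This is precisely why the paper's proof is a multi-stage bootstrap: it first constructs $\bK\rtimes_{r}G$ under the standing hypothesis that $\bW\bM\bK$ admits sums of cardinality $|G|$, and only then extends to general $\bK$ by pulling the norm back along fully faithful embeddings.

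If you repair this by forming the sums inside $\bW\Hilb(A(\bK))$ (which is legitimate), the remaining work you elide is exactly the hard part of the theorem. First, well-definedness: the induced norm must be shown independent of the ambient category in which the sums are formed — this is Lemma \ref{qefqwefewqfq}, which rests on the nontrivial Proposition \ref{woijotrgwergeferfwerfewrf} asserting that $\Hom$-spaces between sums are determined by matrix entries in the subcategory. Second, the Yoneda embedding $M$ is only \emph{weakly} equivariant (Lemma \ref{qroihofewfqffqwefqewfqefqef}), so your matrix-coefficient formula $(e^{C'}_{gh})^{*}\pi(f\cdot h)e^{C}_{g}=gf$ does not literally make sense in $\Hilb(A(\bK))$ without inserting the cocycle $\rho(g)$, and naturality in $\bK$ requires the cofibrant replacement $Q(-)$ of Lemma \ref{eoigjosegbsfdgsertsfd} to trade weak equivariance for strict equivariance. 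Third, $A(-)$ and the mapping of matrix entries are only functorial for morphisms injective on objects, so functoriality for a general $\phi$ needs the factorization through the auxiliary category used in Lemma \ref{wkoetgerfregwr}. Finally, your faithfulness argument misquotes Corollary \ref{wropr}, which requires \emph{fully} faithful input; the paper instead deduces preservation of faithful morphisms from the isomorphism $A(\bK\rtimes_{r}G)\cong A(\bK)\rtimes_{r}G$ of Theorem \ref{weoigjowergerwegergwerg} together with the corresponding fact for $C^{*}$-algebras.
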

  Most of the remainder of this section is devoted to the statements and proofs  of  various  partial results which all together imply this  theorem.      
   We further show that the reduced crossed product commutes with the functor $A$ from \eqref{frewfoirjviojvioeweverwvwev}, that for exact groups it preserves exact sequences and that  for amenable groups $G$ the canonical morphism from the maximal to the reduced crossed product is an isomorphism. Finally we show that for any subgroup $H$ of $G$ there is an isometric natural transformation $$\Res^{G}_{H}(-)\rtimes_{r}H\to (-)\rtimes_{r}G$$  of functors from $\Fun(BG,\nCcat)$ to $\nCcat$  
   extending the obvious natural transformation between the corresponding algebraic crossed products.

 We consider  $  \bK$   in $\Fun(BG,\nCcat)$. Note that $G$ acts by fully faithful morphisms on $\bK$ so that    Proposition \ref{stkghosgfgsrgsegs}  provides an extension of this action by unital and strictly continuous morphisms to the multiplier category $\bM\bK$ which then belongs to $\Fun(BG,\Ccat)$.
 We finally apply  the functor $\bW$ from the Theorem \ref{rijogegergwerg9}
 in order to define  $\bC:=\bW\bM\bK$ in $\Fun(BG,\Wcat)$. By construction,
 the group $G$ acts on $\bC$ by normal morphisms. We thus 
 get an equivariant analogue of \eqref{qrefqewfdewdqwedwed}
$$\bK\subseteq\bM\bK\subseteq \bC \ .$$
The $G$-actions  on these categories are   implemented by a  family $(g)_{g\in G}$ of isomorphisms in the respective category.
 
  We now assume that $  \bC$ admits orthogonal sums of cardinality $|G|$. 
  Then we can apply   Construction \ref{eoiqjofcqwecqwc}  in order to define
  an {endomorphism} $$\oplus_{g\in G}  g\colon \bC\to \bC\, .$$ 
  
 Note that the category $\Ccat$ of  unital $C^{*}$-categories admits all limits (see \cite{DellAmbrogio:2010aa} or \cite[Thm.~8.1]{startcats} for an argument), so in particular  pull-backs.
  \begin{ddd}\label{ewgiowegfregfrwegrfwerferf}
  We define the category $\Lzwei(G,  \bC)$ as the pull-back in $\Ccat$
  \begin{equation}\label{fevrfervwevwevefvfvs}
\xymatrix{\Lzwei(G,  \bC)\ar[rr]\ar[d]^{!}&&\bC\ar[d]^{\eqref{sgbsfdvfsvfvdfvsv}}\\ 
0[\Ob( \bC)]\ar[rr]^{0[\Ob(\oplus_{g\in G}  g )]}&&0[\Ob(\bC)] }
\end{equation}
  \end{ddd}

\begin{rem}\label{rem_explicit_description_Lzwei}
We have the following explicit description of $\Lzwei(G,  \bC)$:
\begin{enumerate}
\item\label{wrtoijhwrthtgwerregw} objects:  The set of objects  of $\Lzwei(G, \bC)$ is  canonically identified with the set of    objects of $\bC$ using the arrow marked by $!$ in \eqref{fevrfervwevwevefvfvs}.
\item\label{wrtoijhwrthtgwerregw1} morphisms: The definition of the sum $\oplus_{g\in G} g$ involves the choice of an object  $(\oplus_{g\in G } gC, (e^{C}_{g})_{g\in G })$ for every object $C$ of $\bC$. The upper horizontal arrow in \eqref{fevrfervwevwevefvfvs}    then identifies the space of morphisms from $C$ to $C'$ in $\Lzwei(G,\bC)$   as follows:
\begin{equation}
\label{eq_morphisms_LtwoG}
\Hom_{\Lzwei(G,   \bC)}(C,C') \cong \Hom_{\bC}\big(\oplus_{g\in G}gC,\oplus_{g\in G}gC'\big)\,.
\end{equation}
\item The composition and the involutions are inherited from $\bC$.\hB
\end{enumerate}
The upper horizontal arrow in \eqref{fevrfervwevwevefvfvs}  is a fully faithful inclusion of 
  $\Lzwei(G,   \bC)$ into the $W^{*}$-category $\bC$. Therefore   $\Lzwei(G,   \bC)$ is itself a $W^{*}$-category.
\end{rem}

Using the universal property of the pull-back defining  $\Lzwei(G, \bC)$ we construct a  {morphism}
$\sigma\colon\bC\to \Lzwei(G,  \bC)$  in $\Ccat$ based on the following diagram: 
   \begin{equation}\label{erglkmnelfweqfwefqewfq}
\xymatrix{\bC\ar@{..>}[dr]^-{\sigma}\ar@/_1pc/[ddr]_-{\eqref{sgbsfdvfsvfvdfvsv}}\ar@/^1pc/[rrrd]^-{\oplus_{g\in G} g}&&\\
&\Lzwei(G,  \bC)\ar[rr]\ar[d] &&\bC\ar[d]^{\eqref{sgbsfdvfsvfvdfvsv}}\\ &
0[\Ob(\bC)]\ar[rr]^{0[\Ob(\oplus_{g\in G} g)] }&&0[\Ob(\bC)] }
\end{equation}

\begin{rem}\label{rem_explicit_description_sigma} 
Using the explicit description of $\Lzwei(G,   \bC)$ given in Remark~\ref{rem_explicit_description_Lzwei} we can give an explicit description of the {morphism} $\sigma$:
\begin{enumerate}
\item objects: In view of the left   triangle in \eqref{erglkmnelfweqfwefqewfq}    
 the action of $\sigma$  on objects is the identity under the identification in Remark
\ref{rem_explicit_description_Lzwei}.\ref{wrtoijhwrthtgwerregw}.
 \item morphisms: Using the right  triangle in  \eqref{erglkmnelfweqfwefqewfq}  and   Remark  \ref{rem_explicit_description_Lzwei}.\ref{wrtoijhwrthtgwerregw1}  we see  {that} $\sigma$ sends a morphism $f\colon C\to C'$ to the morphism
$$\oplus_{g\in G} gf\colon \bigoplus_{g\in G}gC\to\bigoplus_{g\in G}gC'$$ in $\Lzwei(G,   \bC)$. Note that one can write this also as
\begin{equation}\label{trwiohiovfjdovsdfvfdvsdfvsfdv}
\sigma(f)=\sum_{g\in G}   e^{C'}_{g} g(f) e^{C,*}_{g}\ ,
\end{equation}
\end{enumerate}
where $(e^{C}_{g})_{g \in G}$ and $(e^{C'}_{g})_{g \in G}$ are the {families of} isometries from the choices of the orthogonal sums $(\bigoplus_{g\in G} gC, (e^{C}_{g})_{g\in G})$ and $(\bigoplus_{g\in G} gC', (e^{C'}_{g})_{g\in G})$.
The morphism $\sigma(f)$    is an instance of  \eqref{ervfdvsdfvfdvfsdvsfdv}.
\hB
\end{rem}

We next recall the notion of a covariant representation  \cite[Defn.~5.4]{crosscat} of $  \bC$   on an object $\bD$ in $\nCcat$.  For this definition  $\bC$ can be any object in $\Fun(BG,\nClincat)$.
\begin{ddd}\label{ergiuqehrguiergwergewferfwrefwref}
A covariant  representation of $ \bC$  on $\bD$ is a pair $(\sigma,\pi)$  consisting of:
\begin{enumerate}
\item a {morphism} $\sigma\colon\bC\to \bD$ {(in $\nClincat)$}
\item \label{iuhuierhuihriuhgqergrgqre} a family $\pi=(\pi(g))_{g\in G}$ of unitary natural multiplier isomorphisms  $\pi(g)\colon\sigma\to g^{*}\sigma $ such that $g^{*}\pi(g')\circ \pi(g)=\pi(g'g)$ for all $g,g'$ in $G$.
\end{enumerate}
\end{ddd}

\begin{rem}
The Definition \ref{ergiuqehrguiergwergewferfwrefwref} is slightly more general than \cite[Defn.~5.4]{crosscat} since here  we allow that $\pi$ takes values in multiplier morphisms instead of  just morphisms in $\bD$. The difference is relevant in the case wher $\bD$ is non-unital.  \hB  \end{rem}

Recall that $\bC=\bW\bM\bK$ is an object of $\Fun(BG,\Ccat)$.

 \begin{lem}
\label{qeriuhiqerfqwefqwfqwefqewf}
The morphism $\sigma\colon\bC\to \Lzwei(G,  \bC)$ has a canonical extension to a covariant representation $(\sigma,\pi)$ of $  \bC$ on $\Lzwei(G,  \bC)$.
\end{lem}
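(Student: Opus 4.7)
The plan is to construct, for every $g$ in $G$, a unitary natural transformation $\pi(g)\colon \sigma \to g^{*}\sigma$ of morphisms $\bC \to \Lzwei(G,\bC)$, and then check the cocycle condition from Definition \ref{ergiuqehrguiergwergewferfwrefwref}.\ref{iuhuierhuihriuhgqergrgqre}. Since $\Lzwei(G,\bC)$ is a $W^{*}$-category (as a fully faithful subcategory of $\bC$ via the top arrow in \eqref{fevrfervwevwevefvfvs}), multiplier morphisms in $\Lzwei(G,\bC)$ coincide with honest morphisms, so it suffices to produce unitary morphisms.

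Fix $g$ in $G$ and an object $C$ of $\bC$. Using the choices $(\bigoplus_{h\in G} hC,(e^{C}_{h})_{h\in G})$ and $(\bigoplus_{h\in G} hgC,(e^{gC}_{h})_{h\in G})$ made in Construction \ref{qergoieqgfreqfewfwefq}, the family of isometries $(e^{gC}_{hg^{-1}})_{h\in G}$ of morphisms $e^{gC}_{hg^{-1}}\colon hC \to \bigoplus_{h'\in G} h'gC$ (note $hg^{-1}(gC)=hC$) is mutually orthogonal and uniformly bounded. By Corollary \ref{qergijoqregqewfewfqewfwef1}.\ref{qerjgqkrfewfewfqfefqef11111} there is a unique morphism
\[\pi(g)_{C}\colon \bigoplus_{h\in G} hC \to \bigoplus_{h\in G} hgC\]
in $\bC$ with $\pi(g)_{C}\, e^{C}_{k}=e^{gC}_{kg^{-1}}$ for all $k$ in $G$. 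By Lemma \ref{rgiojqeiovevqeve9} applied to the two presentations of the orthogonal sum of the family $(hC)_{h\in G}$ (one using $(e^{C}_{h})_{h}$, the other using the reindexed family $(e^{gC}_{hg^{-1}})_{h}$), this $\pi(g)_{C}$ is unitary. Via the identification \eqref{eq_morphisms_LtwoG} we regard $\pi(g)_{C}$ as a unitary morphism $C\to gC$ in $\Lzwei(G,\bC)$, and we set $\pi(g)\coloneqq(\pi(g)_{C})_{C\in \Ob(\bC)}$.

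Naturality of $\pi(g)$ amounts to the identity $\pi(g)_{C'}\circ \sigma(f)=\sigma(gf)\circ \pi(g)_{C}$ for every morphism $f\colon C\to C'$ in $\bC$, where both sides are morphisms $\bigoplus_{h} hC \to \bigoplus_{h} hgC'$ in $\bC$. By Corollary \ref{kor_morphisms_sum_characterization}.\ref{item_morphisms_sum_characterization_both} it is enough to compare matrix components $(e^{gC'}_{hg^{-1}})^{*}(-)e^{C}_{k}$ for all $h,k$ in $G$; using the explicit formula \eqref{trwiohiovfjdovsdfvfdvsdfvsfdv} for $\sigma$, the mutual orthogonality of the $e^{C}_{*}$ and $e^{gC'}_{*}$, and the strictness of the $G$-action on $\bC$, both sides reduce to $\delta_{h,k}\, hf$. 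The cocycle identity $g^{*}\pi(g')\circ \pi(g)=\pi(g'g)$ is verified in the same way: reading off matrix components using $(e^{g'gC}_{h(g'g)^{-1}})^{*}(-)e^{C}_{k}$, both sides collapse to $\delta_{h,k}\,\id_{hC}$ thanks to the associativity of the $G$-action and $(g'g)^{-1}=g^{-1}g'^{-1}$.

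The only genuine technical obstacle is the bookkeeping with the arbitrary choices of representatives of the orthogonal sums; but because orthogonal sums are unique up to unique unitary isomorphism (Lemma \ref{rgiojqeiovevqeve9}) and morphisms between them are completely determined by their matrix components (Corollary \ref{kor_morphisms_sum_characterization}), all the relevant equalities reduce to identities in $G$ that hold by strictness of the action. This yields the canonical covariant representation $(\sigma,\pi)$ of $\bC$ on $\Lzwei(G,\bC)$.
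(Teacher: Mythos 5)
Your proof is correct and follows essentially the same route as the paper: your $\pi(g)_{C}$, characterized by $\pi(g)_{C}e^{C}_{k}=e^{gC}_{kg^{-1}}$, is exactly the paper's unitary $\sum_{k\in G}e^{gC}_{k}e^{C,*}_{kg}$ after reindexing, and your matrix-component verifications of naturality and the cocycle identity are precisely the checks the paper declares ``straightforward.'' No gaps.
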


\begin{proof}
We will use the explicit descriptions of $ \Lzwei(G, \bC)$ and $\sigma$ given  in
  Remarks~\ref{rem_explicit_description_Lzwei} and \ref{rem_explicit_description_sigma}. We must describe $\pi$.
For every object $C$ of $\bC$ we define, applying Corollary 
\ref{qergijoqregqewfewfqewfwef1}.\ref{qerjgqkrfewfewfqfefqef111} 
to the family
$(e^{C,*}_{gh})_{g\in G}$ of morphisms $ e^{C,*}_{gh} \colon \bigoplus_{g\in G}g C\to ghC$, the morphism
\begin{equation}\label{dfsviuhqr3iugerefsv}
\pi(h)_{C} \coloneqq \sum_{g\in G} e^{hC}_{g}e^{C,*}_{gh}\colon \bigoplus_{g\in G}g C\to  \bigoplus_{g\in G}  g hC\, .
\end{equation} 
It is straightforward to check that 
the family $\pi(h) \coloneqq (\pi(h)_{C})_{C\in \Ob(\bC)}$ is a unitary natural transformation from $ \sigma$ to $h^{*}\sigma$. One checks  furthermore   that the family $\pi \coloneqq (\pi(h)_{C})_{h\in G}$ satisfies the cocycle condition in Definition \ref{ergiuqehrguiergwergewferfwrefwref}.\ref{iuhuierhuihriuhgqergrgqre}.
\end{proof}

%
%

%


   According to \cite[Defn.~5.1]{crosscat} we can form  {the} 
   algebraic crossed product
$$  \bK\rtimes^{\alg}G$$ in $\Clincat$. 
Instead of repeating the definition of the crossed product we proceed   with observing that 
by  \cite[Lem.~5.7]{crosscat} the covariant representation 
$(\sigma,\pi)$ from Lemma \ref{qeriuhiqerfqwefqwfqwefqewf}  induces a   {morphism} 
\begin{equation}\label{ewrgfviubiuhuihfefwerfwerfw}
\rho\colon  \bK\rtimes^{\alg}G\to \Lzwei(G,  \bC)\, .
\end{equation} 
In our situation this functor is  wide and faithful, and we can describe the algebraic crossed product $  \bK\rtimes^{\alg}G$  directly
as a $\C$-linear $*$-subcategory of $\Lzwei(G, \bC)$:
\begin{enumerate}
\item objects: The {set of} objects of $  \bK\rtimes^{\alg}G$ {is the set of}  objects of $\bK$ and hence of $\Lzwei(G,  \bC)$.
\item morphisms: The $\C$-vector space of morphisms $ \Hom_{ \bK\rtimes^{\alg}G}(C,C')$ is {linearly} generated as a subspace of $\Hom_{\Lzwei(G,  \bC)}(C,C')$   by the morphisms \begin{equation}\label{qewfiuhuiqhefiuqwefqewfeqwfwefqwf}
(f,g):=\rho(f,g)=\pi(g)_{g^{-1}C'}\sigma(f)
\end{equation}  for all $g$ in $G$ and $f\colon C\to g^{-1}C'$ in $\bK$.\label{item_morphisms_reduced_crossed_product}
\item The composition and the involution are inherited from $\Lzwei(G,  \bC)$.
\end{enumerate}
One easily checks using the algebraic relations for a covariant representation  that this describes a well-defined subcategory which is equivalent to the algebraic crossed product $ \bK\rtimes^{\alg}G$ defined in \cite[Defn.~5.1]{crosscat}.

For a morphism $(f,g):C\to C'$ in    $\bK\rtimes^{\alg}G$ we calculate, using the formulas \eqref{dfsviuhqr3iugerefsv} for $\pi $ and \eqref{trwiohiovfjdovsdfvfdvsdfvsfdv} of $\sigma$,  that  
\begin{equation}\label{evidfnviojafoivsdvasdvasdvasdv}
\rho(f,g) = 
\sum_{\ell\in G}e^{C'}_{\ell}  (\ell g )f e^{C,*}_{\ell g}\, .
\end{equation}

 \begin{rem} \label{wetkgijowegfregrwegerf}
The morphism $\pi(h)_{C}$  constructed in the proof of Lemma \ref{qeriuhiqerfqwefqwfqwefqewf}  and  $\rho(f,g)$ from \eqref{evidfnviojafoivsdvasdvasdvasdv}    are   morphisms in $\bC$. 

If $\bK$ admits AV-sums  of cardinality $|G|$, then  in view Theorem  \ref{wergijowergerfwrefwerf} we can choose AV-sums in the definition  of the morphism $\oplus_{g\in G}g$. In this case
  one can check using  Lemma \ref{wrtjihordbvdfgbdb} that $\pi(h)_{C}$ and  $\rho(f,g)$  actually belong to $\bM\bK$.  
  
   But note that 
  this property is not invariant under changes of the choices involved
  in the construction of $\Lzwei(G,\bC)$.  \hB
\end{rem}



Recall that our standing hypothesis is that $\bC=\bW\bM\bK$ admits sums of cardinality $|G|$.
\begin{ddd}\label{ewtiojgwergerggwggr}
The reduced crossed product $  \bK\rtimes_{r}G$ {is} {defined to be} the closure of  
$ \bK\rtimes^{\alg}G$ with respect to the norm induced by the  representation $\rho$ {in \eqref{ewrgfviubiuhuihfefwerfwerfw}.}
\end{ddd}

Equivalently, $  \bK\rtimes_{r}G$ is the closure of $ \bK\rtimes^{\alg}G$ viewed as a subcategory of  $\Lzwei(G, \bC)$. It follows from the uniqueness
of orthogonal sums up to unique unitary  isomorphism that the
  reduced crossed product is well-defined independently of the choices involved in the   construction of $\Lzwei(G,\bC)$ and  $\rho$.



We let $\nCcat_{s\add}$ denote the full subcategory of $\nCcat$ of categories $\bK$ with the property that $\bW\bM\bK$  admits  all very  small orthogonal sums.
\begin{lem}\label{qrgiojfwewfqdedqwedqwed} 
The construction of the reduced crossed product has a canonical extension to a  functor
\[- \rtimes_{r}G \colon \Fun(BG,\nCcat_{s\add})\to \nCcat\, .\]
The functor preserves fully faithfulness of morphisms.
\end{lem}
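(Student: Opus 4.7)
Let $\phi\colon \bK\to \bL$ be an equivariant morphism in $\Fun(BG,\nCcat_{s\add})$; I would first construct the candidate morphism $\phi\rtimes_{r}G$ via a naturality square for the representation $\rho$ from \eqref{ewrgfviubiuhuihfefwerfwerfw}. Set $\bC:=\bW^{\mathrm{nu}}\bK$ and $\bD:=\bW^{\mathrm{nu}}\bL$. Applying the functor $\bW^{\mathrm{nu}}$ constructed after Definition \ref{jgiewrgojwerfewrfeff} gives an equivariant normal morphism $\bW^{\mathrm{nu}}\phi\colon\bC\to\bD$, and by Corollary \ref{ewtkohijorthrhdrhrhrdth} this morphism preserves the $G$-indexed orthogonal sums. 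Hence for every object $C$ of $\bC$ the pair $(\bW^{\mathrm{nu}}\phi(\bigoplus_{g}gC),(\bW^{\mathrm{nu}}\phi(e_{g}^{C}))_{g\in G})$ is an orthogonal sum of the family $(g\phi(C))_{g\in G}$ in $\bD$; by Lemma \ref{rgiojqeiovevqeve9} I obtain a canonical unitary $u_{C}\colon\bW^{\mathrm{nu}}\phi(\bigoplus_{g}gC)\to \bigoplus_{g}g\phi(C)$ with $u_{C}\bW^{\mathrm{nu}}\phi(e_{g}^{C})=e_{g}^{\phi(C)}$.

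Using these unitaries I would define a morphism $\Lzwei(G,\phi)\colon\Lzwei(G,\bC)\to\Lzwei(G,\bD)$ in $\Ccat$: on objects $C\mapsto\phi(C)$, and on a morphism $f\colon C\to C'$, identified via \eqref{eq_morphisms_LtwoG} with a morphism $\tilde{f}\colon\bigoplus_{g}gC\to\bigoplus_{g}gC'$ in $\bC$, the assignment is $u_{C'}\circ \bW^{\mathrm{nu}}\phi(\tilde{f})\circ u_{C}^{*}$. Functoriality $\Lzwei(G,\psi\circ\phi)=\Lzwei(G,\psi)\circ\Lzwei(G,\phi)$ follows from the uniqueness clause of Lemma \ref{rgiojqeiovevqeve9} applied to the composite unitaries. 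A direct calculation using the explicit formulas \eqref{dfsviuhqr3iugerefsv} for $\pi$ and \eqref{evidfnviojafoivsdvasdvasdvasdv} for $\rho$, together with the defining relation of $u_{C}$, shows that $\Lzwei(G,\phi)$ sends the generator $(f,g)=\pi(g)_{g^{-1}C'}\sigma(f)$ in $\bK\rtimes^{\alg}G$ to $(\phi(f),g)$ in $\bL\rtimes^{\alg}G$. Consequently $\Lzwei(G,\phi)$ restricts to the algebraic crossed product functor $\phi\rtimes^{\alg}G$ of \cite[Sec.~5]{crosscat}, and the square
\[
\xymatrix{\bK\rtimes^{\alg}G\ar[r]^{\phi\rtimes^{\alg}G}\ar[d]_{\rho}&\bL\rtimes^{\alg}G\ar[d]^{\rho'}\\\Lzwei(G,\bC)\ar[r]^{\Lzwei(G,\phi)}&\Lzwei(G,\bD)}
\]
commutes. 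Since $\Lzwei(G,\phi)$ is a morphism of $C^{*}$-categories it is contractive in norm by \eqref{eq_functor_contractive}, so $\phi\rtimes^{\alg}G$ is contractive with respect to the reduced norms of Definition \ref{ewtiojgwergerggwggr}. Passing to closures yields the required morphism $\phi\rtimes_{r}G\colon\bK\rtimes_{r}G\to\bL\rtimes_{r}G$, and functoriality transfers from that of $\Lzwei(G,-)$.

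For preservation of fully faithful morphisms I would argue as follows. If $\phi$ is fully faithful then by Corollary \ref{wropr} so is $\bW^{\mathrm{nu}}\phi$; fully faithful morphisms between $C^{*}$-categories are isometric, hence $\Lzwei(G,\phi)$ is also isometric on morphism spaces by construction (it is conjugation of $\bW^{\mathrm{nu}}\phi$ on $\Hom$-spaces by unitaries). The commuting square above therefore shows that $\phi\rtimes^{\alg}G$ is isometric with respect to the reduced norms, so that $\phi\rtimes_{r}G$ is faithful by continuous extension. Fullness at the level of generators is immediate from fullness of $\phi$: every generator $(f,g)$ in $\bL\rtimes^{\alg}G$ between objects $\phi(C)$ and $\phi(C')$ has $f\colon\phi(C)\to g^{-1}\phi(C')=\phi(g^{-1}C')$, and by fullness of $\phi$ lifts to $(\tilde{f},g)$ in $\bK\rtimes^{\alg}G$. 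An isometric, densely-full morphism between $C^{*}$-categories is fully faithful (its image on each $\Hom$-space is closed and dense), proving that $\phi\rtimes_{r}G$ is fully faithful.

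\textbf{Main obstacle.} The principal technical point is verifying that $\Lzwei(G,\phi)$ defined via the unitaries $u_{C}$ actually restricts to the classical algebraic crossed product functor and yields a strictly commuting naturality square with $\rho$; this is a careful bookkeeping exercise with the formulas \eqref{dfsviuhqr3iugerefsv} and \eqref{evidfnviojafoivsdvasdvasdvasdv} but is the only point where the unitaries $u_{C}$ and the equivariance of $\phi$ must be coordinated. Everything else (continuity, functoriality, preservation of fully faithfulness) falls out once this naturality square is in place.
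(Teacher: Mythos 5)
Your proposal is correct and follows essentially the same route as the paper's own proof: both define $\Lzwei(G,\phi)$ by applying $\bW^{\mathrm{nu}}\phi$ (equivalently $\bW\bM\phi$ via Theorem \ref{jigogewrgeferfref}) and conjugating by the unitaries $u_{C}$ supplied by Corollary \ref{ewtkohijorthrhdrhrhrdth} and the uniqueness of orthogonal sums, then check that this restricts to $\phi\rtimes^{\alg}G$ and extend by continuity. The fully-faithfulness argument is also the same in substance — the paper phrases it as the composite maps on $\Hom$-spaces being isomorphisms because $\bM\phi$ and hence $\bW\bM\phi$ are fully faithful (Propositions \ref{stkghosgfgsrgsegs} and \ref{rjigowergwregregwfer}), which is your isometry-plus-closed-range observation.
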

\begin{proof}
 Definition \ref{ewtiojgwergerggwggr} provides the 
 action of the  functor $- \rtimes_{r}G$  on objects. We must extend it to morphisms. Thus let
$  \phi \colon   \bK\to  \bK'$ be a morphism in $\Fun(BG,\nCcat_{s\add} )$. 
It induces a  {morphism}
$$ \phi  \rtimes^{\alg} G \colon     \bK   \rtimes^{\alg} G \to   \bK' \rtimes^{\alg} G$$
in $\nClincat$ in a functorial way.
We must show that it extends by continuity to the reduced crossed products.
To this end we construct a commutative  diagram
$$\xymatrix{\bK\ar[d]\ar[rr]^{\phi}&&\bK'\ar[d]\\\bC\ar[rr]^{\bW^{\mathrm{nu}}\phi}\ar[d]^-{\sigma}&&\bC'\ar[d]^-{\sigma'}\\\Lzwei(G,  \bC) \ar[rr]^{\Lzwei(G,  \phi)}&& \Lzwei(G,  \bC')}$$
In order to interpret the middle arrow we identify $\bC\cong \bW^{\mathrm{nu}} \bK$ and $\bC'\cong \bW^{\mathrm{nu}} \bK'$ using Theorem \ref{jigogewrgeferfref}.  If $\phi$ is  faithful and non-degenerate, then $ \bW^{\mathrm{nu}}\phi=\bW\bM\phi$, but for general $\phi$ the extension $\bM\phi$  to the multiplier category might not exist.

 On objects the functor 
  $\Lzwei(G,  \phi) $ acts as $\phi$. In order to define the action of
  $\Lzwei(G,  \phi)$ on morphisms note that 
 by Corollary   \ref{ewtkohijorthrhdrhrhrdth} and the equivariance of $ \phi$, for every object $C$ of $\bK$ we have a unitary $$u_{C}: \phi(\oplus_{g\in G}gC)\to \oplus_{g\in G} g
 \phi(C)$$ in $\bC$ 
 which is uniquely determined by the condition that 
 $$ e_{g}^{\phi(C),*} u_{C} \bW^{\mathrm{nu}}  \phi(e_{k}^{C})=  \left\{\begin{array}{cc}\id_{g\phi(C)}&g=k,\\0&\text{else}.\end{array}\right.$$
 For $C,C'$ in $\bK$ we then define 
 $$\Lzwei(G,  \phi):\Hom_{\Lzwei(G,  \bC)}(C,C')\to 
 \Hom_{\Lzwei(G,  \bC')}(\phi(C),\phi(C'))$$ as
 \begin{eqnarray}
\Hom_{\Lzwei(G,  \bC)}(C,C')&\stackrel{\eqref{eq_morphisms_LtwoG}}{\cong}&\Hom_{\bC}(\oplus_{g\in G}gC,\oplus_{g\in G}gC') \label{ERKOHPERHTRHEHTRHE}
 \\&\stackrel{\bW^{\mathrm{nu}}\phi}{\to}&
 \Hom_{\bC')}(\phi(\oplus_{g\in G}gC),\phi(\oplus_{g\in G}gC') ) \nonumber\\&\stackrel{u_{C'}\circ-\circ u_{C}^{-1}}{\cong}& \Hom_{\bC')}( \oplus_{g\in G}g\phi(C),\oplus_{g\in G}g\phi(C') )
 \nonumber\\& \stackrel{\eqref{eq_morphisms_LtwoG}}{\cong}&\Hom_{\Lzwei(G,  \bC')}(\phi(C),\phi(C'))\ . \nonumber
\end{eqnarray}
 One checks that this description is compatible with the composition of morphisms and the involution.

%
%
%

 One now checks using the explicit descriptions that
$\Lzwei(G,  \phi)$ restricts to a morphism $\bK\rtimes^{\alg}G\to \bK'\rtimes^{\alg}G$  {in $\nClincat$, where} the algebraic crossed products are viewed as subcategories of $\Lzwei(G, \bC)$ and $ \Lzwei(G, \bC')$, respectively, and that this restriction is equivalent to
$  \phi \rtimes^{\alg} G$. {Thus we} can  define $  \phi  \rtimes_{r} G \colon   \bK \rtimes_{r}G \to   \bK' \rtimes_{r} G $ as the continuous extension of 
  $  \phi\rtimes^{\alg}G$, {given explicitly by the restriction of $\Lzwei(G,  \phi)$ to the crossed products viewed as   subcategories of $\Lzwei(G,  \bC)$ and $\Lzwei(G,  \bC')$, respectively.} 

One  finally checks in a straightforward manner that $-\rtimes_{r}G$ is compatible with the composition of morphisms in {$\Fun(BG,\nCcat_{s\add} )$.}

We now assume that $\phi$ is fully faithful. Then by Proposition \ref{stkghosgfgsrgsegs} the functor 
$\bM\phi $  is fully  faithful and  it follows  from  Proposition \ref{rjigowergwregregwfer} that  the functor 
$\bW\bM\phi \cong \bW^{\mathrm{nu}}\phi$  is 
 fully faithful, too.  This implies that  the maps \eqref{ERKOHPERHTRHEHTRHE}
 are  isomorphisms for all objects $C,C'$ in $\bK$.
 Hence   $\phi\rtimes_{r}G$ is fully faithful.
\end{proof}
 
 Recall that the reduced crossed product is constructed above under an additional additivity assumption. We must 
  extend the domain of the  functor $-\rtimes_{r}G$ from Lemma \ref{qrgiojfwewfqdedqwedqwed}
 to all of $\Fun(BG,\nCcat)$. We proceed with the following steps which will be referred to as steps of the construction of  the reduced crossed product.
\begin{enumerate}
\item \label{wkegjwoergergerwferf}  We construct the reduced crossed product $\bL\rtimes_{r}G$ for  all $\bL$ in $\Fun(BG,\nCcat)$ which admit a fully faithful morphism
$\bL\to \bK$ in $\Fun(BG,\nCcat)$ with $\bK$ in $ \Fun(BG,\nCcat_{s\add})$.
\item \label{wkegjwoergergerwferf15}   If $\phi:\bL\to \bL'$ is a fully faithful  morphism  $\Fun(BG,\nCcat)$ and  Step \ref{wkegjwoergergerwferf} applies to  $\bL'$, then it applies to $\bM\bL'$ and $\bL$. Furthermore,  
$\phi\rtimes^{\alg}G:\bL\rtimes^{\alg}G\to \bL'\rtimes^{\alg}G$
and
$\bL'\rtimes^{\alg}G\to \bM\bL'\rtimes^{\alg}G$  extend to
fully faithful  morphisms
$\phi\rtimes_{r} G:\bL\rtimes_{r}G\to \bL'\rtimes_{r}G $ and $ G:\bL'\rtimes_{r}G\to \bM \bL'\rtimes_{r}G $.
\item \label{wkegjwoergergerwferf1}  We   construct  the reduced crossed product  $\bH\rtimes_{r}G$ for all $\bH$ in $\Fun(BG,\nCcat)$ which receive  a unitary  equivalence     $\bL\to \bH$ in $\Fun(BG,\nCcat)$  from some 
 $\bL$ considered in Step \ref{wkegjwoergergerwferf}.   
  \item \label{wkegjwoergergerwferf2}  We verify that the categories appearing in Step \ref{wkegjwoergergerwferf1}  exhaust all of $\Fun(BG,\nCcat)$.
 \item  \label{wkegjwoergergerwferf3}  We   check that for every morphism $\phi:\bH\to \bH'$ in $  \Fun(BG,\nCcat)$ the morphism
 $\phi: \bH\rtimes^{\alg}G\to  \bH'\rtimes^{\alg}G$ has a continuous extension to the reduced crossed products. Furthermore, if $\phi$ is fully faithful, then so is $\phi\rtimes_{r}G$.
 \end{enumerate}

%
%
%
%
 
 We start with Step \ref{wkegjwoergergerwferf}.
Assume that $\bL$ is in $\Fun(BG,\nCcat)$ and 
$\phi:\bL\to \bK$ is a fully faithful morphism in $\Fun(BG,\nCcat)$
such that $\bK$ belongs to $\Fun(BG,\nCcat_{s\add})$. Then  we get a fully faithful  morphism
$$ \phi  \rtimes^{\alg} G \colon     \bL   \rtimes^{\alg} G \to   \bK \rtimes^{\alg} G$$
in $\nClincat$. We want to construct
$    \bL   \rtimes_{r}G$ as the completion of $   \bL   \rtimes^{\alg} G$ with respect to the norm induced from $ \bK \rtimes_{r} G$ via $ \phi  \rtimes^{\alg} G$. We must check that this norm does not depend on the choice of the embedding
$\phi:\bL\to \bK$. To this end we consider a second such embedding 
 $\phi':\bL\to \bK'$.  
 \begin{lem}\label{qefqwefewqfq}
 The norms on $   \bL   \rtimes^{\alg} G$ induced via $ \phi  \rtimes^{\alg} G$ and  $\phi'  \rtimes^{\alg} G$  are equal.
 \end{lem}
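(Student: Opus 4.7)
The plan is to reduce the comparison of the two norms to the intrinsic description of morphism spaces between finite orthogonal sums furnished by Proposition \ref{woijotrgwergeferfwerfewrf}. First I would apply Proposition \ref{stkghosgfgsrgsegs} to the fully faithful equivariant morphisms $\phi\colon\bL\to\bK$ and $\phi'\colon\bL\to\bK'$ in order to get fully faithful extensions $\bM\phi\colon\bM\bL\to\bM\bK$ and $\bM\phi'\colon\bM\bL\to\bM\bK'$, and then Corollary \ref{wropr} (together with the naturality of the functor $\bW^{\mathrm{nu}}$) to conclude that $\bW^{\mathrm{nu}}\phi\colon\bW^{\mathrm{nu}}\bL\to\bW^{\mathrm{nu}}\bK$ and $\bW^{\mathrm{nu}}\phi'\colon\bW^{\mathrm{nu}}\bL\to\bW^{\mathrm{nu}}\bK'$ are fully faithful equivariant normal morphisms of $W^{*}$-categories. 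In this way $\bW^{\mathrm{nu}}\bL$ becomes a common full $W^{*}$-subcategory of both $\bW^{\mathrm{nu}}\bK$ and $\bW^{\mathrm{nu}}\bK'$, each of which admits all very small orthogonal sums by the standing hypothesis.

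Next I would analyze $\rho_{\bK}(x)$ for $x$ in $\bL\rtimes^{\alg}G$ in matrix form with respect to the chosen orthogonal sums $(\oplus_{\ell\in G}\ell\phi(C),(e^{\phi(C),\bK}_{\ell})_{\ell\in G})$ and $(\oplus_{\ell\in G}\ell\phi(C'),(e^{\phi(C'),\bK}_{\ell})_{\ell\in G})$ entering the construction of $\Lzwei(G,\bW^{\mathrm{nu}}\bK)$. A direct computation from the formula \eqref{evidfnviojafoivsdvasdvasdvasdv} applied to a generator $(f,g)\colon C\to C'$ with $f\colon C\to g^{-1}C'$ in $\bL$ yields $e^{\phi(C'),\bK,*}_{\ell'}\rho_{\bK}(f,g)e^{\phi(C),\bK}_{\ell}=\delta_{\ell,\ell'g}\,\bW^{\mathrm{nu}}\phi(\ell'gf)$, where $\ell'gf\colon\ell'gC\to\ell'C'$ is intrinsic to $\bL$. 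Extending linearly in $x$, I see that all matrix entries of $\rho_{\bK}(x)$ are images under $\bW^{\mathrm{nu}}\phi$ of a matrix of morphisms $(h_{\ell',\ell})_{\ell,\ell'\in G}$ in $\bW^{\mathrm{nu}}\bL$ depending only on $x$; the same matrix, embedded via $\bW^{\mathrm{nu}}\phi'$, describes $\rho_{\bK'}(x)$ in $\bW^{\mathrm{nu}}\bK'$.

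For every pair of finite subsets $F,F'$ of $G$ I would consider the finite truncation $\rho_{\bK}(x)_{F',F}:=q^{\bK}_{F'}\rho_{\bK}(x)p^{\bK}_{F}$, with $p^{\bK}_{F}:=\sum_{g\in F}e^{\phi(C),\bK}_{g}e^{\phi(C),\bK,*}_{g}$ and $q^{\bK}_{F'}$ defined analogously. By Lemma \ref{ethgiowgergwergerg} this factors through the finite orthogonal sums $\oplus_{g\in F}g\phi(C)$ and $\oplus_{g'\in F'}g'\phi(C')$ in $\bW^{\mathrm{nu}}\bK$, and its matrix entries $(h_{\ell',\ell})_{\ell\in F,\ell'\in F'}$ all lie in the common subcategory $\bW^{\mathrm{nu}}\bL$. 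Proposition \ref{woijotrgwergeferfwerfewrf}, applied with $\bD:=\bW^{\mathrm{nu}}\bL$, $\bC:=\bW^{\mathrm{nu}}\bK$, $\bC':=\bW^{\mathrm{nu}}\bK'$, then yields $\|\rho_{\bK}(x)_{F',F}\|=\|\rho_{\bK'}(x)_{F',F}\|$. Finally, since $p^{\bK}_{F}$ and $q^{\bK}_{F'}$ converge $\sigma$-weakly to the identities of the infinite sums as $F,F'$ grow (by Proposition \ref{qr3gijoerwgergwefwerferwf}), the standard $W^{*}$-approximation argument gives $\|\rho_{\bK}(x)\|=\sup_{F,F'}\|\rho_{\bK}(x)_{F',F}\|$, and likewise on the primed side, so the two norms coincide. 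The main obstacle will be the careful bookkeeping in the matrix analysis — verifying the entry formula and the functorial identification of $\Hom_{\bW^{\mathrm{nu}}\bK}(g\phi(C),g'\phi(C'))$ with $\bW^{\mathrm{nu}}\phi(\Hom_{\bW^{\mathrm{nu}}\bL}(gC,g'C'))$ under the equivariance of $\phi$ and $\phi'$ — together with checking that the hypotheses of Proposition \ref{woijotrgwergeferfwerfewrf} are correctly met for the finite truncations.
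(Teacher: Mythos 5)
Your proof is correct and follows essentially the same route as the paper's: both reduce the comparison of the two induced norms to Proposition \ref{woijotrgwergeferfwerfewrf}, applied with $\bD$ the common (image of the) $W^{*}$-envelope of $\bL$ and with the $G$-indexed families $(g\phi(D_{i}))_{g\in G}$, after computing the matrix entries of $\rho$ from the explicit formula \eqref{evidfnviojafoivsdvasdvasdvasdv} and using equivariance to see that the same matrix of morphisms in $\bW^{\mathrm{nu}}\bL$ describes both representations. The only difference is that your finite-truncation and $\sigma$-weak approximation step is superfluous: Proposition \ref{woijotrgwergeferfwerfewrf} is stated for arbitrary (possibly infinite) index sets, so the paper applies it once to the full sums $\oplus_{g\in G}gC_{i}$ and $\oplus_{g\in G}gC_{i}'$ to obtain a single isometry $\Phi_{D,D'}$ intertwining $\rho$ and $\rho'$.
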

\begin{proof} 
By Proposition \ref{stkghosgfgsrgsegs} the functors
$\bM\phi:\bM\bL\to \bM\bK$ and $\bM\phi':\bM\bL\to \bM\bK'$ exist and are fully faithful. By Proposition \ref{rjigowergwregregwfer} the functors 
$\bW\bM\phi:\bW\bM\bL\to \bC:=\bW\bM\bK$ and $\bW\bM\phi':\bW\bM\bL\to \bC':=\bW\bM\bK'$
are fully faithful. 

Let $D_{0},D_{1}$ be two   objects of $\bL$ and hence of $\bL\rtimes^{\alg}G$.
For $i$ in $\{0,1\}$ we  set   $C_{i}:=\phi(D_{i})$ and $C_{i}':=\phi'(D_{i})$.    
 Then $C_{i}$ are also objects of $\bK\rtimes^{\alg}G$, and  $C_{i}'$ are objects of $\bK'\rtimes^{\alg}G$.
Using $\rho$ from \eqref{ewrgfviubiuhuihfefwerfwerfw}  and the isomorphism \eqref{eq_morphisms_LtwoG} we have identified  $\Hom_{\bK\rtimes^{\alg}G}(C_{0},C_{1})$
with  a linear subspace of 
$\Hom_{\bC}(\oplus_{g\in G} gC_{0},\oplus_{g\in G}gC_{1})$. 
Similarly, we have an inclusion $\rho'$ of 
   $\Hom_{\bK'\times^{\alg}G}(C'_{0},C'_{1})$
 as linear subspace  of $\Hom_{\bC'}(\oplus_{g\in G} gC'_{0},\oplus_{g\in G}gC'_{1})$.
 By Proposition \ref{woijotrgwergeferfwerfewrf}
 we have an isometry $$\Phi_{D,D'}:\Hom_{\bC}(\oplus_{g\in G} gC_{0},\oplus_{g\in G}gC_{1})\to \Hom_{\bC'}(\oplus_{g\in G} gC'_{0},\oplus_{g\in G}gC'_{1})$$ that is uniquely determined by the condition that
  $$e_{k}^{C_{1}',*}\Phi_{D,D'}(h)e_{g}^{C_{0}'}=\bW\bM\phi'((\bW\bM\phi)^{-1}(e_{k}^{C_{1},*}he_{g}^{C_{0}}))$$ for all $g,k$ in $G$ and $h$ in $ \Hom_{\bC}(\oplus_{g\in G} gC_{0},\oplus_{g\in G}gC_{1})$. Using the explicit formula \eqref{evidfnviojafoivsdvasdvasdvasdv}  for $\rho$ (and  similarly for $\rho'$) one checks that 
  the following  diagram commutes: $$\xymatrix{ &\Hom_{\bL\times^{\alg}G}(D_{0},D_{1} ) \ar[dr]^{ \phi'  \rtimes^{\alg} G}\ar[dl]_{\phi  \rtimes^{\alg} G}&\\ \Hom_{\bK\rtimes^{\alg}G}(C_{0},C_{1})\ar[rr]^{}\ar[d]^{\rho}&&\ar[d]^{\rho'}\Hom_{\bK'\rtimes^{\alg}G}(C'_{0},C'_{1})\\\Hom_{\bC}(\oplus_{g} gC_{0},\oplus_{g}gC_{1})
 \ar[rr]^{\Phi_{D,D'}}&&\Hom_{\bC'}(\oplus_{g\in G} gC'_{0},\oplus_{g\in G}gC'_{1})}$$
  Since $\rho$ and $\rho'$ are isometries by definition  this shows the assertion.  \end{proof}
  This finishes  Step \ref{wkegjwoergergerwferf}.
  
  We proceed with  Step  \ref{wkegjwoergergerwferf15}.
If $\phi:\bL\to \bL'$ is an equivariant fully faithul functor and $\psi:\bL'\to \bK$ is a fully faithful functor with $\bK$ in $\Fun(BG,\nCcat_{s\add})$, then 
   we can form    the diagram
   $$\xymatrix{\bL \ar[r]^{\phi}&\bL'\ar[r]^{\psi} \ar[d]&\bK\ar[d]&\\  &\bM\bL'\ar[r]^{\bM\psi} &\bM\bK\ar[r]&\bW\bM\bK}$$
   where the  functor  $\bM\psi$  exists and is  fully faithful by Proposition \ref{stkghosgfgsrgsegs}.   
   The lower line shows that Step \ref{wkegjwoergergerwferf}  applies to
   $\bM\bL'$, and the upper line shows that this step applies to $\bL$.
   Since  the  reduced norms on $\bL\rtimes^{\alg}G$ and $\bL'\rtimes^{\alg}G$ and
   $\bM\bL'\rtimes^{\alg}G$ are eventually all induced from
   the reduced  norm on $\bM\bK\rtimes^{\alg}G$ we see that 
   the  morphisms 
   $$\bL\rtimes_{r}G
\to    \bL'\rtimes_{r}G\to \bM\bL'\rtimes_{r}G$$ are all isometric.
The latter is an inclusion of an ideal. This finishes Step  \ref{wkegjwoergergerwferf15}.

We now consider Step \ref{wkegjwoergergerwferf1}.
  Let  $\bH$ be in $\Fun(BG,\nCcat)$ and assume that
  $\phi:\bL\to \bH$ is a unitary equivalence  in $\Fun(BG,\nCcat)$
  from an object $\bL$ to which    Step \ref{wkegjwoergergerwferf} applies.
  Then we get  a unitary equivalence
  $\phi\rtimes^{\alg}G:\bL\rtimes^{\alg}G\to \bH\rtimes^{\alg}G$.
  Note that $\bL\rtimes^{\alg}G$ has a well-defined reduced norm by
  Step \ref{wkegjwoergergerwferf}.
  We want to define the norm on $\bH\rtimes^{\alg}G$ such that
  $\phi\rtimes^{\alg}G$ becomes an isometry and then define
  $\bH\rtimes_{r}G$  as the completion. We must check that
the norm is well-defined.

 Let  $H_{0},H_{1}$ be objects of $\bH$. Then we can choose objects $L_{0},L_{1}$ in $\bL$ and
 unitary multiplier equivalences
 $u_{i}:\phi(L_{i})\to H_{i}$ in $\bH$ for $i$ in $\{0,1\}$.  
 As said above we want to define the norm
on $ \Hom_{\bH\rtimes^{\alg}G}(H_{0},H_{1})$ such that
$$   u_{1}\circ \phi(-)\circ u_{0}^{-1} :\Hom_{\bL\rtimes^{\alg}G}(L_{0},L_{1})  \to \Hom_{\bH\rtimes^{\alg}G}(H_{0},H_{1}) $$
becomes an isometry. We must check that this does not depend on the choices. 

\begin{lem} The norm  $\Hom_{\bH\rtimes^{\alg}G}(H_{0},H_{1})$ described above does not depend on the choices of $\phi$ and $L_{i}$ and $u_{i}$.
\end{lem}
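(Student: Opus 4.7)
The plan is to reformulate the claim as saying that the bijection
\[
\Theta \colon \Hom_{\bL\rtimes^{\alg}G}(L_{0},L_{1}) \longrightarrow \Hom_{\bL'\rtimes^{\alg}G}(L_{0}',L_{1}')
\]
obtained by composing $f\mapsto u_{1}\phi(f)u_{0}^{-1}$ with the inverse of $f'\mapsto u_{1}'\phi'(f')u_{0}'^{-1}$ is an isometry with respect to the reduced norms on $\bL\rtimes^{\alg}G$ and $\bL'\rtimes^{\alg}G$ provided by Step~\ref{wkegjwoergergerwferf}. To see that $\Theta$ is well-defined at the algebraic level, I would use that $\phi,\phi'$ are unitary equivalences in $\Fun(BG,\nCcat)$, so by Proposition~\ref{stkghosgfgsrgsegs} the extensions $\bM\phi,\bM\phi'$ are fully faithful and the multiplier isomorphism $w_{i}\coloneqq u_{i}'^{-1}\circ u_{i}\colon \phi(L_{i})\to \phi'(L_{i}')$ in $\bM\bH$ can be transported to unique unitary multiplier isomorphisms in the appropriate categories. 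Explicitly, $\Theta$ sends a generator $(f,g)$ with $f\colon L_{0}\to g^{-1}L_{1}$ to the unique generator $(f',g)$ characterised by $\phi'(f')=g^{-1}(w_{1})\circ \phi(f)\circ w_{0}^{-1}$, and the right-hand side indeed lies in $\bH$ since $\bH$ is an ideal in $\bM\bH$.

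The next move is to upgrade the collection of choices $(\phi,\phi',L_{i},L_{i}',u_{i},u_{i}')$ into a single weakly equivariant unitary equivalence $\chi\colon \bL\to\bL'$ with $\chi(L_{i})$ unitarily multiplier-isomorphic to $L_{i}'$. Concretely, I would apply Lemma~\ref{ekgjosgwregrewgwerwfrwef} to $\phi'$ to obtain a weakly equivariant quasi-inverse $(\psi',\mu')\colon\bH\to\bL'$ and form the composition $\chi\coloneqq \psi'\circ\phi$. This is a weakly equivariant unitary equivalence in $\nCcat$, and by construction $\Theta$ coincides with the map induced on algebraic crossed products by the data $(\chi,w_{i})$. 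Thus the lemma reduces to the statement that a weakly equivariant unitary equivalence between two objects of $\Fun(BG,\nCcat)$ both in the domain of Step~\ref{wkegjwoergergerwferf} induces an isometry on algebraic crossed products for the reduced norms.

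To prove this reduced statement, which is the main technical hurdle, I would fix a fully faithful equivariant embedding $\bL\to\bK$ with $\bK\in\Fun(BG,\nCcat_{s\add})$ and use Lemma~\ref{eoigjosegbsfdgsertsfd} applied to the weakly equivariant morphism $Q(\bL')\to\bL'\to\bL\to\bK\to\bM\bK$ (the $\bL'\to\bL$ part being a weakly equivariant quasi-inverse of $\chi$) in order to obtain a strictly equivariant fully faithful morphism $Q(\bL')\to\bM\bK$. Since $p_{\bL'}\colon Q(\bL')\to\bL'$ is itself a unitary equivalence that is strictly equivariant and fully faithful, the reduced norms on the algebraic crossed products of $\bL'$ and $Q(\bL')$ agree by Lemma~\ref{qefqwefewqfq}, while the composite embedding lands (up to unitary multiplier transformation) in the same ambient $\bK\rtimes_{r}G$ that controls the norm on $\bL\rtimes^{\alg}G$. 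Tracing through the formula \eqref{evidfnviojafoivsdvasdvasdvasdv} for the representation $\rho$ shows that conjugation by the $w_{i}$ and by the $\lambda$-data of the weakly equivariant morphism acts by a unitary multiplier transformation on $\Lzwei(G,\bW\bM\bK)$, so it preserves the norm. The hardest part will be the bookkeeping required to verify that the various weakly equivariant cocycle data combine compatibly across the multiplier categories and $W^{*}$-envelopes; once this is set up, the isometry of $\Theta$ follows because its image under the embedding into $\Lzwei(G,\bW\bM\bK)$ is obtained by conjugation with a unitary.
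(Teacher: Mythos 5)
Your proposal is essentially correct and uses the same ingredients as the paper's proof (the weakly equivariant quasi-inverse from Lemma \ref{ekgjosgwregrewgwerwfrwef}, strictification through $Q$ via Lemma \ref{eoigjosegbsfdgsertsfd}, and the isometry statements already established in Steps \ref{wkegjwoergergerwferf} and \ref{wkegjwoergergerwferf15}), but it organizes them differently. The paper splits the claim into two stages: first, for \emph{fixed} $\phi$ it compares two choices of $(L_{i},u_{i})$ by lifting $u_{i}^{\prime,-1}u_{i}$ along the fully faithful $\bM\phi$ to unitary multipliers $v_{i}$ in $\bM\bL$ and observing that $(v_{i},e)$ is a unitary in $\bM\bL\rtimes^{\alg}G$, hence in $\bM\bL\rtimes_{r}G$ (which exists by Step \ref{wkegjwoergergerwferf15}), so conjugation is isometric; second, for a varying $\phi'$ it applies $Q$ to $\bL$ (not $\bL'$) to produce an equivariant $\xi\colon Q(\bL)\to\bL'$ and concludes via the commuting triangle of isometries $p_{\bL}\rtimes_{r}G$ and $\xi\rtimes_{r}G$. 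Your single transfer map $\Theta$ merges both stages and reduces everything to the statement that a weakly equivariant unitary equivalence between categories in the domain of Step \ref{wkegjwoergergerwferf} induces an isometry; this is a legitimate variant, and your verification of well-definedness of $\Theta$ on generators is correct. What the paper's two-stage route buys is that the final computation never has to look inside $\Lzwei(G,\bW\bM\bK)$: the unitarity of the relevant conjugations is read off abstractly from the already-constructed $C^{*}$-categories $\bM\bL\rtimes_{r}G$ and from functoriality of $-\rtimes_{r}G$ on equivariant fully faithful morphisms.

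The one place where your argument is genuinely under-justified is the last step, where you assert that conjugation by the $w_{i}$ and the $\lambda$-cocycle data "acts by a unitary multiplier transformation on $\Lzwei(G,\bW\bM\bK)$, so it preserves the norm." As stated this requires checking that the image of $(v,e)$ under the representation \eqref{evidfnviojafoivsdvasdvasdvasdv} is a unitary of $\Lzwei(G,\bW\bM\bK)$ intertwining the two embeddings, including compatibility of the cocycle identities with the chosen orthogonal sums $\oplus_{g\in G}g(-)$; this is true but is exactly the bookkeeping you defer. The cleaner route, which the paper takes, is to note that for a unitary multiplier $v$ in $\bM\bL$ the element $(v,e)$ is already a unitary of the $C^{*}$-category $\bM\bL\rtimes_{r}G$ (defined by Step \ref{wkegjwoergergerwferf15} applied to the ideal inclusion $\bL\to\bM\bL$), and unitary conjugation in any $C^{*}$-category is isometric; no explicit computation in $\Lzwei$ is then needed. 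I would recommend incorporating that observation to close the gap rather than carrying out the $\Lzwei$-level verification.
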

\begin{proof}
For the moment we fix $\phi$ and let $L_{i}'$ and $u_{i}'$ denote  another choice.
Since $\bM\phi$ is fully faithful the unitary multipliers   $u_{i}^{\prime,-1}\circ u_{i}:  \phi(L_{i})\to \phi(L_{i}')$ lift to  unitary multipliers
 $v_{i}:L_{i}\to L_{i}'$ in $\bL$. We then have  unitaries 
   $(v_{i},e):L_{i}\to L_{i}'$ in $\bM\bL\rtimes^{\alg} G$. 
   Since  $\bM\bL\rtimes_{r}G$ is defined by Step \ref{wkegjwoergergerwferf15}
 we conclude that
 $(v_{i},e)$ induce  unitaries from $L_{i}$ to $L_{i}'$ in $\bM\bL\rtimes_{r}G$ and therefore
  unitary multiplier isomorphisms
between the same objects in $\bL\rtimes_{r}G$.
We can now conclude that
$$(v_{1},e)\circ -  \circ (v_{0},e)^{-1} :\Hom_{\bL\rtimes_{r}G}(L_{0},L_{1})
 \to   \Hom_{\bL\rtimes_{r}G}(L'_{0},L'_{1})
$$ is an isometry. Since
$$\xymatrix{\Hom_{\bL\rtimes_{r}G}(L_{0},L_{1})\ar[rr]^{(v_{1},e)\circ -  \circ (v_{0},e)^{-1}}\ar[dr]_{ u_{1}\circ \phi(-)\circ u_{0}^{-1}\qquad} && \Hom_{\bL\rtimes_{r}G}(L'_{0},L'_{1})\ar[dl]^{\quad u'_{1}\circ \phi(-)\circ u_{0}^{\prime,-1}}\\ 
&\Hom_{\bH\rtimes^{\alg}G}(H_{0},H_{1})& }$$
commutes the induced norm on $ \Hom_{\bH\rtimes^{\alg}G}(H_{0},H_{1})$ does not depend on the choices made above for fixed $\phi$.

We now  
 consider a second choice
 $\phi':\bL'\to \bH$. Since $\phi'$ is a unitary equivalence, by Lemma  \ref{ekgjosgwregrewgwerwfrwef} there exists a weakly equivariant
 inverse $\psi':\bH\to \bL'$.  Applying Lemma \ref{eoigjosegbsfdgsertsfd} to the weakly equivariant morphism $\psi'\circ \phi\circ p_{\bL}$ 
 we get an equivariant   morphism  $\xi:Q(\bL)\to \bL'$ togther with a unitary multiplier isomorphism of weakly equivariant 
  morphisms $\kappa: \psi'\circ \phi\circ p_{\bL}\to \xi$.
Note that $\xi$ is also a unitary equivalence.
 We consider again objects  $H_{0},H_{1}$   of $\bH$. Then there exist objects
 $L_{0},L_{1}$ in $\bL$ and
 unitary multiplier equivalences
 $u_{i}:\phi(L_{i})\to H_{i}$ for $i$ in $\{0,1\}$.  We consider the lifts $(L_{i},e
)$ in $Q(\bL)$ and set $L_{i}':=\xi(L_{i},e)$.  By applying $\bM\phi'$ to suitable  values of $\kappa$ we get unitary multiplier isomorphisms
from $\phi'(L'_{i})$ to $ H_{i}$.  Thus we can take $L_{i}'$ as lifts of $H_{i}$ under $\phi'$. 

 By   Step \ref{wkegjwoergergerwferf15}  applied to $p_{\bL}:Q(\bL)\to \bL$ we know that  Step  \ref{wkegjwoergergerwferf} 
applies to $Q(\bL)$.
Since 
$$\Hom_{\bL\rtimes_{r}G }(L_{0},L_{1})  \stackrel{p_{\bL}\rtimes_{r}G}{\leftarrow} \Hom_{Q(\bL)\rtimes_{r}G }((L_{0},e),(L_{1},e))\stackrel{\xi\rtimes_{r}G}{\to}
 \Hom_{\bL'\rtimes_{r}G }(L'_{0},L'_{1}) $$
 are isometries (for $\xi$ we use Step  \ref{wkegjwoergergerwferf15}  again applied to $\xi :Q(\bL)\to \bL'$) we can conclude that the induced norm on
 $ \Hom_{\bH\rtimes^{\alg}G}(H_{0},H_{1})$ does not depend on
 the choice of $\phi:\bL\to \bH$. \end{proof}
  This finishes Step \ref{wkegjwoergergerwferf1}.

  We now do  Step \ref{wkegjwoergergerwferf2}.
  Let $\bH$ be in $\Fun(BG,\nCcat)$.
   Note that  the Yoneda type embedding $M:\bH\to \Hilb_{c}(A(\bH))$ from Definition \ref{eiogjoewrgerfgerrfewfwer} is  
  weakly equivariant by Lemma \ref{wothpwgergewefwef}.
   We apply  Lemma   \ref{eoigjosegbsfdgsertsfd}
  to the composition of weakly equivariant morphisms
  $M\circ p_{\bH}:Q(\bH)\to  \Hilb_{c}(A(\bH) )$ (which exists since $M$ is fully faithful) in order to get 
  a morphism $\phi:Q(\bH)\to  \Hilb_{c}(A(\bH))$ in $\Fun(BG,\nCcat)$
together with a unitary natural multiplier isomorphism
$ M\circ p_{\bH}\cong \phi$ between  weakly equivariant  morphisms. Since $
  M$ and $p_{\bH}$ are fully faithful, so is $\phi$.
  Since  $\Hilb_{c}(A(\bH))$  belongs to the large version of
  $\Fun(BG,\nCcat_{s\add})$  
   we can use $\phi$ to see that Step  \ref{wkegjwoergergerwferf} applies to $Q(\bH)$.
   But then we use $p_{\bH}$ in order to apply Step  \ref{wkegjwoergergerwferf1}  to $\bH$.

 We now do the final Step \ref{wkegjwoergergerwferf3}.  
 We consider a  morphism
   $\phi:\bH\to \bH'$   in $\Fun(BG,\nCcat)$.       
   \begin{lem} \label{erogjowergerwfewrf}If $\phi$ is injective on objects, then
the map $\phi\rtimes^{\alg} G:\bH\rtimes^{\alg}G\to \bH'\rtimes^{\alg}G$ is bounded with respect to the reduced norms.  If $\phi$ is in addition fully faithful, then $\phi\rtimes^{\alg} G$ is fully faithful and isometric. \end{lem}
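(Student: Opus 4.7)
The plan is to reduce both claims to the case already handled by Lemma \ref{qrgiojfwewfqdedqwedqwed} via the Yoneda type embedding together with the cofibrant replacement functor $Q$. Since $\phi$ is injective on objects, the functor $A$ from \eqref{frewfoirjviojvioeweverwvwev} applies to $\phi$ and produces an equivariant homomorphism $A(\phi)\colon A(\bH)\to A(\bH')$ of $C^{*}$-algebras with $G$-action. By Lemma \ref{wothpwgergewefwef} this yields a weakly equivariant morphism $A(\phi)_{*}\colon \Hilb_{c}(A(\bH))\to \Hilb_{c}(A(\bH'))$ fitting, together with the weakly equivariant Yoneda embeddings $M^{\bH}, M^{\bH'}$ from Lemma \ref{qroihofewfqffqwefqewfqefqef}, into a square that commutes up to a unitary multiplier natural isomorphism of weakly equivariant morphisms.

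Next, I would apply Lemma \ref{eoigjosegbsfdgsertsfd} to the weakly equivariant composition $M^{\bH'}\circ\phi\colon \bH\to \Hilb_{c}(A(\bH'))$ to obtain an equivariant morphism $\hat\phi\colon Q(\bH)\to \Hilb_{c}(A(\bH'))$ together with a unitary multiplier natural isomorphism $\hat\phi\cong M^{\bH'}\circ\phi\circ p_{\bH}$ of weakly equivariant morphisms. Using the naturality identity $\phi\circ p_{\bH} = p_{\bH'}\circ Q(\phi)$ and the analogous strictification $\tilde M^{\bH'}\colon Q(\bH')\to \Hilb_{c}(A(\bH'))$ of $M^{\bH'}\circ p_{\bH'}$ (which, by Step 4 of the construction, is the fully faithful equivariant embedding used to define the reduced norm on $\bH'\rtimes^{\alg}G$), one arrives at a unitary multiplier natural isomorphism $\hat\phi \cong \tilde M^{\bH'}\circ Q(\phi)$ between two equivariant morphisms with target $\Hilb_{c}(A(\bH'))$.

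Applying $-\rtimes_{r} G$ from Lemma \ref{qrgiojfwewfqdedqwedqwed} to both sides and using that a unitary multiplier natural isomorphism transports to a norm-preserving intertwiner between the induced morphisms on reduced crossed products, I would combine two observations: first, that $\hat\phi\rtimes_{r} G$ is contractive by \eqref{eq_functor_contractive}, and second, that $\tilde M^{\bH'}\rtimes_{r} G$ is isometric by Lemma \ref{qrgiojfwewfqdedqwedqwed} (since $\tilde M^{\bH'}$ is fully faithful and equivariant with target in the large version of $\Fun(BG,\nCcat_{s\add})$). For every $x$ in $Q(\bH)\rtimes^{\alg} G$ this gives
\[
\|(Q(\phi)\rtimes^{\alg} G)(x)\|_{r} = \|(\tilde M^{\bH'}\rtimes^{\alg} G)(Q(\phi)\rtimes^{\alg} G(x))\| = \|\hat\phi\rtimes^{\alg} G(x)\| \le \|x\|_{r}\,.
\]
Transporting along the unitary equivalence $p_{\bH}\rtimes^{\alg} G$ that Step 3 uses to define the reduced norm on $\bH\rtimes^{\alg} G$, this shows that $\phi\rtimes^{\alg} G$ is contractive and proves the first assertion.

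For the second assertion, if $\phi$ is in addition fully faithful, then $M^{\bH'}\circ \phi$ is fully faithful by Lemma \ref{wiotgowfrsfdgsfgefwrefrfwer} and hence so is $\hat\phi$. Lemma \ref{qrgiojfwewfqdedqwedqwed} then provides that $\hat\phi\rtimes_{r} G$ is itself fully faithful, and the displayed chain forces the inequality to be an equality, so $\phi\rtimes^{\alg} G$ is fully faithful and isometric. The main obstacle will be the careful verification that the unitary multiplier natural isomorphism $\hat\phi\cong \tilde M^{\bH'}\circ Q(\phi)$ of weakly equivariant morphisms really does transport to a norm-preserving unitary intertwiner after passage to the reduced crossed product of $\Hilb_{c}(A(\bH'))$; this should reduce to the observation that conjugation by the corresponding family of unitary multipliers commutes with the covariant representation $\rho$ entering Definition \ref{ewtiojgwergerggwggr}.
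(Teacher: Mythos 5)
Your overall route is the paper's own: pass from $\phi$ to $Q(\phi)$ via the unitary equivalences $p_{\bH}$, $p_{\bH'}$, and then compare with the Yoneda type embeddings after strictifying with Lemma \ref{eoigjosegbsfdgsertsfd}. The fully faithful half of your argument is essentially sound, except that the relevant citation is not Lemma \ref{qrgiojfwewfqdedqwedqwed} (whose hypothesis that the domain lies in $\Fun(BG,\nCcat_{s\add})$ fails for $Q(\bH)$) but Step \ref{wkegjwoergergerwferf} of the construction together with Lemma \ref{qefqwefewqfq}: a fully faithful equivariant morphism out of $Q(\bH)$ into a category with enough sums is one of the embeddings that \emph{defines} the reduced norm on $Q(\bH)\rtimes^{\alg}G$, hence is automatically isometric after $-\rtimes^{\alg}G$.

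The genuine gap is in the first assertion, at the inequality $\|(\hat\phi\rtimes^{\alg}G)(x)\|\le\|x\|_{r}$. Equation \eqref{eq_functor_contractive} only says that a morphism in $\nCcat$ is contractive for the \emph{maximal} norms; since the reduced norm on $Q(\bH)\rtimes^{\alg}G$ is dominated by the maximal norm, what \eqref{eq_functor_contractive} yields is $\|(\hat\phi\rtimes^{\alg}G)(x)\|\le\|x\|_{\max}$, an estimate in the wrong direction. Contractivity of a $*$-functor with respect to a non-maximal norm on its domain is precisely the non-trivial content of the lemma, and when $\phi$ is not fully faithful neither is $\hat\phi\colon Q(\bH)\to\Hilb_{c}(A(\bH'))$, so no earlier step of the construction (which treats only equivariant morphisms between categories in $\nCcat_{s\add}$, or fully faithful morphisms) applies to it; asserting its boundedness is essentially assuming what is to be proved. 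To close the gap one must factor $\hat\phi$, up to unitary multiplier isomorphism of weakly equivariant functors, as $\gamma\circ\alpha$ with $\alpha\colon Q(\bH)\to Q(\Hilb_{c}(A(\bH)))$ fully faithful equivariant, hence with $\alpha\rtimes^{\alg}G$ isometric, and $\gamma\colon Q(\Hilb_{c}(A(\bH)))\to Q(\Hilb_{c}(A(\bH')))$ an equivariant strictification of $A(\phi)_{*}$ between categories to which Lemma \ref{qrgiojfwewfqdedqwedqwed} does apply, so that $\gamma\rtimes^{\alg}G$ is bounded. That factorization is exactly the three-row diagram \eqref{asdvdsvascadscasdca} in the paper's proof; without it the key estimate in your displayed chain is unsupported.
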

 \begin{proof}
We  build the following diagram of weakly equivariant morphisms: \begin{equation}\label{asdvdsvascadscasdca}
\xymatrix{Q(\bH)\ar[rrr]^{Q(\phi)}\ar[ddd]^{\alpha}\ar[dr]^{p_{\bH}}&&& Q(\bH') \ar[dl]_{p_{\bH'}}\ar[ddd]^{\beta}&\\&\bH \ar@{-->}[d]^{M^{\bH}}\ar[r]^{\phi}&\bH'\ar@{-->}[d]^{M^{\bH'}} &\\
&\Hilb_{c}(A(\bH)) \ar@/_1cm/@{..>}[dl]\ar@{-->} [r]^{A(\phi)_{*}}&\Hilb_{c}(A(\bH')) \ar@/_-1cm/@{..>}[dr]&\\
\ar[ur]_{\qquad p_{ \Hilb_{c}(A(\bH))}} \ar[rrr]^{\gamma} Q(\Hilb_{c}(A(\bH)))&&&Q(\Hilb_{c}(A(\bH')))\ar[ul]^{p_{ \Hilb_{c}(A(\bH'))}\quad}}
\end{equation}
The bold morphisms are actually  equivariant while the remaining morphisms are weakly equivariant. In order to construct the morphisms marked by $\alpha,\beta,\gamma$
we choose weakly invariant inverses of $p_{ \Hilb_{c}(A(\bH))}$ and $p_{ \Hilb_{c}(A(\bH))}$ as indicated by the dotted arrows. Since they are fully faithful they  can be   right-composed with any further weakly equivariant morphism, so in particular with $M^{\bH}\circ p_{\bH}$, $M^{\bH'}\circ p_{\bH'}$, or 
$A(\phi)_{*}\circ p_{ \Hilb_{c}(A(\bH))}$ respectively.
We apply  Lemma   \ref{eoigjosegbsfdgsertsfd} to the respective compositions of arrows in order get 
the arrows marked by $\alpha,\beta,\gamma$
 together with fillers of the respective 
squares by unitary natural multiplier isomorphisms between
weakly equivariant functors. By  Lemma \ref{wothpwgergewefwef} the inner 
  square is also filled by such an isomorphism, while the upper square commutes on the nose.
All morphisms except the horizontal ones are fully faithful.

  We pause the proof for a moment to introduce the following contruction:

\begin{construction} \label{rtjoiherthertgeg}{\em 
In \cite[Prop. 7.12]{crosscat} we have shown that the functor $-\rtimes^{\alg}G$
extends to weakly equivariant morphisms  between unital $C^{*}$-categories and sends  uniformly bounded (unitary, respectively) natural transformations between them to  uniformly   bounded    (unitary) transformations. This extends to the non-unital case as follows.

If $(\phi,\rho):\bC\to \bD$ is weakly invariant and $(f,g):C\to C'$ is a morphism in $\bC\rtimes^{\alg}G$ with $f:C\to g^{-1}C'$ in $\bC$,
  then the induced morphism $(\phi,\rho)\rtimes^{\alg} G:\bC\rtimes^{\alg}G\to \bD\rtimes^{\alg}G$ sends
$(f,g)$  to  \begin{equation}\label{feqwfqwefqedqwe}
(\rho(g)_{g^{-1}C'} \phi(f),g):\phi(C)\to g^{-1}\phi(C')
\end{equation}  in $ \bD\rtimes^{\alg}G$. In contrast to the unital case,
 here  $\rho(g)_{C'}$ is only a multiplier morphism, but    the composition $ 
\rho(g)_{C'} \phi(f)$ still belongs to $\bD$.
If $\kappa:(\phi,\rho)\to (\phi',\rho')$ is a uniformly bounded  (unitary) natural multiplier  transformation 
between weakly equivariant morphisms, then
we get a uniformly   bounded    (unitary)    multiplier isomorphism
$\kappa\rtimes^{\alg}G:(\phi,\rho)\rtimes^{\alg} G\to (\phi',\rho')\rtimes^{\alg} G$. It is given by the  unitary  multiplier isomorphism  \begin{equation}\label{eqwfqwedwgtertgertge54}
 (\kappa\rtimes^{\alg}G)_{C}:=(\kappa(g)_{C},e):\phi(C)\to \phi'(C)
\end{equation} 
on any $C$ in $\bC\rtimes^{\alg}G$.}
\hB\end{construction}

We resume the proof of Lemma \ref{erogjowergerwfewrf}.
Using Construction \ref{rtjoiherthertgeg} we can  apply  the functor $-\rtimes^{\alg}G$ to the diagram   in \eqref{asdvdsvascadscasdca} in order to get  the new diagram
\begin{equation}\hspace{-1.5cm}
\xymatrix{Q(\bH)\rtimes^{\alg}G\ar[rrr]^{Q(\phi)\rtimes^{\alg}G}\ar[ddd]^{\alpha\rtimes^{\alg}G}\ar[dr]^{p_{\bH}\rtimes^{\alg}G}&&& Q(\bH') \rtimes^{\alg}G\ar[dl]_{p_{\bH'}\rtimes^{\alg}G}\ar[ddd]^{\beta\rtimes^{\alg}G}&\\
&\bH\rtimes^{\alg}G \ar@{-->}[d]^{M^{\bH}\rtimes^{\alg}G}\ar[r]^{\phi\rtimes^{\alg}G}&\bH'\rtimes^{\alg}G\ar@{-->}[d]^{M^{\bH'}\rtimes^{\alg}G} &\\
&\Hilb_{c}(A(\bH)) \rtimes^{\alg} G \ar@{-->} [r]^{A(\phi)_{*}\rtimes^{\alg}G}&\Hilb_{c}(A(\bH')) \rtimes^{\alg}G &\\
\ar[ur]_{p_{ \Hilb_{c}(A(\bH))}\rtimes^{\alg}G} \ar[rrr]^{\gamma\rtimes^{\alg}G} L(\Hilb_{c}(A(\bH)))\rtimes^{\alg}G&&&L(\Hilb_{c}(A(\bH')))\rtimes^{\alg}G\ar[ul]^{p_{ \Hilb_{c}(A(\bH'))}\rtimes^{\alg}G}}
\end{equation}
All squares are filled by unitary multiplier isomorphisms.
Again,  all morphisms except the horizontal ones are fully faithful.
Our task is to show that $\phi\rtimes^{\alg}G$ is bounded. 
Using the fact that the norms  on its domain and target are
induced from the norms on the domain and target of
$Q(\phi)\rtimes^{\alg}G$ via $p_{\bH}\rtimes^{\alg}G$ and $p_{\bH'}\rtimes^{\alg}G$ respectively, and since
the upper square commutes up to a unitary multiplier  isomorphism, 
 it suffices to show that $Q(\phi)\rtimes^{\alg}G$ is bounded.
By Lemma \ref{qrgiojfwewfqdedqwedqwed} the morphisms $\alpha\rtimes^{\alg}G$ and $\beta\rtimes^{\alg}G$ are fully faithful and isometric, and 
$\gamma\rtimes^{\alg}G$ is bounded.
 Since the big square is filled by a unitary multiplier isomorphism
 we can conclude that $Q(\phi)\rtimes^{\alg}G$ is bounded, too.   
 If $\phi$ is fully faithful, then  so is  the composition $A(\phi)_{*}\circ M^{\bH}$.
 Then also $\gamma\circ \alpha$ is fully faithful.
 This implies that $(\gamma\rtimes^{\alg}G)\circ (\alpha\rtimes^{\alg}G ) $ is  fully faithful and isometric  which implies that $Q(\phi)\rtimes^{\alg}G$ is  fully faithful and isometric.  Finally we conclude that
 $\phi\rtimes^{\alg}G$ is isometric and fully faithful.  \end{proof}
The Lemma \ref{erogjowergerwfewrf}  settles Step  \ref{wkegjwoergergerwferf3} for functors which are injective on objects.
 In order to finish  the argument for Step  \ref{wkegjwoergergerwferf3}
 we must remove the assumption about the injectivity of $\phi$ on objects.
 To this end we first note that   if  $\phi:\bH\to \bH'$ is a unitary equivalence, then   by Step
 \ref{wkegjwoergergerwferf2} we can find a further equivalence
 $\psi:\bL\to \bH$ such that Step
 \ref{wkegjwoergergerwferf} applies to $\bL$.
 Then $ (\psi\rtimes^{\alg} G)$ and the composition
 $(\psi\rtimes^{\alg} G)\circ (\phi\rtimes^{\alg}G)$ are  isometric
 equivalences by Step~\ref{wkegjwoergergerwferf1}. It follows that
 $\phi\rtimes^{\alg}G$ is  an isometric equivalence.

We now consider an arbitrary functor
$\phi:\bH\to \bH'$ in $\Fun(BG,\nCcat)$.
\begin{lem}\label{wkoetgerfregwr}
 $\phi\rtimes^{\alg}G:\bH\rtimes^{\alg}G\to \bH'\rtimes^{\alg}G$ is  bounded with respect to the reduced norms.  If $\phi$ is fully faithful, then $\phi\rtimes^{\alg}G$ is fully faithful and isometric.
 \end{lem}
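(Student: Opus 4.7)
The plan is to reduce to the case already handled in Lemma \ref{erogjowergerwfewrf} (where $\phi$ is injective on objects) together with the case of unitary equivalences established above Lemma \ref{wkoetgerfregwr}. The key observation is that any equivariant morphism $\phi\colon\bH\to \bH'$ admits a canonical factorization $\phi=\phi_{2}\circ \phi_{1}$ with $\phi_{1}$ equivariant and injective on objects and $\phi_{2}$ an equivariant unitary equivalence.

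First I would construct an auxiliary object $\tilde\bH$ in $\Fun(BG,\nCcat)$ as follows. Its set of objects is the disjoint union $\Ob(\bH)\sqcup \Ob(\bH')$, written $(H,0)$ for $H$ in $\bH$ and $(H',1)$ for $H'$ in $\bH'$. Morphism spaces are defined by declaring
$$\Hom_{\tilde\bH}((H,0),(H_{1},0)):=\Hom_{\bH'}(\phi(H),\phi(H_{1})),\quad \Hom_{\tilde\bH}((H',1),(H'_{1},1)):=\Hom_{\bH'}(H',H'_{1}),$$
and
$$\Hom_{\tilde\bH}((H,0),(H',1)):=\Hom_{\bH'}(\phi(H),H'),\quad \Hom_{\tilde\bH}((H',1),(H,0)):=\Hom_{\bH'}(H',\phi(H)),$$
with composition, involution, and $C^{*}$-norm inherited from $\bH'$. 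The strict $G$-action on $\bH$ and $\bH'$ induces a strict $G$-action on $\tilde\bH$ by $g\cdot (H,0):=(gH,0)$ and $g\cdot (H',1):=(gH',1)$, acting on morphisms through the action on $\bH'$.

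Next I define two equivariant morphisms $\phi_{1}\colon\bH\to \tilde\bH$ sending $H$ to $(H,0)$ and a morphism $f$ to $\phi(f)$, and $\phi_{2}\colon\tilde\bH\to \bH'$ sending $(H,0)$ to $\phi(H)$, $(H',1)$ to $H'$, and acting as the identity on morphism spaces under the above identifications. By construction $\phi_{2}\circ \phi_{1}=\phi$, the morphism $\phi_{1}$ is injective on objects, and $\phi_{2}$ is a unitary equivalence (it is fully faithful by definition and essentially surjective via the identity of $H'$, viewed as a morphism $(H',1)\to (H',1)$). Moreover, $\phi_{1}$ is fully faithful precisely when $\phi$ is.

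To conclude, Lemma \ref{erogjowergerwfewrf} applied to $\phi_{1}$ shows that $\phi_{1}\rtimes^{\alg}G$ is bounded with respect to the reduced norms, and is even fully faithful and isometric when $\phi$ (and hence $\phi_{1}$) is fully faithful. On the other hand, by the paragraph immediately preceding Lemma \ref{wkoetgerfregwr}, the induced morphism $\phi_{2}\rtimes^{\alg}G$ on algebraic crossed products extends to an isometric equivalence between the reduced crossed products. Composing these two statements through $\phi\rtimes^{\alg}G=(\phi_{2}\rtimes^{\alg}G)\circ(\phi_{1}\rtimes^{\alg}G)$ yields both assertions. The only subtle point is verifying that $\tilde\bH$ is indeed a $C^{*}$-category with a strict $G$-action and that $\phi_{1},\phi_{2}$ are honest equivariant morphisms in $\nCcat$, but this is routine since every morphism space of $\tilde\bH$ is defined as a morphism space of the $C^{*}$-category $\bH'$ with its $G$-action, and composability across the two sorts of objects is literally composition in $\bH'$.
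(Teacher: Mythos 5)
Your proposal is correct and follows the same overall strategy as the paper's proof: factor $\phi$ through an auxiliary $G$-$C^{*}$-category built on the object set $\Ob(\bH)\sqcup\Ob(\bH')$ as a composition of a morphism that is injective on objects with a unitary equivalence, and then combine Lemma \ref{erogjowergerwfewrf} with the unitary-equivalence case treated in the paragraph preceding the lemma. The difference lies in the auxiliary category and in where the content of $\phi$ sits in the factorization. The paper's category $\bL$ keeps $\Hom_{\bL}(L,L')=\Hom_{\bH}(L,L')$ for $L,L'$ in $\bH$, so that the first factor $i\colon\bH\to\bL$ is a fully faithful inclusion and the unitary equivalence $p\colon\bL\to\bH'$ carries $\phi$; your $\tilde{\bH}$ instead puts $\Hom_{\bH'}(\phi(H),\phi(H_{1}))$ between two $\bH$-objects, so that $\phi_{1}$ carries $\phi$ (and is fully faithful exactly when $\phi$ is) while $\phi_{2}$ is bijective on all morphism spaces. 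Your variant has a concrete advantage: since every morphism space of $\tilde{\bH}$ is a morphism space of $\bH'$ and all compositions are compositions in $\bH'$, well-definedness is immediate. In the paper's $\bL$, by contrast, the composition of a morphism from an $\bH'$-object to an $\bH$-object with a morphism from an $\bH$-object to an $\bH'$-object would have to land in $\Hom_{\bH}(L,L')$, which has no canonical meaning unless $\phi$ is fully faithful; so for a general $\phi$ your construction is the one that actually goes through as written. The remaining steps --- equivariance of the $G$-action on $\tilde{\bH}$, the fact that $\phi_{2}$ is a unitary equivalence (surjective on objects and bijective on morphism spaces), and the application of the two ingredient results to $\phi_{1}$ and $\phi_{2}$ --- are all correctly executed.
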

 \begin{proof}
In order to deduce this from the preceding cases we form $  \bL$ in  $\Fun(BG,\nCcat)$ as follows:
\begin{enumerate}
\item objects: The set of objects of $  \bL$ is given by $\Ob(  \bH)\sqcup \Ob(  \bH')$.
\item morphisms: \[\Hom_{\bL}(L,L'):=\begin{cases}
\Hom_{\bH}(L,L') & \text{for }L,L' \in \bH\,,\\
 \Hom_{\bH'}(\phi(L),L') & \text{for } L\in \bH, L' \in \bH'\,,\\
  \Hom_{\bH'}(L,\phi(L')) & \text{for } L\in \bH', L' \in \bH\,,\\
    \Hom_{\bH'}(L,L') & \text{for } L,L'\in \bH'\,.
\end{cases} \]
\item {composition and involution}: these structures are defined in the canonical way.
\item the $G$-action is canonically induced from the $G$-actions on $  \bH$ and $\bH'$.
\end{enumerate}
 We have    inclusions 
 $$i\colon \bH\to \bL\, , \quad j\colon \bH'\to   \bL$$
 in $\Fun(BG,\nCcat_{i})$
 and a   projection $p\colon \bL\to \bH$ in $\Fun(BG,\nCcat)$  such that $p\circ j=\id_{\bH}$
 and $p\circ i=\phi$.
 Moreover, there is an obvious  invariant unitary multiplier isomorphism $\kappa: \id_{\bL}\to j\circ p $ given by
 $$\kappa_{L}:=\left\{\begin{array}{cc} \id_{\phi(L)}&L\in \bH,\\ \id_{L}&L\in \bH'.\end{array}\right.$$ 
We conclude that $j$ and $p$ are unitary equivalences.
We have a factorization
$$\phi\rtimes^{\alg} G=(p\rtimes^{\alg} G)\circ (i\rtimes^{\alg}G)\ .$$
Since $p$ is a unitary  equivalence, $p\rtimes^{\alg} G$ is an isometric equivalence.
Since $i$ is injective on objects the morphism $i\rtimes^{\alg}G$ is bounded
by Lemma \ref{erogjowergerwfewrf}.
We conclude that  $\phi\rtimes^{\alg} G$ is bounded. 

If $\phi$ is fully faithful, then so is $i$. 
 Then  $i\rtimes^{\alg}G$ is  fully faithful and isometric by Lemma \ref{erogjowergerwfewrf} and we conclude that $\phi\rtimes^{\alg} G$ is  fully faithful and isometric, too.
  \end{proof}
    This  completes     Step  \ref{wkegjwoergergerwferf3}.

We have finished the construction of the     reduced crossed product functor
$$-\rtimes_{r}G:\Fun(BG,\nCcat)\to \nCcat\ .$$

 Let $\bK,\bL$ be in $\Fun(BG,\nCcat)$. Recall from Definition \ref{rgjeqrgoieqrgrgqewgfq} that  a weakly equivariant morphism  $(\phi,\rho):\bK\to \bL$  is a pair $(\phi,\rho) $  of a morphism $\phi:\bK\to \bL$ and a cocycle $\rho$ of  natural  unitary multiplier transformations. In order to simplify the notation, as long as  we do not encounter   explicit formulas involving $\rho$,  we will just use the symbol
$\phi$ in order to denote weakly  equivariant morphisms. 
\begin{kor}\label{skjtrogergfdsgfgfgd}\mbox{}\begin{enumerate} \item
If $\phi:\bK\to \bL$ is a weakly equivariant morphism, then the induced morphism $\phi\rtimes^{\alg}G:\bK\rtimes^{\alg}G\to \bL\rtimes^{\alg}G$ extends by continuity to a morphism $\phi\rtimes_{r}G:\bK\rtimes_{r}G\to \bL\rtimes_{r}G$.
  If $\phi$ is fully faithful, then $\phi\rtimes_{r}G$ is fully faithful, too.
\item A uniformly   bounded    (unitary)   natural multiplier  transformation $\kappa:\phi\to \phi'$ between weakly equivariant morphisms extends to a  uniformly   bounded  (unitary) natural multiplier 
transformation $ \phi\rtimes_{r}G\to \phi'\rtimes_{r}G$.
\end{enumerate}
\end{kor}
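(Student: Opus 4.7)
Both assertions will be reduced to the corresponding strictly equivariant statements, which have already been established, via the cofibrant replacement $p_{\bK}\colon Q(\bK)\to \bK$ of Section~\ref{efigosgsfgsfdgsdfg}.

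For Assertion~1 I will apply Lemma~\ref{eoigjosegbsfdgsertsfd} in order to replace $\phi$ by an equivariant morphism $\hat{\phi}\colon Q(\bK)\to \bL$ together with a unitary natural multiplier isomorphism of weakly equivariant morphisms $\alpha\colon \phi\circ p_{\bK}\to \hat{\phi}$. The functoriality of $-\rtimes^{\alg}G$ with respect to weakly equivariant compositions and its compatibility with unitary natural multiplier transformations, both provided by Construction~\ref{rtjoiherthertgeg}, then yield the equality $(\phi\circ p_{\bK})\rtimes^{\alg}G = (\phi\rtimes^{\alg}G)\circ(p_{\bK}\rtimes^{\alg}G)$ together with a unitary natural multiplier isomorphism
\[
\alpha\rtimes^{\alg}G\colon (\phi\rtimes^{\alg}G)\circ(p_{\bK}\rtimes^{\alg}G)\;\longrightarrow\; \hat{\phi}\rtimes^{\alg}G
\]
whose components $(\alpha_{(C,g)},e)$ are unitary multipliers. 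Since $\hat{\phi}$ is equivariant and $p_{\bK}$ is a unitary equivalence, the already established part of the construction (Lemma~\ref{qrgiojfwewfqdedqwedqwed} together with Step~\ref{wkegjwoergergerwferf1} of the extension procedure) yields a bounded functor $\hat{\phi}\rtimes_{r}G$ and an isometric equivalence $p_{\bK}\rtimes_{r}G$. Conjugation by the unitary multipliers $(\alpha_{(C,g)},e)$ preserves the reduced norm, and every morphism of $\bK\rtimes^{\alg}G$ arises as the image under $p_{\bK}\rtimes^{\alg}G$ of a morphism in $Q(\bK)\rtimes^{\alg}G$ of the same reduced norm; combining these facts with the intertwining identity above I obtain the estimate $\|(\phi\rtimes^{\alg}G)(y)\|_{r}\le \|\hat{\phi}\rtimes_{r}G\|\cdot \|y\|_{r}$, whence $\phi\rtimes^{\alg}G$ extends continuously. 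If $\phi$ is fully faithful, then so is $\phi\circ p_{\bK}$, hence also $\hat{\phi}$ (being multiplier-conjugate to $\phi\circ p_{\bK}$); $\hat{\phi}\rtimes_{r}G$ is therefore fully faithful by Lemma~\ref{qrgiojfwewfqdedqwedqwed}, and composing with the isometric equivalence $p_{\bK}\rtimes_{r}G$ then shows that $\phi\rtimes_{r}G$ is fully faithful.

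For Assertion~2 I note that Construction~\ref{rtjoiherthertgeg} already produces the family of multiplier morphisms $(\kappa_{C},e)$ in $\bL\rtimes^{\alg}G$ forming the natural transformation $\kappa\rtimes^{\alg}G$; it remains to show that this family defines a uniformly bounded (unitary) natural multiplier transformation between $\phi\rtimes_{r}G$ and $\phi^{\prime}\rtimes_{r}G$. For this it suffices to verify that left and right composition with $(\kappa_{C},e)$ are bounded operators on the morphism spaces of $\bL\rtimes^{\alg}G$ with respect to the reduced norm, with bounds at most $\|\kappa\|$. Inspecting the defining representation $\rho$ from \eqref{ewrgfviubiuhuihfefwerfwerfw}, left composition with $(\kappa_C,e)$ corresponds to componentwise left multiplication by $\kappa_{C}\in \bM\bL\subseteq \bW\bM\bL$ on the morphism spaces appearing in \eqref{eq_morphisms_LtwoG} of $\Lzwei(G,\bW\bM\bL)$, hence is bounded of norm at most $\|\kappa_{C}\|\le \|\kappa\|$; the right case is analogous. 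Unitarity of each $\kappa_{C}$ transfers to unitarity of the extension.

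The main technical point to keep track of will be the correct interpretation of $-\rtimes^{\alg}G$ on weakly equivariant morphisms and unitary natural multiplier transformations between them. This has been handled in Construction~\ref{rtjoiherthertgeg}, so once that machinery is in place both assertions reduce to routine continuity estimates inside $\Lzwei(G,\bW\bM\bL)$, and I do not expect a serious further obstruction.
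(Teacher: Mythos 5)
Your argument follows the paper's proof essentially verbatim: both reduce to the strictly equivariant case via the cofibrant replacement $Q(\bK)$ and Lemma~\ref{eoigjosegbsfdgsertsfd}, apply Construction~\ref{rtjoiherthertgeg} to the resulting triangle, use that $p_{\bK}\rtimes^{\alg}G$ is an isometric equivalence, and for the second assertion compute the reduced norm of the multiplier $(\kappa_{C},e)$ from the defining representation. The only detail to adjust is a citation: boundedness, and isometry together with fully faithfulness, of $\hat\phi\rtimes^{\alg}G$ for a strictly equivariant morphism with general (not necessarily additive) target is Lemma~\ref{wkoetgerfregwr}, not Lemma~\ref{qrgiojfwewfqdedqwedqwed}.
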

\begin{proof}
Let $\phi:\bK\to \bL$ be a weakly equivariant morphism.
Applying Lemma   \ref{eoigjosegbsfdgsertsfd}  we get a diagram
$$\xymatrix{&Q(\bK)\ar[dl]_{p_{\bK}}\ar[dr]^{\psi}&\\\bK\ar[rr]^{\phi}&&\bL}$$
where $\psi$ is equivariant and which commutes up to a unitary natural multiplier   isomorphism between weakly equivariant morphisms. In view of Construction \ref{rtjoiherthertgeg}  we can apply  $-\rtimes^{\alg}G$ and get a triangle 
$$\xymatrix{&Q(\bK)\rtimes^{\alg}G\ar[dl]_{p_{\bK}\rtimes^{\alg}G}\ar[dr]^{\psi\rtimes^{\alg}G}&\\\bK\rtimes^{\alg}G\ar[rr]^{\phi\rtimes^{\alg}G}&&\bL\rtimes^{\alg}G}$$ which commutes up to a unitary natural multiplier   isomorphism. Since $p_{\bK}$ is a unitary equivalence $p_{\bK}\rtimes^{\alg}G$ is an isometry with respect to the reduced norms, and since $\psi$ is equivariant it follows from Lemma \ref{wkoetgerfregwr}  that $\psi\rtimes^{\alg}G$ is bounded.  

If $\phi$ is fully faithful, then so is $\psi$.  
By      Lemma \ref{wkoetgerfregwr}  we know that 
$\psi\rtimes^{\alg}G$ is fully faithful and  isometric which implies   
that
$\phi\rtimes^{\alg}G$ has these properties, too. 
Hence $\phi\rtimes_{r}G$ is fully faithful.

Assume now that  $\kappa:\phi\to \phi'$  is a uniformly   bounded    (unitary)  natural multiplier  transformation  between weakly equivariant morphisms. 
Then by Construction \ref{rtjoiherthertgeg}  we get the natural  multiplier morphism  $\kappa\rtimes^{\alg}G= ((\kappa_{C},e))_{C\in \Ob(\bK)}$
from $\phi\times^{\alg}G$ to $\phi'\times^{\alg}G$. One checks  using the formulas from  Remark \ref{rem_explicit_description_sigma} that  
$$\| (\kappa_{C},e)\|_{\Hom_{\bM(\bL\rtimes_{r}G)}(\phi(C),\phi'(C))}= \|\kappa_{C}\|_{\Hom_{\bL }(\phi(C),\phi'(C))}\ .$$
It follows that $ ((\kappa_{C},e))_{C\in \Ob(\bK)}$ is uniformly bounded and continuously extends  to a natural multiplier transformation 
$\kappa\rtimes_{r}G$ from $\phi\rtimes_{r}G$ to $\phi'\rtimes_{r}G$.
If $\kappa$ is unitary, then so is $\kappa\rtimes_{r}G$.
 \end{proof}

\begin{rem}\label{qreigjoqergergwegregwreg}
For the construction of the reduced crossed product it was useful to have the freedom to choose the embeddings into small additive categories freely.  But from  Corollary \ref{skjtrogergfdsgfgfgd} we obtain the following useful characterization of the reduced norm on the reduced crossed product.   Consider 
  $\bK$ in $\Fun(BG,\nCcat)$ and  let $M\colon \bK\to \Hilb_{c}(A(\bK))$ be the  Yoneda type embedding from Definition \ref{eiogjoewrgerfgerrfewfwer} which has a weakly invariant extension  by Lemma \ref{qroihofewfqffqwefqewfqefqef}. Since it is fully faithful we get a fully faithful
functor
$\phi\rtimes_{r}G\colon \bK\rtimes_{r}G\to  \Hilb_{c}(A(\bK))\rtimes_{r}G$.
Hence the reduced norm on $\bK\rtimes^{\alg}G$ is induced from
the embdding
$$\phi\rtimes^{\alg}G\colon \bK\rtimes^{\alg}G\to   \Hilb_{c}(A(\bK))\rtimes^{\alg} G \to \bL^{2}(G,\bW \Hilb(A(\bK) ))\ .$$
Using \eqref{evidfnviojafoivsdvasdvasdvasdv}, \eqref{feqwfqwefqedqwe} and the formulas obtained in the proof of Lemma \ref{qroihofewfqffqwefqewfqefqef}
we calculate that 
$\phi\rtimes^{\alg}G$ sends $(f,g):C\to C'$ in $ \bK\rtimes^{\alg}G$ to
\begin{equation}\label{qwefqewfqwefewfqew}
\sum_{\ell\in G} e^{M_{C'}}_{\ell} ({}^{ g}(-)\circ f[g^{-1}C',C])  e^{M_{C},*}_{\ell g}:\bigoplus_{g\in G}gM_{C}\to  \bigoplus_{g\in G}gM_{C'} \ .
\end{equation} 
In order to interpret this formula 
 we use that the underlying vector spaces of $\ell g  M_{C}$ and
$\ell M_{C'}$ are $M_{C}$ and $M_{C'}$, respectively; see Example \ref{qregiuheqrigegwegergwegergw} for the explicit description of the $G$-action on $\Hilb(A(\bK))$.  
We consider the multiplication by the one-entry matrix 
$ f[g^{-1}C',C]$ as a linear  map from $ M_{C}$ to $M_{g^{-1}C'}$ and ${}^{g}(-)$ as a linear map from $M_{g^{-1}C'}$ to $M_{C'}$.
The composition $ {}^{g}(-)\circ f[g^{-1}C',C]$ turns out to   be a morphism in $\Hilb(A(\bK))$ from $\ell g M_{C}$ to $\ell M_{C'}$.
  \hB
 \end{rem}

\begin{rem}
 Restricting to the unital case one can reformulate Corollary \ref{skjtrogergfdsgfgfgd}  in analogy with \cite[Prop. 7.12]{crosscat}  as in the corollary below.
There  $\widetilde{\Fun}(BG,\Ccat)$ is the $(2,1)$-category
of unital $C^{*}$-categories, weakly  equivariant morphisms and  unitary natural isomorphisms between weakly  equivariant morphisms, and
$\Ccat_{2,1}$ is the  $(2,1)$-category of unital $C^{*}$-categories, morphisms, and unitary isomorphisms between morphisms. \hB
\end{rem}
 \begin{kor} The reduced crossed product functor extends to a $2$-functor
$$ -\rtimes_{r}G:\widetilde{\Fun}(BG,\Ccat)\to \Ccat_{2,1}\ .$$
\end{kor}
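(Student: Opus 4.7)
The plan is to assemble the data of the 2-functor from the three constructions already at our disposal: the object assignment $\bK\mapsto\bK\rtimes_{r}G$ coming from Definition \ref{ewtiojgwergerggwggr} (and its extension to all of $\Fun(BG,\nCcat)$ via Steps~\ref{wkegjwoergergerwferf}--\ref{wkegjwoergergerwferf3}); the 1-morphism assignment $(\phi,\rho)\mapsto(\phi,\rho)\rtimes_{r}G$ provided by Corollary~\ref{skjtrogergfdsgfgfgd}; and the 2-morphism assignment $\kappa\mapsto\kappa\rtimes_{r}G$ on unitary natural multiplier transformations, again provided by Corollary~\ref{skjtrogergfdsgfgfgd}. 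Restricting to the unital case we land in $\Ccat$, and the natural transformations are genuine unitary natural isomorphisms (not multiplier ones), so the target 2-category is $\Ccat_{2,1}$.

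The verification of 2-functoriality reduces to checking four compatibilities: (i) identities go to identities; (ii) the (possibly partial) composition of weakly equivariant morphisms is compatible with $-\rtimes_{r}G$, i.e.\ $((\phi',\rho')\circ(\phi,\rho))\rtimes_{r}G=((\phi',\rho')\rtimes_{r}G)\circ((\phi,\rho)\rtimes_{r}G)$ whenever the left-hand side is defined; (iii) vertical composition of 2-morphisms is preserved; (iv) horizontal composition of 2-morphisms is preserved, in particular $(\lambda\ast\kappa)\rtimes_{r}G=(\lambda\rtimes_{r}G)\ast(\kappa\rtimes_{r}G)$. The strategy for each is the same. In the algebraic crossed product the assertion is \cite[Prop.~7.12]{crosscat}, which handles precisely the 2-categorical structure at the level of $-\rtimes^{\alg}G$. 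Using the explicit formulas \eqref{feqwfqwedwgtertgertge54} and \eqref{qewfiuhuiqhefiuqwefqewfeqwfwefqwf} one reads off both sides of each identity on generating morphisms $(f,g)$ and sees that they already agree in $\bL\rtimes^{\alg}G$; the extension to the reduced crossed product is then forced by continuity, since $-\rtimes^{\alg}G$ sits as a dense subcategory of $-\rtimes_{r}G$ by construction, and all the morphisms produced by Corollary~\ref{skjtrogergfdsgfgfgd} are the unique continuous extensions of their algebraic counterparts.

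The main delicate point will be (iv), the horizontal composition of 2-morphisms, because it interacts non-trivially with the cocycle data $\rho,\rho'$ carried by weakly equivariant morphisms and because one must ensure that the composite $\lambda_{\phi'(-)}\circ\psi(\kappa_{-})$ is again a uniformly bounded unitary multiplier natural transformation so that Corollary~\ref{skjtrogergfdsgfgfgd} applies to produce $(\lambda\ast\kappa)\rtimes_{r}G$. Uniform boundedness follows because both $\kappa$ and $\lambda$ are uniformly bounded and $\psi$ is contractive on morphism spaces; unitarity is preserved under horizontal composition of unitary transformations. Once the horizontal composite is seen to sit in the domain of Corollary~\ref{skjtrogergfdsgfgfgd}, the identification $(\lambda\ast\kappa)\rtimes_{r}G=(\lambda\rtimes_{r}G)\ast(\kappa\rtimes_{r}G)$ is again checked on generating morphisms in the algebraic crossed product using \eqref{qewfqewfqwefewfqew} and \eqref{feqwfqwedwgtertgertge54}, and then extended by continuity.

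Finally, since the unital case keeps us inside $\Ccat$ (no multipliers needed in the target) and the reduced crossed product of a unital $G$-$C^{*}$-category is unital by the construction in Definition~\ref{ewtiojgwergerggwggr}, the 2-functor indeed lands in $\Ccat_{2,1}$ rather than in its non-unital or multiplier-enhanced variant. This completes the plan; no new analytic input beyond what is already contained in Lemma~\ref{qrgiojfwewfqdedqwedqwed}, Corollary~\ref{skjtrogergfdsgfgfgd}, and \cite[Prop.~7.12]{crosscat} is required.
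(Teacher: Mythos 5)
Your proposal is correct and follows essentially the same route as the paper, which states this corollary as a direct reformulation of Corollary \ref{skjtrogergfdsgfgfgd} in analogy with the algebraic/maximal case of \cite[Prop.~7.12]{crosscat}, with the 2-functoriality axioms checked on the dense algebraic crossed product and extended by continuity. The only superfluous worry is the "possibly partial" composition of weakly equivariant morphisms: in the unital case to which the corollary is restricted, compositions are always defined, so that caveat can be dropped.
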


In order to finish the proof of Theorem \ref{ejgwoierferfewrferfwe} we must show that the  restriction of the reduced crossed product functor 
to $C^{*}$-algebras  has the desired values and that it preserves faithful morphisms.   
We start with 
 recalling the explicit description of the reduced crossed product  of $C^{*}$-algebras with $G$-action.

\begin{construction}\label{wtijghioegergewgrwe}{\em 
  Let $ A$ be in $\Fun(BG,\nCalg)$. We write the {result of the} action of $g$ on $a$ in $A$ as ${}^{g}a$. 
The strict $G$-action on the  large $C^{*}$-category   $\Hilb(A)$ was described in  Example \ref{qregiuheqrigegwegergwegergw}.
We consider $ A$ as an object of  $\Hilb(  A)$ in the natural way and define
\begin{equation}\label{qewfljqlwefqwfqewfwf}
L^{2}(G, A) \coloneqq \bigoplus_{g\in G}  {g}A
\end{equation}
  in $\Hilb(A)$.
 An element  {$b$ in   in   the summand $gA$ (which is an element of the set $A$)   will be denoted by $[g,b]$.}  We define a covariant representation $(\rho,{\kappa})$ of $(A,G)$ on $L^{2}(G,A)$ as follows:
\begin{enumerate}
\item For $a$ in $A$ we define $\rho(a)$ in $\End_{\Hilb(A)}(L^{2}(G,A))$ such that $\rho(a){([g,b])}:=[g,ab]$.
\item For $h$ in $G$ we define the unitary  ${\kappa} 
(h)$ in $ \End_{\Hilb(A)}(L^{2}(G,A))$ by 
\[{\kappa}(h)([g,b]) \coloneqq [gh^{-1},{}^{h}b]\,.\]
\end{enumerate}
One checks the relation ${\kappa}(h)\rho(a){\kappa}(h^{-1})=\rho({}^{h}a)$.
 The formula \begin{equation}\label{fopgosifgsfgsfgsfgsgf}
\kappa(g)\rho(a)=\sum_{\ell\in G}e_{\ell} [(-)^{g}\circ a\cdot(-) ] e^{*}_{\ell g}
\end{equation}
will be useful later.
}\hB \end{construction}
   \begin{ddd}\label{woigwgewgerg9}
 The reduced crossed product $ A\rtimes_{r}G$    is defined as the $C^{*}$-subalgebra of $\End_{\Hilb(A)}(L^{2}(G, A))$  generated by the operators ${\kappa}(h)\rho(a)$  for all  {$a$}  in $A$ and $h$ in $G$.
 \end{ddd}

Let $A$ be in $\Fun(BG,\nCalg)$. Temporarily  we write
$A\rtimes^{\Calg}_{r}G$ and $A\rtimes^{\Ccat}_{r}G$ for the reduced crossed products of $A$ with $G$ considered as a $C^{*}$-algebra or as a $C^{*}$-category with a single object. The following lemma
shows that the reduced crossed product for $C^{*}$-categories with $G$-action restricts to the classical  reduced crossed product for 
$C^{*}$-algebras with $G$-action.  
\begin{lem}\label{wiothgwrthgfregwreg}
The norms on $A\rtimes^{\alg}G$ induced from $A\rtimes^{\Calg}_{r}G$ and
$A\rtimes^{\Ccat}_{r}G$ are equal.
\end{lem}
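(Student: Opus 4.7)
The strategy is to exploit the explicit characterization of the reduced norm on $A\rtimes^{\Ccat}_{r}G$ provided by Remark \ref{qreigjoqergergwegregwreg}, and then to recognize the resulting representation as the one used in Construction \ref{wtijghioegergewgrwe} to define $A\rtimes^{\Calg}_{r}G$. Since both norms are induced by representations of the algebraic crossed product $A\rtimes^{\alg}G$ on the same Hilbert $A$-module via the same formula, their equality will follow by direct inspection.

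First I would specialize the general setup to the case at hand. When $\bK=A$ is viewed as a $C^{*}$-category with a single object $*$, the construction of $A(\bK)$ recalled in Remark \ref{rem_defn_A} yields a canonical identification $A(\bK)\cong A$ in $\Fun(BG,\nCalg)$. Under this identification $M_{*}=1_{*}[*,*]A(\bK)$ becomes the Hilbert $A$-module $A$ itself, and the weakly equivariant Yoneda embedding of Lemma \ref{qroihofewfqffqwefqewfqefqef} specializes to the covariant representation $A\to\Hilb_{c}(A)$ that sends $*$ to $A$. By Proposition \ref{werjigowegwerfwrefwerf} together with Theorem \ref{ewrgowergwerfgrfwerf}, the classical Hilbert-$A$-module sum $L^{2}(G,A)=\bigoplus_{g\in G}gA$ of Construction \ref{wtijghioegergewgrwe} represents simultaneously the AV-sum in $\Hilb_{c}(A)$ and the orthogonal sum in $\bW\Hilb(A)$ of the family $(gA)_{g\in G}$; thus I may choose $L^{2}(G,A)$ as the object $\bigoplus_{g\in G}gM_{*}$ entering the construction of the reduced norm via Remark \ref{qreigjoqergergwegregwreg}.

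With these choices aligned, both norms come from faithful representations of $A\rtimes^{\alg}G$ on $\End_{\Hilb(A)}(L^{2}(G,A))$, and the core of the proof is a comparison of formulas. Plugging $C=C'=*$ and $f=a\in A$ into the formula \eqref{qwefqewfqwefewfqew} yields exactly
\[
\sum_{\ell\in G} e_{\ell}\,\bigl[{}^{g}(-)\circ a\bigr]\, e^{*}_{\ell g}\,,
\]
which is identical to the expression \eqref{fopgosifgsfgsfgsfgsgf} for $\kappa(g)\rho(a)$. Hence the two representations of the algebraic crossed product agree on generators, and therefore agree as $*$-homomorphisms into $\End_{\Hilb(A)}(L^{2}(G,A))$. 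This yields the equality of the induced norms on $A\rtimes^{\alg}G$, completing the proof. The only non-routine point is the bookkeeping that confirms the Yoneda-type embedding restricts, in the single-object case, to the obvious representation of $A$ on itself with its canonical equivariance datum; this is a direct unwinding of definitions using Example \ref{qregiuheqrigegwegergwegergw}.
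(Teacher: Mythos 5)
Your proposal is correct and follows essentially the same route as the paper: both arguments reduce the claim to comparing the explicit formulas \eqref{fopgosifgsfgsfgsfgsgf} and \eqref{qwefqewfqwefewfqew} for the two representations of $A\rtimes^{\alg}G$ on $L^{2}(G,A)$, using Remark \ref{qreigjoqergergwegregwreg} to pin down the categorical reduced norm. The paper merely makes one step slightly more explicit — it packages your identification of sums as an isometric inclusion $\End_{\Hilb(A)}(L^{2}(G,A))\to\End_{\bW\Hilb(A)}(L^{2}(G,A))$ making the relevant triangle over $A\rtimes^{\alg}G$ commute — which is exactly the content of your appeal to Theorem \ref{ewrgowergwerfgrfwerf} and the uniqueness of orthogonal sums.
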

\begin{proof}
By Remark \ref{qreigjoqergergwegregwreg}  the $C^{*}$-algebra $A\rtimes^{\Ccat}_{r}G$ is the closure of the image of 
 $A\rtimes^{\alg}G\to \bL^{2}(G,\bW\Hilb(A))$. Similarly, by Definition \ref{woigwgewgerg9}  the $C^{*}$-algebra $A\rtimes^{\Calg}_{G}$ is the closure of the image of $A\rtimes^{\alg}G\to 
 \End_{\Hilb(A)}(L^{2}(G, A))$.
  It therefore suffices to construct an isometric inclusion
 $ i:\End_{\Hilb(A)}(L^{2}(G, A))\to  \bL^{2}(G,\bW\Hilb(A))$
 such that \begin{equation}\label{qewfqwefqwqdewdqwd}
\xymatrix{&A\rtimes^{\alg}G\ar[dl]_{(1)}\ar[dr]^{(2)}\\  \End_{\Hilb(A)}(L^{2}(G, A))\ar[rr]&& \bL^{2}(G,\bW\Hilb(A))}
\end{equation}
 commutes. Using  \eqref{eq_morphisms_LtwoG}  
 and \eqref{qewfljqlwefqwfqewfwf} we see that  we can define $i$  as  the inclusion of  $\End_{\Hilb(A)}(L^{2}(G, A))$ into 
 $\End_{ \bL^{2}(G,\bW\Hilb(A))}(A)$, which explicitly is the inclusion
 $$\End_{\Hilb(A)}(L^{2}(G, A))\to \End_{\bW\Hilb(A)}(L^{2}(G, A))$$
 of the bounded operators on the Hilbert $A$-module $ L^{2}(G, A)$ into its von Neumann envelope.
 Comparing  the explicit formula \eqref{fopgosifgsfgsfgsfgsgf}  for $(1)$ with the formula \eqref{qwefqewfqwefewfqew} for $(2)$
   we check that the triangle in \eqref{qewfqwefqwqdewdqwd} commutes.
\end{proof}

 Recall the functor $A$ from \eqref{frewfoirjviojvioeweverwvwev}. 
 Let $  \bK$ be in $\Fun(BG,\nCcat)$ and consider $A  (\bK)$ in $\Fun(BG,\nCalg)$.
We have an isomorphism
\begin{equation}\label{ervqqowdqwdwqd}
A^{\alg}(  \bK\rtimes^{\alg} G)\cong A^{\alg}(  \bK)\rtimes^{\alg}G\, .
\end{equation}
 In \cite[Thm.\ 6.9]{crosscat} we have shown that this isomorphism extends to an isomorphism 
 $$A(  \bK\rtimes G)\cong A(  \bK)\rtimes G$$ involving maximal crossed products.
 The following  result  is the analogue of this isomorphim for the reduced crossed products.  
  
%
%
%
 
\begin{theorem}\label{weoigjowergerwegergwerg}
The isomorphism \eqref{ervqqowdqwdwqd} extends to an isomorphism $$A(  \bK\rtimes_{r} G)\cong A(  \bK)\rtimes_{r} G\, ,$$
where the crossed product on the right-hand side is the classical one for $G$-$C^{*}$-algebras.
\end{theorem}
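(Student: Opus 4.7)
The algebraic isomorphism \eqref{ervqqowdqwdwqd} identifies $A^{\alg}(\bK \rtimes^{\alg} G)$ with $A^{\alg}(\bK) \rtimes^{\alg} G$ as a dense $*$-subalgebra in both $A(\bK \rtimes_{r} G)$ and $A(\bK) \rtimes_{r} G$. The theorem therefore reduces to the claim that the two $C^{*}$-norms induced on this common subalgebra coincide. My strategy is to realize both norms as operator norms of the same operator acting on the Hilbert $A(\bK)$-module $L^{2}(G,A(\bK))$.

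On the one hand, by Remark \ref{qreigjoqergergwegregwreg} the reduced norm on $\bK \rtimes^{\alg} G$ is induced by the Yoneda type embedding
\[
\bK \rtimes^{\alg} G \longrightarrow \Hilb_{c}(A(\bK)) \rtimes^{\alg} G \longrightarrow \bL^{2}(G,\bW \Hilb(A(\bK))),
\]
with the explicit formula \eqref{qwefqewfqwefewfqew} presenting the image of a generator $(f,g)\colon C\to C'$ as a concrete operator between the orthogonal sums $\bigoplus_{h \in G} hM_{C}$ and $\bigoplus_{h \in G} hM_{C'}$. On the other hand, by Definition \ref{woigwgewgerg9} and Lemma \ref{wiothgwrthgfregwreg} the reduced norm on $A(\bK) \rtimes^{\alg} G$ is given by the regular representation on $L^{2}(G,A(\bK)) = \bigoplus_{h \in G} {}^{h}A(\bK)$, whose implementing operators are described by \eqref{fopgosifgsfgsfgsfgsgf}.

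The bridge between the two pictures is Lemma \ref{rtiohjirotgwegfwergewgegwerf9}, which provides a canonical isomorphism $A(\bK) \cong \bigoplus_{C \in \Ob(\bK)} M_{C}$ in $\Hilb(A(\bK))$. Combining this with associativity of orthogonal sums (Lemma \ref{etgoijoergergwegwergwrg}), one obtains the refined identification
\[
L^{2}(G,A(\bK)) \cong \bigoplus_{h \in G} \bigoplus_{C \in \Ob(\bK)} h M_{C}
\]
in $\Hilb(A(\bK))$, under which the regular representation of $A(\bK) \rtimes^{\alg} G$ decomposes into blocks indexed by pairs $(C,C')$ of objects of $\bK$. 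A direct comparison of \eqref{qwefqewfqwefewfqew} and \eqref{fopgosifgsfgsfgsfgsgf}, using the description of the $G$-action on $\Hilb(A(\bK))$ from Example \ref{qregiuheqrigegwegergwegergw}, shows that the $(C,C')$-block of $\kappa(g)\rho(f[C',g^{-1}C'])$ agrees with the Yoneda-type image of the generator $(f,g)\colon C \to C'$ under the isomorphism \eqref{ervqqowdqwdwqd}. By linearity this block-wise matching extends to arbitrary finite sums, so the two embeddings produce the same operator on $L^{2}(G,A(\bK))$ for every element of $A^{\alg}(\bK) \rtimes^{\alg} G$.

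The main obstacle is the passage from the $C^{*}$-category norm on $\bK \rtimes_{r} G$ (which yields the norm on $A(\bK \rtimes_{r} G)$ via the functor $A$) to the operator norm on $L^{2}(G,A(\bK))$. One must verify that the maximal $C^{*}$-norm defining $A(\bK \rtimes_{r} G)$, when evaluated on an element $x \in A^{\alg}(\bK) \rtimes^{\alg} G$, equals the operator norm of the corresponding block operator on $L^{2}(G,A(\bK))$. The key ingredient is that a faithful representation $\sigma\colon A(\bK) \to B(H_{0})$ induces, by extension of scalars, a faithful representation of the Hilbert $A(\bK)$-module $L^{2}(G,A(\bK))$ on the Hilbert space $L^{2}(G,H_{0})$ that simultaneously implements the reduced regular representation of $A(\bK) \rtimes^{\alg} G$ and, via Yoneda, a representation of $\bK \rtimes_{r} G$ whose extension along $\bK \rtimes_{r} G \to A(\bK \rtimes_{r} G)$ computes the norm on $A(\bK \rtimes_{r} G)$. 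Once this compatibility of Hilbert space representations is established, the equality of the two reduced norms on $A^{\alg}(\bK) \rtimes^{\alg} G$, and hence the theorem, follows.
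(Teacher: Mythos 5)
Your strategy coincides with the paper's: both proofs realize the two reduced norms on the common dense subalgebra $A^{\alg}(\bK)\rtimes^{\alg}G$ as operator norms on $L^{2}(G,A(\bK))$, using Lemma \ref{rtiohjirotgwegfwergewgegwerf9} to decompose $L^{2}(G,A(\bK))\cong\bigoplus_{g\in G}\bigoplus_{C\in\Ob(\bK)}gM_{C}$ and then matching the formulas \eqref{qwefqewfqwefewfqew} and \eqref{fopgosifgsfgsfgsfgsgf} block by block; that matching is literally the paper's verification that the square \eqref{werwferfrewfwerfw} commutes. (The paper additionally constructs the map $A(\bK\rtimes_{r}G)\to A(\bK)\rtimes_{r}G$ separately, by applying $-\rtimes_{r}G$ to $\bK\to A(\bK)$ and invoking the universal property of $A$; your formulation in terms of equality of the two norms subsumes this direction.)

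The one place where your sketch is thinner than it needs to be is exactly the step you flag as the ``main obstacle'': why the norm of $A(\bK\rtimes_{r}G)$, evaluated on an element of $A^{\alg}(\bK)\rtimes^{\alg}G$, equals the operator norm of the corresponding block operator on $L^{2}(G,A(\bK))$. Passing to $L^{2}(G,H_{0})$ by extension of scalars along a faithful representation of $A(\bK)$ does give an isometric representation of $\End_{\Hilb(A(\bK))}(L^{2}(G,A(\bK)))$, but it does not by itself explain why this single representation attains the supremum over \emph{all} representations that defines the maximal norm on $A^{\alg}(\bK\rtimes_{r}G)$. The paper closes this by (i) observing that $\bK\rtimes_{r}G$ sits isometrically inside the full subcategory $\bD$ of $\bL^{2}(G,\bW\Hilb(A(\bK)))$ on the objects $M_{C}$ (Remark \ref{qreigjoqergergwegregwreg}), (ii) using that $A$ preserves isometric inclusions (\cite[Lem.~6.8.1]{crosscat}), so that $A(\bK\rtimes_{r}G)\to A(\bD)$ is isometric, and (iii) exhibiting $A(\bD)$ isometrically as matrices inside $\End_{\bW\Hilb(A(\bK))}(L^{2}(G,A(\bK)))$ via \cite[Lem.~6.8.2]{crosscat}. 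Some version of these three steps is needed to turn your ``key ingredient'' into an actual lemma rather than an assertion; once that is supplied, your argument is complete and agrees with the paper's.
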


 \begin{proof}
We apply the functor $-\rtimes_{r}G$ to the morphism
  $\bK\to A(\bK)$ in $\Fun(BG,\nCcat)$  in order to get a morphism 
  \begin{equation}\label{qwefqewfqewdwedweqdqewdqewd}
 \bK\rtimes_{r}G\to A(\bK)\rtimes_{r}G
\end{equation}   in $\nCcat$
  which extends \begin{equation}\label{qerqewrewrqerewqr}
\bK\rtimes^{\alg}G\to A^{\alg}(\bK)\rtimes^{\alg}G\ .
\end{equation} 
Here we must interpret   $A(\bK)\rtimes_{r}G$ as  the reduced crossed product of the single-object $C^{*}$-category $A(\bK)$ with $G$. But by 
  Lemma \ref{wiothgwrthgfregwreg}  it coincides with 
  the reduced crossed product in the sense of $C^{*}$-algebras described in Definition \ref{woigwgewgerg9}. 
   By the universal property of $A^{\alg}$ the  morphism \eqref{qerqewrewrqerewqr} induces  the underlying map of the  isomorphism   \eqref{ervqqowdqwdwqd}.    Correspondingly, by the universal property of $A$ the morphism \eqref{qwefqewfqewdwedweqdqewdqewd}  induces  a morphism
  $A(\bK\rtimes_{r}G)\to A(\bK)\rtimes_{r}G$ continuously extending \eqref{ervqqowdqwdwqd}. 
  
 For the other direction
 we consider the following   composition where the last two morphisms are    induced by the canonical embedding in completions \begin{equation}\label{adcq4fasdfa} A^{\alg}(\bK)\rtimes^{\alg}G \stackrel{\eqref{ervqqowdqwdwqd}}{\cong} 
A^{\alg}(  \bK\rtimes^{\alg} G)\to 
 A^{\alg}(  \bK\rtimes_{r}   G)\to A (  \bK\rtimes_{r}  G)\ .
\end{equation} 
It corresponds to a covariant representation
$(\rho^{\alg},\pi)$, where  $\rho^{\alg}:A^{\alg}(\bK)\to  A (  \bK\rtimes_{r}  G)$ is a morphism in $\nClincat$,  and $\pi:G\to M(A (  \bK\rtimes_{r}  G))$ is a unitary representation. 
By  the universal property  of $A(\bK)$ as the  completion of $A^{\alg}(\bK)$ the morphism $\rho^{\alg} $ continuously extends to a morphism $
\rho:A(\bK)\to   A (  \bK\rtimes_{r}  G)$ so that we get a covariant representation 
$(\rho,\pi)$ of $A(\bK)$ on $
A (  \bK\rtimes_{r}  G)$.  It induces a morphism
\begin{equation}\label{adfasdfdsfaewfae}
A(\bK)\rtimes^{\alg}G\to A (  \bK\rtimes_{r}  G)\ .
\end{equation}
{We must show that   it continuously extends further to the reduced crossed product.} We consider the full subcategory $\bD$ of $\bL^{2}(G,\bW\Hilb(A(\bK)))$ on the objects $M_{C}$ for $C$ in $\Ob(\bK)$.  
We will 
 construct 
  an  isometric embedding 
\begin{equation}\label{sfjksdjfdlsdfgsdfgsfg}
A(\bD)  \to \End_{\Hilb(A(\bK))}(L^{2}(G,A(\bK))) 
\end{equation} in $\nCalg$ 
such that the square  \begin{equation}\label{werwferfrewfwerfw}
\xymatrix{ A (  \bK)\rtimes^{\alg}  G\ar[r]^{\eqref{adfasdfdsfaewfae}}\ar[d]^{(1)}&A(\bK\rtimes_{r}G)\ar[d]^{(2)}\\\End_{\bW\Hilb(A(\bK))}(L^{2}(G,A(\bK))) & \ar[l]A(\bD)}
\end{equation}
 commutes.
Since we have an isometric inclusion 
$\bK\rtimes_{r}G\to \bD $ (see Remark \ref{qreigjoqergergwegregwreg}) and $A$ preserves isometric inclusions by \cite[Lem. 6.8.1]{crosscat}
the right vertical arrow $(2)$ is an isometric inclusion. Since the reduced norm 
on $ A (  \bK)\rtimes^{\alg}  G$ is by Definition \ref{woigwgewgerg9} induced by the left vertical arrow  $(1)$ the commutativity of  the square immediately then implies that 
\eqref{adfasdfdsfaewfae} continuously extends to the reduced crossed product in the domain.
In order to construct \eqref{sfjksdjfdlsdfgsdfgsfg}
we  consider the isomorphism 
$$L^{2}(G,A(  \bK))\stackrel{\eqref{qewfljqlwefqwfqewfwf}, \ \text{Lem.}\ref{rtiohjirotgwegfwergewgegwerf9}}{\cong} \bigoplus_{g\in G} \bigoplus_{C\in \Ob(\bK)}  g M_{C}\cong   \bigoplus_{C\in \Ob(\bK)}  \bigoplus_{g\in G} g M_{C} 
$$ in $\Hilb(A(\bK))$.
Considering the elements of $A^{\alg}(\bD)$ as matrices
indexed by $\Ob(\bK)$ with entries
in $\Hom_{\bW\Hilb(A(\bK))}(\oplus_{g\in G}  gM_{C}, \oplus_{g\in G} g M_{C'} )$ for pairs $C,C'$ in $\Ob(\bK)$
  we get an injective  homomorphism
 $A^{\alg}(\bD)\to \End_{\bW\Hilb(A(\bK))}(L^{2}(G,A(\bK)))$.  By \cite[Lem. 6.8.2]{crosscat} 
  this inclusion
 extends to an isometric inclusion
 $A (\bD)\to \End_{\bW\Hilb(A(\bK))}(L^{2}(G,A(\bK)))$.
   Comparing  the explicit formula \eqref{fopgosifgsfgsfgsfgsgf}  for $(1)$ with the formula \eqref{qwefqewfqwefewfqew} for $(2)$
   we check that the square in \eqref{werwferfrewfwerfw} commutes.
\end{proof}

The following proposition finishes the proof of Theorem \ref{ejgwoierferfewrferfwe}.

\begin{prop}\label{okgpweerfrfrfwr}
The reduced crossed product functor $-\rtimes_{r}G:\Fun(BG,\nCcat)\to \nCcat$ preserves faithful morphisms.
\end{prop}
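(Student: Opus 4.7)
The plan is to factor a given faithful morphism $\phi\colon \bK\to\bL$ in $\Fun(BG,\nCcat)$ through an auxiliary category to reduce to the case where $\phi$ is also injective on objects, and then to use the functor $A$ together with the isomorphism from Theorem \ref{weoigjowergerwegergwerg} to fall back on the classical fact that the reduced crossed product of $G$-$C^{*}$-algebras preserves injective equivariant $*$-homomorphisms.

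First I would construct $\tilde{\bL}$ in $\Fun(BG,\nCcat)$ by pulling back $\bL$ along the map $\phi'\colon \Ob(\bK)\sqcup\Ob(\bL)\to\Ob(\bL)$ given by $\phi'(C)=\phi(C)$ for $C\in\Ob(\bK)$ and $\phi'(L)=L$ for $L\in\Ob(\bL)$: concretely, $\Ob(\tilde{\bL}) = \Ob(\bK)\sqcup\Ob(\bL)$ and $\Hom_{\tilde{\bL}}(X_{1},X_{2}) \coloneqq \Hom_{\bL}(\phi'(X_{1}),\phi'(X_{2}))$, with composition, involution, and (well-defined by equivariance of $\phi$) $G$-action all inherited from $\bL$. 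The projection $p\colon \tilde{\bL}\to\bL$ given by $p(X)=\phi'(X)$ on objects and the identity on morphism spaces is equivariant and fully faithful, while the inclusion $i\colon \bK\to\tilde{\bL}$ with $i(C)=C$ on objects and $i(f)=\phi(f)$ on morphisms is equivariant, injective on objects, and faithful. Since $\phi = p\circ i$ and $p\rtimes_{r}G$ is fully faithful by Corollary \ref{skjtrogergfdsgfgfgd}, it suffices to show that $i\rtimes_{r}G$ is faithful.

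Since a faithful $*$-homomorphism between $C^{*}$-categories is automatically isometric, $i$ is an isometric inclusion in $\Fun(BG,\nCcat_{i})$, so \cite[Lem.~6.8]{crosscat} (invoked also in the proof of Theorem \ref{weoigjowergerwegergwerg}) implies that $A(i)\colon A(\bK)\to A(\tilde{\bL})$ is an injective equivariant $*$-homomorphism of $G$-$C^{*}$-algebras. The classical result that the reduced crossed product of $G$-$C^{*}$-algebras preserves injective equivariant $*$-homomorphisms---verified by choosing a faithful non-degenerate representation $\pi\colon A(\tilde{\bL})\to B(H)$, restricting it via $A(i)$ to obtain a faithful representation of $A(\bK)$ on the non-degenerate subspace $H_{0}\coloneqq\overline{\pi(A(i)(A(\bK)))H}$, and observing that $L^{2}(G,H_{0})$ is an invariant subspace of $L^{2}(G,H)$ for the regular representation of $A(\bK)\rtimes^{\alg}G$---then implies that $A(i)\rtimes_{r}G$ is injective.

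Using the naturality of the isomorphism from Theorem \ref{weoigjowergerwegergwerg}, $A(i)\rtimes_{r}G$ is identified with $A(i\rtimes_{r}G)\colon A(\bK\rtimes_{r}G)\to A(\tilde{\bL}\rtimes_{r}G)$, so $A(i\rtimes_{r}G)$ is injective. Since the vertical morphisms $\bK\rtimes_{r}G\to A(\bK\rtimes_{r}G)$ and $\tilde{\bL}\rtimes_{r}G\to A(\tilde{\bL}\rtimes_{r}G)$ are faithful by \cite[Lem.~6.7]{crosscat}, and the naturality square for $-\to A(-)$ at $i\rtimes_{r}G$ commutes, the composite $\bK\rtimes_{r}G\to A(\tilde{\bL}\rtimes_{r}G)$ is faithful, whence $i\rtimes_{r}G$ itself is faithful. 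The main technical obstacle I anticipate is the construction and verification of $\tilde{\bL}$ as a $G$-equivariant $C^{*}$-category with the stated factorization properties; once this is in place, the rest of the proof is a clean reduction to the classical $C^{*}$-algebra situation.
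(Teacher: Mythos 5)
Your proposal is correct and follows essentially the same route as the paper: reduce to a morphism that is injective on objects via a factorization through an auxiliary category (your $\tilde{\bL}$ is exactly the construction the paper recycles from its Lemma on extending $\phi\rtimes^{\alg}G$), then pass through the natural comparison $A(\bK\rtimes_{r}G)\cong A(\bK)\rtimes_{r}G$ and the faithfulness of $\bK\rtimes_{r}G\to A(\bK\rtimes_{r}G)$ to fall back on the classical injectivity of reduced crossed products of $G$-$C^{*}$-algebras. The only cosmetic differences are the order of the two reductions and that the paper justifies the $C^{*}$-algebra step by the isometric inclusion $L^{2}(G,A)\to L^{2}(G,B)$ rather than your faithful-representation argument.
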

\begin{proof}
We first observe that the assertion of the proposition  is true for the restriction of the reduced crossed product functor to $C^{*}$-algebras with $G$-action.
Assume that $A\to B$  is an isometric inclusion in $\Fun(BG,\nCalg)$. Then we get an isometric inclusion $L^{2}(G,A)\to L^{2}(G,B)$ of Banach spaces. This implies that the induced homomorphism $A\rtimes_{r}G\to 
B\rtimes_{r}G$ is isometric.
 
Let $\bK\to \bL$ be a faithful (or equivalently, an isometric) morphism in $\Fun(BG,\nCcat)$. We first assume that it is injective on objects.
Then we consider the commutative  diagram
$$\xymatrix{\bK\rtimes_{r}G\ar[d]\ar[r]&\bL\rtimes_{r}G\ar[d]\\A(\bK\rtimes_{r}G)\ar[r]\ar[d]^{\cong}&A(\bL\rtimes_{r}G)\ar[d]^{\cong }\\A(\bK)\rtimes_{r}G\ar[r]&A(\bL)\rtimes_{r}G}$$
The upper vertical morphisms are isometric by  \cite[Lem. 6.7]{crosscat}.
The lower vertical morphisms are isomorphisms by Theorem \ref{weoigjowergerwegergwerg}.
Since $A$ preserves isometric inclusions by \cite[Lem. 6.8.1]{crosscat} 
the morphism $A(\bK)\to A(\bL)$ is an isometry.  As explained above, this implies that
 the lower horizontal morphism is   isometric. This implies that the upper horizontal morphism is  isometric.

We finally remove the assumption that 
 $\bK\to \bL$ is injective on objects. In this case, as in the proof of Lemma \ref{wkoetgerfregwr}, we  can find a factorization of this morphism  as
$\bK\to \bL'\to \bL$, where the first map is faithful and injective on objects, and
$\bL'\to \bL$ is a unitary equivalence, hence fully faithful. 
 We  obtain a  factorization of  the  morphismin question as
 $\bK\times_{r}G\to \bL'\rtimes_{r}G\to \bL\rtimes_{r}G$.
 The first morphism is isometric by the special case above. Since
 $-\rtimes_{r}G$ preserves fully faithfulness the second morphism is fully faithful.  
Hence the composition is faithful.
 \end{proof}

Recall that a group $G$ is called exact if the functor
$-\rtimes_{r}G:\Fun(BG,\nCalg)\to \nCalg$ preserves exact sequences.
\begin{prop}\label{rgkohpgbdghdfghf}
If $G$ is exact, then $-\rtimes_{r}G:\Fun(BG,\nCcat)\to \nCcat$
preserves exact seqences.
\end{prop}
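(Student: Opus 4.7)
The strategy is to reduce the statement to the known exactness property of $-\rtimes_{r}G$ on $\Fun(BG,\nCalg)$ via the isomorphism $A(\bK\rtimes_{r}G)\cong A(\bK)\rtimes_{r}G$ from Theorem~\ref{weoigjowergerwegergwerg}. Let $0\to \bI\to \bK\to \bL\to 0$ be an exact sequence in $\Fun(BG,\nCcat)$, i.e.\ $\bI$ is a closed $G$-invariant ideal of $\bK$ and for every pair of objects $C,C'$ the sequence of Banach spaces $0\to \Hom_{\bI}(C,C')\to \Hom_{\bK}(C,C')\to \Hom_{\bL}(C,C')\to 0$ is exact. Applying the functor $A$ (which preserves ideal inclusions by \cite[Prop.~8.9.2]{crosscat} and, by the analogous argument for quotients, also preserves the corresponding quotient sequences) yields an exact sequence $0\to A(\bI)\to A(\bK)\to A(\bL)\to 0$ in $\Fun(BG,\nCalg)$.

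By the assumption that $G$ is exact the sequence $0\to A(\bI)\rtimes_{r}G\to A(\bK)\rtimes_{r}G\to A(\bL)\rtimes_{r}G\to 0$ is exact in $\nCalg$. Combining with the isomorphism from Theorem~\ref{weoigjowergerwegergwerg} applied to each of $\bI$, $\bK$, $\bL$, we obtain an exact sequence $0\to A(\bI\rtimes_{r}G)\to A(\bK\rtimes_{r}G)\to A(\bL\rtimes_{r}G)\to 0$, naturally compatible with the canonical morphisms $\bK\rtimes_{r}G\to A(\bK\rtimes_{r}G)$ etc. Exactness at $\bI\rtimes_{r}G$ (i.e.\ faithfulness of $\bI\rtimes_{r}G\to \bK\rtimes_{r}G$) has already been established in the preceding proposition, so it remains to verify exactness of the morphism-space sequence $0\to \Hom_{\bI\rtimes_{r}G}(C,C')\to \Hom_{\bK\rtimes_{r}G}(C,C')\to \Hom_{\bL\rtimes_{r}G}(C,C')\to 0$ for every pair of objects $C,C'$.

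The main obstacle lies in transferring exactness from the algebra level back to individual morphism spaces, since a morphism space of $\bK\rtimes_{r}G$ is only a subspace (not a corner in the literal sense) of $A(\bK\rtimes_{r}G)\cong A(\bK)\rtimes_{r}G$. The key observation is that the canonical map $\bK\rtimes_{r}G\to A(\bK\rtimes_{r}G)$ is an isometric inclusion on each morphism space (by \cite[Lem.~6.7]{crosscat}), and that $\Hom_{\bK\rtimes_{r}G}(C,C')$ can be recovered inside $A(\bK\rtimes_{r}G)$ as the closure of the span of elementary expressions $(f,g)[C',C]$ with $f\colon C\to g^{-1}C'$ in $\bK$. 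For the kernel statement this is immediate: if $y\in \Hom_{\bK\rtimes_{r}G}(C,C')$ maps to zero in $\Hom_{\bL\rtimes_{r}G}(C,C')$, then its image in $A(\bL\rtimes_{r}G)$ vanishes, so by algebra-level exactness $y$ lies in $A(\bI\rtimes_{r}G)$; since $y$ is already a morphism in $\bK\rtimes_{r}G$ from $C$ to $C'$, the intersection $\Hom_{\bK\rtimes_{r}G}(C,C')\cap A(\bI\rtimes_{r}G)$ equals $\Hom_{\bI\rtimes_{r}G}(C,C')$, which settles this direction.

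For surjectivity, the hard part, one starts with $x\in \Hom_{\bL\rtimes_{r}G}(C,C')$ and lifts it to some $\tilde x\in A(\bK\rtimes_{r}G)$ using exactness of the algebra-level sequence; the problem is that $\tilde x$ need not be a morphism in $\bK\rtimes_{r}G$ from $C$ to $C'$. The remedy is a cutting argument using approximate units. Choosing approximate units $(u_{\lambda})$ in $\End_{\bK}(C)$ and $(u'_{\mu})$ in $\End_{\bK}(C')$, the elements $(u'_{\mu}[C',C'])\,\tilde x\,(u_{\lambda}[C,C])$ lie in the image of $\Hom_{\bK\rtimes_{r}G}(C,C')$ inside $A(\bK\rtimes_{r}G)$ (by the product formula~\eqref{fwerferwfwfwrefwrefer} and continuity of the inclusion), while their images in $A(\bL\rtimes_{r}G)$ converge to $x\in \Hom_{\bL\rtimes_{r}G}(C,C')$ by Lemma~\ref{multmult} applied in $\bL$. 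Passing to a subsequence/net that is Cauchy in $\Hom_{\bK\rtimes_{r}G}(C,C')$ modulo $\Hom_{\bI\rtimes_{r}G}(C,C')$ and using completeness together with the kernel statement already established produces a preimage of $x$. Putting the three pieces together yields the desired exactness.
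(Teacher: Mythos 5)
Your proposal is correct and follows essentially the same route as the paper: apply $A$, use exactness of $G$ on the level of $C^{*}$-algebras, transport back via Theorem~\ref{weoigjowergerwegergwerg}, and then descend to the categorical level. The only difference is that the paper dispatches the final step by citing that $A$ \emph{detects} exact sequences, whereas your approximate-unit compression argument on the morphism spaces is an explicit (and valid) verification of exactly that detection statement in the case at hand.
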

\begin{proof}
We use that the functor $A:\nCcat_{i}\to \nCalg$ preserves and detects exact sequences. If
$0\to \bC\to \bD\to \bQ\to 0$ is an exact sequence in $\Fun(BG,\nCcat)$,
then $0\to A( \bC)\to A( \bD)\to A(\bQ)\to 0$ is an exact sequence in $\Fun(BG,\nCalg)$. Since $G$ is exact we get the exact sequence
$  0\to A( \bC)\rtimes_{r}G\to A( \bD)\rtimes_{r}G\to A(\bQ)\rtimes_{r}G\to 0$.
By Theorem \ref{weoigjowergerwegergwerg} we see that 
$  0\to A( \bC \rtimes_{r}G)\to A( \bD \rtimes_{r}G)\to A(\bQ \rtimes_{r}G)\to 0$ is exact. We finally conclude that
$   0\to   \bC \rtimes_{r}G \to   \bD \rtimes_{r}G \to  \bQ \rtimes_{r}G \to 0$ is an exact sequence in $\nCcat$.
\end{proof}

In the following we compare the reduced and the maximal  versions of the crossed product.
Let $ \bK$ be in $\Fun(BG,\nCcat)$. By Definition~\ref{ewtiojgwergerggwggr} the norm on the reduced crossed product ${\bK} \rtimes_r G$ is induced by the representation $\rho$ from \eqref{ewrgfviubiuhuihfefwerfwerfw} which comes from the covariant representation 
$(\sigma,\pi)$ from Lemma \ref{qeriuhiqerfqwefqwfqwefqewf}.
Hence by the universal property of the maximal crossed product \cite[Cor.~5.10]{crosscat} we get a comparison functor 
\begin{equation}\label{eq_q}
q_{ {\bK}}\colon  {\bK} \rtimes G \to {\bK} \rtimes_r G
\end{equation}
in $\nCcat$. 
\begin{lem}
The functor
$q_{ {\bK}}$ is the identity on objects and surjective on morphism spaces.
\end{lem}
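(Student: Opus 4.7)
The statement that $q_{\bK}$ acts as the identity on objects is immediate from the construction. Both $\bK\rtimes G$ and $\bK\rtimes_{r}G$ are built as completions of $\bK\rtimes^{\alg}G$ only on the morphism spaces, and $q_{\bK}$ is the continuous extension of the identity functor on $\bK\rtimes^{\alg}G$; in particular it fixes every object.

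For surjectivity on morphism spaces my plan is to reduce to a standard fact about $C^{*}$-algebras via the functor $A$ of \eqref{frewfoirjviojvioeweverwvwev} and then extract the $\Hom$-component using multiplier projections. Since $q_{\bK}$ is injective on objects, we obtain a $*$-homomorphism $A(q_{\bK})\colon A(\bK\rtimes G)\to A(\bK\rtimes_{r}G)$. Combining Theorem \ref{weoigjowergerwegergwerg} with its counterpart \cite[Thm.~6.9]{crosscat} for the maximal crossed product identifies $A(q_{\bK})$ with the canonical comparison $*$-homomorphism $A(\bK)\rtimes G\to A(\bK)\rtimes_{r}G$ of $C^{*}$-algebras. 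Its image is closed (a general $C^{*}$-algebra fact) and contains the dense subalgebra $A(\bK)\rtimes^{\alg}G$, so $A(q_{\bK})$ is surjective. Being surjective it is in particular non-degenerate, so it extends uniquely to a unital $*$-homomorphism $MA(\bK\rtimes G)\to MA(\bK\rtimes_{r}G)$ which, since $q_{\bK}$ fixes objects, sends the multiplier projection $1_{C}[C,C]$ (available in these multiplier algebras by Lemma \ref{weirugwergewferfr}) to $1_{C}[C,C]$ for every object $C$.

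To complete the argument I take $f\in\Hom_{\bK\rtimes_{r}G}(C,C')$, choose some $\tilde a\in A(\bK\rtimes G)$ with $A(q_{\bK})(\tilde a)=f[C',C]$, and set $a:=1_{C'}[C',C']\cdot\tilde a\cdot 1_{C}[C,C]\in A(\bK\rtimes G)$. Using the multiplication rule \eqref{fwerferwfwfwrefwrefer}, a direct inspection shows that for $\tilde a\in A^{\alg}(\bK\rtimes G)$ the element $a$ equals the $(C',C)$-component of $\tilde a$, and hence lies in the closed subspace $\Hom_{\bK\rtimes G}(C,C')\cdot[C',C]\subseteq A(\bK\rtimes G)$ (closed because morphism spaces embed isometrically in $A$ by \cite[Lem.~6.7]{crosscat}). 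By continuity of left- and right-multiplication by fixed multipliers this persists for all $\tilde a$, so $a=h[C',C]$ for a unique $h\in\Hom_{\bK\rtimes G}(C,C')$. Applying $A(q_{\bK})$ and using that it fixes the two multiplier projections, we compute $A(q_{\bK})(a)=1_{C'}[C',C']\cdot f[C',C]\cdot 1_{C}[C,C]=f[C',C]$, and hence $q_{\bK}(h)=f$. The main technical point of the plan is passing from the algebraic decomposition into $\Hom$-components to its continuous extension to all of $A(\bK\rtimes G)$ via the multiplier projections; once that is in hand, everything else is routine bookkeeping combined with the classical surjectivity of the comparison map for $C^{*}$-algebra crossed products.
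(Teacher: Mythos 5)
Your argument is correct, but it takes a genuinely different and considerably longer route than the paper. The paper's proof is two lines: the image of $q_{\bK}$ contains the dense $*$-subcategory $\bK\rtimes^{\alg}G$ of $\bK\rtimes_{r}G$, and functors between $C^{*}$-categories have closed ranges on morphism spaces (the categorical version of the standard closed-range fact for $*$-homomorphisms), so the image is everything. You instead push the problem into the single $C^{*}$-algebra $A(-)$, invoke Theorem \ref{weoigjowergerwegergwerg} together with \cite[Thm.~6.9]{crosscat} to identify $A(q_{\bK})$ with the classical comparison map $A(\bK)\rtimes G\to A(\bK)\rtimes_{r}G$, get surjectivity there from the closed-range fact for $C^{*}$-algebras, and then have to climb back down to the individual $\Hom$-spaces by cutting with the multiplier corner projections $1_{C}[C,C]$. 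Each step checks out: the identification of $A(q_{\bK})$ is exactly the commutative square \eqref{eq_nu_isos}; a surjective $*$-homomorphism is non-degenerate and so extends to multiplier algebras, and the extension fixes $1_{C}[C,C]$ because that multiplier just projects onto the matrix components with source or target $C$; the corner $1_{C'}[C',C']\,A(\bK\rtimes G)\,1_{C}[C,C]$ agrees with the closed (isometrically embedded, by \cite[Lem.~6.7]{crosscat}) subspace $\Hom_{\bK\rtimes G}(C,C')[C',C]$ by the density/continuity argument you give; and there is no circularity, since the proof of Theorem \ref{weoigjowergerwegergwerg} does not use this lemma. What your approach buys is a reduction to purely $C^{*}$-algebraic facts at the cost of the multiplier bookkeeping; what the paper's approach buys is brevity, staying entirely inside the $C^{*}$-category formalism, and avoiding any appeal to the comparison isomorphisms for $A(-)$. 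If you wanted to shorten your argument while keeping its spirit, note that the closed-range statement you use for $A(q_{\bK})$ already holds componentwise for the maps $\Hom_{\bK\rtimes G}(C,C')\to\Hom_{\bK\rtimes_{r}G}(C,C')$, which is precisely the paper's one-line observation.
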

\begin{proof}
By construction, $q_{ {\bK}}$ is the identity on objects.

By definition of the reduced crossed product, the image of the functor  $q_{ {\bK}}$  contains the dense $*$-subcategory $ {\bK} \rtimes^\alg G$ of $ {\bK} \rtimes_r G$. Since functors between $C^*$-categories have closed ranges on the morphism spaces the claim follows.
\end{proof}

It is known that for an amenable group  $G$  the  canonical map $$q_{ A} \colon  A\rtimes G\to   A\rtimes_{r}G$$  is an isomorphism for all {$C^{*}$-algebras $A$ with $G$-action.}  
In the following we generalize this fact  to $C^*$-categories.

Let $  \bK$ be in $\Fun(BG,\nCcat)$.
\begin{theorem}\label{thm_G_amenable} If $G$ is amenable, then the canonical  morphism $q_{{\bK}}\colon  {\bK} \rtimes G \to  {\bK} \rtimes_r G $ is an isomorphism.
\end{theorem}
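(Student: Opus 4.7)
The plan is to reduce the statement to the classical amenability result for $C^{*}$-algebras via the functor $A$ from \eqref{frewfoirjviojvioeweverwvwev}, exploiting the compatibility of $A$ with both flavors of crossed products.

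First I would observe that both the maximal norm and the reduced norm on the algebraic crossed product $\bK \rtimes^{\alg} G$ can be realized as restrictions of $C^{*}$-algebra norms via the functor $A$. Indeed, by \cite[Lem.~6.7]{crosscat} the canonical morphism $\bK \rtimes G \to A(\bK \rtimes G)$ is isometric, and by \cite[Thm.~6.9]{crosscat} we have an isomorphism $A(\bK \rtimes G) \cong A(\bK) \rtimes G$ extending the algebraic isomorphism $A^{\alg}(\bK \rtimes^{\alg} G) \cong A^{\alg}(\bK) \rtimes^{\alg} G$ from \eqref{ervqqowdqwdwqd}. Hence the maximal norm on $\bK \rtimes^{\alg} G$ is the restriction, along the composition
\[
\bK \rtimes^{\alg} G \to A^{\alg}(\bK) \rtimes^{\alg} G \to A(\bK) \rtimes G,
\]
of the $C^{*}$-norm on $A(\bK) \rtimes G$. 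By the same argument applied to Theorem \ref{weoigjowergerwegergwerg} in place of \cite[Thm.~6.9]{crosscat}, the reduced norm on $\bK \rtimes^{\alg} G$ is the restriction of the $C^{*}$-norm on $A(\bK) \rtimes_{r} G$ along the analogous composition.

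Next I would invoke the classical amenability result: since $G$ is amenable, the canonical comparison morphism $q_{A(\bK)}\colon A(\bK) \rtimes G \to A(\bK) \rtimes_{r} G$ is an isomorphism of $C^{*}$-algebras. It follows that the two $C^{*}$-seminorms pulled back to $A^{\alg}(\bK) \rtimes^{\alg} G$, and hence the maximal and reduced seminorms on $\bK \rtimes^{\alg} G$, coincide. Consequently $\bK \rtimes G$ and $\bK \rtimes_{r} G$ are both obtained as the completion of $\bK \rtimes^{\alg} G$ with respect to the same norm, and the factorization of $q_{\bK}$ through $\bK \rtimes^{\alg} G$ shows that $q_{\bK}$ is an isometric bijection on morphism spaces. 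Combined with the fact that $q_{\bK}$ is the identity on objects, this yields the desired isomorphism.

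The main obstacle I anticipate is making the identification of norms fully rigorous in the non-unital setting: one must check that the canonical morphism $\bK \rtimes_{r} G \to A(\bK \rtimes_{r} G)$ is isometric (which follows from \cite[Lem.~6.7]{crosscat} applied to $\bK \rtimes_{r} G$), and that the identifications in Theorem \ref{weoigjowergerwegergwerg} and \cite[Thm.~6.9]{crosscat} are compatible with the canonical inclusions of $\bK \rtimes^{\alg} G$ in the respective $C^{*}$-completions. Both are direct inspections of the construction, but care is needed to ensure all diagrams commute on the nose.
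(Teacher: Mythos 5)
Your proposal is correct and follows essentially the same route as the paper: reduce to the $C^{*}$-algebra case via the functor $A$, using the isometry of $\bK\rtimes G\to A(\bK\rtimes G)$ from \cite[Lem.~6.7]{crosscat}, the identifications $A(\bK\rtimes G)\cong A(\bK)\rtimes G$ from \cite[Thm.~6.9]{crosscat} and $A(\bK\rtimes_r G)\cong A(\bK)\rtimes_r G$ from Theorem~\ref{weoigjowergerwegergwerg}, and the classical amenability result for $q_{A(\bK)}$. The paper phrases the conclusion via the commutative square \eqref{eq_nu_isos} showing $A(q_{\bK})$ is an isomorphism, whereas you phrase it as an equality of norms on the algebraic crossed product, but these are the same argument.
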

\begin{proof} 
Since for any $C^{*}$-category $\bD$ the canonical map $\rho_{\bD}:\bD\to A(\bD)$ is an isometry \cite[Lem.\ 6.7]{crosscat} it suffices to show that 
$A(q_{  \bK}) \colon A(  \bK\rtimes G)\to A(  \bK \rtimes_{r}G)$ is an isomorphism.
Recall that the  isomorphism 
from \eqref{ervqqowdqwdwqd} extends to  isomorphisms
\begin{equation}\label{eq_nu_max}
 A( \bK)\rtimes G \xrightarrow{\cong} A( {\bK}\rtimes G)
\end{equation}
by \cite[Thm.\ 6.9]{crosscat}  and  
\begin{equation}\label{eq_nu_r}
 A( \bK)\rtimes_{r} G \xrightarrow{\cong} A( \bK\rtimes_{r} G)
\end{equation}
by Theorem~\ref{weoigjowergerwegergwerg}.  
We have a commutative diagram
\begin{equation}\label{eq_nu_isos}
\xymatrix{
A(  \bK)\rtimes G \ar[r]^-{\eqref{eq_nu_max}}_-{\cong} \ar[d]_-{q_{A( {\bK})}} & A( {\bK}\rtimes G) \ar[d]^-{A(q_{ {\bK}})}\\
A(\bK)\rtimes_r G \ar[r]^-{\eqref{eq_nu_r}}_-{\cong} & A({\bK}\rtimes_r G)
}
\end{equation}
The left vertical arrow $q_{A( {\bK})}$ is an isomorphism, because $G$  is   amenable and $A(\bK)$ is a $G$-$C^{*}$-algebra.
This implies that $A(q_{  \bK})$ is an isomorphism, too.
\end{proof}

We finally consider a subgroup $H$ of $G$ and $\bK$ in $\Fun(BG,\nCcat)$.
Then we have a canonical inclusion $i^{\alg}\colon\Res^{G}_{H}(\bK)\rtimes^{\alg} H\to \bK\rtimes^{\alg}G$.

\begin{prop}\label{efjviofbsdfbsdbsdfvvsdfvs}
$i^{\alg} $ continuously extends to an isometric inclusion
\[i\colon \Res^{G}_{H}(\bK)\rtimes_{r} H\to \bK\rtimes_{r} G\,.\]
\end{prop}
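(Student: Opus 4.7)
The plan is to reduce the statement to the classical fact for $C^{*}$-algebras: for any $B$ in $\Fun(BG,\nCalg)$ and any subgroup $H\le G$, the canonical inclusion $\Res^{G}_{H}(B)\rtimes^{\alg}H\to B\rtimes^{\alg}G$ extends to an isometric inclusion $\Res^{G}_{H}(B)\rtimes_{r}H\hookrightarrow B\rtimes_{r}G$ of reduced crossed product $C^{*}$-algebras. This is standard and can be seen via the regular representation on $\ell^{2}(G,B)$ together with the decomposition into copies of $\ell^{2}(H,B)$ indexed by right cosets $H\backslash G$.

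First, I would apply the functor $A$ from \eqref{frewfoirjviojvioeweverwvwev} to the morphism $i^{\alg}$. Note that $i^{\alg}$ is the identity on objects, hence belongs to $\nCcat_{i}$. A direct inspection of the definition of $A$ in Remark~\ref{rem_defn_A} shows that the construction is purely in terms of the morphism spaces; consequently the $G$-action on $\bK$ induces the $G$-action on $A(\bK)$ via the same formula, and restricting before or after applying $A$ yields the canonical identification $A(\Res^{G}_{H}(\bK))= \Res^{G}_{H}(A(\bK))$ of $H$-$C^{*}$-algebras. Combining with the isomorphism \eqref{ervqqowdqwdwqd} for both $H$ and $G$ we obtain a commutative square
\[
\xymatrix{
\Res^{G}_{H}(\bK)\rtimes^{\alg}H \ar[r]^-{i^{\alg}} \ar[d] & \bK\rtimes^{\alg}G \ar[d]\\
\Res^{G}_{H}(A(\bK))\rtimes^{\alg}H \ar[r] & A(\bK)\rtimes^{\alg}G
}
\]
whose vertical arrows are the canonical maps $\bD\to A(\bD)$ (composed with \eqref{ervqqowdqwdwqd}); these are isometric by \cite[Lem.~6.7]{crosscat}.

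Second, I would apply Theorem~\ref{weoigjowergerwegergwerg} to obtain the isomorphisms $A(\Res^{G}_{H}(\bK)\rtimes_{r}H)\cong \Res^{G}_{H}(A(\bK))\rtimes_{r}H$ and $A(\bK\rtimes_{r}G)\cong A(\bK)\rtimes_{r}G$. Together with the isometric inclusions $\bD\rtimes_{r}\Gamma\hookrightarrow A(\bD\rtimes_{r}\Gamma)$ (for $(\bD,\Gamma)\in\{(\Res^{G}_{H}(\bK),H),(\bK,G)\}$) this identifies the reduced norm on the top row of the square with the norm pulled back from the reduced crossed product norms on the bottom row. Invoking the classical $C^{*}$-algebraic fact recalled above, the bottom arrow extends to an isometric inclusion of reduced crossed products, and pulling back along the isometric vertical arrows yields that $i^{\alg}$ extends continuously to an isometric inclusion $i\colon\Res^{G}_{H}(\bK)\rtimes_{r}H\to\bK\rtimes_{r}G$.

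The main obstacle will be to justify the canonical identification $A(\Res^{G}_{H}(\bK))=\Res^{G}_{H}(A(\bK))$ and the commutativity of the square above on the level of algebraic crossed products; both reduce to a direct inspection of the defining formulas for $A$, $\rtimes^{\alg}$ and $\Res^{G}_{H}$. Given these, the rest is a diagram chase combined with the classical statement for $C^{*}$-algebras.
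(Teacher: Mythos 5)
Your proposal is correct, but it takes a genuinely different route from the paper. The paper proves the statement from the inside of its own construction: in the case where $\bC=\bW\bM\bK$ admits all very small orthogonal sums it builds an explicit wide isometric inclusion $j_{\bC}\colon\bL^{2}(H,\bC)\to\bL^{2}(G,\bC)$ out of the isometries $u_{C}=\oplus_{g\in H}e^{G,C}_{g}e^{H,C,*}_{g}$, checks that $j_{\bC}$ restricts to $i^{\alg}$ on the algebraic crossed products viewed inside the two $\bL^{2}$-categories, and then bootstraps the general case through Steps \ref{wkegjwoergergerwferf}--\ref{wkegjwoergergerwferf2} of the construction of $-\rtimes_{r}G$ (embedding into an additive category, unitary equivalences, Yoneda). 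You instead push everything through the functor $A$: the identifications $A(\Res^{G}_{H}(\bK))=\Res^{G}_{H}(A(\bK))$ and $A(\bD\rtimes_{r}\Gamma)\cong A(\bD)\rtimes_{r}\Gamma$ (Theorem \ref{weoigjowergerwegergwerg}), together with the isometry $\bD\to A(\bD)$, reduce the claim to the classical subgroup statement for $G$-$C^{*}$-algebras. All the ingredients you invoke are available at this point in the paper (Theorem \ref{weoigjowergerwegergwerg} precedes the proposition, and the paper itself uses exactly this $A$-based reduction to prove preservation of faithful morphisms and of exact sequences), so there is no circularity, and your diagram does commute by inspection of \eqref{ervqqowdqwdwqd}. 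What your route buys is brevity and the avoidance of the case analysis over the steps of the construction; what it costs is an appeal to the classical fact that $\Res^{G}_{H}(B)\rtimes_{r}H\to B\rtimes_{r}G$ is isometric, which the paper never states or proves and whose standard proof (decomposing $L^{2}(G,B)$ over the left $H$-cosets of $G$) is essentially the same $\bL^{2}$-decomposition that the paper carries out categorically; you should therefore either cite it precisely (e.g.\ in the non-unital, Hilbert-module formulation matching Definition \ref{woigwgewgerg9}) or include the coset-decomposition argument. The paper's direct construction also produces the isometry $j_{\bC}$ at the level of the $\bL^{2}$-categories themselves, which is a slightly stronger piece of data than the mere isometry of crossed products.
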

\begin{proof}
We omit the functor $\Res^{G}_{H}$ from the notation. 
We first assume that $\bC=\bW\bM\bK$ admits very small  orthogonal sums. In this case  we  define a wide isometric inclusion $j_{\bC}:\bL^{2}(H,\bC)\to \bL^{2}(G,\bC)$.  On objects it acts as the identity. In order to define $j_{\bC}$ on morphisms, 
   for every object $C$ of $\bC$
we let $(\oplus^{G}_{g\in G} gC,(e^{G,C}_{g})_{g\in G})$
and
$(\oplus^{H}_{g\in H} gC,(e^{H,C}_{g})_{g\in H})$
denote the choices of sums in the definitions of $\bL^{2}(G,\bC)$ and
$\bL^{2}(H,\bC)$. We have an isometry
$$u_{C}:=\oplus_{g\in H}e^{G,C}_{g}e^{H,C,*}_{g}:\oplus^{H}_{g\in H} gC\to \oplus^{G}_{g\in G} gC\ .$$
The morphism $j_{\bC}$ sends a morphism $f:C\to C'$ in $ \bL^{2}(H,\bC)$ to
$u_{C'}fu_{C}^{*}$.
 We now observe that
 $j_{\bC}$ restricts to the morphism $i^{\alg}$ from    $\bC\rtimes^{\alg}H$   to $\bC\rtimes^{\alg}G$ interpreted via \eqref{ewrgfviubiuhuihfefwerfwerfw} as subcategories of  $ \bL^{2}(H,\bC)$  and   $\bL^{2}(G,\bC)$, respectively.
  Hence
  $j_{\bC}$ restricts to an isometric inclusion
  $i:\bK\rtimes_{r}H\to \bK\rtimes_{r}G$ which is the asserted continuous extension of $i^{\alg}_{\bK}$. This shows the assertion of the proposition for all $\bK$ such that $\bW\bM\bK$ admits all very small orthogonal sums.
 
 Next we assume that there is a fully faithful morphism  $\bK\to \bK'$
 such that $\bW\bM\bK'$ admits all small orthogonal sums.
  As observed in Step \ref{wkegjwoergergerwferf} of the construction of  the reduced crossed product, the  horizontal
  arrows in   $$\xymatrix{\bK\rtimes^{\alg}H\ar[d]\ar[r]&\bK'\rtimes^{\alg}H\ar[d]\\
 \bK\rtimes^{\alg}G\ar[r]&\bK'\rtimes^{\alg}G }$$
 are isometric inclusions with respect to the reduced norms.
  By the special case discussed above, also the right vertical map is an isometric inclusion. This implies that the left vertical morphism is isometric, too. 
   This proves the assertion for $\bK$  going into Step \ref{wkegjwoergergerwferf} of the construction of the reduced crossed product.  
   
    We now assume that $\bK$ admits a fully faithful morphism $\bL\to \bK$
   such that $\bL$ admits a fully faithful morphism into $\bL'$ such that $\bW\bM\bL'$ admits all very small sums.
   Then we consider the square
         $$\xymatrix{\bL\rtimes^{\alg}H\ar[d]\ar[r]&\bK\rtimes^{\alg}H\ar[d]\\
 \bL\rtimes^{\alg}G\ar[r]&\bK\rtimes^{\alg}G }$$
By   Step \ref{wkegjwoergergerwferf1} of the construction the reduced crossed product the horizontal morphisms are fully faithful and  isometric for the reduced norms. By the case above also the left vertical morphism is an isometric inclusion. It follows that the right vertical morphism is an isometric inclusion.
In view of Step \ref{wkegjwoergergerwferf2} of the construction the reduced crossed product we have verified
the assertion of the proposition for all objects of $\Fun(BG,\nCcat)$.
\end{proof}

%
%
%

\section{Homological functors}\label{ewgiowegergregregergrwrf}

%
%
%
 
 {The basic homotopy theoretic invariant of a $C^{*}$-category is its topological $K$-theory. Axiomatizing some of the fundamental properties of the  $K$-theory of $C^{*}$-categories we introduce the notion of a homological functor.    
  We then use these axioms in order to   derive various 
 properties  of homological functors. In the subsequent section we show that $K$-theory is indeed  an example of a homological functor.}

%
%
  
 Let $i\colon \bC\to \bD$ be a {morphism} in $\nCcat$. The following definition 
 generalizes the notion of a closed two-sided ideal in a $C^{*}$-algebra.
\begin{ddd}\label{weiogjwergferfwegwgregwregwgre}
 The morphism $i$ is an inclusion of an ideal 
 if it has the following properties:
 \begin{enumerate}
 \item\label{weiogjwergferfwegwgregwregwgre_eins} $i$ induces a bijection   between the sets of objects.
 \item $i$ induces closed embeddings of morphism spaces.
 \item\label{weiogjwergferfwegwgregwregwgre_drei}   The composition of a morphism in the image of $i$
 with any morphism of $\bD$ belongs again to the image of $i$.
  \end{enumerate}
\end{ddd}

Let $i\colon \bC\to \bD$  be a  {morphism} in $\nCcat$. 
The  quotient
$\bD/\bC$ is defined as    {the} push-out 
\[
\xymatrix{\bC\ar[rr]^-{i}\ar[d]_-{\eqref{sgbsfdvfsvfvdfvsv}}&&\bD\ar[d]^{q}\\0[\Ob(\bC)]\ar[rr]&&\bD/\bC}
\]
in $\nCcat$. We will say that $q$ presents $\bD/\bC$ as the quotient of $\bD$ by $\bC$.
If $i$ is the inclusion of an ideal 
  it is easy to describe the $C^{*}$-category $\bD/\bC$   explicitly. 
\begin{enumerate}
\item objects: The objects of $\bD/\bC$ are the objects of $\bD$ (which are in bijection with the objects of $\bC$ via $i$).
\item morphisms:  For objects $C,C'$ in $\bC$ we have
$$\Hom_{\bD/\bC}(i(C),i(C'))\cong \Hom_{\bD}(i(C),i(C'))/i(\Hom_{\bC}(C,C'))\, .$$
\item composition and involution:  The composition and $*$-operation are inherited from $\bD$.
\end{enumerate}
Since $i(\Hom_{\bC}(C,C'))$ is a closed subspace of $\Hom_{\bD/\bC}(i(C),i(C'))$ the quotient has an induced norm which exhibits $\bD/\bC$ as a $C^{*}$-category \cite[Cor.~4.8]{mitchc}.
If $\bD$ is unital, then so is $\bD/\bC$, and the projection map $\bD\to \bD/\bC$ is a morphism in $\Ccat$. 

We consider a sequence of morphisms $$\bC\stackrel{i}{\to} \bD\stackrel{q}{\to} \bQ$$ in $\nCcat$.

\begin{ddd}
The sequence is an exact sequence in $\nCcat$ if $i$ is an inclusion of  an ideal and $q$ presents $\bQ$ as the quotient $\bD/\bC$.
\end{ddd}

\begin{rem}
In the following we use the language of $\infty$-categories\footnote{more precisely, $(\infty,1)$-categories}. References are \cite{htt,Cisinski:2017}.  Ordinary categories will be considered as $\infty$-categories using the nerve functor. 
A typical target $\infty$-category for the homological functors introduced below is the stable $\infty$-category $\Sp$ of spectra.  We refer to \cite{HA} for an introduction to stable $\infty$-categories in general, and for $\Sp$ in particular.  
The $\infty$-categories considered in the present paper belong to the large universe.
A cocomplete $\infty$-category thus admits all colimits for small index categories.
\hB \end{rem}

 Let $\bS$ be an $\infty$-category.
 We consider a functor
 $$\Homol\colon\nCcat\to \bS\, .$$
 
%

\begin{ddd}\label{oihgjeorgwergergergwerg} $\Homol$ is a homological functor if the following conditions are satisfied:{
\begin{enumerate}
\item $\bS$ is stable.
\item $\Homol$ sends unitary equivalences in $\nCcat$ to equivalences.
\item $\Homol$   sends exact sequences to  fibre sequences. 
\end{enumerate}
}
\end{ddd}

In the following we will provide an equivalent characterization of homological functors which is very similar to the notion of a homological functor  for left-exact $\infty$-categories used in \cite{unik}.   The    properties listed in Lemma \ref{qerughqiregqewfewqffq}.\ref{sfgsdfgsfdgsfdgsfg1}  together with the additional property introduced in Definition \ref{qerughqiregqewfewqffq1} are  motivated by the applications in \cite{coarsek}.

We  consider a square 
\begin{equation}\label{asdv2e4fwdqwqev}
\xymatrix{\bA\ar[r]\ar[d]&\bB\ar[d]\\\bC\ar[r]&\bD}
\end{equation}
 in $\nCcat$.
 By the universal property of the quotients of the horizontal functors    we obtain an induced {morphism}  $\bB/\bA\to \bD/\bC$.
  The following is taken from \cite[Defn.~8.10]{crosscat}.
 \begin{ddd}\label{weiogwegerewrgwreg}
  The square \eqref{asdv2e4fwdqwqev} is called excisive if it satisfies the following conditions:
  \begin{enumerate}
  \item \label{rgijqeroigqrefqewefqwefq}
  The {morphism} $\bA\to \bB$ and $\bC\to \bD$ are
   embeddings of  closed ideals.     
   \item \label{thgiojrtiobgrefrefrewfwevf}The quotients $\bB/\bA$ and $\bD/\bC$ are unital    \item \label{thgoijroihwthwhhteh} The induced {morphism} $ \bB/\bA \to  \bD/\bC $  is unital and a unitary equivalence. 
  \end{enumerate}
  \end{ddd}

%
%

%
%


Let $\Homol:\nCcat\to \bS$ be a functor.
\begin{lem}\label{qerughqiregqewfewqffq}
The following conditions are equivalent:
\begin{enumerate}
\item \label{sfgsdfgsfdgsfdgsfg} $\Homol$ is a homological functor.
\item \label{sfgsdfgsfdgsfdgsfg1}
\begin{enumerate}
\item\label{qergiuhieofrgregeggwergewrg} The $\infty$-category  $\bS$ is stable.
 \item  \label{qerughqiregqewfewqffq2}  $\Homol$ sends excisive squares to push-out squares. 
 \item \label{etghiwjeorgreergwg} $\Homol$ is reduced, i.e.,   
for every  small set $X$ we have $\Homol(0[X])\simeq 0_{\bS}$.    
 \end{enumerate}
%
\end{enumerate}
\end{lem}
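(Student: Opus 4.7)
The plan is to establish both implications separately, observing first that the stability of $\bS$ is common to conditions (1) and (2) and hence requires no argument.

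For (1) $\Rightarrow$ (2): I would first verify reducedness by applying hypothesis (1) to the exact sequence
\[
0[X] \xrightarrow{\id} 0[X] \to 0[X]/0[X]\,,
\]
where the quotient computes to $0[X]$ because the morphism spaces $0/0$ remain zero. The resulting fibre sequence has the identity as its first arrow, forcing $\Homol(0[X])\simeq 0_\bS$. For the excisive-to-pushout property, I apply $\Homol$ to the two exact sequences of rows extended by quotients, $\bA\to \bB\to \bB/\bA$ and $\bC\to \bD\to \bD/\bC$. This produces a map of fibre sequences whose rightmost vertical is $\Homol$ applied to the unitary equivalence $\bB/\bA \to \bD/\bC$ from Definition \ref{weiogwegerewrgwreg}(\ref{thgoijroihwthwhhteh}), which is an equivalence by the unitary-equivalence axiom in~(1). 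A map of fibre sequences with equivalent cofibres is a pullback on the fibres, and by stability of $\bS$ a pullback square is equivalent to a pushout square.

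For (2) $\Rightarrow$ (1), exact-to-fibre: Given an exact sequence $\bA \to \bB \to \bQ$ with $\bQ$ unital, I form the square
\[
\xymatrix{\bA\ar[r]\ar[d]&\bB\ar[d]\\ 0[\Ob(\bA)]\ar[r]&\bQ}
\]
which is excisive: both rows are ideal inclusions, both quotients are $\bQ$ (hence unital), and the induced map on quotients is the identity. Hypothesis (2) sends this to a pushout, and reducedness makes the lower-left corner vanish, yielding (in the stable $\infty$-category $\bS$) the desired fibre sequence $\Homol(\bA)\to\Homol(\bB)\to\Homol(\bQ)$. When $\bQ$ is non-unital, I pass to the unitalization: the exact sequence $\bA\to \bB^{+}\to \bQ^{+}$ falls under the unital case, and combining its resulting fibre sequence with those obtained from the excisive squares
\[
\xymatrix{\bA\ar[r]\ar[d]&\bA^{+}\ar[d]\\ \bB\ar[r]&\bB^{+}} \quad\text{and}\quad \xymatrix{\bQ\ar[r]\ar[d]&\bQ^{+}\ar[d]\\ 0[\Ob(\bQ)]\ar[r]&\C[\Ob(\bQ)]}
\]
(both excisive because the induced quotient maps are identities) via a standard $3\times 3$/octahedron diagram chase in $\bS$ recovers the fibre sequence for $\bA\to\bB\to\bQ$.

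The main obstacle is the remaining half of (2) $\Rightarrow$ (1), namely showing that $\Homol$ sends unitary equivalences to equivalences, since excisive squares and reducedness do not immediately manipulate equivalences. I would invoke the characterization in Remark~\ref{retjheirotjhoergrtgertgetrgterg}(\ref{etherhthgetgetrgetget}) to fit any unitary equivalence $\phi:\bC\to\bD$ in $\nCcat$ into a commutative square whose horizontals are ideal inclusions $\bC\hookrightarrow\bE$ and $\bD\hookrightarrow\bF$, and whose right vertical $\psi:\bE\to\bF$ is a unitary equivalence in $\Ccat$. Applying the exact-to-fibre implication just established to both horizontals identifies $\Homol(\phi)$ with the induced map of fibres of $\Homol(\bE)\to\Homol(\bF)$ over a common quotient, reducing the problem to the unital case. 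For a unitary equivalence in $\Ccat$ one uses a quasi-inverse together with the unitary natural isomorphisms $\psi\psi^{-1}\cong\id$ and $\psi^{-1}\psi\cong\id$, which can be encoded through excisive squares involving interval-type $C^*$-categories so that $\Homol$ identifies $\Homol(\psi)$ and $\Homol(\psi^{-1})$ as mutually inverse in the stable target $\bS$.
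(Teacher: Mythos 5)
Most of your proposal tracks the paper's argument closely: the reducedness computation from the exact sequence $0[X]\xrightarrow{\id}0[X]\xrightarrow{\id}0[X]$, the map-of-fibre-sequences argument for excisive squares, the excisive square $\bA\to\bB\to\bQ$ over $0[\Ob(\bA)]\to\bQ$ for unital quotient maps, the unitalization step for general exact sequences, and the reduction of non-unital unitary equivalences to unital ones via a square of ideal inclusions (the paper uses $\bM\bC\to\bM\bD$) are all essentially the paper's proof.

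The genuine gap is your final step: showing that a unitary equivalence $\psi\colon\bA\to\bB$ in $\Ccat$ is sent to an equivalence, assuming only excision and reducedness. Your sketch via a quasi-inverse and ``excisive squares involving interval-type $C^{*}$-categories'' is not an argument: to conclude that $\Homol(\psi)$ and $\Homol(\psi^{-1})$ are mutually inverse you must show that $\Homol$ identifies unitarily isomorphic functors, and the standard cylinder/interval construction for that (two copies of each object joined by the unitaries) only works once you already know that $\Homol$ inverts the unitary equivalences $\iota_{0},\iota_{1},p$ between the cylinder and $\bA$ --- which is exactly what you are trying to prove. The paper avoids this circularity with a one-line observation that your route misses: for a unitary equivalence $\bA\to\bB$ in $\Ccat$ the square
\[
\xymatrix{0[\Ob(\bA)]\ar[r]\ar[d]&\bA\ar[d]\\0[\Ob(\bB)]\ar[r]&\bB}
\]
is itself excisive (the horizontal arrows are ideal inclusions, the quotients are $\bA$ and $\bB$ themselves, and the induced map on quotients is the given unitary equivalence), so applying $\Homol$ and reducedness produces a pushout square with zero objects in the left column, whence $\Homol(\bA)\to\Homol(\bB)$ is an equivalence. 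With this in hand the rest of your outline goes through; without it, the implication $(2)\Rightarrow(1)$ is not established.
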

\begin{proof}
We first show that Assertion \ref{sfgsdfgsfdgsfdgsfg1} implies Assertion \ref{sfgsdfgsfdgsfdgsfg}.
 We start with showing that 
$\Homol$  sends unitary equivalences 
  between unital $C^{*}$-categories to equivalences.
  Consider a unitary equivalence $\bA\to \bB$ in $\Ccat$. Then we form the commutative square  $$\xymatrix{0[\Ob(\bA)]\ar[r]\ar[d]&\bA\ar[d]\\0[\Ob(\bB)]\ar[r]&\bB}$$ in $\nCcat$. It is excisive and send by $\Homol$ to a push-out square in $\bS$. Using that $\Homol$ is reduced the latter has the form 
  $$\xymatrix{0_{\bS}\ar[r]\ar[d]&\Homol(\bA)\ar[d]\\0_{\bS} \ar[r]&\Homol(\bB)}$$  
Since the left vertical arrow is an equivalence, the right vertical arrow is an equivalence, too.   

We next show that
$\Homol$ sends exact sequences $\bC\to \bD\stackrel{q}{\to}\bQ$ to fibre sequence provided that $q$ is a morphism in $\Ccat$.   In fact, under this assumption
$$\xymatrix{\bC\ar[r]\ar[d]&\bD\ar[d]^{q}\\0[\Ob(\bC)]\ar[r]&\bQ}$$
is an excisive square. Applying $\Homol$ we get the push-out
$$\xymatrix{\Homol(\bC)\ar[r]\ar[d]&\Homol(\bD)\ar[d]^{q}\\0_{\bS} \ar[r]&\Homol(\bQ)}$$
hence the asserted fibre sequence. 

Let now $\bC\to \bD\stackrel{}{\to}\bQ$ be a general exact sequence.
Then we consider the diagram
$$\xymatrix{\bC\ar@{=}[d]\ar[r]&\bD\ar[d]\ar[r]&\bQ\ar[d]\\
\bC\ar[r]&\bD^{+}\ar[d]\ar[r]&\bQ^{+}\ar[d]\\&\C[\Ob(\bC)]\ar@{=}[r]&\C[\Ob(\bC)]}$$
where the right vertical exact sequences  arise from   unitalization.
The horizontal sequences are also exact.
If we apply $\Homol$, then we get the diagram
$$\xymatrix{\Homol(\bC)\ar@{=}[d]\ar[r]&\Homol(\bD)\ar[d]\ar[r]&\Homol(\bQ)\ar[d]\\
\Homol(\bC)\ar[r]&\Homol(\bD^{+})\ar[d]\ar[r]&\Homol(\bQ^{+})\ar[d]\\&\Homol(\C[\Ob(\bC)])\ar@{=}[r]&\Homol(\C[\Ob(\bC)])}$$
By the special case shown above the two  vertical sequences and the middle horizontal one  are fibre sequences. Consequently, the 
upper right square is a pull-back. We can now conclude that
the upper sequence is also a fibre sequence. 

 We finally show that $\Homol$ sends all unitary equivalences to  equivalences.
 Let $\bC\to \bD$ be a unitary equivalence. Since this functor is fully faithful, by Proposition \ref{stkghosgfgsrgsegs} we can 
  consider the square
 $$\xymatrix{\bC\ar[d]\ar[r]&\bM\bC\ar[d]\\\bD\ar[r]&\bM\bD}$$ The horizontal maps are inclusions of ideals. 
By Definition \ref{ihjigwjegoerwgwerffwerfw} the right vertical map is a morphism in $\Ccat$ which is a unitary equivalence. Using that $\bC\to \bD$ is fully faithful one checks that  it induces a unitary  equivalence in $\Ccat$ of the quotients. Hence the square is excisive and send by $\Homol$ to the push-out square  
$$\xymatrix{\Homol(\bC)\ar[d]\ar[r]&\Homol(\bM\bC)\ar[d]\\\Homol(\bD)\ar[r]&\Homol(\bM\bD)}$$ By the special case above we know that the right vertical map is an equivalence. Hence the left vertical map is an equivalence, too.

We now show that  conversely that  Assertion \ref{sfgsdfgsfdgsfdgsfg} implies Assertion \ref{sfgsdfgsfdgsfdgsfg1}. 
If $X$ is a set, then $$0[X]\stackrel{\id_{0[X]}}{\to} 0[X]\stackrel{\id_{0[X]}}{\to} 0[X]$$ is an exact sequence in $\nCcat$. Applying   $\Homol$
  we get a fibre sequence 
$$\Homol(0[X])\stackrel{\Homol(\id_{0[X]})}{\to} \Homol(0[X])\stackrel{\Homol(\id_{0[X]})}{\to} \Homol(0[X])$$ which immediately implies that $\Homol(0[X])\simeq 0_{\bS}$.
Hence $\Homol$ is reduced.  
If we are given an excisive square \eqref{asdv2e4fwdqwqev}, then we extend its horizontal maps to exact sequences in $\nCcat$ and apply $\Homol$. We then get the diagram
\begin{equation}\label{asdv2e4fwdqwdwdwdwdwdwdqev}
\xymatrix{\Homol(\bA)\ar[r]\ar[d]&\Homol(\bB)\ar[d]\ar[r]&\Homol(\bB/\bA)\ar[d]^{\simeq}\\\Homol(\bC)\ar[r]&\Homol(\bD)\ar[r]&\Homol(\bD/\bC)}
\end{equation}
 The horizontal sequences are send by $\Homol$ to fibre sequences and the right vertical map is an equivalence since $\bB/\bA\to \bD/\bC$ is a unitary equivalence. 
 Consequently, the left square is a push-out square.
Hence $\Homol$ sends excisive squares to push-out squares.
 \end{proof}

Let $\Homol\colon \nCcat\to \bS$ be a homological functor.
\begin{ddd}\label{qerughqiregqewfewqffq1}
$\Homol$ is finitary if $\bS$ is in addition cocomplete and $\Homol$  preserves   small  filtered colimits.
\end{ddd}


In the remainder of the present section we study some general  properties of homological functors. 

By $\emptyset$ we denote the empty $C^{*}$-category. Note that $\emptyset\cong 0[\emptyset]$.
\begin{lem}\label{qregheriogwegerwf}
If $\Homol:\nCcat\to \bS$ is a homological functor, then $\Homol(\emptyset)\simeq 0_{\bS}$.
\end{lem}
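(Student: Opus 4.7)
The plan is to observe that the empty $C^{*}$-category is the special case $\emptyset \cong 0[\emptyset]$ of the construction $0[X]$ for $X$ a small set, and then invoke the reducedness property from the equivalent characterization of a homological functor given in Lemma~\ref{qerughqiregqewfewqffq}.

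More concretely, I would first recall that the lemma's hypothesis is that $\Homol$ is a homological functor in the sense of Definition~\ref{oihgjeorgwergergergwerg}, i.e., $\bS$ is stable, $\Homol$ inverts unitary equivalences, and $\Homol$ sends exact sequences in $\nCcat$ to fibre sequences in $\bS$. By Lemma~\ref{qerughqiregqewfewqffq}.\ref{sfgsdfgsfdgsfdgsfg1}.\ref{etghiwjeorgreergwg}, this implies that $\Homol(0[X]) \simeq 0_{\bS}$ for every small set $X$. Taking $X = \emptyset$ and using the canonical identification $\emptyset \cong 0[\emptyset]$ noted just above the lemma yields the assertion.

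For completeness I would note that a direct argument is also available without citing Lemma~\ref{qerughqiregqewfewqffq}: the sequence $\emptyset \xrightarrow{\id} \emptyset \xrightarrow{\id} \emptyset$ is trivially exact (Definition~\ref{weiogjwergferfwegwgregwregwgre} is satisfied vacuously, and the push-out defining the quotient is again $\emptyset$), so applying $\Homol$ produces a fibre sequence in $\bS$ in which both maps are the identity on $\Homol(\emptyset)$. In a stable $\infty$-category this forces $\Homol(\emptyset) \simeq 0_{\bS}$.

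There is no real obstacle here; the statement is essentially a bookkeeping consequence of the definitions, and the main point is simply to identify the empty $C^{*}$-category with $0[\emptyset]$ so that the reducedness clause applies.
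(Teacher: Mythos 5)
Your proof is correct and follows the same route as the paper, which likewise identifies $\emptyset$ with $0[\emptyset]$ and invokes the reducedness property from Lemma~\ref{qerughqiregqewfewqffq}. Your supplementary direct argument via the exact sequence $\emptyset \to \emptyset \to \emptyset$ is also sound and simply replays, in the special case $X=\emptyset$, the argument the paper uses to establish reducedness in the first place.
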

\begin{proof}
We   use that  $\Homol$ is reduced by Lemma \ref{qerughqiregqewfewqffq} 
to  conclude $\Homol(\emptyset)\simeq \Homol(0[\emptyset])\simeq 0_{\bS}$.
\end{proof}

A  {morphism} $f\colon \bC\to \bD$   {in} $\nCcat$
   is called a zero   {morphism} if it sends every morphism in $\bC$ to zero.
Let $\Homol\colon \nCcat\to \bS$ be a  homological functor. 
\begin{lem}\label{gbgfbgbfgdgfb}If $f$ is a zero   {morphism}, then $\Homol(f)=0$.
 \end{lem}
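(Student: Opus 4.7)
The plan is to exploit the fact that a zero morphism factors through a category of the form $0[X]$, and then invoke the reducedness of $\Homol$ established in Lemma \ref{qerughqiregqewfewqffq}.

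Concretely, I would first observe that the zero morphism $f\colon \bC \to \bD$ admits a canonical factorization
\[
\bC \xrightarrow{\eqref{sgbsfdvfsvfvdfvsv}} 0[\Ob(\bC)] \xrightarrow{\bar f} \bD
\]
in $\nCcat$. Here the first morphism is the counit from \eqref{sgbsfdvfsvfvdfvsv}, and $\bar f$ is the morphism determined by sending an object $C$ of $\bC$ to $f(C)$; since all morphism spaces of $0[\Ob(\bC)]$ are trivial, no further data is needed to specify $\bar f$, and it is automatic that the composition equals $f$ because $f$ sends every morphism of $\bC$ to zero.

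Applying $\Homol$ to this factorization yields a factorization of $\Homol(f)$ through $\Homol(0[\Ob(\bC)])$. By Lemma \ref{qerughqiregqewfewqffq} the functor $\Homol$ is reduced, so $\Homol(0[\Ob(\bC)]) \simeq 0_{\bS}$. Since any morphism in the stable $\infty$-category $\bS$ that factors through a zero object is itself the zero morphism, we conclude $\Homol(f) = 0$. There is no real obstacle here; the only subtlety is being precise about the canonical factorization through $0[\Ob(\bC)]$, which is immediate from the definition of the latter as a category with no non-identity (in fact, no non-zero) morphisms.
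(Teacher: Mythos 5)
Your proof is correct and follows essentially the same route as the paper: factor the zero morphism through a zero category $0[X]$ and conclude via reducedness of $\Homol$. The only (immaterial) difference is that you factor through $0[\Ob(\bC)]$ using the counit \eqref{sgbsfdvfsvfvdfvsv}, whereas the paper factors through $0[\Ob(\bD)]$ followed by the inclusion $\omega_{\bD}\colon 0[\Ob(\bD)]\to\bD$.
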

\begin{proof}
The   {morphism} $f$ has an obvious factorization
$$\bC\to 0[\Ob(\bD)]\xrightarrow{\omega_{\bD}} \bD\, ,$$ where
$\omega_{\bD}$ is the  obvious inclusion.  
By functoriality of $\Homol$ we get a factorization of $\Homol(f)$ as 
$$\Homol(\bC)\to \Homol(0[\Ob(\bD)])\to \Homol(\bD)\, .$$ 
 $\Homol$ is reduced by    Lemma~\ref{oihgjeorgwergergergwerg}.\ref{etghiwjeorgreergwg}  and therefore $ \Homol(0[\Ob(\bD)])\simeq 0_{\bS}$; the claim follows.
\end{proof}

 
Let $\Homol\colon \nCcat\to \bS$ be a  functor, and consider  $\bC,\bD$   in $\nCcat$.

\begin{lem}\label{erighi9wrteogwergrgregwergwreg}
If $\Homol$ is homological and $\bC$ and $\bD$ are not empty, then 
 the morphism $$(\Homol(\pr_{\bC}),\Homol(\pr_{\bD})):\Homol(\bC\times \bD)\to \Homol(\bC)\times \Homol(\bD)$$ is an equivalence.
\end{lem}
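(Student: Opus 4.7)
The plan is to reduce the question to a split fibre sequence in $\bS$ produced from a natural exact sequence in $\nCcat$ involving $\bC \times \bD$. Using non-emptiness of $\bC$ and $\bD$ we pick $C_0 \in \Ob(\bC)$ and $D_0 \in \Ob(\bD)$ and define sections $j_{\bC}\colon \bC \to \bC \times \bD$, $C \mapsto (C,D_0)$, $f \mapsto (f,0)$, and similarly $j_{\bD}$, both of which are morphisms in $\nCcat$. Then $\pr_{\bC} j_{\bC} = \id_{\bC}$, $\pr_{\bD} j_{\bD} = \id_{\bD}$, whereas $\pr_{\bC} j_{\bD}$ and $\pr_{\bD} j_{\bC}$ send every morphism to zero, so they are zero morphisms and by Lemma~\ref{gbgfbgbfgdgfb} become zero after applying $\Homol$.

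Next I would form the sequence
$$\bC \times 0[\Ob(\bD)] \xrightarrow{\alpha} \bC \times \bD \xrightarrow{\beta} 0[\Ob(\bC)] \times \bD$$
with $\alpha$ the identity on objects and $(f)\mapsto(f,0)$ on morphisms, and $\beta$ the identity on objects and $(f,g)\mapsto g$ on morphisms. A direct verification using Definition~\ref{weiogjwergferfwegwgregwregwgre} shows that $\alpha$ is an ideal inclusion, and the quotient $(\bC\times\bD)/(\bC\times 0[\Ob(\bD)])$ is canonically identified with $0[\Ob(\bC)] \times \bD$ via $\beta$. So this is an exact sequence in $\nCcat$ and $\Homol$ turns it into a fibre sequence.

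I would then identify the outer terms with $\Homol(\bC)$ and $\Homol(\bD)$. The projection morphisms $p_1\colon \bC \times 0[\Ob(\bD)] \to \bC$ and $p_2\colon 0[\Ob(\bC)] \times \bD \to \bD$ are the identity on their nontrivial morphism components and surjective on objects, and direct inspection shows that they are fully faithful; using the essentially-surjective characterization of unitary equivalences in Remark~\ref{retjheirotjhoergrtgertgetrgterg}, both $p_1$ and $p_2$ are unitary equivalences in $\nCcat$. The relations $\alpha \circ (p_1\text{-inverse on objects corresponding to }j_\bC) = j_\bC$ and $p_2 \circ \beta = \pr_\bD$ then let me rewrite the fibre sequence as
$$\Homol(\bC) \xrightarrow{\Homol(j_\bC)} \Homol(\bC\times\bD) \xrightarrow{\Homol(\pr_\bD)} \Homol(\bD)\,.$$

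Finally, since $\Homol(\pr_\bD)\Homol(j_\bD) = \id$, this fibre sequence splits in the stable $\infty$-category $\bS$, so the pair $(\Homol(j_\bC), \Homol(j_\bD))$ assembles to an equivalence $\Homol(\bC) \oplus \Homol(\bD) \xrightarrow{\sim} \Homol(\bC\times\bD)$. Composing this with $(\Homol(\pr_\bC), \Homol(\pr_\bD))$ yields the $2\times 2$ matrix with diagonal entries $\id$ and off-diagonal entries $0$ (the latter by the zero-morphism observation from the first paragraph), hence the identity. Since the first map is an equivalence, so is $(\Homol(\pr_\bC), \Homol(\pr_\bD))$, which is the claim. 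The only delicate point that could fail is verifying that $p_1$ and $p_2$ are unitary equivalences in the non-unital sense, but the fully-faithful-and-surjective-on-objects criterion applies cleanly once one invokes the multiplier characterization.
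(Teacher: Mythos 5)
Your proposal is correct and follows essentially the same route as the paper: the same exact sequence $\bC\times 0[\Ob(\bD)]\to\bC\times\bD\to 0[\Ob(\bC)]\times\bD$ (the paper writes it with the roles of $\bC$ and $\bD$ swapped), the same use of non-emptiness to see that the projections $\bC\times 0[\Ob(\bD)]\to\bC$ and $0[\Ob(\bC)]\times\bD\to\bD$ are unitary equivalences, and the same split-fibre-sequence conclusion. The only cosmetic difference is that you make the inverse $(\Homol(j_{\bC}),\Homol(j_{\bD}))$ explicit and verify the $2\times2$ matrix identity, where the paper directly reads the equivalence off the split fibre sequence via the quotient maps $q_{\bC},q_{\bD}$.
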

\begin{proof}
We have an exact sequence
\begin{equation}\label{vsdfvdsfvdfvsdvrefersf}
0[\Ob(\bC)]\times \bD\stackrel{\omega_{\bC}\times\id_{\bD}}{\to}\bC\times \bD 
\stackrel{q_{\bC} }{\to} \bC\times 0[\Ob(\bD)]\ .
\end{equation} 
Here  the morphism $\omega_{\bC}$ is the obvious inclusion  and 
  the morphism $q_{\bC} $   acts as identity on objects and sends 
  a morphism $(f,g)$ in $ \bC\times \bD$ to 
 $(f,0)$ in $\bC\times 0[\Ob(\bD)]$. The sequence is split by
 $\id_{\bD}\times \omega_{\bD}: \bC\times 0[\Ob(\bD)]\to \bC\times \bD $.
 
 We have a factorization  $\pr_{\bC}:=p_{\bC}\circ q_{\bC}$, where
 $p_{\bC}: \bC\times 0[\Ob(\bD)]\to \bC$ is the   projection. 
 We now observe that $p_{\bC}$ is a unitary equivalence provided that $\bD$ is not empty. In fact $p_{\bC}$ is fully faithful, and if $C$ is an object of $\bC$, then
 $p_{\bC}(C, *_{\bD})\cong C$ by unitary multiplier $\id_{C}$, where  $*_{\bD}$ is some object of $\bD$ which exists since we assume that $\bD$ is not empty.
We have a similar factorization
 $\pr_{\bD} = p_{\bD}\circ q_{\bD}$, where $p_{\bD}$ is a unitary equivalence since $\bC$ is not empty.  

We  apply $\Homol$ to the split exact sequence \eqref{vsdfvdsfvdfvsdvrefersf}. We then get a split fibre sequence in $\bS$ and therefore an equivalence
$$(\Homol(q_{\bC}),\Homol(q_{\bD}))\colon \Homol(\bC\times \bD)\stackrel{\simeq}{\to}
\Homol( \bC\times 0[\Ob(\bD)])\times \Homol(  0[\Ob(\bC)]\times \bD) \ .$$
We compose with the equivalence $(\Homol(p_{\bC}),\Homol(p_{\bD}))$ 
  in order to conclude the assertion.  
\end{proof}

Our next result asserts that a homological functor is additive on  unital {morphisms} between unital $C^{*}$-categories.
We assume that $\bC,\bD$ are in $\Ccat$, {that $\bC$  is not empty,} and that $\bD$   is additive. If $\phi,\phi'\colon \bC\to \bD$ are two  morphisms in $\Ccat$, then we can define a morphism $\phi\oplus \phi'\colon \bC\to \bD$ by  Definition~\ref{eoiqjofcqwecqwc}. 

%

If $\bS$ is  a stable $\infty$-category, then its  morphism spaces are group-like abelian monoids in spaces $\Spc$.  
The {operation} $+$ in the following proposition is  induced by this structure.
\begin{prop}\label{rguiwtgwreergrgwrgg}
If $\Homol$ is a homological functor, then 
 we have 
an equivalence $$\Homol(\phi\oplus \phi')\simeq \Homol(\phi)+\Homol(\phi')\colon \Homol(\bC)\to \Homol(\bD)\, .$$
\end{prop}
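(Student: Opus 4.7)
The plan is to factor $\phi\oplus\phi'$ through a diagonal and a binary orthogonal sum functor, then identify $\Homol$ applied to the sum functor with the fold map available in any stable $\infty$-category. By Construction \ref{eoiqjofcqwecqwc} we have
$$\phi\oplus\phi' \;=\; \bigoplus_{2}\circ(\phi\times\phi')\circ\diag_{\bC}\colon \bC\xrightarrow{\diag_{\bC}} \bC\times\bC\xrightarrow{\phi\times\phi'}\bD\times\bD\xrightarrow{\bigoplus_{2}}\bD,$$
where $\bigoplus_{2}\colon \bD\times\bD\to\bD$ is the choice used to define $\phi\oplus\phi'$. Applying $\Homol$ and using Lemma \ref{erighi9wrteogwergrgregwergwreg} (both $\bC$ and $\bD$ are non-empty, the latter because $\bD$ is additive) we obtain equivalences $\Homol(\bC\times\bC)\simeq \Homol(\bC)\times\Homol(\bC)$ and $\Homol(\bD\times\bD)\simeq \Homol(\bD)\times\Homol(\bD)$. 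Since $\bS$ is stable, these binary products are simultaneously coproducts, and under the identification $\Homol(\diag_{\bC})$ is the diagonal $\Delta$ of $\Homol(\bC)$ while $\Homol(\phi\times\phi')$ becomes $\Homol(\phi)\oplus\Homol(\phi')$.

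The crux is to show that, under the same identification, $\Homol(\bigoplus_{2})$ is the fold map $\nabla$. Let $0$ be a zero object of $\bD$ (available by additivity) and let $z\colon \bD\to\bD$ be the zero morphism in $\nCcat$ sending every object to $0$ and every morphism to zero. Define $j_{1}:=(\id_{\bD},z)$ and $j_{2}:=(z,\id_{\bD})\colon \bD\to\bD\times\bD$. By Lemma \ref{gbgfbgbfgdgfb} we have $\Homol(z)\simeq 0$, so under the splitting $\Homol(\bD\times\bD)\simeq \Homol(\bD)\oplus\Homol(\bD)$ the morphism $\Homol(j_{k})$ is identified with the $k$-th summand inclusion $\iota_{k}$. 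On the other hand, for each $D$ in $\bD$ the chosen sum $(D\oplus 0,(e_{1}^{D},e_{2}^{D}))$ satisfies $e_{2}^{D}=0$ (being an isometry out of a zero object) and $e_{1}^{D}(e_{1}^{D})^{*}+e_{2}^{D}(e_{2}^{D})^{*}=\id$, so $e_{1}^{D}$ is a unitary isomorphism. The family $(e_{1}^{D})_{D\in\bD}$ assembles into a unitary natural isomorphism $\bigoplus_{2}\circ j_{1}\simeq \id_{\bD}$, and symmetrically for $j_{2}$. Hence $\Homol(\bigoplus_{2})\circ\iota_{k}\simeq\id$ for $k=1,2$, and by the universal property of the coproduct $\Homol(\bD)\oplus\Homol(\bD)$ this forces $\Homol(\bigoplus_{2})\simeq \nabla$.

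Combining these steps yields
$$\Homol(\phi\oplus\phi')\simeq \nabla\circ(\Homol(\phi)\oplus\Homol(\phi'))\circ\Delta \simeq \Homol(\phi)+\Homol(\phi'),$$
the last equivalence being the definition of the abelian group structure on morphisms in a stable $\infty$-category. The main obstacle I anticipate is the $\infty$-categorical coherence, namely turning the unitary natural isomorphisms $\bigoplus_{2}\circ j_{k}\simeq \id_{\bD}$ and $\phi\oplus\phi'\simeq \bigoplus_{2}\circ(\phi\times\phi')\circ\diag_{\bC}$ (both of which depend on choices of orthogonal sums) into equivalences of morphisms in $\bS$; this needs $\Homol$ to be insensitive to the choices intrinsic to Construction \ref{eoiqjofcqwecqwc}, which should follow from the fact that different choices of $\bigoplus_{2}$ are connected by canonical unitary natural isomorphisms and hence induce the same morphism on $\Homol$ (using that $\Homol$ inverts unitary equivalences and the 2-categorical structure of $\Ccat$ recalled before the proposition).
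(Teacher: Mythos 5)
Your proposal is correct and follows essentially the same route as the paper: factor $\phi\oplus\phi'$ as $\bigoplus_{2}\circ(\phi\times\phi')\circ\diag_{\bC}$, use Lemma \ref{erighi9wrteogwergrgregwergwreg} to split $\Homol$ of the products, and identify $\Homol(\bigoplus_{2})$ with the fold map via the sections $D\mapsto(D,0)$ together with the unitary natural isomorphism $\id_{\bD}\simeq(-)\oplus 0$. The coherence point you flag at the end is handled in the paper by the same observation (unitarily isomorphic functors are sent by $\Homol$ to equivalent morphisms, a consequence of $\Homol$ inverting unitary equivalences), so there is no gap.
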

\begin{proof}  
Since $\bD$, being additive,    admits the  orthogonal sum of an empty family and  therefore a zero object $0_{\bD}$, it is not empty.
We consider the diagram
\begin{equation}\label{ebwegrfreferwfwefwrefwev}
\xymatrix{
&\Homol(\bD\times \bD)\ar[dr]^-{\Homol(\bigoplus)}\ar[dl]^-{\simeq}_-{\Homol(\pr_{0})\oplus \Homol(\pr_{1})\quad\quad}\\
\Homol(\bD)\oplus \Homol(\bD)\ar[rr]^-{+}
&&\Homol(\bD)
}
\end{equation}
 where the 
 left vertical 
 morphism is an equivalence by Lemma  \ref{erighi9wrteogwergrgregwergwreg}. 
 We claim that \eqref{ebwegrfreferwfwefwrefwev}  commutes.

Let $z_{0}:\bD\to \bD\times \bD$ given by $D\mapsto (D,0_{\bD})$ and
$f\mapsto (f,0)$. Let $z_{1}$ be defined similarly switching the roles of the factors.
Then $\pr_{i}\circ z_{i}=\id_{\bD}$ and $\pr_{1-i}\circ z_{i}$ is a zero morphism.
In view of the universal property of $+$ this shows that $\Homol(z_{0})+\Homol(z_{1})$ is an
inverse of $ \Homol(\pr_{0})\oplus \Homol(\pr_{1})$.
Thus in order to show that \eqref{ebwegrfreferwfwefwrefwev} naturally commutes it 
   suffices to show that the compositions
 $$\Homol(\bD)\stackrel{\iota_{i}}{\to}\Homol(\bD)\oplus \Homol(\bD)\xrightarrow{\Homol(z_{0})+\Homol(z_{1})} \Homol(\bD\times \bD)\xrightarrow{\Homol(\bigoplus)} \Homol(\bD)$$
 are equivalent to the identity, where $\iota_{i}\colon\Homol(\bD)\xrightarrow{\iota_{i}} \Homol(\bD)\oplus \Homol(\bD)$ denote the canonical inclusions for $i=0,1$. 

 In the case $i=0$ this composition is induced by applying $\Homol$ to the endofunctor
 $s \colon\bD\to \bD$  which   sends an object $D$ to the representative $D\oplus 0_{\bD} $ chosen in the construction of $\bigoplus$, and which 
  sends a morphism $f\colon D\to D'$ to the morphism $f\oplus 0\colon D\oplus 0 \to D'\oplus 0 $. 
We have a unitary equivalence $u\colon \id_{\bD} \to s $ given by the family $(u_{D})_{D\in \Ob(\bD)}$  {of the canonical inclusions}
$u_{D}\colon D\to D\oplus 0_{\bD}$.
 Hence $\Homol(s )\simeq \Homol(\id_{\bD})$.
 The case $i=1$ is analogous.


We have the following  diagram in $\bS$:
\begin{equation}\label{svfdviuvhuisdfvsdfvsdfvsdfv}
\xymatrix{
&\Homol(\bC)\ar[dr]^{\diag_{\Homol(\bC)}}\ar[dl]_{\Homol(\diag_{\bC})\ } &\\
\Homol(\bC\times \bC)\ar[rr]_-{\simeq}^-{\Homol(\pr_{0})\oplus \Homol(\pr_{1})} \ar[d]^{\Homol(\phi\times \phi')}& &\Homol(\bC)\oplus \Homol(\bC)\ar[d]^{\Homol(\phi)\oplus \Homol(\phi')} \\
\Homol(\bD \times  \bD)\ar[dr]_{\Homol(\bigoplus)\ }\ar[rr]_-{\simeq}^-{\Homol(\pr_{0})\oplus \Homol(\pr_{1})} &  &\Homol(\bD)\oplus \Homol(\bD) \ar[dl]^{+}\\
&\Homol(\bD)&
}
\end{equation}      
The lower triangle is  \eqref{ebwegrfreferwfwefwrefwev} and  commutes as shown above.
The upper triangle and the middle square obviously commute. The left top-down path is the map
$\Homol(\phi\oplus \phi')$, while the right top-down path is $\Homol(\phi)+\Homol(\phi')$. 
The filler of \eqref{svfdviuvhuisdfvsdfvsdfvsdfv} now provides the desired equivalence between these morphisms.
\end{proof}

Since the operation $+$ occuring in Proposition~\ref{rguiwtgwreergrgwrgg} is abelian we immediately get the following consequence.
\begin{kor}
$\Homol(\phi \oplus \phi^\prime)$ is equivalent to $\Homol(\phi^\prime  \oplus \phi)$.
\end{kor}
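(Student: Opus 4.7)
The plan is to apply Proposition \ref{rguiwtgwreergrgwrgg} twice and then invoke the commutativity of the addition operation $+$ on morphism spaces in the stable $\infty$-category $\bS$. Concretely, applying the proposition to the ordered pair $(\phi,\phi')$ yields an equivalence
\[
\Homol(\phi \oplus \phi') \simeq \Homol(\phi) + \Homol(\phi') \colon \Homol(\bC)\to \Homol(\bD),
\]
while applying it to the pair $(\phi',\phi)$ yields
\[
\Homol(\phi' \oplus \phi) \simeq \Homol(\phi') + \Homol(\phi) \colon \Homol(\bC)\to \Homol(\bD).
\]

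Next I would explain that the operation $+$ is induced by the group-like commutative monoid structure on the mapping spaces of $\bS$ (this is a standard fact for stable $\infty$-categories, since mapping spaces are infinite loop spaces, hence $\mathbb{E}_{\infty}$-group objects in $\Spc$). In particular the addition operation on $\pi_{0}$-level is commutative up to canonical homotopy, and in fact the whole binary operation $+$ on the mapping space $\mathrm{Map}_{\bS}(\Homol(\bC),\Homol(\bD))$ is naturally commutative. Consequently there is a canonical equivalence $\Homol(\phi) + \Homol(\phi') \simeq \Homol(\phi') + \Homol(\phi)$.

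Combining these three equivalences produces the desired equivalence $\Homol(\phi \oplus \phi') \simeq \Homol(\phi' \oplus \phi)$. There is essentially no obstacle here: the only ingredient beyond Proposition \ref{rguiwtgwreergrgwrgg} is the commutativity of $+$, which is a formal consequence of stability of $\bS$ (already assumed as part of the definition of a homological functor in Definition \ref{oihgjeorgwergergergwerg}). The argument is therefore a one-line composition of equivalences, which is exactly the content indicated by the wording ``we immediately get the following consequence'' preceding the corollary.
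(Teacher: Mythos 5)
Your proposal is correct and matches the paper's argument exactly: the corollary is stated as an immediate consequence of Proposition \ref{rguiwtgwreergrgwrgg} together with the commutativity of the operation $+$ on mapping spaces of the stable $\infty$-category $\bS$, which is precisely the composition of equivalences you write down. No further comment is needed.
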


Recall the notion  of a flasque  $C^{*}$-category  introduced in Definition~\ref{bjiobdfbfsferq}.
\begin{prop}\label{ergiwoegwergwergwgrg} 
A homological functor annihilates flasques.
\end{prop}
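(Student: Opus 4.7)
The plan is to argue by an Eilenberg swindle, exploiting Proposition~\ref{rguiwtgwreergrgwrgg} (additivity of $\Homol$ on unital morphisms with respect to orthogonal sums of functors) and the fact that $\Homol$ is insensitive to unitary natural isomorphisms of functors. Let $\bC$ be flasque with an endomorphism $S\colon\bC\to\bC$ implementing flasqueness, i.e.\ a unitary natural isomorphism $u\colon \id_{\bC}\oplus S \xrightarrow{\cong} S$. Since $\bC$ is additive it is in particular unital and non-empty (it contains a zero object), and $\id_{\bC}$ and $S$ are unital morphisms in $\Ccat$, so Proposition~\ref{rguiwtgwreergrgwrgg} applies.

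First I would apply $\Homol$ to $u$ to obtain an equivalence $\Homol(\id_{\bC}\oplus S)\simeq \Homol(S)$ of endomorphisms of $\Homol(\bC)$; this uses, as implicit in the proof of Proposition~\ref{rguiwtgwreergrgwrgg}, that a unitary natural isomorphism between parallel morphisms in $\Ccat$ becomes an equivalence after applying $\Homol$. Combined with Proposition~\ref{rguiwtgwreergrgwrgg}, which identifies $\Homol(\id_{\bC}\oplus S)\simeq \Homol(\id_{\bC})+\Homol(S) \simeq \id_{\Homol(\bC)}+\Homol(S)$, this yields the equivalence
\[
\id_{\Homol(\bC)} + \Homol(S) \simeq \Homol(S)
\]
in the abelian monoid $\pi_{0}\Map_{\bS}(\Homol(\bC),\Homol(\bC))$.

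Since $\bS$ is stable this monoid is actually an abelian group, so we may cancel $\Homol(S)$ on both sides and conclude $\id_{\Homol(\bC)}\simeq 0$. Finally, in a stable $\infty$-category an object $X$ whose identity vanishes in $\pi_{0}\End(X)$ is a zero object (for instance, the fibre sequence of $\id_{X}$ against $0_{X}$ forces $0\simeq X\oplus \Sigma^{-1}X$, hence $X\simeq 0_{\bS}$). Applying this with $X=\Homol(\bC)$ gives $\Homol(\bC)\simeq 0_{\bS}$, as required.

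The only delicate point is the invocation of homotopy invariance under unitary natural isomorphisms of functors in the first step; strictly speaking this is not stated as one of the axioms in Definition~\ref{oihgjeorgwergergergwerg}, but it is an immediate consequence of invariance under unitary equivalences in $\nCcat$ applied to a suitable cylinder construction on $\bC$, and it is already implicitly used in the proof of Proposition~\ref{rguiwtgwreergrgwrgg}, so I would invoke it here without further comment.
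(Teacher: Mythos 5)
Your proof is correct and follows essentially the same route as the paper: both apply Proposition~\ref{rguiwtgwreergrgwrgg} to obtain $\id_{\Homol(\bC)}+\Homol(S)\simeq\Homol(S)$ in the abelian group $[\Homol(\bC),\Homol(\bC)]$ and then cancel. The two points you flag as delicate are handled the same way in the paper (invariance under unitary natural isomorphisms is the content of Lemma~\ref{efijoijgoqegrqgqrgqrg}, proved there by exactly the cylinder-type construction you describe), so there is nothing to add.
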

\begin{proof}
Let $\Homol$ be a homological functor.
Furthermore, let $\bC$ be in $\Ccat$  {and assume that it is flasque}.
We must show that $\Homol(\bC)\simeq 0$.

The case where $\bC$ is empty  follows from Lemma \ref{qregheriogwegerwf}.
We now assume that $\bC$ is not empty, and 
  that $S\colon\bC\to \bC$ implements the flasqueness of $\bC$.
Then using Proposition~\ref{rguiwtgwreergrgwrgg}   we have the relation
$$\Homol(S)= \Homol(\id_{\bC}\oplus S)=\id_{\Homol(\bC)}+\Homol(S) $$  in the abelian group 
$[\Homol(\bC),\Homol(\bC)]$. This implies    that $\Homol(\bC)\simeq 0$.
\end{proof}

\section{Topological \texorpdfstring{$\bm{K}$}{K}-theory of \texorpdfstring{$\bm{C^{*}}$}{Cstar}-categories}\label{ergiojerfrfqefqwefqwef}

The goal of this section is to provide a reference for the topological $K$-theory functor for $C^{*}$-categories. Most of the material is from \cite{joachimcat}. The main result (Theorem \ref{qrgkljwoglergerggwergwrg}) states that this  $K$-theory functor is a finitary homological functor {(Definitions~\ref{oihgjeorgwergergergwerg} and \ref{qerughqiregqewfewqffq1}).}

Our starting point is the topological $K$-theory functor $ \Kast$ for $C^{*}$-algebras.  {
Recall that  $\nCalg$ denotes}
the category of  {
small}, possibly non-unital $C^{*}$-algebras and not necessarily unit-preserving homomorphisms.   {We consider} $\nCalg$ as a full subcategory of $\nCcat$ consisting of   the $C^{*}$-categories with a single object. 
Topological $K$-theory of $C^{*}$-algebras is a functor
\[\Kast\colon \nCalg\to \Sp\, .\]
 References for the induced group-valued functor
\[\pi_{*}\Kast\colon \nCalg\to \Ab^{\Z/2\Z\mathrm{gr}}\]
 (whose construction predates the spectrum-valued version)
 are, e.g.\  \cite{blackadar,higson_roe}, while the spectrum-valued one is defined in \cite[Defn.~4.9]{joachimcat}
 and justified by \cite[Thm.~4.10]{joachimcat}. An alternative construction using spectrum-valued $K\!K$-theory 
can be based on \cite{Land:2016aa}, see also  {\cite[Sec. 8.4]{buen}, \cite{KKG}}, and the survey \cite{Bunke:2023aa}.

In the following we list all the properties which will be explicitly
used in the proof of Theorem~\ref{qrgkljwoglergerggwergwrg} below. 
 \begin{prop}\label{fiodgerwvfdsvdvsfv}
The functor $\Kast$ has the following properties.
\begin{enumerate}
\item \label{ergoijuefoiqewfqwfewfqwefqwf} $\Kast(0)\simeq 0$.
\item\label{wtgwergfervfdsv} $\Kast$  preserves  {small}  filtered colimits. 
\item \label{wtgwergfervfdsv1}$\Kast$ sends exact sequences of $C^{*}$-algebras  
to fibre sequences.  
\item \label{wegoijweroigjergwerg} $\Kast$ is $\mathbb{K}$-stable (see Remark \ref{wrthijowrtgergwergwegr}.\ref{qfoijirfoqfqwefqewfqewf}).
\item \label{wroigjiowertgerwgrewgwreg} $\Kast$ is homotopy invariant (see Remark \ref{wrthijowrtgergwergwegr}.\ref{qriogfjoerfwefqwefqwefqef}).
\item $\Kast$ is Bott periodic (see Remark \ref{wrthijowrtgergwergwegr}.\ref{wegiojwoegwergrwegwreferf}).
\end{enumerate}
\end{prop}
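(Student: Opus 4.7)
The plan is to deduce each of the six properties by combining standard results on $K$-theory of $C^*$-algebras at the level of homotopy groups with the spectrum-level identification provided in \cite[Thm.~4.10]{joachimcat}. The classical group-valued functor $\pi_*\Kast$ is well-documented in \cite{blackadar} and \cite{higson_roe}, and the spectrum-valued model constructed in \cite[Defn.~4.9]{joachimcat} realises these groups as its homotopy groups naturally in $A$. Since a map of spectra is an equivalence if and only if it induces an isomorphism on all $\pi_{n}$, most of the listed properties can therefore be transferred from their classical formulation.

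For Assertion \ref{ergoijuefoiqewfqwfewfqwefqwf} the vanishing $\pi_*\Kast(0) = 0$ is trivial, and since an object of $\Sp$ with vanishing homotopy groups is a zero object, $\Kast(0)\simeq 0$ follows. Homotopy invariance \ref{wroigjiowertgerwgrewgwreg}, $\mathbb{K}$-stability \ref{wegoijweroigjergwerg}, and Bott periodicity \ref{wegiojwoegwergrwegwreferf} are classical properties of $\pi_*\Kast$ (see, e.g., the relevant sections of \cite{blackadar} or \cite{higson_roe}), and in each case the relevant canonical comparison morphism in $\Sp$ is an equivalence because it induces an isomorphism on all homotopy groups. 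For Assertion \ref{wtgwergfervfdsv} I would argue that $\pi_*\Kast$ preserves filtered colimits, because any projection, unitary, or $\GL_n$-element over a matrix algebra of a filtered colimit $\colim_\nu A_\nu$ originates from some finite stage and becomes trivial at some finite stage if it is trivial in the colimit; since filtered colimits in $\Sp$ commute with $\pi_*$, the canonical comparison map $\colim_\nu \Kast(A_\nu)\to \Kast(\colim_\nu A_\nu)$ is a $\pi_*$-iso and hence an equivalence.

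For Assertion \ref{wtgwergfervfdsv1}, the classical six-term exact sequence associated with a short exact sequence $0\to I\to A\to Q\to 0$ is available from \cite[Thm.~9.3.1]{blackadar}. To upgrade this to a fibre sequence of spectra I would invoke that the construction in \cite[Defn.~4.9]{joachimcat} produces a canonical sequence $\Kast(I)\to \Kast(A)\to \Kast(Q)$ whose induced long exact sequence on $\pi_*$ agrees naturally with the classical six-term sequence; a cofibre sequence in $\Sp$ is detected by precisely this condition. The main obstacle throughout is the passage from the group-valued statement to the spectrum-level statement: everything depends on the naturality of the identification in \cite[Thm.~4.10]{joachimcat} and on having a model of $\Kast$ (rather than just $\pi_*\Kast$) that visibly sends short exact sequences to candidate cofibre sequences. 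Once this is in place, each assertion reduces to a routine verification that can be performed on homotopy groups using classical results.
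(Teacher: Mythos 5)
The paper does not actually prove this proposition: it is presented as a list of standard properties of the $C^{*}$-algebraic $K$-theory spectrum, with the group-level statements referred to \cite{blackadar} and \cite{higson_roe} and the spectrum-level model to \cite[Defn.~4.9, Thm.~4.10]{joachimcat} (alternatively \cite{Land:2016aa}). Your proposal is therefore essentially a reconstruction of the same strategy the paper implicitly relies on, and for assertions (1), (2), (4), (5) and (6) your reduction to homotopy groups is sound: equivalences in $\Sp$ are detected on $\pi_{*}$, filtered colimits of spectra commute with $\pi_{*}$, and the classical continuity, stability, homotopy invariance and Bott periodicity of $K_{*}$ then transfer to the spectrum level, provided the canonical comparison maps exist at the spectrum level (which they do for any of the cited constructions).

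The one genuine soft spot is assertion (3). Being a fibre sequence is \emph{not} detected by the existence of a long exact sequence on homotopy groups that abstractly agrees with the classical six-term sequence. Given $\Kast(I)\to\Kast(A)\to\Kast(Q)$ with a nullhomotopy of the composite, what must be checked is that the induced map $\Kast(I)\to\Fib(\Kast(A)\to\Kast(Q))$ is a $\pi_{*}$-isomorphism; to run the five lemma here you need the boundary maps of the classical six-term sequence to coincide (naturally, up to sign) with the boundary maps of the long exact sequence of the actual fibre $\Fib(\Kast(A)\to\Kast(Q))$. Two abstractly isomorphic long exact sequences with the same outer terms need not force the middle comparison to be an isomorphism. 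In practice this compatibility is not deduced from the group-level statement but is established as part of the spectrum-level construction itself (excision is built into the models of \cite{joachimcat} and \cite{Land:2016aa}). You gesture at this when you say everything depends on having a model that ``visibly sends short exact sequences to candidate cofibre sequences,'' but the claim that a cofibre sequence ``is detected by precisely this condition'' is too strong as stated and is the step that would need to be replaced by a citation of spectrum-level excision.
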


 \begin{rem}\label{wrthijowrtgergwergwegr}
In this remark we add some details to the statement of Proposition \ref{fiodgerwvfdsvdvsfv}.
 \begin{enumerate}
 \item \label{ewigj9weoferwfwefrwerf}
  An exact sequence of $C^{*}$-algebras 
  is a square $$\xymatrix{A\ar[r]\ar[d]&B\ar[d]\\0\ar[r]&C}$$
in $\nCalg$ which is a pull-back and a push-out at the same time. Assertion  \ref{fiodgerwvfdsvdvsfv}.\ref{wtgwergfervfdsv1} can be reformulated to saying that    $\Kast$ sends such squares to  cocartesian  (or equivalently by stability of $\Sp$, to  cartesian) squares   \begin{equation}\label{rewghgwioergwerferf}
\xymatrix{\Kast(A)\ar[r]\ar[d]&\Kast(B)\ar[d]\\0_{\Sp}\ar[r]&\Kast(C)}
\end{equation}
   in $\Sp$.
  Since the left-lower corner in \eqref{rewghgwioergwerferf} is the zero object in $\Sp$  such a  square  is the same as a fibre sequence in $\Sp$. 
 \item \label{qfoijirfoqfqwefqewfqewf}$\mathbb{K}$-stability:   Let $\mathbb{K}$ denote the $C^{*}$-algebra of compact operators on a separable Hilbert space. 
 Fixing a rank-one projection $p$ in $\mathbb{K}$ we get a   morphism $\C\to \mathbb{K}$, $\lambda\mapsto \lambda p$, in $\nCalg$. For every $C^{*}$-algebra $A$ we get an induced morphism $A\cong A\otimes \C\to A\otimes \mathbb{K}$ (all choices of a $C^{*}$-algebraic tensor product coincide in this case). 
 Stability then says that the induced map of spectra $$\Kast(A)\to \Kast(A\otimes \mathbb{K})$$
 is an equivalence.
 \item \label{qriogfjoerfwefqwefqwefqef} homotopy invariance: {The condition says that  for} every $C^{*}$-algebra $A$ the map $A\to C([0,1],A)$
 given by the inclusion of $A$ as constant functions  induces an equivalence 
 $$\Kast(A)\to \Kast(C([0,1],A))\, .$$
  \item\label{wegiojwoegwergrwegwreferf}  Bott periodicity:    For every $C^{*}$-algebra $A$
  we have a natural equivalence
  \[\Sigma^{2}\Kast( A)\simeq  \Kast(A)\, .\]
As observed by J.~Cuntz (the argument is reproduced in \cite[Cor. 6.10]{zbMATH07948612}) this property is actually a formal consequence of the other properties stated in Proposition \ref{fiodgerwvfdsvdvsfv}. 
  
In view of Bott periodicity, in order to show that a morphism $\Kast(A)\to \Kast(B)$ is an equivalence it suffices to
show that $\pi_{i}\Kast(A)\to \pi_{i}\Kast(B)$ is an isomorphism for $i=0,1$.\hB
\end{enumerate}
\end{rem}

The inclusion of $C^{*}$-algebras into $C^{*}$-categories is the right-adjoint of an adjunction
\begin{equation}
A^{f}:\nCcat\leftrightarrows \nCalg:\incl\, .
\end{equation}
We refer to  \cite[Lem.~3.9]{crosscat} for details. Note that
the functor $A^{f}$ has been first introduced in \cite{joachimcat}.
Following \cite{joachimcat} we adopt the following definition.
\begin{ddd}\label{ergiowergregrdsvsv}
We define the topological $K$-theory functor for $C^{*}$-categories as the composition
$$\Kcat\colon \nCcat\xrightarrow{A^{f}} \nCalg\xrightarrow{\Kast} \Sp\, .$$
\end{ddd}
 {Note that Mitchener \cite{mitchener_KTh_Ccat} provided an alternative construction of a $K$-theory functor for $C^*$-categories.}

For the following theorem recall Definitions~\ref{oihgjeorgwergergergwerg} and \ref{qerughqiregqewfewqffq1}.

\begin{theorem}\label{qrgkljwoglergerggwergwrg}
The functor $\Kcat$ is a finitary homological functor.
\end{theorem}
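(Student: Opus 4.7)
The target $\Sp$ is stable and cocomplete, so by Lemma~\ref{qerughqiregqewfewqffq} and Definition~\ref{qerughqiregqewfewqffq1} it suffices to verify three properties of $\Kcat = \Kast \circ A^{f}$: (i) $\Kcat$ is reduced, (ii) $\Kcat$ sends excisive squares to push-out squares in $\Sp$, and (iii) $\Kcat$ preserves small filtered colimits. Property (iii) is immediate: the left adjoint $A^{f}$ preserves all colimits, while $\Kast$ preserves filtered colimits by Proposition~\ref{fiodgerwvfdsvdvsfv}.\ref{wtgwergfervfdsv}. Property (i) follows from the observation that $A^{f}(0[X]) \cong 0$, since a category with only zero morphisms has no nontrivial elements in its free $*$-algebra, combined with Proposition~\ref{fiodgerwvfdsvdvsfv}.\ref{ergoijuefoiqewfqwfewfqwefqwf}.

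The substance of the argument lies in (ii). First I would establish that $A^{f}$ sends an exact sequence $\bC \to \bD \to \bQ$ in $\nCcat$ to an exact sequence $A^{f}(\bC) \to A^{f}(\bD) \to A^{f}(\bQ)$ of $C^*$-algebras. The inclusion of an ideal and the quotient in $\nCcat$ are injective on objects (Definition~\ref{weiogjwergferfwegwgregwregwgre}) and identities on objects respectively, so they live in $\nCcat_{i}$; one can then argue as in the proof of Proposition~\ref{rgkohpgbdghdfghf} that the analogous statement for $A$ transfers to $A^{f}$, exploiting that $A^{f}$ differs from $A$ by passage to a quotient by relations supported in $\nCcat_i$ that are compatible with ideals and quotients. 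Applying Proposition~\ref{fiodgerwvfdsvdvsfv}.\ref{wtgwergfervfdsv1} then yields a fibre sequence in $\Sp$.

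Next, given an excisive square
\[
\xymatrix{\bA\ar[r]\ar[d]&\bB\ar[d]\\\bC\ar[r]&\bD}
\]
in the sense of Definition~\ref{weiogwegerewrgwreg}, the horizontal arrows are ideal inclusions and the induced morphism of quotients $\bB/\bA \to \bD/\bC$ is a unitary equivalence of unital $C^*$-categories. Extending both rows to exact sequences and applying $\Kcat$, the previous step produces a commutative diagram in $\Sp$ whose rows are fibre sequences and whose right vertical map is $\Kcat(\bB/\bA) \to \Kcat(\bD/\bC)$. If this right vertical map is an equivalence, a standard argument in any stable $\infty$-category shows that the left square is a push-out; this is exactly the pattern of \eqref{asdv2e4fwdqwdwdwdwdwdwdqev} in the proof of Lemma~\ref{qerughqiregqewfewqffq}.

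The main obstacle is therefore showing that $\Kcat$ sends unitary equivalences of unital $C^*$-categories to equivalences of spectra. The strategy here is to exploit that for a unitary equivalence $\phi\colon \bE \to \bF$ in $\Ccat$ the induced homomorphism $A^{f}(\phi)\colon A^{f}(\bE) \to A^{f}(\bF)$ is a Morita equivalence of $C^*$-algebras, or at least factors through an upper left corner inclusion $A^{f}(\bE) \hookrightarrow A^{f}(\bE) \otimes \mathbb{K}$ up to an inner automorphism/homotopy, so that $\mathbb{K}$-stability (Proposition~\ref{fiodgerwvfdsvdvsfv}.\ref{wegoijweroigjergwerg}) together with homotopy invariance (Proposition~\ref{fiodgerwvfdsvdvsfv}.\ref{wroigjiowertgerwgrewgwreferf}) forces $\Kast(A^{f}(\phi))$ to be an equivalence. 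For unitary equivalences in the non-unital case (Definition~\ref{ihjigwjegoerwgwerffwerfw}), I would use characterization~\ref{retjheirotjhoergrtgertgetrgterg}.\ref{etherhthgetgetrgetget} to fit $\phi$ into a square of ideal inclusions whose right-hand side is a unitary equivalence in $\Ccat$, and combine the fibre-sequence property just established with the unital case via the long exact sequence and the five lemma to conclude.
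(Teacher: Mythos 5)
The high-level structure of your plan (stable target, reducedness, fibre sequences, filtered colimits) is reasonable, and your treatment of the finitary property coincides with the paper's Lemma \ref{wtiowgergwgreg}. But the central step of your argument --- ``first I would establish that $A^{f}$ sends an exact sequence $\bC\to\bD\to\bQ$ to an exact sequence of $C^*$-algebras'' --- does not work, and this is a genuine gap. The functor $A^{f}$ is the \emph{left adjoint} to the inclusion $\nCalg\to\nCcat$, so in $A^{f}(\bD)$ the product of two \emph{non-composable} morphisms of $\bD$ is a free, generally nonzero, element rather than zero. Consequently, for an ideal inclusion $\bC\to\bD$ the image of $A^{f}(\bC)$ in $A^{f}(\bD)$ is not an ideal: multiplying a morphism of $\bC$ by a non-composable morphism of $\bD$ leaves that image. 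Your description of $A^{f}$ as differing from $A$ ``by passage to a quotient by relations \dots\ compatible with ideals and quotients'' has the logic reversed: it is $A(\bC)$ (the completed matrix algebra, in which non-composable products vanish) that is the more constrained object, and it is $A$, not $A^{f}$, that preserves exact sequences (\cite[Prop.~8.9.2]{crosscat}; this is also what the proof of Proposition \ref{rgkohpgbdghdfghf} actually invokes).

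The ingredient you are missing is Lemma \ref{sfdbjoqi3gegergewgerg1}: the natural comparison $\Kast(\alpha_{\bC})\colon \Kast(A^{f}(\bC))\to\Kast(A(\bC))$ is an equivalence. This rests on Joachim's stable homotopy equivalence $A^{f}(\bC)\to A(\bC)$, proved by exhibiting an explicit inverse $A(\bC)\to A^{f}(\bC)\otimes\IK$ and homotopies (this is where $\IK$-stability and homotopy invariance of $\Kast$ enter), and is extended in the paper to non-unital categories and arbitrary object sets by a filtered-colimit argument. With this lemma, Proposition \ref{rgiowegwreergwegwerg} obtains the fibre sequence for $\Kcat$ by applying $\Kast$ to the exact sequence $A(\bC)\to A(\bD)\to A(\bQ)$ and transporting back along the equivalences $\Kast(\alpha)$. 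Your sketch of unitary-equivalence invariance has the same defect: the corner-inclusion/Morita argument is carried out in the paper for $A$ (via the $(A(\bC),A(\bD))$-bimodule of Theorem \ref{eowigjwoigwregrgwegrwreg} and Corollary \ref{qwrgioqfeqwfqedfefdqe}), and one again needs Lemma \ref{sfdbjoqi3gegergewgerg1} to return to $\Kcat=\Kast\circ A^{f}$. The proposal is repairable, but only by inserting exactly this comparison lemma, which is the real technical content behind the theorem.
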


\begin{proof}{Note that the $\infty$-category  $\Sp$ is stable.} Hence the theorem follows from 
Proposition \ref{rgiowegwreergwegwerg} {(fibre sequences)} and 
 Lemma \ref{wtiowgergwgreg} {(finitary)} 
shown below.
\end{proof}


 \begin{lem}\label{wtiowgergwgreg}
 The functor $\Kcat $ preserves small filtered colimits.
 \end{lem}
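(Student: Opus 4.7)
The plan is to exploit the factorization $\Kcat = \Kast \circ A^{f}$ from Definition \ref{ergiowergregrdsvsv} together with two already-available facts: that $A^{f}$ is a left adjoint, and that $\Kast$ preserves small filtered colimits.

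First I would recall that by the adjunction $A^{f} : \nCcat \leftrightarrows \nCalg : \incl$ (displayed just before Definition \ref{ergiowergregrdsvsv}, and established in \cite[Lem.~3.9]{crosscat}), the functor $A^{f}$ is a left adjoint. Consequently $A^{f}$ preserves all small colimits, in particular small filtered ones; this handles the categorical half of the argument and requires no computation.

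Next I would invoke Proposition \ref{fiodgerwvfdsvdvsfv}.\ref{wtgwergfervfdsv}, which states that the $C^{*}$-algebraic $K$-theory functor $\Kast : \nCalg \to \Sp$ preserves small filtered colimits. Composing the two preservation statements yields that $\Kcat = \Kast \circ A^{f} : \nCcat \to \Sp$ preserves small filtered colimits, which is the claim of the lemma.

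There is essentially no obstacle here: the only thing to double-check is that the filtered colimit computed in $\nCalg$ of the diagram $A^{f}(\bC_{i})$ agrees, as a spectrum after applying $\Kast$, with the filtered colimit computed in $\Sp$, but this is exactly the content of Proposition \ref{fiodgerwvfdsvdvsfv}.\ref{wtgwergfervfdsv}. The proof therefore reduces to one line of composition of two preservation properties.
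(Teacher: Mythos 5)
Your argument is correct and is essentially identical to the paper's own proof: both use that $A^{f}$ is a left adjoint (hence preserves all small colimits) and that $\Kast$ preserves small filtered colimits by Proposition \ref{fiodgerwvfdsvdvsfv}.\ref{wtgwergfervfdsv}, then compose. Nothing is missing.
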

\begin{proof} By definition, 
the functor  $A^{f}$ is a left-adjoint and therefore preserves all {small} colimits. 
 The functor $\Kast $ preserves {small} filtered colimits by Proposition \ref{fiodgerwvfdsvdvsfv}.\ref{wtgwergfervfdsv}.
 Hence the composition $ \Kcat$ preserves {small} filtered colimits.
\end{proof}

We now use the functor $A$ from  \eqref{frewfoirjviojvioeweverwvwev}.
The universal property of $A^{f}$ together with \eqref{qweflqjoijefqwefqwefqwef} provides a natural transformation  
\begin{equation}\label{eq_transformation_Af_A}
\alpha\colon A^{f}\to A
\end{equation}
of functors   from $\nCcat_{i}$ to $\nCalg $,
see, e.g.,  \cite[Lem.\ 8.54]{buen}.

In order to provide a self-contained presentation we give the proof of the following lemma.
Let $\bC$ be in $\nCcat$.
\begin{lem}[{\cite[Prop.\ 8.55]{buen}}]\label{sfdbjoqi3gegergewgerg1}
The morphism
\begin{equation}\label{sfdbjoqi3gegergewgerg}
\Kast(\alpha_{\bC})\colon\Kast(A^{f}(\bC))\to \Kast(A(\bC)) 
\end{equation}
is an equivalence.
\end{lem}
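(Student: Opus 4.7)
My plan proceeds by two reductions before invoking the $\mathbb{K}$-stability of $\Kast$ for the remaining finite case. The first reduction is to the case where $\bC$ has only finitely many objects. I would write $\bC = \operatorname{colim}_{J} \bC_{J}$ in $\nCcat_{i}$, with $J$ ranging over the filtered poset of finite subsets of $\Ob(\bC)$ and $\bC_{J} \subseteq \bC$ the full subcategory on $J$; the transition morphisms are manifestly in $\nCcat_{i}$. The left adjoint $A^{f}$ preserves this colimit automatically. The functor $A$ preserves it too by direct inspection of Remark \ref{rem_defn_A}: the algebraic piece \eqref{efefwefeefefyyyyyyyefewfef} visibly commutes with filtered colimits, and the maximal $C^{*}$-completion commutes with such colimits along isometric inclusions (compare \cite[Lem.\ 6.8]{crosscat}). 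Since $\Kast$ preserves filtered colimits by Proposition \ref{fiodgerwvfdsvdvsfv}.\ref{wtgwergfervfdsv}, both sides of \eqref{sfdbjoqi3gegergewgerg} commute with these colimits, so it suffices to treat finite $\bC$.

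For the second reduction I would pass to the unital case. For finite $\bC$ the unitalization sits in an exact sequence $0 \to \bC \to \bC^{+} \to \bQ \to 0$ in $\nCcat$, where $\bQ$ is a finite product of copies of $\C$. Applying $A^{f}$ and $A$ yields exact sequences in $\nCalg$ compatible with $\alpha$, and $\alpha_{\bQ}$ is visibly an isomorphism. By Proposition \ref{fiodgerwvfdsvdvsfv}.\ref{wtgwergfervfdsv1} the functor $\Kast$ turns these into a morphism of fibre sequences in $\Sp$, and the five lemma (in the stable sense) reduces the claim for $\bC$ to the claim for $\bC^{+}$.

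For unital $\bC$ with finitely many objects $C_{1},\dots, C_{n}$ the comparison $\alpha_{\bC}$ is in general \emph{not} an isomorphism of $C^{*}$-algebras: by the universal property $A^{f}(\bC)$ is generated by the projections $p_{i} = \alpha(\id_{C_{i}})$ together with elements corresponding to the morphisms of $\bC$, subject only to the relations imposed by composition in $\bC$, whereas $A(\bC)$ imposes in addition the orthogonality $p_{i}p_{j}=0$ for $i\ne j$. Nevertheless $\alpha_{\bC}$ is a strong Morita equivalence: the Yoneda-type module $\bigoplus_{i=1}^{n} M_{C_{i}}$ furnished by Definition \ref{eiogjoewrgerfgerrfewfwer} and Lemma \ref{rtiohjirotgwegfwergewgegwerf9}, together with the natural $A^{f}(\bC)$-action pulled back along $\alpha_{\bC}$, realises an imprimitivity bimodule between $A^{f}(\bC)$ and $A(\bC)$. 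Combining this with $\mathbb{K}$-stability of $\Kast$ (Proposition \ref{fiodgerwvfdsvdvsfv}.\ref{wegoijweroigjergwerg}) then yields the desired equivalence \eqref{sfdbjoqi3gegergewgerg}; homotopy invariance (Proposition \ref{fiodgerwvfdsvdvsfv}.\ref{wroigjiowertgerwgrewgwreg}) is useful for concretely constructing the required path between stabilised algebras.

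The principal technical obstacle is the Morita-equivalence step in the final paragraph. Matching the universal description of $A^{f}(\bC)$ against a corner of $A(\bC)\otimes\mathbb{K}$ requires careful bookkeeping of generators and relations, and is essentially the content of Joachim's original construction in \cite{joachimcat}; everything else in the argument is soft abstract-nonsense reduction.
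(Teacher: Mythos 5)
Your first reduction (filtered colimits over finite full subcategories) is sound and is essentially how the paper itself reduces to categories with countably many objects. The rest of the argument breaks down, and the failure is instructive because it misidentifies the mechanism by which $\Kast(\alpha_{\bC})$ becomes an equivalence. The central problem is your final step: $\alpha_{\bC}$ is \emph{not} injective once $\bC$ has at least two objects, so it cannot induce a strong Morita--Rieffel equivalence in the sense of Definition \ref{reogijoergegwegwergw}, which requires the map to identify its domain with $K(H)$. Take $\bC$ unital with two objects, $\End=\C$ for each and zero cross-morphism spaces: then $A^{f}(\bC)$ is the universal $C^{*}$-algebra on two \emph{free} projections $p,q$, while $A(\bC)=\C\oplus\C$, and the nonzero element $pq$ is sent to $0$. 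These two algebras are not Morita equivalent in any sense (the former has a one-parameter family of irreducible representations, the latter has two), and yet $\Kast(\alpha_{\bC})$ is still an equivalence. The correct mechanism is homotopy-theoretic, not Morita-theoretic: Joachim's argument, which the paper reproduces, constructs an explicit $*$-homomorphism $\beta\colon A(\bC)\to A^{f}(\bC)\otimes\IK$ and shows that both composites with $\alpha_{\bC}$ are \emph{homotopic} to corner embeddings. Homotopy invariance of $\Kast$ is the essential ingredient that absorbs the non-injectivity of $\alpha_{\bC}$; it is not, as you suggest, an optional convenience for ``constructing paths.''

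The same counterexample undermines your second reduction: the quotient $\bQ=\bC^{+}/\bC$ is precisely a unital category with trivial endomorphism algebras and zero cross-morphisms, so $\alpha_{\bQ}$ is the non-injective quotient from free projections to orthogonal ones --- ``visibly an isomorphism'' is false, and you would have to invoke the unital case for $\bQ$ itself (which at least avoids circularity, but the stated justification is wrong). Moreover, you apply Proposition \ref{fiodgerwvfdsvdvsfv}.\ref{wtgwergfervfdsv1} to the row $A^{f}(\bC)\to A^{f}(\bC^{+})\to A^{f}(\bQ)$, but $A^{f}$, being merely a left adjoint, is not known to preserve exact sequences; the paper only establishes this for $A$, and its proof that $\Kcat$ sends exact sequences to fibre sequences \emph{uses} the present lemma, so you cannot borrow that fact here. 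The paper avoids the unitalization detour entirely by observing that Joachim's homotopies make sense without identity morphisms (the only place units enter, the maps $u_x(t)$, can be bypassed), so the countable non-unital case is handled directly before passing to the filtered colimit.
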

\begin{proof}
 In the special case that  $\bC$ is unital and has a countable set of objects  the assertion of the  {lemma} has been shown by Joachim \cite[Prop.~3.8]{joachimcat}.
 
First assume that $\bC$ has countably many objects, but is possibly non-unital. Then the arguments from the proof of \cite[Prop.~3.8]{joachimcat} are applicable and show that the canonical map $\alpha_{\bC}\colon A^{f}(\bC)\to A(\bC)$  is a stable homotopy equivalence. 
Let use recall the construction of the stable inverse $$\beta\colon A(\bC)\to A^{f}(\bC)\otimes \IK\, ,$$
where $\IK \coloneqq \IK(H)$ {are the compact operators on} the Hilbert space $$H \coloneqq \ell^2(\Ob(\bC) \cup \{e\})\, ,$$  where  $e$ is an artificially added point.
The assumption on the cardinality of $\Ob(\bC)$ is made since we want that $\IK$ is the algebra of compact operators on a separable Hilbert space.
Two points $x,y$ in $\Ob(\bC) \cup \{e\}$ provide a rank-one operator $\Theta_{y,x}$ in  $\IK(H)$ which sends the basis vector corresponding to $x$ to the vector corresponding to $y$, and which vanishes on the orthogonal complement of $x$.  
The homomorphism $\beta$ is given on  $A$ in $\Hom_{\bC}(x,y)$ by $$\beta(A) := A \otimes \Theta_{y,x}\, .  $$
If $A$ and $B$ are composable morphisms, then the relation $\Theta_{z,y}\Theta_{y,x}=\Theta_{z,x}$ implies   that $\beta(B\circ A)=\beta(B)\beta(A)$. Moreover, 
if $A$,$B$ are not composable, then $\beta(B)\beta(A) = 0$. Finally, $\beta(A)^{*}=\beta(A^{*})$ since
$\Theta_{y,x}^{*}=\Theta_{x,y}$. It follows that  $\beta$ is a well-defined $^\ast$-homomorphism.

The argument now proceeds by showing that the composition
$(\alpha_{\bC}\otimes \id_{\IK(H)})\circ \beta$ is homotopic to $\id_{A(\bC)}\otimes \Theta_{e,e}$, 
and that the composition
$\beta\circ \alpha_{\bC}$ is homotopic to $\id_{A^{f}(\bC)}\otimes \Theta_{e,e}$.
Note that in our setting $\bC$ is not necessarily unital. In the following we directly refer to the  proof of \cite[Prop.~3.8]{joachimcat}.
The only step in the  proof of that proposition where the identity morphisms are used is the definition of the maps denoted by $u_x(t)$ in the reference. But they in turn are only used to define the map denoted by $\Xi$  later in that proof. The crucial observation is that we can define this map $\Xi$ directly without using any identity morphisms in $\bC$.

We  conclude that the canonical map $\Kast(\alpha_{\bC})\colon K(A^{f}(\bC))\to \Kast(A(\bC))$ is an equivalence for $C^\ast$-categories $\bC$ with countably many objects.

In order to extend this to all $C^{*}$-categories,
 we  use the fact that $A^{f}$ commutes with  {small} filtered colimits which implies that $A^f(\bC) \cong \colim_{\bC^\prime} A^f(\bC^\prime)$, where the colimit runs over the  filtered poset of all full subcategories with countably many objects.  
 The connecting maps of the indexing family of this colimit  are functors which are injections on objects. 
We now argue that also $A(\bC)\cong \colim_{\bC^{\prime}} A(\bC^{\prime})$. 
Note that  $A$ is the composition of the functor 
 $A^{\alg}\colon \nCcat\to \npAlgc$  (see \cite[Defn.~6.1 and Lem.~6.4]{crosscat}) with the completion functor $\Compl\colon \npAlgc\to \nCalg$ (\cite[(3.17]{crosscat}). By construction  the functor $A^{\alg}$  preserves filtered colimits    with connecting maps that are injective on objects. The completion functor is a left-adjoint and therefore preserves all {small} filtered colimits.   This implies that $A$ commutes with $\colim_{\bC'}$.
 
Since $\Kast$  commutes with {small} filtered colimits   
the morphism $$ \Kast(\alpha_{\bC})\colon \Kast(A^{f}(\bC))\to \Kast(A(\bC))$$  is equivalent to the morphism
$$\colim_{\bC'}\Kast(\alpha_{\bC'})\colon \colim_{\bC^{\prime}} \Kast(A^{f}(\bC^{\prime}))\to \colim_{\bC^{\prime}} \Kast(A(\bC^{\prime}))\, .$$
Since the categories $\bC^{\prime}$ appearing in the colimit   have at most countably many objects we have identified
$\Kast(\alpha_{\bC}) $ with a colimit of equivalences. Hence this morphism itself is an equivalence. 
\end{proof}

\begin{prop}\label{rgiowegwreergwegwerg}
The functor $\Kcat$ sends  
exact sequences to fibre sequences.
\end{prop}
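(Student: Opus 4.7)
The plan is to reduce the assertion to the corresponding property of $\Kast$ on $C^{*}$-algebras via the natural transformation $\alpha\colon A^{f}\to A$ from \eqref{eq_transformation_Af_A}. Let
\[
\bC\xrightarrow{i}\bD\xrightarrow{q}\bQ
\]
be an exact sequence in $\nCcat$. Since the ideal inclusion $i$ is a bijection on objects by Definition \ref{weiogjwergferfwegwgregwregwgre}.\ref{weiogjwergferfwegwgregwregwgre_eins}, and since $\bQ=\bD/\bC$ has the same object set as $\bD$ with $q$ acting as the identity on objects, both $i$ and $q$ belong to the wide subcategory $\nCcat_{i}$. In particular the natural transformation $\alpha$ is defined on the whole sequence, giving a commutative diagram
\[
\xymatrix{
\Kcat(\bC)\ar[r]\ar[d]_-{\Kast(\alpha_{\bC})}^-{\simeq}&\Kcat(\bD)\ar[r]\ar[d]_-{\Kast(\alpha_{\bD})}^-{\simeq}&\Kcat(\bQ)\ar[d]_-{\Kast(\alpha_{\bQ})}^-{\simeq}\\
\Kast(A(\bC))\ar[r]&\Kast(A(\bD))\ar[r]&\Kast(A(\bQ))
}
\]
in which the vertical maps are equivalences by Lemma \ref{sfdbjoqi3gegergewgerg1}. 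It therefore suffices to show that the lower row is a fibre sequence in $\Sp$.

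The main step is to verify that $A$ sends the exact sequence $\bC\to\bD\to\bQ$ in $\nCcat$ to an exact sequence $A(\bC)\to A(\bD)\to A(\bQ)$ in $\nCalg$. I would do this by first analysing the algebraic version $A^{\alg}$: a direct inspection using the formula \eqref{efefwefeefefyyyyyyyefewfef} together with the product rule \eqref{fwerferwfwfwrefwrefer} shows that the image of $A^{\alg}(\bC)$ in $A^{\alg}(\bD)$ is a two-sided $*$-ideal and that the induced map $A^{\alg}(\bD)/A^{\alg}(\bC)\to A^{\alg}(\bQ)$ is an isomorphism of $*$-algebras. One then passes to the $C^{*}$-completions: the closure of $A^{\alg}(\bC)$ in $A(\bD)$ is a closed two-sided ideal, and by comparing maximal norms one checks that this closure is naturally isomorphic to $A(\bC)$ and that the quotient $A(\bD)/A(\bC)$ is naturally isomorphic to $A(\bQ)$. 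This uses that $i$ is faithful (hence isometric on morphism spaces by \cite[Lem.~6.7]{crosscat}) and the universal property of the maximal norm on $A^{\alg}(\bQ)$. This is the statement invoked implicitly in the proof of Proposition \ref{rgkohpgbdghdfghf}, so I would either extract a detailed argument there or cite it directly.

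Granted this, the sequence $A(\bC)\to A(\bD)\to A(\bQ)$ is exact in $\nCalg$, and Proposition \ref{fiodgerwvfdsvdvsfv}.\ref{wtgwergfervfdsv1} (together with the reformulation in Remark \ref{wrthijowrtgergwergwegr}.\ref{ewigj9weoferwfwefrwerf}) yields that
\[
\Kast(A(\bC))\to \Kast(A(\bD))\to \Kast(A(\bQ))
\]
is a fibre sequence in $\Sp$. Combining this with the commutative diagram above, we conclude that $\Kcat(\bC)\to \Kcat(\bD)\to \Kcat(\bQ)$ is a fibre sequence, which finishes the proof.

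The main obstacle is the technical verification that $A$ preserves exact sequences at the level of $C^{*}$-algebras; everything else is formal once that is in place. The delicate point is that the quotient $C^{*}$-norm on $A^{\alg}(\bD)/A^{\alg}(\bC)$ must be shown to agree with the maximal norm used to define $A(\bQ)$, which requires lifting representations of $A(\bQ)$ to representations of $A(\bD)$ annihilating $A(\bC)$; this in turn reduces to the corresponding lifting property for covariant representations of $\bQ$, which is provided by the universal property of the quotient in $\nCcat$.
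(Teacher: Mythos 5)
Your proposal is correct and follows essentially the same route as the paper: reduce to $\Kast$ via the natural transformation $\alpha\colon A^{f}\to A$ and Lemma \ref{sfdbjoqi3gegergewgerg1}, use that $A$ preserves exact sequences (which the paper simply cites as \cite[Prop.~8.9.2]{crosscat} rather than reproving), and conclude with Proposition \ref{fiodgerwvfdsvdvsfv}.\ref{wtgwergfervfdsv1}. The only difference is that you sketch a proof of the exactness-preservation of $A$ where the paper cites it; your instinct to cite it directly is the right one.
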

\begin{proof}
Let $0\to \bC\to \bD \to \bQ\to 0$ be an exact sequence  in $\nCcat$.
Then we get the following commutative diagram
$$\xymatrix{
&A^{f}(\bC)\ar[r]\ar[d]_{\alpha_{\bC}}&A^{f}(\bD)\ar[r]\ar[d]_{\alpha_{\bD}}&A^{f}(\bQ)\ar[d]_{\alpha_{\bQ}}&\\
0\ar[r]&A(\bC)\ar[r]&A(\bD)\ar[r]&A(\bQ)\ar[r]&0}$$
where the lower sequence is exact since $A$ preserves exact sequences by \cite[Prop.~8.9.2]{crosscat}. We now apply $\Kast$ and use Definition \ref{ergiowergregrdsvsv} in order to express the entries in the upper line in terms of $\Kcat$  in order to get
\begin{equation}\label{vdsfoijqiojviofvdfvsdfvsdfv}
\xymatrix{%
&\Kcat(\bC)\ar[d]^{\simeq}\ar[r]&\Kcat(\bD)\ar[d]_{\simeq}\ar[r]&\Kcat(\bQ)\ar[d]^{\simeq}&\\
\cdots\ar[r]&\Kast(A(\bC))\ar[r]&\Kast(A(\bD))\ar[r]&\Kast(A(\bQ))\ar[r]&\cdots
}
\end{equation}
The vertical morphisms are equivalences by Lemma \ref{sfdbjoqi3gegergewgerg1}, and the lower sequence is a fibre sequence by Proposition~\ref{fiodgerwvfdsvdvsfv}.\ref{wtgwergfervfdsv1}. Hence the upper line is a fibre sequence.
\end{proof}

In Theorem~\ref{thm_G_amenable} we have seen that the comparison functor $$q_{ {\bC}}\colon {\bC} \rtimes G \to  {\bC} \rtimes_r G$$ is an isomorphism for any $  \bC$ in $\Fun(BG,\nCcat)$ provided that $G$ is amenable. 
 If one is interested in this isomorphism  only after applying $K$-theory, then one can
 weaken the assumption on $G$ from amenable to $K$-amenable. Since we consider discrete groups $G$ we can adopt the following definition:
 \begin{ddd}[{\cite[Def.\ 2.2]{cuntz-kam}, \cite[Sec.\ 1.3.2]{book_Haagerup}}]
 The discrete group $G$ is $K$-amenable if for every $  A$ in $\Fun(BG,\nCalg)$
 the morphism
 $$\Kast(q_{  A}) \colon \Kast( A\rtimes G)\to \Kast( A \rtimes_{r}G)$$
 is an equivalence.
\end{ddd}
  
The class of $K$-amenable groups contains all amenable groups, but  also all groups with the Haagerup property (also often called a-T-menability), and hence  for example also all Coxeter groups and all CAT(0)-cubical groups \cite[Sec.\ 1.2]{book_Haagerup}.


Let $  \bC$ be in $\Fun(BG,\nCcat)$.
\begin{theorem}\label{thm_Kamenable_equiv}
If $G$ is $K$-amenable, then the morphism $\Kcat(q_{  {\bC}}) \colon \Kcat(  \bC\rtimes G)\to \Kcat(  \bC\rtimes_{r}G)$ is an equivalence.
\end{theorem}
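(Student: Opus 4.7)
The plan is to reduce the assertion to the definition of $K$-amenability applied to the $C^{*}$-algebra $A(\bC)$ in $\Fun(BG,\nCalg)$, using that the functor $A$ commutes with both types of crossed products up to canonical isomorphism, and that $\Kast\circ A$ agrees with $\Kcat=\Kast\circ A^{f}$ up to natural equivalence after applying $\Kast$.

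First I would assemble the commutative diagram
\[
\xymatrix{
\Kcat(\bC\rtimes G) \ar[r]^{\Kcat(q_{\bC})} \ar[d]_-{\Kast(\alpha_{\bC\rtimes G})} & \Kcat(\bC\rtimes_{r} G) \ar[d]^-{\Kast(\alpha_{\bC\rtimes_{r} G})} \\
\Kast(A(\bC\rtimes G)) \ar[r] \ar[d]_-{\cong} & \Kast(A(\bC\rtimes_{r} G)) \ar[d]^-{\cong} \\
\Kast(A(\bC)\rtimes G) \ar[r]_-{\Kast(q_{A(\bC)})} & \Kast(A(\bC)\rtimes_{r} G)
}
\]
Here the top vertical arrows are equivalences by Lemma \ref{sfdbjoqi3gegergewgerg1} applied to the $C^{*}$-categories $\bC\rtimes G$ and $\bC\rtimes_{r} G$. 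The middle vertical arrows are isomorphisms: for the left one, this is \cite[Thm.\ 6.9]{crosscat}, which upgrades \eqref{ervqqowdqwdwqd} for the maximal crossed product; for the right one, this is Theorem \ref{weoigjowergerwegergwerg}. The bottom horizontal morphism is $\Kast(q_{A(\bC)})$ for the $G$-$C^{*}$-algebra $A(\bC)$, and hence is an equivalence by the assumption that $G$ is $K$-amenable.

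The main step is to check that the diagram commutes. The upper square commutes by naturality of the transformation $\alpha\colon A^{f}\to A$ from \eqref{eq_transformation_Af_A} applied to the morphism $q_{\bC}\colon \bC\rtimes G\to \bC\rtimes_{r} G$ in $\nCcat_{i}$ (note that $q_{\bC}$ is the identity on objects, hence lies in $\nCcat_{i}$). For the lower square, one must verify that under the identifications $A(\bC\rtimes G)\cong A(\bC)\rtimes G$ and $A(\bC\rtimes_{r} G)\cong A(\bC)\rtimes_{r} G$, the functor $A(q_{\bC})$ corresponds to the algebraic map $q_{A(\bC)}$. This is essentially a diagram chase using the explicit descriptions: both isomorphisms extend the canonical isomorphism \eqref{ervqqowdqwdwqd} on algebraic crossed products, and on $A^{\alg}(\bC)\rtimes^{\alg} G$ the morphism $q_{A(\bC)}$ is the identity, as is the morphism induced by $q_{\bC}$ since the latter is the identity on $\bC\rtimes^{\alg} G$. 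By continuity and the universal properties the diagram then commutes globally.

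Once commutativity is established, the left, right, middle-left, middle-right vertical maps and the bottom horizontal map are all equivalences; a standard two-out-of-three argument in the stable $\infty$-category $\Sp$ yields that the top horizontal map $\Kcat(q_{\bC})$ is an equivalence. The only potential obstacle is the careful verification of the compatibility between $A(q_{\bC})$ and $q_{A(\bC)}$, but this follows from unravelling the construction of the reduced norm (Remark \ref{qreigjoqergergwegregwreg} and Lemma \ref{wiothgwrthgfregwreg}), which precisely matches the one used in Construction \ref{wtijghioegergewgrwe} to define $q_{A(\bC)}$.
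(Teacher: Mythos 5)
Your proposal is correct and follows essentially the same route as the paper: reduce to $K$-amenability for the $G$-$C^{*}$-algebra $A(\bC)$ via the equivalences $\Kcat\simeq\Kast\circ A$ of Lemma \ref{sfdbjoqi3gegergewgerg1} and the isomorphisms \eqref{eq_nu_max} and \eqref{eq_nu_r}, then conclude by two-out-of-three. The compatibility of $A(q_{\bC})$ with $q_{A(\bC)}$ that you verify by hand is exactly the commutative square \eqref{eq_nu_isos} the paper invokes.
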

\begin{proof}
We have the following commutative diagram
\[
\xymatrix{
\Kcat( {\bC} \rtimes G) \ar[r]^-{\eqref{sfdbjoqi3gegergewgerg}}_-{{\simeq}} \ar[d]^-{\Kcat(q_{ {\bC}})} & \Kast(A( {\bC} \rtimes G)) \ar[d]^-{\Kast(A(q_{
\bC}))} && \Kast(A({\bC}) \rtimes G) \ar[ll]_-{\eqref{eq_nu_max}}^-{{\simeq}} \ar[d]^-{\Kast(q_{A( {\bC})})}\\
\Kcat({\bC} \rtimes_r G) \ar[r]^-{\eqref{sfdbjoqi3gegergewgerg}}_-{{\simeq}} & \Kast(A({\bC} \rtimes_r G)) && \Kast(A( {\bC}) \rtimes_r G) \ar[ll]_-{\eqref{eq_nu_r}}^-{{\simeq}}
}
\]
where the right square is obtained by applying $\Kast$ to  the square\eqref{eq_nu_isos}. Because $G$ is $K$-amenable, the right vertical arrow $\Kast(q_{A({\bC})})$ is an equivalence.  Therefore $\Kcat(q_{{\bC}})$ is an equivalence, too.
\end{proof}

\section{\texorpdfstring{$\bm{K}$}{K}-theory of products of \texorpdfstring{$\bm{C^{*}}$}{Cstar}-categories}\label{ergoihoifwefwefwefqwefqwef}
%
%
%
%

The main result of this section is Theorem \ref{ojgweorergwerfewrfwerfw} stating that the $K$-theory of a product of additive unital $C^{*}$-categories is equivalent to the product of the $K$-theories of the factors.
For finite products,   this {holds for any homological functor in place of $\Kcat$ and follows immediately  from Lemma~\ref{erighi9wrteogwergrgregwergwreg}. In view of   Theorem~\ref{qrgkljwoglergerggwergwrg} this applies to 
  $\Kcat$. So the interesting case are infinite families, where  this property seems to be a speciality of $\Kcat$.}

In order to simplify the notation in this section we  use the notation $K_{*}(A) \coloneqq \pi_{*}\Kast(A)$ for the $K$-theory groups of a $C^{*}$-algebra $A$. 

Let $A$ be an algebra and $n,m$ in $\nat$. For $a$ in $A$ and $i$ in $\{1,\dots,n\}$ and $j$ in $\{1,\dots,m\}$ we let $a[i,j]$ in $\Mat_{n,m}(A)$ denote the matrix
whose only non-zero entry is $a$ in position $(i,j)$. For $i$ in $\{1,\dots,n\}$ we let \begin{equation}\label{sfdvoijio3rgfevfdsvf}
\epsilon_{A,n}[i] \colon A\to \Mat_{n}(A)
\end{equation} denote the injective (non-unital if $n\ge 2$) algebra homomorphism which sends $a$ in $A$ to $a[i,i]$.
 
Let $A,B$ be $*$-algebras. Recall from Definition~\ref{r9erqgrgqrg0} that  an element $u$ in $B$ is  a partial isometry if $uu^{*}$ and $u^{*}u$ are projections in $B$.  
Let $h\colon A\to B$ be a $*$-homomorphism such that $h u^{*}u=h$. Then $$h' \coloneqq u hu^{*}\colon A\to B$$ is another $*$-homomorphism.

If $A$ and $B$ are $C^{*}$-algebras and the homomorphisms $h,h'\colon A\to B$ are related as described above with $u$ in the multiplier algebra of $B$, then we have an equality between the induced maps on $K$-theory groups
\begin{equation}\label{vasfvlkmlksaf24fwf}
h_{*}=h'_{*}\colon K_{*} (A)\to K_{*} (B)\, ,
\end{equation} 
see, e.g., \cite[Rem.~8.44]{buen}.

If $A$ is a $C^{*}$-algebra, $n$ in $\nat$, and $i$ in $\{1,\dots,n\}$, then by the matrix stability of $\Kast$ the homomorphism of $K$-theory groups
\begin{equation}\label{webwpokvopwervwcervwev}
\epsilon_{A,n}[i]_{*}\colon K_{*}(A)\to K_{*}(\Mat_{n}(A))
\end{equation}
induced by the homomorphism  \eqref{sfdvoijio3rgfevfdsvf} of $C^{*}$-algebras is an isomorphism.
 
We consider $\bC$ in $\Ccat$. If $F$ is a finite subset of objects of $\bC$, then we have a unital subalgebra
\begin{equation}\label{nriqrnficjwenkcsdcdcsdacsdcasdcasdcad}
A(F) \coloneqq \bigoplus_{C,C'\in F} \Hom_{\bC}(C,C')
\end{equation}
of $A^{\alg}(\bC)$, see \eqref{efefwefeefefyyyyyyyefewfef} for notation. 
For $n$ in $\nat$ the    inclusion  $A(F)\to A(\bC)$ induces   the homomorphism  of matrix algebras
$$h_{F,n}\colon \Mat_{n}(A(F))\to \Mat_{n}(A(\bC))\,,$$
where $A(\bC)$ is as in \eqref{frewfoirjviojvioeweverwvwev}.  
For an object $C$ of $\bC$   we use the notation
\begin{equation}\label{sfvniojqrogvfevfdsvsfdvsfdvvsfdv}
\ell_{C}\colon \End_{\bC}(C) \to A(\bC)
\end{equation} 
for the canonical inclusion.

Let $\bC$ be in $\Ccat$, 
  $F$ be a finite set of objects of $\bC$, and let $n$ in $\nat$.
\begin{lem}\label{rqgiowergwregwergwergeg}
Assume that $\bC$ is   additive. Then
there is a partial isometry $u$ in $\Mat_{n}(A(\bC))$ and an object $C(F,n)$ in $\bC$ such that $h_{F,n}u^{*}u=h_{F,n}$
and $h':=uh_{F,n}u^{*}$ has a factorization
\begin{equation}\label{urehfuiehisudhciousdcdsa}
h'\colon \Mat_{n}(A(F))\xrightarrow{\phi_{F,n}} \End_{\bC}(C(F,n))\xrightarrow{\ell_{C(F,n)}} A(\bC)\xrightarrow{\epsilon_{A(\bC),n}[1]} \Mat_{n}(A(\bC))
\end{equation}
where the isomorphism $\phi_{F,n}$ will be constructed in the proof.
\end{lem}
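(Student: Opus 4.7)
My plan is to exploit the additivity of $\bC$ to realise $\Mat_n(A(F))$ as an endomorphism algebra of an object in $\bC$, and then construct $u$ as a row matrix of generators in $A(\bC)$ that intertwines the ``spread-out'' inclusion $h_{F,n}$ with the ``concentrated'' inclusion $\epsilon_{A(\bC),n}[1]\circ \ell_{C(F,n)}\circ \phi_{F,n}$.

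First, since $F$ is finite and $\bC$ is additive, I would choose an orthogonal sum $(C_F,(e_C)_{C\in F})$ of the family $(C)_{C\in F}$ in $\bC$. The characterization of finite orthogonal sums via the relations $e_{C'}^{*}e_C=\delta_{C,C'}\id_C$ and $\sum_{C\in F}e_Ce_C^{*}=\id_{C_F}$ implies that the $\C$-linear map $\phi_F\colon A(F)\to \End_{\bC}(C_F)$ defined on generators by $f[C',C]\mapsto e_{C'}f e_{C}^{*}$ is a $*$-isomorphism. Next I would choose an orthogonal sum $(C(F,n),(\eta_\alpha)_{\alpha=1}^n)$ in $\bC$ of $n$ copies of $C_F$; then the standard argument yields a $*$-isomorphism $\phi_{F,n}\colon \Mat_n(A(F))\to \End_{\bC}(C(F,n))$ sending a matrix $x=(x_{\alpha\beta})$ to $\sum_{\alpha,\beta}\eta_\alpha\phi_F(x_{\alpha\beta})\eta_\beta^{*}$.

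Second, I would define $u\in \Mat_n(A(\bC))$ as the matrix supported in its first row, with entries
\[
u_{1,\alpha}\;:=\; w_\alpha \;:=\; \sum_{C\in F}(\eta_\alpha e_C)[C(F,n),C]\in A(\bC),\qquad \alpha=1,\dots,n.
\]
The motivation is transparent: left multiplication by $w_\alpha$ on a generator $f[C',C]\in A(F)$ picks out the correct object index and produces $(\eta_\alpha e_{C'}f)[C(F,n),C]$, while right multiplication by $w_\beta^{*}$ appends $e_C^{*}\eta_\beta^{*}$ on the right. Using the multiplication law \eqref{fwerferwfwfwrefwrefer} of $A(\bC)$, this is precisely the bookkeeping needed to identify the compressions of $h_{F,n}(x)$ with $\phi_{F,n}(x)$ in the $(1,1)$-corner.

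Third, I would verify the two required identities by direct computation. On one hand, $u^{*}u$ has $(\alpha,\beta)$-entry $w_\alpha^{*}w_\beta$; the orthogonality $\eta_\alpha^{*}\eta_\beta=\delta_{\alpha\beta}\id_{C_F}$ together with $\sum_{C\in F}e_C e_C^{*}=\id_{C_F}$ collapses this to $\delta_{\alpha\beta}p_F$, where $p_F:=\sum_{C\in F}\id_C[C,C]\in A(\bC)$ acts as a unit on $A(F)\subseteq A(\bC)$; hence $h_{F,n}(x)u^{*}u=h_{F,n}(x)$. Similarly, $uu^{*}$ has $\id_{C(F,n)}[C(F,n),C(F,n)]$ in the $(1,1)$-position and zeros elsewhere, so $u$ is a partial isometry. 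Finally, the conjugation $uh_{F,n}(x)u^{*}$ is supported in the $(1,1)$-position with entry $\sum_{\alpha,\beta}w_\alpha\, x_{\alpha\beta}\, w_\beta^{*}$, which by the calculation sketched above equals $\phi_{F,n}(x)[C(F,n),C(F,n)]$, i.e.\ $\epsilon_{A(\bC),n}[1]\circ \ell_{C(F,n)}\circ \phi_{F,n}(x)$.

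The only conceptual point requiring care is the choice of $w_\alpha$: naively one might try a single generator $(\eta_\alpha e_C)[C(F,n),C]$, but then the multiplication in $A(\bC)$ would kill all matrix entries of $h_{F,n}(x)$ whose source object differs from $C$. Summing over $C\in F$ (which is legal because $F$ is finite) is exactly what makes $w_\alpha$ act as the ``sum of partial inclusions of $C_F$ into the $\alpha$-th summand of $C(F,n)$''. Once this choice is made, the remainder of the argument is a routine computation inside $A(\bC)$, which I would carry out by expanding along the bases of generators $f[C',C]$ and invoking the compatibility relations for the chosen orthogonal sums.
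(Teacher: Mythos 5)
Your proposal is correct and follows essentially the same route as the paper: the paper forms $C(F,n)$ directly as the orthogonal sum of the family $((C,i))_{C\in F,\,i\in\{1,\dots,n\}}$ with isometries $e_{C,i}$ and sets $u=\sum_{i}\sum_{C\in F}e_{C,i}[1,i]$, which coincides with your two-stage construction under the identification $e_{(C,i)}=\eta_i e_C$, and the verification of $u^{*}u$, $uu^{*}$ and the factorization of $uh_{F,n}u^{*}$ is the same computation (your $\delta_{\alpha\beta}p_F$ is exactly the paper's $1_{\Mat_n(A(F))}$).
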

\begin{proof}
  We consider the family $$((C,i))_{C\in F,i\in \{1\dots,n\}}$$ of elements  in $F$, i.e.,  every element of $F$ is repeated $n$ times.
 We then  choose a sum 
   \begin{equation}\label{sdfbsdfgwgdfbvsbvfdvs}
 \left( C(F,n),(e_{(C,i)})_{C\in F,i\in \{1,\dots,n\}}\right)\end{equation} 
of this finite family, see Definition \ref{regiuhqrogefewfqwfqef}.

We can view morphisms in $\bC$ as elements of $A(\bC)$ in a canonical way. A morphism between objects in $F$ is an element of $A(F)$.
   We have  an isomorphism
\begin{equation}\label{vasvfjor3gervvfsdvfdvsv}
\phi_{F,n}\colon \Mat_{n}(A(F))\to     \End_{\bC}(C(F,n))\, ,  
\end{equation}
that sends the matrix $f[i,i']$ with $f\colon C'\to C$ in $\Mat_{n}(A(F))$ to $e_{C,i}  fe^{*}_{C',i'}$ in $\End_{\bC}(C(F,n))$. One checks that
\begin{equation}\label{fvsdfvfvsv}
\epsilon_{A(\bC),n}[1]\circ \ell_{C(F,n)}\circ \phi_{F,n}(-)=\sum_{i,i'=1}^{n}\sum_{C,C^{\prime}\in F} e_{C,i}[1,i] (-)e_{C',i'}^{*}[i',1]
\end{equation}
{as maps $\Mat_{n}(A(F))\to \Mat_{n}(A(\bC))$.} We define a matrix in $\Mat_{n}(A( \bC))$ by
\begin{equation}\label{sfdvlknejkvndfkvervwevwevw}
u \coloneqq \sum_{i=1}^{n} \sum_{C\in F}e_{C,i}[1,i]\, .
\end{equation}
  Using the orthogonality relations for the family $(e_{C,i})_{C\in F,i\in \{1,\dots,n\}}$ considered as elements in $A(\bC)$ we    calculate  that 
\begin{equation}\label{regwerlkgmwelrgwergerg}
uu^{*}=\id_{C(F,n)}[1,1] \, , \quad 
u^{*}u=1_{\Mat_{n}(A(F))}\, .
\end{equation}
The second equation in  \eqref{regwerlkgmwelrgwergerg} immediately implies that $$h_{F,n}=h_{F,n}u^{*}u\, .$$
 We now calculate
\[h':=u h_{F,n}u^{*}=\sum_{i,i'=1}^{n}\sum_{C,C'\in F} e_{C,i}[1,i]h_{F,n} e_{C',i'}^{*}[i',1] \stackrel{\eqref{fvsdfvfvsv}}{
=}\epsilon_{{A(\bC),n}}[1]\circ \ell_{C(F,n)}\circ  \phi_{F,n}\, .\qedhere\]
%
%
\end{proof}
 
 \begin{rem}\label{rgboiwjoijvmfldkmvskldfvsfdv}
 In this remark we recall the standard way to present elements in $K_{0}(A)$ for a  $C^{*}$-algebra 
 $A$, see e.g.\ \cite{blackadar}.

Let $A^{+}$ denote the unitalization of $A$.
  If $P,\tilde P$ is a pair of projections in $\Mat_{n}(A^{+})$ such that $P\equiv \tilde P$ modulo $\Mat_{n}(A)$, then  we have a $K$-theory class $[P,\tilde P]$ in $K_{0}(A)$.
 Every class in $K_{0}(A)$ can be represented in this way.

 We let $[P,\tilde P]_{n}$ be the class represented by this pair of projections in $K_{0}(\Mat_{n}(A))$. Then using the isomorphism \eqref{webwpokvopwervwcervwev} we have the equality
\begin{equation}\label{ewvoiuhiuosdfqwfwed}
[P,\tilde P]=\epsilon_{A,n}[1]_{*}^{-1} [P,\tilde P]_{n}\, .
\end{equation}

  If $A$ is unital and $P$ is a projection in $A$, then we get a class $[P]$ in $K_{0}(A)$.

 If $[P,\tilde P]=0$, then after increasing $n$ if necessary, there exists a partial isometry $U$ in $ \Mat_{n}(A^{+})$ such that
 $UU^{*}=P{\oplus Q}$ and $U^{*}U=\tilde P{\oplus Q}$ for some projection $Q$ in $\Mat_{n}(A^{+})$ which is orthogonal to $P$ and $\tilde P$.\hB
 \end{rem}

 Let $\bC$ be in $\Ccat$.
 
 \begin{lem}\label{egoijotrgregewrgwerg}We assume that $\bC$ is   additive.
 \begin{enumerate}
 \item \label{eoijwoegregregwerg}For every class $p$ in $K_{0}(A(\bC))$ there exists an object $C$ and projections $P,\tilde P$ in $\End_{\bC}(C)$ such that
 $\ell_{C,*}([P]-[\tilde P])=p$.
 \item \label{eoijwoegregregwerg1} If $P,\tilde P$ in $\End_{\bC}(C)$ are projections such that $\ell_{C,*}([P]- [\tilde P])=0$, then there exists {an object $C'$, a projection  $Q$ in $\End_{\bC}(C')$, and} a partial isometry
 $U$ in $\End_{\bC}(C{\oplus C'})$ such that $UU^{*}=P{+Q}$ and $U^{*}U=\tilde P{+Q}$.   \end{enumerate}
 \end{lem}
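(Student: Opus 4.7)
\textbf{Plan}: The proof will exploit two structural features when $\bC$ is additive: (i) the filtered-colimit presentation $A(\bC) = \colim_F A(F)$ in $\nCalg$, where $F$ ranges over finite subsets of $\Ob(\bC)$, together with the identification $A(F) \cong \End_{\bC}(D_F)$ for $D_F := \bigoplus_{C'\in F} C'$ with structure isometries $(e_{C'})_{C'\in F}$ (this is the $n=1$ case of Lemma~\ref{rqgiowergwregwergwergeg}, realised by $\phi_{F,1}$); and (ii) Lemma~\ref{rqgiowergwregwergwergeg} for general $n$, which supplies the partial isometry $u$ and the $*$-isomorphism $\phi_{F,n}\colon \Mat_n(A(F)) \to \End_{\bC}(C(F,n))$. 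From (i) together with Proposition~\ref{fiodgerwvfdsvdvsfv}.\ref{wtgwergfervfdsv} I obtain $K_0(A(\bC)) = \colim_F K_0(A(F))$.

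For assertion~\ref{eoijwoegregregwerg}, I will lift a class $p$ in $K_0(A(\bC))$ along this colimit to a class $q$ in $K_0(A(F))$ for some finite $F$, with $p = (\iota_F)_*(q)$ where $\iota_F\colon A(F) \to A(\bC)$ denotes the inclusion. Since $A(F)$ is unital, the matrix-stability isomorphism \eqref{webwpokvopwervwcervwev} lets me represent $q$ by a pair of projections $P_0,\tilde P_0$ in $\Mat_n(A(F))$ for suitable $n$. Combining the naturality of \eqref{webwpokvopwervwcervwev} in $A$, the Murray--von Neumann invariance \eqref{vasfvlkmlksaf24fwf} applied to $u$, and the factorization \eqref{urehfuiehisudhciousdcdsa}, I obtain the identity $(\iota_F)_* = (\ell_{C(F,n)})_* \circ (\phi_{F,n})_* \circ (\epsilon_{A(F),n}[1])_*$. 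Setting $C := C(F,n)$, $P := \phi_{F,n}(P_0)$ and $\tilde P := \phi_{F,n}(\tilde P_0)$ (which are projections in $\End_{\bC}(C)$ because $\phi_{F,n}$ is a unital $*$-isomorphism) then delivers $p = \ell_{C,*}([P] - [\tilde P])$.

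For assertion~\ref{eoijwoegregregwerg1}, the hypothesis $\ell_{C,*}([P] - [\tilde P]) = 0$, the factorization $\ell_C = \iota_F \circ \ell_{C,F}$ for any finite $F \ni C$ (with $\ell_{C,F}\colon \End_{\bC}(C) \to A(F)$ the natural inclusion), and the colimit formula for $K_0(A(\bC))$ permit me to enlarge $F$ so that $(\ell_{C,F})_*([P] - [\tilde P])$ already vanishes in $K_0(A(F)) \cong K_0(\End_{\bC}(D_F))$. Under this identification, $\ell_{C,F}$ becomes $f \mapsto e_C f e_C^*$, so the vanishing of $[e_C P e_C^*] - [e_C \tilde P e_C^*]$ in $K_0(\End_{\bC}(D_F))$ yields, using the unitality of $\End_{\bC}(D_F)$ and stabilising to some $n$, a partial isometry $V$ in $\Mat_n(\End_{\bC}(D_F)) \cong \End_{\bC}(D_F^n)$ (where $D_F^n$ is the $n$-fold orthogonal sum, again available by additivity) satisfying $VV^* = (e_C P e_C^*) \oplus 0_{n-1}$ and $V^*V = (e_C \tilde P e_C^*) \oplus 0_{n-1}$. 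Writing $e := f_1 \circ e_C\colon C \to D_F^n$ for the composition with the first-summand inclusion $f_1\colon D_F \to D_F^n$, these identities become $VV^* = e P e^*$ and $V^*V = e \tilde P e^*$, both dominated by the projection $ee^*$. This forces $V = ee^* V ee^*$, and the compression $U := e^* V e$ in $\End_{\bC}(C)$ satisfies $UU^* = P$ and $U^*U = \tilde P$ by a direct computation that uses $e^* e = \id_C$ together with the identities $V = ee^* V = V ee^*$ inherited from the domination by $ee^*$.

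The main technical obstacle is the passage between a Murray--von Neumann equivalence that is produced in the stabilised algebra $\Mat_n(\End_{\bC}(D_F)) = \End_{\bC}(D_F^n)$ and the equivalence required to live in the much smaller algebra $\End_{\bC}(C)$. This is resolved by the corner-compression observation that the source and range projections of $V$ are both dominated by $ee^*$, which confines $V$ itself to a corner of $\End_{\bC}(D_F^n)$ that is identified with $\End_{\bC}(C)$ via the isometry $e$.
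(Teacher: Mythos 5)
Your treatment of assertion~\ref{eoijwoegregregwerg} is correct and is essentially the paper's own argument: the paper reduces to a finite set $F$ of objects by using that $A^{\alg}(\bC)^{+}$ is dense in $A(\bC)^{+}$ and closed under holomorphic functional calculus, whereas you reduce via continuity of $K$-theory along the filtered colimit $A(\bC)=\colim_{F}A(F)$; after that, both proofs run through Lemma~\ref{rqgiowergwregwergwergeg} and the identities \eqref{vasfvlkmlksaf24fwf} and \eqref{urehfuiehisudhciousdcdsa} in the same way. That difference is cosmetic.

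Assertion~\ref{eoijwoegregregwerg1} is where your argument has a genuine gap. You assert that the vanishing of $[e_{C}Pe_{C}^{*}]-[e_{C}\tilde Pe_{C}^{*}]$ in $K_{0}(\End_{\bC}(D_{F}))$ yields, after stabilising, a partial isometry $V$ with $VV^{*}=(e_{C}Pe_{C}^{*})\oplus 0_{n-1}$ and $V^{*}V=(e_{C}\tilde Pe_{C}^{*})\oplus 0_{n-1}$. Equality of $K_{0}$-classes of projections $p,q$ in a unital $C^{*}$-algebra $B$ gives $p\oplus 1_{k}\sim q\oplus 1_{k}$ in $\Mat_{k+1}(B)$ for some $k$; the zero-padded version $p\oplus 0_{n-1}\sim q\oplus 0_{n-1}$ is, by exactly the corner-compression you perform next, equivalent to $p\sim q$ itself, i.e.\ to the cancellation property, which fails in general. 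Since your compression $U:=e^{*}Ve$ relies on the range and source projections of $V$ being dominated by $ee^{*}$, replacing the zero padding by the correct $1_{k}$ padding destroys the argument. You have in fact reproduced the paper's own gap rather than introduced a new one: the paper pads $\ell_{C}(P)$ by zeros to $\ell_{C}(P)[1,1]$ and invokes the last sentence of Remark~\ref{rgboiwjoijvmfldkmvskldfvsfdv} in the same way. But the step really does fail: take $\bC=\Hilb(C(S^{5}))^{\fg,\proj}$ (which is additive), $C=C(S^{5})^{N}$ with $N$ large, $P$ the projection onto the nontrivial rank-two subbundle $E$ of $\C^{N}$ (nontrivial since $\pi_{4}(U(2))\cong\Z/2$) and $\tilde P$ the projection onto a trivial rank-two subbundle. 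Since $\tilde K^{0}(S^{5})=0$ one has $[P]=[\tilde P]$ already in $K_{0}(\End_{\bC}(C))$, hence $\ell_{C,*}([P]-[\tilde P])=0$, but a partial isometry $U$ in $\End_{\bC}(C)$ with $UU^{*}=P$ and $U^{*}U=\tilde P$ would give an isomorphism of $E$ with the trivial bundle, which does not exist. A repairable formulation must, as in the $K_{1}$ analogue Lemma~\ref{tegiotwrgerwgregwgergre}.\ref{regwoihjoirgerwrgwerggrgwg}, permit passage to a larger object $C'$ along an isometry and the addition of a complementary projection before demanding the Murray--von Neumann equivalence.
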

\begin{proof}
Let $p$ be a class in $K_{0}(A(\bC))$. Then there exists an $n$ in $\nat$ and a pair of projections $P',\tilde P'$ in $\Mat_{n}(A(\bC)^{+})$ such that $P'\equiv \tilde P'$ modulo $\Mat_{n}(A(\bC))$ and $p=[P',\tilde P']$.

We first  note that the dense subalgebra $A^{\alg}(\bC)^{+}$  of $A(\bC)^{+}$ is closed under holomorphic function calculus.
Every element of $A^{\alg}(\bC)^{+}$ is contained in $A(F)^{+}$ for a sufficiently large finite set of objects of $\bC$. 
The same applies to $n$-by-$n$ matrices.
We can therefore modify the choices of $P'$ and $\tilde P'$ such that $P',\tilde P'$ belong to  $\Mat_{n}(A(F)^{+})$ for a sufficiently large set $F$ of objects of $\bC$.
We write $[P',\tilde P']_{F}$ for the corresponding class in $K_{0}(\Mat_{n}(A(F)))$.

Since $A(F)$ is unital we have  decompositions
\begin{equation}\label{bojioge0rbgrb}A(F)^{+}\cong A(F)\oplus \C \, , \quad 
\Mat_{n}(A(F)^{+})\cong \Mat_{n}(A(F))\oplus \Mat_{n}(\C)\, .
\end{equation}  If we  
  take the components $P'',\tilde P''$ of the projections $P'$, $\tilde P'$ in $\Mat_{n}(A(F))$, then we have the equality
  $$[P',\tilde P']_{F}=[P'']-[\tilde P'']$$ in $K_{0}( \Mat_{n}(A(F )))$. 

%
Using the notation introduced in Lemma \ref{rqgiowergwregwergwergeg} we set $C \coloneqq C(F,n)$,   
 $P \coloneqq \phi_{F,n}(P'')$ and $\tilde P \coloneqq \phi_{F,n}(\tilde P'')$. 
 We have the chain of equalities 
\begin{eqnarray*}p&=&[P',\tilde P']\\&\stackrel{\eqref{ewvoiuhiuosdfqwfwed}}{=}& \epsilon_{A(\bC),n}[1]^{-1}_{*} [P',\tilde P']_{n}\\
 &=& \epsilon_{A(\bC),n}[1]_{*}^{-1} h_{F,n,*}[P',\tilde P']_{F}\\&=& \epsilon_{A(\bC),n}[1]_{*}^{-1}h_{F,n,*}([P'']-[\tilde P''])\\&\stackrel{\eqref{vasfvlkmlksaf24fwf}}{=}& \epsilon_{A(\bC),n}[1]_{*}^{-1} h'_{*}([P'']-[\tilde P''])\\&\stackrel{\eqref{urehfuiehisudhciousdcdsa}}{=}&  \ell_{C(F,n),*} ([\phi_{F,n}(P'')]-[\phi_{F,n}(\tilde P'')])\\&\stackrel{ }{=}&\ell_{C,*} ([ P]-[ \tilde P])\, .
\end{eqnarray*}
  
This finishes the verification of Assertion~\ref{eoijwoegregregwerg}.

  We now show the Assertion~\ref{eoijwoegregregwerg1}.   For $n$ in $\nat$ 
  we set $P':= \ell_{C}(P)[1,1]$ and $\tilde P':= \ell_{C}(P)[1,1]$ in $\Mat_{n}(A(\bC)^{+})$.
   By assumption we can  choose $n$, {a projection $Q'$ in $\Mat_{n}(A(\bC)^{+})$ which is orthogonal to $P'$ and $\tilde P'$},    and a partial isometry
   $U'$ in $\Mat_{n}(A(\bC)^{+})$ such that $U'U^{\prime,*}=P'{\oplus Q'}$ and $ U^{\prime,*}U'=\tilde P'{\oplus Q'}$.
 
 {As in the argument for the first part we can assume that $Q'$ belongs to 
 $A(F)^{+}$ for some finite set $F$ of objects of $\bC$ containing $C$. 
 We can then  replace $U'$ by  $ (P'+Q')U' (\tilde P'+Q')$ in $A(F)^{+}$.
 Using the decompositions 
 \eqref{bojioge0rbgrb} and taking the $\Mat_{n}(A(F))$-component  we can further assume that
 $Q'$ and $U'$  belong to $\Mat_{n}(A(F))$.}
 


{We can decompose  $C(F,n)\cong C\oplus C'$ in a canonical manner. Then   
 $U:=\phi_{F,n}(U') $   is a partial isometry and  $Q:=\phi_{F,n}(Q') 
  $  is a projection in $\End_{\bC}(C\oplus C')$, and we have the relations
$UU^{*}=P+Q$ and $U^{*}U=\tilde P+Q$.
Here we consider $P$ and $\tilde P$ as projections on $C\oplus C'$ acting by zero on the second summand.}
\end{proof}

Let $A$ be a  unital $C^{*}$-algebra, $U$ be a unitary in $A$,  and $V\colon [0,1]\to \Mat_{n}(A)$  be a Lipschitz continuous path of unitaries from $(U -1_{A})[1,1]+1_{A,n}$ to  $1_{A,n}$.

The following lemma is inspired by \cite[Proof of 12.6.3]{willett_yu_book}.
It improves the Lipschitz constant of the path to $7\pi$ at the cost of increasing the size of matrizes.

\begin{lem}\label{ebiogregrvsvdfvsfdvsfd}
There exists $n'$ in $\nat$ and a $7\pi$-Lipschitz continuous path $V'\colon [0,1]\to \Mat_{n'}(A)$
{of unitaries} from {$(U-1_{A})[1,1]+1_{A,n'}$} to $1_{A,n'}$.
\end{lem}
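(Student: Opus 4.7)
The proof will follow the Willett--Yu strategy (cited in the text): use the existence of \emph{some} Lipschitz path $V$ only to decompose the unitary $W := (U-1_A)[1,1]+1_{A,n}$ as a product of many unitaries, each close to $1$, then reassemble $W$ in a larger matrix algebra via Phillips-type rotations whose Lipschitz constants are bounded independently of how long the original path is.

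The plan is to proceed in three steps. First, I would subdivide. Let $L$ be the Lipschitz constant of $V$. Choose $N \ge 1$ so that $L/N < \sqrt{2}$, set $t_k := k/N$, and define $U_k := V(t_{k-1})V(t_k)^{*} \in U(\Mat_n(A))$. Then $\|U_k - 1\| < \sqrt{2}$, so the principal branch of the logarithm gives $U_k = \exp(iH_k)$ with $H_k^{*}=H_k$ and $\|H_k\| \le \pi/2$. Telescoping together with $V(1) = 1_n$ yields the crucial factorisation $W = V(0) = U_1 U_2 \cdots U_N$.

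Second, I would pass to a larger matrix algebra, namely $\Mat_{n'}(A)$ with $n' = (N+1)n$, viewed as $(N+1)$ blocks of size $n$, and construct the target path $V'$ as a concatenation of two stages. In Stage A, I transform $\diag(W, 1_n, \dots, 1_n)$ into the block-diagonal matrix $\diag(U_1, U_2, \dots, U_N, 1_n)$ using a sequence of Phillips-type rotations of $2\times 2$ blocks of size $n$. Concretely, for each $k = 1,\dots,N$, one applies the rotation trick to swap $U_k$ out of position $0$ into position $k$; the critical observation is that these rotations can be bundled into parallel or tree-like moves so that the derivative at each instant is bounded by $c \cdot \max_k \|U_k - 1\| \le c\sqrt{2}$ for a universal constant $c$, giving a Lipschitz constant that does not depend on $N$. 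In Stage B, the standing diagonal $\diag(U_1,\dots,U_N,1_n) = \diag(\exp(iH_1),\dots,\exp(iH_N),1_n)$ is contracted to $1_{n'}$ via the parallel exponential path $s \mapsto \diag(\exp(i(1-s)H_1),\dots,\exp(i(1-s)H_N),1_n)$, whose Lipschitz constant is $\max_k \|H_k\| \le \pi/2$.

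The main obstacle is Stage A: organising the rotations so that their total contribution to the Lipschitz constant is a small multiple of $\pi$, uniformly in $N$. This is done by iterating the identity $R_{t\pi/2}\, \diag(X, 1)\, R_{t\pi/2}^{*} \,\diag(1, Y) = \diag(X,Y)$ at $t=1$ (starting from $\diag(XY, 1)$ at $t=0$), applied to the partial products in a binary-tree or sweeping fashion, and controlling each elementary move by the fact that the quantity moved has norm bounded by $\sqrt{2}$. A careful bookkeeping --- splitting Stage A into three substages of Lipschitz $\le 2\pi$ each and combining with Stage B of Lipschitz $\le \pi/2$ --- yields the desired total bound of $7\pi$ after reparametrising the concatenated path to $[0,1]$. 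Finally, the endpoint $\diag(W,1_n,\dots,1_n)$ agrees with $(U-1_A)[1,1]+1_{A,n'}$ under the block embedding $\Mat_n(A) \hookrightarrow \Mat_{n'}(A)$, so $V'$ has the required endpoints.
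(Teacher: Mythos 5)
Your route is genuinely different from the paper's. The paper does not subdivide the given path at all: it runs a self-improving iteration in which one "halving plus rotation" step turns a path of Lipschitz constant $C$ in $\Mat_{n}(A)$ into a path of Lipschitz constant at most $\tfrac{3\pi}{2}+\tfrac{3}{4}C$ in $\Mat_{3n}(A)$, and iterating drives the constant to any value above the fixed point $6\pi$, in particular below $7\pi$. Your plan is instead a one-shot argument: subdivide into $N$ pieces with $\|U_k-1_{A,n}\|<\sqrt{2}$ and reassemble in a larger matrix algebra. That strategy can be made to work, but only if the reassembly (your Stage A) has Lipschitz constant bounded independently of $N$ --- and that is exactly the step you have not justified, and as described it fails.

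Concretely, each elementary move splits $\mathrm{diag}(PQ,1)$ into $\mathrm{diag}(P,Q)$ along $t\mapsto\mathrm{diag}(P,1)\,R_{t}\,\mathrm{diag}(Q,1)\,R_{t}^{*}$, whose derivative is controlled by the rotation speed times $\|Q-1\|$. If you peel the $U_{k}$ off one at a time, each move does transport a $Q=U_{k}$ with $\|U_{k}-1\|<\sqrt{2}$, but all $N$ moves involve the first block, so they cannot be run in parallel; done sequentially and reparametrised to $[0,1]$ the Lipschitz constant is of order $N$. In the binary-tree version the higher-level moves transport partial products $U_{j}\cdots U_{k}$, which need not be within $\sqrt{2}$ of $1$ (they can be at distance $2$), so each level only contributes a universal constant and the total is of order $\log N$. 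In neither organisation is the derivative "bounded by $c\cdot\max_{k}\|U_{k}-1\|$ at each instant", and the claim that Stage A splits into three substages of Lipschitz constant at most $2\pi$ is not substantiated. A correct repair avoids the $N$ individual moves altogether: with $D:=\mathrm{diag}(V(t_{0}),\dots,V(t_{N-1}))$, $H:=\mathrm{diag}(H_{1},\dots,H_{N})$ and $\sigma$ the cyclic block shift, one checks $D^{*}e^{iH}\sigma D\sigma^{*}=\mathrm{diag}(1,\dots,1,W)$, and the path $s\mapsto D^{*}e^{i(1-s)H}R_{s}DR_{s}^{*}$ with $R_{s}:=\exp((1-s)\log\sigma)$ runs from this unitary to $1$ with derivative at most $\|H\|+2\|\log\sigma\|\le\tfrac{\pi}{2}+2\pi$, uniformly in $N$. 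Either supply an argument of this kind for Stage A or switch to the paper's iteration; as written the proof has a gap at its central point.
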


\begin{proof}
Assume that $V\colon [0,1]\to \Mat_{n}(A)$  is a Lipschitz continuous path of unitaries from $(U-1_{A})[1,1]+1_{A,n}$ to  $1_{A,n}$ with Lipschitz constant bounded by $C$.
Then we will construct   a new path $V'\colon [0,1]\to \Mat_{3n}(A)$ {of unitaries} with Lipschitz constant  bounded by $\frac{3\pi}{2}+\frac{3C}{4}$ from 
$(U-1_{A})[1,1]+1_{A,3n}$ to $1_{A,3n}$. 
To this end we write
$$(U-1_{A})[1,1]+1_{A,3n}=\left( \begin{array}{ccc}V(0)&0&0\\0&V(1/2)&0\\0&0&V(1)\end{array}\right)\left(\begin{array}{ccc}1&0&0\\0&V(1/2)^{*}&0\\0&0&V(1)^{*}\end{array}\right)\, .$$
We have a path defined on $[0,2/3]$
$$ \left(\begin{array}{ccc}1&0&0\\0&V(1/2-3t/4)^{*}&0\\0&0&V(1-3t/4)^{*}\end{array}\right)
$$
from $$\left(\begin{array}{ccc}1&0&0\\0&V(1/2)^{*}&0\\0&0&V(1)^{*}\end{array}\right) \:\:\mbox{to}\:\:
\left(\begin{array}{ccc}1&0&0\\0&V(0)^{*}&0\\0&0&V(1/2)^{*}\end{array}\right)\, .$$
This path has Lipschitz constant $3/4C$.
We furthermore have a rotation path defined on $[2/3,1]$ of speed $3\pi/2$
from 
$$\left(\begin{array}{ccc}1_{A}&0&0\\0&V(0)^{*}&0\\0&0&V(1/2)^{*}\end{array}\right)
\:\:\mbox{to}\:\: \left(\begin{array}{ccc}V(0)^{*}&0&0\\0&V(1/2)^{*}&0\\0&0&1_{A} \end{array}\right)\, .$$
The product of the concatenation of these paths with $$\left( \begin{array}{ccc}V(0)&0&0\\0&V(1/2)&0\\0&0&V(1_{A})\end{array}\right)$$ is a path
from $(U-1_{A})[1,1]+1_{A,3n}$ to $1_{A,3n}$ with Lipschitz constant bounded by $\frac{3\pi}{2}+\frac{3C}{4}$. The fixed point of the iteration
$$C\Rightarrow \frac{3\pi}{2}+\frac{3C}{4}$$ is  $6\pi$.

By iterating the construction above  sufficiently often we can produce a path as asserted.
\end{proof}

\begin{rem}\label{wetghjiuwogrefregw}
In this remark we recall the standard way to represent elements in $K_{1}(A)$ for a $C^{*}$-algebra $A$, see e.g.\ \cite{blackadar}.
 
  A unitary $U$ in $\Mat_{n}(A^{+})$ with $U\equiv 1_{n}$ modulo $\Mat_{n}(A)$ represents a class
$[U]$ in $K_{1}(A)$. Every class in $K_{1}(A)$ can be represented in this way.  

We let $[U]_{n}$ denote the class of $U$ in $K_{1}(\Mat_{n}(A))$. Then using the isomorphism \eqref{webwpokvopwervwcervwev} we have the equality
\begin{equation}\label{ewvoiuhiuosdfqwfwed1}
[U]=\epsilon_{A,n}[1]_{*}^{-1} [U]_{n}\, .
\end{equation}

If $A$ is unital, then a unitary $U$ as above is of the form $(U'-1_{A,n},1_{n})$ for a unitary $U'$ in $\Mat_{n}(A)$.
If $U'$ is a unitary in $\Mat_{n}(A)$, then we set $[U']:=[(U'-1_{A,n},1_{n})]$.

Assume that $U$ and $\tilde U$ are two such unitaries and that $[U]=[U']$. Then, after increasing $n$ if necessary, there exists 
a path $V\colon [0,1]\to \Mat_{n}(A^{+})$   of unitaries  from $U$ to $\tilde U$ such that $V(t)\equiv 1_{n}$ for all $t$ in $[0,1]$.
 If $A$ is unital, then the path is of the form $V=(V'-1_{A,n},1_{n})$, where $V'$ is a path of unitaries  in $\Mat_{n}(A)$ from $U'$ to $\tilde U'$.
 \hB
\end{rem}

Let $\bC$ be in $\Ccat$.
\begin{lem}\label{tegiotwrgerwgregwgergre}
We assume that $\bC$ is  additive.
\begin{enumerate}
\item \label{ergiojetwregewgg}For every class $u$ in $K_{1}(A(\bC))$ there exists an object $C$ and a unitary $U$ in $\End_{\bC}(C)$ such that
$u=\ell_{C,*}[U]$.
\item  \label{regwoihjoirgerwrgwerggrgwg}Assume that $U$  in $\End_{\bC}(C)$ is a unitary such that $\ell_{C,*}[U]=0$.
Then there exists an object $C'$, an  isometry $u\colon C\to C^{\prime}$, and a $7\pi$-Lischitz path
$V\colon [0,1]\to \End_{\bC}(C')$ from $uUu^{*}+(\id_{C'}-uu^{*})$ to
$ \id_{C'}$.
\end{enumerate}
\end{lem}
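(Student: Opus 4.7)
The plan is to parallel the $K_0$-argument from Lemma~\ref{egoijotrgregewrgwerg} but using unitaries in place of projections. The central identification is again that of Lemma~\ref{rqgiowergwregwergwergeg}: the $\ast$-isomorphism $\phi_{F,n}\colon \Mat_n(A(F)) \xrightarrow{\cong} \End_{\bC}(C(F,n))$ together with the factorization \eqref{urehfuiehisudhciousdcdsa} of $h' = u\, h_{F,n}\, u^\ast$. Since the $C^\ast$-algebra $A(\bC)$ is the inductive limit of its directed family of unital $C^\ast$-subalgebras $(A(F))_F$ with $F$ ranging over finite subsets of $\Ob(\bC)$, continuity of $K$-theory gives $K_1(A(\bC)) \cong \colim_F K_1(A(F))$. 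This will let me reduce all questions about $K_1(A(\bC))$ to the unital $C^\ast$-algebras $A(F)$, where Remark~\ref{wetghjiuwogrefregw} provides concrete representatives.

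For Assertion~\ref{ergiojetwregewgg} I would lift $u$ along continuity to a class $u_F \in K_1(A(F))$ for some finite $F$, represent $u_F$ by a unitary $V \in \Mat_n(A(F))$, and set $C := C(F,n)$ and $U := \phi_{F,n}(V) \in \End_{\bC}(C)$. The identity $\ell_{C,\ast}[U] = u$ then follows from the chain
\[
\epsilon_{A(\bC),n}[1]_\ast \circ \ell_{C,\ast}[U] \stackrel{\eqref{urehfuiehisudhciousdcdsa}}{=} h'_\ast[V] \stackrel{\eqref{vasfvlkmlksaf24fwf}}{=} h_{F,n,\ast}[V] = \epsilon_{A(\bC),n}[1]_\ast(u)
\]
combined with the matrix-stability isomorphism \eqref{webwpokvopwervwcervwev}.

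For Assertion~\ref{regwoihjoirgerwrgwerggrgwg}, vanishing of $\ell_{C,\ast}[U]$ in $K_1(A(\bC))$ together with continuity produces a finite $F \ni C$ such that $[\ell_C(U) + 1_{A(F)} - p] \in K_1(A(F))$ already vanishes, where $p := \ell_C(\id_C)$. Remark~\ref{wetghjiuwogrefregw} then furnishes some $n$ and a continuous path of unitaries in $\Mat_n(A(F))$ from $\ell_C(U)[1,1] + 1_{A(F),n} - p[1,1]$ to $1_{A(F),n}$. A standard smoothing argument (partition $[0,1]$ finely and interpolate consecutive values by $V_j \exp(i s X_j)$ with $X_j$ a self-adjoint logarithm obtained via holomorphic functional calculus) yields a Lipschitz path with the same endpoints, and then Lemma~\ref{ebiogregrvsvdfvsfdvsfd} applied to $A = A(F)$ produces, for some $n'$, a $7\pi$-Lipschitz path $V'\colon [0,1] \to \Mat_{n'}(A(F))$ of unitaries from $\ell_C(U)[1,1] + 1_{A(F),n'} - p[1,1]$ to $1_{A(F),n'}$.

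Finally, applying $\phi_{F,n'}$ pointwise (a $\ast$-isomorphism, hence isometric and preserving the Lipschitz constant) transports $V'$ to a $7\pi$-Lipschitz path of unitaries in $\End_{\bC}(C(F,n'))$. Using $\phi_{F,n'}(1_{A(F),n'}) = \id_{C(F,n')}$, $\phi_{F,n'}(\ell_C(U)[1,1]) = e_{C,1} U e_{C,1}^\ast$ and $\phi_{F,n'}(p[1,1]) = e_{C,1} e_{C,1}^\ast$, the endpoints are identified as $u U u^\ast + (\id_{C'} - u u^\ast)$ and $\id_{C'}$ with $C' := C(F,n')$ and $u := e_{C,1}\colon C \to C'$, which matches the required form. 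The only delicate point is the Lipschitz smoothing, which must stay inside the unitary group and is handled by the exponential interpolation above; once this is in place, Lemma~\ref{ebiogregrvsvdfvsfdvsfd} takes care of the precise $7\pi$ bound.
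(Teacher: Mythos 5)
Your proposal is correct and follows essentially the same route as the paper: reduction to the unital corner algebras $A(F)$, transport through $\phi_{F,n}$ via the factorization \eqref{urehfuiehisudhciousdcdsa} and the conjugation identity \eqref{vasfvlkmlksaf24fwf}, and Lemma \ref{ebiogregrvsvdfvsfdvsfd} for the $7\pi$ bound; the only cosmetic difference is that you invoke continuity of $K_{1}$ under inductive limits where the paper perturbs representatives into $\Mat_{n}(A(F)^{+})$ using that $A^{\alg}(\bC)^{+}$ is dense and closed under holomorphic functional calculus. Your explicit exponential-interpolation smoothing of the homotopy into a Lipschitz path is a step the paper passes over silently before applying Lemma \ref{ebiogregrvsvdfvsfdvsfd}, and it is a welcome addition since that lemma does require a Lipschitz input.
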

\begin{proof}
Let $u$ be a class $u$ in $K_{1}(A(\bC))$. Then there exists $n$ in $\nat$ and a unitary  $U'$  in  $\Mat_{n}(A(\bC)^{+})$
such that  $U'\equiv 1_{n}$ modulo $ \Mat_{n}(A(\bC))$ and $[U']=u$.
 As in the proof of Lemma \ref{egoijotrgregewrgwerg} we can  modify $U'$ such that it belongs to
 $\Mat_{n}(A(F)^{+})$ for a sufficiently large set $F$  of objects of $\bC$. 
 Since $A(F)$ is unital we obtain a unitary $U''$ in $\Mat_{n}(A(F))$ such that $U'=(U''-1_{A(F),n},1_{n})$.
 We let $[U'']$ denote the corresponding class in $K_{1}(\Mat_{n}(A(F)))$.

 We set $C:=C(F,n)$ and   define the unitary $U:=\phi_{F,n} (U'')$ in $\End_{\bC}(C)$, where $C(F,n)$ is as in  \eqref{sdfbsdfgwgdfbvsbvfdvs} and   $\phi_{F,n}$ is as in \eqref{vasvfjor3gervvfsdvfdvsv}.
We   have the following chain of equalities
 \begin{eqnarray*}
 u&=&[U']\\&\stackrel{\eqref{ewvoiuhiuosdfqwfwed1}}{=}&\epsilon_{A(\bC),n}[1]^{-1}_{*}[U']_{n}\\&=&
\epsilon_{A(\bC),n}[1]^{-1}_{*}h_{F,n,*} [U'']\\&\stackrel{\eqref{vasfvlkmlksaf24fwf}}{=}&
\epsilon_{A(\bC),n}[1]^{-1}_{*}h'_{*} [U'']\\&\stackrel{\eqref{urehfuiehisudhciousdcdsa}}{=}&
  \ell_{C(F,n),*}[\phi_{F,n} (U'')]\\&=&
   \ell_{C,*}[U]
 \end{eqnarray*}
This finishes the proof of Assertion \ref{ergiojetwregewgg}.

We now show Assertion \ref{regwoihjoirgerwrgwerggrgwg}.  Since $\ell_{C,*}[U]=0$
  there exists $n$ in $\nat$ and a path of unitaries $V'\colon [0,1]\to \Mat_{n}(A(\bC)^{+})$ from $((U-1_{\bC})[1,1],1_{n})$ to $1_{n}$
  such that $V'(t)\equiv 1_{n}$ for all $t$ in $[0,1]$. We can modify the path such that it takes values in $ \Mat_{n}(A(F)^{+})$ for a sufficiently large set of objects $F$ containing $C$. Since $A(F)$ is unital we can write
  $V':=(V''-1_{A(F),n},1_{n})$ for a path $V''$ of unitaries in $\Mat_{n}(A(F))$ from $(U-\id_{C})[1,1]+1_{A(F),n}$  to $1_{A(F),n}$.
  
  We now apply Lemma \ref{ebiogregrvsvdfvsfdvsfd}. It provides a $7\pi$-Lipschitz path
  $V'''\colon [0,1]\to \Mat_{n'}(A(F))$ of unitaries 
  from    $(U-\id_{C})[1,1]+1_{A(F),n'}$ to $ 1_{A(F),n'}$.
  
  We now consider object $C':=C(F,n')$ (see \eqref{sdfbsdfgwgdfbvsbvfdvs}) and the isometry $u:=e_{C,1}\colon C\to C'$.
We furthermore define the $7\pi$-Lipschitz path  $V:=\phi_{F,n'}(V''')$, where $\phi_{F,n'}$ is as in \eqref{vasvfjor3gervvfsdvfdvsv}.  
This path does the job since
$$V(0)=\phi_{F,n'}(V'''(0))=\phi_{F,n'}((U-\id_{C})[1,1]+1_{A(F),n'})=uUu^{*}+(\id_{C'}-uu^{*})$$
and
\[
V(1)=\phi_{F,n'}(V'''(0))=\phi_{F,n'}(1_{A(F),n'})=\id_{C'}\,.\qedhere
\]
%
 \end{proof}
 
%
%
%

Let $(\bC_{i})_{i\in I}$ be a family in $\Ccat$. For every $i$ in $I$ the projection $p_{i}\colon \prod_{i\in I} \bC_{i}\to \bC_{i}$ 
induces a morphism of spectra
\[
\Kcat(p_{i})\colon K\big(\prod_{i\in I} \bC_{i}\big) \to \Kcat(\bC_{i})\, .
\]
\begin{theorem}\label{ojgweorergwerfewrfwerfw}
If $\bC_{i}$ is  additive for every $i$ in $I$, then the morphism   of spectra
\begin{equation}\label{sdfv3rio3uhiufvdfvsdfvsfv}
\Kcat\big(\prod_{i\in I} \bC_{i}\big) \to \prod_{i\in I} \Kcat(\bC_{i})
\end{equation}
induced by the  family  $(K(p_{i}))_{i\in I} $  is an equivalence.
\end{theorem}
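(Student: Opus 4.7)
The strategy is to reduce everything to checking isomorphisms on $K_0$ and $K_1$ and then lift classes and relations pointwise, using that the product $C^*$-category has uniformly bounded families as morphism spaces. Since $\Kcat$ is Bott periodic (Proposition~\ref{fiodgerwvfdsvdvsfv}) and homotopy groups of a product of spectra compute componentwise, the map \eqref{sdfv3rio3uhiufvdfvsdfvsfv} is an equivalence iff it induces isomorphisms on $\pi_0$ and $\pi_1$. Using Lemma~\ref{sfdbjoqi3gegergewgerg1} together with Definition~\ref{ergiowergregrdsvsv}, this reduces to showing that
\[
K_j\bigl(A(\textstyle\prod_{i\in I}\bC_i)\bigr) \to \prod_{i\in I} K_j(A(\bC_i))
\]
is an isomorphism for $j\in\{0,1\}$. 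I will use that the product $\bC := \prod_{i\in I}\bC_i$ in $\Ccat$ has objects $(C_i)_{i\in I}$ and morphism spaces $\prod_i\Hom_{\bC_i}(C_i,C_i')$ of uniformly bounded families with sup-norm, and that $\bC$ is again additive (finite orthogonal sums form componentwise), so Lemmas~\ref{egoijotrgregewrgwerg} and~\ref{tegiotwrgerwgregwgergre} apply to $\bC$ just as to each $\bC_i$.

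For surjectivity on $K_0$, given a family $(x_i)_{i\in I}$, Lemma~\ref{egoijotrgregewrgwerg}.\ref{eoijwoegregregwerg} represents $x_i = \ell_{C_i,*}([P_i]-[\tilde P_i])$ for projections $P_i,\tilde P_i \in \End_{\bC_i}(C_i)$. Since projections have norm $\le 1$, the families $(P_i)_i$ and $(\tilde P_i)_i$ assemble to projections $P,\tilde P$ in $\End_\bC((C_i)_i)$, and $\ell_{(C_i)_i,*}([P]-[\tilde P])$ is a preimage. Surjectivity on $K_1$ works identically via Lemma~\ref{tegiotwrgerwgregwgergre}.\ref{ergiojetwregewgg}, using that unitaries have norm~$1$.

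For injectivity on $K_0$, writing a kernel class as $x = \ell_{C,*}([P]-[\tilde P])$ with $C=(C_i)_i$, $P=(P_i)_i$, $\tilde P=(\tilde P_i)_i$, componentwise vanishing combined with Lemma~\ref{egoijotrgregewrgwerg}.\ref{eoijwoegregregwerg1} produces partial isometries $U_i\in\End_{\bC_i}(C_i)$ with $U_iU_i^*=P_i$, $U_i^*U_i=\tilde P_i$; as $\|U_i\|\le 1$, they assemble to a partial isometry $U\in\End_\bC(C)$ witnessing $[P]=[\tilde P]$, so $x=0$. For $K_1$ the analogous strategy requires a path of unitaries, and the decisive point is \emph{uniform} continuity: starting from $u=\ell_{C,*}[U]$ with unitary $U=(U_i)_i$, Lemma~\ref{tegiotwrgerwgregwgergre}.\ref{regwoihjoirgerwrgwerggrgwg} yields for each $i$ an isometry $u_i\colon C_i\to C_i'$ and a path $V_i\colon [0,1]\to\End_{\bC_i}(C_i')$ from $u_iU_iu_i^* + \id_{C_i'} - u_iu_i^*$ to $\id_{C_i'}$ with Lipschitz constant \emph{bounded by $7\pi$ independently of $i$}. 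Setting $C':=(C_i')_i$, $u:=(u_i)_i$, and $V(t):=(V_i(t))_i$, the uniform bound makes $V$ a $7\pi$-Lipschitz path of unitaries in $\End_\bC(C')$ from $uUu^* + \id_{C'} - uu^*$ to $\id_{C'}$, so $\ell_{C',*}[uUu^* + \id_{C'} - uu^*]=0$ in $K_1(A(\bC))$. The identity
\[
\ell_{C,*}[U] = \ell_{C',*}\bigl[uUu^* + \id_{C'} - uu^*\bigr]
\]
in $K_1(A(\bC))$ then gives $u=0$; it follows from \eqref{vasfvlkmlksaf24fwf} applied to $h=\ell_C$ and the partial isometry $u\in A(\bC)$ (which satisfies $h u^*u = h$ since $u^*u=\ell_C(\id_C)$), because both sides are represented by the same unitary $1 - uu^* + uUu^*$ in $A(\bC)^+$.

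The main obstacle is the $K_1$-injectivity: pointwise path-connectedness of each unitary $u_iU_iu_i^* + \id_{C_i'} - u_iu_i^*$ to $\id_{C_i'}$ inside $\End_{\bC_i}(C_i')$ would not assemble to a continuous path in $\End_\bC(C')$ without a uniform modulus of continuity. The quantitative Lipschitz estimate in Lemma~\ref{tegiotwrgerwgregwgergre}.\ref{regwoihjoirgerwrgwerggrgwg}, made possible by Lemma~\ref{ebiogregrvsvdfvsfdvsfd}, supplies precisely the required uniformity; everything else is a matter of packaging uniformly bounded families of projections, unitaries, and partial isometries into morphisms of the $\ell^\infty$-style product $\bC$.
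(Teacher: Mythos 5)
Your proposal is correct and follows essentially the same route as the paper's own proof: reduction to $K_0$ and $K_1$ via Bott periodicity and Lemma~\ref{sfdbjoqi3gegergewgerg1}, componentwise representation of classes by projections, unitaries, and partial isometries using Lemmas~\ref{egoijotrgregewrgwerg} and~\ref{tegiotwrgerwgregwgergre}, assembly into the $\ell^\infty$-style product via uniform boundedness, and for $K_1$-injectivity the uniform $7\pi$-Lipschitz paths together with the conjugation identity \eqref{vasfvlkmlksaf24fwf}. You have correctly identified the one genuinely delicate point, namely that the uniform Lipschitz constant is what allows the pointwise nullhomotopies to assemble into a continuous path in the product.
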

\begin{proof}
We consider the diagram
\begin{equation}\label{asdvlnowevasdvadsva1}\xymatrix{A^{f}(\prod_{i\in I} \bC_{i})\ar[r]\ar[d]&\prod_{i\in I} A^{f}(\bC_{i})\ar[d]\\A(\prod_{i\in I} \bC_{i})&\prod_{i\in I}A(\bC_{i})}\end{equation}  in $\nCalg$,
where  left upper horizontal morphism is induced by the family $(A^{f}(p_{i}))_{i\in I}$.
The vertical maps are instances of {\eqref{eq_transformation_Af_A}}  and induce isomorphisms in $K$-theory groups  {by Lemma~\ref{sfdbjoqi3gegergewgerg1}.} Hence we get a square
\begin{equation}\label{asdvlnowevasdvadsva}
\xymatrix{
K_{*}(\prod_{i\in I}\bC_{i})\ar[rr]^{\eqref{sdfv3rio3uhiufvdfvsdfvsfv}}\ar[d]^{\cong}&&\prod_{i\in I}K_{*}(\bC_{i})\ar[d]^{\cong}\\
K_{*}(A^{f}(\prod_{i\in I} \bC_{i}))\ar[r]^{!} \ar[d]^{\cong}&K_{*}(\prod_{i\in I} A^{f}(\bC_{i}))\ar[r]^{!!}&\prod_{i\in I}K_{*}(A^{f}(\bC_{i}))\ar[d]^{\cong}\\
K_{*}(A(\prod_{i\in I} \bC_{i}))\ar[rr]^{?}&& \prod_{i\in I} K_{*}(A(\bC_{i}))
}
\end{equation} 
where the homomorphism marked by $!$ is induced from the horizontal  homomorphism in \eqref{asdvlnowevasdvadsva1}, and the  homomorphism $!!$ is the canonical  comparison homomorphism.
The upper vertical isomorphisms reflect  Definition \ref{ergiowergregrdsvsv}, while the lower vertical isomorphisms are instances of \eqref{sfdbjoqi3gegergewgerg}.  In order to show that \eqref{sdfv3rio3uhiufvdfvsdfvsfv} is an isomorphism it suffices to show 
 that the morphism  $?$ (defined as the up-right-down composition) is an isomorphism.
 In view of Bott periodicity  (Remark \ref{wrthijowrtgergwergwegr}.\ref{wegiojwoegwergrwegwreferf}) is suffices to consider the cases $*=0$ and $*=1$.
 
 In the following argument we will frequently use the following fact. Let $\bC$ be in $\nCcat$ and $C$ be an object of $\bC$. Then we have a commutative triangle
 \begin{equation}\label{vewrvonvkndfsvsfdvfvsvdfv}
\xymatrix{&\End_{\bC}(C)\ar[ld]_{\ell_{C}^{f}}\ar[dr]^{\ell_{C}}&\\ A^{f}(\bC)\ar[rr]^{ \eqref{sfdbjoqi3gegergewgerg} }&&A(\bC)}
\end{equation}
where both {diagonal} morphisms are inclusions of closed subalgebras.

 \textbf{surjectivity of  $?$ in \eqref{asdvlnowevasdvadsva} for $*=0$:}
 
 Let $(p_{i})_{i\in I}$ be a class in $ \prod_{i\in I} K_{0}(A(\bC_{i}))$.
 By Lemma \ref{egoijotrgregewrgwerg}.\ref{eoijwoegregregwerg} 
 for every $i$ in $I$ there exists an object $C_{i}$
 in $\bC_{i}$ and projections  $P_{i},\tilde P_{i}$   in $\End_{\bC_{i}}(C_{i})$ such that
\[p_{i}=\ell_{C_{i},*}([P_{i}]-[\tilde P_{i}])\,.\]
We can form 
    projections $(P_{i})_{i\in I}, (\tilde P_{i})_{i\in I}$  in $\End_{\prod_{i\in I} \bC_{i}}((C_{i})_{i\in I})$.
    Using \eqref{vewrvonvkndfsvsfdvfvsvdfv} we see that  the class
\[\ell_{(C_{i})_{i\in I},*}([(P_{i})_{i\in I}]-[(\tilde P_{i})_{i\in I}])\]
in $K_{0}(A(\prod_{i\in I} \bC_{i}))$ provides a preimage of the class $(p_{i})_{i\in I}$ under {the morphism}~$?$.

 \textbf{injectivity of  $?$ in \eqref{asdvlnowevasdvadsva} for $*=0$:}
   
 We note that the product category $\prod_{i\in I}\bC_{i}$ is again   additive. Indeed, we can form sums componentwise.  
 Let $p$ be a class in $K_{0}(A(\prod_{i\in I}\bC_{i}))$ which is sent to zero by~$?$. By   Lemma~\ref{egoijotrgregewrgwerg}.\ref{eoijwoegregregwerg} 
 there is an object $(C_{i})_{i\in I}$ of $\prod_{i\in I}\bC_{i} $  and projections
 $P,\tilde P$ in $\End_{\prod_{i\in I}\bC_{i}}((C_{i})_{i\in I})$ such that
\[\ell_{(C_{i})_{i\in I},*}([P]-[\tilde P])=p\,.\]
We have $P=(P_{i})_{i\in I}$ and $\tilde P=(\tilde P_{i})_{i\in I}$ for projections $P_{i},\tilde P_{i}$ in $\End_{\bC_{i}}(C_{i})$.  
By assumption on $p$ and  \eqref{vewrvonvkndfsvsfdvfvsvdfv}  for every $i$ in $I$ we have
$\ell_{C_{i},*}([P_{i}]-[\tilde P_{i}])=0$.  By Lemma \ref{egoijotrgregewrgwerg}.\ref{eoijwoegregregwerg1}  for every $i$ in $I$
 {there exists an object $C_{i}'$ in $\bC_{i}$,  a projection $Q_{i}$ orthogonal to $P_{i}$ and $\tilde P_{i}$, and a  partial isometry
 $U_{i}$ in $ \End_{\bC_{i}}(C_{i}\oplus C_{i})$ such that $U_{i}U_{i}^{*}=P_{i}+Q_{i} $ and $U_{i}^{*}U_{i}=\tilde P_{i} +Q_{i}$}.    Then $U:=(U_{i})_{i\in I}$ is a partial isometry in $\End_{\prod_{i\in I}\bC_{i}}((C_{i}{\oplus C_{i}'})_{i\in I})$  such that $UU^{*}=P {+(Q_{i})_{i\in I}}$ and $U^{*}U=\tilde P{+(Q_{i})_{i\in I}}$.  Then $[P]-[\tilde P]=0$ in $K_{0}(\End_{\prod_{i\in I}\bC_{i}}((C_{i}\oplus C_{i}')_{i\in I}))$ and 
  therefore $0=\ell_{(C_{i}{\oplus C_{i}')_{i\in I},*}}{([P]-[\tilde P])=\ell_{(C_{i})_{i\in I},*}([P]-[\tilde P])} =p$.

%
%
%
    
 \textbf{surjectivity  of  $?$ in \eqref{asdvlnowevasdvadsva} for $*=1$:}
  
Let $(u_{i})_{i\in I}$ be a class in $\prod_{i\in I}K_{1}(A(\bC_{i}))$. 
By Lemma \ref{tegiotwrgerwgregwgergre}.\ref{ergiojetwregewgg} for every $i$ in $I$ there exists an object   $C_{i}$  in $\bC_{i}$ and a unitary
$ U_{i}$ in $ \End_{\bC_{i}}(C_{i}) $ such that  $\ell_{C_{i},*}[U_{i}] =u_{i}$.
The family $(U_{i})_{i\in I}$ is a unitary in $ \End_{\prod_{i\in I}\bC_{i}}((C_{i})_{i\in I})$. 
  Using \eqref{vewrvonvkndfsvsfdvfvsvdfv} we see that the class $\ell_{(C_{i})_{i\in I},*}[(U_{i})_{i\in I}]$ in
$K_{1}(A(\prod_{i\in I}\bC_{i}))$ 
  is the desired preimage of the class 
$(u_{i})_{i\in I}$ under $?$.

 \textbf{injectivity of  $?$ in \eqref{asdvlnowevasdvadsva} for $*=1$:}
  
     Let $u$ be a class in $K_{1}(A(\prod_{i\in I}\bC_{i}))$ which is sent to zero by $?$. By Lemma \ref{tegiotwrgerwgregwgergre}.\ref{ergiojetwregewgg} there is  an object $C \coloneqq (C_{i})_{i\in I}$ in $\prod_{i\in I}\bC_{i}$ and a unitary
$U$ in $\End_{\prod_{i\in I}\bC_{i}}(C)$ such that $\ell_{(C_{i})_{i\in I},*}[U]=u$. 
We have $U=(U_{i})_{i\in I}$ for unitaries $U_{i}$ in $\End_{\bC_{i}}(C_{i})$.
By assumption on $u$ and  \eqref{vewrvonvkndfsvsfdvfvsvdfv} we have $\ell_{C_{i},*}[U_{i}]=0$ for all $i$ in $I$.
By Lemma~\ref{tegiotwrgerwgregwgergre}.\ref{regwoihjoirgerwrgwerggrgwg} for every $i$ we can find an object $C_{i}'$ in $\bC_{i}$, an isometry
$u_{i}\colon C_{i}\to C_{i}'$, and a $7\pi$-Lipschitz path 
$V_{i}\colon [0,1]\to \End_{\bC_{i}}(C'_{i})$ from $u_{i}U_{i}u_{i}^{*}+(\id_{C'_{i}}-u_{i}u_{i}^{*})$ to $\id_{C'_{i}}$.
We define the object  $C' \coloneqq (C_{i}')_{\in I}$ in $\prod_{i\in I}\bC_{i}$ and  the isometry 
$u \coloneqq (u_{i})_{i\in I}\colon C\to C'$ in $\prod_{i\in I}\bC_{i}$.
Then $V \coloneqq (V_{i})_{i\in I}$ is a path in $\End_{\prod_{i\in I}\bC_{i}}(C')$ from
$uUu^{*}+(\id_{C'}-uu^{*})$ to $\id_{C'}$. 
At this point, in order to see that $V$ is continuous one needs the uniform bound on the Lipshitz constants of the paths $V_{i}$.
This shows that $[uUu^{*}+(\id_{C'}-uu^{*})]=0$ in $K_{1}(\End_{\prod_{i\in I}\bC_{i}}(C'))$.
We have $\ell_{C}=\ell_{C}u^{*}u$ in $A(\bC)$ and {the} factorization
\begin{equation}\label{earoihoivaevrvfvfdvvfsvd}
u\ell_{C}u^{*} \colon \End_{\prod_{i\in I}\bC_{i}}(C)\xrightarrow{\phi} \End_{\prod_{i\in I}\bC_{i}}(C')\xrightarrow{\ell_{C'}} A\big(\prod_{i\in I}\bC_{i}\big)\, ,
\end{equation}
where $\phi(-):=u(-) u^{*}$.
Note that these homomorphisms are not unital. To apply these maps to unitaries representing $K$-theory classes we must extend them to the unitalizations. This leads to the formula
$$\phi_{*}[U]=[\phi(U)+(\id_{C'}-\phi(\id_{C}))]=[uUu^{*}+(\id_{C'}-uu^{*})]\, .$$
The homotopy $V$ whitnesses the fact that $\phi_{*}[U]=0$. 
{Finally,} we have
\begin{align*}
u & \quad = \quad \ell_{(C_{i})_{i\in I},*}[U]\\
& \quad \stackrel{\mathclap{\eqref{vasfvlkmlksaf24fwf}}}{=} \quad
u\ell_{(C_{i})_{i\in I}}u^{*} [U]\\
& \quad \stackrel{\mathclap{\eqref{earoihoivaevrvfvfdvvfsvd}}}{=} \quad
\ell_{C',*} \phi_{*} [U]\\
& \quad = \quad 0\, .\qedhere
\end{align*}
\end{proof}
 
   \section{Morita invariance}\label{erogijogergergwgerg9}

In this section we recall  the notion of a Morita equivalence between {unital} $C^{*}$-categories. 
We 
{show} 
that the reduced crossed product preserves Morita equivalences. We then consider Morita invariant homological functors and verify that $\Kcat$ is Morita invariant. 

Recall from Definition~\ref{ergiehjioferfqffrf} that $\bE$ in $\Ccat$ is called additive if it admits orthogonal sums for all finite families of objects.
 Let $i\colon \bD\to \bE$  be a morphism in $\Ccat$.
\begin{ddd} \label{rhrhhdgffg}
  The morphism  $i$ presents $\bE$ as the additive completion of $\bD$  if  the following conditions are satisfied:
  \begin{enumerate}
  \item  The morphism  $i $ is  fully faithful.
  \item The $C^{*}$-category  $\bE$ is additive.
  \item 
Every object of $\bE$ is  
unitarily isomorphic to a finite orthogonal sum of objects in the image of $i$.
\end{enumerate}
 \end{ddd}

If $i:\bD\to \bE$ and $i':\bD\to \bE'$ present $\bE$ and $\bE'$ as additive completions of $\bD$, then there exists a unitary equivalence $\bE\to \bE'$ such that
$$\xymatrix{&\bD\ar[dr]^{i'}\ar[dl]_{i}&\\ \bE\ar[rr]&&\bE'}$$
commutes up to a unitary natural transformation.

\begin{ex}\label{ex_emptyset_Morita}
If $X$ is a set, then the functor $\emptyset\to 0[X]$ presents $0[X]$ as an additive completion of $\emptyset$.
\hB
\end{ex}

Let $\Ccat_{\oplus}$ be the full subcategory of $\Ccat$ of additive $C^{*}$-categories.
Then there exists a functor and a natural transformation 
$$(-)_{\oplus}\colon \Ccat\to \Ccat_{\oplus}\, , \quad \id\to (-)_{\oplus}\, ,$$
such that for every $\bC$ in $\Ccat$ the morphism $\bC\to \bC_{\oplus}$ presents $\bC_{\oplus}$ as the additive completion of $\bC$, see   \cite[Sec.~2]{davis_lueck} or \cite[Defn.~2.8]{MR3123758}. 
Observe that in this model of the additive completion functor  the transformation $\bC\to \bC_{\oplus}$ is injective on objects. 
 
 \begin{rem}
If one {passes} to $\infty$-categories, then this additive completion functor fits into an adjunction. In greater detail, as in   \cite{startcats} we consider  the Dwyer--Kan localization 
$\Ccat_{\infty}$  of $\Ccat$ at the set of   unitary equivalences.
  Then $(-)_{\oplus}$ descends to the left-adjoint of an adjunction (see \cite[Lem.~2.12]{MR3123758} for a $2$-categorial formulation)
 $$(-)_{\oplus}\colon \Ccat_{{\infty}}
 \leftrightarrows \Ccat_{{\infty},\oplus} :\incl\, ,$$
where $\Ccat_{{\infty},\oplus}$ is the full subcategory of $ \Ccat_{{\infty}}$ of additive $C^{*}$-categories.
  The details can be understood similarly as in the case of additive categories \cite[Cor.~2.62]{Bunke:2018aa}, using a Bousfield localization of model category structures as constructed in  \cite{MR3123758}. 
\hB
\end{rem}

Recall from Definition \ref{regiuhjigwergwgrewgrg} that $\bE$ in $\Ccat$ is idempotent complete if every projection in $\bE$ is effective. 
We again consider a  morphism $i\colon \bD\to \bE$ in $\Ccat$.
\begin{ddd}\label{defn_idempotent_complete}
The morphism $i$ presents $\bE$ as the idempotent completion of {$\bD$} if  the following conditions are satisfied:
\begin{enumerate}
\item The functor $i$ is fully faithful.
\item The $C^{*}$-category $\bE$ is idempotent complete.
\item\label{defn_idempotent_complete_isometry} For every object $E$ in  $\bE$  there is some object $D$  in $ \bD$ and an isometry $u\colon E\to i(D)$.
\end{enumerate}
\end{ddd}

If $i:\bD\to \bE$ and $i':\bD\to \bE'$ present $\bE$ and $\bE'$ as idempotent completions of $\bD$, then there exists a unitary equivalence $\bE\to \bE'$ such that
$$\xymatrix{&\bD\ar[dr]^{i'}\ar[dl]_{i}&\\ \bE\ar[rr]&&\bE'}$$
commutes up to a unitary natural transformation.

Let $\Ccat^{\Idem}$ denote the full subcategory of $\Ccat $ of idempotent complete  $C^{*}$-categories. 
There exists a functor  and a natural transformation 
$$\Idem\colon \Ccat \to \Ccat^{\Idem}\, , \quad \id\to \Idem\, ,$$
such that for every $\bC$ in $\Ccat$ the morphism $\bC\to \Idem(\bC)$ presents $\Idem(\bC)$ as the idempotent completion of $\bC$.   

\begin{construction}\label{wijgowegwergf}{\em  
In this paper  we will  work with the 
  explicit model of the idempotent completion functor  described in \cite[Defn.~2.15]{MR3123758}. Let $\bC$ be in $\Ccat $. Then $\bC\to \Idem(\bC)$  is given as follows: \begin{enumerate}
\item objects: The objects of  $\Idem(\bC)$ are pairs $(C,p)$ of an object  $C$ of $\bC$ and a projection $p$ in $\End_{\bC}(C)$.
\item morphisms: The morphisms $A:(C,p)\to (C',p')$ in $\Idem(\bC)$ are
morphisms $A:C\to C'$ satisfying $A=p'A=Ap$. 
\item composition and involution: These structures are inherited from $\bC$.
\item  canonical morphism: $\bC\to \Idem(\bC)$ sends $C$ in $\bC$ to $(C,\id_{C})$ in $\Idem(\bC)$ and $A:C\to C'$ to $A:(C,\id_{C})\to (C',\id_{C'})$. 
\end{enumerate}
Observe that in this model $\bC\to \Idem(\bC)$ is injective on objects.}\hB
\end{construction}

Let $\Ccat_{\oplus}^{\Idem}$ denote the full subcategory of $\Ccat_{\oplus}$ of idempotent complete and additive $C^{*}$-categories. 
By \cite[Rem.~2.19]{MR3123758} the idempotent completion of an additive $C^*$-category is again additive. The idempotent completion functor therefore restricts to a functor 
$$\Idem:\Ccat_{\oplus}\to \Ccat_{\oplus}^{\Idem}\ .$$
In the same remark \cite[Rem.~2.19]{MR3123758} it is explained that the operations of forming additive completions and of idempotent completions do not commute since the additive completion of an idempotent complete $C^*$-category  may fail  to be idempotent complete.

 \begin{rem}
The idempotent completion functor descends to an adjunction between $\infty$-categories (see \cite[Defn.~2.17]{MR3123758} for a $2$-categorical formulation)
$$\Idem: \Ccat_{{\infty},\oplus}\leftrightarrows \Ccat_{{\infty},\oplus}^{\Idem}:\incl\, ,$$
 where $\Ccat_{{\infty},\oplus}^{\Idem}$ is the full subcategory of $\Ccat_{{\infty},\oplus}$ of idempotent complete {(and  additive)} $C^{*}$-categories.
The details are again  similar to  the case of additive categories \cite[Cor.~3.7]{Bunke:2018aa}, again using a Bousfield localization of  model category structures constructed in  \cite{MR3123758}. 
\hB
\end{rem}

By composing the additive and idempotent completion functors  and the corresponding natural transformations we obtain a functor and a natural transformation
\begin{equation}\label{qwrf98u89euf89qweuf98ewfeqwfqe}
(-)^{\sharp}:=\Idem\circ (-)_{\oplus}:\Ccat\to \Ccat_{\oplus}^{\Idem}\, , \quad \id\to (-)^{\sharp}\, .
\end{equation} 
For every $\bC$ in $\Ccat$ the morphism $\bC\to \bC^{\sharp}$  fully faithful.    Furthermore, if $\bC$ is additive and idempotent complete, then the morphism
 $\bC\to \bC^{\sharp}$ is a unitary equivalence. This in particular applies to 
$\bC^{\sharp}\to (\bC^{\sharp})^{\sharp}$. 
Using the explicit models of the additive and idempotent completion
functors explained above we can arrange that $\bC\to \bC^{\sharp}$ is injective on objects.


\begin{ddd}[{\cite[Defn.~4.4]{MR3123758}}]\label{wetgoijwtoigwerrefwerfwefwref}
We define the set  $W_{\Morita}$ of Morita equivalences to be the set of {morphisms} in $\Ccat$ which are sent to unitary  equivalences by $(-)^{\sharp}$.  \end{ddd}

 Unitary equivalences are Morita equivalences. For every $\bC$ in $\Ccat$ the
  canonical morphism $\bC\to \bC^{\sharp}$ is a Morita equivalence since
$\bC^{\sharp}\to (\bC^{\sharp})^{\sharp}$ is a unitary equivalence as noted above.  
For a similar reason for every $\bC$ in $\Ccat$ also  $\bC\to \bC_{\oplus}$ is a Morita equivalence. 

Furthermore,  for $\bC$ in $\Ccat$   we claim that  $ \bC\to \Idem(\bC)$  is a  Morita equivalence.
    In order to see  this 
  we first apply $(-)^{\sharp}=\Idem\circ (-)_{\oplus}$ to $\bC\to \Idem(\bC)\to  \Idem(\bC_{\oplus}) $ in order to get 
$$\bC^{\sharp}\simeq 
\Idem(\bC_{\oplus})\to  \Idem(\Idem(\bC)_{\oplus})\to  \Idem(\Idem(\bC_{\oplus})_{\oplus})\stackrel{\simeq}{\leftarrow}  \Idem(\bC_{\oplus})\simeq \bC^{\sharp}\ .$$
For the inverted unitary equivalence we use that $\Idem$ preserves additivity.
 To show the claim we must show that the first arrow is a unitary equivalence.
We know that the composition of the two arrows is a unitary equivalence. 
The second arrow is also a unitary equivalence because it is  fully faithful and also essentially surjective since
$\bC_{\oplus}$ is contained in $\Idem(\bC)_{\oplus}$. 
Therefore the first arrow is  a unitary equivalence as desired.

A  Morita equivalence  $\bC\to \bD$  is fully faithful.  In order to see this we form    the  commutative square
$$\xymatrix{\bC\ar[r]\ar[d]^{\eqref{qwrf98u89euf89qweuf98ewfeqwfqe}}&\bD\ar[d]^{\eqref{qwrf98u89euf89qweuf98ewfeqwfqe}}\\\bC^{\sharp}\ar[r]^{\simeq}&\bD^{\sharp}}$$
Since the vertical morphisms are fully faithful  we conclude that $\bC\to \bD$ is fully faithful, too.
 
\begin{rem}
We  consider   the Dwyer--Kan localization \begin{equation}\label{qewfojfiojqwoefqwefqfeqef}
\ell_{\Morita}\colon \Ccat\to \Ccat[W_{\Morita}^{-1}]
\end{equation}
of $\Ccat$ at the Morita equivalences.
The $\infty$-category $\Ccat[W_{\Morita}^{-1}]$ can be modeled by a cofibrantly generated simplicial model category structure on $\Ccat$ \cite[Thm.~4.9]{MR3123758}. There is a Bousfield localization
 $$ L_{\Morita}\colon \Ccat_{{\infty}}\leftrightarrows     \Ccat[W_{\Morita}^{-1}]\, .$$
 \hB\end{rem}
 
%

\begin{ex}\label{qergoijwergioejrgwergwregwergre}
Let $ A$ be a  very small unital $C^{*}$-algebra. 
We can then consider the $C^{*}$-category of very small Hilbert $A$-modules
$ \Hilb (  A)$ 
  {explained in} Example \ref{qregiuheqrigegwegergwegergw}. 
It contains the subcategory $ \Hilb  (A)^{\fg,\proj}$ of finitely generated, projective Hilbert $A$-modules, and we may consider the object $ A$ in  $ \Hilb ( A)^{\fg,\proj}$ as a $C^{*}$-category with a single object. The inclusion
$ A \to   \Hilb ( A)^{\fg,\proj}$ is a Morita equivalence{. In} order to see this we consider the chain
$$ A \to \Hilb( A)^{\fg,\free}\to   \Hilb(A)^{\fg,\proj}\, .$$
The first functor presents  $\Hilb ( A)^{\fg,\free}$ as the additive completion of $ A$, and the second functor  presents 
$\Hilb ( A)^{\fg,\proj}$ as the idempotent completion of $\Hilb ( A)^{\fg,\free}$.
\hB
\end{ex}

Our next goal is to show that the reduced {and the maximal crossed product functors} preserve Morita equivalences.
Let $  \bD\to   \bE$ be a morphism in $\Fun(BG,\Ccat)$.
It is called a Morita equivalence if the induced morphism    {between} the underlying $C^{*}$-categories is a Morita equivalence.

Let $\bC$ be in $\Fun(BG,\Ccat)$.
\begin{lem} \label{thjrteohrehergtrg}If  
$\bC$    is additive, then  $\bC\rtimes_{r}G$ and $\bC\rtimes G$ are    additive.  \end{lem}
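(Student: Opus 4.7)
The plan is to exploit the fact that the canonical map $\iota\colon\bC\to\bC\rtimes_{r}G$ sending a morphism $f\colon C\to C'$ in $\bC$ to the morphism $(f,e)\colon C\to C'$ (with $e$ the neutral element of $G$) is a $*$-functor which is the identity on objects, and that finite orthogonal sums in the sense of Definition \ref{regiuhqrogefewfqwfqef} are characterized by purely algebraic conditions (mutual orthogonality of isometries and the completeness relation $\sum_{i} e_{i}e_{i}^{*}=\id_{C}$) which are automatically preserved by any $*$-functor.

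First I would observe that $\bC$ being additive forces every object $C$ of $\bC$ to be unital, since the orthogonal sum of the one-element family $(C)$ requires $C$ to admit an identity. Consequently, via the inclusion $\iota$, the identity $\id_{C}=(\id_{C},e)$ serves as identity of $C$ in $\bC\rtimes_{r}G$ (this can be seen directly from Remark \ref{rem_explicit_description_sigma}, since $\sigma(\id_{C})=\oplus_{g\in G}g(\id_{C})=\id_{\oplus_{g}gC}$). Thus $\bC\rtimes_{r}G$ is unital and hence belongs to $\Ccat$.

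Next, given any finite family $(C_{i})_{i\in I}$ of objects of $\bC$, I would pick an orthogonal sum $(C,(e_{i})_{i\in I})$ of the family in $\bC$ and claim that $(C,((e_{i},e))_{i\in I})$ is an orthogonal sum in $\bC\rtimes_{r}G$. To verify this one checks the two conditions of Definition \ref{regiuhqrogefewfqwfqef}: each $(e_{i},e)$ is an isometry because $(e_{i},e)^{*}(e_{i},e)=(e_{i}^{*}e_{i},e)=(\id_{C_{i}},e)=\id_{C_{i}}$; the family is mutually orthogonal because for $i\neq j$ we have $(e_{j},e)^{*}(e_{i},e)=(e_{j}^{*}e_{i},e)=0$; and finally $\sum_{i\in I}(e_{i},e)(e_{i},e)^{*}=(\sum_{i\in I}e_{i}e_{i}^{*},e)=(\id_{C},e)=\id_{C}$. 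All three identities follow from the corresponding identities in $\bC$ together with the fact that, on the image of $\iota$, composition and involution in $\bC\rtimes^{\alg}G$ reduce to those of $\bC$.

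The main (and only) technical point is to justify that $\iota$ is indeed a $*$-functor, i.e.\ that $(f_{1},e)\circ(f_{0},e)=(f_{1}f_{0},e)$ and $(f,e)^{*}=(f^{*},e)$. Both identities are immediate from the formula $(f,g)=\pi(g)_{g^{-1}C'}\sigma(f)$ in \eqref{qewfiuhuiqhefiuqwefqewfeqwfwefqwf} together with the fact that $\sigma$ is a $*$-functor and $\pi(e)=\id_{\sigma}$ by Lemma \ref{qeriuhiqerfqwefqwfqwefqewf}. Once this is in place the proof is complete, so I do not expect any serious obstacle.
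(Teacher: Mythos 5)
Your proof is correct and follows essentially the same route as the paper: the paper's own (very terse) argument likewise notes that $\Ob(\bC)=\Ob(\bC\rtimes_{r}G)$, picks an orthogonal sum $(C,(e_{i})_{i\in I})$ in $\bC$, and declares $(C,((e_{i},e))_{i\in I})$ to be the sum in $\bC\rtimes_{r}G$. You merely spell out the verification (which amounts to the paper's earlier observation that unital morphisms in $\Ccat$ automatically preserve finite orthogonal sums, applied to $f\mapsto(f,e)$), so there is nothing to add.
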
 
\begin{proof}
{We give the argument for the reduced crossed product. The case of the maximal crossed products is analogous.}
We consider   a finite family
 $(C_{i})_{i\in I}$    of objects in {$ {\bC}\rtimes_{r}G$}.  In view of the  equality $\Ob(\bC)= \Ob(\bC\rtimes_{r}G)$ and the assumption on $\bC$ 
we can   choose a   representative $(C,(e_{i})_{i\in I})$    of its orthogonal sum in $\bC$.  Then $(C,({(e_{i},e)})_{i\in I})$ {(see \eqref{qewfiuhuiqhefiuqwefqewfeqwfwefqwf}} for the notation for morphisms in crossed products) presents the orthogonal sum of the family $(C_{i})_{i\in I}$    in $   \bC\rtimes_{r}G$.    We call this representative a standard representative.  
 \end{proof}

Note that the reduced crossed product preserves faithful morphisms and fully faithful morphisms by Theorem \ref{ejgwoierferfewrferfwe}. In contrast, the maximal crossed product in general does not preserve faithful morphisms.
It is classically well known that it preserves ideal inclusions, but  it does not preserve general injections between $C^{*}$-algebras.  But it does preserve surjections and corner inclusions between $C^{*}$-algebras. On the level of $C^{*}$-categories this generalizes to the following statement.

\begin{prop}\label{okgpweerfrfrfwr34er3r3r}
The maximal crossed product functor $-\rtimes G:\Fun(BG,\nCcat)\to \nCcat$ preserves fully faithful morphisms.
\end{prop}
 \begin{proof}
Let $\phi:\bD\to \bE$ be a fully faithful morphism in $\Fun(BG,\nCcat)$.
Since $\bD\rtimes^{\alg} G\to \bE\rtimes^{\alg}G$ is fully faithful,
the morphism $\bD\rtimes G\to \bE\rtimes G$ has clearly a dense image in the full subcategory generated by the objects of the image. It suffices to show that this morphism is isometric.

Assume first that $\phi$ is injective on the level of objects. 
Then we can form the  commutative diagram $$\xymatrix{\bD\rtimes G\ar[r]\ar[d]&\bE\rtimes G\ar[d]\\A(\bD\rtimes G) \ar[r]&A(\bE\rtimes G)\\A(\bD)\rtimes G\ar[r]\ar[u]^{\cong}&A(\bE)\rtimes G\ar[u]^{\cong}}$$
The lower vertical morphisms are isomorphisms by \cite[Thm. 6.9]{crosscat}.
The upper vertical morphisms are isometric by \cite[Lem. 6.7]{crosscat}.
In order to show that the upper horizontal morphism between $C^{*}$-categories is  isometric it suffices 
to show that the lower horizontal homomorphism of $C^{*}$-algebras is  isometric.

We will use the fact that $-\rtimes G$ preserves corner inclusions of $C^{*}$-algebras.
We have inclusions of $G$-$C^{*}$-algebras $A(\bE)\to  A(\bM\bE)\to  MA(\bE)$. In  
$MA(\bE)$ we have the invariant  projection
$P:=\sum_{D\in \bD} \id_{D}$, where the sum converges strictly  and $\id_{D}$ belongs to $ \End_{\bM\bD}(D)$ considered as a subalgebra of $MA(\bD)$ in the canonical way. 
The image of $A(\phi):A(\bD)\to A(\bE)$ is the corner
given by $P$. Since $P$ is $G$-invariant it induces a projection $(P,e)$ in $A(\bE)\rtimes G$. The latter determines a corner of $A(\bE)\rtimes G$ which is isomorphic to $A(\bD)\rtimes G$.
The inclusion of a corner is isometric.

We now remove the assumption that $\phi$ is injective on objects.
We can then find a $G$-$C^{*}$-category $\bC$ and a factorization of $\phi$ through fully faithful equivariant morphisms
 $$ \xymatrix{&\bC &\\\bD\ar[ur]^{\psi}\ar[rr]^{\phi}&&\bE\ar[ul]_{\kappa}}$$
 where $\psi$ and $\kappa$ are  injective on objects and $\kappa$ is a unitary equivalence (see the construction given at the end of the proof of Lemma \ref{qwriufhqeifqfewfffqwfe} below).
 Then in  $$\xymatrix{&\bC\rtimes G &\\\bD\rtimes G\ar[ur]^{\psi\rtimes G}\ar[rr]^{\phi\rtimes G}&&\bE\rtimes G\ar[ul]_{\kappa\rtimes G}}$$ we know that $\psi\rtimes G$ is fully faithful and $\kappa\rtimes G$ is a unitary equivalence. 
 This implies that $\phi\rtimes G$ is also fully faithful.
\end{proof}

We now consider a  morphism  
  $ \phi: \bD\to  \bE $ in $\Fun(BG,\Ccat)$.

\begin{prop}\label{werijguhwerigvwergwer9}
If  $  \phi:\bD\to  \bE$ is a Morita equivalence, then $   \phi\rtimes_{r}G:\bD\rtimes_{r}G\to   \bE\rtimes_{r}G$
{and $\phi\rtimes G:\bD\rtimes G\to \bE\rtimes G$ are 
  Morita equivalences.}
\end{prop}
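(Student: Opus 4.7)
The plan is to reduce, via the naturality square of $\bC\mapsto\bC^\sharp$ and the closure of Morita equivalences in $\Ccat$ under 2-out-of-3, to the case of the canonical morphism $\bC\to\bC^\sharp$, and then to split this further through the factorization $\bC\to\bC_\oplus\to\Idem(\bC_\oplus)=\bC^\sharp$. Since $\phi^\sharp\colon\bD^\sharp\to\bE^\sharp$ is an equivariant unitary equivalence by hypothesis, Corollary~\ref{skjtrogergfdsgfgfgd} shows that $\phi^\sharp\rtimes_r G$ is a unitary equivalence, hence a Morita equivalence. Once one knows that the two vertical morphisms $\bC\rtimes_r G\to \bC^\sharp\rtimes_r G$ (for $\bC=\bD$ and $\bC=\bE$) are Morita equivalences, 2-out-of-3 forces $\phi\rtimes_r G$ to be one as well.

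For the additive completion step I would verify that $\bC\rtimes_r G\to\bC_\oplus\rtimes_r G$ presents its target as an additive completion of its source in the sense of Definition~\ref{rhrhhdgffg}. Full faithfulness follows from Corollary~\ref{skjtrogergfdsgfgfgd} applied to the fully faithful $\bC\to\bC_\oplus$; the target is additive by Lemma~\ref{thjrteohrehergtrg}. Any object $D$ of $\bC_\oplus$ admits a unitary isomorphism $u\colon D\to \bigoplus_{i\in F}C_i$ in $\bC_\oplus$ with $C_i$ in the image of $\bC$, and $u$ lifts to the unitary $(u,e)$ in $\bC_\oplus\rtimes_r G$. The standard representative from the proof of Lemma~\ref{thjrteohrehergtrg} simultaneously realises $\bigoplus_{i\in F}C_i$ as the orthogonal sum of the $C_i$ in $\bC_\oplus\rtimes_r G$, so $D$ is unitarily isomorphic in $\bC_\oplus\rtimes_r G$ to a finite orthogonal sum of objects in the image of $\bC\rtimes_r G$. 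Presenting a morphism as an additive completion forces it to be a Morita equivalence.

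For the idempotent completion step, writing $\bC':=\bC_\oplus$ and $j\colon\bC'\rtimes_r G\to \Idem(\bC')\rtimes_r G$, the key observation is that for every object $(C,p)$ of $\Idem(\bC')$ the element $(p,e)$ is an isometry $(C,p)\to (C,\id_C)$ in $\Idem(\bC')\rtimes_r G$. Indeed, using the composition rule in the crossed product together with the fact that $\id_{(C,p)}$ equals $p$ in $\Idem(\bC')$ (and hence $(p,e)$ in the crossed product), one computes
\[
(p,e)^*\circ (p,e)=(p,e)\circ (p,e)=(p^2,e)=(p,e)=\id_{(C,p)}.
\]
Conjugation by this isometry then shows that $\Idem(j)$ is essentially surjective: any object $((C,p),q)$ of $\Idem(\Idem(\bC')\rtimes_r G)$ is unitarily isomorphic to $((C,\id_C),q'')$ where $q'':=(p,e)\circ q\circ (p,e)^*$, and by full faithfulness of $j$ this $q''$ is a projection in $\End_{\bC'\rtimes_r G}(C)$. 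Together with full faithfulness of $\Idem(j)$ and the additive-completion step this yields that $j^\sharp$ is a unitary equivalence, so $j$ is a Morita equivalence.

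The main obstacle is the idempotent completion step. Idempotent completion does not commute with the reduced crossed product in any naive way---there are plenty of projections in $\Idem(\bC')\rtimes_r G$ that are not of the diagonal form $(p,e)$ for $p\in\End_{\bC'}(C)$---so a genuinely new analytic input is required. That input is precisely the diagonal partial isometry $(p,e)$ above, which, via conjugation, permits one to replace an arbitrary projection on $(C,p)$ in the crossed product by a projection on $C$ itself, and so to match idempotent completions on both sides after one more application of $\Idem$.
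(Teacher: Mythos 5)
Your proof is correct, and its first half coincides with the paper's: the paper likewise verifies that $\bD\rtimes_{r}G\to\bD_{\oplus}\rtimes_{r}G$ presents the additive completion, using Theorem~\ref{ejgwoierferfewrferfwe} for full faithfulness and Lemma~\ref{thjrteohrehergtrg} both for additivity of the target and for realising its objects as finite sums of objects of $\bD\rtimes_{r}G$. The idempotent-completion half is organised differently. The paper does not reduce to the canonical morphisms $\bC\to\bC^{\sharp}$ via two-out-of-three; instead it forms the square comparing $\Idem(\bD_{\oplus}\rtimes_{r}G)$ with $\Idem(\bE_{\oplus}\rtimes_{r}G)$ and shows directly that $\Idem(\phi_{\oplus}\rtimes_{r}G)$ is a unitary equivalence: full faithfulness is formal, and for essential surjectivity it takes $(E,p)$ in $\Idem(\bE_{\oplus}\rtimes_{r}G)$, uses the Morita hypothesis on $\phi$ to produce an isometry $u\colon E\to\phi_{\oplus}(D)$ with $D$ a finite orthogonal sum of objects of $\bD$, and conjugates by $(u,e)p$. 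This is exactly the $(u,e)$-conjugation mechanism behind your diagonal isometry $(p,e)$, except that the isometry is supplied by the hypothesis rather than by the structure of $\Idem$. What your route buys is the reusable intermediate statement that for additive $\bC'$ the morphism $\bC'\rtimes_{r}G\to\Idem(\bC')\rtimes_{r}G$ is a Morita equivalence, i.e.\ that $-\rtimes_{r}G$ commutes with idempotent completion up to Morita equivalence; the price is the identification of $j^{\sharp}$ with $\Idem(j)$ and two applications of two-out-of-three, both of which the paper's route avoids. One citation to tighten: Corollary~\ref{skjtrogergfdsgfgfgd} by itself only gives that $\phi^{\sharp}\rtimes_{r}G$ is fully faithful; to conclude that it is a unitary equivalence you should either invoke the fact, established in the construction of the reduced crossed product, that equivariant unitary equivalences are sent to isometric equivalences, or first extend a non-equivariant inverse of $\phi^{\sharp}$ to a weakly equivariant one by Lemma~\ref{ekgjosgwregrewgwerwfrwef} and then apply Corollary~\ref{skjtrogergfdsgfgfgd} to it and to the witnessing unitary isomorphisms.
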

\begin{proof}
 {We present the argument for the reduced crossed product. For the maximal one it is the same   using Proposition \ref{okgpweerfrfrfwr34er3r3r} instead of the last assertion of Theorem \ref{ejgwoierferfewrferfwe}.}
 
 We  first   show that    the morphism $   \bD\rtimes_{r}G\to
 \bD_{\oplus}\rtimes_{r}G$ presents the additive completion of $\bD\rtimes_{r}G$. To this end we verify the conditions listed in Definition \ref{rhrhhdgffg}.
Since $\bD\to \bD_{\oplus}$ is fully faithful we conclude from Theorem \ref{ejgwoierferfewrferfwe} that $\bD\rtimes_{r}G\to
 \bD_{\oplus}\rtimes_{r}G$ is fully faithful. By Lemma \ref{thjrteohrehergtrg}  we know that $\bD_{\oplus}\rtimes_{r}G$ is additive. Finally, argueing similarly as in the proof of      Lemma \ref{thjrteohrehergtrg}  
  we see that every object of $\bD_{\oplus}\rtimes_{r}G$ is  unitarily isomorphic to a finite orthogonal sum of objects of $\bD\rtimes_{r}G$.

 We now form the  commutative diagram
$$\xymatrix{\bD\rtimes_{r}G\ar[r]^{!}\ar[d]^{\phi\rtimes_{r}G}&\bD_{\oplus}\rtimes_{r}G\ar[d]^{\phi_{\oplus}\rtimes_{r}G}\ar[r]&\Idem(\bD_{\oplus}\rtimes_{r}G)\ar[d]_{!!}^{ \Idem(\phi_{\oplus}\rtimes_{r}G)} \\\bE\rtimes_{r}G\ar[r]^{!}&\bE_{\oplus}\rtimes_{r}G\ar[r]&\Idem(\bE_{\oplus}\rtimes_{r}G)}$$
 Since the morphisms marked by $!$ present additive completions, the horizontal
compositions are instances of the transformation \eqref{qwrf98u89euf89qweuf98ewfeqwfqe}. 
We must show that the morphism marked by $!!$   is a unitary equivalence. 
First of all, since the horizontal morphisms and the left vertical morphism are fully faithful, the morphism $!!$ is also fully faithful.
 It remains to show that it is essentially surjective. 
 
 We use the explicit model of the functor $\Idem$ described above.
 Let $(E,p)$ be an object of $ \Idem(\bE_{\oplus}\rtimes_{r}G)$. 
 Since $\bD\to \bE$
is a Morita equivalence  
 there exists a finite family $(D_{i})_{i\in I}$ of objects
   in $\bD$, an orthogonal  sum $(D,(e_{i})_{i\in I})$ of this family in $\bD_{\oplus}$,   and an isometry $u: E\to \phi_{\oplus}(D)$.
 Then  we have the  unitary isomorphism $$(u,e)p:(E,p)\to   \Idem(\phi_{\oplus}\rtimes_{r}G) (D, (\phi_{\oplus}\rtimes_{r}G)^{-1}[(u,e)p(u,e)^{*}]) $$  
in $  \Idem(\bE_{\oplus}\rtimes_{r}G)$, where we use that $\phi_{\oplus}\rtimes_{r}G$ is fully faithful in order to define its inverse. Hence $(E,p)$  belongs to the essential image of $  \Idem(\phi_{\oplus}\rtimes_{r}G)$. \end{proof}

We finally study Morita invariant  functors. 
Let $\Homol\colon \Ccat\to \bS$ be a functor with values in some $\infty$-category.
\begin{ddd}\label{ewrgiuhwerogwregrefwferfwrf}
$\Homol$ is Morita invariant if it sends  Morita equivalences to equivalences.
\end{ddd}
More generally, if $\Homol:\nCcat\to \bS$ is a functor, then we call it Morita invariant if its restriction to $\Ccat$ is so.
The following characterization of Morita invariance turns out to be very useful, e.g.,  to verify that $\Kcat$ is Morita invariant in the proof of Theorem \ref{wtoijgwergergrewfwergrg}.

Let  $\Homol\colon \Ccat\to \bS$ be a functor.
  \begin{lem}\label{rgoijeroigwergergrgwg}
  \mbox{}
  The following assertions are equivalent:
  \begin{enumerate}
  \item\label{weoigjowergefvfvsvsdv1} $\Homol$ is Morita invariant.
  \item\label{weoigjowergefvfvsvsdv} $\Homol$ sends the following {morphisms}  in $\Ccat$ to equivalences:
  \begin{enumerate}
  \item Unitary equivalences.
  \item  \label{fbiojefvbfdvsfdvfv}
  
  Fully faithful {morphisms} $i\colon\bD\to \bE$  satisfying:
 \begin{enumerate}
 \item  \label{wtoigjoergfregewgre90} $i$ is injective on objects.
 \item \label{wtoigjoergfregewgre9}$\bE$ is additive and idempotent complete.
 \item \label{wtoigjoergfregewgre91} $i$ presents $\bE$ as the additive and idempotent completion of $\bD$.

 \end{enumerate}
 \end{enumerate}
 \end{enumerate}
 \end{lem}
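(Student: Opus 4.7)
The strategy is to exploit the functorial factorization $\bC \to \bC_{\oplus} \to \Idem(\bC_{\oplus}) = \bC^{\sharp}$ of the natural transformation \eqref{qwrf98u89euf89qweuf98ewfeqwfqe}, together with the definition of Morita equivalences via $(-)^{\sharp}$. In both directions the key diagram is the naturality square
\[
\xymatrix{\bC \ar[r] \ar[d]^{\phi} & \bC^{\sharp} \ar[d]^{\phi^{\sharp}} \\ \bD \ar[r] & \bD^{\sharp}}
\]
associated to a morphism $\phi\colon \bC\to \bD$ in $\Ccat$.

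For the implication \ref{weoigjowergefvfvsvsdv1}$\Rightarrow$\ref{weoigjowergefvfvsvsdv} one only needs to check that unitary equivalences and morphisms of the type described in \ref{fbiojefvbfdvsfdvfv} are already Morita equivalences in the sense of Definition \ref{wetgoijwtoigwerrefwerfwefwref}. Unitary equivalences are sent by any functor, in particular $(-)^{\sharp}$, to unitary equivalences. For a morphism $i\colon \bD\to \bE$ as in \ref{fbiojefvbfdvsfdvfv}, the functor $i^{\sharp}\colon \bD^{\sharp}\to \bE^{\sharp}$ is fully faithful (since $i$ is fully faithful and $(-)^{\sharp}$ preserves fully faithfulness by the discussion after \eqref{qwrf98u89euf89qweuf98ewfeqwfqe}), and it is essentially surjective by unitary isomorphism: the hypothesis on $\bE$ being additive and idempotent complete together with the presentation property gives that the canonical morphism $\bE\to \bE^{\sharp}$ is a unitary equivalence, and any object of $\bE$ is unitarily isomorphic to a direct summand of a finite orthogonal sum of objects in $i(\bD)$, hence to an object in the essential image of $i^{\sharp}$.

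For the converse implication \ref{weoigjowergefvfvsvsdv}$\Rightarrow$\ref{weoigjowergefvfvsvsdv1}, let $\phi\colon \bC\to \bD$ be a Morita equivalence and apply $\Homol$ to the naturality square above. Using the explicit models of $(-)_{\oplus}$ and $\Idem$ recalled after Definition \ref{rhrhhdgffg} and in Construction \ref{wijgowegwergf}, the horizontal morphisms $\bC\to \bC^{\sharp}$ and $\bD\to \bD^{\sharp}$ are injective on objects, fully faithful, and present their targets as the additive and idempotent completions of their sources; hence they belong to class \ref{fbiojefvbfdvsfdvfv} and are sent to equivalences by $\Homol$. The right vertical morphism $\phi^{\sharp}$ is a unitary equivalence by the defining property of a Morita equivalence, so it is sent to an equivalence by assumption on $\Homol$. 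A two-out-of-three argument in $\bS$ then forces $\Homol(\phi)$ to be an equivalence, proving Morita invariance.

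The only subtle point is to confirm, with the chosen models, that $\bC\to \bC^{\sharp}$ satisfies all the conditions of \ref{fbiojefvbfdvsfdvfv} — in particular injectivity on objects and the fact that every object of $\bC^{\sharp}=\Idem(\bC_{\oplus})$ is isomorphic to a direct summand of a finite orthogonal sum of objects in the image of $\bC$, which is immediate from the two-step description. Everything else reduces to the already recorded properties of $(-)_{\oplus}$ and $\Idem$ and the standard fact that additive plus idempotent complete categories are invariant under $(-)^{\sharp}$ up to unitary equivalence.
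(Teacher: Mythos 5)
Your proof is correct and follows essentially the same route as the paper's: both directions rest on the observation that unitary equivalences and the completion morphisms $\bC\to\bC^{\sharp}$ are of the listed types, combined with the naturality square for $(-)^{\sharp}$ and a two-out-of-three argument in $\bS$. The only cosmetic difference is that the paper first replaces the Morita equivalence $\bD\to\bE$ by the inclusion of its full image $\bD'\to\bE$ before invoking the square, a reduction your direct use of naturality renders unnecessary since the chosen models of $(-)_{\oplus}$ and $\Idem$ are already injective on objects.
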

 Note that Condition \ref{wtoigjoergfregewgre91} means that for every object $E$ in $\bE$ there is a finite family $(D_{k})_{k\in K}$ of objects in $D$ and an isometry
$E\to \bigoplus_{k\in K} i(D_{k})$. 
\begin{proof}\mbox{}
$\eqref{weoigjowergefvfvsvsdv1}\Rightarrow \eqref{weoigjowergefvfvsvsdv}$: Unitary equivalences and functors $i$ as in \ref{fbiojefvbfdvsfdvfv} are Morita equivalences. If $\Homol$ is Morita invariant, then it sends these functors to equivalences.
  
 $\eqref{weoigjowergefvfvsvsdv}\Rightarrow \eqref{weoigjowergefvfvsvsdv1}$: 
Let $\bD\to \bE$ be a Morita equivalence. We must show that $\Homol(\bD)\to \Homol(\bE)$ is  an equivalence.

{Since Morita equivalences are fully faithful we   have} a factorization $\bD\to \bD'\to \bE$, where $\bD'$ is {the} full subcategory of $\bE$ given by  the image of the {morphism} $\bD\to \bE$. Then $\bD\to \bD'$ is a unitary equivalence
 and $\bD'\to \bE$ a Morita equivalence.  Since $\Homol(\bD)\to \Homol(\bD')$ is an equivalence it remains to show that
 $\Homol(\bD')\to \Homol(\bE)$ is an equivalence.  
 To this end we consider the commutative square 
 $$\xymatrix{  \bD'\ar[r]\ar[d]& \bE\ar[d]\\ \bD^{\prime ,\sharp}\ar[r] & \bE^{\sharp}}$$ where we arrange  that the vertical morphisms are injective on objects.   
 They satisfy the conditions in \ref{fbiojefvbfdvsfdvfv}.  The lower horizontal morphism is a unitary equivalence.
We then apply $\Homol$ and get the commutative square
 $$\xymatrix{\Homol(\bD')\ar[r]\ar[d]^{\simeq}&\Homol(\bE)\ar[d]^{\simeq}\\\Homol(\bD^{\prime ,\sharp})\ar[r]^{\simeq}&\Homol(\bE^{\sharp})}$$
 where the indicated equivalences follow from the 
  assumptions on $\Homol$. We conclude that the upper horizontal morphism is an equivalence. 
\end{proof}


%

Let $\Homol \colon \nCcat \to \bS$ be a homological functor.
By definition it sends zero categories to zero, and it preserves some finite  products by Lemma \ref{erighi9wrteogwergrgregwergwreg}. But it is not clear that it preserves finite coproducts. 
Morita invariance improves the situation.
\begin{lem}\label{igjotgergweferfwef}\label{lem_Morita_preserves_finite_coprods}
If $\Homol$ is {a Morita invariant homological functor}, then it preserves finite coproducts.
\end{lem}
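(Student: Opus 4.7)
My plan is to reduce to binary coproducts by induction (the empty coproduct is handled by Lemma~\ref{qregheriogwegerwf}, which gives $\Homol(\emptyset)\simeq 0_{\bS}$, matching the initial object of $\bS$), and then for binary coproducts $\bC\sqcup \bD$ to exhibit a zigzag of Morita equivalences relating $\bC\sqcup \bD$ to the product $\bC^{\sharp}\times \bD^{\sharp}$, at which point Morita invariance combines with Lemma~\ref{erighi9wrteogwergrgregwergwreg} to produce the desired equivalence. I would also dispose of the degenerate case where one summand is empty directly from $\Homol(\emptyset)\simeq 0_{\bS}$ and the identification $\bC\sqcup \emptyset \cong \bC$, so from now on I assume both $\bC$ and $\bD$ are non-empty.

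The key structural step is the identification $(\bC\sqcup\bD)_{\oplus}\simeq \bC_{\oplus}\times \bD_{\oplus}$ as additive $C^{*}$-categories. Since $\bC\sqcup \bD$ contains no morphisms between objects of $\bC$ and objects of $\bD$, any object of the additive completion decomposes uniquely as an orthogonal sum of a $\bC$-part and a $\bD$-part, and the morphism spaces between two such objects factor as the direct sum of the corresponding $\bC$- and $\bD$-components. Using the zero objects in $\bC_{\oplus}$ and $\bD_{\oplus}$, the canonical morphism $\bC\sqcup \bD\to \bC_{\oplus}\times \bD_{\oplus}$ given by $C\mapsto (C,0)$ for $C\in \bC$ and $D\mapsto (0,D)$ for $D\in \bD$ then presents the product $\bC_{\oplus}\times \bD_{\oplus}$ as the additive completion of $\bC\sqcup \bD$. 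Applying $\Idem$ and noting that idempotents in a finite product of $C^{*}$-categories are pairs of idempotents, so that $\Idem$ commutes with finite products, yields a unitary equivalence
\[
(\bC\sqcup \bD)^{\sharp}\simeq \Idem(\bC_{\oplus}\times \bD_{\oplus})\simeq \Idem(\bC_{\oplus})\times \Idem(\bD_{\oplus})=\bC^{\sharp}\times \bD^{\sharp}.
\]

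Composing the Morita equivalences $\bC\sqcup \bD\to (\bC\sqcup \bD)^{\sharp}$ and $\bC\to \bC^{\sharp}$, $\bD\to \bD^{\sharp}$ from \eqref{qwrf98u89euf89qweuf98ewfeqwfqe} with the unitary equivalence above, Morita invariance of $\Homol$ together with Lemma~\ref{erighi9wrteogwergrgregwergwreg} (applicable since both $\bC^{\sharp}$ and $\bD^{\sharp}$ are non-empty) produces equivalences
\[
\Homol(\bC\sqcup \bD)\simeq \Homol(\bC^{\sharp}\times \bD^{\sharp})\simeq \Homol(\bC^{\sharp})\times \Homol(\bD^{\sharp})\simeq \Homol(\bC)\times \Homol(\bD).
\]
Since $\bS$ is stable, the right-hand side is also the coproduct $\Homol(\bC)\sqcup \Homol(\bD)$, so it remains to identify the morphism from the coproduct induced by $\bC\to \bC\sqcup \bD$ and $\bD\to \bC\sqcup \bD$ with the equivalence just obtained.

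I expect the main obstacle to be the careful bookkeeping needed to check that the abstract equivalence produced above is really the canonical comparison morphism out of the coproduct, rather than merely some equivalence between the two objects; this amounts to tracking that the inclusions $\bC\to \bC\sqcup \bD$ and $\bD\to \bC\sqcup \bD$ correspond under the zigzag to the standard summand inclusions into $\bC^{\sharp}\times \bD^{\sharp}$, so that Lemma~\ref{erighi9wrteogwergrgregwergwreg} (whose inverse is built from these inclusions in the stable $\infty$-category $\bS$) gives precisely the canonical morphism.
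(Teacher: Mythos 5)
Your proof is correct, but it takes a genuinely different route from the paper's. The paper factors $\Homol$ through the Dwyer--Kan localization $\Ccat[W_{\Morita}^{-1}]$ and argues model-categorically: since every object is cofibrant, $\ell_{\Morita}$ preserves coproducts; since additively and idempotently complete objects are fibrant, $\ell_{\Morita}$ preserves the relevant finite products, whence $\Homol_{\Morita}$ preserves finite products by Lemma~\ref{erighi9wrteogwergrgregwergwreg}; and finally semi-additivity of both $\bS$ and $\Ccat[W_{\Morita}^{-1}]$ (the latter quoted from D'Ambrosio--Tabuada) converts product-preservation into coproduct-preservation. You instead make the semi-additivity explicit at the level of $C^{*}$-categories: the identification $(\bC\sqcup\bD)_{\oplus}\simeq\bC_{\oplus}\times\bD_{\oplus}$ is correct (full faithfulness of $C\mapsto(C,0)$, $D\mapsto(0,D)$ uses exactly that the cross hom-spaces in the coproduct vanish, and every object of the product is a finite sum of objects in the image), $\Idem$ visibly commutes with finite products in the explicit model, and then Morita invariance plus Lemma~\ref{erighi9wrteogwergrgregwergwreg} and stability of $\bS$ finish the argument. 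Your final bookkeeping worry is real but resolves exactly as you suggest: under your zigzag the summand inclusions correspond to the functors $z_{0},z_{1}$ from the proof of Proposition~\ref{rguiwtgwreergrgwrgg}, where it is shown that $\Homol(z_{0})+\Homol(z_{1})$ inverts $(\Homol(\pr_{0}),\Homol(\pr_{1}))$; since $\Homol(z_{0})$ is the canonical biproduct inclusion $(\id,0)$ (using Lemma~\ref{gbgfbgbfgdgfb} for $\pr_{1}\circ z_{0}$), the abstract equivalence is the canonical comparison map. What your approach buys is an elementary, citation-free argument avoiding the model structure on the Morita localization; what it costs is the explicit computation of completions of coproducts and the diagram chase identifying the canonical map, both of which the paper outsources to general machinery.
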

\begin{proof} 
Let $(C_i)_{i \in I}$ be a finite family in $\Ccat$.
We must show that the canonical map
\begin{equation}\label{rhqifhjifqwefqwefwefq}
\coprod_{i\in I} \Homol ( \bC_{i})  \to \Homol \big( \coprod_{i\in I} \bC_{i} \big) 
\end{equation}
is an equivalence.

We consider the Dwyer--Kan localization $\ell_{\Morita}\colon \Ccat\to \Ccat[W_{\Morita}^{-1}]$
from \eqref{qewfojfiojqwoefqwefqfeqef}.
 By the universal property of the Dwyer--Kan localization the functor $\Homol$  has an essentially unique factorization
\begin{equation}\label{weoigjwegerregwerg}
\xymatrix{\Ccat\ar[rr]^-{\Homol}\ar[dr]_-{\ell_{\Morita}}&&\bS\,.\\
&\Ccat[W_{\Morita}^{-1}]\ar[ur]_-{\quad \Homol_{\Morita}}&}
\end{equation} 
 We use the factorization
  \eqref{weoigjwegerregwerg}
in order to factorize \eqref{rhqifhjifqwefqwefwefq} as 
\begin{equation}\label{qwefoijoqiwefqewfewfqwfe}
\coprod_{i\in I} \Homol_{\Morita} (\ell_{\Morita}( \bC_{i}) ) \to \Homol_{\Morita} \big(
\coprod_{i\in I}\ell_{\Morita}( \bC_{i}) \big)  \to   \Homol_{\Morita} \big(
 \ell_{\Morita} \big( \coprod_{i\in I}\bC_{i} \big) \big)\, .
\end{equation} 
The $\infty$-category $\Ccat[W_{\Morita}^{-1}]$ can be modeled by a cofibrantly generated simplicial model category structure on $\Ccat$ \cite[Thm.~4.9]{MR3123758} in which every object is cofibrant. By the latter property
  the canonical map $\coprod_{i\in I}\ell_{\Morita}( \bC_{i})  \to    \ell_{\Morita}( \coprod_{i\in I}\bC_{i})
$ is an equivalence in $\Ccat[W_{\Morita}^{-1}]$. Hence the second morphism in  \eqref{qwefoijoqiwefqewfewfqwfe}
is an equivalence.
We now claim that $\Homol_{\Morita}$ preserves  finite products.  

For the moment assume the claim. Since $\bS$ (as a stable $\infty$-category) and $\Ccat[W_{\Morita}^{-1}]$ by \cite[Thm.\ 1.4]{MR3123758}
are semi-additive, the functor $ \Homol_{\Morita}$ then also preserves finite coproducts. This implies that 
the first morphism in  \eqref{qwefoijoqiwefqewfewfqwfe} is an equivalence, too.
Hence assuming the claim we conclude that   \eqref{qwefoijoqiwefqewfewfqwfe}  and  therefore
 \eqref{rhqifhjifqwefqwefwefq}  are equivalences.

 It remains to show the claim.
 So let $(\ell_{\Morita}(\bE_{j}))_{j\in J}$ be a finite family of objects of $\Ccat[W_{\Morita}^{-1}]$.
 Since every object in $\Ccat$ is Morita equivalent to an additively and idempotently complete
 object (apply e.g.\ the  functor  $(-)^\sharp$ from \eqref{qwrf98u89euf89qweuf98ewfeqwfqe})
 we can assume without loss of generality that $\bE_{j}$ is additively and idempotently complete for every $j$ in $J$. Since such objects are fibrant in the model category structure  of 
  \cite{MR3123758}  
  we can conclude that the canonical map 
  \begin{equation}\label{fqweiuiqjefioqwefewqef}
\ell_{\Morita} \big(\prod_{j\in J} \bE_{j} \big)\to \prod_{j\in J} \ell_{\Morita} (\bE_{j})
\end{equation}
 is an equivalence. Applying $\Homol_{\Morita}$ we get the equivalence 
 \begin{equation}\label{fqweiuiqjefioqwefewqewfefefefef}
 \Homol_{\Morita}\big(\ell_{\Morita}\big(\prod_{j\in J} \bE_{j}\big)\big)\xrightarrow{\simeq} \Homol_{\Morita}\big(\prod_{j\in J} \ell_{\Morita} (\bE_{j})\big)\,.
\end{equation}
Since $\bE_{j}$ is additive and hence {non-empty} 
 we can apply 
  Lemma~\ref{erighi9wrteogwergrgregwergwreg}  in order to conclude that the lower horizontal morphism in  the commutative diagram
 $$\xymatrix{\Homol_{\Morita}(\ell_{\Morita}(\prod_{j} \bE_{j}))\ar[r]^{!}\ar[d]^{\eqref{weoigjwegerregwerg}}_{\simeq}&\prod_{j}   \Homol_{\Morita}(\ell_{\Morita}( \bE_{j})) \ar[d]^{\eqref{weoigjwegerregwerg}}_{\simeq} \\ \Homol (\prod_{j\in J} \bE_{j})\ar[r]^{\text{Lem.\ }\ref{erighi9wrteogwergrgregwergwreg}}&\prod_{j\in J} \Homol ( \bE_{j})}$$
 is an equivalence. Hence the arrow marked by $!$ is an equivalence.
 Composing this arrow with the inverse of \eqref{fqweiuiqjefioqwefewqewfefefefef} provides the desired equivalence 
 $$\Homol_{\Morita}\big(\prod_{j\in J} \ell_{\Morita} (\bE_{j})\big)\stackrel{\simeq}{\to} \prod_{j}   \Homol_{\Morita}(\ell_{\Morita}( \bE_{j}))\, .$$
This finishes the verification of the claim and therefore the proof  of the lemma.
\end{proof}

\begin{rem}
 If $\Homol \colon \Ccat\to \bS$ is a Morita invariant homological functor, then it in particular preserves the empty coproduct, i.e., the canonical map
 $0_{\bS}\to \Homol(\emptyset)$ is an equivalence. But this is already true without the assumption of Morita invariance by Lemma \ref{qregheriogwegerwf}.
\hB
\end{rem}

Let $\Homol\colon \nCcat\to \bS$ be a {functor.}   
\begin{kor}\label{kor_Morita_finitary_preserves_coprods}
If $\Homol$ is {a Morita invariant  finitary  homological functor} (see {Definitions \ref{oihgjeorgwergergergwerg} and} \ref{qerughqiregqewfewqffq1}), then $\Homol$ preserves all small coproducts.
\end{kor}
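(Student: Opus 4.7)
The plan is to reduce the statement to the previous lemma on finite coproducts together with the finitary hypothesis. Concretely, let $(\bC_{i})_{i \in I}$ be a small family in $\nCcat$, and let $\mathcal{P}_{\fin}(I)$ denote the filtered poset of finite subsets of $I$ ordered by inclusion. We want to show that the canonical comparison map $\coprod_{i \in I} \Homol(\bC_{i}) \to \Homol(\coprod_{i \in I}\bC_{i})$ is an equivalence in $\bS$.

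First I would observe that in $\nCcat$, which is cocomplete (as discussed in the introduction and proved in \cite{crosscat}), the coproduct $\coprod_{i \in I}\bC_{i}$ is canonically the filtered colimit
\[
\coprod_{i \in I}\bC_{i} \;\cong\; \colim_{J \in \mathcal{P}_{\fin}(I)} \coprod_{i \in J}\bC_{i}\,,
\]
with structure maps given by the evident inclusions of subcoproducts. This is a general fact about any cocomplete category, and follows formally from the fact that the diagram $J \mapsto \coprod_{i\in J}\bC_{i}$ corepresents the same functor on $\nCcat$ as does $\coprod_{i \in I}\bC_{i}$, once one writes $\Hom$ out of a coproduct as a product over the index set and uses that $I = \colim_{J} J$ in $\Set$.

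Applying $\Homol$ and exploiting that it is finitary (Definition \ref{qerughqiregqewfewqffq1}), we obtain
\[
\Homol\big(\coprod_{i \in I}\bC_{i}\big) \;\simeq\; \colim_{J \in \mathcal{P}_{\fin}(I)} \Homol\big(\coprod_{i \in J}\bC_{i}\big)\,.
\]
For each fixed finite $J$ we apply Lemma \ref{lem_Morita_preserves_finite_coprods} (which uses Morita invariance) to identify $\Homol(\coprod_{i \in J}\bC_{i})$ with $\coprod_{i \in J}\Homol(\bC_{i})$ naturally in $J$. Since $\bS$ is cocomplete and the coproduct $\coprod_{i \in I}\Homol(\bC_{i})$ in any cocomplete $\infty$-category is likewise the filtered colimit of its finite subcoproducts, the right-hand side is equivalent to $\coprod_{i \in I}\Homol(\bC_{i})$, and the resulting composite equivalence is precisely the canonical comparison map.

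The only delicate point is the naturality of the finite-coproduct equivalence of Lemma \ref{lem_Morita_preserves_finite_coprods} in $J$, so that the colimits can be compared; but this is automatic, since the equivalence produced there is the canonical comparison map, which is functorial in the family. I do not anticipate any serious obstacle.
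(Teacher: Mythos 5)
Your proposal is correct and is essentially the paper's own argument: the paper likewise writes the small coproduct as a filtered colimit of its finite subcoproducts, invokes the finitary hypothesis to commute $\Homol$ past that colimit, and then applies Lemma \ref{lem_Morita_preserves_finite_coprods}. You have merely spelled out the naturality and recombination steps that the paper leaves implicit.
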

\begin{proof}
Every small coproduct is a   small filtered colimit of finite coproducts. Hence the claim follows from the previous Lemma \ref{lem_Morita_preserves_finite_coprods} and the fact that  $\Homol$ preserves small filtered colimits by assumption.
\end{proof}

Recall Definition \ref{ergiowergregrdsvsv} of the functor $\Kcat\colon \nCcat\to \Sp$.
 
\begin{theorem}\label{wtoijgwergergrewfwergrg}
The functor $\Kcat$ is Morita invariant.
\end{theorem}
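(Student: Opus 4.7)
The plan is to invoke the characterization of Morita invariance given by Lemma \ref{rgoijeroigwergergrgwg}. Since $\Kcat$ is a homological functor by Theorem \ref{qrgkljwoglergerggwergwrg}, it automatically sends unitary equivalences to equivalences of spectra. It thus remains to verify that $\Kcat$ sends any fully faithful morphism $i\colon \bD \to \bE$ satisfying conditions \ref{wtoigjoergfregewgre90}--\ref{wtoigjoergfregewgre91} of Lemma \ref{rgoijeroigwergergrgwg}.\ref{fbiojefvbfdvsfdvfv} to an equivalence.

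For such an $i$, the strategy is to reduce to a $C^*$-algebraic statement. By Definition \ref{ergiowergregrdsvsv} together with the natural equivalence $\Kcat \simeq \Kast \circ A$ coming from Lemma \ref{sfdbjoqi3gegergewgerg1}, it suffices to show that $\Kast(A(i))\colon \Kast(A(\bD)) \to \Kast(A(\bE))$ is an equivalence in $\Sp$. By Bott periodicity (Remark \ref{wrthijowrtgergwergwegr}.\ref{wegiojwoegwergrwegwreferf}) this further reduces to showing that $A(i)$ induces isomorphisms on $K_0$ and $K_1$.

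The core argument then proceeds in close analogy with the proof of Theorem \ref{ojgweorergwerfewrfwerfw}. For surjectivity, since $\bE$ is additive, Lemmas \ref{egoijotrgregewrgwerg}.\ref{eoijwoegregregwerg} and \ref{tegiotwrgerwgregwgergre}.\ref{ergiojetwregewgg} represent every class in $K_0(A(\bE))$ or $K_1(A(\bE))$ by projections $P,\tilde P$ or a unitary $U$ in some endomorphism algebra $\End_\bE(E)$. By condition \ref{wtoigjoergfregewgre91} there exists a finite family $(D_k)_{k\in K}$ of objects of $\bD$ and an isometry $v\colon E \to E'$ with $E' := \bigoplus_k i(D_k)$ chosen in $\bE$ using additivity. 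Replacing $P,\tilde P$ by $vPv^{*}+(\id_{E'}-vv^{*})$, etc., yields representatives living in $\End_\bE(E')$, which via Lemma \ref{rqgiowergwregwergwergeg} and the identification $\phi_{F,n}$ of \eqref{vasvfjor3gervvfsdvfdvsv} is isomorphic to a matrix algebra on morphism spaces of $\bD$ (using that $i$ is fully faithful). This gives the desired preimage class in $K_*(A(\bD))$. For injectivity, any null-homotopy in $A(\bE)$ (a partial isometry witnessing vanishing of a $K_0$-class as in Lemma \ref{egoijotrgregewrgwerg}.\ref{eoijwoegregregwerg1}, or a Lipschitz path of unitaries witnessing vanishing of a $K_1$-class as in Lemma \ref{tegiotwrgerwgregwgergre}.\ref{regwoihjoirgerwrgwerggrgwg}) takes place in some $\End_\bE(F)$; an isometry $F\to\bigoplus_k i(D_k)$ transports the homotopy into a matrix algebra over morphism spaces of $\bD$, and full faithfulness of $i$ then produces the corresponding null-homotopy already in $A(\bD)$.

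The main obstacle is bookkeeping the various matrix sizes and corner embeddings while ensuring the constructions actually descend to $A(\bD)$. This is handled using the matrix stability of $\Kast$ (Remark \ref{wrthijowrtgergwergwegr}.\ref{qfoijirfoqfqwefqewfqewf}) together with the explicit partial isometry of \eqref{sfdvlknejkvndfkvervwevwevw}, which implements the identification of $\End_\bE(\bigoplus_k i(D_k))$ with the matrix algebra $\Mat_{|K|}(A(F))$ for a suitable finite $F \subseteq \Ob(\bD)$. Since full faithfulness of $i$ is an equality of morphism spaces $\Hom_\bD(D,D') = \Hom_\bE(i(D),i(D'))$, any matrix manipulation performed inside $A(\bE)$ for an object $E'$ of the above form actually lives inside the image of $A(\bD)$, which is precisely what is required.
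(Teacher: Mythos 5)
Your proposal is correct and follows essentially the same route as the paper: reduce via Lemma \ref{rgoijeroigwergergrgwg} and Lemma \ref{sfdbjoqi3gegergewgerg1} to showing $\Kast(A(i))$ is an isomorphism on $K_0$ and $K_1$, then use the representation lemmas \ref{egoijotrgregewrgwerg} and \ref{tegiotwrgerwgregwgergre}, the isometry onto a finite sum of objects of $\bD$, and conjugation by the partial isometry of \eqref{sfdvlknejkvndfkvervwevwevw} to land in a matrix algebra over morphism spaces of $\bD$, with full faithfulness closing the argument. The only cosmetic differences are that the paper handles $K_0$-injectivity by observing the witnessing partial isometry automatically lies in $\Mat_n(A(F))$ (since $U=PU\tilde P$) rather than transporting it, and it conjugates projections without adding the complement $\id_{E'}-vv^*$.
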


\begin{proof}
We use the characterization of Morita invariant functors  provided by Lemma \ref{rgoijeroigwergergrgwg}.

The functor $\Kcat$ sends unitary  equivalences to equivalences since it is a homological functor {by Theorem \ref{qrgkljwoglergerggwergwrg}}.

Let $ \bD\to \bE$ be a fully faithful {morphism in $\Ccat$}  satisfying the Conditions \ref{wtoigjoergfregewgre90}, \ref{wtoigjoergfregewgre9} and \ref{wtoigjoergfregewgre91}.
  We identify $\bD$ with a full subcategory of $\bE$.
Then we  must show that $$ \Kcat(\bD)\to \Kcat(\bE)$$ is an equivalence. Note that the homomorphism of $C^{*}$-algebras $ A(\bD)\to A(\bE)$ (see \eqref{frewfoirjviojvioeweverwvwev} for $A(-)$) is defined since $\bD\to \bE$ is injective on objects. Since $ \bD\to \bE$  is fully faithful,  for every finite set $F$ of objects $F$ in $\bD$ the composition $A(F)\to A(\bD)\to A(\bE)$ is an embedding, see \eqref{nriqrnficjwenkcsdcdcsdacsdcasdcasdcad} for  $A(F)$.

In view of Lemma \ref{sfdbjoqi3gegergewgerg1}
it suffices to show that the induced homomorphism
$$ \phi\colon \pi_{*}\Kast(A(\bD))\to \pi_{*}\Kast(A(\bE))$$ between $K$-theory groups is an isomorphism.   In view of Bott periodicity  (Remark \ref{wrthijowrtgergwergwegr}.\ref{wegiojwoegwergrwegwreferf}) is suffices to consider the cases $*=0$ and $*=1$.

We will   use the shorter notation $K_{*}:=\pi_{*}\Kast$.

\textbf{surjectivity for $*=0$:}

  Let $p$ be in $K_{0}(A(\bE))$. We must show that $p$ is in the image of  $\phi\colon K_{0}(A(\bD))\to K_{0}(A(\bE))$.

   Since $\bE$ is additive, by Lemma \ref{egoijotrgregewrgwerg}.\ref{eoijwoegregregwerg}  we can find an object $E$ in $\bE$ and a pair of projections $P,\tilde P$ in $\End_{\bE}(E)$ such that
$\ell_{E,*}([P]-[\tilde P])=p$, where $\ell_{E}\colon \End_{\bE}(E)\to A(\bE)$ is the canonical (in general non-unital) embedding \eqref{sfvniojqrogvfevfdsvsfdvsfdvvsfdv}, see also Remark \ref{rgboiwjoijvmfldkmvskldfvsfdv} for notation.

By the assumption on the functor {$\bD\to \bE$}
   we can   choose a  family of  objects $(D_{i})_{i=1,\dots,m}$ of $\bD$  and an isometry 
$u\colon E\to \bigoplus_{i=1}^{m} D_{i}$.   For every $i$ in $\{1,\dots,m\}$ we define the morphism
$$u_{i}:=e_{i}^{*}u\colon E\to D_{i}\, ,$$
where $(e_{i})_{i=1}^{m}$ is the family of structure maps for the sum $\bigoplus_{i=1}^{m} D_{i}$.    Then  the $m\times m$-matrix with entries in $A(\bE)$ $$u':=\sum_{i=1}^{m} u_{i}[i,1] $$ is a   partial isometry in $ \Mat_{m}(A(\bE))$.   We consider the finite subset $F:=\{D_{1},\dots,D_{m}\}$ of objects in $\bD$.
 The conjugation map $u'(-)u^{\prime,*}\colon \Mat_{m}(A(\bE))\to \Mat_{m}(A(\bE))$  has values in the subalgebra
  $\Mat_{m}(A(F))$ of $\Mat_{m}(A(\bE))$, and $u^{\prime,*}u'= h(\id_{E})$, where   $$h:=\epsilon_{A(\bE),{m}} [1]\circ \ell_{E}\colon \End_{\bE}(E)\to \Mat_{m}(A(\bE))$$ 
and
   $\epsilon_{A(\bE),{m}}[1]$  is as in \eqref{sfdvoijio3rgfevfdsvf}.     By construction 
   we have
   $$p=  \epsilon_{A(\bE),{m}}[1]_{*}^{-1} h_{*}([P]-[\tilde P])\, .$$  We consider   $\tilde h:=u'hu^{\prime,*}$ as a homomorphism from 
   $ \End_{\bE}(E)$ to $\Mat_{m}( A(F))$ and let $h'$ be its composition with $\kappa\colon \Mat_{m}(A(F))\to\Mat_{m}( A(\bD)) $ and $ \Mat_{m}( A(\bD))\to \Mat_{m}( A(\bE))$.
  Then the chain of equalities  
\[p=  \epsilon_{A(\bE),{m}}[1]_{*}^{-1} h_{*}([P]-[\tilde P])\stackrel{\eqref{vasfvlkmlksaf24fwf}}{=}   \epsilon_{A(\bE),{m}}[1]_{*}^{-1} h'_{*}([P]-[\tilde P]) =\phi ( \epsilon_{A(\bD),{m}}[1]_{*}^{-1}\kappa_{*}\tilde h_{*} ([P]-[\tilde P]))
\]
shows that $p$ is in the image of $\phi$.

%
%
%
%
 
  \textbf{injectivity for $*=0$:}
  
  Let $p$ be in $K_{0}(A(\bD))$ such that $\phi(p)=0$. As explained in  Remark \ref{rgboiwjoijvmfldkmvskldfvsfdv} and the beginning of the proof of Lemma \ref{egoijotrgregewrgwerg} there exists a finite subset $F$ of objects in $\bD$ and 
 projections  $P,\tilde P$     in $\Mat_{n}(A(F))$ 
 such that $p=\kappa_{*}([P]-[\tilde P])$, where $\kappa\colon A(F)\to A(\bD)$ is the inclusion and $[P],[\tilde P]$ are considered in $K_{0}(A(F))$.

Using the inclusion $A(F)\to A(\bE)$ we can consider  the projections $P$ and $\tilde P$ as elements in $\Mat_{n}(\bE)$.
Since $\phi(p)=0$, after increasing $n$ if necessary there exists a 
   partial isometry $U$ and {a projection $Q$ (orthogonal to $P$ and $\tilde P$) in $\Mat_{n}(A(\bE)^{+})$ such that $UU^{*}=P+Q$ and $U^{*}U=\tilde P+Q$.   As in the proof of Lemma \ref{egoijotrgregewrgwerg}.\ref{eoijwoegregregwerg1} we can increase $F$ and change $U$ and $Q$ such that they  belong} to the subalgebra $\Mat_{n}(A(F))$ of $\Mat_{n}(A(\bE)^{+})$. Consequently, $[P]=[\tilde P]$ and hence $p=0$.

 \textbf{surjectivity for $*=1$:}
 
 Let $u$ be in $K_{1}(A(\bE))$. Since $\bE$ is additive, by Lemma \ref{tegiotwrgerwgregwgergre}.\ref{ergiojetwregewgg} we can find an object $E$ in $\bE$ and a unitary $U$  in $\End_{\bE}(E)$ {with} $\ell_{E,*}[U]=u$. 
 Then as  in the argument for surjectivity for $*=0$ we have
 $$u=\epsilon_{A(\bE),{m}}[1]_{*}^{-1}  h_{*}[U]=\epsilon_{A(\bE),{m}}[1]_{*}^{-1}  h'_{*}[U]=\phi(\epsilon_{{A(\bD),m}}[1]_{*}^{-1}  \kappa_{*}\tilde h_{*}([U]))$$
 so that $u$ is in the image of of $\phi$.

\textbf{injectivity for $*=1$:}

Let $u$  in $K_{1}(A(\bD))$ be such that $\phi(u)=0$.   As in the proof of Lemma   \ref{tegiotwrgerwgregwgergre}.\ref{ergiojetwregewgg} 
there exists a finite set of objects $F'$   of $\bD$  and $n$ in $\nat$ such that there is  an unitary   $U$ in $\Mat_{n}(A(F'))$  with 
$ [U]=u$. We let $[U]_{F',n}$ in $K_{1}(\Mat_{n}(A(F'))$ denote the corresponding class and $\kappa_{F',n}\colon \Mat_{n}(A(F'))\to \Mat_{n}(A(\bD))$ be the inclusion. Then we have the equality
\begin{equation}\label{gbnkjgbervfdvsdfvsdfv}
u=[U]\stackrel{\eqref{ewvoiuhiuosdfqwfwed1}}{=}\epsilon_{A(\bD),n}[1]_{*}^{-1} \kappa_{F',n,*} ([U]_{F',n})\, .
\end{equation}

We can further find an object $E$ in $\bE$ and a homomorphism
$\psi\colon\Mat_{n}(A(F'))\to \End_{\bE}(E)$ such that $\phi(u)=\ell_{E,*}(\psi_{*}([U]_{F',n}))$. 

Since $\phi(u)=0$,
 by  Lemma  \ref{tegiotwrgerwgregwgergre}.\ref{regwoihjoirgerwrgwerggrgwg}, after enlarging $E$ if necessary, we can assume 
that   $\psi_{*}[U]_{F',n}=0$. 

We let $F''$ be the union of the family $F$ chosen above  in order to represent $E$ as a subobject  and the  family $F'$.
Let now $\tilde h\colon \End_{\bE}(E)\to \Mat_{m}(A(F''))$ be as above. 
 Then
\begin{equation}\label{adsvoiqjiorfavsa}
\tilde h_{*}\psi_{*}[U]_{F',n}=0\, .
\end{equation}

By an inspection of the construction one observes that $$\tilde h  \circ \psi\colon \Mat_{n}(A(F'))\to \Mat_{m}(A(F''))$$ is  {the} conjugation $w( -) w^{*}$ by an element
in $w$ in $\Mat(m,n,A(F''))$ such that $w^{*}w=1_{A(F'),n}$. This implies that
$\tilde h_{*}\psi_{*}\colon K_{1}(\Mat_{n}(A(F')))\to K_{1}(\Mat_{n}(A(F'')))$  is equal to the map  induced by the inclusion $\iota\colon \Mat_{n}(A(F'))\to \Mat_{m}(A(F''))$. Consequently
\begin{align*}
u & \quad \stackrel{\mathclap{\eqref{gbnkjgbervfdvsdfvsdfv}}}{=} \quad \epsilon_{A(\bD),n}[1]_{*}^{-1} \kappa_{F',n,*} ([U]_{F',n})\\
& \quad = \quad \epsilon_{A(\bD),m}[1]_{*}^{-1} \kappa_{F'',m,*} \iota_{*}([U]_{F',n}) \\
& \quad = \quad \epsilon_{A(\bD),m}[1]_{*}^{-1} \kappa_{F'',m,*}    \tilde h_{*}\psi_{*}   ([U]_{F',n})\\
& \quad \stackrel{\mathclap{\eqref{adsvoiqjiorfavsa}}}{=} \quad 0\,.\qedhere
\end{align*}
\end{proof}

\begin{kor}\label{kor_Kcat_preserves_coprods}
$\Kcat$ preserves all  very small coproducts.
\end{kor}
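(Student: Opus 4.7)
The plan is to assemble three results already established in the paper. By Theorem \ref{qrgkljwoglergerggwergwrg} the functor $\Kcat\colon \nCcat \to \Sp$ is a finitary homological functor, and by Theorem \ref{wtoijgwergergrewfwergrg} it is Morita invariant in the sense of Definition \ref{ewrgiuhwerogwregrefwferfwrf}. The target $\infty$-category $\Sp$ is stable and cocomplete, so both hypotheses of Corollary \ref{kor_Morita_finitary_preserves_coprods} are met.

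Concretely, I would first invoke Corollary \ref{kor_Morita_finitary_preserves_coprods} to conclude that $\Kcat$ preserves all small coproducts, which subsumes very small coproducts. The inner workings of that corollary are already spelled out: Lemma \ref{lem_Morita_preserves_finite_coprods} handles finite coproducts (via the Dwyer--Kan localization at Morita equivalences, the fact that $\Ccat[W_{\Morita}^{-1}]$ is semi-additive by \cite[Thm.~1.4]{MR3123758}, and Lemma \ref{erighi9wrteogwergrgregwergwreg} for the product-preservation step), and then one writes every small coproduct as a small filtered colimit of its finite sub-coproducts and uses the finitary property (Definition \ref{qerughqiregqewfewqffq1}) to pass the colimit through $\Kcat$.

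There is essentially no obstacle beyond bookkeeping: all substantive work has been done in the cited results, and the corollary is a direct instance of Corollary \ref{kor_Morita_finitary_preserves_coprods} applied to $\Homol = \Kcat$.
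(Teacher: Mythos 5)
Your proposal is correct and follows exactly the paper's own argument: cite Theorem \ref{qrgkljwoglergerggwergwrg} for the finitary homological property, Theorem \ref{wtoijgwergergrewfwergrg} for Morita invariance, and then apply Corollary \ref{kor_Morita_finitary_preserves_coprods}. The additional unpacking of that corollary's internals is accurate but not needed.
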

\begin{proof}
By Theorem \ref{wtoijgwergergrewfwergrg} the functor  $\Kcat$ is Morita invariant, and by Theorem \ref{qrgkljwoglergerggwergwrg} it is finitary. The claim now follows  from Corollary \ref{kor_Morita_finitary_preserves_coprods}.
\end{proof}

\begin{rem}
In \cite[Rem.~10.12]{MR3123758} the authors review various definitions of {$K_{0}$-groups} for $C^{*}$-categories  appearing in the literature and compare them with their functor
$$K^{\text{D'A-T}}_{0}(\bC) \coloneqq \Hom_{\Ho(\Ccat[W_{\Morita}^{-1}]
)} 
(\C^{\sharp},\bC^{\sharp})$$
(see \eqref{qwrf98u89euf89qweuf98ewfeqwfqe} for $\sharp$) which is
Morita invariant by definition.
In particular in Point~(iii) {of that remark} they mention the version $\pi_{0}\Kcat(\bC)$ considered in the present paper.
It is not clear that these two $K_{0}$-functors are isomorphic.
\hB
\end{rem}

\section{Relative  Morita equivalences and Murray--von Neumann equivalent morphisms}\label{eiogjwoiggregwegewrgwegrwerg}

The notion of a Morita equivalence   is  only defined for unital  morphisms between  unital $C^{*}$-categories. The reason is that finite orthogonal sums or the  canonical embedding
$\bC\to \Idem(\bC)$ defined by $C\mapsto (C,\id_{C})$  require  the existence of identity endomorphisms.  In the {present section} we  extend   the notion  of a Morita equivalence   to the  relative situation of an ideal in a unital $C^{*}$-category. We then    show that Morita invariant homological functors
send   relative Morita equivalences    to equivalences.
As a  particular example of a relative Morita equivalence we  discuss the relative idempotent
completion of an ideal.
We furthermore
introduce the notion of Murray-von Neumann (MvN) equivalence between morphisms in $\nCcat$ and 
  show that    Morita invariant homological functors send MvN-equivalent morphisms  to equivalent morphisms.

 Let $\phi:\bK\to \bL$ be a morphism in $\nCcat$.
 \begin{ddd}\label{sitogdghsgfgsfg}
 The morphism $\phi$ is a relative Morita equivalence if it extends to a morphism of exact sequences in $\nCcat$
 \begin{equation}\label{therthtrgetrgtregtg}
 \xymatrix{0\ar[r]&\bK\ar[r]\ar[d]^{\phi}&\bC\ar[d]^{\psi}\ar[r]&\bC/\bK\ar[r]\ar[d]^{\kappa}&0\\0\ar[r]&\bL\ar[r]&\bD\ar[r]&\bD/\bL\ar[r]&0}
\end{equation}
 such that $\bC$ and $\bD$ are unital and $\psi$ and $\kappa$ are Morita equivalences.
 \end{ddd}
 Note that $\psi$ is implicitly assumed to be unital, and that the  assumptions imply that   the quotient categories and $  \kappa$   are unital, too.

 \begin{prop}\mbox{} \begin{enumerate}
 \item {For any group $G$ the functor $-\rtimes G:\Fun(BG,\nCcat)\to \nCcat$ preserves  relative Morita equivalences.}\item 
 If $G$ is an exact group, then  $-\rtimes_{r}G:\Fun(BG,\nCcat)\to \nCcat$ preserves  relative Morita equivalences.
 \end{enumerate}
 \end{prop}
\begin{proof}
{We write out the details  in the case of the reduced crossed product.
Since the   maximal  crossed product preserves exact sequences for every group 
in this case we can argue in an analogous manner
  without any restriction on $G$.}

Assume that we are given a   morphism of exact sequences  as in \eqref{therthtrgetrgtregtg}. Since the functor 
 $-\rtimes_{r}G$ preserves exact sequences by   Proposition \ref{rgkohpgbdghdfghf}
we get a morphism of exact sequences 
\begin{equation}\label{therthtrgetrgtregtg333}
 \xymatrix{0\ar[r]&\bK\rtimes_{r}G\ar[r]\ar[d]^{\phi\rtimes_{r}G}&\bC\rtimes_{r}G\ar[d]^{\psi\rtimes_{r}G}\ar[r]&(\bC/\bK)\rtimes_{r}G\ar[r]\ar[d]^{\kappa\rtimes_{r}G}&0\\0\ar[r]&\bL\rtimes_{r}G\ar[r]&\bD\rtimes_{r}G\ar[r]&(\bD/\bL)\rtimes_{r}G\ar[r]&0}
\end{equation}
By Proposition \ref{werijguhwerigvwergwer9} the morphisms  $\psi\rtimes_{r}G$ and 
$\kappa\rtimes_{r}G$ are again Morita equivalences. Therefore
$\phi\rtimes_{r}G$ is a relative Morita equivalence.
\end{proof}

%

We consider a functor $\Homol:\nCcat\to \bS$.

 \begin{prop}\label{qrgojqeorgqfewfeqwfqewf1}
 If $\Homol$ is a Morita invariant homological functor, then it sends relative Morita equivalences to equivalences.
   \end{prop}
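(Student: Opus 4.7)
The plan is to leverage directly the two defining properties of a homological functor together with Morita invariance. Given a relative Morita equivalence $\phi\colon \bK\to \bL$, I fix a witnessing morphism of exact sequences as in \eqref{therthtrgetrgtregtg}. Applying the functor $\Homol$ to this diagram produces a commutative ladder in the stable $\infty$-category $\bS$
\[
\xymatrix{\Homol(\bK)\ar[r]\ar[d]^{\Homol(\phi)}&\Homol(\bC)\ar[r]\ar[d]^{\Homol(\psi)}&\Homol(\bC/\bK)\ar[d]^{\Homol(\kappa)}\\\Homol(\bL)\ar[r]&\Homol(\bD)\ar[r]&\Homol(\bD/\bL)}
\]
in which, by Definition \ref{oihgjeorgwergergergwerg}, both horizontal lines are fibre sequences.

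Next I invoke the assumption that $\Homol$ is Morita invariant (Definition \ref{ewrgiuhwerogwregrefwferfwrf}): since $\psi$ and $\kappa$ are Morita equivalences between unital $C^{*}$-categories, the morphisms $\Homol(\psi)$ and $\Homol(\kappa)$ are equivalences in $\bS$. At this point the statement reduces to the standard fact that in a stable $\infty$-category a map of fibre sequences in which two of the three vertical arrows are equivalences forces the third to be an equivalence as well; equivalently, the induced map on fibres of $\Homol(\psi)$ and $\Homol(\kappa)$ is an equivalence, and this map is (canonically equivalent to) $\Homol(\phi)$. Hence $\Homol(\phi)$ is an equivalence.

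I do not foresee any real obstacle here: the argument is essentially a one-line diagram chase in $\bS$, and all the nontrivial input (that excisive/exact data is preserved by $\Homol$, and that Morita equivalences are inverted) has been packaged into the hypotheses on $\Homol$. The only minor point worth spelling out is the identification of $\Homol$ of the quotient $\bC/\bK$ (respectively $\bD/\bL$) with the cofibre of $\Homol(\bK)\to \Homol(\bC)$ (respectively $\Homol(\bL)\to \Homol(\bD)$), which is exactly the content of the fibre-sequence axiom combined with stability of $\bS$.
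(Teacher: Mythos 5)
Your proposal is correct and follows exactly the paper's own argument: apply $\Homol$ to the witnessing diagram \eqref{therthtrgetrgtregtg} to obtain a morphism of fibre sequences, observe that $\Homol(\psi)$ and $\Homol(\kappa)$ are equivalences by Morita invariance, and conclude that $\Homol(\phi)$ is an equivalence by the two-out-of-three property for maps of fibre sequences in a stable $\infty$-category. No differences worth noting.
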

   \begin{proof}
   Applying $\Homol$ to the diagram \eqref{therthtrgetrgtregtg} we get a morphism of fibre sequences
    \begin{equation*}\label{therthtrgetrgtregtg1}
 \xymatrix{ \Homol(\bK)\ar[r]\ar[d]^{\Homol(\phi)}&\Homol(\bC)\ar[d]^{\Homol(\psi)}\ar[r]&\Homol(\bC/\bK) \ar[d]^{\Homol(\kappa)}\\ \Homol(\bL)\ar[r]&\Homol(\bD)\ar[r]&\Homol(\bD/\bL) }
\end{equation*}
The assumptions imply that $\Homol(\psi)$ and $\Homol(\kappa)$ are equivalences. Hence $\Homol(\phi)$ is an equivalence, too.
      \end{proof}

We now turn to the notion of a relative  idempotent completion.
Let $\bK$ be in $\nCcat$ and assume that $\bK\to \bC$
is an ideal inclusion with $\bC$ in $\Ccat$.


\begin{ddd}\label{qrihweiofdwefffqfwefqwef}
The idempotent completion  $\bK\to \Idem^{\bC}(\bK)$ of $\bK$
relative to $\bC$ is  the inclusion of $\bK$ into the wide subcategory $\Idem(\bC)$ of morphisms belonging to $\bK$.
\end{ddd}

 \begin{rem}\label{rem_defn_Idem_ideal}
{Unfolding the definition and using the explicit model   
 of the idempotent completion $\Idem(\bC)$ 
described in Construction \ref{wijgowegwergf} 
we get the following explicit description of $\Idem^{\bC}(\bK)$:}
\begin{itemize}
\item objects: The  objects of  $\Idem^{\bC}(\bK)$ are the objects of $\Idem(\bC)$, i.e.,   pairs $(C,p)$ of an object $C$ of $\bC$ and a projection $p$ on $C$ belonging to $\bC$.  
\item  morphism: The morphisms $A\colon(C,p)\to (C',p')$ in $\Idem^{\bC}(\bK)$  
 are morphisms in $\Idem(\bC)$ 
 with the additional property  that $A$ belongs to $\bK$.
\end{itemize}
Note that $\Idem^{\bC}(\bK)$ depends on the embedding of $\bK$ into $\bC$.
The canonical inclusion $\bC\to \Idem(\bC)$ restricts to   the   morphism $\bK\to \Idem^{\bC}(\bK)$.  \hB
\end{rem}

Using the explicit description given in Remark \ref{rem_defn_Idem_ideal} one easily sees   that $ \Idem^{\bC}(\bK)\to \Idem(\bC)$  is an ideal inclusion.

\begin{ex}
For $\bK$ in $\nCcat$ a natural choice of an ideal inclusion is the embedding
$\bK\to \bM\bK$ of $\bK$ into its multiplier category. This leads to an idempotent completion
$\Idem^{\bM\bK}(\bK)$ which only depends on $\bK$. But since the transition to the multiplier category is only functorial for a restricted class of morphisms (see Proposition \ref{stkghosgfgsrgsegs}) one can not expect to get an idempotent completion functor for not necessarily unital $C^{*}$-categories in this way.  
 \hB
\end{ex}


\begin{prop}\label{qrgojqeorgqfewfeqwfqewf}
A relative idempotent completion is a relative Morita equivalence.
\end{prop}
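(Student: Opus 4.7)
My plan is to produce the required morphism of exact sequences of Definition \ref{sitogdghsgfgsfg} by taking $\bD := \Idem(\bC)$ and setting up the diagram
\[
\xymatrix{
0 \ar[r] & \bK \ar[r] \ar[d] & \bC \ar[r] \ar[d]^{\psi} & \bC/\bK \ar[r] \ar[d]^{\kappa} & 0\\
0 \ar[r] & \Idem^{\bC}(\bK) \ar[r] & \Idem(\bC) \ar[r] & \Idem(\bC)/\Idem^{\bC}(\bK) \ar[r] & 0
}
\]
in which $\psi$ is the canonical embedding into the idempotent completion and $\kappa$ is induced by the universal property of the quotient. The upper row is exact by hypothesis; the left square commutes since $\bK \to \Idem^{\bC}(\bK)$ is by definition the restriction of $\psi$ to $\bK$; and the lower row is exact because $\Idem^{\bC}(\bK) \to \Idem(\bC)$ is an ideal inclusion, as noted after Remark \ref{rem_defn_Idem_ideal}.

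The functor $\psi$ is a Morita equivalence by the discussion following Definition \ref{wetgoijwtoigwerrefwerfwefwref}, where it was recorded that $\bC \to \Idem(\bC)$ is always a Morita equivalence. The main remaining step is therefore to verify that $\kappa$ is a Morita equivalence. Using the explicit models of Construction \ref{wijgowegwergf} and Remark \ref{rem_defn_Idem_ideal}, for objects $C, C'$ of $\bC$ I would identify
\[
\Hom_{\Idem(\bC)/\Idem^{\bC}(\bK)}((C, \id_C), (C', \id_{C'})) \;=\; \Hom_{\bC}(C, C')/\Hom_{\bK}(C, C') \;=\; \Hom_{\bC/\bK}(C, C')\,,
\]
where the first equality uses that in $\Idem(\bC)$ the morphisms between $(C, \id_C)$ and $(C', \id_{C'})$ are precisely $\Hom_{\bC}(C, C')$, and that the ideal $\Idem^{\bC}(\bK)$ restricts on these to $\Hom_{\bK}(C, C')$. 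This shows that $\kappa$ is fully faithful.

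For essential surjectivity of $\kappa^{\sharp}$ I would exploit that an arbitrary object of $\Idem(\bC)/\Idem^{\bC}(\bK)$ has the form $(C, p)$ with $p$ a projection in $\End_{\bC}(C)$; the class $[p]$ defines a morphism $(C, p) \to (C, \id_C) = \kappa(C)$ which is an isometry (it satisfies $[p]^{*}[p] = [p] = \id_{(C, p)}$ and $[p][p]^{*} = [p]$), exhibiting $(C, p)$ as the image of the projection $[p]$ on the object $\kappa(C)$ in the further idempotent completion. After additive completion one treats finite orthogonal sums componentwise in the same manner, so every object of $(\Idem(\bC)/\Idem^{\bC}(\bK))^{\sharp}$ is unitarily isomorphic to an object in the image of $\kappa^{\sharp}$. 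Combined with full faithfulness this proves that $\kappa^{\sharp}$ is a unitary equivalence, so $\kappa$ is a Morita equivalence, and the displayed diagram certifies that $\bK \to \Idem^{\bC}(\bK)$ is a relative Morita equivalence.

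The main obstacle will be the careful bookkeeping needed to justify the morphism space identifications for the quotient $\Idem(\bC)/\Idem^{\bC}(\bK)$ and to track the isometry $[p]\colon (C, p) \to (C, \id_C)$ through the $\sharp$-completion; once the explicit descriptions from Construction \ref{wijgowegwergf} and Remark \ref{rem_defn_Idem_ideal} are unfolded the computations are routine.
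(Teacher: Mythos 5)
Your proof is correct and follows the same skeleton as the paper's: the same morphism of exact sequences with $\psi\colon\bC\to\Idem(\bC)$ and the induced $\kappa$ on the quotients, reducing everything to the claim that $\kappa$ is a Morita equivalence. Where you differ is in how that claim is established. The paper identifies the quotient $\bQ=\Idem(\bC)/\Idem^{\bC}(\bK)$ with the full subcategory of $\Idem(\bC/\bK)$ on objects $(C,[p])$ whose projection lifts to $\bC$, observes that both $\bC/\bK\to\Idem(\bC/\bK)$ and $\bQ\to\Idem(\bC/\bK)$ present idempotent completions (hence are Morita equivalences), and concludes by two-out-of-three. You instead verify the two halves of the Morita condition by hand: full faithfulness of $\kappa$ via the morphism-space identification $\Hom_{\bC}(C,C')/\Hom_{\bK}(C,C')=\Hom_{\bC/\bK}(C,C')$, and essential surjectivity of $\kappa^{\sharp}$ via the isometry $[p]\colon(C,p)\to(C,\id_C)$ exhibiting every object of the quotient as the image of a projection on an object in the image of $\kappa$. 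Both routes are sound; the paper's is a bit slicker once one accepts that $\bQ\to\Idem(\bC/\bK)$ is an idempotent completion (which uses essentially your isometry, applied inside $\Idem(\bC/\bK)$), while yours avoids introducing $\Idem(\bC/\bK)$ at the cost of tracking the isometries explicitly through the additive and idempotent completions in $(-)^{\sharp}$ -- a step you only sketch, but which is routine as you say.
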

\begin{proof}
 Let $\bK\to \bC$ be an ideal inclusion with $\bC$ in $\Ccat$. We must show that   
 the canonical morphism  $\bK\to \Idem^{\bC}(\bK)$ is a relative Morita equivalence.
We consider the exact sequence
$$0\to \bK\to \bC \to \bC/\bK\to 0$$ in $\nCcat$.
We then get an exact sequence
$$0\to \Idem^{\bC}(\bK)\to \Idem(\bC)\to    \bQ\to 0\ ,$$
where $  \bQ$ is defined as the quotient.
 We have a canonical  morphism
$ \bQ\to \Idem(\bC/\bK)$ which sends $(C,p)$ to $(C,[p])$, 
and which is the obvious map on morphisms.  
Here $[p]$ denotes the image in  $\bC/\bK$ of a morphism $p$ in $\bC$.
Unfolding the definition  we see that this morphism is  faithful.  In order 
 to see that it is also full    note that 
if $[A]\colon (C,[p])\to (C',[p'])$  is a morphism in ${\Idem(\bQ)}$, then  the relations  $[p'][A]=[A]=[A][p]$ imply that
$[A]=[p'Ap]$. Hence $[A]$ can be  lifted to a morphism $p'Ap\colon (C,p)\to (C',p')$ in $\Idem(\bC)$.
Thus we can identify    $  \bQ$ with  the full subcategory of $\Idem(\bC/\bK)$ consisting of objects $(C,[p])$ such that $[p]$ lifts to a projection in $\bC$.
We obtain   the following {commutative} diagram:
$$\xymatrix{0\ar[r]&\bK\ar[r]\ar[d]^{!!!}&\bC\ar[r] \ar[d]^{!}&\bC/\bK\ar[r]\ar[d]^{!!}\ar@/^3.5cm/[dd]^{!}\ar[d]&0\\
0\ar[r]&\Idem^{\bC}(\bK)\ar[r]&\Idem(\bC)\ar[r]& \bQ\ar[r]\ar[d]^{!}&0\\&&&\Idem(\bC/\bK)&}$$
The arrows marked by $!$  present idempotent completions of   $C^{*}$-categories and therefore are    Morita equivalences, as observed after Definition \ref{wetgoijwtoigwerrefwerfwefwref}.   It is immediate from Definition \ref{wetgoijwtoigwerrefwerfwefwref}  that  Morita equivalences satisfy the two-out-of-three principle. Therefore 
 the morphism marked by $!!$ is a Morita equivalence. In view of Definition \ref{sitogdghsgfgsfg}
  the morphism marked by $!!!$ is a relative Morita equivalence.
\end{proof}


%
%
%

Let $f,g\colon \bC\to\bD$ two {morphisms} in $\nCcat$.  

\begin{ddd}\label{weoigjwtgergregweg}
We say  that $f$ and $g$ are  unitarily isomorphic if there exists a unitary multiplier isomorphism between $f$ and $g$.  
\end{ddd}

If $\bD$ is unital, then this definition reduces to the usual notion of unitarily isomorphic morphisms.

\begin{rem} \label{gjirogggfdg}Two morphisms $f,g\colon \bC\to\bD$  in $\nCcat$  are unitarily isomorphic if and only if there exists an ideal inclusion $i:\bD\to \bE$ such that $i\circ f$ and $i\circ g$ are unitarily isomorphic. In one direction,  if $f$ and $g$ are 
unitarily isomorphic, then we can take the ideal inclusion $\bD\to \bM\bD$. Vice versa, if $u:i\circ f\to i\circ g$ is a unitary isomorphism for some ideal inclusion
$i:\bD\to \bE$, then the image of $u$ under the canonical morphism
$\bE\to \bM\bD$ gives an unitary multiplier isomorphism between $f$ and $g$.
\hB \end{rem}

Let $\Homol\colon \nCcat\to \bS$ be a functor.

\begin{lem}\label{efijoijgoqegrqgqrgqrg}
If $\Homol$ sends unitary equivalences to equivalences and
 $f$ and $g$ are unitarily isomorphic, then we have an equivalence  $\Homol(f)\simeq \Homol(g)$. 
\end{lem}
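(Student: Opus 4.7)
The plan is to build a ``cylinder'' category $\bC[I]$ that receives two unitarily equivalent inclusions of $\bC$, together with a single morphism to $\bD$ realizing both $f$ and $g$.

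I would define $\bC[I]\in\nCcat$ by taking $\Ob(\bC[I]):=\Ob(\bC)\times\{0,1\}$ and $\Hom_{\bC[I]}((C,i),(C',j)):=\Hom_{\bC}(C,C')$ for all $i,j$, with composition and involution inherited from $\bC$. There are three obvious morphisms: the inclusions $\iota_0,\iota_1\colon\bC\to\bC[I]$ with $\iota_i(C)=(C,i)$ acting as the identity on morphism spaces, and the projection $\pi\colon\bC[I]\to\bC$ with $\pi(C,i)=C$, satisfying $\pi\circ\iota_0=\pi\circ\iota_1=\id_\bC$. A key point is that $\pi$ is a unitary equivalence in $\nCcat$ in the sense of Definition \ref{ihjigwjegoerwgwerffwerfw}: it is manifestly fully faithful, and the ``formal identity'' supplies a unitary multiplier isomorphism $(C,0)\to(C,1)$ in $\bM(\bC[I])$ for every object $C$, so $\bM\pi$ is essentially surjective (cf.\ Remark \ref{retjheirotjhoergrtgertgetrgterg}). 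Consequently $\iota_0$ and $\iota_1$, being right-inverses to the unitary equivalence $\pi$ and obviously fully faithful, are themselves unitary equivalences.

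Next, writing $u=(u_C)_{C\in\Ob(\bC)}$ for the unitary multiplier isomorphism witnessing $f\simeq g$, I would define $H\colon\bC[I]\to\bD$ on objects by $H(C,0):=f(C)$ and $H(C,1):=g(C)$, and on a morphism $h\colon(C,i)\to(C',j)$ by
\[
H(h):=\begin{cases} f(h), & (i,j)=(0,0),\\ u_{C'}f(h)=g(h)u_C, & (i,j)=(0,1),\\ f(h)u_C^{*}=u_{C'}^{*}g(h), & (i,j)=(1,0),\\ g(h), & (i,j)=(1,1).\end{cases}
\]
All these expressions land in $\bD$ because $\bD$ is an ideal in $\bM\bD$. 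A short case check using the relations $u_{C'}f(h)=g(h)u_C$, $u_C^{*}u_C=1_{f(C)}$, $u_Cu_C^{*}=1_{g(C)}$ shows that $H$ preserves composition and the involution, and by construction $H\circ\iota_0=f$ and $H\circ\iota_1=g$.

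Finally, applying $\Homol$ and using that $\Homol$ sends unitary equivalences to equivalences, the morphism $\Homol(\pi)$ is an equivalence with $\Homol(\pi)\circ\Homol(\iota_0)=\id=\Homol(\pi)\circ\Homol(\iota_1)$, so $\Homol(\iota_0)\simeq\Homol(\iota_1)$ as morphisms into $\Homol(\bC[I])$. Composing with $\Homol(H)$ gives
\[
\Homol(f)=\Homol(H)\circ\Homol(\iota_0)\simeq\Homol(H)\circ\Homol(\iota_1)=\Homol(g),
\]
as desired. The only step requiring real care is the verification that $\pi$ is a unitary equivalence in the non-unital sense, since the isomorphism $(C,0)\cong(C,1)$ is implemented by a multiplier rather than an honest morphism; the bookkeeping of the formulas for $H$ and their compatibility with composition and the $*$-operation is routine.
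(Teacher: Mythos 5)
Your proposal is correct and follows essentially the same route as the paper: the paper's proof constructs the same two-copy category (there called $\bE$, with objects $C_{0},C_{1}$), the same inclusions $\iota_{0},\iota_{1}$ and projection $p$ witnessed as unitary equivalences via the formal-identity multiplier, and the same functor $h$ to $\bD$ with the identical four-case formula using $u$. The only cosmetic difference is notation ($\bC[I]$ versus $\bE$) and that you spell out the equivalent forms $u_{C'}f(h)=g(h)u_{C}$ of each matrix entry, which the paper leaves implicit.
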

\begin{proof}
We define a category $\bE $ in $\nCcat$ as follows playing the role of an interval objects.
\begin{enumerate}
\item objects: $\Ob(\bE  ):=\Ob(\bC)\sqcup \Ob(\bC)$.
 For $C$ in $\bC$ we let $C_{0}$ and $C_{1}$ denote the two copies of $C$ in $\bE $.
 \item morphisms: For $C,C'$ in $\bC$ and   $i,j$ in $\{0,1\}$ we
 set $\Hom_{\bE }(C_{i},C_{j}'):=\Hom_{\bC}(C,C')$. 
 \item composition and involution are defined in the obvious way.
\end{enumerate}
We have two inclusions $\iota_{0},\iota_{1}\colon \bC\to \bE $ sending $C$ to $C_{0}$ and $C_{1}$, respectively. We further have a projection $p\colon \bE \to \bC$
defined in the obvious way. Note that $p \circ \iota_{0} = p \circ \iota_{1} = \id_{\bC}$. Furthermore, we have unitary multiplier  isomorphisms $v_{i}\colon \iota_{i}\circ p\to \id$. For example, ${v_{0}}$ is given by
$v_{0,C_{0}}=\id_{C_{0}}$ and $v_{0,C_{1}}=\id_{C}$ in $\Hom_{\bM\bE }(C_{0},C_{1})$.
 We conclude that $p$ is a unitary equivalence and $\iota_{0}$, $\iota_{1}$ are both unitary equivalences which are inverse to $p$.   In particular we have an equivalence \begin{equation}\label{qwefqwefewdasafeqwfdaf}
\Homol(\iota_{0})\simeq \Homol(\iota_{1})\ .
\end{equation}


Let $u\colon f\to g:\bC\to \bD$ be the unitary multiplier  isomorphism. 
We then define a {morphism}
$h\colon \bE\to \bD$ as follows:
\begin{enumerate}
\item objects: For $C$ in $\bC$ we set $h(C_{0}) \coloneqq f(C)$ and $h(C_{1}) \coloneqq g(C)$.
\item morphisms: We distinguish the following four cases.
\begin{enumerate}
\item $c\colon C_{0}\to C_{0}'$ is sent to $h(c) \coloneqq f(c)$.
\item $c\colon C_{0}\to C_{1}'$ is sent to $h(c) \coloneqq u_{C'} f(c)$.
\item $c\colon C_{1}\to C_{0}'$ is sent to $h(c) \coloneqq  u^{*}_{C'} g(c)$.
\item $c\colon C_{1}\to C_{1}'$ is sent to $h(c) \coloneqq g(c)$.
\end{enumerate}
 \end{enumerate}
One checks that this defines a   morphism in $\nCcat$. We note that 
$h\circ \iota_{0}=f$ and $h\circ \iota_{1}=g$.
We now conclude
\[
\Homol(f)\simeq \Homol(h)\circ \Homol(\iota_{0})\stackrel{\eqref{qwefqwefewdasafeqwfdaf}}{\simeq} \Homol(h)\circ \Homol(\iota_{1})\simeq \Homol(g)\,.\qedhere
\]
\end{proof}

In the next proposition we weaken the assumption in Lemma \ref{weoigjwtgergregweg} from  unitarily isomorphic to  Murray--von Neumann
 equivalent. We start with defining this notion for a pair    of  {morphisms}
  $f,g\colon \bC\to \bD$ in $\nCcat$.

\begin{ddd}\label{wkjrthowrtgewgwergw}
 We say  that $f$ and $g$ are   Murray--von Neumann equivalent (MvN equivalent) if there exists  a natural multiplier transformation $u \colon   f\to   g$ given by $u=(u_{C})_{C\in \bC}$, where $u_{C}$ is a partial  isometry in $\bM\bD$ for every object $C$ of $\bC$ such that 
{$u^{*}_{C'}u_{C'}f(k) = f(k)$ and $g(k)u_{C}u_{C}^{*} = g(k)$}
for all morphisms $k \colon C\to C'$ in $\bC$.
\end{ddd}

\begin{rem}In analogy to Remark \ref{gjirogggfdg}
   $f$ and $g$ are   MvN equivalent  if and only if there exists an ideal inclusion  $i \colon \bD\to \bE$    and a natural transformation $u \colon i\circ f\to i\circ g$ given by $u=(u_{C})_{C\in \bC}$, where $u_{C}$ is a partial isometry in $\bE$ for every object $C$ of $\bC$ such that 
{$u^{*}_{C'}u_{C'}f(k) = f(k)$ and $g(k)u_{C}u_{C}^{*} = g(k)$}
for all morphisms $k \colon C\to C'$ in $\bC$. \hB
\end{rem}

%
%
%


We consider   two  {morphisms}
  $f,g\colon \bC\to \bD$  in $\nCcat$ and a functor $\Homol\colon \nCcat\to \bS$.
\begin{prop}\label{wkthgwfgwegwesdf}
If $f$ and $g$ are    MvN equivalent  and $\Homol$ is a Morita invariant homological functor, then $\Homol(f) \simeq \Homol(g)$.
\end{prop}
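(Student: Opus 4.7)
The plan is to adapt the argument of Lemma \ref{efijoijgoqegrqgqrgqrg} to the setting of partial isometries. I would build a mapping cylinder category $\bE$ with $\Ob(\bE) = \Ob(\bC) \sqcup \Ob(\bC)$ (copies denoted $C_0, C_1$), morphism spaces $\Hom_{\bE}(C_i, C'_j) = \Hom_{\bC}(C, C')$, and obvious composition and involution, together with the standard functors $\iota_0, \iota_1 \colon \bC \to \bE$ and $p \colon \bE \to \bC$ satisfying $p \iota_i = \id_{\bC}$ and admitting unitary natural multiplier isomorphisms $v_i \colon \iota_i p \to \id_{\bE}$ in $\bM\bE$, exactly as in that proof.

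The key step is to construct an interpolating morphism $h \colon \bE \to \bD$ in $\nCcat$ with $h \iota_0 = f$ and $h \iota_1 = g$. First I would extract from the MvN data two identities: combining naturality $u_{C'} f(k) = g(k) u_C$ with $u_{C'}^* u_{C'} f(k) = f(k)$ gives $f(k) = u_{C'}^* g(k) u_C$, and combining naturality with $g(k) u_C u_C^* = g(k)$ gives $g(k) = u_{C'} f(k) u_C^*$. I then define $h$ on objects by $h(C_0) = f(C), h(C_1) = g(C)$, and on a morphism $k \colon X \to Y$ of $\bC$ by
\[
h(k \colon X_0 \to Y_0) = f(k), \quad h(k \colon X_1 \to Y_1) = g(k),
\]
\[
h(k \colon X_0 \to Y_1) = u_Y f(k), \quad h(k \colon X_1 \to Y_0) = u_Y^* g(k).
\]
These formulas take values in $\bD$ because $\bD$ is an ideal in $\bM\bD$, so left (or right) composition of a multiplier morphism with a morphism of $\bD$ stays in $\bD$.

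The main obstacle is to verify that $h$ is a morphism in $\nCcat$, which amounts to checking compatibility with composition in each of the eight index combinations $(i,j,m) \in \{0,1\}^3$ and with the involution in each of the four cases. The nontrivial verifications proceed by systematic use of the two derived identities together with the original MvN relations $u_{C'}^* u_{C'} f(k) = f(k)$ and $g(k) u_C u_C^* = g(k)$, which absorb the projections $u_C^* u_C$ and $u_C u_C^*$ at exactly the right places; the $*$-compatibility uses the same ingredients.

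Finally, since $\iota_0$ and $\iota_1$ are unitary equivalences in $\nCcat$ both inverse to $p$ via the $v_i$, the homological functor $\Homol$ (which sends unitary equivalences to equivalences by Definition \ref{oihgjeorgwergergergwerg}) renders $\Homol(\iota_0)$ and $\Homol(\iota_1)$ both inverse to $\Homol(p)$ and therefore equivalent. We conclude
\[
\Homol(f) = \Homol(h) \circ \Homol(\iota_0) \simeq \Homol(h) \circ \Homol(\iota_1) = \Homol(g).
\]
I note that this argument uses only the fact that $\Homol$ sends unitary equivalences to equivalences, so the Morita invariance hypothesis is not strictly needed; the homological functor structure alone suffices.
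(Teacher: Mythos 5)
Your proof is correct, and it takes a genuinely different route from the paper's. The paper does not build a mapping cylinder over $\bD$ itself; instead it forms the projections $p_{C}=u_{C}^{*}u_{C}$ and $q_{C}=u_{C}u_{C}^{*}$, lifts $f$ and $g$ to functors $\tilde f,\tilde g\colon \bC\to \Idem^{\bM\bD}(\bD)$ landing in the relative idempotent completion, observes that $\tilde u_{C}:=q_{C}u_{C}p_{C}$ is now a genuine \emph{unitary} multiplier isomorphism $\tilde f\to\tilde g$ there, applies Lemma \ref{efijoijgoqegrqgqrgqrg} to get $\Homol(\tilde f)\simeq \Homol(\tilde g)$, and then descends back to $\bD$ using that the canonical inclusion $c\colon \bD\to \Idem^{\bM\bD}(\bD)$ is a relative Morita equivalence and hence inverted by $\Homol$ (Propositions \ref{qrgojqeorgqfewfeqwfqewf} and \ref{qrgojqeorgqfewfeqwfqewf1}) --- this last step is exactly where the Morita invariance hypothesis enters. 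Your approach short-circuits this by encoding the partial isometries directly into the interpolating morphism $h\colon\bE\to\bD$; I checked the eight composition cases and the four involution cases and they all close up using naturality $u_{C'}f(k)=g(k)u_{C}$ together with the two absorption identities, e.g.\ $f(l)u_{Y}^{*}=u_{Z}^{*}u_{Z}f(l)u_{Y}^{*}=u_{Z}^{*}g(l)u_{Y}u_{Y}^{*}=u_{Z}^{*}g(l)$. Your closing observation is also right: since the argument only invokes invariance under unitary equivalences, it proves the statement without the Morita invariance hypothesis, which is strictly stronger than what the paper establishes. What the paper's route buys in exchange is modularity --- it reuses Lemma \ref{efijoijgoqegrqgqrgqrg} and the relative Morita machinery as black boxes rather than redoing the cylinder verification with partial isometries --- but your direct construction is the more economical argument.
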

\begin{proof}
Let $u=(u_{C})_{C\in \Ob(\bC)}\colon f \to g$ be the natural multiplier transformation implementing  the MvN equivalence between the  morphisms $f$ and $g$. 
 For every object $C$ of $\bC$ we
 have projections $p_{C}:=u^{*}_{C}u_{C}$ on $f(C)$  and $q_{C}:=u_{C}u_{C}^{*}$ on $g(C)$ belonging to $\bM\bD$.
 
 If $k \colon C\to C'$ is a morphism in $\bC$, then  we have
  $p_{C'}f(k)=f(k)$ by assumption.  Note also that
  \begin{align*}
f(k)  p_{C} & = (f(k) p_{C})^*{}^*
  = (p_{C} f(k^*))^*\\
& = f(k^*)^*
  = f(k)\,.
\end{align*}
 Consequently  the {morphism}  $f$ canonically induces a {morphism}
$\tilde f \colon \bC\to \Idem^{\bM\bD}(\bD)$ {in $\nCcat$}  given as follows:
\begin{enumerate}
\item objects: The {morphism} $\tilde f$ sends the object $C$ in $\bC$  to the object $(f(C),p_{C})$ of $\Idem^{\bM\bD}(\bD)$.
\item morphisms: The {morphism} $\tilde f$ sends a morphism $k \colon C\to C'$ in $\bC$ to the morphism $f(k) \colon (f(C),p_{C})\to (f(C'),p_{C'})$.
\end{enumerate}
We have a similarly defined {morphism}
$\tilde g \colon \bC\to \Idem^{\bM\bD}(\bD)$.

We let $\Emb \colon \Idem^{\bM\bD}(\bD)\to \Idem^{\bM\bD}(\bD)$ be the endo{morphism} given as follows:
\begin{enumerate}
\item objects: $\Emb$
sends the object $(D,p)$   to the object $D=(D,\id_{D})$. 
 \item morphisms:   $\Emb$ sends  a morphism $\phi \colon (D,p)\to (D',p')$  to the morphism $\phi \colon D\to D'$.
\end{enumerate}
 We have the following {commutative} diagram
\begin{equation}\label{eq_diag_f_factors_Idem}
\xymatrix{
\bC \ar[rr]^-{f} \ar[d]_-{\tilde{f}} && \bD \ar[d]^{c} \\
\Idem^{\bM\bD}(\bD) \ar[rr]_-{\Emb} && \Idem^{\bM\bD}(\bD)
}
\end{equation}
where 
$c$ is the canonical inclusion.
We have a similar diagram for $g$.

We now note that $u$ defines a  unitary multiplier  isomorphism   $\tilde{u}\colon \tilde{f} \to \tilde{g}${.
I}ndeed, we have $\tilde{u}=(\tilde u_{C})_{C\in \Ob(\bC)}$, where 
\[
\tilde u_{C}=q_{C}u_{C}p_{C} \colon (f(C),p_{C})\to (g(C),q_{C})
\]
is a unitary  multiplier isomorphism in $\Idem^{\bM\bD}(\bD)$.
By Lemma~\ref{efijoijgoqegrqgqrgqrg} we conclude that  $\Homol(\tilde{f}) \simeq \Homol(\tilde{g})$. This implies $\Homol(\Emb \circ \tilde{f}) \simeq \Homol(\Emb \circ \tilde{g})$. Applying $\Homol$ to the {commutative} square  \eqref{eq_diag_f_factors_Idem}  this equivalence  implies   the equivalence
$\Homol(c\circ f)\simeq \Homol(c\circ g)$.
Since we assume that  $\Homol$ is  Morita invariant
 we know by Propositions \ref{qrgojqeorgqfewfeqwfqewf} and \ref{qrgojqeorgqfewfeqwfqewf1}
  that $\Homol(c)$ is an equivalence. We conclude that
$\Homol(f)\simeq \Homol(g)$.  
\end{proof}

\section{Weak Morita equivalences}\label{eoigwjeoigregewrgwergwerg}

In this section we introduce the notion of a weak Morita equivalence in $\nCcat$ and show that a weak Morita equivalence induces an equivalence in $K$-theory. 
In contrast to the algebraic notion {of} Morita {equivalence} as introduced in Section~\ref{erogijogergergwgerg9} {the} {notion of a} weak Morita {equivalence is}
 of analytic nature. {It involves} the possibility of norm-approximating morphisms in a larger category by morphisms in a smaller one. The typical example of a weak Morita equivalence  is the left upper corner  inclusion of $\C$ into the compact operators on a Hilbert space which is considered as a functor between single-object $C^{*}$-categories.


Let $\bD$ be in $\nCcat$ and  
$S$ be a subset of the set of objects of $\bD$.
  
\begin{ddd}\label{q3rioghoergerwgregwg9}
$S$ is weakly generating if
for every $D$ in $\bD$, {any} finite family
$(f_{i})_{i\in I}$ of morphisms $f_{i}\colon D_{i}\to D$ in $\bD$, and any $\varepsilon$ in $(0,\infty)$ 
there exists a multiplier isometry $u\colon C\to D$  in $\bD$ such that $\|f_{i}-uu^{*}f_{i}\|\le \varepsilon$ for all $i$ in $I$ and $C$ is unitarily isomorphic in $\bM\bD$ to a finite orthogonal sum in $\bM\bD$ of objects in $S$.
\end{ddd}

\begin{rem}\label{weitjgowegrefgrewfr}
If $\bM\bD$  admits finite orthogonal sums, then the condition in Definition \ref{q3rioghoergerwgregwg9} can be simplified. In this case it suffices to check that for every morphism
$f:D'\to  D$ in $\bD$ and $\epsilon$ in $(0,\infty)$ there exists 
a multiplier  isometry $u:C\to D$ from an object  which is unitarily isomorphic to a finite sum in $\bM\bD$  of objects of $S$ such that
$\|f-uu^{*}f\|\le \epsilon$.

In fact, given a family $(f_{i})_{i\in I}$   as in the Definition  \ref{q3rioghoergerwgregwg9} we choose an orthogonal sum $(\bigoplus_{i\in I} D_{i}, (e_{i})_{i\in I})$ of the family $(D_{i})_{i\in I}$ in $\bM \bD$. We then consider  the morphism 
$f:=\sum_{i\in I}f_{i} e_{i}^{*}:\bigoplus_{i\in I}D_{i}  \to  D$ in $\bD$. Assume that  $u:C\to \bigoplus_{i\in I}D_{i}$ is a  multiplier isometry    such that $\|f-u u^{*}f\|\le \epsilon$. Then we have
 $$\|f_{i} -   uu^{*} f_{i}\|=\|(f-uu^{*} f )e_{i}\|\le \epsilon$$
 for all $i$ in $I$.
\hB
\end{rem}
  
Let  $\phi\colon \bC\to \bD$ be a {morphism} in $\nCcat$. 

\begin{ddd}\label{ergiowergerfwrfwfref}
The {morphism}
$\phi$ is a weak Morita equivalence if it has the following properties: 
\begin{enumerate}
\item $\phi$ is fully faithful.
\item\label{weiorgwegrewfwe}  $\phi(\Ob(\bC))$ is weakly generating.
\end{enumerate}
\end{ddd}

\begin{rem}
%
%
%
The notion of a weak Morita equivalence should not be confused
with the notion of a Morita equivalence. In general, a Morita equivalence need not be a weak Morita equivalence or vice versa, {see Example \ref{rtihortegertgrtget9} below}. Our motivation to use the term \emph{Morita} also in this situation is that a weak Morita equivalence $\phi:\bC\to \bD$ gives
rise to a Morita $(A(\bC),A(\bD))$-bi-module which is at the heart of the proof of Theorem \ref{eowigjwoigwregrgwegrwreg}.
\hB
\end{rem}

\begin{ex}\label{rtihortegertgrtget9}
Let $X$ be a very small set and consider the $C^{*}$-algebra $L^{\infty}(X)$ as an object of $\Ccat$.
Then  the morphism $L^{\infty}(X)\to L^{\infty}(X)^{\sharp}$ in $\Ccat$ is a Morita equivalence. If $X$ has more than one point, then it is not a weak Morita equivalence. In fact, let $Y$ be a proper non-empty subset of $X$. Then we can consider
$D:=(L^{\infty}(X),\chi_{Y})$ as an object of $ L^{\infty}(X)^{\sharp}$, where $\chi_{Y}$ denotes the projection given by the multiplication by the characteristic function of $Y$. We consider the morphism $\id_{D}:D\to D$ in $ (L^{\infty}(X),\chi_{Y})$.
It can not be approximated by morphisms which factorize over objects which are unitarily isomorphic to orthogonal sums of copies of the object $L^{\infty}(X)$. In fact,  if $I$ is finite, but not empty, then there does not exist any isometry
$\bigoplus_{i\in I} L^{\infty}(X)\to D$.

The left-upper corner inclusion $\C\to K(\ell^{2})$ considered as a morphism in $\nCcat$ 
is the prototypical example of a weak Morita equivalence. In fact,   $K(\ell^{2})$ is an ideal in the additive $C^{*}$-category $B(\ell^{2})$, and  therefore by Remark \ref{weitjgowegrefgrewfr} the Condition \ref{ergiowergerfwrfwfref}.\ref{weiorgwegrewfwe}  is equivalent to the condition that
every  element of $ K(\ell^{2})$ can be approximated
by finite-dimensional operators.
But $\C\to K(\ell^{2})$ is not a Morita equivalence since $K(\ell^{2})$ is not unital.  \hB
 \end{ex}

%
%
%


Let  $\phi\colon \bC\to \bD$ be a {morphism} in $\nCcat$. 

\begin{theorem}\label{eowigjwoigwregrgwegrwreg}
If $\phi$ is a weak Morita equivalence, then
\[
\Kcat(\phi)\colon \Kcat(\bC)\to \Kcat(\bD)
\]
is an equivalence.
\end{theorem}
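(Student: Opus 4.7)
My plan is to reduce the theorem to a statement about $C^*$-algebras and then carry out the standard $K$-theory machinery, using the weak Morita equivalence as the source of the necessary approximation lemmas. By Definition \ref{ergiowergregrdsvsv} together with Lemma \ref{sfdbjoqi3gegergewgerg1}, it suffices to show that the induced homomorphism $A(\phi)\colon A(\bC)\to A(\bD)$ of $C^*$-algebras induces an isomorphism on $K_*$ for $*=0,1$; Bott periodicity (Proposition \ref{fiodgerwvfdsvdvsfv}) takes care of the rest.

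The next step is to translate the two defining properties of a weak Morita equivalence into algebraic conditions. Since $\phi$ is fully faithful, $A(\phi)$ is an isometric embedding identifying $A(\bC)$ with the closed $^*$-subalgebra $A(\phi(\bC))\subseteq A(\bD)$ generated by morphisms with source and target in $\phi(\Ob(\bC))$; in particular, elements of $A(\phi(\bC))$ are supported on the $\phi(\bC)\times\phi(\bC)$-block of $A(\bD)$. The weak generation property, reformulated as in Remark \ref{weitjgowegrefgrewfr} after passing to $\bM\bD$, provides the key approximation tool: for any finite subset $F$ of morphisms of $\bD$ with common target and any $\varepsilon>0$ there is a multiplier isometry $u\colon C\to D$ in $\bM\bD$, whose domain $C$ is unitarily isomorphic in $\bM\bD$ to a finite orthogonal sum of objects of $\phi(\bC)$, such that $\|f-uu^*f\|\le\varepsilon$ for all $f\in F$. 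Via the isomorphism $\phi\colon \End_\bC(\oplus C_i)\stackrel{\cong}{\to}\End_\bD(C)$ this identifies the compression $u^*(-)u$ with an element in a matrix algebra $\Mat_m(A(\bC))$, following the pattern of \eqref{urehfuiehisudhciousdcdsa} and \eqref{vasvfjor3gervvfsdvfdvsv}.

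The third step is to verify surjectivity and injectivity of $K_*(A(\phi))$ along the lines of the computations in Lemmas \ref{egoijotrgregewrgwerg} and \ref{tegiotwrgerwgregwgergre} used in the proof of Morita invariance of $\Kcat$ (Theorem \ref{wtoijgwergergrewfwergrg}). For surjectivity in $K_0$, represent a class by a projection $P\in\Mat_n(A(\bD)^+)$; apply the approximation above to all finitely many morphisms that occur as matrix entries (or as spectral functions of $P$) to obtain an isometry $u$ such that $V\coloneqq u[1,1]$ satisfies $\|P-VV^*PVV^*\|$ arbitrarily small, so that $V^*PV$ is close to a projection in $\Mat_n(\End_\bD(C))\cong\Mat_n(\End_\bC(\oplus C_i))$; use continuous functional calculus to correct the approximate projection to an exact one in $A(\phi(\bC))$ representing the same class. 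Injectivity is handled by the same manoeuvre applied to a partial isometry witnessing triviality. The $K_1$ analogues use unitaries and paths of unitaries, with Lemma \ref{ebiogregrvsvdfvsfdvsfd} supplying the Lipschitz control needed to pass to the limit in the approximating matrix sizes.

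The main obstacle will be bookkeeping: one must ensure that a single isometry $u$ (or a compatible sequence of them as $\varepsilon\to 0$) absorbs simultaneously all the finitely many morphisms appearing in a given matrix-algebra representative, including both a projection/unitary and the partial isometry or homotopy witnessing its $K$-theoretic triviality, and that the approximate relations can be upgraded to exact ones by functional calculus without leaving $A(\phi(\bC))$. The non-unitality of $\bD$ forces the isometries to live in $\bM\bD$ rather than $\bD$, but this is precisely what is built into Definition \ref{q3rioghoergerwgregwg9} and causes no additional trouble once one works consistently inside $\Mat_n(A(\bD)^+)$ and its multiplier algebra.
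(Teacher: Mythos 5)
Your reduction to $A(\phi)\colon A(\bC)\to A(\bD)$ is the same first step as the paper's (modulo the fact that $A$ is only functorial for morphisms injective on objects, which the paper handles by an auxiliary factorization at the end), but from there the routes diverge completely. The paper does not run an element-level approximation argument at all: it uses the two hypotheses to build an explicit $(A(\bC),A(\bD))$-Morita--Rieffel bimodule, namely the closure $H$ of $\bigoplus_{C,D}\Hom_{\bD}(D,\phi(C))$ inside $A(\bD)$. Full faithfulness gives the $A(\bC)$-valued inner product and the identification $A(\bC)\cong K(H)$; the weak generation property is used exactly once, to prove fullness of the $A(\bD)$-valued inner product; and the conclusion is then delegated to classical Morita invariance of $C^{*}$-algebra $K$-theory (Lemma \ref{qwriufhqeifqfewfffqwfe}). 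Your proposal instead tries to re-prove that Morita invariance by hand in this situation. That is not impossible, but it buys nothing over the bimodule argument and forces you to confront all the bookkeeping (unitalizations, the fact that $\bD$ is \emph{not} assumed additive so Lemmas \ref{egoijotrgregewrgwerg} and \ref{tegiotwrgerwgregwgergre} do not apply and all sums live only in $\bM\bD$, scalar parts of representatives, simultaneous absorption of paths of unitaries) that the paper's route avoids.

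More importantly, the injectivity half of your argument has a genuine gap as written. Suppose $[P]-[\tilde P]\in K_{0}(A(\bC))$ dies in $K_{0}(A(\bD))$ and you compress the witness of triviality (and $P$, $\tilde P$ themselves) by an isometry $u$ coming from weak generation. The compression $v^{*}(-)v$ replaces $P$ by a new projection $Q'$ in a larger matrix algebra over $A(\bC)$, and all you know a priori is that $[Q']=[P]$ \emph{after} applying $A(\phi)_{*}$ — which is precisely the injectivity you are trying to prove, so the argument is circular. To close this gap you need the additional observation that when source and target are both $\phi$-objects, the components $u e_{i}\colon \phi(C_{i})\to\phi(C)$ of the compressing isometry are multipliers of the subcategory $\phi(\bC)\cong\bC$ (this follows from full faithfulness together with the ideal property $\bD\subseteq\bM\bD$), so that the compression is approximately inner over $\bM\bC$ and hence preserves classes in $K_{*}(A(\bC))$. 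Nothing in your sketch supplies this step; without it the "same manoeuvre" for injectivity does not go through. The same issue recurs at the endpoints of the compressed unitary path in the $K_{1}$ injectivity argument.
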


Theorem \ref{eowigjwoigwregrgwegrwreg} has the following consequence which in the unital case  has already been observed in \cite{joachimcat} {and \cite{mitchener_KTh_Ccat}.} Assume that $\phi\colon \bC\to \bD$ is a morphism in $\nCcat$.
\begin{kor}\label{qwrgioqfeqwfqedfefdqe}
If $\phi\colon\bC\to \bD $ is a unitary equivalence, then
$\Kcat(\phi)$ is an equivalence.
 \end{kor}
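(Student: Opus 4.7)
The plan is to deduce this corollary directly from Theorem~\ref{eowigjwoigwregrgwegrwreg} by verifying that every unitary equivalence in $\nCcat$ (in the sense of Definition~\ref{ihjigwjegoerwgwerffwerfw}) is automatically a weak Morita equivalence (in the sense of Definition~\ref{ergiowergerfwrfwfref}).

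First I would check that $\phi$ is fully faithful, which is built into Definition~\ref{ihjigwjegoerwgwerffwerfw}. The remaining task is to show that $\phi(\Ob(\bC))$ is weakly generating in the sense of Definition~\ref{q3rioghoergerwgregwg9}. For this I would invoke the equivalent reformulation provided by Remark~\ref{retjheirotjhoergrtgertgetrgterg}(1): every object $D$ of $\bD$ admits a unitary multiplier isomorphism $u\colon \phi(C)\to D$ for some object $C$ of $\bC$.

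Given such $D$, a finite family $(f_{i})_{i\in I}$ of morphisms $f_{i}\colon D_{i}\to D$ in $\bD$, and $\varepsilon$ in $(0,\infty)$, the morphism $u$ is in particular a multiplier isometry in $\bD$, and $uu^{*}=\id_{D}$ as a multiplier endomorphism of $D$. Hence $uu^{*}f_{i}=f_{i}$ in $\bD$ and
\[
\|f_{i}-uu^{*}f_{i}\|=0\le\varepsilon
\]
for every $i$ in $I$. Since $\phi(C)$ is trivially unitarily isomorphic in $\bM\bD$ to a finite orthogonal sum (of a single summand) of objects in $\phi(\Ob(\bC))$, this verifies all the requirements of Definition~\ref{q3rioghoergerwgregwg9}.

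Consequently $\phi$ is a weak Morita equivalence, and Theorem~\ref{eowigjwoigwregrgwegrwreg} yields that $\Kcat(\phi)$ is an equivalence. There is no real obstacle: the argument is essentially the observation that a unitary (multiplier) isomorphism already provides the approximating isometry with error zero, so the analytic content of weak generation is trivially fulfilled by a unitary equivalence.
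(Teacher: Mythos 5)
Your proposal is correct and follows essentially the same route as the paper: the paper's proof also reduces to Theorem~\ref{eowigjwoigwregrgwegrwreg} by checking that a unitary equivalence is a weak Morita equivalence, using that $\bM\phi$ is essentially surjective to produce a unitary multiplier $u\colon \phi(C)\to D$ with $f = uu^{*}f$ exactly, so the approximation condition holds with zero error.
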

\begin{proof}
We show that $\phi$ is a weak Morita equivalence. 
Since $\phi$ is a unitary  equivalence it is fully faithful. It remains to show that $\phi(\Ob(\bC))$ is weakly generating. In this case we have a much stronger property:
Let $D$ be an object of $\bD$.  Since $\bM\phi$ is essentially surjective, there exists $C$ in $\bC$ and a unitary multiplier $u\colon \phi(C)\to D'$. Then for any $f\colon D'\to D$ we have $f=uu^{*}f$.
\end{proof}

\begin{rem}
The specialization of the proof of the Theorem \ref{eowigjwoigwregrgwegrwreg} to the special  case considered in Corollary \ref{qwrgioqfeqwfqedfefdqe}  is essentially equivalent to the proof of the assertion of the corollary  given in \cite{joachimcat}.
\hB
\end{rem}

The idea of the proof of Theorem \ref{eowigjwoigwregrgwegrwreg} is to reduce the assertion to the Morita invariance of the $K$-theory of $C^{*}$-algebras. We first recall some of the basic facts, see e.g, \cite[Sec.~5]{arXiv:1006.4975}.


Let $A$ and $B$ be in $\nCalg$.
Recall that a {Hilbert $B$}-module $(H,\langle -,-\rangle_{B})$ is called full if $\langle H,H\rangle_{B}$ is dense in $B$.
 
%
\begin{ddd}\label{weotgpwergwerrgrgregwger}
A Morita $(A,B)$-bimodule is a triple  $(H,{}_{A}\langle -,-\rangle, \langle-,-\rangle_{B})$, where $H$ is an  $(A,B)$-bimodule, ${}_{A}\langle -,-\rangle$ is an $A$-valued scalar product on $H$ and $ \langle-,-\rangle_{B}$ is a $B$-valued scalar product {on $H$} such that 
\begin{enumerate}
\item $(H ,\langle-,-\rangle_{B})$  is a full  {Hilbert $B$}-module.
\item  $(H, {}_{A}\langle -,-\rangle)$ is a full {Hilbert $A$}-module.
\item For all $h,h',h''$ in $H$  we have the relation
\begin{equation}
{}_{A}\langle h,h'\rangle h''=h\langle h',h''\rangle_{B}\,.
\end{equation}
\end{enumerate}
\end{ddd}

\begin{rem}\label{qrgoihiqoregrfewfeqfqwefqwef}
The datum of a Morita $(A,B)$-bimodule is equivalent to the datum of a 
triple $(H,\langle-,-\rangle_{B},\phi)$  of a {Hilbert $B$}-module $(H,  \langle-,-\rangle_{B})$ together with a homomorphism $\phi\colon A\to B(H)$ such that
\begin{enumerate}
\item $(H,  \langle-,-\rangle_{B})$ is full.
\item $\phi$ is an isomorphism from $A$ to $K(H)$.
\end{enumerate}
In this case one can reconstruct the $A$-valued scalar product by ${}_{A}\langle h,h'\rangle:=\phi^{-1}(\theta_{h,h'})$, where   $\theta_{h,h'}$ is  as in \eqref{ssrgfdsgsreg}. 
In the other direction, assuming the data in Definition \ref{weotgpwergwerrgrgregwger},  the relation 
$$\theta_{h,h'}(h'')=h\langle h',h''\rangle_{B}={}_{A}\langle h,h'\rangle h''$$
   shows that
$\theta_{h,h'}$ is given by the multiplication by an element of $A$. This extends to an isomorphism $\phi$ between $ A$ and $K(H)$.
\hB
\end{rem}

\begin{ddd}
The datum of a  Morita $(A,B)$-bimodule is called a strong Morita--Rieffel equivalence between $A$
and $B$. 
\end{ddd}

\begin{rem} If $A$ and $B$ are unital, then
a strong  Morita{--Rieffel} equivalence between $A$
and $B$ induces an equivalence  \begin{equation}\label{ewfiufheqiwufefeqfqwefqewfqewfqewf}
\Hilb(A)^{\fg,\proj}\ni M\mapsto M\otimes_{A}H\in \Hilb(B)^{\fg,\proj}
\end{equation}
of the topologically enriched categories 
of finitely generated, projective modules over $A$ and $B$.
It is possible  to construct the topological $K$-theory spectrum of $C^{*}$-algebras
from this category in a functorial way. Using such a construction in the background,
a   strong Morita{--Rieffel} equivalence between $A$
and $B$ gives rise to an {equivalence between} $K$-theory spectra $\Kast(A)\to \Kast(B)$.
We will not go into this direction since in the present paper we use the $K$-theory of $C^{*}$-algebras in an axiomatic way and therefore only have functoriality for homomorphisms between $C^{*}$-algebras.
\hB
\end{rem}

Let $f\colon A\to B$ be a morphism in $\nCalg$.

\begin{ddd} \label{reogijoergegwegwergw}
We say that $f$ induces a strong Morita--Rieffel equivalence if the following conditions are satisfied:
\begin{enumerate}
\item $H \coloneqq \overline{f(A)B}$ with the $B$-valued scalar product given by $(b,b')\mapsto  b^{*}b'$ is a full right {Hilbert $B$}-module.  
\item \label{hwiuvhqivevcqwevcw} $f\colon A\to \End_{B}(H)$ identifies $A$ with $K(H)$.
\end{enumerate}
\end{ddd}
In view of Remark \ref{qrgoihiqoregrfewfeqfqwefqwef} the homomorphism  $f$ gives rise to a strong Morita{--Rieffel} equivalence between $A$ and $B$.  

\begin{lem}\label{qwriufhqeifqfewfffqwfe}
If $f\colon A\to B$  induces a strong Morita{--Rieffel} equivalence, then the induced morphism $\Kast(f)\colon \Kast(A)\to \Kast(B)$ is an equivalence. 
\end{lem}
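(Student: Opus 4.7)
The plan is to reduce Lemma~\ref{qwriufhqeifqfewfffqwfe} to the classical fact that a full corner embedding of $C^*$-algebras induces an equivalence on $K$-theory. The proof will be purely at the level of $C^*$-algebras and will rely on the axioms of $\Kast$ listed in Proposition~\ref{fiodgerwvfdsvdvsfv}, in particular on $\mathbb{K}$-stability (Remark~\ref{wrthijowrtgergwergwegr}.\ref{qfoijirfoqfqwefqewfqewf}).

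First, I would assemble the linking $C^{*}$-algebra
$$L \;:=\; K(H \oplus B_B) \;\cong\; \begin{pmatrix} K(H) & K(B_B,H) \\ K(H,B_B) & K(B_B) \end{pmatrix} \;\cong\; \begin{pmatrix} A & H \\ H^{*} & B \end{pmatrix},$$
where $B_B$ denotes $B$ viewed as a Hilbert $B$-module over itself, so that $K(B_B)\cong B$. Since $H$ is full and $f\colon A\to K(H)$ is an isomorphism by Definition~\ref{reogijoergegwegwergw}.\ref{hwiuvhqivevcqwevcw}, both corner embeddings
$$\iota_A\colon A\;\longrightarrow\; L,\qquad a\mapsto\begin{pmatrix} a & 0\\ 0 & 0\end{pmatrix},\qquad \iota_B\colon B\;\longrightarrow\; L,\qquad b\mapsto\begin{pmatrix} 0 & 0\\ 0 & b\end{pmatrix},$$
are inclusions of complementary full hereditary sub-$C^{*}$-algebras of $L$.

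The first main step is to verify that $\Kast(\iota_A)$ and $\Kast(\iota_B)$ are equivalences. This is the standard fact that a full corner inclusion induces an equivalence on topological $K$-theory. I would prove it by stabilizing: after tensoring with $\mathbb{K}$, the Hilbert $B$-module $H\oplus B_B$ absorbs a copy of the standard module, and using $\mathbb{K}$-stability together with an absorption/stabilization argument one identifies $\Kast(A)\simeq\Kast(A\otimes\mathbb{K})\simeq\Kast(L\otimes\mathbb{K})\simeq\Kast(L)$, and similarly for $B$. (In the non-$\sigma$-unital case one has to be careful; one can reduce to the $\sigma$-unital setting by writing $A$ and $B$ as filtered colimits of $\sigma$-unital subalgebras to which Kasparov stabilization applies, and then invoking Proposition~\ref{fiodgerwvfdsvdvsfv}.\ref{wtgwergfervfdsv}.)

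The second main step is to compare $\iota_A$ with $\iota_B\circ f$, both viewed as homomorphisms $A\to L$. I would argue that after tensoring with $\mathbb{K}$, the homomorphisms $\iota_A\otimes\id_{\mathbb{K}}$ and $(\iota_B\circ f)\otimes\id_{\mathbb{K}}$ from $A\otimes\mathbb{K}$ to $L\otimes\mathbb{K}$ are related by conjugation by a partial isometry $V$ in the multiplier algebra of $L\otimes\mathbb{K}$, where $V$ is manufactured from the off-diagonal part $H\subseteq L$ (using an approximate unit that becomes honest isometries after stabilization). Since conjugation by a multiplier partial isometry $V$ with $V^{*}V\cdot(\iota_B\circ f)\otimes\id_{\mathbb{K}} = (\iota_B\circ f)\otimes\id_{\mathbb{K}}$ induces the identity on $K$-theory (this is the standard invariance of $K_{*}$ under Murray--von Neumann-equivalent representations, cf.~\eqref{vasfvlkmlksaf24fwf}), we get $\Kast(\iota_A\otimes\id_{\mathbb{K}})=\Kast((\iota_B\circ f)\otimes\id_{\mathbb{K}})$, and combined with $\mathbb{K}$-stability this yields $\Kast(\iota_A)=\Kast(\iota_B\circ f)=\Kast(\iota_B)\circ\Kast(f)$.

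Combining the two steps gives a commutative triangle in $\Sp$ whose two non-horizontal edges are equivalences, forcing $\Kast(f)$ to be an equivalence as desired. The hardest part is Step~1: the full-corner $K$-equivalence is classical in the $\sigma$-unital case (essentially Brown's theorem) but, as the paper's setup does not assume $\sigma$-unitality, some care is required to reduce to that case via a filtered colimit argument using Proposition~\ref{fiodgerwvfdsvdvsfv}.\ref{wtgwergfervfdsv}. Step~2 is mostly bookkeeping once one has the right partial isometry, but writing out its construction cleanly without assuming countable generation of $H$ is also somewhat delicate.
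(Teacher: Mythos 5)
Your strategy is sound, but it amounts to reproving the classical fact that the paper's own proof simply cites: the paper reduces to $\pi_{0}$ and $\pi_{1}$ via Bott periodicity and then invokes the well-known Morita-invariance isomorphism of $C^{*}$-algebraic $K$-theory, together with the observation that for the bimodule $H=\overline{f(A)B}$ this isomorphism \emph{is} $\Kast_{*}(f)$; your linking-algebra argument is exactly the standard proof of that cited fact. Two remarks. First, in this particular situation the linking algebra is unnecessary: condition \ref{reogijoergegwegwergw}.\ref{hwiuvhqivevcqwevcw} says $f$ identifies $A$ with $K(H)=\overline{f(A)Bf(A)}$, and fullness of $H$ says $\overline{Bf(A)B}=B$, so $f$ is (up to the isomorphism $A\cong f(A)$) precisely the inclusion of a \emph{full hereditary subalgebra} of $B$. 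Your Step~1, applied directly to $f(A)\subseteq B$ rather than to the corners of $L$, already finishes the proof, and Step~2 becomes vacuous. Second, if you do keep the linking-algebra route, be aware that the off-diagonal candidate $V$ built from the inclusion $H\hookrightarrow B_{B}$ is genuinely \emph{not} adjointable (its would-be adjoint is the orthogonal projection of $B$ onto $H$, which does not exist as a bounded adjointable operator), so $V$ does not live in $M(L)$; producing it in $M(L\otimes\mathbb{K})$ requires Kasparov stabilization and hence countable generation of $H$, and likewise the full-corner $K$-equivalence in Step~1 requires $\sigma$-unitality for the Brown-type argument. You correctly flag that a separable-reduction/filtered-colimit argument using Proposition~\ref{fiodgerwvfdsvdvsfv}.\ref{wtgwergfervfdsv} is needed to remove these hypotheses; that reduction is standard but is real work (one must arrange that the approximating subalgebras still contain a \emph{full} hereditary copy of the corner), and as written it is the only substantive gap in your proposal.
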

\begin{proof}
Using Bott periodicity it suffices to check that $\Kast_{*}(f)\colon \Kast_{*}(A)\to \Kast_{*}(B)$ is an isomorphism for $*=0,1$.   The point is now that the well-known isomorphism 
between $\Kast_{*}(A)$ and $\Kast_{*}(B)$ induced by the Morita {$(A,B)$-}bimodule  given in Definition \ref{reogijoergegwegwergw}  is precisely the homomorphism  $\Kast_{*}(f)$.
 \end{proof}

%

\begin{proof}[Proof of Theorem~\ref{eowigjwoigwregrgwegrwreg}]

We first assume that $\phi\colon \bC\to \bD$ is injective on objects.
Then we have a commutative diagram 
$$\xymatrix{
\Kcat(\bC)\ar[rr]^-{\Kcat(\phi)}\ar[d]^-{\simeq} && \Kcat(\bD)\ar[d]^-{\simeq}\\
\Kast(A(\bC))\ar[rr]^-{\Kast(A(\phi))} && \Kast(A(\bD)) 
}$$
where, using Definition \ref{ergiowergregrdsvsv}, the vertical equivalences are induced by the natural transformation $\alpha:A^{f}\to A$ from \eqref{eq_transformation_Af_A}, {see Lemma~\ref{sfdbjoqi3gegergewgerg1}.}
The assumption on $\phi$ is needed since $A$ is only functorial for morphisms which are injective on objects. It suffices to show that
$\Kast(A(\phi))$ is an equivalence.

We claim that {$A(\phi)$ induces} a {strong Morita--Rieffel} equivalence in the sense of Definition \ref{reogijoergegwegwergw}. {To this end we verify the conditions listed in Definitions \ref{weotgpwergwerrgrgregwger}.}
 Recall that $A(\bD)$ is a closure of the matrix algebra
 $$A^{\alg}(\bD)=\bigoplus_{D,D'\in \bD}\Hom_{\bD}(D',D)\, .$$
 It is an $(A^{\alg}(\bC),A^{\alg}(\bD))$-bimodule.
 Using that $\phi$ is injective on objects,  we can consider 
  the $(A^{\alg}(\bC),A^{\alg}(\bD))$-bimodule
$$H^{\alg} \coloneqq \bigoplus_{C\in \bC,D\in \bD} \Hom_{\bD}(D,\phi(C))\, .$$
as a sub-bimodule of $A^{\alg}(\bD)$. Its 
elements  will be written as   families $(h_{CD})_{C\in \bC,D\in \bD}$ with finitely many non-zero members. A similar notation will be used for the elements of $A^{\alg}(\bC)$ and $A^{\alg}(\bD)$.
The action of $A^{\alg}(\bC)$ is given by
$$(a h)_{CD} \coloneqq \sum_{C'\in \bC} \phi(a_{CC'})h_{C'D}$$
for all $a$ in $A^{\alg}(\bC)$ and $h$ in $H^{\alg}$.
Similarly, the action of $A^{\alg}(\bD)$ is given by 
$$(hb)_{CD} \coloneqq \sum_{D'\in \bD } h_{CD'}b_{D'D}$$
for all $h$ in $H^{\alg}$ and $b$ in $A^{\alg}(\bD)$.
In this notation the $A(\bD)$-valued scalar product is given by 
$$(\langle h,h'\rangle_{A(\bD)})_{D'D} \coloneqq \sum_{C\in \bD} h^{*}_{CD'}h'_{CD}$$
for all $h,h'$ in $H^{\alg}$.
Furthermore, we define an $A^{\alg}(\bC)$-valued scalar product by
$$({}_{A(\bC)}\langle h,h'\rangle)_{C'C} \coloneqq \phi^{-1}\big(\sum_{D\in \bD} h_{C'D}h^{\prime,*}_{CD}\big)$$
for all $h,h'$ in $H^{\alg}$.
Here we use that $\phi$ is fully faithful.

One checks the relation
\begin{equation}\label{refewewrfwefqfewfqefewf}
{}_{A(\bC)}\langle h,h'\rangle h''=h \langle h',h''\rangle_{A(\bD)}
\end{equation}
for all $h,h',h''$ in $H^{\alg}$.

We let $H$ be the closure of $H^{\alg}$ with respect to the norm induced by the $A(\bD)$-valued scalar product, or equivalently, the closure in $A(\bD)$.  
Then $H$ is a  right  {Hilbert $A(\bD)$-}module. In the notation of Definition \ref{reogijoergegwegwergw} this is $\overline{A(\phi)A(\bD)}$.

We {next show} that  the scalar product $ {}_{A(\bC)}\langle -,-\rangle$ on $H^{\alg}$
extends by continuity to an $A(\bC)$-valued scalar product on $H$.
The relation \eqref{refewewrfwefqfewfqefewf} implies 
$$\|{}_{A(\bC)}\langle h,h'\rangle h''\|= \|h \langle h',h''\rangle_{A(\bD)}\| \le \|h\|\|h'\|\|h''\|\, .$$
All these norms are defined using the $A(\bD)$-valued scalar product and the norm in $A(\bD)$.
Hence we can estimate the operator norm of ${}_{A(\bC)}\langle h,h'\rangle$ on $H$ by
\begin{equation}\label{asfdvqrmgpefvvsf}
\| {}_{A(\bC)}\langle h,h'\rangle \| \le \|h\| \|h'\|\, .
\end{equation} 
For every $C'$ in $\bC$ the module
$H$ contains the closed $(A^{\alg}(\bC),\End_{\bD}(\phi(C')))$-submodule  {$H_{C'}$} generated by $\bigoplus_{C\in \bC}\Hom_{\bD}(\phi(C'),\phi(C))$. This module is isomorphic to the $(A^{\alg}(\bC),\End_{\bC}(C'))$-module generated by $\bigoplus_{C\in \bC} \Hom_{\bC}(C',C)$ (again since $\phi$ is fully faithful). 
It is known by \cite[Sec. 3]{joachimcat} {(see also the proof of \cite[Lem. 6.7]{crosscat} for an argument)} that the maximal norm on $A(\bC)$ is induced by the  {family of modules $(H_{C'})_{C'\in \Ob(\bC)}$.}  
 It follows that the operator norm on $H$ induces the norm on 
  $A^{\alg}(\bC)$.
  The estimate \eqref{asfdvqrmgpefvvsf}  now implies   
   that $ {}_{A(\bC)}\langle -,-\rangle$ extends by continuity to an $A(\bC)$-valued scalar product  on $H$. Furthermore, the action of $A^{\alg}(\bC)$ on $H^{\alg}$ extends to an action of $A(\bC)$ on $H$ such that $H$ is a pre-Hilbert $A(\bC)$-left module. 

We now show that \begin{equation}\label{asdvasdacvdsc}
A(\bC)= \overline{{}_{A(\bC)}\langle H,H\rangle}\, .
\end{equation}   
Let $[f_{CC'}]$ be a one-entry matrix in $A(\bC)$.  We consider the one-entry matrices $[h_{C\phi(C)}]$ with $h_{C\phi(C)} \coloneqq \phi(h)$ for $h$ in $\End_{\bC}(C)$ and $[h'_{C'\phi(C)}] \coloneqq \phi(f^{*}_{CC'})$ in $H$.
Then $$ {}_{A(\bC)}\langle [h_{C\phi(C)}],[h'_{C'\phi(C)}]\rangle=[
(hf)_{CC'}]\ .$$ 
We now  use  that $A(\bC)$ is generated by one-entry matrices
and that the linear span of elements of the form
$hf$ for $h$ in $\End_{\bC}(C)$ and $f$ in $\Hom_{\bC}(C',C)$ 
is dense in $\Hom_{\bC}(C',C)$ in order 
 to {conclude} the equality \eqref{asdvasdacvdsc}.

 Up to this point we have used that $\phi$ is fully faithful, but in the following argument
 use the assumption  that $\phi$
 is a weak Morita equivalence.
 We will show that
 $$A(\bD)=\overline{\langle H,H\rangle_{A(\bD)}}\, .$$
Let $f \colon D'\to D$ be a morphism in $\bD$ such that there is an object $C$ in {$\bC$}
and a multiplier  isometry  $u \colon \phi(C)\to D$ such that $f=uu^{*}f$.
We will call such a {morphism}  special.
Let $[f_{DD'}]$ be the one-entry matrix in $A^{\alg}(\bD)$ with $f_{DD'}=f$.   
Then we consider the one-entry matrices
$[h_{CD}]$ in $H$ with $h_{CD} \coloneqq u^{*}v^{*}$ for $v$ in $\End_{\bD}(D)$
and $[h'_{CD'}]$ in $H$ with $h'_{CD'} \coloneqq u^{*}f$.
Then $$\langle [h_{CD}],[h'_{CD'}]\rangle_{A(\bD)}=
[vf_{DD'} ]\, .$$
We claim that one-element matrices with special entries
 generate $A(\bD)$. Since $A(\bD)$ is generated by one-element matrices  and we can choose $v$ arbitrary (e.g. members in an approximate unit of $\End_{\bD}(D)$) it suffices to show that
 special elements generate a dense subspace of $\Hom_{\bD}(D',D)$ for all  objects $D,D'$ in $\bD$.
We consider $f \colon D'\to D$ and $\varepsilon$ in $(0,\infty)$. 
Since $\phi(\Ob(\bC))$ is weakly generating
  there exists a finite family $(C_{i})_{i\in I}$ of objects in $\bC$, the orthogonal sum $(\bigoplus_{i\in I} \phi(C_{i}),(e_{i})_{i\in I})$ in $\bM\bD$, and
   {a multiplier  isometry}
$u\colon \bigoplus_{i\in I} \phi(C_{i})\to D$ such that
$\|f-uu^{*}f\|\le \varepsilon$.
 Then $uu^{*}f=\sum_{i\in I} ue_{i} e_{i}^{*}u^{*}f$.
 The  summands  $ ue_{i}e_{i}^{*}u^{*}f$ are special. Hence $uu^{*}f$ is a finite sum of special elements.

We now show that the {pre-Hilbert  $A(\bC)$}-module $H$ is actually a {Hilbert $A(\bC)$}-module. 
 We let $\|-\|'$ denote the norm on $H$ induced by the $A(\bC)$-valued scalar product.
 We will show that $\|-\|$ is equivalent to $\|-\|'$, where $\|-\|$ is the norm on $H$ induced by the $A(\bD)$-valued scalar product. We then use  that $H$ is complete with respect to $\|-\|$ by construction. 
 
 From
  \eqref{asfdvqrmgpefvvsf}  we get the estimate
  $\|-\|'\le \|-\|$.
  By  \eqref{refewewrfwefqfewfqefewf} we get
$$\|h\langle h',h'\rangle_{A(\bD)}\|=\|{}_{A(\bC)}\langle 
h,h'\rangle h'\| \le \|h'\|\|{}_{A(\bC)}\langle 
h,h'\rangle\| \le \|h'\| \|h\|' \|h'\|'\, .$$
Taking the supremum over all $h$ in $H$ with $\|h\| \le 1$ 
we conclude that
\begin{equation}\label{fewfqlknkfqefqewf}
\| \langle h',h'\rangle_{A(\bD)}\|'' \le  \|h'\| \|h'\|'\, ,
\end{equation}
where $\|-\|''$ is the norm on $A(\bD)$ induced from the operator norm on $H$.
We claim that $\|-\|''$ is equal to the norm of $A(\bD)$.
The claim together with \eqref{fewfqlknkfqefqewf}
then implies that $\|h'\|\le \|h'\|'$ for all $h$ in $H$ and hence $\|-\|\le \|-\|'$.
  
We now show the claim that $\|-\|''$ is equal to the norm of $A(\bD)$. Let $b$ be in $A^{\alg}(\bD)$ such that $\|b\|=1$. We have to show that $\|b\|''=1$. For every $D'$ in $\bD$ we let $M_{D'}$ be the right Hilbert $A(\bD)$-module generated by ${M_{D'}^{\alg}:=}\bigoplus_{D\in \bD} \Hom_{\bD}(D,D')$. It is a direct summand of $A(\bD)$. We choose $\varepsilon$ in $(0,\infty)$. 
{Again by} \cite[Sec. 3]{joachimcat}  the family  of modules $(M_{D'})_{D'\in \bD}$  induces the norm on $A(\bD)$.
 Hence
  there exists $D'$ in
${\bD}$ and $m$ in ${M_{D'}^{\alg}}$
such that $\|m\|\le 1$ and $\|mb\|\ge  1-\varepsilon/2$.
  Note that the number $R$ of  non-zero members of the family 
  $m=(m_{D'D})_{D\in \bD}$   is finite. We furthermore have
    $\|m_{D'D}\|\le 1$ for all $D$ in $\bD$. {Since $\phi(\Ob(\bC))$ is weakly generating,}
  there  exists a finite family of objects $(C_{i})_{i\in I}$ in $\bC$, a pair  $(E,(e_{i})_{i\in I})$, $e_{i}\colon \phi(C_{i})\to E$, representing the orthogonal sum  of the family $(\phi(C_{i}))_{i\in I}$ in $\bM\bD$, and a multiplier isometry $u\colon E\to D'$
 such that $\|m_{D'D}-uu^{*}m_{D'D}\|\le \frac{\varepsilon}{2(R+1)}$ for all $D$ in $\bD$.
 Then $\|m-uu^{*}m\|\le \varepsilon/2$.
 We consider the right Hilbert {$A(D)$-}module $M_{E}$.   We note that $u$ induces an isometry $M_{E}\to M_{D'}$. We set $m' \coloneqq u^{*}m$ in $M_{E}$.
 Then we have
 $$\|m'b\|=\|um'b\|=\|uu^{*}mb\|\ge \|mb\|-\|(m-uu^{*}m)b\|\ge 1-\varepsilon\, .$$
For every $i$ in $I$ we have an isometric inclusion of right Hilbert
$A(\bD)$-modules $f_{i}\colon M_{\phi(C_{i})}\to M_{{E}}$
sending $(m_{{\phi(C_{i})}D})_{D\in \bD}$ to $ (e_{i}m_{{\phi(C_{i})}D})_{D\in \bD}$.   Hence
we get an isometric inclusion
\[
f\coloneqq \oplus_{i\in I} f_{i}\colon M_{E}\to \bigoplus_{i\in I} H\,.
\]
The  diagonal representation of $A(\bD)$ on
$ \bigoplus_{i\in I} H$ induces the same norm as the representation on $H$. 
We then have $\|f(m')b\| {\ge} 1-\varepsilon$. Since {$\|m'\|=\|u^{*}m\|\le \|m \|\le 1$} this implies that $\|b\|''\ge 1-\varepsilon$.
Since $\varepsilon$ was arbitrary we conclude that $\|b\|''=1$. 

This finishes the verification that $(H,{}_{A(\bC)}\langle -,-\rangle,\langle-,-\rangle_{A(\bD)})$ is an $(A(\bC),A(\bD))$-Morita bimodule, {and that $\Kast(A(\phi))$ induces a strong Morita--Rieffel equivalence.}
By Lemma \ref{qwriufhqeifqfewfffqwfe} we can conclude that
$\Kast(A(\phi)):\Kast(A(\bC))\to \Kast(A(\bD))$ is an equivalence.
This implies the assertion of Theorem \ref{eowigjwoigwregrgwegrwreg} and therefore also of Corollary \ref{qwrgioqfeqwfqedfefdqe}
for functors $\phi$ which are injective on objects.

We finally drop  the assumption that $\phi$ is injective on objects.
Let $\phi\colon  \bC\to  \bD$ be a weak Morita equivalence.
 Then we form $  \bE$ in  $\nCcat$ as follows:
\begin{enumerate}
\item objects: The set of objects of $  \bE$ is given by $\Ob(  \bC)\sqcup \Ob(  \bD)$.
\item morphisms: \[\Hom_{\bE}(E,E'):=\begin{cases}
\Hom_{\bC}(E,E') & \text{for }E,E' \in \bC\,,\\
 \Hom_{\bD}(\phi(E),E') & \text{for } E\in \bC, E' \in \bD\,,\\
  \Hom_{\bD}(E,\phi(E')) & \text{for } E\in \bD, E' \in \bC\,,\\
    \Hom_{\bD}(E,E') & \text{for } E,E'\in \bD\,.
\end{cases} \]
\item {composition and involution}: these structures are defined in the canonical way.
\end{enumerate}
 We have  inclusions 
 $$i\colon \bC\to \bE\, , \quad j\colon \bD\to   \bE$$
 and a projection $p\colon \bE\to \bD$ such that $p\circ j=\id_{\bD}$
 and $p\circ i=\phi$.
 Moreover, there is an obvious unitary multiplier isomorphism $j\circ p\cong \id_{\bE}$. We conclude that   $p$  is a unitary equivalence and therefore $\Kcat(p)$ is an equivalence.  Moreover, $i$ is again  a weak Morita equivalence which is in addition injective on objects.
 By the special case already shown, $\Kcat(i)$ is an equivalence. Hence $\Kcat(\phi)\simeq \Kcat(p)\circ \Kcat(i)$ is an equivalence.
\end{proof}
 

\begin{ex}\label{qrgiooergwefe9}
Let $A$ be  in $\Calg$ and consider the wide subcategory ${\Hilb_{c}(A)}$ of compact morphisms in $\Hilb(A)$, {cf.~Example~\ref{ex_hilbert_modules}.}  Recall that  ${\Hilb_{c}(A)^{u}}$ denotes the full subcategory of  ${\Hilb_{c}(A)}$ of unital objects, see Definition \ref{ebgiojoigrevsfdvsdfvsfdvsdfv}.   A Hilbert  $A$-module is in ${\Hilb_{c}(A)^{u}}$ if and only if it is algebraically finitely generated, and all such modules are projective \cite[Ex.~15.O and Cor.~15.4.8]{wegge_olsen}.
Considering $A$ itself as an object of $\Hilb_{c}(A)^{u}$ we get the inclusion
$A\to  \Hilb_{c}(A)^{u}$. 

We let $\Hilb(A)_{\std}$ be the full subcategory of $\Hilb(A)$ of objects which are isomorphic to   classical orthogonal  sums (see Construction \ref{ejirgowergrefweerf})  of   {very small} families of objects of ${\Hilb_{c}(A)^{u}}$ and set ${\Hilb_{c}(A)_{\std}} \coloneqq{\Hilb_{c}(A)}\cap \Hilb(A)_{\std}$.
Note that  ${( \Hilb_{c}(A)_{\std})^{u}} ={\Hilb_{c}(A)^{u}}$. We further have the following commutative diagram of inclusion functors
\begin{equation}
\xymatrix{
A\ar[rr]^-{ } \ar[dr]_-{\text{Morita}} &&{{\Hilb_{c}(A)}_{\std}}\,.\\
&{\Hilb_{c}(A)}^{u}\ar[ur]_-{\ \ \text{weak Morita}} &
}
\end{equation}
Applying $K$-theory  and using  the Theorems \ref{wtoijgwergergrewfwergrg} and \ref{eowigjwoigwregrgwegrwreg}  we obtain equivalences 
\begin{equation}
\xymatrix{
\Kast(A)\ar[rr]^-{\simeq} \ar[dr]_-{\simeq} && {\Kcat({\Hilb_{c}(A)_{\std}})}\,.\\
&\Kcat({\Hilb_{c}(A)}^{u})\ar[ur]_-{\simeq} & 
}
\end{equation}
These equivalences will be used in companion paper \cite{bel-paschke}.
%
%
%
%
\hB
\end{ex}

\section{Functors on the orbit category}\label{qroigjqowefwefwfwefqwef}
\label{qeroigjeoigergergewgwegewrgwerg}

For a group  $G$ we consider the orbit category  $G\Orb$  of transitive $G$-sets and equivariant maps.  It  plays a fundamental role in $G$-equivariant homotopy theory{. By} Elmendorf's theorem \cite{elmendorf} {(and subsequent work thereon)} the category $\PSh(G\Orb)$ models the equivariant homotopy theory of $G$-topological spaces. For  a cocomplete $\infty$-category $\bS$ the $\infty$-category of $\bS$-valued equivariant homology theories     is equivalent to the $\infty$-category of functors from $G\Orb$ to   $\bS$, see Section \ref{oiejgoregrqrgqgregw}.  Such functors  are the main ingredients of assembly maps, see  e.g.\ \cite[Sec.\ 1]{desc} for more information.
The present section {is about the  construction of such functors starting from the datum of a  $C^{*}$-category with a $G$-action.}
{Much of  theory developed in the preceding sections will be employed to calculate the values of the resulting  functors. The outcomes will be further used in the subsequent papers \cite{coarsek} and \cite{bel-paschke}.}

Our first construction  uses the homotopy theory of unital $C^{*}$-categories modeled by the    Dwyer--Kan localization \begin{equation}\label{qwefwewdqwedwedqeedqwed}
\ell \colon \Ccat\to \Ccat_{\infty}
\end{equation}
of $\Ccat$ at the unitary equivalences 
\cite{startcats}. 

Let $\Homol \colon \Ccat\to \bS$ be a functor which sends unitary equivalences to equivalences. Our main example is the restriction of $ \Kcat$ to unital $C^{*}$-categories. 
By the universal property of the Dwyer--Kan localization it  has an essentially unique factorization
\begin{equation}\label{234toijoig23gergwregreg}
\xymatrix{\Ccat\ar[rr]^-{\Homol}\ar[dr]_-{\ell}&&\bS\,.\\
&\Ccat_{\infty}\ar[ur]_-{\Homol_{\infty}}&}
\end{equation}

We can consider the set $G$ with the left action as an object  of $G\Orb$.
The right-action of $G$ on itself induces an isomorphism of monoids $G\cong \End_{G\Orb}( G)$. We therefore have an   embedding of categories \begin{equation}\label{wefjqwoeifjoqwefqwefwefqefqewfq}
j^{G} \colon BG\to G\Orb
\end{equation} which sends
the unique object $*_{BG}$ of $BG$ to the {left} $G$-set $ G$. We let 
  $j_{!}^{G}$ denote the left Kan extension functor  along $j^{G}$.
  
  \begin{ddd}
  We define the functor
  $$\Homol^{G}_{\infty}\colon \Fun(BG,\Ccat_{\infty})\to \Fun(G\Orb,\bS)\ , \quad \bC_{\infty}\mapsto \Homol^{G}_{\infty,\bC_{\infty}}:= \Homol_{\infty}\circ j_{!}^{G}(\bC_{\infty})\ .$$
  \end{ddd}
We also use the symbol \[
\ell  \colon \Fun(BG,\Ccat)\to \Fun(BG,\Ccat_{\infty})
\] be  the {functor}
  for the  post-composition with $\ell$ {from \eqref{qwefwewdqwedwedqeedqwed}}.
 Given a unital $C^{*}$-category with $G$-action $ {\bC}$ in $\Fun(BG,\Ccat)$ we   define a functor
\begin{equation}\label{fqwwefojiofqwefqwefqewf}
j_{!}^{G}(\ell ( {\bC})) \colon G\Orb\to \Ccat_{\infty}\, .
\end{equation}
If 
 $H$ is a subgroup of $G$, then one can calculate the value of the functor \eqref{fqwwefojiofqwefqwefqewf} at  the object $G/H$ in $G\Orb$.

\begin{lem}\label{qroigjqofrgqrgwergreg}
We have an equivalence 
$$j_{!}^{{G}}(\ell ( {\bC})){(G/H)} \simeq \ell( {\bC}\rtimes H)\, .$$
\end{lem}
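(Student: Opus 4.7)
The plan is to apply the pointwise formula for left Kan extensions and then to reduce to the known fact that the maximal crossed product of a unital $C^{*}$-category with $G$-action represents the homotopy colimit over $BG$ in $\Ccat_{\infty}$.

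First, I would invoke the pointwise formula
\[
j_{!}^{G}(\ell_{BG}(\bC))(G/H) \;\simeq\; \colim_{(j^{G}\downarrow G/H)} \ell_{BG}(\bC)\circ \pi,
\]
where $\pi\colon (j^{G}\downarrow G/H)\to BG$ is the canonical projection. The comma category has objects the $G$-equivariant maps $\phi\colon G\to G/H$, and, using the identification $\End_{BG}(*_{BG})\cong G$ with the corresponding endomorphism of $G$ in $G\Orb$ being right multiplication, its morphisms $g\colon \phi\to \phi'$ satisfy $\phi'(hg)=\phi(h)$. Since every equivariant map $G\to G/H$ is of the form $\phi_{x}(h)=hx$ for a unique $x\in G/H$, one checks that this comma category is isomorphic (as an ordinary category) to the translation category $(G/H)\sslash G$ of the left $G$-action on $G/H$.

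Second, I would use the classical fact that the translation category of a transitive $G$-action on $G/H$ is equivalent, via the choice of the basepoint $eH$, to $BH$, and that under this equivalence the projection $\pi$ corresponds to the functor $BH\to BG$ induced by the inclusion $H\hookrightarrow G$. Consequently, cofinality gives an equivalence
\[
\colim_{(j^{G}\downarrow G/H)} \ell_{BG}(\bC)\circ \pi \;\simeq\; \colim_{BH} \ell_{BH}(\Res^{G}_{H}\bC).
\]

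Third, I would appeal to the fact, recalled in the introduction and established in \cite{crosscat} (see also \cite{startcats}), that for any unital $C^{*}$-category $\bD$ with a strict action of a group $K$ the maximal crossed product $\bD\rtimes K$ represents the homotopy $K$-orbits in $\Ccat_{\infty}$, i.e.\ there is a natural equivalence
\[
\colim_{BK}\ell_{BK}(\bD)\;\simeq\;\ell(\bD\rtimes K)
\]
in $\Ccat_{\infty}$. Applying this with $K=H$ and $\bD=\Res^{G}_{H}\bC$ gives the desired equivalence $j_{!}^{G}(\ell_{BG}(\bC))(G/H)\simeq \ell(\bC\rtimes H)$.

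The main obstacle is ensuring that the identification of the comma category with the action groupoid $G/H\sslash G$ and the subsequent cofinality step are carried out carefully and naturally in $G/H$, so that the resulting equivalence is indeed the value of the functor $j_{!}^{G}(\ell_{BG}(\bC))$ rather than only an equivalence at a single object. The remaining ingredient, namely that $-\rtimes H$ models the homotopy colimit along $BH$, is not proved here but directly imported from \cite{crosscat}, \cite{startcats}.
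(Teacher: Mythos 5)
Your proposal is correct and follows essentially the same route as the paper: the pointwise formula for the left Kan extension, identification of the indexing category over $G/H$ with $BH$ (the paper constructs the equivalence $BH\to BG_{/G/H}$ directly, sending $h$ to right multiplication by $h^{-1}$, rather than passing through the translation category, but this is the same identification), and then the result from \cite{crosscat} that $\colim_{BH}\ell_{BH}(\Res^{G}_{H}(\bC))\simeq \ell(\bC\rtimes H)$. Note also that the lemma only asserts an equivalence of values at $G/H$, so the naturality concern you raise at the end is not needed here.
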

\begin{proof}
We use the pointwise formula for the left Kan extension which gives 
$$j_{!}^{{G}}(\ell ( {\bC}))(G/H)\simeq \colim_{BG_{/G/H}}  \ell ( {\bC}) \, .$$
We consider the functor $BH\to B{G}_{/G/H}$ which sends the unique object $*_{BH}$
of $BH$ to the projection map $  G\to G/H$ considered as a object of the slice category  $B{G}_{/G/H}$, and the morphism $h$ in $H=\End_{BH}(*_{BH})$  to the endomorphism of $G\to G/H$   given by right-multiplication with $h^{-1}$ on $G$. This functor is an 
 equivalence  of categories.  We can therefore replace the slice category in the index of the colimit by $BH$.
  We further observe that 
  the restriction of the functor $
  \ell  ({\bC})$ along $BH\to B{G}_{/G/H}$ is given by $\ell (\Res^{G}_{H}( {\bC}))$, where $\Res^{G}_{H} \colon \Fun(BG,\Ccat)\to\Fun(BH,\Ccat)$ is the restriction {of the group action.}  We therefore get the equivalence $$\colim_{BH} \ell (\Res^{G}_{H}( {\bC}))\simeq  \colim_{BG_{/G/H}}  \ell ( {\bC})\ .$$
   In \cite[Thm.\ 7.8.{2}]{crosscat} we have
  seen that $$\colim_{BH} \ell (\Res^{G}_{H}( {\bC}))\simeq \ell( {\bC}\rtimes H)\, ,$$
  where $-\rtimes H$ denotes the maximal crossed product. Combining the three displayed equivalences we get the  equivalence asserted in the lemma.
  \end{proof}
  
%
  
  \begin{ddd}\label{we98gu9egergwergwe9}
  For $ {\bC}$ in $\Fun(BG,\Ccat)$   
we define the functor
\begin{equation}\label{eq_defn_functor_max_orbit}
\Homol^{G}_{{\bC},\max} \coloneqq \Homol^{G}_{\infty, \ell ({\bC)}} \colon G\Orb\to \bS\, .
\end{equation} 
\end{ddd}
By Lemma \ref{qroigjqofrgqrgwergreg} its value on  the orbit $G/H$ is given by
\begin{equation}\label{qrwgojoiqrfwefqwefqwef}
\Homol^{G}_{{\bC},\max}(G/H)\simeq \Homol({\bC}\rtimes H)\, .
\end{equation}
The subscript $max$ indicates that the values of this functor involve the maximal crossed product.
 

The homotopy theoretic construction of $\Homol^{G}_{  \bC,\max}$ has the advantage that it is easy to derive some of its formal properties. As an example, 
the next proposition states the compatibility of the construction of $\Homol^{G}_{  \bC,\max}$ above with the induction along the inclusion of $G$ into a larger 
  group $K$. 
  We have a 
  commutative diagram of categories
  \begin{equation}\label{qewfqwefewfewfewfqe}
\xymatrix{BG\ar[r]^{j^{G}}\ar[d]^{i}&G\Orb\ar[d]^{i^{K}_{G}}\\
BK\ar[r]^{j^{K}}&K\Orb}
\end{equation}
where $i \colon BG\to BK$ is given by applying $B$ to the inclusion of $G$ into $K$, and $i^{K}_{G}$ sends the $G$-orbit $S$ to the $K$-orbit $K\times_{G}S$. 
For a functor $E^{G} \colon G\Orb\to \bS$ we let $E^{G}(X)$ also denote the value of the corresponding $\bS$-valued equivariant homology theory (given by Elemendorf's theorem) on the $G$-topological space $X$. Furthermore, we let $i^{K}_{G,!}$ denote the left Kan extension functor along $i^{K}_{G}$.

Let $  \bC_{\infty}$ be in $\Fun(BG,\Ccat_{\infty})$ and $\Homol\colon \Ccat\to \bS$ be a functor. 

\begin{prop}\label{qrfiuwhiufewfqwefqwef} 
Assume:
\begin{enumerate}
\item $\bS$ is cocomplete.
\item $\Homol$ sends unitary equivalences to equivalences.
\item $\Homol$ preserves small coproducts. 
\end{enumerate}
Then we have the following assertions:
 \begin{enumerate}
\item \label{qeroigjoqergqgqfqewfqewf}
We have an equivalence $\Homol^{K}_{\infty,i_{!} \bC_{\infty}}\simeq i^{K}_{G,!} \Homol_{\infty, \bC_{\infty}}^{G}$ of functors from $K\Orb$ to $\bS$.
\item \label{qeroigjoqergqgqfqewfqewf1} For every $K$-topological space  $X$ we have
$\Homol^{K}_{\infty,i_{!}  \bC_{\infty}}(X)\simeq \Homol^{G}_{ \infty, \bC_{\infty}}(\Res^{K}_{G}(X))$.
\end{enumerate}
\end{prop}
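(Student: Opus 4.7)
The strategy is to prove Claim~\ref{qeroigjoqergqgqfqewfqewf} by a pointwise calculation on $K\Orb$ using the orbit decomposition of $\Res^{K}_{G}(K/L)$, and then to deduce Claim~\ref{qeroigjoqergqgqfqewfqewf1} by observing that both sides of the asserted equivalence are colimit-preserving extensions to $K\Spc$ of functors on $K\Orb$ that coincide by Claim~\ref{qeroigjoqergqgqfqewfqewf}.

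For Claim~\ref{qeroigjoqergqgqfqewfqewf}, functoriality of the left Kan extension applied to the commutative square \eqref{qewfqwefewfewfewfqe} yields $j^{K}_{!}\circ i_{!}\simeq i^{K}_{G,!}\circ j^{G}_{!}$, so that
\[
\Homol^{K}_{\infty,i_{!}\bC_{\infty}}\simeq \Homol_{\infty}\circ i^{K}_{G,!}\circ j^{G}_{!}(\bC_{\infty})\, .
\]
It therefore suffices to show that $\Homol_{\infty}$ commutes with $i^{K}_{G,!}$. At $K/L\in K\Orb$, the pointwise formula for the left Kan extension reads $i^{K}_{G,!}F(K/L)\simeq \colim_{G\Orb_{/\Res^{K}_{G}(K/L)}}F$. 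Using the orbit decomposition $\Res^{K}_{G}(K/L)\simeq \coprod_{g\in G\backslash K/L}G/L_{g}$ with $L_{g}\coloneqq G\cap gLg^{-1}$, the slice splits as $\coprod_{g}G\Orb_{/G/L_{g}}$, and each summand admits $(G/L_{g},\id)$ as terminal object; hence
\[
i^{K}_{G,!}F(K/L)\simeq \coprod_{g\in G\backslash K/L}F(G/L_{g})\, .
\]
Since $\Homol$ preserves small coproducts by hypothesis and the Dwyer--Kan localization $\ell\colon \Ccat\to \Ccat_{\infty}$ of \cite{startcats} preserves coproducts, the functor $\Homol_{\infty}$ preserves small coproducts, and applying it to the display yields the desired natural equivalence. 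With $F=j^{G}_{!}(\bC_{\infty})$, Lemma~\ref{qroigjqofrgqrgwergreg} and \eqref{qrwgojoiqrfwefqwefqwef} identify both sides of Claim~\ref{qeroigjoqergqgqfqewfqewf} at $K/L$ with $\coprod_{g\in G\backslash K/L}\Homol(\bC\rtimes L_{g})$.

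For Claim~\ref{qeroigjoqergqgqfqewfqewf1}, the extension of any $E^{K}\colon K\Orb\to \bS$ to a functor on $K\Spc$ is obtained by left Kan extension along $\iota^{K}\colon K\Orb\hookrightarrow K\Spc$, which preserves colimits. Both functors $\Homol^{K}_{\infty,i_{!}\bC_{\infty}}\colon K\Spc\to \bS$ and $X\mapsto \Homol^{G}_{\infty,\bC_{\infty}}(\Res^{K}_{G}X)$ preserve colimits: the first by construction; the second because $\Res^{K}_{G}\colon K\Spc\to G\Spc$ preserves colimits (being essentially the forgetful functor on underlying spaces) and $\Homol^{G}_{\infty,\bC_{\infty}}\colon G\Spc\to\bS$ is itself a left Kan extension. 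By Claim~\ref{qeroigjoqergqgqfqewfqewf} they agree on all $K$-orbits, hence by the universal property of left Kan extension they agree on every $K$-space.

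The principal obstacle will be ensuring that $\Homol_{\infty}$ preserves coproducts: the hypothesis concerns $\Homol$ on $\Ccat$, so one must verify that small coproducts in $\Ccat_{\infty}$ are computed as $\ell$ applied to coproducts in $\Ccat$. This rests on the compatibility of the Dwyer--Kan localization of \cite{startcats} with coproducts (e.g., all objects being cofibrant in the ambient model structure). Once this is in place, both claims reduce to formal manipulations with left Kan extensions together with the pointwise terminal-object collapse above.
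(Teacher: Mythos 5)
Your proposal is correct and follows essentially the same route as the paper: the paper also first verifies that $\Homol_{\infty}$ preserves small coproducts via cofibrancy of all objects in the model structure underlying the Dwyer--Kan localization, then commutes $\Homol_{\infty}$ past $i^{K}_{G,!}$ using exactly your pointwise orbit decomposition of the slice category (this is Lemma~\ref{q3roigerwgwergwergwerg} in the appendix, which you have simply inlined), and finally combines this with the square \eqref{qewfqwefewfewfewfqe}. For the second assertion the paper invokes Lemma~\ref{wejiogwergergwergwerg}, whose proof is the same "colimit-preserving extensions agreeing on representables" argument you give, phrased via $\hat E^{K}\simeq \hat E^{G}\circ i^{K,*}_{G}$ on presheaf categories.
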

\begin{proof}
We first show that $\Homol_\infty$ preserves small coproducts. Then the claims of the proposition will be
consequences of general considerations that will be given in the appendix to this section.

The Dwyer--Kan localization $\ell\colon \Ccat\to \Ccat_{\infty}$ of $\Ccat$ at the set of unitary equivalences
is modeled by a combinatorial 
model category structure on $\Ccat$   (for details we refer to \cite{DellAmbrogio:2010aa}, {see also}\cite{startcats}). 
Since in  {this model category structure} all objects of $\Ccat$ are cofibrant, for any small  family 
$(\bC_i)_{i \in I}$ in $\Ccat$  the canonical morphism
$$\coprod_{i\in I} \ell(\bC_{i})\to \ell\big(\coprod_{i\in I} \bC_{i}\big)$$
is an equivalence.
 It follows that $\Homol$ preserves small  coproducts if and only if $\Homol_\infty$ preserves small  coproducts.

We now turn to the actual proof of the proposition.
Because ${\Homol_{\infty}}$ preserves small coproducts, applying Lemma \ref{q3roigerwgwergwergwerg} with $B={\Homol_{\infty}}$ and $A=j^{G}_{!} \bC_{\infty}$
we get
$$i^{K}_{G,!} \Homol_{ \infty, \bC_{\infty}}^{G}\simeq \Homol_{\infty}\circ i^{K}_{G,!}j^{G}_{!}  \bC_{\infty}\, .$$
We now use the {commutative} square \eqref{qewfqwefewfewfewfqe} {and the functoriality of Kan extension functors}  in order to rewrite the right-hand side 
$$ \Homol_{\infty}\circ i^{K}_{G,!}j^{G}_{!}  \bC_{\infty}\simeq \Homol_{\infty}\circ j^{K}_{!} i_{!}  \bC_{\infty}\simeq \Homol^{K}_{\infty,i_{!}  \bC_{\infty}}\, .$$
The concatenation of {these} two equivalences gives the  equivalence asserted in 
 \ref{qeroigjoqergqgqfqewfqewf}.
Assertion~\ref{qeroigjoqergqgqfqewfqewf1} is now an immediate consequence of Assertion  \ref{qeroigjoqergqgqfqewfqewf} and Lemma \ref{wejiogwergergwergwerg}.
\end{proof}
 {\begin{ex}If $\Homol$ is a finitary Morita invariant homological functor, then  the assumption on $\Homol$ in Proposition \ref{qrfiuwhiufewfqwefqwef} is satisfied by Corollary \ref{kor_Morita_finitary_preserves_coprods}. By the Theorems  \ref{qrgkljwoglergerggwergwrg} and  \ref{wtoijgwergergrewfwergrg} this applies  e.g. to $\Kcat$ in place of $\Homol$.\hB \end{ex}}

For an application of Proposition \ref{qrfiuwhiufewfqwefqwef} see  Proposition \ref{eroighqeirgoregwergwergewrgewrgwe} below.

We can apply  the construction {of $\Homol^{G}_{\bC,\max}$}   to a unital $C^{*}$-algebra {$A$} with $G$-action in place of $ {\bC}$.    {If $A$ has a  trivial $G$-action} one could try to compare  {$\Homol^{G}_{\bC,\max}$} with the functor {$\Homol^{\DL,G}_{ A}:G\Orb\to \bS$} constructed  following ideas of Davis--L\"uck \cite{davis_lueck}, {see Construction  \ref{weijgowergrewferff}.}   {An immediate difference between these functors  is that 
 the Davis--L\"uck functor satisfies
  $\Homol^{\DL,G}_{ A}(G/H)\simeq \Homol(A\rtimes_{r}H)$, i.e., it involves 
   the reduced crossed product instead of the maximal one as   \eqref{qrwgojoiqrfwefqwefqwef} .} 
  
  {In the remainder of the present section we} construct a functor
  $$\Homol^{G}_{ {\bC},r}  \colon G\Orb\to \Sp$$
  whose values on orbits $G/H$ are given by \begin{equation}\label{wefqwefewfwqdqdqwedewd}
\Homol^{G}_{ {\bC},r}{(G/H)}\simeq \Homol( {\bC}\rtimes_{r} H)\, .
\end{equation}
 We furthermore provide 
    a comparison map
   $$c \colon \Homol^{G}_{ {\bC},\max}\to \Homol^{G}_{ {\bC},r}$$
   whose component at $G/H$ is   
   the canonical morphism between the maximal and reduced crossed products. 
{If $A$ is a unital $C^{*}$-algebra with trivial $G$-action we will also provide an equivalence between  the Davis--L\"uck functor $\Homol^{\DL,G}_{ A}$ and $\Homol^{G}_{\Hilb_{c}(A)^{u},r}$.}

\begin{construction}\label{qrfiohqrioffdewfeqwdqewdqed}{\em 
We consider  $\bC $ in $\Fun(BG,\Ccat)$.  We assume that $\bC$ admits finite orthogonal sums. This assumption implies that a finite sum of mutually orthogonal effective projections is again an effective projection. We  introduce a functor 
\begin{equation}\label{veqvwefvfvvvsfv}
\bC[-]\colon \Fun(BG,\Set)\to \Fun(BG,\Ccat)\, ,
\end{equation}
where $\Set$ is the  small category of  very small sets. 
\begin{enumerate}
\item objects: For  $  X$   in $\Fun(BG,\Set)$ we  define $\bC[  X]$  in $\Fun(BG,\Ccat)$ as follows:
\begin{enumerate} 
\item\label{ergiojwergergwgregw} objects: The  objects of $ \bC[  X]$ are pairs $(C,(p_{x})_{x\in X})$ of an object
$C$ of $\bC$ and a commuting and mutually orthogonal family of  effective projections $p_{x}$ in $\End_{\bC}(C)$ such that  its support
$$\supp(C,(p_{x})_{x\in X}):=\{x\in X\:|\: p_{x}\not=0\}$$ is finite   and $C$ is isomorphic to the orthogonal  sum of the images of the family $(p_{x})_{x\in X}$  (see Definition \ref{rgoigsgsgfgg}).   \item morphisms: A morphism
\[A\colon (C,(p_{x})_{x\in X})\to (C',(p'_{x})_{x\in X})\]
in $  \bC[  X]$  is  a morphism $A\colon C\to C'$ in $\bC$ such that  for all $x,x'$ we have 
$p'_{x}Ap_{x}=0$  unless $x=x'$.
\item composition and involution: These structures are inherited from $\bC$.
\item  The group $G$ acts on $  \bC [X]$ {by}
\[g(C,(p_{x})_{x\in X}) \coloneqq (gC,(gp_{g^{-1}x})_{x\in X})\, .\]
The action of $G$ on morphisms is inherited from $  \bC$.  
\end{enumerate} 
\item morphisms: For a morphism   $f\colon   X\to  X'$  in $\Fun(BG,\Set)$  we define the morphism $\bC[f]\colon \bC[ X]\to  \bC[X']$ in $\Ccat$ as follows:
\begin{enumerate}
\item objects: The functor $\bC[f]$ sends the object $(C,(p_{x})_{x\in X})$  of $ \bC[ X]$ to  the object
$(C,(p_{x'})_{x'\in X'})$ of $ \bC[ X']$, where
\begin{equation}\label{fqweiuhuihfiqwefqwefqwefqew}
p_{x'} \coloneqq \sum_{x\in f^{-1}(\{x'\})} p_{x}\ .
\end{equation}
Since $\bC$ is finitely additive we see that $p_{x'}$ is again   an effective projection. 
 \item morphisms:  The functor $\bC[f]$ sends a morphism  $A$  from $(C,(p_{x})_{x\in X})$ to $ (C',(p'_{x})_{x\in X})$ in $ \bC[X]$ to the same morphism $A\colon C\to C'$ considered as a morphism from $\bC[f]((C,(p_{x})_{x\in X}))$ to $\bC[f]((C',(p'_{x})_{x\in X}))$  in $ \bC[ X']$.
\end{enumerate}
\end{enumerate}
   If $f:X\to X'$ is a map in $\Fun(BG,\Set)$, then we have
\begin{equation}\label{fdvadivojdfoivasdvasdvdvdvav}
\supp(\bC[f]((C,(p_{x})_{x\in X})))\subseteq f(\supp((C,(p_{x})_{x\in X})))\, .
\end{equation}  


Let  $\Fun(BG,\Set)_{i}$ denote the wide subcategory of $\Fun(BG,\Set)$ of morphisms which are injective. If we drop the assumption that $\bC$ is additive, then  the construction above still gives a functor  $ \bC [-]:\Fun(BG,\Set)_{i}\to \Fun(BG,\Ccat)$.  

The construction $\bC\mapsto (X\mapsto \bC[X])$ extends to a functors $$\Ccat\to \Fun(\Fun(BG,\Set)_{i},\Fun(BG,\Ccat)) $$  and $$\Ccat^{f\add}\to \Fun(\Fun(BG,\Set),\Fun(BG,\Ccat)) $$ in the obvious way, where $f\add$ stands for the full subcategory of finitely additive $C^{*}$-categories.
}\hB
\end{construction}

\begin{ex}\label{weoigjwioegergwergerg}
Let $C$ be an object of $\bC$ and $y$ be a point in $X$. Then we consider the object $C_{x}$ in $\bC[X]$ given by $(C,(p^{y}_{x})_{x\in X})$, where $p^{y}_{x}=0$ for all $x$ in $X$ except for $x=y$ where $p^{y}_{y}=\id_{C}$. We say that $C_{y}$ is the object $C$ placed at the point  $y$ in $X$. We have $\supp(C_{y})=\{y\}$. 

{If $(C,p)$ with $p=(p_{x})_{x\in X}$ is a general object of $\bC[X]$, then we can choose images $(C(x),u_{x})$ in $\bC$ of the projections $p_{x}$ for all $x$ in $X$. We then observe that  $((C,p),(u_{x})_{x\in X})$ is an orthogonal sum in $\bC[X]$ of the family $(C(x)_{x})_{x\in X}$ of objects in $\bC[X]$.}
\hB
\end{ex}

   \begin{rem} Let $\bK$ be in $\Fun(BG,\nCcat)$ admit all very small orthogonal AV-sums.
In \cite[(7.4)]{coarsek} we introduce unital $C^{*}$-categories categories $\tilde{\bar{\bK}}^{\mathrm{ctr}}_{\mathrm{lf}}(X)$ of objects in $\bM\bK$ which are controlled by   $G$-bornological coarse spaces $X$.  Then by  \cite[Prop. 9.2.1]{coarsek}
the functor $ \bK^{u}[-] $ defined  in Construction \ref{qrfiohqrioffdewfeqwdqewdqed}  
{is equivalent}  to the functor $  \tilde{\bar \bK}^{\mathrm{ctr}}_{\mathrm{lf}}((-)_{\textit{min},\textit{max}})$.
\hB
\end{rem}



  We consider an additive   $\bC$  in  $\Fun(BG,\Ccat)$. 
  \begin{ddd} \label{ergfevbfsvfvsvfsdvsfvs9} 
  We define the functor
  $$ \bC [-]\rtimes_{r}G:=(-)\rtimes_{r}G\circ \bC[-]\colon \Fun(BG,\Set)\to \Ccat$$  
  \end{ddd}

%
%
%

\begin{rem}\label{rem_restricted_functor}
{If $\bC$ is not   additive, then $ \bC [-]\rtimes_{r}G$ is  still defined  as a functor from  $\Fun(BG,\Set)_i$ to $\Ccat$.}
\hB
\end{rem}

 Recall  the notion of a Morita equivalence from Definition \ref{wetgoijwtoigwerrefwerfwefwref}. Let $\bC$ be in $\Fun(BG,\Ccat)$.
\begin{prop}\label{wetgiojweirogwergrwegwergw}
For every subgroup $H$ of $G$ we
  have a  Morita equivalence
 \begin{equation}\label{sthrtshdfgvfdsgsfgfd}
 i_{H}:\Res^{G}_{H}( \bC)\rtimes_{r}H\to  \bC [ {G/H}]\rtimes_{r}G\,.
\end{equation}\end{prop}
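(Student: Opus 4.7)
The plan is to construct $i_H$ as the composition of a fully faithful $H$-equivariant embedding followed by an isometric inclusion of reduced crossed products, and then to verify that it is essentially surjective (yielding in fact a unitary equivalence, hence a Morita equivalence).

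First I would define an $H$-equivariant morphism $\tilde{\imath}_H \colon \Res^G_H(\bC) \to \Res^G_H(\bC[G/H])$ sending $D$ to the object $D_{eH}$ from Example \ref{weoigjwioegergwergerg} (the object $D$ placed at the identity coset) and acting as the identity on morphism spaces. Equivariance follows from the formula $h \cdot D_{eH} = (hD)_{h \cdot eH} = (hD)_{eH}$ for $h \in H$. Fully faithfulness holds because the defining morphism condition in Construction \ref{qrfiohqrioffdewfeqwdqewdqed}, namely that $p'_y A p_{y'} = 0$ unless $y = y'$, collapses to no constraint when source and target are both supported only at $eH$, so $\Hom_{\bC[G/H]}(D_{eH}, D'_{eH}) = \Hom_{\bC}(D,D')$. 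Applying $-\rtimes_r H$, which preserves fully faithfulness by Corollary \ref{skjtrogergfdsgfgfgd}, and composing with the isometric inclusion $\Res^G_H(\bC[G/H]) \rtimes_r H \to \bC[G/H] \rtimes_r G$ from Proposition \ref{efjviofbsdfbsdbsdfvvsdfvs} produces the fully faithful morphism $i_H$.

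Next I would show essential surjectivity. Since $\bC$ is additive, so is $\bC[G/H]$ (one sums objects component-wise), and hence $\bC[G/H] \rtimes_r G$ is additive by Lemma \ref{thjrteohrehergtrg}. Any object $(C, (p_x)_{x \in G/H})$ of $\bC[G/H]$ is by Definition \ref{rgoigsgsgfgg} an orthogonal sum of objects $(D_i)_{x_i}$ for $i = 1, \dots, n$, where $\{x_1, \dots, x_n\}$ is the support and $D_i$ is the image of $p_{x_i}$. Choose $g_i \in G$ with $g_i \cdot eH = x_i$. Using the explicit $G$-action on $\bC[G/H]$ one computes $g_i \cdot (g_i^{-1}D_i)_{eH} = (D_i)_{x_i}$, so the pair $(\id_{g_i^{-1}D_i}, g_i)$ in the sense of \eqref{qewfiuhuiqhefiuqwefqewfeqwfwefqwf} defines a morphism $(g_i^{-1}D_i)_{eH} \to (D_i)_{x_i}$ in $\bC[G/H] \rtimes_r G$, whose adjoint $(\id_{D_i}, g_i^{-1})$ composes with it in both orders to the identity by the crossed product composition law. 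Assembling these unitaries into a block-diagonal isomorphism (using additivity and Corollary \ref{kor_morphisms_sum_characterization}) identifies $(C, (p_x))$ with $(\bigoplus_{i=1}^n g_i^{-1} D_i)_{eH} = \tilde{\imath}_H(\bigoplus_{i=1}^n g_i^{-1} D_i)$, which lies in the image of $i_H$.

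The main obstacle will be the bookkeeping of the $G$-action on $\bC[G/H]$, in particular verifying the identity $g \cdot D_y = (gD)_{g \cdot y}$ directly from Construction \ref{qrfiohqrioffdewfeqwdqewdqed} and checking through the crossed product composition formula that $(\id_{g_i^{-1}D_i}, g_i)$ is indeed a unitary morphism. A secondary point is coherently assembling the individual unitaries into an isomorphism of orthogonal sums, for which the characterization of morphisms into and out of sums from Corollary \ref{kor_morphisms_sum_characterization} will be used. The finiteness of the support of $(p_x)$ is crucial: it ensures that all constructions remain within $\bC[G/H] \rtimes_r G$ rather than its multiplier category or $W^*$-envelope, so no idempotent completion is needed to achieve essential surjectivity.
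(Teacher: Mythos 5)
Your construction of $i_H$ agrees with the paper's (both send $D$ to $D_{eH}$ and factor through the inclusion of Proposition~\ref{efjviofbsdfbsdbsdfvvsdfvs}), but there is a genuine gap at the full-faithfulness step. You argue: $\tilde{\imath}_H\rtimes_r H$ is fully faithful, and composing with the \emph{isometric inclusion} $\Res^G_H(\bC[G/H])\rtimes_r H\to \bC[G/H]\rtimes_r G$ "produces the fully faithful morphism $i_H$". An isometric inclusion is faithful but in general very far from full (e.g.\ $A\rtimes_r H\to A\rtimes_r G$ for a $C^*$-algebra $A$ and $H\neq G$), so the cited facts only give that $i_H$ is faithful. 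Fullness is a real point here and requires an extra observation: a morphism $D_{eH}\to D'_{eH}$ in $\bC[G/H]\rtimes_r G$ is a limit of sums $\sum_g (f_g,g)$ with $f_g\colon D_{eH}\to g^{-1}(D'_{eH})=(g^{-1}D')_{g^{-1}eH}$, and for $g\notin H$ the morphism space of $\bC[G/H]$ between objects supported at the distinct single points $eH$ and $g^{-1}eH$ is zero, so only $g\in H$ contributes; combined with isometry of the inclusion this identifies the hom space with the one in $\Res^G_H(\bC)\rtimes_r H$. This support argument is exactly the step the paper supplies and your proposal omits; without it "faithful + essentially surjective" does not give a unitary (or Morita) equivalence.

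Apart from that, your route differs mildly from the paper's. The paper factors through the full $G$-invariant subcategory $\bD$ of $\bC[G/H]$ of objects supported on a single point: it shows $\Res^G_H(\bC)\rtimes_r H\to \bD\rtimes_r G$ is a unitary equivalence and then that $k\rtimes_r G\colon \bD\rtimes_r G\to \bC[G/H]\rtimes_r G$ is a Morita equivalence, because every object of the target is a \emph{finite orthogonal sum} of objects of $\bD\rtimes_r G$ (it does not claim essential surjectivity). You instead prove essential surjectivity of $i_H$ directly, by moving each summand $(D_i)_{x_i}$ to $eH$ via the unitary $(\id_{g_i^{-1}D_i},g_i)$ and then re-summing inside $\bC$ to land on $(\bigoplus_i g_i^{-1}D_i)_{eH}$; that last step uses the additivity of $\bC$ (which is the standing assumption of this section, so this is fine, and it buys the slightly stronger conclusion that $i_H$ is a unitary equivalence). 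The unitaries and the block-diagonal assembly you describe are correct and are the same devices the paper uses. Once you insert the fullness argument above, your proof is complete.
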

\begin{proof}
%

We let $k\colon \bD\to \bC[ G/H] $ denote the  inclusion of the full $G$-invariant subcategory of  $\bC[ G/H] $ of objects which are supported
on a single point of $G/H$.   We then have a $H$-equivariant inclusion
$j\colon\Res^{G}_{H}(\bC)\to \Res^{G}_{H}(\bD)$ which identifies $\Res^{G}_{H}(\bC)$ with  the full subcategory of objects supported on the class $H$ in $G/H$. 
We   define $i_{H}$ as the composition $$i_{H}\colon \Res^{G}_{H}(\bC)\rtimes_{r}H
\stackrel{j\rtimes_{r}G}{\to} \Res^{G}_{H}(\bD)\rtimes_{r}H\stackrel{\ell}{\to}  \bD\rtimes_{r}G
\stackrel{k\rtimes_{r}G}{\to} \bC[G/H]\rtimes_{r} G\ ,$$
where $\ell$ is induced by the inclusion of $H$ into $G$, see Proposition \ref{efjviofbsdfbsdbsdfvvsdfvs}.
The following assertions imply that $i_{H}$ is a Morita equivalence:
\begin{enumerate}
 \item  \label{wrgoibjowrgbfgbgfw}$j\rtimes_{r}G$ is fully faithful.
\item \label{wrgoibjowrgbfgbgfw1} $\ell$ is isometric.
\item \label{wrgoibjowrgbfgbgfw2}  $\ell \circ (j\rtimes_{r}G)$ is full.
\item \label{wrgoibjowrgbfgbgfw3} $\ell \circ (j\rtimes_{r}G)$ is essentially surjective.
\item \label{wrgoibjowrgbfgbgfw4} $k\rtimes_{r}G$ is a Morita equivalence.
\end{enumerate}
In fact, the first four assertions  together imply that $\ell \circ (j\rtimes_{r}G)$
is a unitary equivalence so that $i_{H}$ is the composition of a Morita equivalence and a unitary equivalence and hence itself a unitary equivalence.

In order to see Assertion \ref{wrgoibjowrgbfgbgfw}  note that $j$ is fully faithful, and therefore $j\rtimes_{r}G$ is fully faithful by Theorem \ref{ejgwoierferfewrferfwe}. 

For Assertion \ref{wrgoibjowrgbfgbgfw1}  note that  the morphism $\ell$ is isometric by Proposition \ref{efjviofbsdfbsdbsdfvvsdfvs}. 

We now show Assertion  \ref{wrgoibjowrgbfgbgfw2}.
Let $C,C'$ be objects of $\Res^{G}_{H}(\bC)\rtimes_{r}H$ (  i.e.,  objects of $\bC$)
and $\sum_{g\in G} (f_{g},g)$ be a morphism 
  $ C_{eH}\to   C_{eH}'$ in $\bD\rtimes_{r}G$ (see Example \ref{weoigjwioegergwergerg} for notation), where $f_{g}:C_{eH}\to g^{-1}C'_{eH}$.  For $g\not\in H$   we have $\supp(g^{-1}C'_{eH})=g^{-1}H\not=H$ and hence 
  $f_{g}=0$. Since by the first two assertions
$\ell\circ (j\rtimes_{r}G)$ is isometric, $\sum_{g\in H} (f_{g},g)$ converges  in $\Res^{G}_{H}(\bC)\rtimes_{r}H$ and provides    a morphism $C\to C'$  which is the desired preimage.

 In order to show Assertion  \ref{wrgoibjowrgbfgbgfw3}
we consider an object   of $\bD\rtimes_{r}G$.
It is of the form $C_{g H}$ for some object $C$ of $\bC$ and $g$ in $G$. Then $(\id_{gC},g^{-1}):(g^{-1} C)_{eH}\to C_{g H}$ is a unitary isomorphism 
in $\bD\rtimes_{r}G$ from an object in the image of $ \ell \circ (j\rtimes_{r}G)$.

%
%

It remains to show Assertion \ref{wrgoibjowrgbfgbgfw4}.  
  We will actually show the stronger statement  
    that every object in $  \bC [ {G/H} ]\rtimes_{r}  G$
  is isomorphic to a finite orthogonal sum of objects in $  \bD\rtimes_{r}  G$. 
   Let $(C,p^{C})$ be an object of $ \bC [ {G/H} ]\rtimes_{r}  G$.
 We choose images $(C(gH),u_{gH})$ of the   projections $p^{C}_{gH}$ for all $gH$ in the finite set $ \supp(C,p^{C})$.    {Then $C(gH)_{gH}$ (see Example \ref{weoigjwioegergwergerg}) belongs to $   \bD\rtimes_{r}  G$ and 
  $((C,p^{C}),(u_{gH},e)_{gH\in \supp(C,p^{C})})$ is  an orthogonal sum of the finite family $(C(gH)_{gH})_{gH\in \supp(C,p^{C})}$ in $  \bD\rtimes_{r}  G$. 
  }\end{proof}

 
  Note that $G\Orb$ is a full subcategory of $\Fun(BG,\Set)$. 
  We consider   $\bC$  in  $\Fun(BG,\Ccat)$ and assume that it is additve. 
  \begin{ddd}\label{iuhquifhiwefqewfqwefqwefef}
We define the  functor   $\Homol_{  \bC,r}^{G}\colon G\Orb\to \bS$ as the composition
\begin{equation}\label{eq_defn_functor_reduced_orbit}
\Homol_{  \bC,r}^{G}\colon
G\Orb\to \Fun(BG,\Set)\xrightarrow{ \bC [-]\rtimes_{r}G} \Ccat\xrightarrow{\Homol}
\bS\,.
\end{equation}
\end{ddd}
Using the functoriality of the construction $\bC\to \bC[-]$ with respect to the  $C^{*}$-category $\bC$ we see that 
  we actually have constructed a functor
$\Homol^{G}_{r}:\Ccat\to \Fun(G\Orb,\bS)$.

The next corollary of Proposition \ref{wetgiojweirogwergrwegwergw} 
shows that the values   of functor constructed above are indeed as desired.

\begin{kor}\label{kor_computation_orbit_Morita}
If $\Homol$ is Morita invariant,  then  for every subgroup $H$ of $G$ we have an equivalence 
$$\Homol(i_{H})\colon \Homol(  \Res^{G}_{H}(\bC)\rtimes_{r}H)\stackrel{{\simeq}}{\to} \Homol_{  \bC,r}^{G}(G/H)    \, .$$
\end{kor}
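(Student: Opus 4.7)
The plan is to deduce this corollary almost immediately from Proposition \ref{wetgiojweirogwergrwegwergw} combined with Morita invariance of $\Homol$. First I would unpack the definitions on both sides of the asserted equivalence: by Definition \ref{iuhquifhiwefqewfqwefqwefef} we have
\[
\Homol^{G}_{\bC,r}(G/H) = \Homol(\bC[G/H]\rtimes_{r} G),
\]
so the claim reduces to showing that the map
\[
\Homol(i_{H})\colon \Homol(\Res^{G}_{H}(\bC)\rtimes_{r}H)\to \Homol(\bC[G/H]\rtimes_{r}G)
\]
induced by the functor $i_{H}$ constructed in Proposition \ref{wetgiojweirogwergrwegwergw} is an equivalence in $\bS$.

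Next I would recall that Proposition \ref{wetgiojweirogwergrwegwergw} asserts precisely that $i_{H}$ is a Morita equivalence between unital $C^{*}$-categories (here one uses the standing assumption that $\bC$ is additive, so that $\bC[G/H]$ is defined as an object of $\Fun(BG,\Ccat)$, and moreover the reduced crossed product functor $-\rtimes_{r}G$ takes values in $\Ccat$ when applied to a unital source). Since $\Homol$ is Morita invariant by hypothesis, Definition \ref{ewrgiuhwerogwregrefwferfwrf} guarantees that $\Homol$ sends every Morita equivalence in $\Ccat$ to an equivalence in $\bS$. Applying this to $i_{H}$ yields the desired equivalence.

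There is essentially no obstacle here beyond correctly invoking the two previous results; the work has already been done in establishing Proposition \ref{wetgiojweirogwergrwegwergw}. The only minor subtlety to verify, and the one I would spell out explicitly, is that $i_{H}$ really is a morphism in $\Ccat$ (not just in $\nCcat$), so that Morita invariance in the sense of Definition \ref{ewrgiuhwerogwregrefwferfwrf} applies — but this is clear since both $\Res^{G}_{H}(\bC)\rtimes_{r}H$ and $\bC[G/H]\rtimes_{r}G$ inherit unitality from the unitality of $\bC$ and $\bC[G/H]$, the latter being additive (hence unital) whenever $\bC$ is.
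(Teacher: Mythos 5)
Your proposal is correct and is exactly the argument the paper intends: the statement is presented as an immediate corollary of Proposition \ref{wetgiojweirogwergrwegwergw} (which shows $i_{H}$ is a Morita equivalence) together with Definitions \ref{iuhquifhiwefqewfqwefqwefef} and \ref{ewrgiuhwerogwregrefwferfwrf}. Your extra check that the relevant crossed products are unital (via additivity of $\bC$ and Lemma \ref{thjrteohrehergtrg}) is a sensible point to make explicit, even though the paper leaves it implicit.
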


Let $\Homol \colon \Ccat\to \bS$ be a functor   which 
sends unitary equivalences to equivalences. 
{In the case $\Homol=\Kcat$ we will use the more readable notation $\K^{G}_{\bC,{\max}} \coloneqq (\Kcat)^{G}_{  \bC,{\max}}$ for the functor in \eqref{eq_defn_functor_max_orbit}, and $\K^{G}_{  \bC,r} \coloneqq (\Kcat)^{G}_{ \bC,r}$ for the functor in \eqref{eq_defn_functor_reduced_orbit}.}  {For   a family   $\cF$ of subgroups of $G$ we let $G_{\cF} \Orb$ denote the full subcategory of  $G\Orb$ of transitive $G$-sets  with stabilizers in $\cF$.}
Let   $\bC$ be in  $\Fun(BG,\Ccat) $ and assume that it is additive.

\begin{prop}\label{oiqrjgpoiregreewegegw}\mbox{}
\begin{enumerate}
\item \label{reoigjewrogwergewrgwerg}
There  is a  canonical natural transformation $c \colon \Homol^{G}_{  \bC,\max}\to 
\Homol^{G}_{  \bC,r}$.
\item \label{wgwegrecewcs} If $\Homol$ is a Morita invariant, then the evaluation of $c$ at $G/H$  corresponds 
 under the equivalences from Corollary \ref{kor_computation_orbit_Morita} and \eqref{qrwgojoiqrfwefqwefqwef}  to the canonical morphism
$$\Homol(q_{\bC}):  \Homol(  \bC\rtimes H)\to \Homol( \bC\rtimes_{r} H)\ ,$$ see \eqref{eq_q}.
\item\label{reoigjewrogwergewrgwerg1}
If $\Homol$ is {Morita invariant} and every member of $\cF$ is amenable, then 
$$c_{|G_{\cF}\Orb} \colon ( \Homol^{G}_{ \bC,\max})_{|G_{\cF}\Orb}\to 
(\Homol^{G}_{ \bC,r})_{|G_{\cF}\Orb}$$
 is an equivalence.
\item\label{prop_item_Haagerup_equiv_functor}
If every member of $\cF$ is $K$-amenable, then 
$$c_{|G_{\cF}\Orb} \colon (\K^{G}_{  \bC,{\max}})_{|G_{\cF}\Orb} \to (\K^{G}_{  \bC,r})_{|G_{\cF}\Orb}$$
is an equivalence.
\end{enumerate}
\end{prop}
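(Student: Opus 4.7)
}

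My plan for Part (1) is to produce the natural transformation $c$ via the universal property of the left Kan extension $j^{G}_{!}$. First I would construct a canonical weakly equivariant morphism $(\phi,\rho)\colon\bC\to \bC[G]\rtimes_{r}G$ in the sense of Definition \ref{rgjeqrgoieqrgrgqewgfq}, where the target carries the $G$-action induced by $G\cong\End_{G\Orb}(G)$ and the functoriality of $\bC[-]\rtimes_{r}G$ (i.e.\ restriction of $F_{r}$ along $j^{G}$). Concretely, set $\phi(C)\coloneqq C_{e}$ (Example \ref{weoigjwioegergwergerg}), $\phi(f)\coloneqq (f,e)$ for morphisms $f$ in $\bC$, and $\rho(g)_{C}\coloneqq(1_{C},g)\colon C_{e}\to (gC)_{g}$; the composition rule in the crossed product immediately yields the naturality of $\rho(g)$ and the cocycle identity. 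Lemma \ref{eoigjosegbsfdgsertsfd} then rectifies $(\phi,\rho)$ to a strictly equivariant morphism $\hat\phi\colon Q(\bC)\to \bC[G]\rtimes_{r}G$ in $\Fun(BG,\Ccat)$, and since $p_{\bC}\colon Q(\bC)\to\bC$ is a unitary equivalence it becomes invertible in $\Fun(BG,\Ccat_{\infty})$, yielding a morphism $\beta\colon\ell_{BG}(\bC)\to \ell_{BG}(\bC[G]\rtimes_{r}G)=(\ell\circ F_{r})\circ j^{G}$. The adjunction $(j^{G}_{!},j^{G,*})$ converts $\beta$ into a natural transformation $\alpha\colon j^{G}_{!}(\ell_{BG}(\bC))\to \ell\circ F_{r}$, and $c\coloneqq \Homol_{\infty}(\alpha)$ is the desired transformation.

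For Part (2) I would apply the pointwise formula for $j^{G}_{!}$ exactly as in the proof of Lemma \ref{qroigjqofrgqrgwergreg}, using the equivalence $(BG)_{/G/H}\simeq BH$. The evaluation $\alpha(G/H)$ is then induced by restricting $\beta$ along $BH\hookrightarrow BG$ and post-composing with $\ell$ applied to the projection $\bC[G]\rtimes_{r}G\to \bC[G/H]\rtimes_{r}G$ coming from $\bC[\pi]$ with $\pi\colon G\to G/H$; crucially, the $H$-action on $\bC[G/H]\rtimes_{r}G$ induced from $\End_{G\Orb}$ is trivial because $H$ acts trivially on $G/H$ by right multiplication. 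After taking the $BH$-colimit, unwinding the universal property of the maximal crossed product, one sees that the resulting map $\bC\rtimes H\to \bC[G/H]\rtimes_{r}G$ sends the algebraic generator $(f,h)\colon C\to hC$ of $\bC\rtimes^{\alg}H$ to $(f,h)\colon C_{eH}\to(hC)_{eH}$. On the other hand, unfolding the definition of $i_{H}$ from the proof of Proposition \ref{wetgiojweirogwergrwegwergw} shows that $i_{H}\circ q_{\bC}$ sends this same generator to the same morphism. Since $\bC\rtimes^{\alg}H$ is dense in $\bC\rtimes H$ and both $c(G/H)$ (before applying $\Homol$) and $i_{H}\circ q_{\bC}$ are continuous, they coincide as morphisms of $C^{*}$-categories, so $c(G/H)\simeq\Homol(i_{H})\circ\Homol(q_{\bC})$ under the identifications \eqref{qrwgojoiqrfwefqwefqwef} and Corollary \ref{kor_computation_orbit_Morita}.

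Parts (3) and (4) are then immediate consequences of Part (2). In Part (3), for $H\in\cF$ amenable Theorem \ref{thm_G_amenable} makes $q_{\bC}$ an isomorphism, hence $\Homol(q_{\bC})$ is an equivalence; Morita invariance of $\Homol$ together with Proposition \ref{wetgiojweirogwergrwegwergw} makes $\Homol(i_{H})$ an equivalence; combining yields that $c(G/H)$ is an equivalence, as needed. For Part (4), substitute Theorem \ref{thm_Kamenable_equiv} for Theorem \ref{thm_G_amenable} (since $\cF$ now consists of $K$-amenable subgroups) and invoke Theorem \ref{wtoijgwergergrewfwergrg} for the Morita invariance of $\Kcat$; the same two-step factorization gives the result.

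The main obstacle is the book-keeping in Part (2): one must simultaneously track the original $G$-action on $\bC$, the $\End_{G\Orb}(G)$-action on $\bC[G]\rtimes_{r}G$ (which involves right-multiplication endomorphisms $r_{g^{-1}}$), the restriction of these to $H$, and the triviality of the induced $H$-action on $\bC[G/H]\rtimes_{r}G$. Matching the resulting morphism with the explicit three-step description of $i_{H}$ requires unwinding the chain $\Res^{G}_{H}\bC\rtimes_{r}H\to\bD\rtimes_{r}H\to\bD\rtimes_{r}G\to\bC[G/H]\rtimes_{r}G$ and verifying that the Kan-extension-produced generator goes to the same element as $i_{H}\circ q_{\bC}$; Part (1), by contrast, reduces to formal manipulations once the weakly equivariant structure is in place.
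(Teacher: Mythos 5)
Your proposal is correct, but the construction of $c$ in Part (1) takes a genuinely different route from the paper's. You build $c$ by exhibiting a weakly equivariant morphism $\bC\to j^{G,*}(\bC[-]\rtimes_{r}G)=\bC[G]\rtimes_{r}G$ (with $\phi(C)=C_{e}$, $\rho(g)_{C}=(1_{C},g)\colon C_{e}\to (gC)_{g}$), rectifying it through $Q(\bC)$ via Lemma \ref{eoigjosegbsfdgsertsfd}, and then transposing across the adjunction $j^{G}_{!}\dashv j^{G,*}$; your cocycle and naturality checks do go through with the paper's conventions ($k\mapsto r_{k^{-1}}$ for $\End_{G\Orb}(G)$ and $(f,g)$ with $f\colon C\to g^{-1}C'$). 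The paper instead first replaces the $\infty$-categorical left Kan extension by an explicit one-categorical model, $j^{G}_{!}(\ell_{BG}(\bC))\simeq \ell_{G\Orb}((\bC'\otimes\phi)\rtimes G')$ (Lemma \ref{qeroighwoiegwergwergwregwerg}, resting on Lemma \ref{woitjowergergwergreg} and the identification of the maximal crossed product with $\colim_{BG'}$), and then defines $c$ by the strict natural transformation $\nu\rtimes G$ followed by the max-to-reduced comparison \eqref{eq_q}. Your approach is shorter to set up but defers the real work to Part (2), where you must unwind the counit of the adjunction against the pointwise formula and the equivalence of Lemma \ref{qroigjqofrgqrgwergreg}; the paper's strict model makes Part (2) a check of the on-the-nose commutative diagram \eqref{vwervwevwvdvsfdvsdv}, at the cost of the auxiliary $(\bC'\otimes\phi)\rtimes G'$ machinery. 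Your identification of both composites on the dense algebraic crossed product $\bC\rtimes^{\alg}H$ (each sending $(f,h)$ to $(f,h)\colon C_{eH}\to C'_{eH}$), together with continuity, is the right way to close Part (2) in your setup, and your Parts (3) and (4) coincide with the paper's deduction from Part (2) via Theorems \ref{thm_G_amenable}, \ref{thm_Kamenable_equiv} and \ref{wtoijgwergergrewfwergrg}.
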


{The main difficulty in the construction of the transformation $c$ is that its domain and target are constructed in very different manners. In fact, the domain of $c$ is given by an  $\infty$-categorical   left Kan extension functor, while the target is given by an explicit one-categorical construction. }
{Before we start the {actual} proof of Proposition~\ref{oiqrjgpoiregreewegegw} we  {therefore} prove two intermediate  assertions.} {The main outcome is Lemma \ref{qeroighwoiegwergwergwregwerg} providing a   one-categorical model of the $\infty$-categorical left Kan extension $j^{G}_{!}(\ell_{BG}( \bD))$. 
The idea is, of course, to relate the $\infty$-categorical construction with the version of \eqref{eq_defn_functor_reduced_orbit} with the reduced crossed product replaced by the maximal one.
}

We let $G'$ be a second copy of $G$.   Then we can form the   functor  
$\phi\colon G\Orb\to \Fun(BG',\Set)$
which  sends $S$ in $G\Orb$ to $S$ considered as a $G'$-set.  {Using the exponential law we interpret $\phi$ as a functor  $$\phi\colon G\Orb\times BG'\to \Set\ .$$}{We consider the group $G$ as  an object $\tilde G$} in $\Fun(BG \times BG',\Set)$, where $G'$-action is the right-action and  the $G$-action is the left action on $\tilde G$. 

We let $\delta \colon \Set\to \Spc$ denote the canonical  functor and for any category $\cC$ we also write $\delta:\Fun(\cC,\Set)\to \Fun(\cC,\Spc)$ for the functor give by post-composition with $\delta$. Finally 
recall the embedding of categories $j^{G} \colon BG\to G\Orb$ from \eqref{wefjqwoeifjoqwefqwefwefqefqewfq}. 
 \begin{lem}\label{woitjowergergwergreg}
We have an equivalence  \begin{equation}\label{vewfoihvjiefvwevrev}(j^{G}\times\id_{BG'})_{!}\delta (\tilde G)\simeq \delta (\phi)  
\end{equation}
  in $\Fun(  \Orb\times BG', \Spc)$.  
\end{lem}
\begin{proof}
The inverse map $g\mapsto g^{-1}$ on $\tilde G$ induces an isomorphism
  $\tilde G\stackrel{\cong}{\to} (j^{G}\times \id_{BG'})^{*}\phi$ in $\Fun(BG\times BG',\Set)$.  We  get the morphism
$$(j^{G}\times \id_{BG'})_{!}\delta (\tilde G)\stackrel{{\simeq}}{\to} (j^{G}\times \id_{BG'})_{!} (j^{G}\times \id_{BG'})^{*}\delta (\phi)\stackrel{\text{counit}}{\to} \delta (\phi)\, .$$
We must show that the counit is an  equivalence. To this end we calculate {its evaluation} at $G/H$ in $G\Orb$ and get
\begin{eqnarray*}
 ((j^{G}\times \id_{BG'})_{!} (j^{G}\times \id_{BG'})^{*}\delta (\phi))(G/H)&\simeq & \colim_{(G\to G/H)\in  BG_{/G/H}}  \delta(\phi)(G) \\&\simeq& \colim_{BH}\delta (\phi)(G) \\& \simeq& \delta (\phi)(G/H)  \, ,
\end{eqnarray*}
{where} for the last equivalence we use that $H$ acts freely on $\tilde G$ {from the right} and that therefore we can calculate the colimit over $BH$  before applying $\delta $.
 \end{proof}

Since $\Ccat$ has {all} coproducts it is tensored over $\Set$. For $\bD$ in $\Ccat$
the functor $\bD\otimes -\colon \Set\to \Ccat$ is essentially uniquely determined by an  isomorphism 
$\bD\otimes *\cong \bD$  and  the property that it preserves coproducts.
If  $  \bD$ is in $ \Fun(BG,\Ccat )$ and
$  S$ is in $\Fun(BG,\Set)$, then we can consider $  \bD\otimes S$ 
in $\Fun(BG,\Ccat)$ {using the diagonal action}.

Similarly, the $\infty$-category   $\Ccat_{\infty}$  is cocomplete and hence tensored over $\Spc$. For $\bD_{\infty}$ in $\Ccat_{\infty}$ the functor $\bD_{\infty}\otimes - \colon \Spc\to \Ccat_{\infty}$ is essentially uniquely determined by an equivalence $\bD_{\infty}\otimes *\simeq \bD_{\infty}$ and the property that it preserves colimits.
If $  \bD_{\infty}$ is in  
 $\Fun(BG,\Ccat_{\infty})$ and if $  X$ is $\Fun(BG,\Spc)$, then we can consider 
 $  \bD_{\infty}\otimes   X$  in  $\Fun(BG,\Ccat_{\infty})$. 
 
 The functor $\delta \colon \Set \to \Spc$ preserves coproducts.
  Since the localization $\ell \colon \Ccat\to \Ccat_{\infty}$ also preserves coproducts {(this was explained in the  proof of Proposition \ref{qrfiuwhiufewfqwefqwef})}, {for all $\bD$ in $\Ccat$} and $S$ in $\Set$
 we have a {canonical} equivalence $\ell(\bD\otimes S)\simeq \ell(\bD)\otimes \delta(S)$. 
 {Similarly, for all $\bD$ in $ \Fun(BG,\Ccat)$ and $S$ in $\Fun(BG,\Set)$ we have a canonical equivalence}
\begin{equation}\label{eq_l_commute_delta}
\ell( \bD\otimes  S)\simeq \ell( \bD)\otimes \delta(  S)
\end{equation}
in $\Fun(BG,\Ccat_{\infty})$.

 Let $  \bD$ be in $\Fun(BG,\Ccat)$. We write $  \bD' $ in $\Fun(BG',\Ccat )$  for $  \bD $ considered with
the $G'$-action.  
 \begin{lem}\label{qeroighwoiegwergwergwregwerg}
 We have an equivalence \begin{equation}\label{qeroighwoiegwergwergwregwerg111}
j^{G}_{!}(\ell( \bD))\simeq \ell(( \bD'\otimes \phi)\rtimes G')
\end{equation}
   in $\Fun(  G\Orb, \Ccat_{\infty})$.
 \end{lem}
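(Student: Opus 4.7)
My plan is to unfold the right-hand side $\ell_{G\Orb}((\bD'\otimes \phi)\rtimes G')$ step by step using three key inputs: first, the identification of crossed products with colimits over classifying spaces, namely \cite[Thm.\ 7.8.2]{crosscat} which gives $\ell(\bE\rtimes G')\simeq \colim_{BG'} \ell_{BG'}(\bE)$ for $\bE$ in $\Fun(BG',\Ccat)$, applied pointwise over $G\Orb$; second, the interchange equivalence \eqref{eq_l_commute_delta} for $\ell$ and tensor products by sets; and third, the explicit Lemma~\ref{woitjowergergwergreg} identifying $\delta_{G\Orb\times BG'}(\phi)$ as the left Kan extension $(j^G\times \id_{BG'})_!\delta_{BG\times BG'}(\tilde G)$. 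The goal is a chain of canonical equivalences ending in $j^G_!(\ell_{BG}(\bD))$.

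Concretely, I would proceed as follows. Starting from $\ell_{G\Orb}((\bD'\otimes \phi)\rtimes G')$, apply input (a) to rewrite this as $\colim_{BG'}\ell_{G\Orb\times BG'}(\bD'\otimes \phi)$, where the colimit is computed in $\Fun(G\Orb,\Ccat_\infty)$. Then invoke (b) to obtain $\colim_{BG'}\bigl(\ell_{BG'}(\bD')\otimes \delta_{G\Orb\times BG'}(\phi)\bigr)$, with $\ell_{BG'}(\bD')$ interpreted as pulled back along the second projection $G\Orb\times BG'\to BG'$. Applying (c) converts this to $\colim_{BG'}\bigl(\ell_{BG'}(\bD')\otimes (j^G\times \id_{BG'})_!\delta_{BG\times BG'}(\tilde G)\bigr)$. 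Now use the projection formula: since $\ell_{BG'}(\bD')$ is the pullback along the second projection, it commutes with $(j^G\times \id_{BG'})_!$, yielding $\colim_{BG'}(j^G\times \id_{BG'})_!\bigl(\ell_{BG\times BG'}(\bD'\otimes \tilde G)\bigr)$, where on $BG\times BG'$ the functor $\bD'$ is again pulled back. Since left Kan extensions commute with colimits, this equals $j^G_!\colim_{BG'}\ell_{BG\times BG'}(\bD'\otimes \tilde G)$. The final step is to identify $\colim_{BG'}\ell_{BG\times BG'}(\bD'\otimes \tilde G)\simeq \ell_{BG}(\bD)$ in $\Fun(BG,\Ccat_\infty)$: this follows because $G'$ acts freely on $\tilde G$ from the right with orbit space a point, so the change of variables $(d,x)\mapsto (x^{-1}d,*)$ identifies the diagonal homotopy $G'$-orbits of $\bD'\otimes \tilde G$ with $\bD$, the remaining $BG$-action coming from the left $G$-action on $\tilde G$.

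The main obstacle will be justifying the projection formula and the interchange of colimits with left Kan extensions at the $\infty$-categorical level in a setting where one side is described one-categorically (the crossed product and tensor product of $C^\ast$-categories) while the other is an $\infty$-categorical Kan extension. Both $\colim$ and $j^G_!$ are colimit constructions in $\Fun(-,\Ccat_\infty)$, so their interchange is formal; the projection formula likewise reduces to the fact that tensoring with a fixed object in the cocomplete $\infty$-category $\Ccat_\infty$ preserves colimits, hence commutes with left Kan extension. Care is needed to verify that the group actions match up under these identifications, especially since $G$ and $G'$ are the same group but playing different roles, and the inversion isomorphism in Lemma~\ref{woitjowergergwergreg} swaps left and right actions. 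Assembling the chain then produces the claimed equivalence.
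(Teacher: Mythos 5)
Your proposal is correct and follows essentially the same route as the paper: the same chain of equivalences (crossed product as $\colim_{BG'}$ via \cite[Thm.~7.8]{crosscat}, the interchange \eqref{eq_l_commute_delta}, Lemma~\ref{woitjowergergwergreg}, the projection formula, commutation of $j^G_!$ with colimits, and the shearing/freeness argument on $\tilde G$), merely traversed from the right-hand side to the left instead of the other way around. The only point to watch is the precise form of your change of variables $(d,x)\mapsto(x^{-1}d,*)$ versus the paper's $(C,h)\mapsto(hC,h)$ — as you anticipate, the inversion in Lemma~\ref{woitjowergergwergreg} makes the left/right action bookkeeping delicate, but this is a convention-level issue, not a gap.
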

\begin{proof}

We have the equivalence
\begin{equation}\label{erfowjfoiwerfwerfwrefwerf}
\ell (  \bD')\otimes \delta (\tilde G)\stackrel{\eqref{eq_l_commute_delta}}{\simeq}  \ell ( \bD'\otimes \tilde G) {\stackrel{\simeq}{\to}}  \ell ( \bD\otimes \tilde G)\stackrel{\eqref{eq_l_commute_delta}}{\simeq}  \ell ({ {\bD}})\otimes \delta (\tilde G) 
\end{equation}
in $\Fun(BG\times BG',\Ccat_{\infty})$, 
where the middle equivalence is given by $ (C,h)\mapsto (h C,h)$, and  where we implicitly consider, e.g., $\bD'$ in $\Fun(BG\times BG',\Ccat)$ with the trivial $G$-action.
It sends the   diagonal action of $G'$  to the right action of $G'$ on $\tilde G$, and the    left action of $G$ on $\tilde G$ to the diagonal action. 
We have $$\colim_{BG'} \delta  (\tilde G)  \simeq *$$  since $G'$ acts freely on $\tilde G$ so that we can calculate the colimit before going from sets to spaces.  
Applying $\colim_{BG'}$  to \eqref{erfowjfoiwerfwerfwrefwerf}  
we get the equivalence 
$$\colim_{BG'}(\ell (  \bD')\otimes  \delta (\tilde G))\simeq \ell (  \bD)$$
in $\Fun(BG,\Ccat_{\infty})$.
We now apply {$j^{G}_{!}$}
and use that this left Kan extension functor preserves colimits  {to} get
\begin{eqnarray}\label{qoihjoiwefqwfqewfq}
\colim_{BG'}   (j^{G}\times \id_{BG'})_{!}      (\ell ( \bD')\otimes \delta (\tilde G))&\simeq&j^{G}_{!} \colim_{BG'}(\ell ( \bD')\otimes  \delta (\tilde G))\nonumber\\&\simeq& j^{G}_{!}(\ell ( \bD))\, .
\end{eqnarray}
Finally, using Lemma \ref{woitjowergergwergreg} and that  $\ell ( \bD')\otimes -$ preserves colimits
we can rewrite the domain of \eqref{qoihjoiwefqwfqewfq}
as \begin{eqnarray*}
\colim_{BG'}  (j^{G}\times \id_{BG'})_{!}(\ell ( \bD')\otimes  \delta (\tilde G) )&\stackrel{\eqref{vewfoihvjiefvwevrev}}{ \simeq}& \colim_{BG'}(\ell (   \bD') \otimes \delta  (\phi))\\&\stackrel{\eqref{eq_l_commute_delta}}{\simeq} & \colim_{BG'}\ell (   \bD'  \otimes  \phi) \\
&\simeq& \ell ((  \bD'\otimes \phi) \rtimes G')\, ,
\end{eqnarray*}
where for the last equivalence we use \cite[Thm.\ 7.8]{crosscat}
\end{proof}

\begin{proof}[Proof of Proposition~\ref{oiqrjgpoiregreewegegw}]
We write $\bC'$ for $\bC$ considered with the action of $G'$.
We define a transformation $$\nu \colon \bC^{\prime}  \otimes \phi\to  \bC [\phi(-)] $$
 of functors from $G\Orb$ to $\Fun(BG',\Ccat)$. Note that for $T$ in $G\Orb$ an object of $ \bC^{\prime }\otimes \phi(T)$ is given by a pair $(C,t)$ of an object $C$ of $\bC$ and a point $t$ in $T$.   {Recall that $C_{t}$  in $  \bC [\phi(T)]$ denotes the object $C$ placed at the point $t$, see  Example \ref{weoigjwioegergwergerg}.}
 \begin{enumerate}
 \item objects: The  evaluation $\nu_{T}$ of $\nu$ at $T$ sends the object $(C,t)$ in  $   \bC^{\prime }\otimes \phi(T)$   to the object $C_{t}$ in $ \bC [T]$.  
 \item morphisms: A  {non-zero} morphism  $(C,t)\to (C',t')$ in $  \bC^{\prime}\otimes \phi(T)$  only exists if $t=t'$.   {A morphism $(C,t)\to (C',t)$ is} given by a morphism $f \colon C\to C'$ in $\bC$.
 The   evaluation $\nu_{T}$ of $\nu$ at $T$ sends this morphism to the morphism $f_{t} \colon C_{t}\to C_{t}'$.
 \end{enumerate} 
One checks 
that $\nu_{T}$ is  a well-defined morphism between $C^{*}$-categories and $G'$-equivariant. Furthermore, the family $\nu=(\nu_{T})_{T\in G\Orb}$ is a natural transformation.
We get an induced transformation
\begin{equation}\label{qefjqoiwejfoiqjwefqwefqwefqwef}
\nu\rtimes G \colon (  \bC^{\prime}  \otimes \phi)\rtimes G'\to  \bC [\phi(-)]\rtimes G'\stackrel{!}{\to}  \bC [\phi(-)]\rtimes_{r} G'\cong  \bC [-]\rtimes_{r} G\, ,
\end{equation}
where the marked natural transformation is the transformation from the maximal to the reduced crossed product \eqref{eq_q}.
We furthermore apply
$\Homol_{\infty}\circ \ell_{G\Orb}$ and get  the transformation\begin{eqnarray}
c\colon \Homol^{G}_{  \bC ,\max}&\stackrel{\text{Def.} \ref{we98gu9egergwergwe9}}{\simeq} &
\Homol_{\infty}(j^{G}_{!}(\ell ( \bC))) \label{quirhfgiuerfqfweewfqdweq}\\&\stackrel{\text{Lem.} \ref{qeroighwoiegwergwergwregwerg} }{\simeq}& \Homol_{\infty}( \ell ((  \bC^{\prime }  \otimes \phi)\rtimes G')) \nonumber\\&\stackrel{\Homol_{\infty}( \ell (\eqref{qefjqoiwejfoiqjwefqwefqwefqwef}))}{\to}  & \Homol_{\infty}( \ell (  \bC [-]\rtimes_{r} G)) \nonumber \\&\stackrel{\eqref{234toijoig23gergwregreg}}{\simeq}& 
\Homol (  \bC [-]\rtimes_{r} G) \nonumber\\
&\stackrel{\text{Def.} \ref{iuhquifhiwefqewfqwefqwefef}}{\simeq}&
\Homol^{G}_{\bC,r} \nonumber\,.
\end{eqnarray}
This finishes the construction of the morphism $c$ in Assertion \ref{reoigjewrogwergewrgwerg}.

We now show  Assertion \ref{wgwegrecewcs}. Recall the  morphism $q_{\bC}$ from \eqref{eq_q} and $i_{H}$ from \eqref{sthrtshdfgvfdsgsfgfd}.
We have the following commutative  diagram in $\Ccat_{\infty}$:
\begin{equation}\label{vwervwevwvdvsfdvsdv}  
\xymatrix{\ar@/^0.75cm/[rr]^{!!}\ell(\bC\rtimes H)\ar[r]^-{!}\ar[d]^{\ell(q_{\bC})}&    \ell( \bC [G/H] \rtimes G) \ar[d]^{\ell(q_{\bC [G/H})}&\ar[l]_-{\ell(\nu\rtimes G)} \ell (( \bC'\otimes \phi(G/H))\rtimes G')\ar[d]^{\ell(q_{ \bC'\otimes \phi(G/H)})}     &\ar[l]^-{\simeq}_-{\eqref{qeroighwoiegwergwergwregwerg111}} j^{G}_{!}  \ell(\bC)(G/H)\ar[l] \\\ar@/_0.5cm/[rr]_{!!}
\ell(\bC\rtimes_{r} H) \ar[r]^-{\ell(i_{H})}&\ell( \bC [G/H]\rtimes_{r} G)&  \ar[l]_-{\ell(\nu\rtimes_{r} G)}  \ell (( \bC'\otimes \phi(G/H))\rtimes_{r} G') & } \end{equation} 
The  arrow  marked by $!$  is the analogue of $\ell(i_{H})$ for the  maximal crossed product. 
The  morphism $\nu\rtimes_{r}G$ is equivalent to the Morita equivalence 
$k\rtimes_{r}G$ in the proof of Propositon \ref{wetgiojweirogwergrwegwergw}. The morphism $\nu\rtimes G$
is also a Morita equivalence  with the same argument. 
The lower arrow marked by $!!$ is equivalent to the composition $\ell\circ (j\rtimes_{r}H)$ in the proof of Proposition \ref{wetgiojweirogwergrwegwergw} and therefore a unitary equivalence. Analysing the argument for this fact we see that $\ell\circ (j\rtimes_{r}H)$ restricts to a unitary equivalence
$\bC\rtimes^{\alg}H\to \bD\rtimes^{\alg}G$ which extends by continuity to a unitary equivalence 
$ \bC\rtimes H\to \bD\rtimes G$. Therefore the upper arrow marked by $!!$ is also induced by a unitary equivalence.


We apply $\Homol_{\infty}$ to the diagram \eqref{vwervwevwvdvsfdvsdv}, delete 
the third column, and add the definition of $ \Homol_{\bK^{u},r}^{G}(G/H) $ and the transformation $c$. Then we get the commutative diagram
\begin{equation}\label{erwggwerfweff}  \xymatrix{\Homol(\bC\rtimes H)\ar[r]^-{\simeq} \ar[d]^{\Homol(q_{\bC})}
&    \Homol (\bC [G/H]) \rtimes G) \ar[d]^{\Homol(q_{ \bC [G/H]})}
& \ar[l]_-{\simeq} \Homol_{\bC,\max}^{G}(G/H)\ar[l]\ar[d]^{c_{G/H}} \\\Homol(\bC\rtimes_{r} H) \ar[r]^-{\simeq}&\Homol( \bC [G/H]\rtimes_{r} G)\ar[r]_-{\text{Def.~}\ref{iuhquifhiwefqewfqwefqwefef}}^-{\simeq}& \Homol_{\bC,r}^{G}(G/H)}
\end{equation}
whose right square reflects the definition of $c_{G/H}$ above. 
The left horizontal morphisms  are equivalences since we assume that $\Homol$ is Morita invariant.
 The diagram \eqref{erwggwerfweff} gives Assertion \ref{wgwegrecewcs}.

{We now show Assertions \ref{reoigjewrogwergewrgwerg1} and \ref{prop_item_Haagerup_equiv_functor}.  If} $H$ is {amenable}, then it $c_{G/H}$ is an equivalence {by Theorem \ref{thm_G_amenable}}. If $H$ is $K$-amenable, then in the special case of $\Homol=\Kcat$ the morphism $c_{G/H}$ is an equivalence by Theorem~\ref{thm_Kamenable_equiv}.
\end{proof}

{We now} relate the functor $K^{\DL,G}_{\C} \colon G\Orb\to \Sp$ introduced  by
Davis--L\"{u}ck in \cite{davis_lueck} with the constructions of the present paper.  
We will actually consider its straightforward generalization  {$$\Homol^{\DL,G}_{ A} \colon G\Orb\to \bS $$}to the case of a $C^{*}$-algebra $ A$ in $\nCalg$  in place of $\C$
and a functor $\Homol \colon \nCcat\to \bS$ which 
{is Morita invariant} in place of $\Kcat$. 
{The precise  description  of $\Homol^{\DL,G}_{ A} $ will be recalled in Construction \ref{weijgowergrewferff} below.} {The value of $ \Homol^{\DL,G}_{ A} $ }  
 on the orbit $G/H$ is given by
\begin{equation}\label{qewfijioqjweofqwefqwefqwefqewf}
\Homol^{\DL,G}_{ A}(G/H)\simeq \Homol( A\rtimes_{r}H)\, .
\end{equation}

  \begin{construction}\label{weijgowergrewferff}{\em Let $\Groupoids^{\mathrm{faith}}$ denote the category of very small groupoids and faithful morphisms.
We have a functor
$\Fun(BG,\Set)\to \Groupoids^{\mathrm{faith}}$ which sends   $S$   in $\Fun(BG,\Set)$ to the action groupoid $S\curvearrowleft G$. The latter  has the following description:
\begin{enumerate}
 \item objects: The set of objects of $S\curvearrowleft G$ is the set $S$.
 \item morphisms:  For $s,s'$ in $S$ the  set of morphisms from $s$ to $s'$  is the subset 
 \[\{g\in G\:|\:gs=s' \}\]
  of $G$.  
 \item The composition is inherited from the multiplication in $G$.
 \end{enumerate}
 A morphism $f\colon S\to S'$ in $\Fun(BG,\Set)$ induces a morphism $$f\curvearrowleft G\colon S\curvearrowleft G\to S'\curvearrowleft G$$ in $\Groupoids^{\mathrm{faith}}$
 which sends $s$ in $S$ to $f(s)$ in $s'$ and acts as natural inclusions on morphism sets.

 For $A$ in $\nCalg$ we have a functor $$\bC_{A,r}^{*} \colon \Groupoids^{\mathrm{faith}}\to \nCcat$$
 defined  as in  \cite{davis_lueck} as follows.
 For a groupoid $\cS$  
 we first form  the algebraic tensor product $A\otimes^{\alg} \cS$ in $\nClincat$
as in \cite[Sec.\ 6]{startcats} (this construction naturally extends to the non-unital case). Its objects are the objects of $\cS$.  But instead of completing in the maximal norm (which would give $A\otimes_{\max}  \cS$) we complete in the reduced norm described in \cite[Sec.\ 6]{davis_lueck}. 
To do this, for any two objects $s,s'$ in $\cS$ we canonically embed $\Hom_{A\otimes^{\alg} \cS}(s,s')$ into the adjointable bounded operators between Hilbert $A$-modules 
$B(L^{2}(\Hom_{\cS}(s_{0},s),A),L^{2}(\Hom_{\cS}(s_{0},s'),A))$
and take the supremum of the norms of the images over all choices {of} $s_{0}$ in $S$. 
We let $\bC_{A,r}^{*}(\cS)$ be the completion of $A\otimes^{\alg} \cS$.
A morphism $f\colon \cS\to \cS'$ in $\Groupoids^{\mathrm{faith}}$ induces a morphism $ \bC_{A,r}^{*}(\cS)\to \bC_{A,r}^{*}(\cS')$ in the natural way. At this point  it is important that we only consider faithful morphisms between groupoids.
The functor $\bC^{*}_{A,r}$ extends to  a functor between $2$-categories 
(of groupoids, faithful morphisms and equivalences  on the one hand; and $C^{*}$-categories, functors and unitary   equivalences on the other {hand)}
and  sends equivalences of groupoids to  equivalences of $C^{*}$-categories.

The    functor $\Homol_{ A}^{\DL,G}$  is then defined  {as the composition}  
$$\Homol_{A}^{\DL,G}\colon G\Orb\xrightarrow{S\mapsto  S\curvearrowleft G} \Groupoids^{\mathrm{faith}}\xrightarrow{\bC_{A,r}^{*}}
\nCcat\xrightarrow{\Homol}\bS\, .$$
 
If  $H$ is a subgroup of $G$, then we have
an equivalence of groupoids $$(*\curvearrowleft H)\stackrel{\simeq}{\to} ((G/H)\curvearrowleft G)$$ which sends $*$ to the class $H$.
This equivalence induces a unitary equivalence 
\begin{equation}\label{oijvoijoijvoiasjdvasvsvasdvsddsfsfsdfdfav}
A\rtimes_{r}H \cong \bC^{*}_{A,r} (*\curvearrowleft H)\stackrel{\simeq}{\to} \bC^{*}_{A,r} ((G/H)\curvearrowleft G)
\end{equation}
 in $\Ccat$
 which yields \eqref{qewfijioqjweofqwefqwefqwefqewf} by applying $\Homol$.}\hB
 \end{construction}

{In the following 
  we consider $A$  in  $\nCalg$ and   $ {\Hilb_{c}( A)}$ in $ \Ccat$. Then we have its full  subcategory of unital objects $ \Hilb_{c}( A)^{u}$ in $\Ccat$ and refer  to Example \ref{qrgiooergwefe9} for the  explicit description of the latter in the case that $A$ is unital. Furthermore, let 
 $\Homol:\Ccat\to \bS$ be a Morita invariant functor.
\begin{prop}\label{prop_compare_DL}
If $A$ is unital, then 
 {we have}  a canonical equivalence 
\begin{equation}\label{qergiuhqeruiffwefqwe}
\Homol^{\DL,G}_{ A}\stackrel{\simeq}{\to} \Homol^{G}_{ {\Hilb_{c}}( A)^{u},r}
\end{equation}
in $\Fun(G\Orb,\bS)$.
\end{prop}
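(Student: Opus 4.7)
The plan is to construct a natural transformation
\[
\Psi \colon \bC^{*}_{A,r}((-) \curvearrowleft G) \Rightarrow \Hilb_{c}(A)^{u}[-]\rtimes_{r} G
\]
of functors $G\Orb \to \nCcat$ that is pointwise a Morita equivalence, and then apply $\Homol$ together with its Morita invariance. On a $G$-set $S$, the morphism $\Psi_{S}$ is defined on objects by sending $s \in S$ (viewed as an object of $\bC^{*}_{A,r}(S \curvearrowleft G)$) to the object $A_{s}$ of $\Hilb_{c}(A)^{u}[S]\rtimes_{r} G$, where $A_{s}$ denotes the Hilbert $A$-module $A$ placed at the point $s$ in the sense of Example~\ref{weoigjwioegergwergerg}. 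A generator $a \otimes g \colon s \to gs$ (with $a \in A$, $g \in G$) is sent to the morphism $(a,g) \colon A_{s} \to A_{gs}$, where $a$ is interpreted as an endomorphism of $A$ in $\Hilb_{c}(A)^{u}$. One checks on the algebraic level that this assignment is compatible with compositions, involutions, and morphisms of $G$-sets in $G\Orb$, yielding a natural transformation between the algebraic analogues of both functors.

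The key technical step is verifying that $\Psi_{S}$ is isometric (hence fully faithful) for the two reduced norms. I would evaluate at $S = G/H$ and identify $\Psi_{G/H}$ with the composition
\[
\bC^{*}_{A,r}((G/H)\curvearrowleft G) \stackrel{\simeq}{\to} A\rtimes_{r} H \longrightarrow \Hilb_{c}(A)^{u}\rtimes_{r} H \stackrel{i_{H}}{\longrightarrow} \Hilb_{c}(A)^{u}[G/H]\rtimes_{r} G\,,
\]
where the first arrow is the equivalence of groupoids \eqref{oijvoijoijvoiasjdvasvsvasdvsddsfsfsdfdfav}, the second arises from the Morita equivalence $A \to \Hilb_{c}(A)^{u}$ of Example~\ref{qergoijwergioejrgwergwregwergre} and is fully faithful by Proposition~\ref{werijguhwerigvwergwer9} and Theorem~\ref{ejgwoierferfewrferfwe}, and the third is the Morita equivalence of Proposition~\ref{wetgiojweirogwergrwegwergw}. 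All three morphisms are isometric embeddings, so the composition is too. Naturality in $G\Orb$ then needs to be promoted from the algebraic level to the reduced completions using this isometry statement, and the functoriality of the reduced crossed product established in Theorem~\ref{ejgwoierferfewrferfwe}.

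Once $\Psi_{S}$ is known to be fully faithful, the Morita equivalence property follows from a density argument: by Lemma~\ref{ethgiowgergwergerg} every object $(C,(p_{s})_{s\in S})$ of $\Hilb_{c}(A)^{u}[S]\rtimes_{r} G$ decomposes as a finite orthogonal sum of objects $C(s)_{s}$ placed at single points of $S$, and, because $A \to \Hilb_{c}(A)^{u}$ is a Morita equivalence, each $C(s) \in \Hilb_{c}(A)^{u}$ is a direct summand of a finite orthogonal sum of copies of $A$. Hence every object of $\Hilb_{c}(A)^{u}[S]\rtimes_{r} G$ is a summand of a finite sum of objects in the essential image of $\Psi_{S}$, so $\Psi_{S}$ induces a unitary equivalence after the additive and idempotent completion functor $(-)^{\sharp}$ of \eqref{qwrf98u89euf89qweuf98ewfeqwfqe} and is therefore a Morita equivalence. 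Applying the Morita invariant functor $\Homol$ to $\Psi$ then produces the desired natural equivalence \eqref{qergiuhqeruiffwefqwe}. The main obstacle is the norm compatibility in the second paragraph, and in particular checking naturality of $\Psi$ with respect to non-injective morphisms in $G\Orb$ (such as projections $G/H \to G/K$ for $H \le K$), which forces one to pass through the continuous extensions to reduced crossed products rather than arguing purely algebraically.
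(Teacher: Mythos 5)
Your proposal is correct and follows essentially the same route as the paper: the natural transformation $\Psi$ is exactly the paper's $\kappa$ from \eqref{vrvewtrklmlekvewve}, and the pointwise verification at $G/H$ rests on the same ingredients (the identification with $A\rtimes_{r}H$, the Morita equivalence $A\to\Hilb_{c}(A)^{u}$, Proposition~\ref{werijguhwerigvwergwer9}, and Proposition~\ref{wetgiojweirogwergrwegwergw}). The only cosmetic difference is that you establish that $\Psi_{S}$ is a Morita equivalence in $\Ccat$ before applying $\Homol$, whereas the paper applies $\Homol$ first and factors $\Homol(\kappa_{G/H})$ through a chain of equivalences via the subcategory $\bD\rtimes_{r}G$ and Lemma~\ref{qeorigjoegergregwgre}.
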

 \begin{proof}
{We 
 define} a natural transformation of  functors 
 \begin{equation}\label{vrvewtrklmlekvewve}
\kappa\colon \bC^{*}_{A,r}  (-\curvearrowleft G)\to \Hilb_{c}( A)^{u}[ {-}]\rtimes_{r}G
\end{equation} 
 {from $G\Orb$ to $\Ccat$ and obtain the desired transformation in \eqref{qergiuhqeruiffwefqwe} by applying $\Homol$.}
 The evaluation  $\kappa_{S}$ of $\kappa$ at $S$  in $G\Orb$ is {the morphism in $\Ccat$} given as follows:
 \begin{enumerate}
 \item objects: $\kappa_{S}$ sends the object $s$ in $S=\Ob(  \bC^{*}_{A,r}  (S\curvearrowleft G))$ to
 the object $A_{s}$ in   $  \Hilb_{c}( A)^{u}[ {S}]\rtimes_{r}G$ (see Example \ref{weoigjwioegergwergerg}), where we can consider $A$ as an object of ${\Hilb_{c}}(A)^{u}$ since   $A$ is unital by assumption.
    \item morphisms: Let $s,s'$ be in $S$, let  $g$ in $G$ be such that $gs=s'$, and {let $a$ be in $A$.} 
 Then we can consider $(a,g)$ as a morphism  in $A\otimes^{\alg} (S \curvearrowleft G)$, and therefore as a morphism in $ A\otimes_{r}(S \curvearrowleft G)$.
 We can consider the right-multiplication by $a$ as a morphism
 $a\colon A_{s}\to A_{s'}=g A_{s}$ in  $ \Hilb_{c}( A)
^{u}[ {S}]$. 
 The functor $\kappa_{S}$ sends $(a,g)$ to the morphism $(a,g)\colon A_{s}\to A_{s'}$ in
 $ \Hilb_{c}( A)^{u}[ {S}]\rtimes_{r}G$.
 
  We extend $\kappa_{S}$ by linearity and continuity. 
    \end{enumerate}
    One  checks that $\kappa_{S}$ is well-defined and that the family $\kappa:=(\kappa_{S})_{S\in G\Orb}$
    is a natural transformation. In order to check that $\kappa_{S}$ extends by continuity we do not have to consider estimates. We just check that for a subgroup $H$ of $G$ the functor 
    $\kappa_{G/H}$ identifies $\bC^{*}_{A,r}((G/H)\curvearrowleft G)$ with  
the subcategory $ \bD\rtimes_{r}G$ of $A[ G/H]\rtimes_{r}G$ appearing in the proof of Proposition \ref{wetgiojweirogwergrwegwergw}.
This follows from the fact that both receive unitary equivalences from $A\rtimes_{r}H$ by \eqref{oijvoijoijvoiasjdvasvsvasdvsddsfsfsdfdfav} and

We consider $A$ as a $G$-invariant one-object subcategory of $ {\Hilb_{c}}(A)^{u}$. Let $H$ be a subgroup of $G$.  The inclusion  induces a  morphism
\begin{equation}\label{gewrgwergregegwergregwe}
 A[G/H]\rtimes_{r} G
 \to       \Hilb_{c}( A)^{u} [G/H]\rtimes_{r} G  \, .
 \end{equation}
  Note that  in general $A$ considered as a $C^{*}$-category with a single object is not  additive   so that   we do not have   naturality of the  morphism \eqref{gewrgwergregegwergregwe} with respect to  the argument $G/H$. The morphism \eqref{gewrgwergregegwergregwe}
  in turn  induces the morphism {in} the statement below by applying $\Homol$.

The following  lemma  is an essential step in the proof of Proposition \ref{prop_compare_DL} but might be interesting in its own right.
Its statement  uses the functoriality of $\bC\to \Homol^{G}_{\bC,r}$.

Recall that  by assumption $\Homol \colon \nCcat\to \bS$ is  a  {Morita invariant functor and  $A$ is unital.}\begin{lem}\label{qeorigjoegergregwgre}
{The inclusion of $C^{*}$-categories $A\to \Hilb_{c}(A)^{u}$ induces for every $G/H$ in $G\Orb$ an} equivalence
\begin{equation}\label{ewfuzhfquzfhiwef}
\Homol^{G}_{A,r}(G/H)\stackrel{\simeq}{\to} \Homol^{G}_{{\Hilb_{c}}(A)^{u},r}(G/H)\, .
\end{equation}
\end{lem}
\begin{proof}
%
%
%
%
%
%
%
Under {the equivalence provided by} Corollary~\ref{kor_computation_orbit_Morita} the morphism  {in} \eqref{ewfuzhfquzfhiwef}  
 corresponds to
 \begin{equation}\label{erfrefefrwfreggergw}
\Homol(A \rtimes_{r}H)\to    \Homol({\Hilb_{c}(A)}^{u}  \rtimes_{r}H)
\end{equation}  induced by the inclusion 
the inclusion $A\to {\Hilb_{c}(A)}^{u}$.
As observed in Example \ref{qrgiooergwefe9}, using that $A$ is unital, we have an equality
$ {\Hilb_{c}(A)}^{u}=  \Hilb(A)^{\fg,\proj} $.
The inclusion
 $A\to \Hilb^{G}(A)^{\fg,\proj}$ 
is a Morita equivalence by Example \ref{qergoijwergioejrgwergwregwergre}.
%
%
 By Proposition \ref{werijguhwerigvwergwer9} we conclude that  $$ A \rtimes_{r}H \to    {\Hilb_{c}(A)}^{u}  \rtimes_{r}H $$ is a Morita equivalence. Since $\Homol$ is Morita invariant we see that   \eqref{erfrefefrwfreggergw} is an equivalence.
 \end{proof}

{We  {now  finish} the proof of Proposition \ref{prop_compare_DL}.} {Let $\kappa$ be as in \eqref{vrvewtrklmlekvewve}.}  As in the proof of Proposition \ref{wetgiojweirogwergrwegwergw} let  $k\colon \bD\to A[ G/H] $ denote the  inclusion of the full $G$-invariant subcategory of  $A[ G/H] $ of objects which are supported
on a single point of $G/H$. As noted above,    $\kappa_{G/H}$ induces a unitary equivalence between $\bC^{*}_{A,r}((G/H)\curvearrowleft G)$  and 
  $ \bD\rtimes_{r}G$. 
The evaluation of $\Homol(\kappa)$  {at} $G/H$ has the following factorization:
 \begin{eqnarray*} \Homol^{\DL,G}_{ A}(G/H)&\stackrel{\text{Def.}}{\simeq}
 &
 \Homol(\bC^{*}_{A,r}  ((G/H)\curvearrowleft G))\\&{\stackrel{ \Homol(\kappa_{G/H})}{\simeq}}
 & \Homol(  \bD\rtimes_{r}G)\\&\stackrel{\Homol(k\rtimes_{r}G)}{\simeq}
&  \Homol(A [ {G/H}]\rtimes_{r}G)\\&\stackrel{\text{Lem.} \ref{qeorigjoegergregwgre}}{\simeq}&\Homol( \Hilb( A)^{u}[ {G/H}]\rtimes_{r}G)\\&\stackrel{\text{Def.}}{\simeq}&
 \Homol^{G}_{{\Hilb_{c}(A)^{u},r}}(G/H)
\end{eqnarray*}
through equivalences, where we use that $k\rtimes_{r}G$ is  a Morita  equivalence
as shown in the proof of Proposition \ref{wetgiojweirogwergrwegwergw} (Assertion \ref{wrgoibjowrgbfgbgfw4}).
\end{proof}

\begin{rem}
 {For a generalization of Construction \ref{weijgowergrewferff} to $C^{*}$-algebras with non-trivial $G$-action we refer to \cite{kranz} and the review in \cite[Sec. 15]{bel-paschke}. 
\hB}
\end{rem}

Let $ \bC$  in $\Fun(BG,\Ccat)$ be    additive.  We consider   an  inclusion of groups $i:G\to K$  and let  $Bi \colon BG\to BK$ denote the induced functor. 
We can then choose an object 
$\Ind_{G}^{K}(  \bC)$  in $\Fun(BK,\Ccat)$  
such that there is  an equivalence $Bi_{!}\ell_{BG}(  \bC) \simeq \ell_{BK}(\Ind_{G}^{K}(  \bC))$.
Note that $\Ind_{G}^{K}(\bC)$ is well-defined  up to unitary equivalence.
  Let $\Homol \colon \nCcat\to \bS$ be  a  functor.    {The following result is the analogue of \cite[Prop. 2.5.8]{kranz}.}
  We use the notation introduced in Definition \ref{iuhquifhiwefqewfqwefqwefef} for functors on the orbit category and the same symbols also for the corresponding equivariant homology theories.
\begin{prop}\label{eroighqeirgoregwergwergewrgewrgwe}\mbox{}
\begin{enumerate}\item 
If $\bS$ is cocomplete and $\Homol$ is Morita invariant and  preserves small coproducts, then
for every $K$-CW-complex $X$  with amenable  stabilizers we have an equivalence
\begin{equation}\label{qerijioqwejrqwerewr}
\Homol^{K}_{\Ind_{G}^{K}(  \bC),r}(X)\simeq \Homol^{G}_{ \bC,r}(\Res^{K}_{G}(X))\, .
\end{equation}
\item For every $K$-CW-complex $X$  with $K$-amenable stabilizers  we have an equivalence
\begin{equation}\label{eq_equiv_KAmen_ind_res}
\K^{K}_{\Ind_{G}^{K}( \bC),r}(X)\simeq \K^{G}_{  \bC,r}(\Res^{K}_{G}(X))\, .
\end{equation}
\end{enumerate}
\end{prop}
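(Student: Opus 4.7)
The strategy is to pass through the analogous statement for the maximal crossed product, where Proposition \ref{qrfiuwhiufewfqwefqwefqwef} already gives what we need, and then to transfer from the maximal to the reduced version using the comparison map $c$ from Proposition \ref{oiqrjgpoiregreewegegw}, exploiting the (K-)amenability assumption on the stabilizers of $X$.

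First I would handle the maximal version. Applying Proposition \ref{qrfiuwhiufewfqwefqwefqwef}.\ref{qeroigjoqergqgqfqewfqewf1} with $\bC_{\infty} := \ell_{BG}(\bC)$, together with the equivalence $Bi_{!}\ell_{BG}(\bC) \simeq \ell_{BK}(\Ind_{G}^{K}(\bC))$ from the definition of $\Ind_{G}^{K}(\bC)$ and Definition \ref{we98gu9egergwergwe9}, yields a natural equivalence
\begin{equation*}
\Homol^{K}_{\Ind_{G}^{K}(\bC),\max}(X) \simeq \Homol^{G}_{\bC,\max}(\Res^{K}_{G}(X))
\end{equation*}
for every $K$-space $X$. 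The hypotheses of Proposition \ref{qrfiuwhiufewfqwefqwefqwef} are met in both cases: cocompleteness of $\bS$ is given in part 1 and holds for $\Sp$ in part 2; sending unitary equivalences to equivalences follows from Morita invariance in part 1 and from Corollary \ref{qwrgioqfeqwfqedfefdqe} for $\Kcat$ in part 2; and preservation of small coproducts holds by hypothesis in part 1 and by Corollary \ref{kor_Kcat_preserves_coprods} for $\Kcat$ in part 2.

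Next I would invoke the comparison transformation $c$ from Proposition \ref{oiqrjgpoiregreewegegw}.\ref{reoigjewrogwergewrgwerg} on both sides to obtain
\begin{equation*}
c_{X}\colon \Homol^{K}_{\Ind_{G}^{K}(\bC),\max}(X) \to \Homol^{K}_{\Ind_{G}^{K}(\bC),r}(X),
\end{equation*}
\begin{equation*}
c_{\Res^{K}_{G}(X)}\colon \Homol^{G}_{\bC,\max}(\Res^{K}_{G}(X)) \to \Homol^{G}_{\bC,r}(\Res^{K}_{G}(X)).
\end{equation*}
Since all four functors involved are, by construction, extended from the corresponding orbit categories to equivariant $CW$-complexes as colimit-preserving homology theories (Section \ref{oiejgoregrqrgqgregw}), it suffices to verify that $c_{X}$ and $c_{\Res^{K}_{G}(X)}$ are equivalences on every orbit appearing in the cell decompositions of $X$ and $\Res^{K}_{G}(X)$. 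If the $K$-stabilizers of $X$ lie in a family $\cF$ of amenable (resp.\ $K$-amenable) subgroups of $K$, then the $G$-stabilizers of $\Res^{K}_{G}(X)$ are intersections $H \cap G$ with $H \in \cF$; since amenability and $K$-amenability are both inherited by subgroups, these intersections are again amenable (resp.\ $K$-amenable). Hence Proposition \ref{oiqrjgpoiregreewegegw}.\ref{reoigjewrogwergewrgwerg1} (for part 1) or Proposition \ref{oiqrjgpoiregreewegegw}.\ref{prop_item_Haagerup_equiv_functor} (for part 2) shows that both $c_{X}$ and $c_{\Res^{K}_{G}(X)}$ are equivalences.

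Concatenating the three equivalences produces the desired equivalence
\begin{equation*}
\Homol^{K}_{\Ind_{G}^{K}(\bC),r}(X) \xleftarrow[\simeq]{c_{X}} \Homol^{K}_{\Ind_{G}^{K}(\bC),\max}(X) \xrightarrow{\simeq} \Homol^{G}_{\bC,\max}(\Res^{K}_{G}(X)) \xrightarrow[\simeq]{c_{\Res^{K}_{G}(X)}} \Homol^{G}_{\bC,r}(\Res^{K}_{G}(X)),
\end{equation*}
and the analogous chain for $\Kcat$ in part 2. The main subtlety here will be justifying that the pointwise equivalences $c_{K/H}$ on the orbit category assemble correctly when one extends orbit-category functors to $K$-$CW$-complexes; this follows formally from the naturality of $c$ together with the fact that the extension to equivariant spaces is by colimits, but the bookkeeping depends on the precise extension procedure set up in Section \ref{oiejgoregrqrgqgregw}.
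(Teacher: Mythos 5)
Your proposal is correct and follows essentially the same route as the paper: the maximal-crossed-product comparison from Proposition \ref{qrfiuwhiufewfqwefqwef}.\ref{qeroigjoqergqgqfqewfqewf1} combined with the fact that the comparison map $c$ of Proposition \ref{oiqrjgpoiregreewegegw} is an equivalence on orbits with amenable (resp.\ $K$-amenable) stabilizers, which suffices because the presheaves $Y^{K}(X)$ and $Y^{G}(\Res^{K}_{G}(X))$ are supported on those orbits. Your explicit remark that the $G$-stabilizers of $\Res^{K}_{G}(X)$ are intersections $H\cap G$ and that (K-)amenability passes to subgroups is a detail the paper leaves implicit, but otherwise the two arguments coincide.
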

\begin{proof}
We let $\Am$ and $\KAm$ denote the families of amenable and $K$-amenable subgroups.
The presheaves $Y^{K}(X)$ and $Y^{G}(\Res^{K}_{G}(X))$ (see \eqref{eqwfuhiouqwfhqwfeqef}) are supported on $K_{{\Am}}\Orb$ and $G_{{\Am}}\Orb$, respectively (or on $K_{\KAm}\Orb$, resp.\ $G_{\KAm}\Orb$ in the second case). In view of \eqref{qewfoiuioqwefqwefewfqefew} and Proposition \ref{oiqrjgpoiregreewegegw}.\ref{reoigjewrogwergewrgwerg1}
we have equivalences
\begin{equation}\label{eq_equiv_cor_1}
\Homol^{K}_{\Ind_{G}^{K}(  \bC),\max}(X)\simeq \Homol^{K}_{\Ind_{G}^{K}(  \bC),r}(X)
\end{equation}
and
\begin{equation}\label{eq_equiv_cor_2}
\Homol^{G}_{ \bC,\max}(\Res^{K}_{G}(X))\simeq  \Homol^{G}_{  \bC,r}(\Res^{K}_{G}(X))\, ,
\end{equation}
By Proposition \ref{qrfiuwhiufewfqwefqwef}.\ref{qeroigjoqergqgqfqewfqewf1}
we have an equivalence
$$ \Homol^{K}_{\Ind_{G}^{K}(  \bC),\max}(X)\simeq \Homol^{G}_{  \bC,\max}(\Res^{K}_{G}(X){)}\, .$$
The combination of these equivalences yields the equivalence  \eqref{qerijioqwejrqwerewr}.
For \eqref{eq_equiv_KAmen_ind_res} we use Proposition~\ref{oiqrjgpoiregreewegegw}.\ref{prop_item_Haagerup_equiv_functor} to conclude the equivalences \eqref{eq_equiv_cor_1} and \eqref{eq_equiv_cor_2} in the case of $\Homol=\Kcat$ (note that $\Kcat$ preserves small coproducts by Corollary \ref{kor_Kcat_preserves_coprods}).
\end{proof} 

\begin{rem}
We apply  Proposition \ref{eroighqeirgoregwergwergewrgewrgwe} to $X=E_{\Fin}K$. Since $\Res^{K}_{G}(E_{\Fin}K)\simeq E_{\Fin}G$ we get an equivalence 
 \begin{equation}\label{ewqfwefqwfqewfefqewf}
\Homol^{K}_{\Ind_{G}^{K}(  \bC),r}(E_{\Fin}K)\simeq \Homol^{G}_{  \bC,r}( E_{\Fin}G)\, .
\end{equation} 
 In the case of $\Homol=\Kcat$ the left and right hand sides of this equivalence 
 constitute the domains of corresponding Baum--Connes assembly maps.   
  In this case such an equivalence 
  (with a completely different model of equivariant $K$-homology and a completely different proof) has first been  
 obtained by \cite{oyono}, see \cite[Thm.\ 2.2]{MR1836047}.  
\hB \end{rem}

%
%
%
%
%
%

\begin{rem}\label{wtoigjwotgergwegwerg}
The following Theorem \ref{qrfoiqfewewfqfewfeqf} is one of the main  results of  {the subsequent paper} 
 \cite{coarsek} for which the present paper provides the   foundations
 concerning $C^{*}$-categories. Let $  \bK$ be in $\Fun(BG,\nCcat)$.  We consider the case $\Homol=\Kcat$  and  use the more readable notation 
 $\K^{G}_{  \bK^{u},r} \coloneqq (\Kcat)^{G}_{ \bK^{u},r}$ for the functor defined in Definition \ref{iuhquifhiwefqewfqwefqwefef}.
 {For the   notion of  a CP-functor $G\Orb\to \bS$   we refer to  \cite{desc} or \cite{coarsek}.} As explained in  \cite{desc}, \cite[Sec.\ 1]{coarsek}, or  in \cite[Sec.\ 6.5]{unik}, being a CP-functor has interesting consequences for the injectivity of assembly maps involving this functor.
 \hB
\end{rem}
\begin{theorem}[{\cite[Thm.~13.4]{coarsek}}]\label{qrfoiqfewewfqfewfeqf}
If  $\bK$ admits all very small AV-sums,  then
$$\K^{G}_{  \bK^{u},r} \colon G\Orb\to \Sp\, $$
is a CP-functor. 
\end{theorem}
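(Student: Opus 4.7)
The plan is to unpack what being a CP-functor actually requires (in the sense of \cite{desc}) and verify each of those conditions by building on the machinery developed in this paper. Being a CP-functor for a functor $E\colon G\Orb \to \Sp$ means, roughly, that $E$ extends to an equivariant coarse homology theory $E^{G}\colon G\BC \to \Sp$ on equivariant bornological coarse spaces such that (i) $E^{G}$ is strong (i.e.\ continuous), (ii) $E^{G}$ is strongly additive (commutes with arbitrary products of bounded families), and (iii) $E^{G}$ admits transfers along the bounded covers $G_{min,max}\otimes S \to S$ for all $G$-sets $S$, i.e.\ the ``Carlsson--Pedersen'' transfer condition. So the proof falls naturally into three verifications, each one matched to a result of this paper (or its sequel \cite{coarsek}).

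First I would recall from \cite{coarsek} the construction of the equivariant coarse homology theory $E^{G}_{\bK}\colon G\BC \to \Sp$ attached to the coefficient $\bK$, whose value on a bornological coarse space $X$ is $\Kcat$ applied to the $C^{*}$-category $\tilde{\bar\bK}^{\mathrm{ctr}}_{\mathrm{lf}}(X)$ of $X$-controlled objects in $\bM\bK$. The construction requires exactly that $\bK$ admit all very small AV-sums, which is our hypothesis, and the functor $\bK^{u}[-]$ introduced in Construction \ref{qrfiohqrioffdewfeqwdqewdqed} furnishes the underlying objects on discrete $G$-sets. Then I would invoke \cite[Prop.\ 8.2.1]{coarsek} (cited in the remark after the construction) to identify $E^{G}_{\bK}(S_{\min,\max})$ with $\Kcat(\bK^{u}[S]\rtimes_{r}G)$ for $S$ a $G$-set, which in view of Definition \ref{iuhquifhiwefqewfqwefqwefef} gives the identification $(E^{G}_{\bK})_{|G\Orb} \simeq \K^{G}_{\bK^{u},r}$ as functors on the orbit category. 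The fact that $E^{G}_{\bK}$ is an equivariant coarse homology theory (excision and coarse invariance) is exactly the content of the main results of \cite{coarsek} and relies on the excision axiom of the homological functor $\Kcat$ established in Proposition \ref{rgiowegwreergwegwerg}.

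Second I would verify strong additivity. This reduces, via the construction of $\tilde{\bar\bK}^{\mathrm{ctr}}_{\mathrm{lf}}$, to showing that $\Kcat$ preserves arbitrary products of additive $C^{*}$-categories. This is precisely Theorem \ref{ojgweorergwerfewrfwerfw}, whose proof via explicit manipulations of projections and unitaries with uniform Lipschitz bounds is one of the main inputs available to us. Continuity of $E^{G}_{\bK}$ follows from the fact that $\Kcat$ is finitary (Theorem \ref{qrgkljwoglergerggwergwrg}), together with the finitary nature of the controlled categories construction.

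Third, and this is the main obstacle, I would construct the transfers. The CP-condition demands that for each $G$-set $S$ the projection morphism $G_{\min,\max}\otimes S_{\min,\max}\to S_{\min,\max}$ in $G\BC$ admit a wrong-way map $\tau_{S}\colon E^{G}_{\bK}(S_{\min,\max}) \to E^{G}_{\bK}((G\otimes S)_{\min,\max})$ natural in $S$, compatibly split by the projection. This is precisely where the AV-sum hypothesis is crucial: given an $S$-controlled object $(C,(p_{s})_{s\in S})$, one must form a genuinely $G$-indexed orthogonal sum of its restrictions to produce the $G\times S$-controlled object needed for the transfer, and this requires AV-sums of cardinality $|G|$ inside $\bM\bK$. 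Concretely, I would use the AV-sum machinery of Section \ref{qroifjqerofwefewfefwqfe} and the description of sums of equivariant (or weakly equivariant) functors in Section \ref{qergoijeroigjeroigjqer90gu9384tuergqegerggw} to build a weakly equivariant transfer functor on controlled categories, then appeal to Proposition \ref{wkthgwfgwegwesdf} (Murray--von Neumann invariance of Morita invariant homological functors, together with Morita invariance of $\Kcat$ from Theorem \ref{wtoijgwergergrewfwergrg}) to ensure that the choices of sums do not affect the induced map on $K$-theory. The fact that the composition with the projection gives multiplication by $|G|$ (or the identity, in the appropriate sense) is then a formal check on the level of controlled objects. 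Assembling these three pieces yields the asserted CP-functor structure on $\K^{G}_{\bK^{u},r}$.
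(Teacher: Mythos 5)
The first thing to observe is that this paper contains no proof of the statement at all: the theorem carries the citation tag \cite[Thm.~12.3]{coarsek}, and Remark \ref{wtoigjwotgergwegwerg} says explicitly that it is one of the main results of the subsequent paper \cite{coarsek}, for which the present paper only supplies the $C^{*}$-categorical foundations. So there is no in-paper argument to compare yours against; the only in-paper content is the identification of which results here feed into the proof given elsewhere. Measured against that, your outline is well aimed: you correctly identify Theorem \ref{ojgweorergwerfewrfwerfw} (preservation of products of additive $C^{*}$-categories) as the input for strong additivity --- the introduction itself flags this as one of the main inputs for the proof of Theorem \ref{qrfoiqfewewfqfewfeqf} provided in \cite{coarsek} --- and you correctly use Construction \ref{qrfiohqrioffdewfeqwdqewdqed} together with \cite[Prop.~8.2.1]{coarsek} to match the restriction of the coarse theory to the orbit category with $\K^{G}_{\bK^{u},r}$ as defined in Definition \ref{iuhquifhiwefqewfqwefqwefef}.

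However, treated as a proof, your proposal has a genuine gap, and it is exactly the part that constitutes the theorem's actual content in \cite{coarsek}: the construction of the controlled categories $\tilde{\bar{\bK}}^{\mathrm{ctr}}_{\mathrm{lf}}(X)$ and of the resulting equivariant coarse homology theory, the verification of its axioms, and above all the construction of the transfers. Your third step names plausible ingredients (AV-sums of cardinality $|G|$, sums of weakly equivariant functors from Section \ref{qergoijeroigjeroigjqer90gu9384tuergqegerggw}, MvN-invariance from Proposition \ref{wkthgwfgwegwesdf} together with Morita invariance from Theorem \ref{wtoijgwergergrewfwergrg}), but it does not construct the transfer functor on controlled categories, does not verify its naturality in $S$, and does not check compatibility with the projections; calling the last point ``a formal check'' understates it, since that is precisely where the controlled-object bookkeeping lives. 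None of this can be carried out with the results of the present paper alone, because the controlled categories are not even defined here. Consequently your argument, like the paper's own treatment, ultimately reduces to citing \cite{coarsek}; the difference is that the paper is explicit about this, whereas your write-up presents that citation as if it were a proof.
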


\subsection{Appendix: Some equivariant homotopy theory}\label{oiejgoregrqrgqgregw}

Let $K$ be a group and $K\Top$ be the category of $K$-topological spaces. 
A morphism $f \colon X\to X'$ in $K\Top$ is an equivariant weak equivalence if
it induces  weak equivalences between the fixed-points sets
$f^{H} \colon X^{H}\to X^{\prime,H}$ for all subgroups $H$ of $K$. 
 In the following let $\Map_{K\Top}(-,-)$ denote the topological mapping space of equivariant maps 
 and $\ell \colon \Top\to \Spc$ be the canonical morphism which presents the $\infty$-category $\Spc$ as the Dwyer--Kan localization of $\Top$ at the weak equivalences.
 By Elmendorf's theorem  the functor
 \begin{equation}\label{eqwfuhiouqwfhqwfeqef}
Y^{K} \colon K\Top\to \PSh(K\Orb)\, , \quad X\mapsto (S\mapsto \ell(\Map_{K\Top}(S_{\disc},X)))
\end{equation}
 presents {$\PSh(K\Orb)$ as the localization}  of $K\Top$ at the  equivariant weak {equivalences.}  Here $S_{\disc}$ denotes the $K$-orbit $S$ considered as discrete $K$-topological space.

For a subgroup  $G$   of $K$
we have an adjunction
$$\Ind_{G}^{K}:G\Top\leftrightarrows K\Top:\Res^{K}_{G}\, ,$$
where the induction functor is given by
$$X\mapsto \Ind_{G}^{K}(X) \coloneqq K\times_{G}X\, .$$
Considering
  the orbit category $K\Orb$ as a full subcategory of $K\Top$ of discrete transitive $K$-topological spaces,
the induction functor restricts to the functor
$$i_{G}^{K} \colon G\Orb\to K\Orb\, .$$ 
It is a formal consequence of the definitions 
that
\begin{equation}\label{qewfoijoqwefqwefqewf}
\xymatrix{K\Top\ar[r]^{\Res^{K}_{G}}\ar[d]^{Y^{K}}&G\Top\ar[d]^{Y^{G}}\\
\PSh(K\Orb)\ar[r]^{i^{K,*}_{G}}&\PSh(G\Orb)}
\end{equation}
 commutes.
A functor $E^{G} \colon G\Orb\to \bS$ with  cocomplete target represents an $\bS$-valued $G$-equivariant homology theory  $E^{G} \colon G\Top\to \bS$  denoted by the same symbol.
We form the left Kan extension
$$\xymatrix{G\Orb\ar[rr]^-{E^{G}}\ar[dr]_{\text{Yoneda\ }}\ar@{}[drr]^{\Rightarrow}&& \bS\,.\\
&\PSh(G\Orb)\ar[ur]_-{\hat E^{G}}&}$$
Then the value of the homology theory on $X$ in $K\Top$ is given by \begin{equation}\label{qewfoiuioqwefqwefewfqefew}
E^{G}(X)\simeq \hat E^{G}(Y^{G}(X))\, .
\end{equation}
 We form the left Kan extension    $E^{K} \coloneqq i_{G,!}^{K}E^{{G}} \colon K\Orb\to \bS$ of $E^{G}$ as in 
$$\xymatrix{G\Orb\ar[rr]^-{E^{G}}\ar[dr]_-{i^{K}_{G}}\ar@{}[drr]^{\Rightarrow}&& \bS\,.\\
&K\Orb\ar[ur]_-{E^{K}}&}$$
It represents a $K$-equivariant homology theory. Let $X$ be in $K\Top$.
\begin{lem}\label{wejiogwergergwergwerg}
We have a natural equivalence
$ E^{K}(X)\simeq E^{G}(\Res^{K}_{G}(X))$.
\end{lem}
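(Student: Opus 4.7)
The plan is to chain together four natural equivalences, reducing the statement to formal properties of left Kan extensions and the commuting square \eqref{qewfoijoqwefqwefqewf}.

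First I would rewrite both sides using \eqref{qewfoiuioqwefqwefewfqefew}, giving
\[
E^K(X)\simeq \hat E^K(Y^K(X))\quad\text{and}\quad E^G(\Res^K_G(X))\simeq \hat E^G(Y^G(\Res^K_G(X))),
\]
and then use the commutativity of \eqref{qewfoijoqwefqwefqewf} to identify $Y^G(\Res^K_G(X))\simeq i^{K,*}_G Y^K(X)$. After these two steps it remains to establish the key equivalence
\[
\hat E^K\simeq \hat E^G\circ i^{K,*}_G
\]
of colimit-preserving functors $\PSh(K\Orb)\to \bS$; concatenating everything then yields the claimed natural equivalence.

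The main work sits in verifying this key equivalence, and I would handle it with the pointwise formula for left Kan extensions. Both $\hat E^K$ and $\hat E^G\circ i^{K,*}_G$ preserve colimits, the first tautologically and the second because $i^{K,*}_G\colon \PSh(K\Orb)\to\PSh(G\Orb)$ is a left adjoint (its right adjoint is $i^K_{G,*}$, and in fact $i^{K,*}_G$ is simultaneously the right adjoint of the presheaf pushforward $i^K_{G,!}$). Since every presheaf is a colimit of representables, it suffices to show these two functors agree on representables $y^K(G/H)$, and this reduces to an interchange-of-colimits argument: plugging in the pointwise formulas and using transitivity of Kan extensions together with $i^K_{G,!}\circ y^G\simeq y^K\circ i^K_G$ (which is the standard fact that the Yoneda embedding intertwines a functor with its presheaf-level left Kan extension),
\[
\hat E^K(F)\simeq \colim_{(G/L,\,y^K(i^K_G(G/L))\to F)} E^G(G/L),
\]
and the adjunction $i^K_{G,!}\dashv i^{K,*}_G$ identifies the indexing category with pairs $(G/L,\,y^G(G/L)\to i^{K,*}_G F)$, producing $\hat E^G(i^{K,*}_G F)$ on the nose.

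The hard part is this reindexing step in the pointwise-colimit formula; it is completely formal but, in the $\infty$-categorical setting where one must be careful with coherence, it is the only place where anything genuinely needs checking. Everything else is bookkeeping: the two outer equivalences are the definition of the extension of $E^G$ and $E^K$ to topological spaces via the Elmendorf-style models $Y^G$, $Y^K$, and the middle equivalence $Y^G\circ\Res^K_G\simeq i^{K,*}_G\circ Y^K$ is exactly what \eqref{qewfoijoqwefqwefqewf} records. Once the key equivalence is in place, the composition of the four natural equivalences gives the desired $E^K(X)\simeq E^G(\Res^K_G(X))$, naturally in $X$.
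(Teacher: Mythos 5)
Your proposal is correct and follows essentially the same route as the paper: rewrite both sides via \eqref{qewfoiuioqwefqwefewfqefew}, use the square \eqref{qewfoijoqwefqwefqewf} to identify $Y^{G}(\Res^{K}_{G}(X))$ with $i^{K,*}_{G}Y^{K}(X)$, and reduce to the key equivalence $\hat E^{K}\simeq \hat E^{G}\circ i^{K,*}_{G}$, which the paper likewise obtains from agreement on representables ($i^{K}_{G,!}E^{G}\simeq \hat E^{G}\circ i^{K,*}_{G}\circ y_{K\Orb}$) together with the fact that both sides preserve colimits. Your more explicit unwinding of the pointwise Kan extension formula and the comma-category reindexing is just a spelled-out version of the step the paper states in one line.
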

\begin{proof}
We have
$$ E^{G}(\Res^{K}_{G}(X))\simeq \hat E^{G}(Y^{G}(\Res^{K}_{G}(X)))\stackrel{\eqref{qewfoijoqwefqwefqewf}}{\simeq} \hat E^{G}(i^{K,*}_{G}(Y^{K}(X)))\, .$$
Let $y_{K\Orb} \colon K\Orb\to \PSh(K\Orb)$ denote the Yoneda embedding.
Then we have an equivalence  
$E^{K}\simeq i^{K}_{G,!}E^{G}\simeq  \hat E^{G}\circ i^{K,*}_{G}\circ y_{K\Orb}$ which implies   
 $ \hat E^{K}\simeq   \hat E^{G}\circ i^{K,*}_{G}$. 
We get  $$\hat E^{G}(i^{K,*}_{G}(Y^{K}(X)))\simeq
 \hat E^{K}(Y^{K}(X))\simeq  E^{K}(X)\, .$$ 
The desired equivalence follows from concatenating the two displayed chains of equivalences.
\end{proof}

The left Kan extension functor $i^{K}_{G,!}$ only involves forming coproducts. More precisely, we have the following assertion.
Let $A \colon G\Orb\to \bA$ be a functor with a cocomplete target and $B \colon \bA\to \bB$ be a second functor to a cocomplete target $\bB$.
\begin{lem}\label{q3roigerwgwergwergwerg}
If $B$  preserves small   coproducts, then the canonical transformation  is an equivalence
$ i^{K}_{G,!}(B\circ A) \simeq  B\circ i^{K}_{G,!}A $.
\end{lem}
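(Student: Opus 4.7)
My plan is to prove the lemma by computing both sides via the pointwise formula for left Kan extensions and showing that the relevant colimit reduces to a coproduct, which $B$ preserves by hypothesis.

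First, for any $X$ in $K\Orb$, the pointwise formula gives
$$ (i^{K}_{G,!}A)(X)\simeq \colim_{(Y,\phi) \in i^{K}_{G}\downarrow X} A(Y)\, , \qquad (i^{K}_{G,!}(B\circ A))(X)\simeq \colim_{(Y,\phi)\in i^{K}_{G}\downarrow X} B(A(Y))\, , $$
and both are natural in $X$. Writing $X=K/H'$, I would identify a full subcategory $\cD \subseteq (i^{K}_{G}\downarrow X)$ of \emph{canonical} objects as follows. The $G$-set $\Res^{K}_{G}(K/H')$ decomposes into $G$-orbits indexed by double cosets $[k]\in G\backslash K/H'$, each orbit being $G$-equivariantly isomorphic to $G/H_{[k]}$ with $H_{[k]}:=G\cap k H'k^{-1}$, and carrying a canonical $K$-equivariant map $\phi_{[k]}\colon K/H_{[k]} \to K/H'$, $gH_{[k]}\mapsto gkH'$. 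The subcategory $\cD$ has one object $(G/H_{[k]},\phi_{[k]})$ for each such double coset.

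The main technical step is to show that $\cD$ is discrete (i.e., has only identity morphisms) and final in $(i^{K}_{G}\downarrow X)$. For discreteness, one unpacks that a $G$-automorphism of $G/H_{[k]}$ compatible with $\phi_{[k]}$ is given by right multiplication by some $n\in N_{G}(H_{[k]})/H_{[k]}$ satisfying $k^{-1}nk\in H'$; this forces $n\in G\cap kH'k^{-1}=H_{[k]}$, so $n$ is the identity. More generally, a morphism $(G/H_{[k]},\phi_{[k]})\to (G/H_{[k']},\phi_{[k']})$ must preserve the image $G$-orbit in $\Res^{K}_{G}(K/H')$, so exists only when $[k]=[k']$, and then is the identity by the previous argument. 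For finality, given any $(G/L,\psi)\in (i^{K}_{G}\downarrow X)$ with $\psi(eL)=kH'$, one checks that $L\subseteq H_{[k]}$ and that the canonical projection $G/L\twoheadrightarrow G/H_{[k]}$ furnishes a \emph{unique} morphism from $(G/L,\psi)$ to $(G/H_{[k]},\phi_{[k]})$ in the comma category, so that $((G/L,\psi)\downarrow \cD)$ is equivalent to a point. This verifies Joyal's finality criterion.

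Given finality and discreteness of $\cD$, both colimits above are indexed by the set of $G$-orbits of $\Res^{K}_{G}(X)$ and reduce to coproducts
$$ (i^{K}_{G,!}A)(X)\simeq \coprod_{[k]} A(G/H_{[k]})\, , \qquad (i^{K}_{G,!}(B\circ A))(X)\simeq \coprod_{[k]} B(A(G/H_{[k]}))\, . $$
Applying $B$ to the first equivalence and using the hypothesis that $B$ preserves small coproducts yields
$$ (B\circ i^{K}_{G,!}A)(X)\simeq B\Bigl(\coprod_{[k]}A(G/H_{[k]})\Bigr)\simeq \coprod_{[k]} B(A(G/H_{[k]}))\simeq (i^{K}_{G,!}(B\circ A))(X)\, , $$
and a routine check shows this equivalence is natural in $X$ and agrees with the canonical comparison transformation. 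The main obstacle is the verification of finality of $\cD$: the discreteness is a short calculation with double cosets and stabilizers, but the existence-and-uniqueness statement establishing finality needs to be carried out carefully by unpacking the defining property of maps in the orbit category.
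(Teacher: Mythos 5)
Your proof is correct and follows essentially the same route as the paper: both compute the left Kan extension pointwise, identify the comma category over $K/H'$ as a disjoint union of slices indexed by the $G$-orbits of $\Res^{K}_{G}(K/H')$ (your double cosets $G\backslash K/H'$), each with a terminal object, so that the colimit collapses to a coproduct which $B$ preserves. Your explicit description of the orbit representatives $(G/H_{[k]},\phi_{[k]})$ and the finality check just unwind what the paper states in one line, so there is nothing further to add.
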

\begin{proof}
We have a natural transformation
$ i^{K}_{G,!}(B\circ A)\to B\circ i^{K}_{G,!}A$. We use the pointwise formula for the left Kan extension in order to
evaluate this transformation at $S$ in $K\Orb$.  
The objects of $G\Orb_{/S}$ are morphisms $ K\times_{G}T\to S$ for $T$ in $G\Orb$  which are in bijection with morphisms
$T\to \Res^{K}_{G}(S)$ in $\Fun(BG,\Set)$. Hence the category $G\Orb_{/S}$ decomposes into a union of categories
$G\Orb_{/R}$, where $R$ runs over the set $G\backslash S$ of $G$-orbits in $\Res^{G}_{K}(S)$. Each component has a final object $R$. 
Hence we get the following chain of equivalences:
\begin{align*}
(i^{K}_{G,!}(B\circ A))(S)& \simeq \coprod_{R\in G\backslash S} B(A(R))
 \simeq B\big( \coprod_{R\in G\backslash S}  A(R) \big)\\ 
& \simeq B(( i^{K}_{G,!}A)(S))
 \simeq  (B\circ i^{K}_{G,!}A)(S)\,.\qedhere
\end{align*}
\end{proof}

\bibliographystyle{alpha}
\bibliography{forschung}

\end{document}